\newtheorem{theorem}{Theorem}[section]
\newtheorem{lemma}[theorem]{Lemma}
\newtheorem{corollary}[theorem]{Corollary}
\newtheorem{proposition}[theorem]{Proposition}
\newtheorem{conjecture}[theorem]{Conjecture}
\theoremstyle{definition}
\newtheorem{definition}[theorem]{Definition}
\newtheorem{proposition-definition}[theorem]{Proposition-Definition}
\newtheorem{lemma-definition}[theorem]{Lemma-Definition}
\newtheorem{construction}[theorem]{Construction}
\theoremstyle{remark}
\newtheorem{remark}[theorem]{Remark}
\newtheorem{example}[theorem]{Example}
\newtheorem{warning}[theorem]{Warning}
\numberwithin{equation}{section}
\newcommand{\crit}{\mathrm{crit}}
\newcommand{\subcrit}{\mathrm{subcrit}}
\renewcommand{\H}{\mathcal H}
\renewcommand{\Re}{\operatorname{Re}}
\renewcommand{\Im}{\operatorname{Im}}
\newcommand{\cyl}{\mathrm{cyl}}
\newcommand{\1}{\mathbf 1}
\newcommand{\J}{\mathcal J}
\newcommand{\A}{\mathcal A}
\newcommand{\B}{\mathcal B}
\newcommand{\C}{\mathcal C}
\newcommand{\E}{\mathcal E}
\newcommand{\ainf}{A_\infty}
\newcommand{\SSS}{\mathcal S}
\newcommand{\sL}{\mathcal L}
\newcommand{\ZZ}{\mathbb Z}
\newcommand{\CC}{\mathbb C}
\newcommand{\RR}{\mathbb R}
\newcommand{\PP}{\mathbb P}
\newcommand{\R}{\mathbf R}
\newcommand{\Q}{\mathcal Q}
\newcommand{\M}{\mathcal M}
\newcommand{\N}{\mathcal N}
\newcommand{\T}{\mathcal T}
\newcommand{\G}{\mathcal G}
\newcommand{\Y}{\mathcal Y}
\newcommand{\rr}{\mathfrak r}
\newcommand{\f}{\mathfrak f}
\newcommand{\g}{\mathfrak g}
\newcommand{\h}{\mathfrak h}
\newcommand{\cc}{\mathfrak c}
\newcommand{\Rbar}{\overline{\mathcal R}}
\newcommand{\Sbar}{\overline{\mathcal S}}
\newcommand{\OO}{\mathcal O}
\newcommand{\D}{\mathcal D}
\newcommand{\W}{\mathcal W}
\newcommand{\std}{\mathrm{std}}
\newcommand{\Nbd}{\operatorname{\mathcal{N}bd}}
\newcommand{\id}{\operatorname{id}}
\newcommand{\Ob}{\operatorname{Ob}}
\newcommand{\Tw}{\operatorname{Tw}}
\newcommand{\F}{\mathcal F}
\newcommand{\End}{\operatorname{End}}
\newcommand{\Pro}{\operatorname{Pro}}
\newcommand{\cone}{\operatorname{cone}}
\newcommand{\cones}{\operatorname{cones}}
\newcommand{\Set}{\operatorname{Set}}
\newcommand{\codim}{\operatorname{codim}}
\newcommand{\supp}{\operatorname{supp}}
\newcommand{\pre}{\operatorname{pre}}
\newcommand{\fiber}{[\operatorname{fiber}]}
\newcommand{\cocore}{[\operatorname{cocore}]}
\newcommand{\tildetimes}{\mathbin{\tilde\times}}
\newcommand{\op}{\mathrm{op}}
\newcommand{\inn}{\mathrm{in}}
\newcommand{\out}{\mathrm{out}}
\newcommand{\Fun}{\operatorname{Fun}}
\newcommand{\Hom}{\operatorname{Hom}}
\newcommand{\hocolim}{\operatornamewithlimits{hocolim}}
\newcommand{\charfol}{\mathrm{char.fol.}}
\newcommand{\const}{\mathrm{const}}
\newcommand{\pr}{\mathrm{prod}}
\newcommand{\sm}{\mathrm{sm}}
\newcommand{\Perf}{\operatorname{Perf}}
\newcommand{\Coh}{\operatorname{Coh}}
\newcommand{\Mod}{\operatorname{Mod}}
\newcommand{\Groth}{\operatornamewithlimits{Groth}}
\newcommand{\FS}{\operatorname{FS}}
\newcommand{\diss}{\mathrm{diss}}
\begin{document}

\title{Sectorial descent for wrapped Fukaya categories}

\author{
Sheel Ganatra,
John Pardon,
and
Vivek Shende
}

\date{August 27, 2023}

\maketitle

\begin{abstract}
We develop a set of tools for doing computations in and of (partially) wrapped Fukaya categories.
In particular, we prove (1) a descent (cosheaf) property for the wrapped Fukaya category with respect to so-called Weinstein sectorial coverings and (2) that the partially wrapped Fukaya category of a Weinstein manifold with respect to a mostly Legendrian stop is generated by the cocores of the critical handles and the linking disks to the stop.
We also prove (3) a `stop removal equals localization' result, and (4) that the Fukaya--Seidel category of a Lefschetz fibration with Liouville fiber is generated by the Lefschetz thimbles.
These results are derived from three main ingredients, also of independent use: (5) a K\"unneth formula (6) an exact triangle in the Fukaya category associated to wrapping a Lagrangian through a Legendrian stop at infinity and (7) a geometric criterion for when a pushforward functor between wrapped Fukaya categories of Liouville sectors is fully faithful.
\end{abstract}

\setcounter{tocdepth}{3}

\newpage
\tableofcontents
\newpage

\setcounter{section}{-1}

\section{Introduction}\label{intro}

The goal of this paper is to develop a set of computational tools for wrapped Fukaya categories.
The capstone result, called \emph{(Weinstein) sectorial descent}, gives a \v Cech-type decomposition of the wrapped Fukaya category $\W(X)$ of a Weinstein manifold (or sector) $X$ from a \emph{Weinstein sectorial cover} $X=X_1\cup\cdots\cup X_n$.  
In such a cover, all multiple intersections $X_{i_1}\cap\cdots\cap X_{i_k}$ are Weinstein sectors, and 
our previous work \cite{gpssectorsoc} yields a map 
\begin{equation}\label{descentpreintro}
\hocolim_{\varnothing\ne I\subseteq\{1,\ldots,n\}}\W\biggl(\bigcap_{i\in I}X_i\biggr)\rightarrow\W(X).
\end{equation}
On the left hand side, $\W(\bigcap_{i \in I} X_i)$ is defined in terms of the sector $\bigcap_{i\in I}X_i$ alone; 
in particular, its holomorphic disks and wrappings do not explore the larger space $X$. 

Sectorial descent (Theorem \ref{weinsteindescent}) is the assertion that \eqref{descentpreintro} 
induces an quasi-equivalence on triangulated envelopes, e.g.\ as implemented by categories of twisted complexes. 
This  is a variation on Kontsevich's conjecture \cite{kontsevichnotes} that the wrapped Fukaya
category of a Weinstein manifold $X$ is the category of global sections of a natural cosheaf of categories on any core of $X$.
Such a core is in general highly singular and depends on a choice of symplectic primitive (Liouville form). Sectorial descent, 
by contrast, has the virtue of neither requiring a discussion of singular spaces nor depending on a choice of primitive.
The relationship to Kontsevich's statement is that, roughly speaking, for any given choice of primitive, 
a cover of the core should lift to a Weinstein sectorial cover of the total space.
For example, a ribbon graph as the core of a punctured surface determines a covering by
 `$A_{n-1}$ Liouville sectors' ($D^2$ minus $n$ boundary punctures) corresponding to the vertices of the graph (with $n$ being the degree of a given vertex).
Our formulation 
also refines Kontsevich's original conjecture by giving a Floer-theoretic interpretation of the local categories: they are partially wrapped Fukaya categories.

The main tool in the proof of sectorial descent, and also our central object of study in this paper, is the \emph{partially wrapped Fukaya category}.
This is a category $\W(X,\f)$, depending not only on a Liouville manifold $X$ but also a closed subset $\f\subseteq\partial_\infty X$ known as the \emph{stop}.
Its objects are (possibly non-compact) exact Lagrangian submanifolds of $X$ disjoint from $\f$, with morphisms given by Floer cohomology after wrapping Lagrangians in the complement of $\f$.  The category is defined for arbitrary such $(X, \f)$, but our results are generally sharpest when $X$ is Weinstein and
$\f$ is \emph{mostly Legendrian} in the sense of admitting a closed-open decomposition $\f=\f^\subcrit\cup\f^\crit$ where $\f^\crit$ is Legendrian and the dimension of $\f^\subcrit$ is strictly smaller (see Definition \ref{mostlyLegdef}).  In this setting, 
we establish a package of new structural results of independent interest concerning partially wrapped Fukaya categories, including a K\"{u}nneth formula, an exact triangle, generation results, and geometric criteria for functors between such Fukaya categories to be fully faithful embeddings or to be localizations.
All these results are key ingredients in the proof of sectorial descent. 

The enormous variety of possible stops $\f$ gives great expressive power to the partially wrapped Fukaya category compared with the (fully) wrapped Fukaya category.
Let us give a simple illustration. 
The stopped Liouville manifold $(\CC,\{\pm\infty\})$ allows for a `stabilization' operation in the partially wrapped context, namely there is
a fully faithful embedding $\W(X)\hookrightarrow\W(X\times(\CC,\{\pm\infty\}))$.
By contrast, in the fully wrapped context there is no such embedding: $\W(X\times\CC) = 0$;
multiplying by base or fiber in $T^*S^1$ gives a map $\W(X)\to\W(X\times T^*S^1)$ which is faithful, but still not fully faithful.

The idea that Fukaya categories of non-compact Lagrangians with some sort of wrapping should exist and be of interest goes back to early work on mirror symmetry.
Fukaya categories of Landau--Ginzburg models $f: X \to \CC$, which were introduced by Kontsevich \cite{kontsevich1998lectures}
and developed by Seidel \cite{seidelbook}, are 
partially wrapped Fukaya categories with stop $\f = f^{-1}(-\infty)$.
The wrapped Fukaya category defined by Abouzaid--Seidel \cite{abouzaidseidel} is the case $\f=\varnothing$.
The general framework of partially wrapped Fukaya categories was introduced by Auroux \cite{aurouxborderedfloericm,aurouxborderedfloergokova}, and precise definitions in the case that the stop is a \emph{Liouville hypersurface} $F\subseteq\partial_\infty X$ are given in \cite{sylvanthesis,gpssectorsoc}.
In addition, the study of Legendrian contact homology for Legendrian $\Lambda$, going back to ideas of Chekanov \cite{chekanov} and Eliashberg \cite{eliashberg-icm}, can be understood as the study of a partially wrapped Fukaya category with $\f = \Lambda$ \cite{sylvantalk, ekholm-lekili}.  

The above situations can largely be subsumed into the `mostly Legendrian' setup, because (as we show) 
$\W(X,F) = \W(X, \operatorname{core}(F))$; when $F$ is Weinstein, its core is mostly Legendrian. 
However, mostly Legendrian stops arise naturally in other ways as well (for example as conormals to stratifications), 
and it is highly desirable to have a theory which does not require the stop to be the core of a Liouville hypersurface.

The key property of mostly Legendrian stops $\f$ is 
that a generic isotopy of Legendrians inside $\partial_\infty X$ will intersect $\f$ at a discrete set of times by passing transversally through $\f^\crit$.
The effect of such a crossing can be quantified using  
the \emph{Lagrangian linking disks} $D_p$ at the smooth Legendrian points $p\in\f^\crit$.
Namely, if 
$L^w\subseteq X$ is obtained from $L$ by passing $\partial_\infty L$ positively transversally through a smooth Legendrian point $p\in\f$, 
then there is an exact triangle $L^w\to L\to D_p$ in $\W(X,\f)$ (Theorem \ref{wrapcone}).
This \emph{wrapping exact triangle} is fundamental to our work in this paper.

As a first application of stabilization and the wrapping exact triangle, we offer a suprisingly simple proof that the wrapped Fukaya category of a Weinstein manifold 
is generated by its cocores (Theorem \ref{generation}); another proof of this result is due independently to Chantraine--Dimitroglou Rizell--Ghiggini--Golovko \cite{cdrgggeneration}.

The wrapping exact triangle also allows one to relate the partially wrapped Fukaya categories with respect to different stops, 
which can greatly simplify computations of such categories.  Indeed, one reason such computations are hard is
that direct computation of $\W(X, \f)$ involves understanding explicitly the long time Reeb flow on $\partial_\infty X\setminus\f$; 
for most $\f$, this is essentially intractable.  
However, enlarging the stop can simplify the Reeb flow.
That is, one can often find $\g \supseteq \f$ for which the category $\W(X,\g)$ is easy to calculate or 
has convenient categorical properties such as properness, etc.
The two categories are then related by the \emph{stop removal formula} (Theorem 
\ref{stopremoval}): for stops $\f\subseteq\g$ with $\g\setminus\f$ mostly Legendrian, 
the pushforward functor $\W(X,\g)\to\W(X,\f)$ is the quotient by the Lagrangian linking disks of $(\g\setminus\f)^\crit$ (aka meridians).
This formula reduces the study of $\W(X,\f)$ to the study of $\W(X,\g)$ together with the linking disks of $(\g\setminus\f)^\crit$.
It is a direct consequence of the wrapping exact triangle; its precursors include \cite{abouzaidseidelunpublished, sylvanthesis}.

Of particular importance is the use of stop enlargement to create fully faithful functors.
Given an inclusion of Liouville sectors $X\hookrightarrow Y$, we may add a stop to $Y$ along the `exit set' of $\partial\partial_\infty X$ to obtain a Liouville sector $Y_{|X}$ and a chain of inclusions $X\hookrightarrow Y_{|X} \hookrightarrow Y$.
Now the induced functor $\W(X)\to\W( Y_{|X})$ is fully faithful, simply because Lagrangians in $X$, when wrapped in $ Y_{|X}$, fall immediately into the stop without exploring the rest of $ Y_{|X} \setminus X$.
As a special case: if $F$ is a Liouville hypersurface inside $\partial_\infty Y$, then there are fully faithful embeddings $\W(F)\hookrightarrow\W(F\times T^*[0,1])\hookrightarrow\W(Y,F\sqcup F^+)$, where $F^+$ denotes a small positive pushoff of $F$.
Even a complicated $F$ may sit in a simple $Y$ (e.g.\ a cotangent bundle).

In the factorization $X\hookrightarrow Y_{|X} \hookrightarrow Y$, the same reasoning shows moreover that $\W(Y_{|X})$ contains both $\W(X)$ and $\W(Y\setminus X)$ as semi-orthogonal full subcategories.
Adding appropriate Weinstein hypotheses so as to know generation by cocores, this is moreover a semi-orthogonal decomposition $\W(Y_{|X}) = \langle \W(Y\setminus X), \W(X) \rangle $.

Applying this construction in the setting of a sectorial cover $X = Y \cup Z$ produces semi-orthogonal decompositions 
\begin{align}
\W(Z_{|Y \cap Z}) & =   \langle \W(Y \cap Z), \W(Z \setminus Y \cap Z) \rangle \\
\W(Y_{|Y\cap Z}) & =  \langle \W(Y \cap Z), \W(Y \setminus Y \cap Z) \rangle \\
\W(X_{|Y \cap Z}) & =  \langle \W(Y \cap Z),  \W(Y \setminus Y \cap Z) \oplus  \W(Z \setminus Y \cap Z) \rangle 
\end{align}
from which it follows formally that
\begin{equation}
\hocolim \biggl( \W(Y \cap Z)  \rightrightarrows \W(Z_{|Y \cap Z}) \oplus \W(Y_{|Y\cap Z}) \biggr) \xrightarrow{\sim} \W(X_{|Y \cap Z}).
\end{equation}
Appealing to stop removal and the fact that homotopy colimits commute with localizations, we deduce the corresponding formula for the covering $X = Y \cup Z$.
Arguing by induction establishes sectorial descent.

\section{Statement of results}\label{statementresults}

\subsection{Partially wrapped Fukaya categories}

Let us begin by fixing our notation and terminology for partially wrapped Fukaya categories (see \S\ref{partiallywrappedsection} for a full treatment); our setup is rather more general than that considered before \cite{aurouxborderedfloericm,aurouxborderedfloergokova,sylvanthesis,katzarkovkerr,gpssectorsoc}.

Recall that a \emph{Liouville manifold} is an exact symplectic manifold $(X,\lambda)$ which is `cylindrical at infinity', meaning that the complement of a compact set is identified (necessarily uniquely) with the positive half of the symplectization of a contact manifold, denoted $\partial_\infty X$.
The adjective \emph{cylindrical} applied to an object living on an Liouville manifold means `invariant under the Liouville flow'; by default, `cylindrical' means `cylindrical at infinity', unless otherwise specified.

For a Liouville manifold $X$ and any closed subset $\f\subseteq\partial_\infty X$, henceforth referred to as a \emph{stop}, we denote by $\W(X,\f)$ the \emph{partially wrapped Fukaya category of $X$ stopped at $\f$}: this is the Fukaya category whose objects are exact cylindrical (at infinity)
Lagrangians inside $X$ disjoint at infinity from $\f$ and whose morphisms are calculated by wrapping Lagrangians in the complement of $\f$.
For an inclusion of stops $\f\supseteq\f'$, there is a canonical pushforward functor 
\begin{equation}\label{partialpushforward}
\W(X,\f)\to\W(X,\f').
\end{equation}
We will refer to a pair $(X,\f)$ as a \emph{stopped Liouville manifold}.
More generally, we can take $X$ to be a Liouville sector in the following sense:

\begin{definition}[{Liouville sector \cite[Definition 2.4]{gpssectorsoc}}]\label{liouvillesectordefn}
A \emph{Liouville sector} is an exact symplectic manifold-with-boundary $(X,\lambda)$ which is cylindrical at infinity and for which there exists a function $I:\partial X\to\RR$ which is linear at infinity and whose Hamiltonian vector field $X_I$ is outward pointing along $\partial X$.
\end{definition}

\begin{remark}[Coordinates near the boundary of a Liouville sector]\label{sectorbdrycoords}
The existence of the ``defining function'' $I$ above is equivalent to the existence of coordinates of the form $X=F\times\CC_{\Re\geq 0}$ (or, equivalently, $X=F\times T^\ast\RR_{\geq 0}$) over a cylindrical neighborhood of the boundary, where $F$ is a Liouville manifold called the \emph{symplectic boundary} of $X$ (see \cite[\S 2.6]{gpssectorsoc}).
The associated projection $\pi:\Nbd^Z\partial X\to\CC_{\Re\geq 0}$ (where $\Nbd^Z$ means `a cylindrical neighborhood of')
prevents holomorphic curves in $X$ from approaching $\partial X$ (for almost complex structures which make $\pi$ holomorphic, of which there is a plentiful supply).
\end{remark}

A closely related notion is that of \emph{Liouville pair} 
$(\bar X,F)$ \cite{avdek,avdekjsg,eliashbergweinsteinrevisited}, consisting of a Liouville manifold  $\bar X$  and   a \emph{Liouville hypersurface} $F_0\subseteq\partial_\infty\bar X$, 
i.e.\ a hypersurface-with-boundary $F_0$ 
along with a contact form $\alpha$ on $\partial_\infty\bar X$ for which $(F_0,\alpha|_{F_0})$ is a Liouville domain whose completion is $F$.
By removing from $\bar X$ a standard neighborhood of $F_0$, we obtain a Liouville sector $X=\bar X\setminus\Nbd F_0$ \cite[Definition 2.14]{gpssectorsoc}.
Going in the other direction, $\bar X$ may be obtained from $X$ by gluing $F\times\CC_{\Re\leq\varepsilon}$ onto $X$ via the coordinates $X=F\times\CC_{\Re\geq 0}$ from Remark \ref{sectorbdrycoords}.
Up to contractible choice, this gives a 
correspondence between Liouville sectors $X$ and Liouville pairs $(\bar X,F)$ (see \cite[Lemma 2.32]{gpssectorsoc}).  We refer to
$\bar X$ as the \emph{convex completion} (or \emph{convexification}) of $X$ and to $F$ as the \emph{symplectic boundary} of $X$.

Given a stopped Liouville sector $(X,\f)$ (meaning $\f$ is a closed subset of $(\partial_\infty X)^\circ:=\partial_\infty X\setminus\partial\partial_\infty X$), 
there is a partially wrapped Fukaya category $\W(X,\f)$ as above, where we compute morphisms by wrapping inside $(\partial_\infty X)^\circ\setminus\f$.
For an inclusion of stopped Liouville sectors $(X,\f)\hookrightarrow(X',\f')$ (meaning $X\hookrightarrow X'$ and $\f'\cap(\partial_\infty X)^\circ\subseteq\f$), there is a pushforward functor $\W(X,\f)\to\W(X',\f')$ (the case $\f = \f' = \varnothing$ was described in \cite[\S 3]{gpssectorsoc}).

For a Liouville pair $(\bar X,F)$ and $X=\bar X\setminus\Nbd F_0$ the associated Liouville sector, the natural functor
\begin{equation}\label{sectorstopequivalence}
\W(X)\xrightarrow\sim\W(\bar X,F_0)
\end{equation}
is a quasi-equivalence (see Corollary \ref{horizsmallstop}).

The \emph{core} $\cc_F=:\f$ of $F$ refers to the set of points which do not escape to the boundary under the Liouville flow.
The core is a closed subset, in general highly singular.
In the other direction, we call $F_0$ a \emph{ribbon} for $\f$.
The natural functor
\begin{equation}\label{stopcoreequivalence}
\W(\bar X,F_0)\xrightarrow\sim\W(\bar X,\f)
\end{equation}
is also a quasi-equivalence (see Corollary \ref{horizsmallstop}).
It is frequently convenient to have at hand both descriptions $\W(X)$ and $\W(\bar X,\f)$ of the same Fukaya category.

\begin{remark}[Existence and choice of ribbons]\label{ribbon}
It is likely a subtle question whether a given closed subset $\f\subseteq\partial_\infty X$ admits a ribbon, and whether such a ribbon is unique.
A choice of ribbon is needed to define the corresponding Liouville sector, which may be desirable in that
Fukaya categories of sectors admit a greater range of pushforward functors. 
(Note however that
the pair $(X,\partial_\infty X\setminus\f)$ is an \emph{open Liouville sector} in the sense of \cite[Remark 2.8]{gpssectorsoc} given only the 
existence of a ribbon for $\f$.)
Of course, the partially wrapped Fukaya category $\W(X,\f)$ is defined without any assumptions at all about ribbons for $\f$.
\end{remark}

A deformation of Liouville sectors $X$ or of Liouville pairs $(\bar X,F)$ induces an equivalence on partially wrapped Fukaya categories.
Note, however, that during a deformation of Liouville pairs, the core $\f$ of $F$ may change rather drastically, and hence it is natural to ask the general question of which sorts of deformations of a stop $\f$ induce equivalences on partially wrapped Fukaya categories.
Among other results in this direction, we show (in \S\ref{sec: ccc}) that constancy of the complement of $\f$ as a contact manifold is sufficient:

\begin{theorem}\label{winvariancestrong}
Let $X$ be a Liouville sector, and let $\f_{[0,1]}\subseteq(\partial_\infty X)^\circ\times[0,1]$ be a closed subset.
If the projection of the complement of $\f_{[0,1]}$ to $[0,1]$ is trivial as a family of contact manifolds, then there is a natural quasi-equivalence
\begin{equation}
\W(X,\f_0)=\W(X,\f_1)
\end{equation}
where $\f_t$ denotes the fiber of $\f_{[0,1]}$ over $t\in[0,1]$.
\end{theorem}

\subsection{K\"unneth embedding}

For stopped Liouville manifolds $(X,\f)$ and $(Y,\g)$, we denote by $(X,\f)\times(Y,\g)$ the product $X\times Y$ equipped with the product stop
\begin{equation}
(\f\times\cc_Y)\cup(\f\times\g\times\RR)\cup(\cc_X\times\g),
\end{equation}
where $\cc_X$ and $\cc_Y$ denote the cores of $X$ and $Y$, respectively.
To make sense of this definition, note that if we let $\cc_{X,\f}:=\cc_X\cup(\f\times\RR)\subseteq X$ denote the ``relative core'' (where $\f\times\RR\subseteq X$ indicates the locus of points which limit to $\f$ under the positive Liouville flow), then we have $\cc_{(X,\f)\times(Y,\g)}=\cc_{X,\f}\times\cc_{Y,\g}$.

Floer theory is generally well-behaved under taking products, but in the wrapped setting, difficulties arise because 
the product of cylindrical structures is not generally cylindrical.  This is because `cylindrical' is a condition at infinity,
and the infinite end of a product is not just the product of the infinite ends.  
Nevertheless, some results have been established in the wrapped setting: 
Oancea \cite{oanceakunneth} proved a K\"unneth formula for symplectic cohomology of Liouville manifolds (see also Groman \cite{groman}).
The analogous problem for wrapped Fukaya categories was first studied by Gao \cite{gaokunneth,gaokunneth2}, who constructed a K\"unneth functor after enlarging the target category to include products of cylindrical Lagrangians (which are themselves not generally cylindrical).

In \S\ref{sec: kunneth}, we construct a K\"unneth functor for partially wrapped Fukaya categories.
We employ a cylindrization procedure for products of Lagrangians (see \S\ref{sec: products}) which allows us to avoid enlarging the target category.

\begin{theorem}[K\"unneth embedding]\label{kunneth}
For Liouville sectors $X$ and $Y$, there is a fully faithful $\ainf$-bilinear-functor%
\footnote{The notation $\A \otimes \B \to \C$ indicates an $\ainf$-bilinear-functor from $(\A,\B)$ to $\C$ in the sense of \cite{lyubashenkomultilinear}; we attach no meaning to the `tensor product of $\ainf$-categories' notation $\A \otimes \B$ on its own.  Given a suitable definition of a tensor product $\ainf$-category $\A\otimes\B$, these notions should be quasi-equivalent.}
\begin{equation}
\W(X)\otimes\W(Y)\hookrightarrow\W(X\times Y),
\end{equation}
and for stopped Liouville manifolds $(X,\f)$ and $(Y,\g)$, there is a fully faithful functor
\begin{equation}
\W(X,\f)\otimes\W(Y,\g)\hookrightarrow\W((X,\f)\times(Y,\g)).
\end{equation}
Both these functors send $(L\subseteq X,K\subseteq Y)$ to a canonical cylindrical perturbation $L\tildetimes K\subseteq X\times Y$ of the product $L\times K\subseteq X\times Y$.  If $\lambda_X|_L\equiv 0$ and $\lambda_Y|_K\equiv 0$, then $L\times K$ is already cylindrical and no perturbation is necessary.
\end{theorem}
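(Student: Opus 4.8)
The plan is to build an $\ainf$-bilinear functor $(\W(X),\W(Y))\to\W(X\times Y)$ (and its stopped analogue) in three stages --- the map on objects, the higher $\ainf$-bilinear structure maps, and full faithfulness --- after first pinning down the geometry of $(X,\f)\times(Y,\g)$ near infinity. Writing the two Liouville structures as $\lambda_X,\lambda_Y$ with Liouville vector fields $Z_X,Z_Y$, the product carries $\lambda_X\oplus\lambda_Y$ with Liouville field $Z_X\oplus Z_Y$; when $X$ or $Y$ is a sector one forms the product $\pi_X\times\pi_Y$ of the boundary projections of Remark~\ref{sectorbdrycoords} and rounds the corner $\partial X\times\partial Y$ to exhibit $X\times Y$ again as a Liouville sector (with symplectic boundary $F_X\times Y\sqcup X\times F_Y$ up to deformation). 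The organizing principle is that the product stop $(\f\times\cc_Y)\cup(\f\times\g\times\RR)\cup(\cc_X\times\g)$ is designed precisely so that the relative core of the product equals the product of the relative cores, $\cc_{(X,\f)\times(Y,\g)}=\cc_{X,\f}\times\cc_{Y,\g}$; equivalently its complement in $\partial_\infty(X\times Y)$ is exactly the locus where neither factor has run into its own stop or skeleton.

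\emph{Cylindrical perturbation of the product.} A product $L\times K$ of exact cylindrical Lagrangians is $(Z_X\oplus Z_Y)$-invariant but its link at infinity is the non-smooth ``corner'' set built from the Legendrians $\partial_\infty L$, $\partial_\infty K$, so it is not cylindrical. Fixing primitives $\lambda_X|_L=df_L$, $\lambda_Y|_K=df_K$ (locally constant near infinity), I would cylindrize by a Hamiltonian isotopy supported near infinity that smooths this corner --- replacing the quadrant direction $\RR_{>0}^2$ by $\RR_{>0}\times(\text{smooth arc})$ in a way governed by $f_L+f_K$ and canonical up to contractible choice --- producing $L\tildetimes K$, which by construction is disjoint at infinity from the product stop. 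When $f_L\equiv f_K\equiv 0$ one can instead arrange, using deformation invariance (Theorem~\ref{winvariancestrong}) and the freedom in the contact form at infinity of $X\times Y$, that $L\times K$ is already cylindrical, and take $L\tildetimes K=L\times K$.

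\emph{The bilinear functor and full faithfulness.} On tuples of product Lagrangians I would compute with a product almost complex structure (compatible near $\partial(X\times Y)$ with $\pi_X\times\pi_Y$), for which holomorphic polygons split as pairs of holomorphic polygons in $X$ and in $Y$, together with product wrapping Hamiltonians $H_X\oplus H_Y$, under which $L_0\times K_0$ wraps as a product of wrappings of $L_0$ and of $K_0$. The resulting moduli of split polygons directly assemble an $\ainf$-bilinear functor in Lyubashenko's sense, with structure maps the tensor products of the one-factor operations, and identify the morphism complex between product Lagrangians (with split data) with $CW^\ast_X\otimes CW^\ast_Y$. Two comparisons then remain: (a) pass from $L\times K$ to the cylindrical $L\tildetimes K$ by Hamiltonian-isotopy invariance of wrapped Floer cohomology; and (b) show that split wrappings are \emph{cofinal} among all wrappings of $L\tildetimes K$ inside $(\partial_\infty(X\times Y))^\circ$ minus the product stop, so that the directed colimit defining $\W(X\times Y)(L_0\tildetimes K_0,L_1\tildetimes K_1)$ is computed by split data. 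Granting (a) and (b), full faithfulness is formal: since the cofinal system is indexed by a product poset and tensor products commute with directed colimits, $\W(X\times Y)(L_0\tildetimes K_0,L_1\tildetimes K_1)=\colim(CW^\ast_X\otimes CW^\ast_Y)=CW^\ast_X(L_0,L_1)\otimes CW^\ast_Y(K_0,K_1)$.

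\emph{Main obstacle.} I expect the cofinality statement (b) to be the crux. It is a dynamical claim about the Reeb flow on the contact boundary of $X\times Y$ away from the product stop --- that any positive wrapping isotopy of $\partial_\infty(L\tildetimes K)$ avoiding the product stop is dominated by a split one --- and the difficulty is precisely that $\partial_\infty(X\times Y)$ is genuinely not a product of contact manifolds. One must exploit the exact shape of the product stop, arranged so that the ``non-split'' wrapping directions are exactly those running into $(\f\times\cc_Y)\cup(\f\times\g\times\RR)\cup(\cc_X\times\g)$, together with a quantitative escape/monotonicity estimate near the skeleton, to squeeze an arbitrary admissible wrapping between two split ones. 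A secondary, more technical obstacle is carrying out the corner-rounding for products of sectors compatibly with the projection $\pi_X\times\pi_Y$ and with the cylindrization of the Lagrangians at the same time.
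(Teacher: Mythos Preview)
Your outline has the right shape --- split holomorphic curves for product almost complex structures, product wrappings, and cofinality as the crux --- but there are two genuine gaps where your proposed mechanism does not work as stated, and the paper's argument differs substantially.

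\textbf{Passing from split data to $\W(X\times Y)$.} You propose to compute with product almost complex structures and product Lagrangians, then ``pass from $L\times K$ to the cylindrical $L\tildetimes K$ by Hamiltonian-isotopy invariance''. This step fails: $L\times K$ is not cylindrical, so it is not an object of $\W(X\times Y)$, and the isotopy from $L\times K$ to $L\tildetimes K$ does not stay cylindrical, so the usual invariance statements do not apply. Likewise, product almost complex structures $J_X\times J_Y$ are not cylindrical on $X\times Y$, so the disks you count do not \emph{a priori} compute anything in $\W(X\times Y)$. The paper handles this by introducing an intermediate $\ainf$-category $\W_{X\times Y}^\pr$ built from genuine product Lagrangians with product $J$'s (where your splitting argument is valid), then constructing the functor $\W_{X\times Y}^\pr\to\W_{X\times Y}$ via a bimodule $\B$ counting strips whose almost complex structures interpolate (in a carefully controlled way near infinity) between $\Phi_*(\text{product cylindrical})$ and cylindrical-on-the-product. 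Compactness of these moduli spaces requires a nontrivial monotonicity argument over infinitely many ``shells'', exactly as in the proof of Theorem~\ref{winvariancestrong}. The functor $\W_X\otimes\W_Y\to\W_{X\times Y}^\pr$ is similarly built from a trimodule counting folded strips. Both functors are then extracted via Yoneda and representability; this is not a formality you can replace by ``isotopy invariance''.

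\textbf{Cofinality of product wrappings.} You are right that this is the heart of the matter, but your proposed mechanism (``squeeze an arbitrary admissible wrapping between two split ones'' via escape estimates near the skeleton) is not how it is done, and I do not see how to make such a sandwiching argument work on the non-product contact boundary. The paper instead uses the cofinality criterion Lemma~\ref{cofinalitycriterion}: it suffices to exhibit a contact form $\beta$ on $\partial_\infty(X\times Y)\setminus\h$ with $\int_0^\infty\min\beta(\partial_t(\partial_\infty(L^t\tildetimes K^t)))\,dt=\infty$. The key construction is the \emph{product contact form} $\beta_{X\times Y}:=\min(\beta_X+\lambda_Y,\lambda_X+\beta_Y)$, where $\beta_X,\beta_Y$ are contact forms on $\partial_\infty X\setminus\f$ and $\partial_\infty Y\setminus\g$ obtained from contact Hamiltonians vanishing exactly on the stops. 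This $\beta_{X\times Y}$ blows up precisely along the product stop $\h$ (this is where the specific form of $\h$ enters), and a direct estimate shows that the cylindrized product of Reeb-flow wrappings has $\beta_{X\times Y}$-speed bounded below by $1$ minus an error of order $e^{-N(t)}(1+|N'(t)|)$ coming from the cylindrization. One then solves the differential inequality $|N'(t)|+1\leq\varepsilon(t)e^{N(t)}$ to choose the cylindrization parameter $N(t)$ so that this error is negligible. This argument is short once you have the right contact form, but finding $\beta_{X\times Y}$ is the essential idea you are missing.
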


The K\"unneth embedding immediately gives rise to ``stabilization functors''
\begin{align}
\label{kunnethstabsector}\W(X)&\hookrightarrow\W(X\times T^\ast[0,1]),\\
\label{kunnethstabstopped}\W(X,\f)&\hookrightarrow\W(X\times\CC,(\cc_X\times\{\pm\infty\})\cup(\f\times\RR)),
\end{align}
(the former for Liouville sectors sending $L \mapsto L \tildetimes\fiber$, the latter for stopped Liouville manifolds sending $L \mapsto L \tildetimes i\RR$), which are of particular interest and use.

The K\"unneth embedding gives rise to a notion of ``representability'' for bimodules: one can ask whether a given $(\W(X)\otimes\W(Y))$-bimodule is the pullback of a Yoneda module on $\W(X\times Y)$.
We consider specifically the product $(X^-,\f)\times(X,\f)$ for $(X,\f)$ a stopped Liouville manifold.
Since $\W(X^-,\f)=\W(X,\f)^\op$ (Lemma \ref{oppositesw}), an object of $\W((X^-,\f)\times(X,\f))$ pulls back under K\"unneth to a $\W(X,\f)$-bimodule.
The diagonal $\Delta\subseteq X^-\times X$ is not an object of $\W((X^-,\f)\times(X,\f))$, as it runs into the stop at infinity.
In favorable cases (for example, when $\f$ has a ribbon, see Lemma \ref{diagonalpushoff}), there exist \emph{positive/negative pushoffs} $\Delta^\pm$ of $\Delta$ which are disjoint from the product stop (positively/negatively pushing off $\Delta$ from the product stop is equivalent to positively/negative displacing the stop $\f$ from itself; see \S\ref{diagonalsection} for further discussion).

\begin{proposition}[Pulling back the diagonal]\label{kunnethdiagonal}
Let $(X,\f)$ be a stopped Liouville manifold.
The Yoneda module $\Hom(\Delta^-,-)$ of any negative pushoff $\Delta^-$ of the diagonal pulls back under the K\"unneth embedding of Theorem \ref{kunneth} to the diagonal bimodule of $\W(X,\f)$.
\end{proposition}

This result comes with a small caveat: it requires the Floer data used to define the K\"unneth functor in Theorem \ref{kunneth} to be chosen carefully (we expect, but do not quite prove, that the K\"unneth functors we define are independent of the auxiliary choices going into their definition).

\subsection{Mostly Legendrian stops}\label{singularisotropicstopsection}

In this paper, we are particularly interested in stops which are mostly Legendrian in the sense of the following working definition:

\begin{definition}[Mostly Legendrian]\label{mostlyLegdef}
A closed subset $\f$ of a contact manifold $Y^{2n-1}$ is called \emph{mostly Legendrian} iff it admits a decomposition $\f=\f^\subcrit\cup\f^\crit$ for which $\f^\subcrit$ is closed and is contained in the smooth image of a second countable manifold of dimension $<n-1$ (i.e.\ strictly less than Legendrian), and $\f^\crit\subseteq Y\setminus\f^\subcrit$ is a Legendrian submanifold.
If $\f$ is mostly Legendrian, then there is a canonical largest choice of $\f^\crit\subseteq\f$ as the smooth Legendrian locus of $\f$ and smallest choice of $\f^\subcrit=\f\setminus\f^\crit$ as its complement.
The notion of a \emph{mostly Lagrangian} closed subset of a symplectic manifold is defined analogously.
\end{definition}

The key property of this definition is that a generic positive Legendrian isotopy will intersect a given mostly Legendrian $\f$ only by passing through $\f^\crit$ transversally (Lemma \ref{genericpositiveisotopy}).
In most applications, the relevant mostly Legendrian stops admit some sort of reasonable \emph{finite} (or at least locally finite) stratification by \emph{disjoint} locally closed \emph{isotropic} submanifolds.
However, the definition allows for rather more general phenomena (see Example \ref{mostlyLegexa}).

Weinstein hypersurfaces (i.e.\ Liouville hypersurfaces which are Weinstein) are an important source of mostly Legendrian stops.
Recall that a \emph{Weinstein manifold} is a Liouville manifold $W$ for which the Liouville vector field $Z$ is gradient-like with respect to a proper Morse function $\phi:W\to\RR_{\geq 0}$ (see \cite{cieliebakeliashberg}).
The zeroes of the Liouville vector field on a Weinstein manifold have index $\leq\frac 12\dim W$; those for which this inequality is strict are called \emph{subcritical}, and those of index $=\frac 12\dim W$ are called \emph{critical}.
We extend this terminology to isotropic submanifolds of symplectic and contact manifolds: critical isotropics are those which are Lagrangian/Legendrian, and those of smaller dimension are called subcritical.
The core $\cc_W$ of a Weinstein manifold $W$ is the union of the cores of the handles (i.e.\ the stable manifolds of the zeroes of $Z$).
If the cocores of the critical handles are properly embedded (this is a generic condition), 
then writing $\cc_W=\cc_W^\subcrit\cup\cc_W^\crit$ as the union of the cores of the subcritical and critical handles, 
respectively, we see that $\cc_W^\subcrit$ is closed, and hence this decompositions exhibits $\cc_W$ as mostly Lagrangian.
Additionally, as $Z$ is tangent to the cores of the handles, the Liouville form vanishes identically on them.
Thus the core $\f$ of a Weinstein hypersurface $F_0\subseteq\partial_\infty\bar X$ (with properly embedded critical cocores) is mostly Legendrian.

\begin{remark}\label{ribbonhard}
Even for the mostly Legendrian stops which arise naturally in practice, the question of whether they admit Weinstein ribbons (possibly after small deformation) does not have an obvious answer.
It is hence important, from a practical standpoint, to have a theory which applies in the generality of mostly Legendrian stops, without any assumptions about the existence of ribbons.
\end{remark}

\begin{example}\label{mostlyLegexa}
The union $\f$ of a Legendrian of dimension $>0$ and a sequence of points limiting to a point on the Legendrian is mostly Legendrian---one takes $\f^\crit$ to be the Legendrian minus the limit point.
A cantor set contained in a submanifold of dimension $<n-1$ is also mostly Legendrian.
On the other hand, if $\f\subseteq Y$ is a Legendrian and $\g\subseteq Y$ is $\f$ union a Legendrian in $Y\setminus\f$ accumulating at all points of $\f$, then $\f\subseteq Y$ and $\g\setminus\f\subseteq Y\setminus\f$ are both mostly Legendrian, but $\g\subseteq Y$ is not.
Note that the maps covering $\f^\subcrit$ need not be disjoint: the union of two submanifolds of dimension $<n-1$ interesecting along a cantor set is mostly Legendrian.
Finally, note that there is no constraint on behavior approaching the boundary: the collection of conormals to inverses of integers $\{\frac 1n\}_{n\geq 2}$ is mostly Legendrian inside $(\partial_\infty T^*[0,1])^\circ$, although its closure inside $(\partial_\infty T^*[-1,1])^\circ$ is not.
\end{example}

\subsection{Wrapping exact triangle}

Underlying almost all of our results is an exact triangle (constructed in \S\S\ref{lagrdisksection}--\ref{wet}) describing the effect in the Fukaya category of ``wrapping through a stop''.
To state it, recall that given a local Legendrian submanifold $\Lambda\subseteq\partial_\infty X$ near a point $p\in\Lambda$, there is a \emph{Lagrangian linking disk} $D_p\subseteq X$ whose boundary at infinity is a Legendrian unknot linking $\Lambda$ at $p$ (also known as `meridians', these are defined in detail in \S\ref{linkingdisksection}).
These linking disks may be thought of as `cocores at infinity'; precisely, if a stop $\f$ is the core of a Weinstein hypersurface $F_0$, then the canonical embedding $F\times T^*[0,1]\hookrightarrow(X,\f)$ sends a cocore in $F$ times a fiber of $T^*[0,1]$ to the linking disk at the corresponding smooth Legendrian point of $\f$.

\begin{theorem}[Wrapping exact triangle]\label{wrapcone}
Let $(X,\f)$ be a stopped Liouville sector, and let $p\in\f$ be a point near which $\f$ is a Legendrian submanifold.
If $L\subseteq X$ is an exact Lagrangian submanifold and $L^w\subseteq X$ is obtained from $L$ by passing $\partial_\infty L$ through $\f$ transversally at $p$ in the positive direction, then there is an exact triangle
\begin{equation}
L^w\to L\to D_p\to
\end{equation}
in $\W(X,\f)$, where $D_p\subseteq X$ denotes the Lagrangian disk linking $\f$ at $p$ and the map $L^w\to L$ is the continuation map.
\end{theorem}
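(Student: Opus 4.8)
The plan is to localize the problem near $p$, establish the triangle in a fixed local model, and transport it back. Since $L$ and $L^w$ coincide outside a neighborhood $U\ni p$ and $D_p$ is supported in $U$, the deformation invariance of $\W(X,\f)$ (Theorem~\ref{winvariancestrong}) together with the contact neighborhood theorem for the Legendrian $\f$ near $p$ lets me arrange that near $p$ the triple $(X,\f,L)$ is \emph{standard}: $\f$ is (the core of) a standard $T^*\RR^{n-1}$-handle sitting in $\partial_\infty X$, $\partial_\infty L$ meets it transversally at the single point $p$, and $D_p$ is the standard linking disk. It then suffices to prove the triangle in a fixed stopped Liouville sector $(X_0,\f_0)$ realizing this standard picture near a point $p_0$, and then to transport it to $\W(X,\f)$.

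For the model computation I would take $(X_0,\f_0)$ to be a product of stopped Liouville manifolds --- e.g.\ $\CC\times T^*\RR^{n-1}$ with the product stop $\{\pm\infty\}\times\RR^{n-1}$, with $p_0$ on a Legendrian component --- and invoke the K\"unneth embedding (Theorem~\ref{kunneth}). Being fully faithful and exact, it reduces everything to the two-dimensional factor $\W(\CC,\{-\infty,+\infty\})$: there $L_0$ is an arc, $L_0^w$ is obtained from it by pushing an endpoint through the stop at $+\infty$, $D_{p_0}$ is the short arc linking $+\infty$, and one must check $D_{p_0}\simeq\cone(L_0^w\xrightarrow{c}L_0)$ with $c$ the continuation element. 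This is an explicit and finite computation: the full $A_\infty$-subcategory on $\{L_0,L_0^w,D_{p_0}\}$ in the punctured disk has structure maps counting a handful of immersed polygons drawn in an elementary picture, and together with the grading constraints this pins down the triangle. (Equivalently, $D_{p_0}$ is the Lagrangian surgery of $L_0$ and $L_0^w$ at the intersection point created by the crossing, and the surgery exact triangle for arcs on surfaces applies.) Tensoring the surface triangle with the appropriate generator of $\W(T^*\RR^{n-1})$ gives the triangle in $(X_0,\f_0)$, the third term being the honest linking disk by the description of linking disks as cocores-times-fibers recalled just before the theorem.

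The main obstacle is the transport step. Morphism complexes in $\W(X,\f)$ are defined as colimits over wrappings, and these wrappings are not a priori confined to $U$, so the triangle cannot simply be ``restricted'' from $(X_0,\f_0)$. To get around this I would exhibit the entire triangle --- the objects $L,L^w,D_p$, the continuation map $L^w\to L$, the further maps, and the $A_\infty$-products witnessing exactness --- already at a \emph{finite} stage of the wrapping and inside $U$: choose cofinal wrapping data whose positive isotopies remain near $p$, and use the holomorphic-curve confinement near $\partial X$ from Remark~\ref{sectorbdrycoords} together with a Fukaya's-trick rescaling to force all holomorphic polygons contributing to the relevant morphisms and products to stay in $U$. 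This identifies the $A_\infty$-subcategory of $\W(X,\f)$ on $\{L,L^w,D_p\}$ with that of the model --- the same mechanism that drives the geometric criterion for full faithfulness of pushforward functors --- so the exact triangle descends. The remaining work is bookkeeping: verifying that the transported map $L^w\to L$ is the stated continuation map, and that the grading and orientation conventions reproduce the triangle as written rather than a shift or rotation of it.
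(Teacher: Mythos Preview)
Your localization strategy has a genuine gap at the transport step, and it is not a matter of bookkeeping. The Lagrangians $L$ and $L^w$ are \emph{global} objects of $\W(X,\f)$: they coincide outside $U$, but they are not supported in $U$, and their wrapped Floer theory depends on all of $X$. In particular, $HW^\bullet(L,L)$, $HW^\bullet(L^w,L)$, and indeed the full $A_\infty$-subcategory on $\{L,L^w,D_p\}$ are computed by wrapping $L$ throughout $\partial_\infty X\setminus\f$ and by holomorphic disks that may travel anywhere in $X$. There is no reason for this subcategory to coincide with the corresponding one in your model $(X_0,\f_0)$, which knows nothing about the geometry of $X$ away from $p$. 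Your proposal to ``choose cofinal wrapping data whose positive isotopies remain near $p$'' cannot apply to $L$ itself: cofinal wrappings of $L$ must explore all of $\partial_\infty X\setminus\f$, and the confinement mechanism from Remark~\ref{sectorbdrycoords} only keeps disks away from $\partial X$, not inside an interior chart $U$. What you would actually need is a fully faithful functor from the local model into $\W(X,\f)$ carrying local copies of $L_0,L_0^w$ to the global $L,L^w$, and no such functor exists, since the local model does not see the rest of $L$.

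The paper takes a different route that works directly for arbitrary $L$ in arbitrary $(X,\f)$ and avoids any localization. It first proves a general statement (Proposition~\ref{cobordismtwisted}) that attaching an exact Lagrangian cobordism at infinity to $L_1\sqcup\cdots\sqcup L_n$ yields a twisted complex $[L_1\to\cdots\to L_n]$, using action filtrations and monotonicity estimates to control holomorphic disks as the cobordism is pushed toward infinity (so the argument tests against \emph{all} $K$, rather than confining to a chart). This is specialized (Proposition~\ref{handlecone}) to the relatively non-exact embedded Lagrangian $1$-handle along a short Reeb chord $\gamma$ from $L$ to $D_p$, giving an exact triangle $L\xrightarrow{\gamma}D_p\to L\#_\gamma D_p$. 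A purely geometric front-projection argument (Proposition~\ref{wrapishandle}) identifies $L\#_\gamma D_p$ with $L^w$, so $D_p\simeq\cone(a\colon L^w\to L)$ for \emph{some} $a$. To identify $a$ with the continuation map, the paper adds an auxiliary stop at a small positive pushoff of $\partial_\infty L^w$, tests the triangle against a further pushoff $L^{ww}$, and uses $HF^\bullet(L^{ww},D_p)=0$ to deduce that multiplication by $a$ is an isomorphism on $HF^\bullet(L^+,L)$; hence $a$ is a unit there and may be replaced by the continuation element without changing the cone. K\"unneth is not used anywhere in this argument.
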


Many previous authors have also found exact triangles in the Fukaya category associated to other geometric operations, e.g.\ Seidel's exact triangle of a Dehn twist \cite{seideldehntwist}, the exact triangle associated to Polterovich surgery (which was studied on Floer cohomology in \cite{fooochapter10}), and Biran--Cornea's work on 
exact triangles in the Fukaya category of $M$ associated to Lagrangian cobordims in $M \times \CC$ \cite{birancorneacobordism1}.

We will deduce the wrapping exact triangle from a more general `surgery at infinity' exact triangle, plus a geometric argument relating surgery
with a linking disk to wrapping past a stop.  The surgery at infinity applies to two 
Lagrangians $L,K\subseteq X$ together with a contact Darboux chart of a specific form (see Figure \ref{figurehandlebeforeafter}).
It attaches a \emph{relatively non-exact embedded Lagrangian $1$-handle} (defined in \S\ref{onehandlesection}) to $L\sqcup K$ at infinity to produce a Lagrangian $L\#_\gamma K$.
The notation $\gamma$ refers to an obvious `short' Reeb chord from $\partial_\infty L$ to $\partial_\infty K$ in the Darboux chart of the surgery; we call $\gamma$ the `center' of the surgery.
Topologically, $L\#_\gamma K$ is simply the boundary connect sum of $L$ and $K$ along $\gamma$.
Figure \ref{figurehandlecontact} gives a picture in dimension two.

\begin{figure}[ht]
\centering
\includegraphics[max width=.95\textwidth]{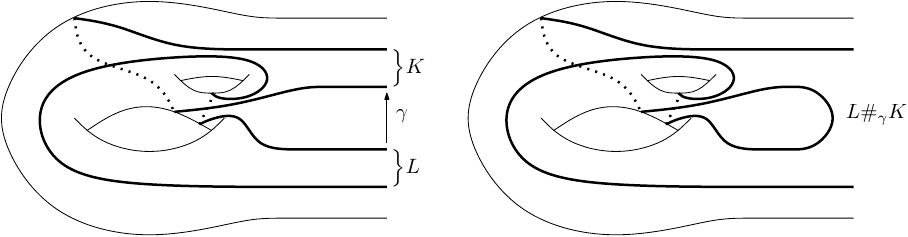}
\caption{Left: Two Lagrangians $L$ and $K$ together with a Reeb chord $\gamma$ from $\partial_\infty L$ to $\partial_\infty K$.  Right: The result $L\#_\gamma K$ of attaching a relatively non-exact embedded Lagrangian $1$-handle to $L\sqcup K$ with center $\gamma$.}\label{figurehandlecontact}
\end{figure}

\begin{figure}[ht]
\centering
\includegraphics[max width=.95\textwidth]{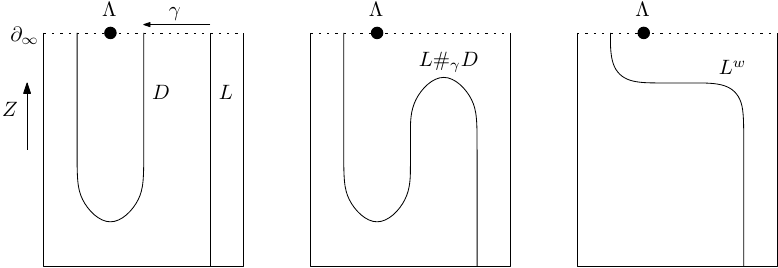}
\caption{Picture proof of Proposition \ref{wrapishandle} in dimension two.  This picture provides at least a moral proof in all dimensions by taking product with $(\CC^{n-1},\RR^{n-1})$.}\label{figureonehandlecancel}
\end{figure}

The relationship between surgery and wrapping in dimension two can be seen in Figure \ref{figureonehandlecancel}.  We show
in \S\ref{wrapishandlesection} the higher dimensional analogue:

\begin{proposition}\label{wrapishandle}
Let $X$ be a Liouville sector, $L\subseteq X$ a cylindrical Lagrangian, and $\Lambda\subseteq\partial_\infty X$ a Legendrian.
Let $L\leadsto L^w$ be a positive wrapping which passes through $\Lambda$ exactly once, transversely, at $p\in\Lambda$.
Then $L^w$ is isotopic to the result $L\#_\gamma D_p$ of attaching a relatively non-exact embedded Lagrangian $1$-handle to $L$ and the linking disk $D_p$.
\end{proposition}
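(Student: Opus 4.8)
The plan is to reduce everything to an explicit model computation in a Darboux neighborhood of the crossing point $p \in \Lambda$, and then to observe that the wrapping isotopy $L \leadsto L^w$ is, away from this neighborhood, the identity, so that the only change to $L$ happens locally. First I would set up standard contact coordinates near $p$ in $\partial_\infty X$ in which $\Lambda$ is the flat Legendrian $\{y = 0,\ z = 0\}$ (with Reeb direction $\partial_z$), and $\partial_\infty L$ appears near $p$ as a parallel Legendrian sheet displaced in the Reeb ($z$) direction by some small amount. Symplectizing and gluing in the Liouville collar, this gives a Darboux chart in $X$ in which both $L$ and the linking disk $D_p$ of $\Lambda$ at $p$ have completely explicit normal forms: $L$ is (a piece of) a cotangent fiber-type Lagrangian, $D_p$ is the obvious small Lagrangian disk whose boundary at infinity is the standard Legendrian unknot linking $\{y=0,z=0\}$, and there is an obvious short Reeb chord $\gamma$ from $\partial_\infty L$ to $\partial_\infty D_p$ realizing the defining Darboux chart of the $1$-handle surgery of \S\ref{onehandlesection}.

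The key geometric step is then the local statement: in this model, performing the positive wrapping of $L$ that drags $\partial_\infty L$ once transversally through $\Lambda$ at $p$ produces a Lagrangian which is Hamiltonian isotopic, rel a neighborhood of the boundary of the chart, to the boundary connect sum $L \#_\gamma D_p$. This is precisely the content suggested by Figure \ref{figureonehandlecancel}: as the sheet $\partial_\infty L$ is pushed through the Legendrian $\Lambda$ in the Reeb direction, the portion of $L$ that has crossed to the far side of $\Lambda$ is, together with the ``bridge'' connecting it back to the uncrossed part, exactly a parametrized copy of (the cocore-type disk) $D_p$ attached to $L$ along the short chord $\gamma$; the wrapping handle and the surgery handle are the same handle. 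I would verify this by writing both Lagrangians as graphs (or multi-sections) over a common base inside the chart and exhibiting the generating function / Hamiltonian interpolating between them, checking it is compactly supported in the interior of the chart. Since the wrapping $L \leadsto L^w$ can be taken to be supported in an arbitrarily small neighborhood of the path along which $\partial_\infty L$ is dragged through $p$ — shrinking it into the chosen Darboux chart before and after the crossing, using that the crossing is transverse and happens at a single time — the local isotopy extends by the identity to the global Hamiltonian isotopy $L^w \simeq L \#_\gamma D_p$ claimed.

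The main obstacle I anticipate is the bookkeeping in the local model: making precise the normal form for $L$, $D_p$, $\gamma$, and the wrapping vector field simultaneously so that ``push $\partial_\infty L$ through $\Lambda$ once'' and ``boundary-connect-sum along $\gamma$'' are manifestly the same operation, including matching the Liouville primitives (so that the resulting $L^w$ and $L \#_\gamma D_p$ agree as \emph{exact cylindrical} Lagrangians, not merely as smooth ones) and matching the relatively-non-exact handle data of \S\ref{onehandlesection}. Care is also needed to confirm the crossing can genuinely be localized: one must arrange that the positive wrapping $L \leadsto L^w$ can be chosen so that outside a small chart around $p$ it is a small cylindrical perturbation, which follows because a generic positive isotopy meets $\Lambda$ only by passing transversally through it (this is the mechanism underlying Lemma \ref{genericpositiveisotopy}), so the wrapping can be decomposed as ``wrap up to just before the crossing, cross inside the chart, wrap afterwards,'' with the first and last pieces not affecting the isotopy class relative to $D_p$. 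Once the local picture is pinned down, the globalization is routine.
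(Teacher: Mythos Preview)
Your proposal is correct and follows essentially the same strategy as the paper: reduce to a single local model near the crossing point and verify the identity there. The paper's execution differs only in presentation: rather than writing both Lagrangians as graphs and searching for an interpolating Hamiltonian, it works entirely in the front projection framework of \S\ref{secfrontprojections}, superimposing the explicit front pictures of $D_p$ (Figure~\ref{figurelinkingdisk}) and of the $1$-handle (Figure~\ref{figureonehandlesecond}) to obtain a front for $L\#_\gamma D_p$ (Figure~\ref{figurezeroonehandles}), and then observing that a single parameterized Legendrian Reidemeister~I move transforms this into the front for $L^w$. This has the advantage that the primitives are automatically tracked by the $z$-coordinate of the front, so your anticipated bookkeeping obstacle (matching exact structures and the relatively-non-exact handle data) dissolves by construction; your generating-function approach would work too, but amounts to redoing in coordinates what the front pictures already encode.
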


Finally, the surgery triangle:

\begin{proposition}[Surgery exact triangle]\label{handlecone}
Let $(X, \f)$ be a stopped Liouville sector.
Let $L,K\subseteq X$ be disjoint exact Lagrangians (disjoint from $\f$ at infinity), and let $L\#_\gamma K$ be the result of attaching (in the complement of $\f$) a relatively non-exact embedded Lagrangian $1$-handle to $L$ and $K$ with center $\gamma$.
There is an exact triangle in $\W(X,\f)$:
\begin{equation}
L \xrightarrow\gamma K \to L \#_\gamma K \to.
\end{equation}
\end{proposition}

The existence of some triangle with the above terms follows from an action filtration argument, which we make in rather 
greater generality (see Proposition \ref{cobordismtwisted}), plus some further arguments to identify the first map as $\gamma$. 
The wrapping exact triangle (Theorem \ref{wrapcone}) is established by combining Propositions \ref{wrapishandle} and \ref{handlecone},
with an additional argument to identify the map $L^w \to L$ as the continuation map.

\subsection{Generation by cocores and linking disks}

An important problem in Floer theory is to find objects which generate the Fukaya category.
In \S\ref{generationsection}, we show how to prove a number of different generation results using the wrapping exact triangle.

\begin{theorem}[Generation by cocores and linking disks]\label{generation}
Let $(X,\f)$ be a stopped Weinstein manifold with $\f$ mostly Legendrian.
Suppose that the cocores of the critical handles of $X$ are properly embedded and disjoint from $\f$ at infinity.
Then $\W(X,\f)$ is generated by the cocores of the critical handles and the linking disks of $\f^\crit$.
\end{theorem}

The linking disk $D_p$ at $p\in\f^\crit$ varies continuously in $p$, and thus 
its isomorphism class depends only on the connected component of $p$ in $\f^\crit$.
Thus we need only take one such linking disk for each component of $\f^\crit$ in Theorem \ref{generation}.

The case of Theorem \ref{generation} in which $\f$ admits a Weinstein ribbon (compare Remark \ref{ribbonhard}) was recently proven independently by Chantraine--Dimitroglou Rizell--Ghiggini--Golovko \cite{cdrgggeneration}.%
\footnote{The results of \cite{cdrgggeneration} are phrased in the equivalent (by \eqref{sectorstopequivalence} and \eqref{stopcoreequivalence}) language of wrapped Fukaya categories of \emph{Weinstein sectors}, which are Liouville sectors $X$ whose convexification $\bar X$ and symplectic boundary $F$ are both (up to deformation) Weinstein.
They show that $\W(X)$ is generated by the cocores of $\bar X$ together with the stabilized cocores of $F$, which by our discussion in \S\ref{cocorediskkunneth} coincide with the linking disks to the critical part of the core $\f = \cc_F$ at infinity.}
This generation result for Weinstein manifolds has long been expected (see Bourgeois--Ekholm--Eliashberg \cite{bourgeoisekholmeliashberg} and the discussion beneath \cite[Theorem 1.1]{abouzaidcriterion}), however it evaded proof for some time.

Let us sketch the proof of Theorem \ref{generation}.  Begin with the K\"unneth embedding
\begin{align}
\W(X,\f)&\hookrightarrow\W(X\times\CC,(\cc_X\times\{\pm\infty\})\cup(\f\times\RR)),\\
L&\mapsto L\tildetimes i\RR,
\end{align}
and the geometric observation that the images of the cocores and linking disks under this functor are precisely the linking disks of the product stop $(\cc_X\times\{\pm\infty\})\cup(\f\times\RR)$.
It therefore suffices to show that $L\tildetimes i\RR$ is generated by the linking disks of this product stop.
By the wrapping exact triangle Theorem \ref{wrapcone}, this will be the case as long as $L\tildetimes i\RR$ can be isotoped through the product stop to a zero object.
For such an isotopy, we can simply take (a generic perturbation of) the (cylindrized) product of $L$ with an isotopy of $i\RR$ inside $(\CC,\{\pm\infty\})$ which passes one end through $+\infty\in\partial_\infty\CC$ to obtain a zero object of $\W(\CC,\{\pm\infty\})$.

In fact, this argument can be made to work under weaker hypotheses, giving the following result:

\begin{theorem}[Generation by generalized cocores]\label{generationpractice}
Let $(X,\f)$ be a stopped Liouville manifold whose relative core $\cc_{X,\f}:=\cc_X\cup(\f\times\RR)$ is mostly Lagrangian.
For every component of $\cc_{X,\f}^\crit$, fix a `generalized cocore': an exact cylindrical Lagrangian $L\subseteq X$ intersecting $\cc_{X,\f}$ exactly once, transversally, somewhere in the given component of $\cc_{X,\f}^\crit$.
Then $\W(X,\f)$ is generated by these generalized cocores.
\end{theorem}

Observe that Theorem \ref{generationpractice} is indeed a generalization of Theorem \ref{generation}: the hypotheses of Theorem \ref{generation} imply that the relative core $\cc_{X,\f}$ is mostly Lagrangian, and we may take the cocores and the linking disks as the generalized cocores.
Note that the existence of generalized cocores is a hypothesis of Theorem \ref{generationpractice}, not a conclusion.%
\footnote{We do not know how to construct (in general) a generalized cocore associated to a smooth Lagrangian point $p\in\cc_{X,\f}$, or even the corresponding object of $\W(X,\f)$.
However, the corresponding ``once stabilized'' object in $\W((X,\f)\times(\CC_{\Re\geq 0},\infty))$ is easy to define: it is simply the Lagrangian linking disk at $p\times\infty$.}
A stopped Liouville manifold $(X,\f)$ satisfying the hypotheses of Theorem \ref{generationpractice} will be called \emph{weakly Weinstein}.
Note that if $X$ is Weinstein and $\f$ is mostly Legendrian, then $(X,\f)$ is weakly Weinstein after small perturbation.

Theorem \ref{generationpractice} is useful since many naturally arising (stopped) Liouville manifolds are weakly Weinstein but not Weinstein.  For example, this is true for a cotangent bundle with a mostly Legendrian stop, or an affine algebraic variety equipped with a non-Morse plurisubharmonic function.
While these usually can be perturbed to become Weinstein, it is often more convenient to deal directly with the given Liouville vector field, e.g.\ because it is more explicit or symmetric.

\begin{example}[$\W(T^\ast Q)$ is generated by fibers]\label{cotangentgeneration}
Let $Q$ be a compact manifold-with-boundary.
The cotangent bundle $T^*Q$ is a Liouville sector, whose associated stopped Liouville manifold may be described as $(T^\ast Q^\circ,Z_{T^\ast Q^\circ}+\pi^\ast V)$ where $V$ is a complete vector field on $Q^\circ$ supported near the boundary and given in collar coordinates $(-\infty,0]\times\partial Q$ by $\varphi(t)\partial_t$ for $\varphi$ supported near zero ($\pi$ denotes the tautological lift from vector fields on $Q^\circ$ to Hamiltonian vector fields on $T^\ast Q^\circ$).
The relative core is simply $Q^\circ$ itself, and any cotangent fiber is a generalized cocore.
Thus Theorem \ref{generationpractice} implies that $\fiber\in\W(T^\ast Q)$ (or rather one fiber over each connected component of $Q$) generates.  In the case $Q$ has no boundary, this result is due originally to Abouzaid \cite{abouzaidcotangent}.
\end{example}

Another generation result which follows from the wrapping exact triangle is the following:

\begin{theorem}\label{halfplanegeneration}
Let $F$ be a Liouville manifold, and let $\Lambda\subseteq\partial_\infty(F\times\CC_{\Re\geq 0})^\circ$ be compact and mostly Legendrian.
Then the linking disks to $\Lambda^\crit$ generate $\W(F\times\CC_{\Re\geq 0},\Lambda)$.
\end{theorem}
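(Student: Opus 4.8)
The plan is to prove the statement one object at a time, using the wrapping exact triangle (Theorem~\ref{wrapcone}) to pay for dragging a Lagrangian across $\Lambda^\crit$ and exploiting the geometry of the half-plane factor $\CC_{\Re\geq0}$, which provides a direction at infinity along which a Lagrangian may be wrapped past any compact stop and eventually off itself. Set $X:=F\times\CC_{\Re\geq0}$, let $\D\subseteq\W(X,\Lambda)$ be the full pre-triangulated subcategory generated by the linking disks to the components of $\Lambda^\crit$, and fix an arbitrary object $L$ of $\W(X,\Lambda)$; the goal is to show $L\in\D$.

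First I would choose a generic positive isotopy $\{L_t\}_{t\in[0,T]}$ of $L=L_0$ inside $X$ whose effect at infinity is to drift $\partial_\infty L$ steadily along the ideal boundary arc of the $\CC_{\Re\geq0}$-factor, toward the corner of $X$ where $\partial X$ reaches infinity. Since $\Lambda$ is compact it is disjoint from some neighborhood $\mathcal E$ of that corner, so the isotopy may be arranged to carry $\partial_\infty L$ into $\mathcal E$ after finite time and to remain there afterwards; and once $\partial_\infty L_T\subseteq\mathcal E$, a further cofinal positive wrapping, which takes place entirely in $\mathcal E$ and so remains disjoint from $\Lambda$, displaces $L_T$ from itself exactly as in the model case of $\W(\CC_{\Re\geq0})$, giving $L_T\cong 0$ in $\W(X,\Lambda)$. (For $\Lambda=\varnothing$ this step is the whole argument and records the degenerate case of the theorem, $\W(F\times\CC_{\Re\geq0})=0$.) By Lemma~\ref{genericpositiveisotopy} the isotopy can be taken to meet $\Lambda$ only by crossing $\Lambda^\crit$ transversally, necessarily in the positive direction as the isotopy is positive; and since $\Lambda$ is compact and $\partial_\infty L$ is properly embedded, the crossing locus in $\partial_\infty L\times[0,T]$ is a compact zero-manifold, so generically there are finitely many crossing times $0<t_1<\dots<t_m<T$, each at a single point $p_i\in\Lambda^\crit$.

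The rest is formal. On each complementary interval (with $t_0:=0$, $t_{m+1}:=T$) the isotopy restricts to a positive isotopy disjoint from $\Lambda$, hence induces an isomorphism $L_{t_{i-1}^{+}}\xrightarrow{\sim}L_{t_i^{-}}$ in $\W(X,\Lambda)$; and at each crossing, Theorem~\ref{wrapcone} gives an exact triangle $L_{t_i^{+}}\to L_{t_i^{-}}\to D_{p_i}\xrightarrow{[1]}$, so $L_{t_i^{-}}$ lies in the pre-triangulated subcategory generated by $L_{t_i^{+}}$ and $D_{p_i}$. Concatenating these statements, $L=L_0$ lies in the pre-triangulated subcategory generated by $L_T$ together with $D_{p_1},\dots,D_{p_m}$; since $L_T\cong 0$ and each $D_{p_i}$ is a linking disk to $\Lambda^\crit$, this yields $L\in\D$, and hence $\W(X,\Lambda)=\D$ as $L$ was arbitrary. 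The substantive point, and the part I expect to be the main obstacle, is the construction of the escaping isotopy --- in particular the verification that a Lagrangian pushed into the corner region $\mathcal E$ is a zero object, equivalently a careful proof that $\W(F\times\CC_{\Re\geq0})=0$; this comes down to understanding the wrapping (Reeb) flow on $(\partial_\infty(F\times\CC_{\Re\geq0}))^\circ$ near the corner of $\partial X$ well enough to displace any properly embedded Legendrian there from itself by a cofinal positive wrapping. Given that, the transversality and finiteness of the crossings follow routinely from Lemma~\ref{genericpositiveisotopy} and properness of $\partial_\infty L$.
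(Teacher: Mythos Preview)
Your proposal is correct and follows essentially the same strategy as the paper's proof: push $L$ by a positive isotopy to a zero object, perturb via Lemma~\ref{genericpositiveisotopy} so that the isotopy only meets $\Lambda$ by passing transversally through $\Lambda^\crit$, and apply the wrapping exact triangle Theorem~\ref{wrapcone} at each crossing. The paper's proof is a two-sentence version of exactly this argument.

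The one place where the paper is more concrete is your anticipated ``main obstacle'': rather than pushing toward the corner and arguing about displacement in a neighborhood $\mathcal E$, the paper simply invokes Lemma~\ref{halfplanevanish}, whose proof uses the explicit Hamiltonian $(\Re)^2$ (positive and linear at infinity for the radial Liouville form on $\CC$) to produce a global positive isotopy on $F\times\CC_{\Re\geq 0}$ which eventually displaces any Lagrangian from any other. This gives both the vanishing $\W(F\times\CC_{\Re\geq 0})=0$ and the displacing isotopy in one stroke, and since $\Lambda$ is compact, the tail of this isotopy stays disjoint from $\Lambda$ so the endpoint is indeed zero in $\W(F\times\CC_{\Re\geq 0},\Lambda)$.
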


Note that we \emph{do not} assume that $F$ is Weinstein in Theorem \ref{halfplanegeneration}.
By combining Theorems \ref{generation} and \ref{halfplanegeneration}, we also derive the following generation result for Fukaya--Seidel categories%
\footnote{From our perspective, the Fukaya--Seidel category is by definition the partially wrapped Fukaya category of a particular sort of stopped Liouville manifold, however existing definitions \cite{seidelbook,maydanskiyseidel,girouxpardon,seidellefschetzvi} are technically somewhat different.  In \S\ref{fscomparison}, we give a comparison between these definitions in one particular simplified setting.}
of Lefschetz fibrations%
\footnote{The references \cite{seidelbook,maydanskiyseidel,girouxpardon,seidellefschetzvi} also detail various technically differing notions of Lefschetz fibrations.  For us, a Lefschetz fibration is by definition the result of attaching critical Weinstein handles to a product $F\times\CC_{\Re\geq 0}$ or equivalently $F\times(\CC,\{-\infty\})$ ($F$ a Liouville manifold termed the `fiber') along the ordered Legendrian lifts of exact Lagrangian spheres $V_1,\ldots,V_k\subseteq F$ (termed the `vanishing cycles').  The cocores of these Weinstein handles are then called the `Lefschetz thimbles'.  For a comparison of this definition with other models, see \cite[\S 6.2]{girouxpardon}.}
(which should be compared to earlier results of Seidel \cite{seidelbook} and Biran--Cornea \cite{birancornealefschetz}; see also Corollary \ref{lefschetzgenerationII} below):

\begin{corollary}[Generation by Lefschetz thimbles]\label{lefschetzgeneration}
Let $\pi:\bar X\to\CC$ be a Lefschetz fibration with Weinstein fiber $F$ with core $\f$.
The Fukaya--Seidel category $\W(\bar X,\f\times\{-\infty\})$ is generated by the Lefschetz thimbles.
\end{corollary}

The above generation results allow us to deduce in many cases that the K\"unneth embedding is a \emph{pre-triangulated equivalence} 
(meaning fully faithful and image generates; compare with `quasi-equivalence' = fully faithful and essentially surjective, 
and `Morita equivalence' = fully faithful and image split-generates).
In particular, since being weakly Weinstein is closed under taking products, and products of generalized cocores are generalized cocores, we have:

\begin{corollary}[Surjectivity of K\"unneth]\label{weinsteinkunneth}
If $(X,\f)$ and $(Y,\g)$ are weakly Weinstein, then the K\"unneth embedding $\W(X,\f)\otimes\W(Y,\g)\xrightarrow\sim\W((X,\f)\times(Y,\g))$ is a pre-triangulated equivalence.
\end{corollary}

Corollary \ref{weinsteinkunneth} provides a potential path towards exhibiting Liouville manifolds which are not Weinstein: if $\W(X\times Y)$ is not generated by 
(cylindrizations of) 
product Lagrangians, then at least one of $X$ or $Y$ cannot be deformed to be Weinstein, or even weakly Weinstein.
In particular, if $\W(X\times T^\ast[0,1])$ is not generated by $L\tildetimes\fiber$ for $L\subseteq X$, then $X$ is not Weinstein.

Since the diagonal bimodule is representable by Proposition \ref{kunnethdiagonal}, we can apply Corollary \ref{weinsteinkunneth} to it to obtain the following categorical result (compare \cite{ganatrawrapcy}):

\begin{corollary}[Categorical smoothness]\label{homologicallysmooth}
If $(X,\f)$ is weakly Weinstein and $\f$ admits a ribbon, then $\W(X,\f)$ is smooth.
\end{corollary}

In fact, the hypothesis that $\f$ admit a ribbon is unnecessarily strong: all we actually need is its consequence that $\f$ can be positively displaced from itself (see Lemma \ref{diagonalpushoff}).

\subsection{Stop removal}

The real power of the partially wrapped Fukaya category comes from the ability to relate the partially wrapped Fukaya categories associated to different stops.
In general, for  $X$ a Liouville manifold (or sector) with two stops $\f\subseteq\g\subseteq(\partial_\infty X)^\circ$, there is a `stop removal' functor
$\W(X, \g) \to \W(X, \f)$.
Essentially by definition,  
this morphism factors through the localization of $\W(X, \g)$ along continuation
maps for positive isotopies through $\g \setminus \f$; one can show (Lemma \ref{lem: general stop removal}) the induced functor from the localization is fully faithful if any Lagrangian $L \subset X$ disjoint from $\g$ admits a cofinal sequence of wrappings in $(X,\f)$ whose endpoints are disjoint from the larger stop $\g$ (as holds in the case $\g \setminus \f$ is mostly Legendrian by general position arguments). 
It is not clear how one would compute this localization in general, but (as we explain in \S\ref{wet}) when $\g \setminus \f$ is mostly Legendrian, 
it follows easily from general position arguments and the wrapping exact triangle that:

\begin{theorem}[Stop removal]\label{stopremoval}
Let $X$ be a Liouville manifold (or sector) with two stops $\f\subseteq\g\subseteq(\partial_\infty X)^\circ$, such that $\g\setminus\f\subseteq(\partial_\infty X)^\circ\setminus\f$ is mostly Legendrian.
Then pushforward induces a quasi-equivalence
\begin{equation}\label{stopremovalqe}
\W(X,\g)/\D\xrightarrow\sim\W(X,\f),
\end{equation}
where $\D$ denotes the collection of linking disks of $(\g\setminus\f)^\crit$.
\end{theorem}

This statement generalizes (but the proof does not use) two prior results: Abouzaid--Seidel \cite{abouzaidseidelunpublished} proved
that for Lefschetz fibrations, the wrapped Fukaya category of the total space is a localization of the Fukaya--Seidel category, and Sylvan \cite{sylvanthesis} showed (under certain hypotheses) that the partially wrapped Fukaya category of a Liouville pair localizes to give the (fully) wrapped Fukaya category.
Note that both these results concern removing an entire connected component of a stop,  whereas in Theorem \ref{stopremoval} it is not required that $\f$ be a connected component of $\g$.
This gives a significant added flexibility which is important in applications, being in particular essential in the proof of Theorem \ref{weinsteindescent} and in \cite{gpswrappedconstructible}.
Also note that Theorem \ref{stopremoval} does not require the existence of a ribbon (compare Remark \ref{ribbonhard}).

\begin{example}
For a Weinstein manifold $X$, consider again the pair $(X \times \CC_{\Re\geq 0}, \cc_X \times \infty)$ (compare with the proof of Theorem \ref{generation}).
Applying Theorem \ref{stopremoval} gives $\W(X \times \CC_{\Re\geq 0}, \cc_X \times \infty) / \D = \W(X \times \CC_{\Re\geq 0}) = 0$.
Combined with the  K\"unneth stabilization functor, we learn that the cocores \emph{split}-generate $\W(X)$.
Getting \emph{generation} as in Theorem \ref{generation} requires the more controlled argument via the wrapping exact triangle given above.
\end{example}

\begin{example}[Calculation of $\W(T^\ast S^1)$]\label{cylinderstop}
Let $X = T^* S^1$ and let $\Lambda$ be the co-$0$-sphere over some fixed point $x \in S^1$.
Let $L$ be the cotangent fiber over a different point, and let $L(1)$ be $L$ `wrapped once around'.
The stop prevents any nontrivial wrapping at infinity, and so 
one has $HW^\bullet(L, L) = \ZZ = HW^\bullet(L(1), L(1))$, and also $HW^\bullet(L(1), L)_{T^\ast S^1,\Lambda} = 0$ (they are disjoint after suitable wrapping) and $HW^\bullet(L, L(1))_{T^\ast S^1,\Lambda} = \ZZ^{\oplus 2}$, 
generated by one trajectory at infinity in each component of $\partial_\infty T^*S^1$.
That is, $(L, L(1))$ is an `exceptional collection'.

The objects $L, L(1)\in\W(T^\ast S^1,\Lambda)$ generate: $L$ is the cocore to
the zero section, and (e.g.\ by Theorem \ref{wrapcone}) the cones on the generating
morphisms of $HW^\bullet(L, L(1))_{T^\ast S^1,\Lambda} $ are the linking disks to the stop. 

From these facts one deduces mirror symmetry $\W(T^\ast S^1,\Lambda) \cong \Perf(\bullet \rightrightarrows \bullet) \cong\Coh(\PP^1)$,
by matching generating exceptional collections.  Note that this induces: 
\begin{align}
L \to L(1) \to D_1 \to &\qquad \iff \qquad \OO \to \OO(1) \to \OO_0 \xrightarrow{[1]}\\
L \to L(1) \to D_2 \to &\qquad \iff \qquad \OO \to \OO(1) \to \OO_\infty \xrightarrow{[1]}
\end{align}
where $D_1,D_2\in\W(T^\ast S^1,\Lambda)$ are the linking arcs around $\Lambda$.

As is well known, $HW^\bullet(L, L)_{T^\ast S^1} = \ZZ[t, t^{-1}]$;
however, a direct computation requires some infinite process or picture.
One can instead argue by stop and removal.
We already saw that the linking disks (arcs in this case) are sent under the above isomorphism to the skyscraper sheaves at $0,\infty\in\PP^1$.
Thus by stop removal we have
\begin{equation}
\W(T^\ast S^1) = \W(T^\ast S^1,\Lambda) / \D  \cong\Coh(\PP^1)/\langle \OO, \OO_\infty \rangle = 
\Coh (\PP^1 \setminus \{0, \infty\}) = \Perf \ZZ[t, t^{-1}].
\end{equation}
Following $L$ (or $L(1)$) along this sequence of equivalences shows that it goes to the object $\ZZ[t, t^{-1}] \in \Perf \ZZ[t, t^{-1}]$, and hence we can conclude $HW^\bullet(L, L)_{T^\ast S^1} \cong \ZZ[t, t^{-1}]$.
(The reader may be more familiar with this example in its incarnation as the Lefschetz fibration / Landau--Ginzburg model $z + \frac{1}{z}: \CC^\times \to \CC$.)

Similar reasoning about surfaces in general appears previously in Lekili--Polishchuk \cite{lekilipolishchuk}.
\end{example}

\begin{example}[Fukaya--Seidel categories]
Let $\pi:\bar X\to\CC$ be a Liouville Landau--Ginzburg model 
with Weinstein fiber $F$ with core $\f$.
Theorem \ref{stopremoval} implies that the functor
\begin{equation}
\W(\bar X,\f\times\{-\infty\})\to\W(\bar X)
\end{equation}
(from the Fukaya--Seidel category of $\pi$ to the wrapped Fukaya category of the total space)
is precisely localization at the linking disks of $\f\times\{-\infty\}$.
By Theorem \ref{generation}, the full subcategory they span is also the essential image of the functor $\W(F)\to\W(\bar X,\f\times\{-\infty\})$ obtained by composing the K\"unneth stabilization functor \eqref{kunnethstabsector} with pushforward under the canonical embedding $F\times T^\ast[0,1]\to(\bar X,\f\times\{-\infty\})$ near the stop.
This is comparable to, but not quite the same as, 
the localization presentation of the wrapped Fukaya category obtained in Abouzaid--Seidel's work in the case of Lefschetz fibrations \cite{abouzaidseidelunpublished}.
For a concrete example of such a situation in mirror symmetry, see Keating \cite{keatingcusp}.

If the critical locus of $\pi$ is compact, then $F$ is the page of an open book decomposition of $\partial_\infty\bar X$, and hence the boundary at infinity of the associated Liouville sector is, up to deformation, $F_0\times[0,1]$ by \cite[Lemma 2.18]{gpssectorsoc}.  By \cite[Lemma 3.44]{gpssectorsoc}, this implies that 
$\W(\bar X,\f\times\{-\infty\})$ is \emph{proper} in the sense of non-commutative geometry (i.e.\ has finite-dimensional morphism spaces).
Such a fibration $\pi$ thus provides a geometrically motivated categorical compactification 
of $\W(\bar X)$:
the Fukaya--Seidel category $\W(\bar X,\f\times\{-\infty\})$ is proper and localizes to give $\W(\bar X)$.
The flexibility of adding stops at will allows for more general, and partial, compactifications.
\end{example}

Another stop removal result somewhat orthogonal to Theorem \ref{stopremoval} is the fact that removing a contact stop is fully faithful (which follows immediately from wrapping using a Reeb vector field which is tangent to the stop being removed):

\begin{proposition}[Removing a contact stop]\label{removecontact}
Let $X$ be a Liouville manifold (or sector) with two stops $\f\subseteq\g\subseteq(\partial_\infty X)^\circ$, such that $\g\setminus\f\subseteq(\partial_\infty X)^\circ\setminus\f$ is a contact submanifold.
Then pushforward $\W(X,\g)\to\W(X,\f)$ is fully faithful.
\end{proposition}

Let us call a Liouville manifold $X$ \emph{inessential} iff, possibly after deforming to a different Liouville form, the core $\cc_X\times 0$ inside the contactization $(X\times\RR,\lambda+dt)$ is contained inside a closed contact submanifold.
For example, we shall see in Lemma \ref{stabinessential} that $W\times\CC$ is inessential for any Liouville manifold $W$.
Now Proposition \ref{removecontact} implies that removing an inessential hypersurface is also fully faithful:

\begin{proposition}[Removing an inessential hypersurface]\label{removeinessential}
Let $(X,\f)$ be a stopped Liouville sector and $H_0\subseteq(\partial_\infty X)^\circ\setminus\f$ an inessential Liouville hypersurface.
Then pushforward $\W(X,\f\sqcup H_0)\to\W(X,\f)$ is fully faithful.
\end{proposition}

We derive from this the following two corollaries, the latter of which is a strengthening of Corollary \ref{lefschetzgeneration}.

\begin{corollary}\label{inessentialvanish}
If $X$ is inessential then $\W(X)=0$.
\end{corollary}

\begin{corollary}[Generation by Lefschetz thimbles]\label{lefschetzgenerationII}
Let $\pi:\bar X\to\CC$ be a Lefschetz fibration with Liouville fiber $F$ with core $\f$.
The Fukaya--Seidel category $\W(\bar X,\f\times\{-\infty\})$ is generated by the Lefschetz thimbles.
\end{corollary}

\subsection{Stopped inclusions}

We introduce a geometric notion of `forward/backward stopped inclusions' of stopped Liouville sectors in \S\ref{stoppedinclusionssection}, which provides another useful source of fully faithful pushforward functors.

Since wrapped Floer cohomology is computed by wrapping only one factor, to ensure that a pushforward functor $\W(X,\f)\to\W(X',\f')$ is fully faithful, it is enough to ensure that when any Lagrangian $L \subseteq X$ disjoint from $\f$ is wrapped inside $X'$ stopped at $\f'$, 
then once it leaves $X$ it never returns (at least for a cofinal collection of wrappings).
We formulate geometric conditions on inclusions of Liouville sectors, called being \emph{(tautologically) forward stopped} (see Definitions \ref{tautforwardstoppeddef} and \ref{forwardstoppeddef}), which guarantee this property of wrapping and hence that the associated pushforward functor is fully faithful (Corollary \ref{stoppedff}).
This notion naturally depends only on the contact geometry of the boundary at infinity.

The notion of forward/backward stopped inclusions is a crucial ingredient in the proof of the homotopy pushout/colimit formulas below.
It is also an important ingredient (see \S\ref{viterborestrictionsection}) in Sylvan's proposal \cite{sylvantalk} to define (and generalize) Abouzaid--Seidel's Viterbo restriction functor \cite{abouzaidseidel} in terms of stop removal functors.

\subsection{Homotopy pushout formula}\label{gluingintrosec}

In \S\ref{pushoutsec}  we use stop removal and forward stopped inclusions to prove certain homotopy pushout formulae for wrapped Fukaya categories.
One such result (also observed by Sylvan \cite{sylvantalk}) is:

\begin{theorem}[Homotopy pushout formula]\label{mvthm}
Let $X=X_1\cup X_2$ be a Liouville sector written as the union of two Liouville sectors $X_1$ and $X_2$ meeting along a hypersurface $X_1\cap X_2$ inside $X$ disjoint from $\partial X$.
Writing a neighborhood of this hypersurface as $F\times T^\ast[0,1]$, suppose in addition that $F$ is Weinstein (up to deformation).
Let $\rr\subseteq(\partial X)^\circ$ be a stop disjoint from $\partial_\infty(X_1\cap X_2)$, and let $\rr_i:=\rr\cap(\partial_\infty X_i)^\circ$.
Then the induced diagram of $\ainf$-categories
\begin{equation}\label{htpypushout}
\begin{tikzcd}
\W(F)\ar{r}\ar{d}&\W(X_1,\rr_1)\ar{d}\\
\W(X_2,\rr_2)\ar{r}&\W(X,\rr)
\end{tikzcd}
\end{equation}
induces a fully faithful functor
\begin{equation}\label{htpycoliminmvthm}
\hocolim\bigl(\W(X_2,\rr_2)\leftarrow\W(F)\to\W(X_1,\rr_1)\bigr)\hookrightarrow\W(X,\rr)
\end{equation}
(so we say \eqref{htpypushout} is an \emph{almost homotopy pushout}).
If, in addition, $X$ is a Liouville manifold, (the convexifications of) $X_i$ are Weinstein (up to deformation), and the $\rr_i$ are mostly Legendrian, then \eqref{htpycoliminmvthm} is a pre-triangulated equivalence (so we say \eqref{htpypushout} is a \emph{homotopy pushout}).
\end{theorem}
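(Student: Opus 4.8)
The plan is to follow the strategy outlined in the introduction: pass to an auxiliary \emph{stopped} model in which the pieces embed fully faithfully and orthogonally, recognize the stopped glued category as a homotopy pushout of the stopped pieces, and then transfer the conclusion back along stop removal. As a preliminary reduction, after enlarging $X_1$ and $X_2$ by collars along the separating hypersurface — which changes none of the wrapped categories up to canonical equivalence — we may assume $X_1\cap X_2$ is a genuine (codimension-zero) Liouville sector with product coordinates $X_1\cap X_2=F\times T^\ast[0,1]$, disjoint from $\partial X$. Since $F$ is Weinstein and $\W(T^\ast[0,1])\simeq\Perf\ZZ$ (generated by the cotangent fiber, Example \ref{cotangentgeneration}), Theorem \ref{kunneth} together with Corollary \ref{weinsteinkunneth} provides a pre-triangulated equivalence $\W(F)\xrightarrow{\sim}\W(X_1\cap X_2)$ compatible with the pushforwards to $\W(X_1,\rr_1)$ and $\W(X_2,\rr_2)$, so the corner $\W(F)$ of \eqref{htpypushout} may be identified with $\W(X_1\cap X_2)$.

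Now introduce an auxiliary stop $\sigma$ along the core of the symplectic boundary of the separating hypersurface: concretely, $\sigma$ is a copy of the mostly Legendrian set $\cc_F$ placed over an interior point of the $[0,1]$-factor of $X_1\cap X_2=F\times T^\ast[0,1]$, so that adding it to $X$ produces the Liouville sector ``$X_{|X_1\cup X_2}$'' of the introduction. Since $\rr$ avoids $\partial_\infty(X_1\cap X_2)$ we have $\sigma\cap\rr=\varnothing$, and $\sigma$ is mostly Legendrian because $F$ is Weinstein, so Theorem \ref{stopremoval} gives $\W(X,\rr\cup\sigma)/\D=\W(X,\rr)$, where $\D$ — the small Lagrangian disks linking $\sigma$ — are, by the discussion in \S\ref{cocorediskkunneth}, precisely the K\"unneth-stabilized cocores of $F$; the same construction performed inside $X_i$ and inside $X_1\cap X_2$ yields $\W(X_i,\rr_i\cup\sigma)/\D_i=\W(X_i,\rr_i)$ and $\W(X_1\cap X_2,\sigma)/\D_0=\W(X_1\cap X_2)\simeq\W(F)$, with the collections $\D,\D_i,\D_0$ carried to one another by all the pushforward functors. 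The point of $\sigma$ is that once it is present, the inclusions $X_i\hookrightarrow X$ are (tautologically) forward stopped in the sense of Definitions \ref{tautforwardstoppeddef}--\ref{forwardstoppeddef}: a Lagrangian wrapped out of $X_i$ inside $X$ runs into $\sigma$ at the cut before it can reach the other side, so by Corollary \ref{stoppedff} the pushforwards $\W(X_i,\rr_i\cup\sigma)\to\W(X,\rr\cup\sigma)$ are fully faithful, with the images of $\W(X_1\setminus X_1\cap X_2)$ and $\W(X_2\setminus X_1\cap X_2)$ mutually orthogonal and $\W(X_1\cap X_2,\sigma)$ sitting as the common central piece. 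Feeding this orthogonality into the description of morphism complexes in a homotopy colimit of $\ainf$-categories shows that the induced functor
\begin{equation}\label{stoppedgluing}
\hocolim\bigl(\W(X_2,\rr_2\cup\sigma)\leftarrow\W(X_1\cap X_2,\sigma)\to\W(X_1,\rr_1\cup\sigma)\bigr)\longrightarrow\W(X,\rr\cup\sigma)
\end{equation}
is fully faithful, and it is moreover a pre-triangulated equivalence once one knows $\W(X,\rr\cup\sigma)$ is generated by the images of the three pieces.

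To descend, localize \eqref{stoppedgluing} at the compatible family of linking disks. Using that homotopy colimits commute with localizations, the target becomes $\W(X,\rr\cup\sigma)/\D=\W(X,\rr)$, while the source becomes $\hocolim(\W(X_2,\rr_2)\leftarrow\W(X_1\cap X_2)\to\W(X_1,\rr_1))=\hocolim(\W(X_2,\rr_2)\leftarrow\W(F)\to\W(X_1,\rr_1))$; since stop removal is effected by cofinally wrapping into $\sigma$, compatibly across the pieces, the localized functor is exactly \eqref{htpycoliminmvthm}, and it inherits full faithfulness from \eqref{stoppedgluing} (a fully faithful functor induces a fully faithful functor between quotients by corresponding thick subcategories). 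This proves the first assertion. For the second, assume $X$ is a Liouville manifold, the convexifications of the $X_i$ are Weinstein, and the $\rr_i$ are mostly Legendrian; then up to deformation $X$ is Weinstein — being assembled from the Weinstein sectors $X_i$ along $F\times T^\ast[0,1]$ with $F$ Weinstein — so $(X,\rr\cup\sigma)$ is a stopped Weinstein manifold with $\rr\cup\sigma$ mostly Legendrian, and Theorem \ref{generation} applies: $\W(X,\rr\cup\sigma)$ is generated by the cocores of its critical handles and the linking disks of $\rr\cup\sigma$. Each critical cocore lies in $X_1\setminus X_1\cap X_2$ or in $X_2\setminus X_1\cap X_2$, each linking disk of $\rr_i$ lies in $X_i$, and each linking disk of $\sigma$ lies in $X_1\cap X_2$ — so every generator is in the image of \eqref{stoppedgluing}, making it a pre-triangulated equivalence; localizing as above, \eqref{htpycoliminmvthm} is one as well.

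The step I expect to be the main obstacle is the construction and analysis of the auxiliary stop $\sigma$ underlying the second paragraph: one must choose $\sigma$ so that the inclusions $X_i\hookrightarrow X$ (and the corresponding inclusions of symplectic boundaries) become forward stopped — which requires controlling a cofinal family of wrappings in $(\partial_\infty X)^\circ$ near the separating hypersurface and checking that such wrappings limit into $\sigma$ rather than leaking across the cut — and so that the three families of linking disks $\D,\D_i,\D_0$ are carried to one another by all of the pushforward functors, which is exactly where the product coordinates $F\times T^\ast[0,1]$, the K\"unneth identification of the first paragraph, and the identification of linking disks with stabilized cocores of $F$ (\S\ref{cocorediskkunneth}) must all be matched precisely. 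By contrast, the reductions in the first paragraph and the homotopy-colimit/localization bookkeeping in the third are formal consequences of the results already established (Theorems \ref{kunneth}, \ref{stopremoval}, \ref{generation} and Corollary \ref{weinsteinkunneth}, together with standard facts about homotopy colimits of $\ainf$-categories).
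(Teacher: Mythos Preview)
Your overall strategy matches the paper's: add auxiliary stops near the splitting hypersurface, show the resulting stopped square is an almost homotopy pushout via Proposition~\ref{semiorthhocolim}, then descend via stop removal and Lemma~\ref{hocolimlocalcommute}. The gap is in the placement of $\sigma$. You put $\sigma$ over an interior point of the overlap $X_1\cap X_2=F\times T^*[0,1]$ and assert that $(X_i,\rr_i\cup\sigma)\hookrightarrow(X,\rr\cup\sigma)$ is forward stopped because wrapping ``runs into $\sigma$ at the cut before it can reach the other side.'' But $\sigma$ lies strictly \emph{inside} both enlarged $X_i$, not just outside the face along which $X_i$ meets its complement; a Lagrangian in $X_1$ sitting between $\sigma$ and $\partial X_1$ can wrap out into $X_2\setminus X_1$ without ever meeting $\sigma$, so forward stopping (Definition~\ref{forwardstoppeddef}) fails as stated. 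The same problem afflicts the embedding $\W(X_1\cap X_2,\sigma)\to\W(X_i,\rr_i\cup\sigma)$, whose full faithfulness you need for Proposition~\ref{semiorthhocolim} but never address; a stop in the interior of the overlap does nothing to prevent wrapping out through either of its boundary faces.

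The paper's proof (\S\ref{mvthmproofsec}) resolves exactly this by using \emph{two} stops, each at a single cotangent direction, placed \emph{outside} the overlap: with $X_1^+=\{t\le\tfrac47\}$ and $X_2^+=\{t\ge\tfrac37\}$, the stops are $F_0\times\{{+}\infty\cdot dt\}\times\{t=\tfrac27\}$ and $F_0\times\{{-}\infty\cdot dt\}\times\{t=\tfrac57\}$. The overlap $F\times T^*[\tfrac37,\tfrac47]$ thus carries no stop, and dragging each stop onto the adjacent boundary exhibits all four inclusions in the stopped square as tautologically \emph{backward} stopped, giving full faithfulness by Corollary~\ref{stoppedff}. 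The paper also verifies the remaining hypotheses of Proposition~\ref{semiorthhocolim} that you omit: that the left-orthogonal complements of the overlap inside each $\W(\check X_i^+,\rr_i)$ are generated by Lagrangians in the far regions $\{t\le\tfrac17\}$, $\{t\ge\tfrac67\}$ (via the wrapping exact triangle), and that these are mutually orthogonal in $\W(\check X,\rr)$. You correctly flagged the stop construction as the main obstacle; the asymmetric two-stop placement is precisely the device that makes both the stopping verification and the semi-orthogonality check go through.
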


The real content here is the full faithfulness, as the final statement about generation is simply an application of Theorem \ref{generation} (when $F$ and the convexifications of $X_i$ are all Weinstein up to deformation, it follows that $X$ is as well).
The proof of Theorem \ref{mvthm} consists of adding and removing a stop via Theorem \ref{stopremoval}, similar in spirit to Example \ref{cylinderstop}.
We sketched already the basic idea of the proof at the end of \S\ref{intro}.
The origin of the hypothesis that $F$ is Weinstein is the use of Theorems \ref{generation} and \ref{stopremoval} in the proof.

Theorem \ref{mvthm} provides some understanding of the effect of Weinstein handle attachment on the wrapped Fukaya category.
Indeed, Weinstein handle attachment is a special case  of the gluing operation in Theorem \ref{mvthm} (compare \cite[\S 3.1]{eliashbergweinsteinrevisited}).
We conclude: 

\begin{corollary}[Effect of Weinstein handle attachment]\label{handleeffect}
Let $(X,\f)$ be a stopped Liouville sector of dimension $2n$ obtained from a stopped Liouville sector $(X^\inn,\f^\inn)$ by attaching a Weinstein $k$-handle along an isotropic sphere $\Lambda^{k-1}\subseteq(\partial_\infty X^\inn)^\circ\setminus\f^\inn$.  Completing inside $X$ gives a map 
$\W(X^\inn,\f^\inn\cup\Lambda) \to \W(X,\f)$. 

In the subcritical case $k<n$, we have 
\begin{equation}
\W(X^\inn,\f^\inn) \xleftarrow{\sim} \W(X^\inn,\f^\inn\cup\Lambda) \hookrightarrow\W(X,\f)
\end{equation}

In the critical case $k=n>1$, there is an almost homotopy pushout
\begin{equation}\label{handlepushoutstatement}
\begin{tikzcd}
C_{-\bullet}(\Omega S^{n-1})\ar{r}\ar{d}&\W(X^\inn,\f^\inn\cup\Lambda)\ar{d}\\
\ZZ\ar{r}&\W(X,\f),
\end{tikzcd}
\end{equation}
implying, in particular, a quasi-isomorphism of $\ainf$-algebras
\begin{equation}\label{handlepushoutalgebras}
CW^\bullet(\cocore,\cocore)_{X,\f}=CW^\bullet(D,D)_{X^\inn,\f^\inn\cup\Lambda}\otimes_{C_{-\bullet}(\Omega S^{n-1})}\ZZ,
\end{equation}
where $D$ denotes the linking disk of $\Lambda$ and $\cocore\subseteq X$ denotes the cocore of the added handle.
For $k=n=1$, replace $C_{-\bullet}(\Omega S^{n-1})$ above with $\ZZ\sqcup\ZZ$, namely the disjoint union of two copies of the $\ainf$-category $\ZZ$ (a single object with endomorphism algebra $\ZZ$).
\end{corollary}

The fact that subcritical handle attachment does not change the wrapped Fukaya category was a folklore 
result (the closest results in the literature concern the closed string analogue symplectic cohomology \cite{cieliebakhandle}; see also \cite{ekholmng}).
The partially wrapped Floer cochains of the linking disk appearing in the case of critical handle attachment are conjectured 
\cite{sylvantalk,ekholm-lekili} to agree with the Legendrian contact homology of the attaching sphere; establishing this conjecture 
would identify our statement above with the surgery formula of Bourgeois--Ekholm--Eliashberg \cite[Remark 5.9]{bourgeoisekholmeliashberg}).

Let us also give some examples concerning surfaces: 

\begin{example}[Partially wrapped Fukaya categories of surfaces]
Let $\Sigma$ be any $2$-dimensional Liouville sector.
That is, $\Sigma$ is a surface-with-boundary with no compact components and no circle boundary components.
Choose any collection of arcs going from non-compact ends to non-compact ends dividing $\Sigma$ into `$A_2$' Liouville sectors (a disk minus three boundary punctures).
The wrapped Fukaya category of the $A_2$ sector is given by
\begin{equation}
\Tw\W(\CC,\{e^{2\pi ik/3}\}_{k=0,1,2})=\Perf(\bullet\to\bullet)
\end{equation}
(for example, this follows from Theorem \ref{wrapcone}).
The $A_2$ sectors overlap over $A_1$ sectors $T^\ast[0,1]$ with wrapped Fukaya category
\begin{equation}
\Tw\W(T^\ast[0,1])=\Tw\W(\CC,\{\pm\infty\})=\Perf(\bullet).
\end{equation}
Iterated applications of Theorem \ref{mvthm} thus yield a description of $\W(\Sigma)$ as a homotopy colimit of copies of these categories $\Perf(\bullet\to\bullet)$ and $\Perf(\bullet)$.

In anticipation this fact, said colimit was historically known as the `topological Fukaya category' 
of the surface and has been studied algebraically \cite{Dyckerhoff-Kapranov, Pascaleff-Sibilla}.
\end{example}

\begin{example}[Partially wrapped Fukaya categories of fibrations over surfaces]
Continuing the preceding example, suppose $\pi:X \to \Sigma$ is an exact symplectic (non-singular) fibration with Weinstein fiber $F$.
The same decomposition of $\Sigma$ pulled back to $X$ yields a description of $\W(X)$ as a homotopy colimit of copies of $\W(F)$ and $\W(F)\otimes\Perf(\bullet\to\bullet)$.

Finally, consider the case where $\pi$ is allowed to have singularities (e.g.\ a Lefschetz fibration).
We now choose arcs dividing $\Sigma$ into $A_2$ sectors containing no critical values and half-planes $\CC_{\Re\geq 0}$ each containing a single critical value.
This again yields a homotopy colimit presentation of $\W(X)$.
Note that in the case of a Lefschetz fibration, the pieces $X_\alpha$ resulting as inverse images of half-planes containing a single critical value satisfy $\W(X_\alpha)=\Perf(\bullet)$ by Corollary \ref{lefschetzgeneration} and observing that wrapping the thimble creates no new self intersections.
\end{example}

\subsection{Homotopy colimit formula}

In order to generalize the homotopy pushout formula (Theorem \ref{mvthm}) to $n$-element covers of 
Liouville sectors by Liouville sectors, we need to first identify the correct class of such covers: 

\begin{definition}[Sectorial covering]\label{admintrodef}
Let $X$ be a Liouville manifold-with-boundary (meaning it is exact and cylindrical at infinity).
A collection of cylindrical hypersurfaces $H_1,\ldots,H_n\subseteq X$ is called \emph{sectorial} iff their characteristic foliations are $\omega$-orthogonal over their intersections and there exist functions $I_i:\Nbd^ZH_i\to\RR$ (linear near infinity) satisfying $dI_i|_{\charfol(H_i)}\ne 0$ (equivalently, $X_{I_i}\notin TH_i$), $dI_i|_{\charfol(H_j)}=0$ (equivalently, $X_{I_i}\in TH_j$) for $i\ne j$, and $\{I_i,I_j\}=0$.
A covering of a Liouville manifold (or sector) $X$ by finitely many Liouville sectors $X_1,\ldots,X_n\subseteq X$ is called sectorial iff the collection of their boundaries $\partial X,\partial X_1,\ldots,\partial X_n$ is sectorial (here we understand $\partial X_i$ as the part of the boundary of $X_i$ not coming from $\partial X$, i.e.\ the boundary in the point set topological sense).
\end{definition}

Any sectorial collection of hypersurfaces $H_1,\ldots,H_n\subseteq X$ is necessarily mutually transverse, 
and all multiple intersections $H_{i_1}\cap\cdots\cap H_{i_k}$ are coisotropic (see \S\ref{admsubsec}).
Also note that a Liouville manifold-with-boundary 
$X$ is a Liouville sector iff $\partial X$ is sectorial.

\begin{example}\label{cotangentsectorial}
If $Q$ is a compact manifold-with-boundary and $Q_1,\ldots,Q_n\subseteq Q$ are codimension zero submanifolds-with-boundary whose boundaries are, together with $\partial Q$, mutually transverse, then $T^*Q_1,\ldots,T^*Q_n\subseteq T^*Q$ is sectorial.
\end{example}

Given a sectorial collection of hypersurfaces $H_1,\ldots,H_n\subseteq X$, there is an induced stratification of $X$ by strata $\bigcap_{i\in I}H_i\setminus\bigcup_{i\notin I}H_i$ for $I\subseteq\{1,\ldots,n\}$.
Each stratum is coisotropic and the symplectic reduction of its closure (not necessarily embedded in $X$) is a Liouville sector (with corners).
We say a sectorial covering is Weinstein iff (the convexifications of) each of these Liouville sectors is Weinstein 
(up to deformation, i.e.\ the Liouville form can be deformed by $df$ so that the associated Liouville flow is gradient-like).
This condition implies that both the convexification of $X$ and its symplectic boundary are Weinstein (see Lemma \ref{lem:morecovers}).

\begin{example}\label{cotangentstrataweinstein}
Continuing Example \ref{cotangentsectorial}, we note that the symplectic reductions of the strata of the covering $T^*Q_1,\ldots,T^*Q_n\subseteq T^*Q$ are simply the cotangent bundles of the strata of the covering $Q_1,\ldots,Q_n\subseteq Q$.
In particular, they are all Weinstein (after deformation).
\end{example}

In \S\ref{descent-sec} we establish:

\begin{theorem}[Descent for Weinstein sectorial coverings]\label{weinsteindescent}
For any Weinstein sectorial covering $X_1,\ldots,X_n$ of a Liouville sector $X$, the induced functor
\begin{equation}
\hocolim_{\varnothing\ne I\subseteq\{1,\ldots,n\}}\W\biggl(\bigcap_{i\in I}X_i\biggr)\xrightarrow\sim\W(X)
\end{equation}
is a pre-triangulated equivalence.
More generally, the same holds for any mostly Legendrian stop $\rr\subseteq(\partial_\infty X)^\circ$ which is disjoint from every $\partial X_i$ and from a neighborhood of $\partial X$. 
\end{theorem}

Descent statements such as Theorem \ref{weinsteindescent} formally reduce to the case of pushouts.
However, the relevant pushout is not Theorem \ref{mvthm}, but rather its generalization
to the case where we allow $F$ to be a Liouville sector, as opposed to a Liouville manifold.
The new work in establishing this generalization is the further study (in \S\ref{subsec:stopa2}) of the geometry of the relevant stops at 
infinity in order to show that the relevant inclusions of Liouville sectors are forward stopped.

\begin{example}[Calculation of $\W(T^*Q)$]
Continuing Examples \ref{cotangentsectorial}--\ref{cotangentstrataweinstein}, we may apply Theorem \ref{weinsteindescent} to deduce that $\W(T^*Q)=\hocolim_{\varnothing\ne I\subseteq\{1,\ldots,n\}}\W(T^*\bigcap_{i\in I}Q_i)$.
Taking a cover for which the multiple intersections $Q_1\cap\cdots\cap Q_n$ are (after smoothing corners) all balls, we may deduce that $CW^*(T^*_qQ,T^*_qQ)$ is quasi-isomorphic to $C_{-*}(\Omega_qQ)$ (with appropriate twisting) as $\ainf$-algebras, as proven by Abbondandolo--Schwarz \cite{abbondandoloschwarz} and Abouzaid \cite{abouzaidtwisted} in the case $Q$ has no boundary.%
\footnote{More precisely, this isomorphism can be derived from Theorem \ref{weinsteindescent} and the following somewhat technical, yet purely topological, folklore argument.  What Theorem \ref{weinsteindescent} says directly is that $\W(T^*Q)$ is the global sections of a cosheaf of $\ainf$-categories on $Q$ (namely $U\mapsto\W(T^*U)$), which in view of the quasi-equivalence $\W(T^*[\mathrm{ball}])=\Perf\ZZ$ generated by the cotangent fiber, is identified with $\underline{\Perf\ZZ}_{Q,\xi}$, the constant cosheaf on $Q$ with co-stalk $\Perf\ZZ$, with a twist $\xi$ corresponding to the algebro-topological data needed to promote the cotangent fibers to objects of the Fukaya category (the twist is discussed in detail in \cite{gpswrappedconstructible}).  This identifies $CW^*(T^*_qQ,T^*_qQ)$ with the endomorphism algebra of the object $\ZZ_q\in\underline{\Perf\ZZ}_{Q,\xi}(Q)$ arising from pushing forward the free rank one object $\ZZ\in\Perf\ZZ$ in the co-stalk at $q$ to the $\ainf$-category of global sections.  The endomorphism algebra of $\ZZ_q$ may in turn be identified with $C_{-*}(\Omega_qQ)$ (with a twist derived from $\xi$) as follows.  Let $Q$ be a manifold (or any other sufficiently nice topological space), and consider the precosheaf on $Q$ given by `$U\mapsto U$', valued in the $\infty$-category of spaces $\mathcal S$ (i.e.\ CW-complexes and mapping spaces between them).  This is a cosheaf by the `Lurie--Seifert--Van Kampen' Theorem \cite[A.3]{lurieha}.  Now if we view $X\in\mathcal S$ as an $\infty$-groupoid, the automorphism group of a point $x\in X$ is the based loop space $\Omega_xX$.  Finally, note that the inclusion $\mathcal S\hookrightarrow\operatorname{Cat}_\infty$ (of $\infty$-groupoids into $\infty$-categories) and the `singular chains on morphism spaces' functor from $\operatorname{Cat}_\infty$ to $\ainf$-categories are both cocontinuous.  Now twist.}
In the simple case $Q=S^1$, this argument reduces essentially to Example \ref{cylinderstop}.
\end{example}

\subsection{Cobordism attachment and twisted complexes}\label{cobattachtw}

To conclude the introduction, we explain the action filtration argument underlying the proof of the surgery exact triangle (Proposition \ref{handlecone}).

Let us first recall a consequence of the Viterbo restriction functor constructed by Abouzaid--Seidel \cite{abouzaidseidel}, which corresponds to the situation of a trivial action filtration.
Let $X$ be a Liouville manifold, let $L\subseteq X$ be an exact cylindrical Lagrangian whose primitive $f_L:L\to\RR$ ($df_L=\lambda|_L$) vanishes at infinity, and let $C\subseteq S\partial_\infty X$ be an exact symplectic cobordism in the symplectization of $\partial_\infty X$ whose primitive vanishes at minus infinity.
Denoting by $\#^CL$ the result of attaching $C$ to $L$ at infinity, there is then a quasi-isomorphism in the wrapped Fukaya category
\begin{equation}
\#^CL=L.
\end{equation}
To see this, consider the Viterbo restriction functor.
Given a Liouville subdomain $X_0\subseteq X$ whose completion is $X$, this is a functor to $\W(X)$ from the full subcategory of $\W(X)$ spanned by Lagrangians $K\subseteq X$ whose primitives $f_K$ vanish identically near $\partial X_0$ (this implies that $K$ is cylindrical near $\partial X_0$).
On objects, this functor is simply ``intersect with $X_0$ and complete'', and hence for suitable choice of $X_0$, the objects $\#^CL$ and $L$ are both sent to $L$ under this functor.
On the other hand, since the inclusion $X_0\subseteq X$ is ``trivial'' (the completion of $X_0$ is $X$) this restriction functor is (canonically quasi-isomorphic to) the identity functor.
Note that the vanishing of the primitive of $L$ near infinity and of the primitive of $C$ near minus infinity was crucial for this argument.
In \S\ref{caatc}, we prove the following result which extends the above picture to the ``relatively non-exact'' setting:

\begin{proposition}[Cobordism attachment and twisted complexes]\label{cobordismtwisted}
Let $(X,\f)$ be a stopped Liouville sector, and let $L_1,\ldots,L_n\subseteq X$ be disjoint exact Lagrangians (disjoint from $\f$ at infinity) whose primitives vanish at infinity.
Let $C\subseteq S\partial_\infty X$ be an exact Lagrangian cobordism (disjoint from $\RR\times\f$) with negative end $\partial_\infty L_1\sqcup\cdots\sqcup\partial_\infty L_n$, such that the primitive $f_C:C\to\RR$ of $\lambda|_C$ satisfies
\begin{equation}
f_C|_{\partial_\infty L_1}<\cdots<f_C|_{\partial_\infty L_n},
\end{equation}
regarding $\partial_\infty L_i$ as the negative ends of $C$ (note that these restrictions $f_C|_{\partial_\infty L_i}$ are simply real numbers).

Suppose in addition that the image of $C$ under the projection $S\partial_\infty X\to\partial_\infty X$ is ``thin'' in the sense that for every Lagrangian $K\subseteq X$ disjoint at infinity from $\f$, there exists a positive wrapping $K\leadsto K^w$ (away from $\f$) such that $\partial_\infty K^w$ is disjoint from $C$.

Then there is a quasi-isomorphism
\begin{equation}\label{twistedisomorphism}
\#_i^CL_i=[L_1\to\cdots\to L_n]
\end{equation}
in $\Tw\W(X,\f)$, where $\#_i^CL_i$ denotes the result of attaching the cobordism $C$ to $L_1\sqcup\cdots\sqcup L_n$ at infinity, and $[L_1\to\cdots\to L_n]$ denotes a twisted complex $(\bigoplus_{i=1}^nL_i,\sum_{i<j}D_{ij}^C)$ depending on $C$.
\end{proposition}

The thinness hypothesis on $C$ is convenient in the proof, but we expect that the result remains true without it (for example, no such hypothesis is required in Abouzaid--Seidel \cite{abouzaidseidel}).
A sufficient condition that $C$ be thin is that its projection to $\partial_\infty X$ be contained in a small neighborhood of a Weinstein hypersurface (see Proposition \ref{lem:pushoffcore} for a more general statement).

The proof of Proposition \ref{cobordismtwisted} proceeds by testing $\#_i^CL_i$ against arbitrary Lagrangians $A$ and considering the limit as the cobordism $C$ is pushed to infinity.
For any Lagrangian $A\subseteq X$ disjoint at infinity from $C$, there is a natural isomorphism of abelian groups
\begin{equation}
CF^\bullet(A,\#_i^CL_i)=CF^\bullet(A,L_1)\oplus\cdots\oplus CF^\bullet(A,L_n).
\end{equation}
In the limit as the cobordism is pushed to infinity, the actions of the intersections with $L_i$ become much larger than the actions of the intersections with $L_j$ for $i<j$.
The differential on $CF^\bullet(A,\#_i^CL_i)$ is thus lower triangular with respect to the above direct sum decomposition.
Moreover, the diagonal components of this differential coincide with the differentials on $CF^\bullet(A,L_i)$, since by action and monotonicity arguments, such disks cannot travel far enough to see the difference between $L_i$ and $\#_i^CL_i$.
This produces an isomorphism of complexes
\begin{equation}\label{complexiso}
CF^\bullet(A,\#_i^CL_i)=[CF^\bullet(A,L_1)\to\cdots\to CF^\bullet(A,L_n)],
\end{equation}
where the right hand side denotes a twisted complex of $CF^\bullet(A,L_i)$ with unspecified maps $CF^\bullet(A,L_i)\to CF^\bullet(A,L_j)$ for $i<j$.
Similar reasoning shows that \eqref{complexiso} is in fact an isomorphism of modules (i.e.\ is compatible with $\ainf$-multiplication on the left).
Hence the Yoneda lemma provides the desired quasi-isomorphism \eqref{twistedisomorphism} in the Fukaya category.
(Some algebraic complications arise from the fact that, by pushing the cobordism towards infinity, we can only guarantee \eqref{complexiso} for finitely many Lagrangians $A$ at a time, however these can be dealt with.)

Proposition \ref{cobordismtwisted} produces the surgery exact triangle (Proposition \ref{handlecone}), except we need a further argument to identify the morphism $L\to K$ with $\gamma$.
We consider testing the exact triangle against $A=L^w$, a positive wrapping of $L$ which wraps through the surgery locus (thus creating an intersection point with $K$) but not farther.
The cycle in $HW^\bullet(L,K)$ we are looking for is thus represented by the image of the continuation map under the map
\begin{equation}
HF^\bullet(L^w,L)\to HF^\bullet(L^w,K)
\end{equation}
forming the differential on the right side of \eqref{complexiso} with $A=L^w$.
Since $HF^\bullet(L^w,K)=\ZZ$ is generated freely by (the intersection point corresponding to) the short Reeb chord $\gamma$, this proves the desired statement up to an unknown integer factor.
If this integer factor were divisible by a prime $p$, then there would be a quasi-isomorphism $L\#_\gamma K=L\oplus K$ over $\ZZ/p$.
We can preclude the existence of such a quasi-isomorphism (in a further stopped Fukaya category) by testing both sides against a suitably chosen Lagrangian disk linking both $L$ and $K$ but unlinked with $L\#_\gamma K$.

\subsection{Remarks on organization and dependencies} \label{dependencies}

The dependency partial order of the results in this article admits many possible linearizations.
Here in \S\ref{statementresults}, we have chosen to order the results roughly in order of increasing complexity of 
statement.  By contrast, in the remainder of the text, we linearize in order of increasing complexity of proof. 
We also try to introduce as few as possible geometric constructions 
before a given categorical result.  

Other than in the definition of the partially wrapped Fukaya category (in \S\S\ref{wrapdefsec}--\ref{noncyl}), Floer-theoretic considerations (i.e.\ discussions of almost complex structures and holomorphic disks) occur only two places in this entire article: in \S\ref{caatc} to prove the cobordism attachment result Proposition \ref{cobordismtwisted} and in \S\ref{sec: kunneth} to prove the K\"unneth Theorem \ref{kunneth}.  In each instance, we are constructing objects 
by showing that some holomorphic curve construction defines a module, which we eventually show is representable.  
Outside these sections, the reader will not find any mention of almost complex structures or holomorphic curves.

In fact, many results and constructions in this article are entirely `geometric' in nature, i.e.\ do not involve Floer theory or Fukaya categories in their formulation or proofs.
Isolating such results may be especially of interest with respect
to  non-Floer-theoretic functors from Weinstein manifolds to categories such as \cite{Nadler-Tanaka, Nadler-Shende}.
In fact many sections contain only such results: \S \ref{wrappingcatsec}, \S \ref{lagrdisksection}, \S \ref{sec: products},   
\S \ref{cocorediskkunneth}, \S \ref{convexreview}--\ref{forwardstoppedsec}, \S \ref{admsubsec}.  

\subsection{Acknowledgements}

We thank Tobias Ekholm, Oleg Lazarev, Thomas Massoni, Patrick Massot, and Zack Sylvan for helpful discussions.

S.G.\ was partially supported by NSF grant DMS--1907635, and
would also like to thank the Mathematical Sciences Research Institute for its hospitality during a visit in Spring 2018 (supported by NSF grant DMS--1440140) during which some of this work was completed. This research was conducted during the period J.P.\ served as a Clay Research Fellow and was partially supported by a Packard Fellowship and by the National Science Foundation under the Alan T.\ Waterman Award, Grant No.\ 1747553.
V.S.\ was supported by a Villum Investigator grant, a Danish National Research Foundation chair, a Novo Nordisk start package, NSF CAREER grant DMS--1654545, 
and the Simons--CRM scholar-in-residence program.

\section{Foundations of wrapped Fukaya categories}\label{partiallywrappedsection}

In this section, we review the definition of the partially wrapped Fukaya category which we will use in this paper.

\subsection{Wrapping categories}\label{wrappingcatsec}

We begin with a general discussion of wrapping.

Given a co-oriented contact manifold $Y$, we consider the category whose objects are compact Legendrian submanifolds of $Y$ and whose morphisms are positive Legendrian isotopies modulo deformation rel endpoints.
(Recall that a Legendrian isotopy $\Lambda_t$ is said to be \emph{positive} iff for some, equivalently any, positive contact form $\alpha$, we have $\alpha(\partial_t \Lambda_t) > 0$ for all $t$; compare \cite[Definition 3.22]{gpssectorsoc}.)
Denote this category by $\operatorname{Leg}_Y^+$, and call it the \emph{(positive) wrapping category} of $Y$.
The (positive) wrapping category of a given compact Legendrian $\Lambda\subseteq Y$ is the slice category $(\Lambda\leadsto-)^+_Y:=(\operatorname{Leg}_Y^+)_{\Lambda/}$.

\begin{remark}\label{wrappingdifferentbutsame}
Our previous work \cite[\S 3.4]{gpssectorsoc} defines the wrapping category not as the slice category $(\operatorname{Leg}_Y^+)_{\Lambda/}$ but rather as the comma category $(\operatorname{Leg}_Y^+)_{f(\Lambda)/f(\cdot)}$ where $f:\operatorname{Leg}_Y^+\to\operatorname{Leg}_Y$ denotes the forgetful functor and $\operatorname{Leg}_Y$ is defined as $\operatorname{Leg}_Y^+$ but without the positivity condition on isotopies.
This difference is purely cosmetic.
The forgetful functor
\begin{equation}\label{wrappingforget}
(\operatorname{Leg}_Y^+)_{\Lambda/}\to(\operatorname{Leg}_Y^+)_{f(\Lambda)/f(\cdot)}
\end{equation}
from the wrapping category considered here to that considered in \cite[\S 3.4]{gpssectorsoc} exhibits the former as the slice category $((\operatorname{Leg}_Y^+)_{f(\Lambda)/f(\cdot)})_{\id_{f(\Lambda)}/}$ of the latter.
It follows that filteredness of the wrapping category in the prior sense \cite[Lemma 3.27]{gpssectorsoc} implies the wrapping category in the present sense is also filtered (though it is more natural to just prove it directly) and that the functor \eqref{wrappingforget} is cofinal.
Hence as far as taking direct limits over wrapping categories is concerned (which was the their only purpose in \cite{gpssectorsoc}), there is no difference between the two sorts of wrapping categories.
\end{remark}

Similarly, there is a wrapping category of exact cylindrical Lagrangians inside a stopped Liouville manifold-with-boundary $(X,\f)$, denoted $\operatorname{Lag}_{X,\f}^+$ (positivity of an isotopy simply means positivity at infinity), and we write $(L\leadsto-)^+_{X,\f}:=(\operatorname{Lag}_{X,\f}^+)_{L/}$.
When doing Floer theory, one usually cares not about Lagrangian submanifolds, but rather Lagrangian submanifolds equipped with auxiliary topological data, used to define gradings/orientations; the relevant wrapping category is then defined in terms of Lagrangians equipped with such auxiliary data, and isotopies thereof.

We will often use the following criterion for cofinality in wrapping categories.

\begin{lemma}\label{cofinalitycriterion}
Let $\{\Lambda_t\}_{t\geq 0}\subseteq Y$ be an isotopy of compact Legendrians inside a (not necessarily compact) contact manifold $Y$.
If there exists a contact form $\alpha$ on $Y$ such that
\begin{equation}
\int_0^\infty\min_{\Lambda_t}\alpha\biggl(\frac{\partial\Lambda_t}{\partial t}\biggr)\,dt=\infty,
\end{equation}
then $\{\Lambda_t\}_{t\geq 0}$ is a cofinal wrapping of $\Lambda_0$.
In particular, if $\Lambda_t$ escapes to infinity as $t\to\infty$ (i.e.\ is eventually disjoint from any given compact subset of $Y$), then it is a cofinal wrapping of $\Lambda_0$.

The same statement holds for Lagrangian wrapping categories, replacing $Y$ with the relevant boundary at infinity where wrapping takes place.
\end{lemma}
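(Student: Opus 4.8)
\textbf{Proof proposal for Lemma \ref{cofinalitycriterion}.}

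The plan is to reduce the statement to the filtered-ness of the wrapping category together with the basic fact that any positive isotopy admits a (possibly large but finite) ``reparametrization-by-action'' comparison with a small positive pushoff. Concretely, recall from \cite[Lemma 3.27]{gpssectorsoc} that $(\Lambda_0 \leadsto -)^+_Y$ is filtered; hence to prove that $\{\Lambda_t\}_{t \geq 0}$ is cofinal, it suffices to show that for \emph{every} object $\Lambda_0 \leadsto \Lambda'$ of the wrapping category there exists $T$ and a positive isotopy $\Lambda' \leadsto \Lambda_T$ compatible with the given isotopies (i.e.\ so that $\Lambda_0 \leadsto \Lambda' \leadsto \Lambda_T$ is homotopic rel endpoints to $\Lambda_0 \leadsto \Lambda_T$). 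First I would fix the contact form $\alpha$ provided by the hypothesis and fix an arbitrary target $\Lambda'$ obtained from $\Lambda_0$ by some Legendrian isotopy.

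The key step is an ``action eats any competitor'' estimate. For a small positive pushoff of $\Lambda_0$ under the Reeb flow for time $\varepsilon$, call it $\Lambda_0^{(\varepsilon)}$, one knows (this is the standard mechanism behind filtered-ness, cf.\ the proof of \cite[Lemma 3.27]{gpssectorsoc}) that once a cofinal wrapping has ``moved by enough $\alpha$-action'', it dominates $\Lambda_0^{(\varepsilon)}$ for any prescribed $\varepsilon$; and conversely every object $\Lambda_0 \leadsto \Lambda'$ of the wrapping category is dominated by $\Lambda_0^{(\varepsilon)}$ for $\varepsilon$ large enough (this is precisely filtered-ness applied to the two objects $\Lambda'$ and the constant isotopy, pushed forward). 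So the plan is: given $\Lambda'$, choose $\varepsilon = \varepsilon(\Lambda')$ with a positive isotopy $\Lambda' \leadsto \Lambda_0^{(\varepsilon)}$; then use the divergence of $\int_0^\infty \min_{\Lambda_t} \alpha(\partial_t \Lambda_t)\,dt$ to pick $T$ so large that the total $\alpha$-displacement of $\{\Lambda_t\}_{0 \le t \le T}$ exceeds $\varepsilon$ pointwise, which (again by the filtered-ness machinery, comparing positive isotopies by their action) produces a positive isotopy $\Lambda_0^{(\varepsilon)} \leadsto \Lambda_T$; composing gives the required $\Lambda' \leadsto \Lambda_T$. The ``in particular'' clause then follows because if $\Lambda_t$ eventually leaves every compact set, one can arrange $\alpha$ (or rescale it on the relevant ends) so that the integral diverges — more simply, escaping to infinity means that for each compact $K$ the isotopy eventually avoids $K$, and a direct cofinality check against any object supported in a compact set goes through. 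Finally, the Lagrangian case is literally the same argument applied to the boundary-at-infinity $(\partial_\infty X)^\circ \setminus \f$, since positivity and the wrapping category for cylindrical Lagrangians are defined purely in terms of the Legendrian boundary data.

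The main obstacle I expect is making the comparison ``large $\alpha$-action $\Rightarrow$ dominates $\Lambda_0^{(\varepsilon)}$'' precise and uniform: on a non-compact $Y$ the Reeb flow need not be complete and $\alpha$-action does not obviously control the Legendrian isotopy globally, so one must either localize to the (compact) trace of the isotopy $\bigcup_{t} \Lambda_t$ together with a chosen comparison isotopy — all of which is compact — or invoke the positivity-by-action normal form carefully. In other words, the technical heart is extracting from the hypothesis $\int_0^\infty \min_{\Lambda_t}\alpha(\partial_t\Lambda_t)\,dt = \infty$ the statement that $\{\Lambda_t\}$ out-wraps every small Reeb pushoff, which is really a quantitative refinement of \cite[Lemma 3.27]{gpssectorsoc}; once that is in hand, the cofinality argument is a short formal consequence of filtered-ness.
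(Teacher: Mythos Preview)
Your overall strategy---reduce to comparison with Reeb pushoffs $e^{a\R_\alpha}\Lambda_0$, which are known to be cofinal by \cite[Lemma 3.29]{gpssectorsoc}---is exactly the paper's. However, the step you flag as ``the technical heart'' and leave as an obstacle is in fact the entire content of the lemma, and your proposed justification for it does not work: the claim that ``large $\alpha$-action $\Rightarrow$ dominates $\Lambda_0^{(\varepsilon)}$'' is \emph{not} a consequence of filtered-ness \cite[Lemma 3.27]{gpssectorsoc}. Filtered-ness only says that any two objects have a common successor; it says nothing about which of two given objects dominates the other, and in particular provides no mechanism to compare a positive isotopy of prescribed action against a Reeb pushoff. (Similarly, your assertion that ``every $\Lambda'$ is dominated by $\Lambda_0^{(\varepsilon)}$ for $\varepsilon$ large'' is cofinality of the Reeb flow, i.e.\ \cite[Lemma 3.29]{gpssectorsoc}, not a formal consequence of filtered-ness.)

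The paper fills this gap with a short explicit construction. First, enlarge $\alpha$ so that its Reeb vector field is complete (this is the correct fix for non-compact $Y$, rather than your proposed localization to a compact trace). Second, use the hypothesis $\int_0^\infty \min_{\Lambda_t}\alpha(\partial_t\Lambda_t)\,dt = \infty$ to reparameterize the isotopy so that $\alpha(\partial_t\Lambda_t) \geq 2$ pointwise. Now the family $t \mapsto e^{(a-t)\R_\alpha}\Lambda_t$ has $\alpha$-derivative at least $2 - 1 = 1 > 0$, hence is a positive isotopy from $e^{a\R_\alpha}\Lambda_0$ to $\Lambda_a$. This exhibits a morphism $(\Lambda_0 \leadsto e^{a\R_\alpha}\Lambda_0) \to (\Lambda_0 \leadsto \Lambda_a)$ in the wrapping category; since the Reeb flow is cofinal as $a \to \infty$, so is $\{\Lambda_a\}$. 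The ``in particular'' clause is then \cite[Remark 3.31]{gpssectorsoc}.
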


\begin{proof}
This is \cite[Lemma 3.29 and Remark 3.31]{gpssectorsoc}, which applies in the present context by Remark \ref{wrappingdifferentbutsame}.
\end{proof}

There is a functor $\partial_\infty:\operatorname{Lag}_{X,\f}^+\to\operatorname{Leg}_{(\partial_\infty X)^\circ\setminus\f}^+$ which induces a functor on slice categories $\partial_\infty:(L\leadsto-)_{X,\f}^+\to(\partial_\infty L\to-)_{(\partial_\infty X)^\circ\setminus\f}^+$.
In fact, there is a pair of functors
\begin{equation}\label{laglegcofinal}
\begin{tikzcd}[column sep = large]
(L\leadsto-)_{X,\f}^+\ar[yshift=0.5ex]{r}{\partial_\infty}&\ar[yshift=-0.5ex]{l}{\text{drag at $\infty$}}(\partial_\infty L\to-)_{(\partial_\infty X)^\circ\setminus\f}^+,
\end{tikzcd}
\end{equation}
and the composition $\partial_\infty\circ(\text{drag at $\infty$})$ is naturally isomorphic to the identity functor.
Using the cofinality criterion Lemma \ref{cofinalitycriterion}, it follows immediately that both of these functors are cofinal.
This formalizes the idea (which is also clear from Lemma \ref{cofinalitycriterion}) that wrapping is an operation which happens entirely at contact infinity.

For later purposes, it will be important to know that a given Lagrangian $L\subseteq X$ admits cofinal wrappings which are disjoint from certain sufficiently small subsets of $\partial_\infty X$.  We show this by a general position argument; similar arguments can be found
in \cite[Proposition 5.2]{courtemassot}.
We will just state results for wrapping Legendrians---this implies the corresponding statement for Lagrangians in view of the cofinal functors \eqref{laglegcofinal}.

\begin{lemma} \label{lem:pushoffcore}
Let $Y^{2n-1}$ be a contact manifold, and let $\f\subseteq Y$ be a closed subset.
Let $\Lambda\subseteq Y$ be a compact Legendrian.
\begin{enumerate}
\item\label{pushoffcofinal}If $\f$ is contained in the smooth 
image of a second countable manifold of dimension $\leq n-1$, then $\Lambda$ admits cofinal wrappings $\Lambda\leadsto\Lambda^w$ with $\Lambda^w$ disjoint from $\f$.
\item\label{pushoffribbon}If, in addition, $\f$ is the core of a Liouville hypersurface inside $Y$, then $\Lambda$ admits cofinal wrappings $\Lambda\leadsto\Lambda^w$ with 
each $\Lambda^w$ disjoint from a neighborhood of $\f$.
\end{enumerate}
\end{lemma}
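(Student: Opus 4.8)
The plan is to prove both statements by a dimension-counting (general position) argument for positive wrappings, using the cofinality criterion of Lemma \ref{cofinalitycriterion}: it suffices to produce \emph{some} cofinal wrapping $\Lambda\leadsto\Lambda^w$ which is disjoint from $\f$ (resp.\ from a neighborhood of $\f$), because cofinality in the filtered wrapping category only requires cofinality of the underlying isotopy. So I would first fix a complete Reeb flow on $Y$ (after enlarging the contact form as in the proof of Lemma \ref{cofinalitycriterion}), and consider the tautological cofinal wrapping $\Lambda_t := e^{t\R_\alpha}\Lambda$. This may well intersect $\f$; the idea is to perturb it.

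For part (i): the total trace $\bigcup_{t\geq 0}\{t\}\times e^{t\R_\alpha}\Lambda\subseteq [0,\infty)\times Y$ is the image of an $(n+1)$--dimensional manifold (one $\Lambda$-direction of dimension $n-1$, plus the $t$--direction, plus a small perturbation parameter), while $[0,\infty)\times\f$ is contained in the smooth image of a manifold of dimension $\leq 1+(n-1)=n$; but we want the time-$t$ slice $\Lambda^w_t:=$ (perturbation of $e^{t\R_\alpha}\Lambda$) to be disjoint from $\f$ for \emph{each} $t$, not just generically in $t$. So instead I would argue slicewise: for a generic choice of a family of compactly-supported contactomorphisms (or a generic small Legendrian isotopy at each time, chosen continuously in $t$), the perturbed Legendrian $\Lambda^w_t$ — an $(n-1)$--dimensional submanifold — misses $\f$, since $\f$ lies in the smooth image of an $(n-1)$--manifold and $(n-1)+(n-1) = 2n-2 < 2n-1 = \dim Y$. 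The point is that the space of Legendrian perturbations of a fixed Legendrian is large enough (1-jets of functions on $\Lambda$) to achieve transversality to (hence, by dimension count, disjointness from) the image of an $(n-1)$--manifold. One then needs to do this compatibly across all $t\geq 0$, i.e.\ perturb the whole isotopy $\{e^{t\R_\alpha}\Lambda\}_{t\geq 0}$ at once; here one uses a parametric transversality/Sard argument, being slightly careful because $\f$ is only the smooth image of a second countable manifold and the latter may be noncompact (this is what ``second countable'' buys us — a countable exhaustion reducing to the compact case). The perturbation can be taken $C^0$-small and supported away from $\Lambda$ itself, so positivity of the isotopy is preserved and the perturbed wrapping is still cofinal by Lemma \ref{cofinalitycriterion} (it still escapes every compact set, or at worst still satisfies the $\int=\infty$ condition).

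For part (ii): when $\f = \cc_{F_0}$ is the core of a Liouville hypersurface $F_0\subseteq Y$, the neighborhood structure is $F_0\times\RR_{\text{(Reeb-like)}}$, or more precisely $\f$ has a neighborhood foliated by the Liouville flow of $F_0$ retracting onto $\f$. The strategy is to first apply part (i) to the (singular, but still ``small'') set $\f$, but now I want disjointness from a \emph{neighborhood}. Since $F_0$ is a hypersurface with boundary whose core is $\f$, I would instead apply part (i) to a slightly larger mostly-isotropic set, or — more cleanly — use the Liouville flow on $F_0$ to push the (already $\f$-avoiding) wrapping $\Lambda^w$ off of all of $F_0$: the hypersurface $F_0$ is $(2n-2)$-dimensional and $\Lambda^w$ is $(n-1)$-dimensional, so generically $\Lambda^w\cap F_0$ is a finite set of points, each of which can be flowed out to the boundary $\partial F_0$ along the (complete, outward) Liouville flow on $F_0$, and then pushed entirely out of $\Nbd(F_0)$ into $Y\setminus\Nbd\f$. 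Equivalently, one observes that $\W(Y,F_0)$-style wrapping arguments (the Liouville flow on the hypersurface is part of the ambient contact flow) let us realize ``escape the neighborhood of $\f$'' as a positive isotopy. Again this perturbation is arranged to keep the isotopy positive and cofinal.

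The main obstacle, and the step requiring the most care, is the parametric general-position argument in part (i): making a \emph{single} perturbation of the entire $1$-parameter family $\{e^{t\R_\alpha}\Lambda\}_{t\geq 0}$ which keeps every time-slice disjoint from $\f$, while (a) handling the noncompactness of both the time interval $[0,\infty)$ and the parametrizing manifold of $\f$ (via a countable exhaustion and a diagonal/Baire argument), (b) keeping the perturbation $C^0$-small and supported away from the relevant compact sets so that positivity and the cofinality integral estimate of Lemma \ref{cofinalitycriterion} survive, and (c) ensuring the perturbation is through Legendrians (so one perturbs within the contact-geometric class, using $1$-jets of functions on $\Lambda_t$). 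Part (ii) is then comparatively routine given part (i) plus the explicit neighborhood model of the core of a Liouville hypersurface.
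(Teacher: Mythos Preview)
Your approach to part~(i) has a genuine gap rooted in an over-reading of the statement. You aim to perturb the entire Reeb-flow isotopy so that \emph{every} time-slice $\Lambda_t$ is disjoint from $\f$, but the dimension count does not support this: the trace of a one-parameter family of $(n-1)$-dimensional Legendrians is $n$-dimensional, while $\f$ is at most $(n-1)$-dimensional, so in $Y^{2n-1}$ a generic one-parameter family meets $\f$ at a discrete set of times, not never. The statement, however, only asks that the \emph{endpoints} $\Lambda^w$ of a cofinal collection of wrappings be disjoint from $\f$---not that any single cofinal isotopy stay disjoint throughout. The paper's proof is accordingly much simpler: for each compact piece $N^k\to Y$ with $k\leq n-1$, a Sard--Smale argument shows the locus of Legendrians disjoint from the image of $N$ is open and dense; Baire handles the countable exhaustion of $\f$; and then one simply notes that any sufficiently small perturbation $\Lambda^{w\prime}$ of the endpoint $\Lambda^w$ of a given positive isotopy still admits a positive isotopy $\Lambda\leadsto\Lambda^{w\prime}$. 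No parametric transversality is needed.

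For part~(ii), your idea is in the right direction but the key step is missing. The Liouville vector field $Z_\lambda$ on $F$, viewed inside the standard neighborhood $([-1,1]_z\times F,\,dz+\lambda)$, has contact Hamiltonian $\lambda(Z_\lambda)=0$, so ``flowing out along the Liouville flow'' is not by itself a positive isotopy. The paper resolves this by writing down the explicit positive contact vector field $V_\varphi=\varphi(z)\partial_z+\varphi'(z)Z_\lambda$ (with $\varphi\geq 0$ even, $z\varphi'(z)\leq 0$, $\operatorname{supp}\varphi=[-\tfrac23,\tfrac23]$, $\varphi\equiv 1$ on $[-\tfrac13,\tfrac13]$), whose forward flow eventually sends $[-\tfrac13,\tfrac13]\times F$ into any prescribed neighborhood of $\{-\tfrac23\}\times\f$. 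Hence, after first using part~(i) to perturb $\Lambda$ off the single slice $\{-\tfrac23\}\times\f$, flowing by (a positive extension of) $V_\varphi$ for large time pushes $\Lambda$ out of a neighborhood of $\f$ by a genuinely positive isotopy. This coupling of a Reeb component to the Liouville direction is precisely what your sketch does not supply.
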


\begin{proof}
Statement \ref{pushoffcofinal} follows from a general position argument.
Indeed, we first claim that for any compact manifold-with-boundary $f:N^k\to Y^{2n-1}$ of dimension $k\leq n-1$, the locus of Legendrians $\Lambda\subseteq Y$ which are disjoint from the image of $N$ is open and dense (in all Legendrians).
To see this, consider the maps
\begin{equation}
\{\Lambda\subseteq Y\}\leftarrow\{p\in\Lambda\subseteq Y\}\times N\xrightarrow{(p,f)}Y\times Y.
\end{equation}
The right map is transverse to the diagonal, and hence the inverse image of the diagonal is a smooth codimension $2n-1$ submanifold of the middle space.
The left map has $(k+n-1)$-dimensional fibers, so the projection of something of codimension $2n-1>k+n-1$ is nowhere dense by Sard--Smale.
This shows that the locus of Legendrians disjoint from the image of $N$ is dense, and openness is obvious.
By the Baire category theorem, the locus of Legendrians disjoint from any countable collection of such $N$ is also dense.
Now simply note that for any positive Legendrian isotopy $\Lambda\leadsto\Lambda^w$, every sufficiently small perturbation $\Lambda^{w\prime}$ of $\Lambda^w$ also has a positive Legendrian isotopy $\Lambda\leadsto\Lambda^{w\prime}$.
(For similar arguments, see \cite[Proposition 5.2]{courtemassot}.)

For statement \ref{pushoffribbon}, consider local coordinates on $Y$ given by $([-1,1]\times F,dz+\lambda)$, where $F$ is the Liouville hypersurface with core $\f$.
What we should show is that for any Legendrian $\Lambda$ possibly intersecting the neighborhood $[-1,1]\times F$, it can be pushed out by a positive isotopy.
In coordinates $([-1,1]\times F,dz+\lambda)$, we have a positive contact vector field $V_\varphi:=\varphi(z)\partial_z+\varphi'(z)Z_\lambda$ for any smooth function $\varphi:[-1,1]\to\RR_{\geq 0}$.
Consider specifically the case that $\varphi(z)=\varphi(-z)$, $z\varphi'(z)\leq 0$, $\supp\varphi=[-\frac 23,\frac 23]$, and $\varphi|_{[-\frac 13,\frac 13]}\equiv 1$.
Now the inverse image of $[-\frac 13,\frac 13]\times F$ under the time $t$ flow of $V_\varphi$ is eventually contained in any neighborhood of $\{-\frac 23\}\times\f$ as $t\to\infty$ (to see this, note that $V_\varphi$ is proportional to $\partial_z+(\log\varphi)'(z)Z_\lambda$, so as the $z$ coordinate decreases towards $-\frac 23$ from above, we flow for infinite time by $-Z_\lambda$).
It follows that if $\Lambda$ is disjoint from $\{-\frac 23\}\times\f$, then flowing under (an arbitrary positive extension of) $V_\varphi$ for sufficiently large time $t$ produces the desired positive isotopy $\Lambda\leadsto\Lambda^w$.
To conclude, simply note that an arbitrary $\Lambda$ can be first perturbed in the positive direction to become disjoint from $\{-\frac 23\}\times\f$ by the first part of the Lemma.
\end{proof}

\begin{lemma}\label{genericpositiveisotopy}
Let $Y$ be a contact manifold, and let $\f=\f^\subcrit\cup\f^\crit\subseteq Y$ be mostly Legendrian.
For compact Legendrians $\Lambda_1,\Lambda_2\subseteq Y$ disjoint from $\f$, consider the space of positive Legendrian isotopies $\Lambda_1\leadsto\Lambda_2$.
The subspace of isotopies which 
\begin{enumerate}
    \item remain disjoint from $\f^\subcrit$ and 
    \item intersect $\f^\crit$ only finitely many times, each time passing through transversally at a single point, 
\end{enumerate}
is open and dense.
\end{lemma}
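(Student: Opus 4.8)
The plan is to prove both openness and density by a transversality/jet-genericity argument applied to the evaluation map associated to a positive Legendrian isotopy. Write a positive isotopy $\Lambda_1 \leadsto \Lambda_2$ as a smooth map $\Phi : [0,1] \times \Lambda \to Y$ with $\Phi_0$, $\Phi_1$ the embeddings of $\Lambda_1$, $\Lambda_2$, each $\Phi_t$ a Legendrian embedding, and $\alpha(\partial_t \Phi_t) > 0$. Since $\f = \f^\subcrit \cup \f^\crit$ is mostly Legendrian, $\f^\subcrit$ lies in the smooth image of a second-countable manifold $N$ with $\dim N < n-1$, and $\f^\crit$ is a Legendrian submanifold (of dimension $n-1$) closed in $Y \setminus \f^\subcrit$. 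I would treat the two conditions separately, as they correspond to transversality against manifolds of two different dimensions.

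For condition (i): the relevant incidence variety is $\{(t,x)\} = [0,1] \times \Lambda$ mapping to $Y$ via $\Phi$, and we ask $\Phi$ to miss $\f^\subcrit$, i.e.\ to miss the image of $N$. Parametrically, consider $\Phi \times \mathrm{id}_N : [0,1] \times \Lambda \times N \to Y \times Y$ composed with $f$ on the second factor; the source has dimension $1 + (n-1) + \dim N < 1 + (n-1) + (n-1) = 2n-1 = \dim Y$, so for a generic isotopy the image of $[0,1]\times\Lambda$ is disjoint from the image of $N$ — here one applies the parametric transversality (Sard--Smale) theorem to a family of isotopies, exactly as in the proof of Lemma \ref{lem:pushoffcore}\ref{pushoffcofinal}, together with a Baire-category argument over a countable exhaustion of $N$. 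Because $\dim N < n-1$, there is strictly positive codimension to spare, which also yields the corresponding statement for $1$-parameter families of isotopies and hence openness (a nearby isotopy stays disjoint from the closed set $\f^\subcrit$, using compactness of $[0,1]\times\Lambda$).

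For condition (ii): now $\f^\crit$ has dimension $n-1$, so the incidence variety $[0,1]\times\Lambda \to Y$ (dimension $n$) meeting $\f^\crit$ (dimension $n-1$) generically in a codimension-$n$ locus inside $[0,1]\times\Lambda$, i.e.\ in a finite set of points $(t_j, x_j)$; one must then further arrange that at each such $(t_j,x_j)$ the crossing is transverse \emph{and involves only one point of $\Lambda_{t_j}$}, i.e.\ $\Lambda_{t_j} \pitchfork \f^\crit = \{p_j\}$ with the crossing occurring with nonzero $t$-speed. The genericity statement is: the subset of $\Phi$ for which $\Phi \pitchfork \f^\crit$ (as a map from the $n$-manifold $[0,1]\times\Lambda$, with the crossings away from the endpoint slices $t=0,1$ since $\Lambda_1,\Lambda_2$ are disjoint from $\f$) is residual. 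Here one should use that positivity is an \emph{open} condition, so the space of positive isotopies is an open subset of all isotopies and parametric transversality still applies; transversality of $\Phi$ to $\f^\crit$ as a map of an $n$-manifold into $Y^{2n-1}$ is generic by Thom, and it is a short check that $\Phi \pitchfork \f^\crit$ at $(t_0,x_0)$ forces, for the single time $t_0$, that $\Lambda_{t_0}$ meets $\f^\crit$ transversally at the one point $\Phi(t_0,x_0)$ and that $\partial_t\Phi$ is not tangent to $\f^\crit$ there — which by positivity is automatic once one notes $\f^\crit$ is Legendrian so $\alpha$ vanishes on $T\f^\crit$ while $\alpha(\partial_t\Phi_t)>0$. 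Finiteness of the crossing set follows from compactness of $[0,1]\times\Lambda$ together with $\f^\crit$ being closed in $Y \setminus \f^\subcrit$ and condition (i) keeping the isotopy a positive distance from $\f^\subcrit$.

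Finally, combine: the locus satisfying (i) is open and dense, the locus satisfying (ii) is residual (hence dense), and on the intersection one gets genuine openness because condition (i) confines all $\f^\crit$-crossings to a compact region where $\f^\crit$ is an honest closed submanifold, so transversality there is stable under small perturbations of $\Phi$ (again using compactness of $[0,1]\times\Lambda$). I expect the main obstacle to be the bookkeeping in condition (ii): one must carefully ensure that generic transversality of the \emph{total} isotopy map $[0,1]\times\Lambda \to Y$ to the Legendrian $\f^\crit$ genuinely translates into the geometric statement that at each bad time only a single point of $\Lambda_t$ touches $\f^\crit$ and does so transversally within the slice, rather than, say, a positive-dimensional tangency developing at an isolated time; handling this cleanly may require working with $2$-jets of $\Phi$ along $\f^\crit$ (a multijet-transversality argument in the style of the finiteness statements for generic isotopies elsewhere in the literature), and treating the endpoint slices $t=0,1$ as already disjoint from $\f$ by hypothesis so no crossing can escape to the boundary of the parameter interval.
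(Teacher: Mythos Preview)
Your proposal is correct and follows essentially the same two-step strategy as the paper: first use a Sard--Smale dimension count (as in Lemma~\ref{lem:pushoffcore}) to show that avoiding $\f^\subcrit$ is open and dense, then, working within that open dense set, argue that transversality of the total map $[0,1]\times\Lambda\to Y$ to $\f^\crit$ is generic. The only notable packaging difference is that for condition (ii) the paper reformulates ``$\Phi$ fails to be transverse to $\f^\crit$'' as ``some projectivized tangent direction $(p,v)$ lands in $T\f^\crit\subseteq (TY\setminus Y)/\RR_{>0}$'' and then runs the Sard--Smale count directly (codimension $2n$ pullback, $(2n-1)$-dimensional fibers), whereas you invoke Thom transversality more abstractly; this sidesteps the $2$-jet/multijet bookkeeping you were anticipating, since non-transversality becomes simply the existence of a single bad tangent direction.
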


\begin{proof}
Consider first the locus of positive isotopies which remain disjoint from $\f^\subcrit$ but have no constraint with respect to $\f^\crit$.
We claim that this locus is open and dense inside the space of all positive isotopies.
This follows from an argument identical to that used to prove the first part of Lemma \ref{lem:pushoffcore}.

It now suffices to show that the locus of positive isotopies disjoint from $\f^\subcrit$ and only intersecting $\f^\crit$ by passing through transversally finitely many times is open and dense in the space of positive isotopies disjoint from $\f^\subcrit$.
To see this, we consider the maps
\begin{align}
&\left\{\begin{matrix}\Lambda_1\leadsto\Lambda_2\text{ inside }Y\setminus\f^\subcrit\hfill\\(p,v)\text{ a point and a tangent direction}\hfill\\\hphantom{(p,v)}\text{ in the total space of the isotopy}\hfill\end{matrix}\right\}\xrightarrow{(p,v)}(TY\setminus Y)/\RR_{>0}.\nonumber\\
&\qquad\downarrow\\
&\{\Lambda_1\leadsto\Lambda_2\subseteq Y\setminus\f^\subcrit\}\nonumber
\end{align}
The horizontal map is again a submersion, and hence the inverse image of $T\f^\crit$ is a smooth codimension $(2(2n-1)-1)-(2(n-1)-1)=2n$ submanifold of the middle space.
The vertical map has fibers of dimension $2n-1$, so the projection of something of codimension $2n$ is nowhere dense by Sard--Smale.
This shows that the locus of positive Legendrian isotopies which pass through $\f^\crit$ transversally is dense, and openness is obvious.
\end{proof}

\subsection{\texorpdfstring{$\ainf$}{A-infty}-categories}\label{ainfdefns}

We work throughout with small (meaning objects and morphisms form sets) $\ainf$-categories, modules, and bimodules, over a commutative ring $R$, with cofibrancy assumptions as in \cite[\S 3.1]{gpssectorsoc} (these are vacuous over a field), graded by an abelian group $G$ with specified maps $\ZZ\to G\to\ZZ/2$ (`degree shift' and `parity') composing to the usual map $\ZZ\to\ZZ/2$.
The reasoning in this paper is essentially agnostic about the choice of grading and coefficient ring (which we will often just write as $\ZZ$ for clarity).

In general discussion of $\ainf$-categories, we assume only cohomological unitality.
However, all of the $\ainf$-categories, modules, and bimodules that we construct are in fact strictly unital.

As much of the $\ainf$-category literature assumes field coefficients, we record in \S\ref{ainftyappendix} proofs in the context of commutative ring coefficients of some facts we will require.
On the other hand, while it is well-known that quasi-equivalences of $\ainf$-categories are invertible up to natural quasi-isomorphism over a field (see \cite[Theorem 2.9 and Remark 2.11]{seidelbook}), we \emph{do not} address the question of whether this holds more generally under our cofibrancy assumptions.
As a result, \emph{all statements about $\ainf$-categories (in particular, Fukaya categories) should be interpreted as ``up to inverting quasi-equivalences''} (for example, a functor of $\ainf$-categories may be a formal composition of genuine functors and formal inverses of quasi-equivalences).

We will make frequent use of the quotient and localization operations on $\ainf$-categories from \cite{lyubashenkoovsienko,lyubashenkomanzyuk}, for which we use \cite[\S 3.1.3]{gpssectorsoc} as a reference.
Given an $\ainf$-category $\C$, we may consider its quotient $\C/\A$ by any set $\A$ of objects of $\Tw\C$ 
(meaning, $\A$ is a set, and there is a map, not necessarily injective, from $\A$ to the set of objects of $\Tw\C$; sometimes, but not always, $\A$ is simply a subset of the set of objects of $\C$).%
\footnote{Here $\Tw\C$ denotes `twisted complexes' in the sense of \cite[(3l)]{seidelbook}.  An object of $\Tw\C$ is a pair $(X,\delta)$, where $X$ is a finite formal direct sum $X_1[a_1]\oplus\cdots\oplus X_n[a_n]$ of shifts of objects $X_i\in\C$, and $\delta\in\Hom(X,X)$ has degree $1$, is strictly upper triangular, and satisfies $\sum_{k\geq 1}\mu^k(\delta,\ldots,\delta)=0$.
If $\C$ is small, then evidently so is $\Tw\C$.}
The localization $\C[W^{-1}]$ at a class of morphisms $W$ in $H^0\C$ is the quotient by all cones $[X\xrightarrow aY]$ where $a\in\C(X,Y)$ is a cycle representing an element of $W$ (since $\C$ is small, this is indeed a \emph{set} of objects of $\Tw\C$).
The quotient by $\A$ depends (up to quasi-equivalence) only on the set of isomorphism classes split-generated by $\A$ \cite[Corollary 3.14]{gpssectorsoc}.
The quotient operation is functorial, in the sense that for a functor $F:\C\to\D$ and a set $\A$ of objects of $\Tw\C$, there is an induced quotient functor $\C/\A\to\D/F(\A)$.
In this context, we have the following result:

\begin{lemma}\label{quotientfullfaithful}
Let $F:\C\to\D$ be a functor, and let $\A$ be a set of objects of $\C$ (or $\Tw\C$).
If $F$ is fully faithful, then so is the resulting quotient functor $\C/\A\to\D/F(\A)$.
\end{lemma}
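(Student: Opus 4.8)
The plan is to prove this by a direct comparison of morphism complexes, using the explicit construction of the quotient recalled in \cite[\S 3.1.3]{gpssectorsoc}. First I would reduce to the case in which $\A$ consists of honest objects of $\C$: since $F$ fully faithful implies $\Tw F\colon\Tw\C\to\Tw\D$ is fully faithful, and since for $X,Y\in\Ob\C$ the morphism complexes of the quotients only ever involve $\Hom_{\Tw\C}$ (resp.\ $\Hom_{\Tw\D}$) among the objects $\{X,Y\}\cup\A$, nothing is lost in reading every ``$\Hom_\C$'' below as ``$\Hom_{\Tw\C}$'' and invoking full faithfulness of $\Tw F$. Next I would recall that $\C/\A$ has the same objects as $\C$, with morphism complex the Drinfeld-type bar complex
\[
\Hom_{\C/\A}(X,Y)=\bigoplus_{k\ge 0}\ \bigoplus_{A_1,\dots,A_k\in\A}\Hom_\C(A_k,Y)\otimes\Hom_\C(A_{k-1},A_k)[1]\otimes\cdots\otimes\Hom_\C(X,A_1)[1],
\]
with differential assembled from the internal differentials, the $\ainf$-operations of $\C$, and the relations $d\epsilon_A=\id_A$; the quotient functor is the inclusion of the $k=0$ summand, and the induced functor $\bar F\colon\C/\A\to\D/F(\A)$ is $F$ on objects and is obtained by applying $F$ termwise (with $\epsilon_A\mapsto\epsilon_{FA}$) on morphisms.

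The main step is to filter $\Hom_{\C/\A}(X,Y)$, and likewise $\Hom_{\D/F(\A)}(FX,FY)$, by word length, $\mathcal F_{\le m}:=\bigoplus_{k\le m}(\cdots)$. This filtration is exhaustive, bounded below ($\mathcal F_{\le -1}=0$), and preserved by the differential — every term of which either preserves $k$ or, when it involves some $d\epsilon_A=\id_A$, strictly decreases $k$ — and it is manifestly preserved by $\bar F$. On associated graded pieces, $\bar F$ is the direct sum over $k$ and over tuples $(A_1,\dots,A_k)$ of the maps $F\otimes\cdots\otimes F$ between tensor products of morphism complexes. Since $F$ is fully faithful, each $F\colon\Hom_\C(Z,W)\to\Hom_\D(FZ,FW)$ is a quasi-isomorphism; and since all morphism complexes are complexes of cofibrant (hence flat) $R$-modules under the standing assumptions of \cite[\S 3.1]{gpssectorsoc}, the K\"unneth theorem makes each $F\otimes\cdots\otimes F$, and hence their direct sum, a quasi-isomorphism. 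So $\bar F$ is a quasi-isomorphism on associated graded; as the filtration is exhaustive and bounded below, one concludes (by the convergent spectral sequence, or by inducting over $m$ with the five lemma applied to $0\to\mathcal F_{\le m-1}\to\mathcal F_{\le m}\to\mathrm{gr}_m\to 0$ and then passing to the filtered colimit, homology commuting with such colimits) that $\bar F$ is a quasi-isomorphism $\Hom_{\C/\A}(X,Y)\to\Hom_{\D/F(\A)}(FX,FY)$ for all $X,Y$ — i.e.\ $\bar F$ is fully faithful.

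I expect the main conceptual point — rather than any computational difficulty — to be the recognition that this statement is \emph{not} formal at the level of triangulated categories: the Verdier quotient of a fully faithful triangulated functor need not be fully faithful, since morphisms in the target quotient are a priori computed through mediating objects that need not come from the source. Full faithfulness survives here precisely because the $\ainf$-model of the quotient constrains the mediating objects to lie in $\A$, and the word-length filtration isolates exactly those contributions, reducing everything to K\"unneth plus full faithfulness of $F$. The remaining points — checking that the length-decreasing part of the bar differential really is length-decreasing, and supplying the cofibrancy input for K\"unneth over a general coefficient ring — are routine given the framework already set up.
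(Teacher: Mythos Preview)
Your proposal is correct and takes essentially the same approach as the paper: both write out the bar-complex model of the quotient morphism spaces, filter by word length, and observe that full faithfulness of $F$ gives a quasi-isomorphism on associated graded, hence on the total complex. Your version is more detailed (explicitly invoking cofibrancy for K\"unneth, handling the $\Tw\C$ reduction, and justifying convergence of the filtration), whereas the paper compresses all of this into two sentences.
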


\begin{proof}
The action of the quotient functor on morphisms is given by
\begin{multline}
\smash{\bigoplus_{\begin{smallmatrix}p\geq 0\\Y_1,\ldots,Y_p\in\A\end{smallmatrix}}}\C(X,Y_1)\otimes\cdots\otimes\C(Y_p,Z)\\\to\bigoplus_{\begin{smallmatrix}p\geq 0\\Y_1,\ldots,Y_p\in\A\end{smallmatrix}}\D(F(X),F(Y_1))\otimes\cdots\otimes\D(F(Y_p),F(Z)).
\end{multline}
The induced map on associated gradeds of the domain and codomain above (filtered by $p$)
is a quasi-isomorphism since $F$ is fully faithful, and hence the map itself is also quasi-isomorphism.
\end{proof}

\subsection{Abstract wrapped Floer setups}\label{abstractfloersec}

We divide the definition of the partially wrapped Fukaya category into two parts, one algebraic and one geometric (this is a reworking of \cite[\S 3]{gpssectorsoc}).
This subsection explains the algebraic part.
We introduce an abstract categorical structure called an `abstract (wrapped) Floer setup' which axiomatizes certain basic properties of holomorphic curve counts.
We then show that any such structure gives rise to an $\ainf$-category, called its wrapped Fukaya category.
The next subsection explains the geometric part; it shows that holomorphic curve counts actually satisfy the axioms of an abstract wrapped Floer setup.

The basic reason for most of the complication in this subsection is the fact on the one hand, an $\ainf$-category has a single well-defined operation for every tuple $(L_0,\ldots,L_k)$, while on the other, counting pseudo-holomorphic curves yields operations only for those tuples $(L_0,\ldots,L_k)$ which are mutually transverse, and these operations moreover depend on a choice of Floer data.
One of our main tasks here is to upgrade a structure of the latter sort to one of the former (we note that work of Efimov \cite{efimovks} addresses a closely related question).
Our other main task is to `localize at continuation maps' by `wrapping Lagrangians'.
In fact, we perform these two tasks simultaneously.

We first introduce a notion of `$\ainf$-pre-category'
which formalizes the notion of having $\ainf$ operations only for certain tuples $(L_0,\ldots,L_k)$, which moreover depend on extra data (in practice, Floer data) chosen for the given tuple.
Recall the semisimplex category $\Delta_+$ of finite totally ordered
sets and strictly order preserving inclusions.   A semisimplicial set is a functor $\Delta_+^\op \to \Set$; given 
a semisimplicial set $\Sigma$ we write $\Sigma_n$ for the image of the ordered set $\{0<1<\cdots<n\}$.

\begin{definition}
An $\ainf$-pre-category (contrast with the definitions in \cite{abouzaidhighergenus,efimovks}) consists of the following data:
\begin{enumerate}
\item A semisimplicial set $X$ (the `objects' are the vertices $X_0$).
\item For every pair of objects $x,y\in X_0$ for which there exists an edge $x\xrightarrow ey$, a graded module $\Hom(x,y)$.
\item For every simplex $\sigma\in X_k$ of dimension $k\geq 1$, a map
\begin{equation*}
\mu^k_\sigma:\Hom(v_0,v_1)\otimes\cdots\otimes\Hom(v_{k-1},v_k)\to\Hom(v_0,v_k)[2-k]
\end{equation*}
where $v_0,\ldots,v_k\in X_0$ denote the vertices of $\sigma$.
These maps $\mu^k_\sigma$ (together with those associated to each face of $\sigma$) should satisfy the $\ainf$ relations.
In particular, $(\Hom(x,y),\mu^1_e)$ is a cochain complex for every edge $e$ from $x$ to $y$, and we require that each such complex be cofibrant in the sense fixed in \S\ref{ainfdefns}.
\end{enumerate}
\end{definition}

\begin{remark}\label{ainfprealtdef}
The data of an $\ainf$-pre-category can also be presented as follows, which is more in line with how we will think of it below:
\begin{enumerate}
\item A set $\sL$ of `objects' (this is the set of vertices $X_0$).
\item For each tuple $(L_0,\ldots,L_k)$ ($k\geq 1$) of objects, a set $D(L_0,\ldots,L_k)$ (this is the set of $k$-simplices $X_k$ spanning vertices $(L_0,\ldots,L_k)$).
\item Forgetful maps $D(L_0,\ldots,L_k)\to D(L_{i_0},\ldots,L_{i_\ell})$ for subsequences $0\leq i_0<\cdots<i_\ell\leq k$ with $\ell\geq 1$, required to be compatible with composition (these are the `face maps' $X_k\to X_\ell$).
\item For each pair $(L_0,L_1)$ with $D(L_0,L_1)\ne\varnothing$, a graded module $\Hom(L_0,L_1)$.
\item For each element $\delta\in D(L_0,\ldots,L_k)$, a map
\begin{equation*}
\mu^k_\delta:\Hom(L_0,L_1)\otimes\cdots\otimes\Hom(L_{k-1},L_k)\to\Hom(L_0,L_k)[2-k]
\end{equation*}
satisfying the $\ainf$ relations with respect to the forgetulful maps on the sets $D$.
We require every cochain complex $(\Hom(L_0,L_1),\mu^1_\delta)$ to be cofibrant.
\end{enumerate}
\end{remark}

From an $\ainf$-pre-category, one can form certain directed $\ainf$-categories via the following construction, which will be particularly relevant for us later.
Recall that the \emph{nerve} of a poset $P$ is the semisimplicial set whose $n$-simplices are the totally ordered tuples $p_0>\cdots>p_n$ in $P$ (beware that this is the `opposite' of the standard convention).

\begin{definition}\label{def: Op}
Let $X$ be an $\ainf$-pre-category and let $P$ be a poset.
A map $\eta:P\to X$ (meaning a map from the nerve of $P$ to the semisimplicial set $X$) determines a strictly unital $\ainf$-category $\OO_P$ as follows.
An object of $\OO_P$ is an element $p\in P$, and the morphisms are given by
\begin{equation}
\OO_P(p,q):=\begin{cases}\Hom(\eta(p,q))&p>q,\\\ZZ \langle\1_p \rangle &p=q,\\0,&\text{else.}\end{cases}
\end{equation}
The $\ainf$ operation for a tuple $p_0>\cdots>p_k$ is the operation in $X$ associated to the image of the simplex $(p_0,\ldots,p_k)$ under $\eta$, namely $\mu^k_{\eta(p_0,\ldots,p_k)}$.
The remaining operations (namely those involving one or more $\1_p$) are specified uniquely by requiring that $\1_p$ be a strict unit (that is, they all vanish, except in the case of $k=2$ for triples $p>q=q$ and $p=p>q$, for which they are the `identity', up to a sign depending on one's sign convention, compare \cite{seidelsubalgebras, seidelbook}).
In particular, the $\ainf$-category $\OO_P$ is strictly unital.
Note that the morphism complexes in $\OO_P$ are indeed cofibrant, as required in our definition of $\ainf$-category.
\end{definition}

In practice, we will consider $\ainf$-pre-categories coming from counting holomorphic disks with Lagrangian boundary conditions.
An element of $X_k$ will be a tuple of mutually transverse Lagrangians $(L_0,\ldots,L_k)$ together with a choice of `Floer data' for counting holomorphic disks with boundary conditions $(L_0,\ldots,L_k)$ (read counterclockwise).
The upper left picture in Figure \ref{figureaxiomcounts} is intended to indicate such disks.

The construction of the Fukaya category we employ also requires counting the holomorphic disks illustrated in the other five pictures in Figure \ref{figureaxiomcounts}.
We organize the relevant sets of Floer data and associated curve counts into a structure which we call an `abstract Floer setup'.

\begin{figure}[ht]
\centering
\includegraphics[max width=.95\textwidth]{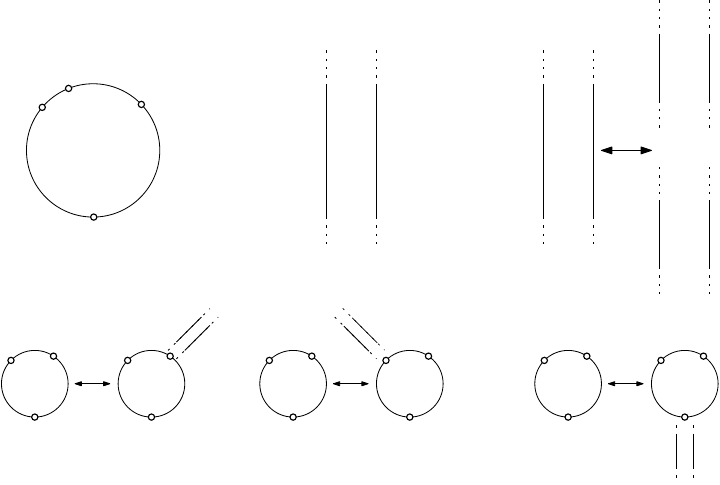}
\caption{An abstract Floer setup keeps track of Floer data for counting holomorphic maps with Lagrangian boundary conditions from the domains illustrated here.  These are, precisely: holomorphic disks with boundary conditions $(L_0,\ldots,L_k)$ (upper left), strips $\RR\times[0,1]$ with boundary conditions $(L_0,L_1)$ (upper middle), a one-parameter family of strips $\RR\times[0,1]$ over an interval which at one of the endpoints breaks into two strips $\RR\times[0,1]\sqcup\RR\times[0,1]$ (upper right), and families of three-pointed disks over intervals which at one of the endpoints break off a strip $\RR\times[0,1]$ at one of the three boundary marked points (lower row).  Floer data for each configuration, respectively, is specified by an element of $D$, $D'$, $D''$ (top row) or $D'''_i$ for $i=0,1,2$ (bottom row).}\label{figureaxiomcounts}
\end{figure}

\begin{definition}
An \emph{abstract Floer setup} $\SSS$ consists of the following data.
\begin{enumerate}
\item\label{afslagra}(Lagrangians) A set $\sL$.
\item(Composability) Subsets $\sL_k\subseteq\sL^{k+1}$ (whose elements are called `composable tuples') for integers $k\geq 1$ which are closed under passing to subsequences, in the sense that if $(L_0,\ldots,L_k)\in\sL_k$ then $(L_{i_0},\ldots,L_{i_\ell})\in\sL_\ell$ for every $0\leq i_0<\cdots<i_\ell\leq k$.
\item(Floer cochain modules) Graded modules $CF^\bullet(L,K)$ for all $(L,K)\in\sL_1$.
\item\label{afsfloerdata}(Floer data) Sets $D(L_0,\ldots,L_k)$ (whose elements are called `Floer data') for all composable tuples $(L_0,\ldots,L_k)$, together with restriction maps $D(L_0,\ldots,L_k)\to D(L_{i_0},\ldots,L_{i_\ell})$ for every subsequence $0\leq i_0<\cdots<i_\ell\leq k$ with $\ell\geq 1$.
The restriction maps should be compatible with composition; that is, $D$ is a contravariant functor to the category of sets from the category whose objects are composable tuples and whose morphisms are inclusions of subsequences.
\item\label{afsfloerdatacontr}(Contractibility of Floer data) We require that the map from $D(L_0,\ldots,L_k)$ to the limit of $D$ applied to all proper subsequences of $(L_0,\ldots,L_k)$ should be surjective (in the degenerate case $k=1$, in which there are no proper subsequences, this colimit is a single point, so the assertion becomes that $D(L_0,L_1)$ is nonempty); this encodes the fact that `Floer data can be constructed by induction'.
\item\label{afscounts}(Holomorphic disk counts) A map
\begin{equation*}
\mu^k_\delta:CF^\bullet(L_0,L_1)\otimes\cdots\otimes CF^\bullet(L_{k-1},L_k)\to CF^\bullet(L_0,L_k)[2-k]
\end{equation*}
for every element $\delta\in D(L_0,\ldots,L_k)$.
These should satisfy the relations induced by all `codimension one splittings' of disks (known as the $\ainf$ relations), with respect to the forgetful maps on $D$.
In particular, $(CF^\bullet(L,K),\mu^1_\delta)$ is a cochain complex, and we require each such complex to be cofibrant in the sense fixed in \S\ref{ainfdefns}.
\newcounter{savecounting}
\setcounter{savecounting}{\value{enumi}}
\end{enumerate}
Axioms \ref{afslagra}--\ref{afsfloerdata} and \ref{afscounts} above are equivalent to the axioms of Remark \ref{ainfprealtdef}; they are thus the same as specifying an $\ainf$-pre-category, which we denote by $\F_\SSS^{\pre}$.
Axiom \ref{afsfloerdatacontr} says roughly speaking that the `topology' of the space of operations is trivial, up to knowing which tuples are composable.
Data satsifying these axioms will be produced by counting holomorphic disks as in the upper left corner of Figure \ref{figureaxiomcounts}.
We also record counts of holomorphic disks in the other five pictures in Figure \ref{figureaxiomcounts}, which take the following form.
\begin{enumerate}
\setcounter{enumi}{\value{savecounting}}
\item(Floer data) Sets $D'(L,K)$, $D''(L,K)$, and $D'''_i(L_0,L_1,L_2)$ for $i=0,1,2$, and surjective maps
\begin{align*}
D'(L,K)&\to D(L,K)\times D(L,K),\\
D''(L,K)&\to D'(L,K)\times_{D(L,K)\times D(L,K)}(D'(L,K)\times_{D(L,K)}D'(L,K)),
\end{align*}
and from $D'''_i(L_0,L_1,L_2)$ for $i=0,1,2$, respectively, to
\begin{align*}
D(L_0,L_1,L_2)\times_{D(L_0,L_1)\times D(L_1,L_2)\times D(L_0,L_2)}(D(L_0,L_1,L_2)\times_{D(L_0,L_1)}D'(L_0,L_1)),\\
D(L_0,L_1,L_2)\times_{D(L_0,L_1)\times D(L_1,L_2)\times D(L_0,L_2)}(D(L_0,L_1,L_2)\times_{D(L_1,L_2)}D'(L_1,L_2)),\\
D(L_0,L_1,L_2)\times_{D(L_0,L_1)\times D(L_1,L_2)\times D(L_0,L_2)}(D(L_0,L_1,L_2)\times_{D(L_0,L_2)}D'(L_0,L_2)).
\end{align*}
\item(Holomorphic disk counts)
\begin{align*}
\delta' \in D'(L,K)  \quad &   \alpha_{\delta'}: CF^\bullet(L,K) \to CF^\bullet(L,K)  \\ 
\delta'' \in D''(L,K) \quad & \beta_{\delta''}: CF^\bullet(L,K)\to CF^\bullet(L,K)[-1] \\ 
\delta''' \in D'''_i(L_0,L_1,L_2) \quad & \gamma_{\delta'''}: CF^\bullet(L_0,L_1)\otimes CF^\bullet(L_1,L_2)\to CF^\bullet(L_0,L_2)[-1]
\end{align*}
satisfying the identities coming from codimension one degenerations of these disks, namely the following.
The map $\alpha_{\delta'}$ is a chain map, where the domain and codomain are equipped with the differentials $\mu^1_\delta$ associated to the image of $\delta'$ under the map $D'(L,K)\to D(L,K)\times D(L,K)$.
The map $\beta_{\delta''}$ is a chain homotopy between an $\alpha$ map and the composition of two $\alpha$ maps associated to the image of $\delta''$ under the forgetful map out of $D''(L,K)$.
The map $\gamma_{\delta'''}$ is a chain homotopy between a $\mu^2$ map and the composition of an $\alpha$ and a $\mu^2$ map (where $\alpha$ is composed depends on $i$).
\item A map $f: D(L,K)\to D'(L,K)$ whose composition with the forgetful map $D'(L,K)\to D(L,K)^2$ is the diagonal and 
such that every map $\alpha_{f(\delta)}: CF^\bullet(L,K)\to CF^\bullet(L,K)$ is the identity.
\end{enumerate}
\end{definition}

An abstract Floer setup $\SSS$ determines, quite directly, a pre-category $H^\bullet\F_\SSS^{\pre}$ enriched in the category of graded modules, as we are about to explain.
Note that a general $\ainf$-pre-category $\C$ \emph{does not} determine a cohomology pre-category $H^\bullet\C$, since nothing forces canonical isomorphisms between $H^\bullet(\Hom(x,y),\mu^1_e)$ for different edges $e:x\to y$.
The purpose of the extra sets of Floer data $D'$, $D''$, $D'''_i$, and their associated operations in an abstract Floer setup is precisely to fix such isomorphisms and to ensure they are compatible with multiplication $\mu^2$ up to homotopy.

We use $h$ to indicate passing from a complex to the associated object in the homotopy category of complexes.
An object of this category will be called cofibrant iff it can be represented by a cofibrant complex.
A morphism of cofibrant objects in the homotopy category is an isomorphism iff it is a quasi-isomorphism \cite[Lemma 3.6]{gpssectorsoc}.
Instead of defining $H^\bullet\F_\SSS^{\pre}$, we will do a bit better and define $h\F_\SSS^{\pre}$.

\begin{definition}
Let $\SSS$ be an abstract Floer setup.
Its \emph{Donaldson--Fukaya pre-category $h\F_\SSS^{\pre}$} consists of the following data:
\begin{enumerate}
\item The set of objects $\sL$ (from $\SSS$).
\item\label{precatcomposable}The composable tuples $(L_0,\ldots,L_k)$ in $\sL_k\subseteq\sL^{k+1}$ (from $\SSS$).
\item For $(L_0,L_1)\in\sL_1$, a cofibrant object $hF^\bullet(L_0,L_1)$ in the homotopy category of complexes.
\item For $(L_0,L_1,L_2)\in\sL_2$, a `composition' map $hF^\bullet(L_0,L_1)\otimes hF^\bullet(L_1,L_2)\to hF^\bullet(L_0,L_2)$.
\item For $(L_0,L_1,L_2,L_3)\in\sL_3$, the two maps $hF^\bullet(L_0,L_1)\otimes hF^\bullet(L_1,L_2)\otimes hF^\bullet(L_2,L_3)\to hF^\bullet(L_0,L_3)$ obtained by composing in either order agree.
\item For $(L_0,\ldots,L_k)\in\sL_k$ for $k\geq 4$, no additional data is specified, although we retain (as mentioned already in \ref{precatcomposable} above) the data from $\SSS$ of which such tuples are to be regarded as `composable'.
\end{enumerate}
These are constructed as follows from the data of $\SSS$.
An element of $D(L,K)$ determines a differential $\mu^1$ on $CF^\bullet(L,K)$.
An element of $D'(L,K)$ determines a chain map between $CF^\bullet(L,K)$ equipped with any two such differentials.
These chain maps compose up to chain homotopy by virtue of the elements of $D''(L,K)$.
Furthermore, the map on $CF^\bullet(L,K)$ determined by an element in the image of $D(L,K)\to D'(L,K)$ is the identity.
This defines a cofibrant object $hF^\bullet(L,K)$ in the homotopy category of complexes (well defined up to unique isomorphism).
In particular, its cohomology groups $HF^\bullet(L,K)$ are well defined.
The composition maps for composable $(L_0,L_1,L_2)$ are the operations $\mu^2$ associated to elements of $D(L_0,L_1,L_2)$.
These composition maps are well defined in the homotopy category by virtue of the homotopies encoded by elements of $D'''_i(L_0,L_1,L_2)$.
They are associative for composable $(L_0,L_1,L_2,L_3)$ using the $\mu^3$ associated to an element of $D(L_0,L_1,L_2,L_3)$.  
\end{definition}

The \emph{wrapped} Fukaya category $\W_\SSS$ will depend on an abstract Floer setup $\SSS$ together with extra data.
The most important piece of extra data is a class of morphisms $C$ called `continuation maps'.
Morally speaking, the wrapped Fukaya category is simply the localization of $\F_\SSS^{\pre}$ at $C$; in fact, Conjecture \ref{wrappedlocalizationconjecture} predicts that there is a precise sense in which this is true.
The construction of the wrapped Fukaya category will, however, depend \emph{a priori} on more data than just the continuation maps.%
\footnote{It is, of course, well known that localizations of categories typically become tractable only once additional assumptions are imposed (for example that the class of morphisms being localized at is a multiplicative system) or extra data is chosen (for example a model structure).}
Although Conjecture \ref{wrappedlocalizationconjecture} would imply that the final result of this construction depends only on the continuation maps, it would not imply that the extra data is totally extraneous either, since this extra data not only allows to define the wrapped Fukaya category but also to understand its morphisms in terms of geometrically wrapping Lagrangian submanifolds.
We encode the abstract data used to define the wrapped Fukaya category into the following axiomatic setup:

\begin{definition}\label{abstractwrappedfloersetupdef}
An \emph{abstract wrapped Floer setup} is an abstract Floer setup $\SSS$ along with the following:
\begin{enumerate}
\item\label{envelope}A category $H^\bullet\F_\SSS$ with objects $\sL$ (the Lagrangians of $\SSS$) enriched over graded modules, which coincides with $H^\bullet\F_\SSS^{\pre}$ for composable tuples (we call $H^\bullet\F_\SSS$ an `envelope' for $H^\bullet\F_\SSS^{\pre}$; morphisms in $H^\bullet\F_\SSS$ are denoted $HF^\bullet$).
\item\label{continuation}A set $C$ of morphisms in $H^0\F_\SSS$ (called `continuation maps') closed under composition.
\item\label{wrappingcategory}For all $L\in\sL$, a filtered category $R_L$ of countable cofinality (called the `wrapping category' of $L$) together with a functor $R_L\to((H^0\F_\SSS)_{/_CL})^\op$ (where $(H^0\F_\SSS)_{/_CL}$ is the `continuation slice category' of $L$, i.e.\ the full subcategory of the slice category $(H^0\F_\SSS)_{/L}$ spanned by those morphisms $L^w\to L$ which are continuation maps).
We define
\begin{equation*}
HW^\bullet(L,K)=\varinjlim_{L^w\in R_L}HF^\bullet(L^w,K),
\end{equation*}
and we require that the map $HW^\bullet(L,K)\to HW^\bullet(L,K')$ given by multiplying on the right by any continuation map $K\to K'$ be an isomorphism (this is the `right locality' property).
\item\label{perturbation}We require the following `factorization property': for all morphisms $A\leadsto B$ in $R_L$ and all finite sets $\mathbf K=\{(K^j_1,\ldots,K^j_{a_j})\}_j$ of composable tuples, there must exist a factorization $A\leadsto H\leadsto B$ in $R_L$ of the given morphism $A\leadsto B$ such that each tuple $(H,K^j_1,\ldots,K^j_{a_j})$ is composable (where here we mean the image of $H$ in $\sL$).
Moreover, we require that there always be uncountably many possibilities for the image of $H$ in $\sL$.
\end{enumerate}
\end{definition}

\begin{remark}
The existence for every $L$ of a filtered category $R_L$ satisfying the right locality property is equivalent to the assertion that the set of continuation maps $C$ is a `right multiplicative system' (for basic properties of right multiplicative systems see \cite[Chapter I]{gabrielzisman}).
In this case, the opposite continuation slice category $((H^0\F_\SSS)_{/_CL})^\op$ is itself filtered and satisfies the right locality property, and the functor $R_L\to((H^0\F_\SSS)_{/_CL})^\op$ is cofinal (making the choice of $R_L$ essentially irrelevant).
We make no logical appeal to any of these facts, rather we simply note that they suggest the existence of a somewhat weaker axiomatic framework which is sufficient for the construction of the wrapped Fukaya which follows below.
This is also evidenced by the fact that the wrapped Fukaya category depends only on the \emph{existence} of wrapping categories (see Lemma \ref{HWlocalizationproperty}, Remark \ref{WSprereq}, and the discussion surrounding \eqref{setupmorphism}).
Somewhat orthogonally, we also expect the choice of envelope to be unnecessary (see Conjecture \ref{wrappedlocalizationconjecture}), although the technical simplifications resulting from having a category (as opposed to the pre-category) should not be underestimated.
\end{remark}

\begin{definition}
The \emph{wrapped Donaldson--Fukaya category} $H^\bullet\W_\SSS$ of an abstract wrapped Floer setup is the full subcategory of $\Pro H^\bullet\F_\SSS$ spanned by $R_L$ for $L\in\sL$.
In other words, the objects of $H^\bullet\W_\SSS$ are the Lagrangians $\sL$ of $\SSS$, and morphisms are given by
\begin{equation}
\Hom(L,K)=\varprojlim_{K^w\in R_K}\varinjlim_{L^w\in R_L}HF^\bullet(L^w,K^w),
\end{equation}
with the evident notion of composition.
In view of the right locality property of wrapping, the inverse system over $K^w\in R_K$ is in fact constant, so $\Hom(L,K)$ is nothing other than $HW^\bullet(L,K)$.
\end{definition}

It is essentially tautological that $H^\bullet\W_\SSS$ is the localization of $H^\bullet\F_\SSS$ at the continuation maps, in the following precise sense:

\begin{lemma}\label{HWlocalizationproperty}
For any category $\C$, the map
\begin{equation}
\Fun(H^\bullet\W_\SSS,\C)\to\Fun(H^\bullet\F_\SSS,\C)
\end{equation}
is fully faithful, and its essential image consists of those functors $H^\bullet\F_\SSS\to\C$ which send continuation maps to isomorphisms.
\end{lemma}

\begin{proof}
For $F,G:H^\bullet\W_\SSS\to\C$, consider the map $\Hom(F,G)\to\Hom(F|_{H^\bullet\F_\SSS},G|_{H^\bullet\F_\SSS})$.
This map is an inclusion of subsets of $\prod_L\Hom(F(L),G(L))$, so is certainly injective.
It is also surjective since every morphism in $H^\bullet\W_\SSS$ is a composition of morphisms in $H^\bullet\F_\SSS$.

Finally, let $F:H^\bullet\F_\SSS\to\C$ be any functor sending continuation maps to isomorphisms.
To extend it to $H^\bullet\W_\SSS$, we consider the map
\begin{equation}
\Hom(L,K)=\varprojlim_{K^w\in R_K}\varinjlim_{L^w\in R_L}HF^\bullet(L^w,K^w)\xrightarrow{F}\varprojlim_{K^w\in R_K}\varinjlim_{L^w\in R_L}\Hom_\C(F(L^w),F(K^w)),
\end{equation}
and we note that the right hand side is identified with $\Hom_\C(F(L),F(K))$ since $F$ sends continuation maps to isomorphisms.
This is compatible with composition by inspection.
\end{proof}

Our final task is to upgrade $H^\bullet\W_\SSS$ to an $\ainf$-category $\W_\SSS$ called the \emph{wrapped Fukaya category} of the abstract wrapped Floer setup $\SSS$.

A \emph{decorated poset} for an abstract wrapped Floer setup $\SSS$ is a poset $P$ together with a map $\eta:P\to\F_\SSS^{\pre}$ as in Definition \ref{def: Op}.
Such a map $\eta$ is thus the assignment to each $p\in P$ of a Lagrangian $L_p\in\sL$ such that for each totally ordered subset $p_r>\ldots>p_0\in P$ the resulting tuple of Lagrangians $(L_{p_r},\ldots,L_{p_0})$ is composable, along with compatible choices of `Floer data', i.e.\ elements of $D(L_{p_r},\ldots,L_{p_0})$, for all such totally ordered subsets of $P$.
We denote the resulting $\ainf$-category by $\OO_P$ (Definition \ref{def: Op}).

Given a decorated poset $P$, we define
\begin{equation}
\W_P:=\OO_P[I^{-1}]
\end{equation}
to be the localization of $\OO_P$ at the set $I$ of morphisms in $H^0\OO_P$ which are (sent to) isomorphisms in $H^\bullet\W_\SSS$ (so in particular $I$ contains all continuation maps in $H^0\OO_P$).
For general decorated posets $P$, the category $\W_P$ has little significance.
However for certain carefully chosen $P$, it will be a full subcategory of $\W_\SSS$.

\begin{definition}
Let $P$ be a decorated poset.
A \emph{$P$-wrapping sequence} is a sequence $p_0<p_1<\cdots\in P$ which is cofinal in $P$ along with elements of $I$ in $H^0\OO_P(p_{i+1},p_i)=HF^0(L_{p_{i+1}},L_{p_i})$ such that the natural map
\begin{equation}\label{pwrappingdef}
\varinjlim_iHF^\bullet(L_{p_i},K)\to\varinjlim_iHW^\bullet(L_{p_i},K)=HW^\bullet(L_{p_0},K)
\end{equation}
is an isomorphism for every $K$.
If every $p\in P$ belongs to a $P$-wrapping sequence, we say that $P$ is \emph{sufficiently wrapped}.
Note that if $L_p\cong L_{p'}$ in $H^\bullet\W_\SSS$, then $p$ belongs to a $P$-wrapping sequence iff $p'$ does.
\end{definition}

\begin{lemma}\label{pwrappingworks}
For any $P$-wrapping sequence $p_0<p_1<\cdots\in P$, the natural maps
\begin{equation}\label{pwrappingworkseqn}
\varinjlim_iH^\bullet\OO_P(p_i,q)\to\varinjlim_iH^\bullet\W_P(p_i,q)
\end{equation}
are both isomorphisms.
More generally, for any left $\OO_P$-module $\M$, the natural maps
\begin{equation}\label{pwrappingworkseqnmodule}
\varinjlim_iH^\bullet\M(p_i)\to\varinjlim_iH^\bullet{}_{I^{-1}}\M(p_i)
\end{equation}
are both isomorphisms.
\end{lemma}

\begin{proof}
The direct limit $\varinjlim_iH^\bullet\OO_P(p_i,-)=HW^\bullet(L_{p_0},L_-)$ sends morphisms in $I$ to isomorphisms, which implies that \eqref{pwrappingworkseqn} is an isomorphism by \cite[Lemma 3.16]{gpssectorsoc}.
We may reduce \eqref{pwrappingworkseqnmodule} to \eqref{pwrappingworkseqn} as follows.
The natural map $\OO\otimes_\OO\M\to\M$ is a quasi-isomorphism \cite[Lemma 3.7]{gpssectorsoc}, so it suffices to consider modules of the form $\OO\otimes_\OO\M$.
Such modules have a `length filtration' (coming from the tensor product) whose subquotients take the form $\OO(-,q_0)\otimes\cdots\otimes\OO(q_{r-1},q_r)\otimes\M(q_r)$.
It thus suffices to prove the desired result for such modules.
Now each tensor factor in ${}\otimes\OO(q_0,q_1)\otimes\cdots\otimes\OO(q_{r-1},q_r)\otimes\M(q_r)$ is cofibrant, and tensoring with cofibrant complexes preserves acyclicity \cite[Definition 3.2(v)]{gpssectorsoc}.
We are thus reduced to the case of the modules $\OO(-,q_0)$ which is just \eqref{pwrappingworkseqn} (in these reduction steps, we are using the fact that the map \eqref{pwrappingworkseqnmodule} can be realized on the chain level by taking mapping telescopes).
\end{proof}

\begin{proposition}\label{suffwrapequiv}
If $P$ is sufficiently wrapped, then there is a canonical full faithful inclusion $H^\bullet\W_P\subseteq H^\bullet\W_\SSS$.
This inclusion is compatible with inclusions $P\subseteq P'$.
\end{proposition}

\begin{proof}
Let $p\in P$.
A choice of $P$-wrapping sequence $p=p_0<p_1<\cdots$ determines a chain of isomorphisms
\begin{equation}
HW^\bullet(L_p,L_q)\xleftarrow\sim\varinjlim_iH^\bullet\OO_P(p_i,q)\to\varinjlim_iH^\bullet\W_P(p_i,q)\xleftarrow\sim H^\bullet\W_P(p,q)
\end{equation}
in view of Lemma \ref{pwrappingworks} and the definition of a $P$-wrapping sequence.
Next we consider the canonicity of this isomorphism.

Consider an inclusion $P\subseteq P'$.
Let $p\in P$, and choose a $P$-wrapping sequence $p=p_0<p_1<\cdots\in P$.
Now each $p_i$, regarded as an element of $P'$, has a $P'$-wrapping sequence $p_i=p_{i0}<p_{i1}<\cdots$.
These choices are summarized in the following diagram of solid arrows
\begin{equation}
\begin{tikzcd}[row sep=small]
&\vdots\ar[d]&\vdots\ar[d]&\vdots\ar[d]\\
\cdots\ar[r,dotted]&p_{22}\ar[d]\ar[r,dotted]&p_{12}\ar[d]\ar[r,dotted]&p_{02}\ar[d]\\
\cdots\ar[r,dotted]&p_{21}\ar[d]\ar[r,dotted]&p_{11}\ar[d]\ar[r,dotted]&p_{01}\ar[d]\\
\cdots\ar[r]&p_{20}\ar[d,equal]\ar[r]&p_{10}\ar[d,equal]\ar[r]&p_{00}\ar[d,equal]\\
\cdots\ar[r]&p_2\ar[r]&p_1\ar[r]&p_0\ar[r,equal]&p
\end{tikzcd}
\end{equation}
Going by induction on $i$, we note that since the sequence $p_i=p_{i0}<p_{i1}<\cdots$ is cofinal in $P'$, we can delete some of its members and re-index so as to ensure that $p_{ij}>p_{i-1,j}$.
Moreover, in view of \eqref{pwrappingdef}, we can do this deleting and re-indexing (by induction on $j$) so as to ensure that each composition $p_{ij}\to p_{i,j-1}\to p_{i-1,j-1}$ factors as $p_{ij}\to p_{i-1,j}\to p_{i-1,j-1}$.
This fixes dotted arrows in the above diagram, making everything commute.
The solid arrows are by definition in $I$, and this implies the new dotted arrows are as well.
Now the following diagram obviously commutes:
\begin{equation}
\begin{tikzcd}[column sep=tiny,row sep=small]
{}&\ar[dl,swap,"\sim"]\varinjlim_jH^\bullet\OO_{P'}(p_{0j},q)\ar[r]\ar[d]&\varinjlim_jH^\bullet\W_{P'}(p_{0j},q)\ar[r,equal]\ar[d]&H^\bullet\W_{P'}(p,q)\ar[d,equal]\\
HW^\bullet(L_p,L_q)&\ar[l,swap,"\sim"]\varinjlim_{i,j}H^\bullet\OO_{P'}(p_{ij},q)\ar[r]&\varinjlim_{i,j}H^\bullet\W_{P'}(p_{ij},q)\ar[r,equal]&H^\bullet\W_{P'}(p,q)\\
{}&\ar[ul,"\sim"]\varinjlim_iH^\bullet\OO_P(p_i,q)\ar[r]\ar[u]&\varinjlim_iH^\bullet\W_P(p_i,q)\ar[r,equal]\ar[u]&H^\bullet\W_P(p,q)\ar[u]
\end{tikzcd}
\end{equation}
We have thus shown that the identifications $H^\bullet\W_P(p,q)=HW^\bullet(L_p,L_q)=H^\bullet\W_{P'}(p,q)$ are compatible with the map $H^\bullet\W_P(p,q)\to H^\bullet\W_{P'}(p,q)$.

Finally, let us conclude.
The first paragraph of this proof constructs an identification $H^\bullet\W_P(p,q)=HW^\bullet(L_p,L_q)$ depending on a choice of $P$-wrapping sequence for $p$.
The next paragraph shows that for $P\subseteq P'$, the identifications induced by any $P$-wrapping sequence and any $P'$-wrapping sequence of $p\in P$ coincide.
In particular, when $P=P'$, the identification $H^\bullet\W_P(p,q)=HW^\bullet(L_p,L_q)$ is independent of choice of $P$-wrapping sequence.
\end{proof}

We now turn to the construction of sufficiently wrapped decorated posets.
An inclusion of posets $P\subseteq P'$ will be called \emph{downward closed} iff $P\ni p\geq q\in P'$ implies $q\in P$.
A poset $P$ is called \emph{cofinite} iff $P^{\leq p}$ is finite for all $p\in P$.

\begin{lemma}\label{pcanwrap}
Every countable cofinite decorated poset $P$ admits a downward closed inclusion into a sufficiently wrapped countable cofinite decorated poset $P'$.
\end{lemma}

\begin{proof}
Choose arbitrarily a sequence of finite downward closed subsets $S_0\subseteq S_1\subseteq\cdots\subseteq P$ whose union is $P$.
Our new poset will be $P'=P\sqcup\ZZ_{\geq 0}$ as a set, equipped with the following ordering.
The ordering on $P$ will be the given one, and the ordering on $\ZZ_{\geq 0}$ the usual one.
The only additional order relations will be that $i\in\ZZ_{\geq 0}$ is larger than everything in $S_i$.

Our task is to assign Lagrangians to the elements of $\ZZ_{\geq 0}$ so that every finite chain in $P'$ is composable and so that $P'$ is sufficiently wrapped.
(The `Floer data' part of the decorations has no bearing on whether $P'$ is sufficiently wrapped, and can be chosen by induction on the skeleta of the nerve of $P'$ after fixing the Lagrangian labels.)

Color the elements of $\ZZ_{\geq 0}$ with the elements of $P$, so that every color appears infinitely often.
That is, choose a map $q:\ZZ_{\geq 0}\to P$ all of whose fibers are infinite.
For each $p\in P$, choose a cofinal sequence $L_p^0\leadsto L_p^1\leadsto\cdots$ in $R_{L_p}$.
Denote by $\cdots>i_{p,1}>i_{p,0}\in\ZZ_{\geq 0}$ the collection of indices $i$ with color $q(i)=p$.
We assign to $i_{p,k}$ the Lagrangian $H$ arising from a choice of factorization $L_p^k\leadsto H\leadsto L_p^{k+1}$ in $R_{L_p}$.
We choose these factorizations by induction on $i\in\ZZ_{\geq 0}$, using the factorization property Definition \ref{abstractwrappedfloersetupdef}\ref{perturbation} to ensure that all totally ordered subsets of $P'$ are composable.
Now each sequence $\cdots>i_{p,1}>i_{p,0}\in\ZZ_{\geq 0}\subseteq P'$ is a $P'$-wrapping sequence since it can lifted to $R_{L_p}$ and interleaved with $L_p^0\leadsto L_p^1\leadsto\cdots$, ensuring this lift to $R_{L_p}$ is cofinal.
\end{proof}

\begin{remark}\label{pcanwrapcountable}
In practice, the factorization axiom Definition \ref{abstractwrappedfloersetupdef}\ref{perturbation} often also holds for countable $\mathbf K$.
In this case, an alternative form of Lemma \ref{pcanwrap} also holds: every countable decorated poset admits a downward closed inclusion into a sufficiently wrapped countable decorated poset (i.e.\ delete the word `cofinite' in both the hypothesis and the conclusion).
The proof is the same, except that we no longer require $S_i$ to be finite (and so we could, for example, take $S_i=P$ for all $i$).
\end{remark}

Lemma \ref{pcanwrap} implies the existence of a countable cofinite sufficiently wrapped $P$ containing any countable set of Lagrangians we like.
Moreover, it also implies that given any two countable cofinite sufficiently wrapped posets $P$ and $P'$, their disjoint union can be included into a third countable cofinite sufficiently wrapped $P''$.
Thus any two $\W_P$ (for countable cofinite sufficiently wrapped $P$) embed fully faithfully into a third, in a way which respects the embeddings of their cohomology categories into $H^\bullet\W_\SSS$.
Thus when $H^\bullet\W_\SSS$ has countably many isomorphism classes, we can define $\W_\SSS$ as $\W_P$ for `any' countable cofinite sufficiently wrapped $P$ for which $H^\bullet\W_P\to H^\bullet\W_\SSS$ is essentially surjective (knowing that all such are `canonically' equivalent).
Under the conditions of Remark \ref{pcanwrapcountable}, this holds more generally for all sufficiently wrapped countable decorated posets $P$ (not necessarily cofinite).

We can in fact do slightly better: there is a canonical decorated poset $P$ with a canonical equivalence $H^\bullet\W_P=H^\bullet\W_\SSS$, defined as follows.
Let us say that a decorated poset $P$ has \emph{no duplicates} when $P^{\leq a}$ and $P^{\leq b}$ are non-isomorphic (as decorated posets) whenever $a\ne b$.
Given any two cofinite decorated posets with no duplicates $P$ and $P'$, there is at most one downward closed inclusion $P\to P'$.
There is now a universal cofinite decorated poset with no duplicates $P_{\mathrm{univ}}(\SSS)$ into which all others admit a unique downward closed embedding (compare \cite[Lemma 3.42]{gpssectorsoc}; the elements of $P_{\mathrm{univ}}(\SSS)$ are the isomorphism classes of cofinite decorated posets with no duplicates with a maximum element; clearly these form a \emph{set}, and each one has trivial automorphism group).

\begin{definition}\label{Sgeneralwrapped}
The wrapped Fukaya category of an abstract wrapped Floer setup is $\W_\SSS:=\W_{P_{\mathrm{univ}}(\SSS)}$.
\end{definition}

\begin{proposition}[{compare \cite[Proposition 3.43]{gpssectorsoc}}]
The cohomology category $H^\bullet\W_\SSS$ of the wrapped Fukaya category $\W_\SSS$ from Definition \ref{Sgeneralwrapped} is canonically equivalent to the wrapped Donaldson--Fukaya category $H^\bullet\W_\SSS$ defined previously.
\end{proposition}

\begin{proof}
In Lemma \ref{pcanwrap}, if $P$ has no duplicates, then there exists a $P'$ without duplicates.
Indeed, the Lagrangians labelling the additional elements $\ZZ_{\geq 0}$ of $P'$ are obtained from the factorization property Definition \ref{abstractwrappedfloersetupdef}\ref{perturbation}, which guarantees uncountably many possibilities, so we can inductively label $i\in\ZZ_{\geq 0}$ with a Lagrangian which is distinct from all Lagrangians in $P\sqcup\{0,\ldots,i-1\}$.
It follows that $P_{\mathrm{univ}}(\SSS)$ is the filtered union of its countable cofinite sufficiently wrapped subposets.
Now apply Proposition \ref{suffwrapequiv} to each of them, and note that the construction $P\mapsto\W_P$ commutes with direct limits, as does taking cohomology.
\end{proof}

\begin{remark}\label{WSprereq}
The definition of $\W_\SSS$ involved only the $\ainf$-pre-category $\F_\SSS^{\pre}$, the envelope $H^\bullet\F_\SSS^{\pre}\subseteq H^\bullet\F_\SSS$, and the functor $H^\bullet\F_\SSS\to H^\bullet\W_\SSS$ (and the \emph{existence} of an abstract wrapped Floer setup giving rise to these three pieces of data).
\end{remark}

\begin{conjecture}\label{wrappedlocalizationconjecture}
For suitable definition of $\Fun(\F_\SSS^{\pre},\C)$, the restriction functor
\begin{equation}
\Fun(\W_\SSS,\C)\to\Fun(\F_\SSS^{\pre},\C)
\end{equation}
is fully faithful, with essential image consisting precisely of those functors which send continuation maps to isomorphisms, for any $\ainf$-category $\C$.
\end{conjecture}

We now turn to the functoriality of the $\ainf$-category $\W_\SSS$.
Suppose $\SSS$ and $\SSS'$ are abstract wrapped Floer setups, and fix:
\begin{equation}\label{setupmorphism}
\begin{tikzcd}[row sep=small,column sep=tiny]
\F_\SSS^{\pre}\ar[d,hook]&H^\bullet\F_\SSS^{\pre}\ar[r,hook]\ar[d,hook]&H^\bullet\F_\SSS\ar[r]\ar[d,hook]&H^\bullet\W_\SSS\ar[d]\\
\F_{\SSS'}^{\pre}&H^\bullet\F_{\SSS'}^{\pre}\ar[r,hook]&H^\bullet\F_{\SSS'}\ar[r]&H^\bullet\W_{\SSS'}
\end{tikzcd}
\end{equation}
meaning that $\F_\SSS^{\pre}\subseteq\F_{\SSS'}^{\pre}$ is an inclusion of semisimplicial sets (in particular, specializing on vertex sets to an inclusion of sets of Lagrangians $\sL\subseteq\sL'$) covered by identifications of $\Hom$ modules compatible with the $\ainf$ operations, inducing on cohomology a functor $H^\bullet\F_\SSS^{\pre}\to H^\bullet\F_{\SSS'}^{\pre}$ which extends to a chosen fully faithful functor on envelopes $H^\bullet\F_\SSS\hookrightarrow H^\bullet\F_{\SSS'}$, which in turn extends to a functor $H^\bullet\W_\SSS\to H^\bullet\W_{\SSS'}$ (which by Lemma \ref{HWlocalizationproperty} is unique up to unique isomorphism if it exists).
We shall call such a diagram \eqref{setupmorphism} a \emph{morphism} of abstract wrapped Floer setups.

Given a morphism of abstract wrapped Floer setups $\SSS\to\SSS'$, a decorated poset $P$ for $\SSS$ is also a decorated poset for $\SSS'$, say denoted $P'$ to distinguish it from $P$.
We have $\OO_P=\OO_{P'}$, and any morphism in $H^0\OO_P$ which becomes an isomorphism in $H^\bullet\W_\SSS$ also becomes one in $H^\bullet\W_{\SSS'}$.
Thus there is an induced functor on localizations $\W_P\to\W_{P'}$.
Taking $P$ to be sufficiently wrapped countable cofinite, we have $\W_P=\W_\SSS$, and including $P'$ into a sufficiently wrapped countable cofinite decorated poset for $\SSS'$, we obtain a functor $\W_P'\to\W_{\SSS'}$.
Combining these defines an $\ainf$-functor%
\footnote{In fact, it a `naive inclusion functor', meaning that it is the inclusion of a subset of the objects and subcomplexes of their morphisms spaces, with all higher operations, namely $F^k$ for $k\geq 2$, vanishing.}
$\W_\SSS\to\W_{\SSS'}$ lifting the cohomology level functor $H^\bullet\W_\SSS\to H^\bullet\W_{\SSS'}$.

In fact, we can do a bit better and define a canonical functor $\W_\SSS\to\W_{\SSS'}$ as that induced by taking $P=P_{\mathrm{univ}}(\SSS)$ above and including $P'$ into $P_{\mathrm{univ}}(\SSS')$.
This construction is evidently compatible with composition: given morphisms $\SSS\to\SSS'\to\SSS''$, the induced functors $\W_\SSS\to\W_{\SSS'}\to\W_{\SSS''}$ compose to the functor induced by the composition of diagrams \eqref{setupmorphism} (note that the definition of localization in \cite[Definition 3.17]{gpssectorsoc} is strictly functorial).
In other words, we have defined a strict functor from the category of abstract wrapped Floer setups with morphisms \eqref{setupmorphism} to the category of small $\ainf$-categories and $\ainf$-functors.

\begin{remark}[Opposites]\label{opposites}
There is an evident notion of the \emph{opposite} of an abstract Floer setup $\SSS$, namely $\SSS^\op$ has the same set of Lagrangians, and associates to a tuple $(L_0,\ldots,L_n)$ the data that $\SSS$ associates to its reverse $(L_n,\ldots,L_0)$.
We cannot define the opposite of an abstract wrapped Floer setup: while it certainly makes sense to take the opposite of the underlying abstract Floer setup, the envelope, and the continuation maps, the definition of wrapping categories is manifestly asymmetric.
Define an \emph{abstract bi-wrapped Floer setup} to be an abstract wrapped Floer setup together with `op-wrapping categories' satisfying the opposite of Definition \ref{abstractwrappedfloersetupdef}\ref{wrappingcategory}--\ref{perturbation}; there is thus an evident notion of the opposite of an abstract bi-wrapped Floer setup.

For an abstract bi-wrapped Floer setup $\SSS$, we claim that there is a natural equivalence $\W_{\SSS^\op}=\W_\SSS^\op$ (this would follow immediately from our unproven Conjecture \ref{wrappedlocalizationconjecture}).
Let us call a decorated poset $P$ \emph{sufficiently bi-wrapped} when both $P$ and $P^\op$ are sufficiently wrapped.
Given a sufficiently bi-wrapped decorated poset $P$, we have $\W_\SSS^\op=\W_P^\op=\W_{P^\op}=\W_{\SSS^\op}$, so it suffices to show that sufficiently bi-wrapped decorated posets exist.
We can construct a sufficiently bi-wrapped decorated poset using the proof of Lemma \ref{pcanwrap}: take the poset to be $\ZZ$ with the standard order relation, color with isomorphism classes in $H^\bullet\W_\SSS$ so that the set of integers of any given color is bounded neither below nor above, and fill towards $+\infty$ with cofinal sequences in wrapping categories, and towards $-\infty$ with cofinal sequences in op-wrapping categories.
Under the assumptions of Remark \ref{pcanwrapcountable}, we also have an analogue of the statement of Lemma \ref{pcanwrap}: every countable decorated poset admits an inclusion (not downward closed) into a sufficiently bi-wrapped countable decorated poset (take $P'=\ZZ_{<0}\sqcup P\sqcup\ZZ_{>0}$ with the standard order relation on $\ZZ_{<0}\sqcup\ZZ_{>0}$ and $-i<S_i<i$, choose wrapping and op-wrapping sequences to fill $\ZZ_{>0}$ and $\ZZ_{<0}$, and perturb by induction on $|i|$).
\end{remark}

\subsection{Partially wrapped Fukaya categories}\label{wrapdefsec}

We now review the definition of (partially) wrapped Fukaya categories, reworking \cite[\S 3]{gpssectorsoc} and generalizing it to the partially wrapped setting.
The previous subsection introduced an axiomatic structure called an `abstract wrapped Floer setup' from which an $\ainf$-category called the wrapped Fukaya category is defined.
The goal of this subsection is to construct an abstract wrapped Floer setup from a stopped Liouville sector by counting pseudo-holomorphic curves.
We thus specify precisely what sort of Floer data to use, we prove transversality and compactness of moduli spaces, and we define continuation maps and wrapping categories.

We begin by explaining the construction of the abstract wrapped Floer setup $\SSS(X,\f)$ for a single stopped Liouville sector $(X,\f)$, where $X$ is equipped with a fixed choice of projection $\pi_X:\Nbd^Z\partial X\to\CC_{\Re\geq 0}$ as in \cite[Definition 2.26]{gpssectorsoc}.

\begin{definition}\label{geomtoabstract}
Let $X$ be a Liouville sector.
An abstract Floer setup $\SSS(X)$ is defined as follows.

An element of the set $\sL$ will be an exact cylindrical Lagrangian equipped with grading/orientation data \cite[\S 3.2]{gpssectorsoc}.
A tuple $(L_0,\ldots,L_k)$ will be called composable iff it is mutually transverse (equivalently, $L_i\pitchfork L_j$ for $i\ne j$ and all triple intersections are empty).
The graded module $CF^\bullet(L,K)$, for composable (i.e.\ transverse) pairs $(L,K)$, is free on the set of intersection points $L\cap K$ (generated by certain orientation lines, whose definition involves the grading/orientation data on $L$ and $K$, see \cite[\S 3.2]{gpssectorsoc}).

We now discuss Floer data and holomorphic curves.
Floer data for holomorphic disks with boundary conditions $(L_0,\ldots,L_k)$ is defined as follows.
Let $\Sbar_{k,1}$ denote the universal curve over the moduli space of $\Rbar_{k,1}$ stable disks with $k+1$ boundary marked points ($k$ `inputs' and $1$ `output'; by convention $\Sbar_{1,1}=[0,1]$).
We consider strip-like coordinates
\begin{align}
\label{coordsI}\xi_{L_0,\ldots,L_k;j}^+:[0,\infty)\times[0,1]\times\Rbar_{k,1}&\to\Sbar_{k,1}\quad j=1,\ldots,k\\
\label{coordsII}\xi_{L_0,\ldots,L_k}^-:(-\infty,0]\times[0,1]\times\Rbar_{k,1}&\to\Sbar_{k,1}
\end{align}
and families of cylindrical almost complex structures
\begin{equation}\label{Jfamilyforainfty}
J_{L_0,\ldots,L_k}:\Sbar_{k,1}\to\J(X)
\end{equation}
which make the projection $\pi_X:\Nbd^Z\partial X\to\CC_{\Re\geq 0}$ holomorphic (we note that cylindricity for families means that every point of $\Sbar_{k,1}$ has a neighborhood such that there is a compact $K\subseteq X$ outside which $J$ in that neighborhood is cylindrical).
In fact, we consider these not just for $(L_0,\ldots,L_k)$ but for all its subsequences $0\leq i_0<\cdots<i_\ell\leq k$.
These data \eqref{coordsI}--\eqref{Jfamilyforainfty} must then be compatible with gluing in the sense of \cite[\S 3.2]{gpssectorsoc}.
Elements of $D(L_0,\ldots,L_k)$ are those Floer data for which these moduli spaces are cut out transversally.
Such Floer data may be constructed by induction on $(L_0,\ldots,L_k)$ by a standard argument \cite[Lemma 3.18]{gpssectorsoc}.

The operations $\mu^k$ are defined by counting holomorphic disks with respect to chosen Floer data.
For this to make sense, we must know that the moduli spaces are compact.
The energy of a given pseudo-holomorphic disk is determined by its boundary conditions and puncture asymptotics by Stokes' theorem (the energy identity \cite[(3.38)]{gpssectorsoc}).
Compactness of the moduli spaces then follows by a monotonicity argument \cite[Proposition 3.19]{gpssectorsoc}.
To see that the complex $(CF^\bullet(L,K),\mu^1)$ is cofibrant, we note that filtering it by action expresses it as a finite iterated extension of free modules with zero differential.

The remaining Floer data, $D'$, $D''$, and $D'''_i$, is defined analogously.
This defines the abstract Floer setup $\SSS(X)$.

For a stop $\f\subseteq(\partial_\infty X)^\circ$, we let $\SSS(X,\f)$ be the restriction of $\SSS(X)$ to Lagrangians which are disjoint from $\f$ at infinity.
\end{definition}

\begin{remark}\label{striplikebottombiholospecial}
It is sometimes necessary (compare \cite[\S 5.2]{gpssectorsoc}) to restrict consideration to strip-like coordinates for which \eqref{coordsII} extends (necessarily uniquely) to a fiberwise biholomorphism
\begin{equation}
\xi_{L_0,\ldots,L_k}^-:\RR\times[0,1]\times\Rbar_{k,1}\to\Sbar_{k,1}
\end{equation}
(or rather, on each fiber it should be a biholomorphism onto the irreducible component containing the negative puncture).
Such strip-like coordinates can also be constructed by induction, so considering only such Floer data is still an abstract Floer setup.
\end{remark}

To upgrade this abstract Floer setup to an abstract wrapped Floer setup, the first step is to establish the following isotopy invariance structure of Floer cohomology.

\begin{lemma-definition}\label{isotopyinvariancedef}
Given exact cylindrical isotopies $L_t$ and $K_t$ such that $L_t$ and $K_t$ are disjoint at infinity for every $t\in[0,1]$ and both $hF^\bullet(L_0,K_0)$ and $hF^\bullet(L_1,K_1)$ are defined (meaning $L_0\pitchfork K_0$ and $L_1\pitchfork K_1$), there is an induced isomorphism $hF^\bullet(L_0,K_0)=hF^\bullet(L_1,K_1)$, compatible with concatenation of isotopies.
These isomorphisms are also compatible with Floer composition, in the sense that if $L_t$, $K_t$, $M_t$ are isotopies disjoint at infinity, then the following commutes
\begin{equation}
\begin{tikzcd}
hF^\bullet(L_0,K_0)\otimes hF^\bullet(K_0,M_0)\ar[r]\ar[d,equal]&hF^\bullet(L_0,M_0)\ar[d,equal]\\
hF^\bullet(L_1,K_1)\otimes hF^\bullet(K_1,M_1)\ar[r]&hF^\bullet(L_1,M_1)
\end{tikzcd}
\end{equation}
provided all objects appearing in it are defined.
\end{lemma-definition}

\begin{proof}
In the case the isotopies are compactly supported, the desired isomorphism is induced by counting holomorphic strips with moving Lagrangian boundary conditions, generalizing the counts from Floer data $D'(L,K)$.
Compactness of these moduli spaces follows from the same argument since the boundary conditions are fixed at infinity (the energy bound follows from the energy identity for moving Lagrangian boundary conditions \cite[(3.42)]{gpssectorsoc}).
For general isotopies $L_t$ and $K_t$ (disjoint at infinity), we choose an exact cylindrical symplectic isotopy $\Phi_t:X\to X$ ($\Phi_0$ is the identity, and $\Phi_t$ is the identity near $\partial X$ for sectors) with $\Phi_tL_t$ and $\Phi_tK_t$ fixed at infinity (the set of such isotopies is contractible), and we compose $hF^\bullet(L_0,K_0)=hF^\bullet(\Phi_1L_1,\Phi_1K_1)=hF^\bullet(L_1,K_1)$.
Compatibility with concatenation is immediate.
Compatibility with Floer composition holds by considering appropriate moduli spaces of disks with moving Lagrangian boundary conditions as in $D'''_i(L,K,M)$.
\end{proof}

Isotopy invariance allows us to define $hF^\bullet(L,K)$ for \emph{all} pairs $L$ and $K$ by perturbation.
Namely, we set $hF^\bullet(L,K):=hF^\bullet(L^+,K)$ for $L^+$ a small perturbation of $L$ which is positive at infinity and transverse to $K$.
Any two such perturbations $L^+$ are related by a small isotopy which is disjoint from $K$ at infinity and is unique up to contractible choice (note that this is true even when $L$ and $K$ are not disjoint at infinity).
Thus Lemma-Definition \ref{isotopyinvariancedef} provides canonical isomorphisms between $hF^\bullet(L^+,K)$ for different choices of $L^+$, thus making $hF^\bullet(L,K)$ well-defined.
Floer multiplication is defined on these $hF^\bullet$ objects for all triples $(L_0,L_1,L_2)$ and is associative for all quadruples $(L_0,L_1,L_2,L_3)$.
Passing to cohomology defines an `envelope' $H^\bullet\F_\SSS$ for $H^\bullet\F_\SSS^{\pre}$ in the sense of Definition \ref{abstractwrappedfloersetupdef}\ref{envelope}.

\begin{lemma-definition}\label{continuationdef}
Suppose every algebra $HF^\bullet(L,L)$ is unital and every module $HF^\bullet(L,K)$ is unital over both $HF^\bullet(L,L)$ and $HF^\bullet(K,K)$.
Then to each isotopy $L_t$ positive at infinity, there is an associated `continuation map' $c(L_t)\in HF^0(L_1,L_0)$, with the following properties:
\begin{enumerate}
\item The continuation map associated to a concatenation of isotopies is the composition of continuation maps associated to the isotopies.
\item If $L_t$ is disjoint at infinity from $K$ for every $t$, then multiplication by the continuation map
\begin{align}
HF^\bullet(K,L_1)&\to HF^\bullet(K,L_0)\\
HF^\bullet(L_0,K)&\to HF^\bullet(L_1,K)
\end{align}
agrees with the isotopy invariance isomorphisms.
\end{enumerate}
\end{lemma-definition}

\begin{proof}
The continuation map $c(L_t)\in HF^0(L_1,L_0)$ is defined by composing the units in $HF^0(L_t,L_t)$ in a very fine subdivision of the isotopy (note that the unit in $HF^\bullet(L,L)$ is necessarily homogeneous of degree zero; proof: units are unique, and the degree zero part of any unit is also a unit).
Unitality means this is well defined and compatible with composition.
Module unitality gives the second property.
\end{proof}

We emphasize that unitality of the algebras $HF^\bullet(L,L)$ and the modules $HF^\bullet(L,K)$ is a \emph{property}.
This property holds for Liouville sectors by an argument involving moving Lagrangian boundary conditions which move positively at infinity \cite[Proposition 3.23]{gpssectorsoc}.
Thus Lemma-Definition \ref{continuationdef} fixes continuation maps in the sense of Definition \ref{abstractwrappedfloersetupdef}\ref{continuation}.

We fix wrapping categories in the sense of Definition \ref{abstractwrappedfloersetupdef}\ref{wrappingcategory} to be those defined in \S\ref{wrappingcatsec}, namely $R_L=(L\leadsto-)_{X,\f}^+$.
The factorization property Definition \ref{abstractwrappedfloersetupdef}\ref{perturbation} is an immediate consequence of general position.
Let us now verify that wrapped Floer cohomology $HW^\bullet(L,K):=\varinjlim_{L^w\in R_L}HF^\bullet(L^w,K)$ satisfies the right locality property.

\begin{lemma}\label{wraprightlocal}
Multiplying by a continuation map $K\to K'$ is an isomorphism $HW^\bullet(L,K)\to HW^\bullet(L,K')$.
\end{lemma}

\begin{proof}
When the isotopy $K\leadsto K'$ is sufficiently small, Lemma \ref{lem:pushoffcore} guarantees that there exist cofinal wrappings $L^w$ of $L$ which are disjoint from the sweepout of $K\leadsto K'$, and for such $L^w$, multiplication with the continuation map is an isomorphism $HF^\bullet(L^w,K')\to HF^\bullet(L^w,K)$ by definition.
Now break an arbitrary isotopy $K\leadsto K'$ into such sufficiently small isotopies.
\end{proof}

This completes the definition of the abstract wrapped Floer setup $\SSS(X,\f)$.

\begin{remark}\label{isotopyisomorphism}
Any isotopy of exact cylindrical Lagrangians (disjoint from $\f$ at infinity) induces an isomorphism in $H^\bullet\W(X,\f)$.
Indeed, any isotopy which is positive at infinity or negative at infinity has an associated continuation map, which is an isomorphism in $H^\bullet\W(X,\f)$, and an arbitrary isotopy admits a `zig-zag' perturbation which is a concatenation of isotopies which are positive at infinity or negative at infinity.
\end{remark}

\begin{definition}
$\W(X,\f):=\W_{\SSS(X,f)}$ is the wrapped Fukaya category (Definition \ref{Sgeneralwrapped}) associated to the abstract wrapped Floer setup $\SSS(X,\f)$.
\end{definition}

\begin{remark}
This definition of the wrapped Fukaya category applies in somewhat more generality than is stated above.
In particular, we can take $X$ to be a symplectic manifold with $\omega|_{\pi_2(X)}=0$ having a positive symplectization end, and we can take elements of $\sL$ to be pairs $(L,J)$ where $L$ is a cylindrical Lagrangian and $J$ is a cylindrical almost complex structure for which there exist no $J$-holomorphic disks $(D^2,\partial D^2)\to(X,L)$.
The almost complex structures \eqref{Jfamilyforainfty} are then required to coincide with $J$ on the boundary component colored by $(L,J)\in\sL$.
\end{remark}

We now turn to functoriality with respect to inclusions of stopped Liouville sectors (which requires a slight modification to the abstract wrapped Floer setup $\SSS(X,\f)$ defined above).
By `inclusion of stopped Liouville sectors' $(X,\f)\hookrightarrow(X',\f')$, we mean that $\f'\cap(\partial_\infty X)^\circ\subseteq\f$ and that projections $\pi_X$ and $\pi_{X'}$ are fixed so that either $X\cap\partial X'=\varnothing$ or $X=X'$ and $\pi_X=\pi_{X'}$ (compare \cite[Convention 3.1]{gpssectorsoc}).

We redefine $\SSS(X,\f)$ as follows to make it strictly functorial under inclusions of stopped Liouville sectors (i.e.\ $(X,\f)\to(X',\f')$ induces $\SSS(X,\f)\to\SSS(X',\f')$ in the sense of \eqref{setupmorphism}).
We declare a Lagrangian for $(X,\f)$ to be a pair consisting of a Liouville subsector $X_0\subseteq X$ together with a Lagrangian in $X_0$ disjoint from $\f$ at infinity.
A tuple $((X_0,L_0),\ldots,(X_k,L_k))$ is called composable when $L_0,\ldots,L_k$ are mutually transverse.
The morphism complex $CF^\bullet((X_0,L_0),(X_1,L_1))$ vanishes unless $X_0\supseteq X_1$ is an inclusion of Liouville sectors (in the above sense), in which case it is $CF^\bullet(L_0,L_1)$.
The family of almost complex structures \eqref{Jfamilyforainfty} associated to such a composable tuple has target $\J(X_0)$ (\emph{not} $\J(X)$).
In the inductive construction of almost complex structures, at the inductive step for a tuple $((X_0,L_0),\ldots,(X_k,L_k))$, we note that the almost complex structures chosen for proper subtuples necessarily patch together to define a family of almost complex structures on a subset of $X_0\times\Sbar_{k,1}$, and that this patched family makes the projection $\pi_{X_0}$ holomorphic near $\partial X_0$ \emph{over its domain of definition} (note that this would fail if the family of almost complex structures \eqref{Jfamilyforainfty} had target $\J(X)$ instead of $\J(X_0)$).

The cohomology category $H^\bullet\F_\SSS^{\pre}$ assigns to a composable (i.e.\ transverse) pair $((X_0,L_0),(X_1,L_1))$ the group $HF^\bullet(L_0,L_1)$ if $X_0\supseteq X_1$ and zero otherwise.
We define the envelope $H^\bullet\F_\SSS$ to make the same assignment for all (not necessarily transverse) pairs $((X_0,L_0),(X_1,L_1))$, where $HF^\bullet$ is defined as above by positively perturbing the first argument.
Continuation maps in $\Hom((X_0,L_0),(X_1,L_1))$ are those induced by positive isotopies $L_1\leadsto L_0$ inside $X_0$.
We define the wrapping category of a given pair $(X_0,L_0)$ to be the wrapping category of $L_0$ in $(X,\f)$, paired with the subsector $X\subseteq X$ (that is, $X_0$ is ignored).
Right locality is simply Lemma \ref{wraprightlocal}, and the factorization property is also immediate.
It is also evident that $H^\bullet\W(X,\f)$ from this abstract Floer setup is the same as defined previously.
As for $\W(X,\f)$ itself, simply note that any decorated poset for the previously defined abstract wrapped Floer setup is one for the presently defined one, by pairing everything with the subsector $X\subseteq X$; since $H^\bullet\W(X,\f)$ is the same, this preserves being sufficiently wrapped.
Finally, we should note that $\SSS(X,\f)$ is strictly functorial in inclusions $(X,\f)\hookrightarrow(X',\f')$, giving the desired strict functor $(X,\f)\mapsto\W(X,\f)$.

\begin{remark}\label{openliouvillesectors}
The above construction of functorial wrapped Fukaya categories applies immediately to the more general context (which we will not need in this paper) of \emph{stopped open Liouville sectors} $(X,\partial_\infty X,\f)$ (where $(X,\partial_\infty X)$ is an open Liouville sector in the sense of \cite[Remark 2.8]{gpssectorsoc} and $\f\subseteq\partial_\infty X$ is a closed subset).
\end{remark}

\subsection{Dissipative Floer data}\label{noncyl}

In the previous subsection \S\ref{wrapdefsec}, we imposed a cylindricity assumption on Floer-theoretic objects (symplectic manifolds, Lagrangian submanifolds, and almost complex structures).
This setup suffices for the majority of this paper.
In this subsection, we generalize the abstract Floer setups constructed in \S\ref{wrapdefsec} by replacing the cylindricity condition with the weaker condition of \emph{dissipativity}, a notion due to Groman \cite{groman}.
This more general setup will be used during the proof of the K\"unneth embedding (Theorem \ref{kunneth}) and in proving invariance of the partially wrapped Fukaya category under contact isotopies of the complement of the stop (Theorem \ref{winvariancestrong}).
Note that while we show dissipative Floer data form abstract Floer setups, we do not show they form abstract \emph{wrapped} Floer setups (that is, we do not address the existence of appropriate wrapping sequences for dissipative Lagrangians).
We do, however, construct abstract wrapped Floer setups from cylindrical Lagrangians and dissipative almost complex structures (Lemma \ref{usedissipativeJ}).

Dissipativity is a property of symplectic manifolds, Lagrangian submanifolds, and almost complex structures which ensures that the proof of compactness based on monotonicity \cite[Proposition 3.19]{gpssectorsoc} goes through.
Its definition makes no reference to any cylindrical or other structure at infinity.
A crucial fact is that the family of all dissipative almost complex structures is either empty or contractible in the relevant sense.

We will consider only symplectic manifolds without boundary (so the discussion here applies to Liouville manifolds, but not to other Liouville sectors).

\begin{definition}[Dissipative symplectic manifolds and almost complex structures]\label{Xdissipative}
Let $(X,\omega)$ be a symplectic manifold with a family of compatible almost complex structures $J$ parameterized by a Riemann surface $S$.

For $K\subseteq U\subseteq X$ ($K$ compact, $U$ open and pre-compact), we define the quantity
\begin{equation}
\hbar(\omega,J,K,U)
\end{equation}
as follows.
We consider connected properly embedded pseudo-holomorphic curves in $S\times(U\setminus K)$ (not necessarily sections of the projection to $S$) which approach both $K$ and $\partial U$.
We define $\hbar(\omega,J,K,U)$ to be the minimum of $1$ and the infimal energy of such a pseudo-holomorphic curve.
Of course, this energy depends not only on $\omega$ but also on a choice of positive symplectic form on $S$; this choice is omitted from the notation since it does not affect the final result, provided that it is fixed.

The family $J$ will be called dissipative at $p\in S$ iff there exists a set of disjoint `shells' $U_i\setminus K_i$ ($K_0\subseteq U_0\subseteq K_1\subseteq\cdots\subseteq X$ with $K_i$ compact, $U_i$ open, and $X=\bigcup_iU_i$) such that
\begin{equation}
\sum_i\hbar(\omega,J|_{N_\varepsilon(p)},K_i,U_i)=\infty
\end{equation}
for some cofinal collection of neighborhoods $N_\varepsilon(p)\subseteq S$ of $p$.
Dissipativity evidently depends only on the germ of $J$ near infinity on $X$, and is independent of the choice of positive symplectic form on $S$ near $p$.

The symplectic manifold $(X,\omega)$ is called dissipative iff it admits an almost complex structure which, as a constant family over any Riemann surface, is dissipative.%
\footnote{One could imagine a weaker definition allowing domain dependent almost complex structures, but we won't need it.}
\end{definition}

\begin{lemma}
Any family of cylindrical almost complex structures on a Liouville manifold is dissipative.
More generally, so is any product of families of cylindrical almost complex structures on a finite product of Liouville manifolds.
\end{lemma}

\begin{proof}
We briefly summarize the proof from \cite{gpssectorsoc}.
A Liouville manifold with a cylindrical almost complex structure has uniformly bounded geometry \cite[Definition 2.42 and Lemma 2.43]{gpssectorsoc}, and bounds on geometry pass to finite products; the family version is \cite[Lemma 2.44]{gpssectorsoc}.
We inductively take $K_i$ to be a non-empty compact set containing $U_{i-1}$, and we take $U_i$ to be the open $1$-neighborhood of $K_i$.
Then by monotonicity \cite[Proposition 4.3.1]{sikorav}, we have a lower bound $\hbar(\omega,J|_{D^2_\varepsilon},K_i,U_i)\geq c\min(\varepsilon^2,1)$, where the constant $c>0$ is independent of $i$ because of uniformly bounded geometry.
\end{proof}

\begin{definition}[Dissipative Lagrangians and almost complex structures]\label{Ldissipative}
Let $(X,\omega)$ be a symplectic manifold with Lagrangian submanifold $L$ and a family of compatible almost complex structures $J$ parameterized by a Riemann surface with boundary $S$.
More generally, we could also label the components of $\partial S$ with Lagrangian submanifolds.

For $K\subseteq U\subseteq X$ ($K$ compact, $U$ open and pre-compact), we define the quantity
\begin{equation}
\hbar(\omega,J,K,U)
\end{equation}
as follows.
We consider connected properly embedded pseudo-holomorphic curves in $S\times(U\setminus K)$ with boundary along $\partial S\times L$ which approach both $K$ and $\partial U$.
We define $\hbar(\omega,J,K,U)$ to be the minimum of $1$ and the infimal energy of such a pseudo-holomorphic curve.

The family $J$ will be called dissipative at $p\in S$ iff there exists a set of disjoint `shells' $U_i\setminus K_i$ ($K_0\subseteq U_0\subseteq K_1\subseteq\cdots\subseteq X$ with $K_i$ compact, $U_i$ open, and $X=\bigcup_iU_i$) such that
\begin{equation}
\sum_i\hbar(\omega,J|_{N_\varepsilon(p)},K_i,U_i)=\infty
\end{equation}
for some cofinal collection of neighborhoods $N_\varepsilon(p)\subseteq S$ of $p$.
Dissipativity evidently depends only on the germ of $J$ and $L$ near infinity.

The pair $(X,L)$ is called dissipative iff it admits an almost complex structure which, as a constant family over any Riemann surface with boundary, is dissipative.
\end{definition}

\begin{lemma}
Any family of cylindrical almost complex structures on a Liouville manifold is dissipative for cylindrical Lagrangians (and the same for finite products).
\end{lemma}

\begin{proof}
This follows from monotonicity \cite[Proposition 4.7.2]{sikorav} and uniformly bounded geometry \cite[Definition 2.42 and Lemma 2.44]{gpssectorsoc} as above.
\end{proof}

\begin{definition}[Dissipative Lagrangian pairs and almost complex structures]\label{LLdissipative}
Let $X$ be symplectic and $L,L'\subseteq X$ Lagrangian.
A family of compatible almost complex structures $J$ parameterized by $[0,1]$ is called dissipative for $(L,L')$ iff the resulting $\RR$-invariant family on $\RR\times[0,1]$, with boundary components labelled by $L$ and $L'$, respectively, is dissipative, and moreover for every $E<\infty$ there exists $N<\infty$ and compact $K\subseteq X$ such that for any $J$-holomorphic strip $I\times[0,1]$ of energy $\leq E$ with $I$ of length $\geq N$, there is a point in $I\times[0,1]$ which is mapped to $K$.
This notion evidently depends only on the germ of $J$ and $L,L'$ near infinity.

The pair $(L,L')$ is called dissipative iff it admits a dissipative family over $[0,1]$.
Note that if $(L,L')$ is dissipative, then $L\cap L'$ is compact.
\end{definition}

\begin{lemma}\label{pairdissipativemetric}
A family of compatible almost complex structures $J$ parameterized by $[0,1]$ is dissipative for $(L,L')$ if the resulting $\RR$-invariant family on $\RR\times[0,1]$, with boundary components labelled by $L$ and $L'$, respectively, is dissipative, and moreover the distance between $L$ and $L'$ with respect to the metric $\min_t(g_{\omega,J_t})$ is bounded away from zero near infinity.
\end{lemma}

\begin{proof}
Let $K$ be any compact set containing $L\cap L'$ in its interior.
For any $J$-holomorphic map $u$ on $I\times[0,1]$ for an interval $I$ of length $N$, if $u$ maps no point of $I\times[0,1]$ into $K$, then we have
\begin{equation}\label{stripenergybound}
E(u)\geq\int_I\int_{[0,1]}\left|\partial_tu\right|^2\geq\int_I\biggl(\int_{[0,1]}\left|\partial_tu\right|\biggr)^2\geq\const\cdot N,
\end{equation}
where the final lower bound comes from the hypothesis on the distance between $L$ and $L'$ near infinity.
\end{proof}

\begin{lemma}
Any family of cylindrical almost complex structures on a Liouville manifold is dissipative for pairs of cylindrical Lagrangians with compact intersection (and the same for finite products).
\end{lemma}

\begin{proof}
We verify the criterion in Lemma \ref{pairdissipativemetric}.
Distance scales exponentially under the Liouville flow, so as long as cylindrical Lagrangians $L$ and $K$ are disjoint near infinity, their distance is uniformly bounded below.
For products $L_1\times\cdots\times L_n$ and $K_1\times\cdots\times K_n$, the intersection being compact implies each pair $L_i\cap K_i$ has compact intersection, so the previous argument applies, except if possibly $L_i\cap K_i=\varnothing$ for some $i$.
In this latter case, the distance between $L_i$ and $K_i$ is bounded below by some $\epsilon>0$, and this passes to the product.
\end{proof}

Let us now generalize Definition \ref{geomtoabstract}, replacing cylindricity by dissipativity.

\begin{definition}\label{disspativeSSS}
Given a dissipative exact symplectic manifold $(X,\lambda)$, we define an abstract Floer setup $\SSS^\diss(X,\lambda)$ as follows.
We consider the set $\sL$ of exact dissipative Lagrangians.
A tuple $(L_0,\ldots,L_k)$ will be called composable iff it is mutually transverse and every pair is dissipative.
We define Floer data as in Definition \ref{geomtoabstract}, except that instead of cylindricality, we impose the following dissipativity conditions on the families of almost complex structures \eqref{Jfamilyforainfty}:
\begin{enumerate}
\item $J$ should be dissipative at every interior point of a fiber of $\Sbar_{k,1}\to\Rbar_{k,1}$.
\item $J$ should be dissipative at every boundary point of a fiber of $\Sbar_{k,1}\to\Rbar_{k,1}$ (with respect to the Lagrangian labelling that boundary component).
\item $J$ over $\Sbar_{1,1}=[0,1]$ should be dissipative for the pair of Lagrangians labelling the boundary components.
\end{enumerate}
Moreover, in the first two conditions, dissipation should hold uniformly in nearby fibers, in the following sense.
For every $p\in\Sbar_{k,1}$ there should exist a sequence of shells in $X$ such that there exist arbitrarily small neighborhoods $U\subseteq\Sbar_{k,1}$ of $p$ together with neighborhoods $V\subseteq\Rbar_{k,1}$ of the image of $p$ under the map $\pi:\Sbar_{k,1}\to\Rbar_{k,1}$, such that for every compact $K\subseteq X$, there exists a compact $K'\subseteq X$ such that for every $q\in V$, the sum of $\hbar$ for the restriction of $J$ to $\pi^{-1}(q)\cap U$ over the shells outside $K$ but inside $K'$ is $\geq 1$.
\end{definition}

Note that $\SSS^\diss(X,\lambda)$ is functorial under all exact symplectomorphisms $\phi:(X,\lambda)\to(Y,\lambda')$ (meaning $\phi$ is a diffeomorphism and $\phi^*\lambda_Y=\lambda_X+df$ for some smooth function $f$, not necessarily compactly supported).
In practice, we will not consider all dissipative Lagrangians, rather only certain subclasses of interest.

To prove that $\SSS^\diss(X,\lambda)$ is an abstract Floer setup, we should prove that almost complex structures can be constructed by induction and that the moduli spaces they define are compact.
This is done in the next two lemmas.
(That dissipative almost complex structures can be taken to make moduli spaces transverse is immediate from the usual perturbation arguments, since the perturbation takes place over a compact set, while dissipativity is a property near infinity.)

\begin{lemma}\label{dissipativeJinduct}
Dissipative almost complex structures in the sense of Definition \ref{disspativeSSS} can be constructed by induction.
\end{lemma}

\begin{proof}
Begin with the case $k=1$.
Dissipativity of the pair of Lagrangians is precisely the assertion of the existence of a dissipative family over $\Sbar_{1,1}=[0,1]$.

Now consider $k\geq 2$.
By the induction hypothesis, we have already a family defined over a neighborhood of $\partial\Rbar_{k,1}$ by the collaring condition, and is dissipative there.
For the same reason, it is defined in near the punctures on $\Sbar_{k,1}$.
To extend to the interior of $\Sbar_{k,1}$, we cover the remainder of $\Sbar_{k,1}$ by finitely many open sets $Q_a$, and over each we choose a dissipative family $J_a$ (either for $X$ if at an interior point or for $(X,L)$ if at a boundary point labelled by $L$), which exists since $X$ is dissipative as is each $L\subseteq X$.
For each such open set, choose shells making the relevant series of $\hbar$ diverge.
Also choose such shells over the locus where $J$ is already fixed.
We can then delete some of these shells to make the shells from different open subsets of $\Sbar_{k,1}$ disjoint, while maintaining the divergence property.
Requiring $J$ to coincide with $J_a$ over $Q_a$ times the chosen shells ensures $J$ is dissipative.
We can extend $J$ everywhere else arbitrarily using contractibility of the space of compatible almost complex structures.
\end{proof}

\begin{remark}
The dissipation conditions on $J$ in Definition \ref{disspativeSSS} differ in an interesting way from those used in \cite[Definition 4.5]{gpssectorsoc} to construct symplectic cohomology of Liouville sectors.
There, the data of an open covering of the parameter space together with a choice of shells exhibiting dissipation (`dissipation data') was recorded, and was required to be compatible across boundary strata.
The reason the inclusion of such data was necessary ultimately came down to the fact that the moduli spaces of domains in question did not have canonical collars, so agreement of dissipation data was necessary to ensure that the glued families over a neighborhood of the boundary remained dissipative and thus ensure that the analogue of Lemma \ref{dissipativeJinduct} would hold.
In the present situation, we have strip-like coordinates \eqref{coordsI}--\eqref{coordsII} which give canonical collars on $\Rbar_{k,1}$, and hence we can use a much simpler version of dissipativity, namely a local property of the family.
\end{remark}

\begin{lemma}
Dissipative almost complex structures in the sense of Definition \ref{disspativeSSS} make the moduli spaces of disks of energy $\leq E$ compact.
\end{lemma}

\begin{proof}
The argument from \cite[Proposition 3.19]{gpssectorsoc} applies without change.
We outline the argument anyway so as to emphasize that it applies under the present hypothesis (dissipativity) despite the fact that the statement of \cite[Proposition 3.19]{gpssectorsoc} assumes cylindricity.
It suffices to produce an \emph{a priori} $C^0$-estimate, i.e.\ to show that there is a compact subset of $X$ which contains all of the holomorphic curves in question (then the usual Gromov compactness arguments apply).

Metrize the fibers of $\Sbar_{k,1}\to\Rbar_{k,1}$ so that in the `thin parts' the metric is the product metric $I\times[0,1]$ ($I\subseteq\RR$ an interval) in the chosen strip-like coordinates.
Dissipativity in the sense of Definition \ref{LLdissipative} implies there exist $N<\infty$ and compact $K\subseteq X$ such that for any finite strip $I\times[0,1]$ in the thin parts, with $I$ of length $\geq N$, there is a point in $I\times[0,1]$ which $u$ maps inside $K$.
This implies that each point of the domain (fiber of $\Sbar_{k,1}\to\Rbar_{k,1}$) is within a bounded distance of a point which is mapped into $K$ by $u$.
It thus suffices to show that if $u(p)\in K$, then $u(B_\varepsilon(p))\in K'$ for some compact $K'$ depending on $K$ and some absolute $\varepsilon>0$ (depending just on the family of almost complex structures).
For this, we use dissipativity in the sense of Definitions \ref{Xdissipative} and \ref{Ldissipative} near $p$: divergence of the sum of $\hbar$ implies there are finitely many shells outside $K$ whose sum of $\hbar$ exceeds the energy bound $E$, and hence $u(B_\varepsilon(p))$ cannot cross all of them.
\end{proof}

Having shown that $\SSS^\diss(X,\lambda)$ is an abstract Floer setup, we now discuss isotopy invariance of Floer cohomology in the dissipative context.

\begin{lemma-definition}\label{isotopyinvariancedefdissipative}
The Floer cohomology groups arising from $\SSS^\diss(X,\lambda)$ have the same isotopy invariance structure as stated in Lemma-Definition \ref{isotopyinvariancedef} but with the cylindricity assumption dropped.
\end{lemma-definition}

\begin{proof}
All that needs to be observed is that an exact Lagrangian isotopy $L_t$ induces an exact symplectic isotopy $\Phi_t$ (defined by the property that $\Phi_tL_t=L_0$) which is unique up to contractible choice (this amounts to extension of Hamiltonians, which is obvious).
\end{proof}

Isotopy invariance allows us to define $hF^\bullet(L,K)$ for any pair $(L,K)$ which is dissipative (hence, in particular, disjoint at infinity), but not necessarily transverse (apply a compactly supported perturbation to make them transverse, and note that the resulting $hF^\bullet$ object is well defined by isotopy invariance).

We do not know how to upgrade $\SSS^\diss(X,\lambda)$ to an abstract wrapped Floer setup, since we do not know how to wrap dissipative Lagrangians.
We can, however, form an abstract wrapped Floer setup using cylindrical Lagrangians and dissipative almost complex structures as follows.
For any stopped Liouville sector, let $\SSS^{\cyl,\diss}(X,\f)$ denote the abstract Floer setup obtained from $\SSS^\diss(X)$ by restricting to cylindrical Lagrangians which are disjoint from $\f$ at infinity.
Thus $\SSS^\cyl(X,\f)\subseteq\SSS^{\cyl,\diss}(X,\f)\subseteq\SSS^\diss(X)$.
The inclusion $\SSS^\cyl(X,\f)\subseteq\SSS^{\cyl,\diss}(X,\f)$ is a bijection on sets of Lagrangians, respecting composability, so induces an isomorphism on pre-categories $h\F^{\pre}_\SSS$, by invariance of $hF^\bullet$.
The additional data enhancing an abstract Floer setup to an abstract wrapped Floer setup (Definition \ref{abstractwrappedfloersetupdef}) involves only $H^\bullet\F^{\pre}_\SSS$, so the enhancement of $\SSS^\cyl(X,\f)$ to an abstract wrapped Floer setup in \S\ref{wrapdefsec} applies equally to $\SSS^{\cyl,\diss}(X,\f)$.

\begin{lemma}\label{usedissipativeJ}
The inclusion of abstract wrapped Floer setups $\SSS^\cyl(X,\f)\to\SSS^{\cyl,\diss}(X,\f)$ induces an equivalence on wrapped Fukaya categories.
\end{lemma}

\begin{proof}
The definition of $HW^\bullet$ depends only on $H^\bullet\F^{\pre}_\SSS$ and the data of Defintion \ref{abstractwrappedfloersetupdef}.
\end{proof}

\section{Basic properties of wrapped Fukaya categories}

In this section, we prove some basic results about partially wrapped Fukaya categories.

\subsection{Equivalent presentations of the same Fukaya category}\label{variouswrappedsamesec}

\begin{lemma}\label{oppositesw}
There is a canonical equivalence $\W(X^-,\f)=\W(X,\f)^\op$.
\end{lemma}

\begin{proof}
The abstract wrapped Floer setups underlying $\W(X,\f)$ and $\W(X^-,\f)$ are `opposite' for trivial reasons (for a discussion of orientation lines, see \cite[B.7]{sheridanversality}).
In particular, they are both bi-wrapped in the sense of Remark \ref{opposites}, from which the desired equivalence follows.
\end{proof}

\begin{remark}
Additional arguments are required to show that the opposite equivalences in Lemma \ref{oppositesw} are compatible with pushforward functors.
\end{remark}

\begin{definition}\label{trivialinclusiondef}
An inclusion of Liouville sectors is said to be \emph{trivial} if it is isotopic, through inclusions of Liouville sectors, to a symplectomorphism.
\end{definition}

\begin{lemma}[{\cite[Lemma 3.41]{gpssectorsoc}}]\label{winvariancesector}
For a trivial inclusion of Liouville sectors $X\hookrightarrow X'$, the pushforward on wrapped Fukaya categories is a quasi-equivalence.
Thus a deformation of Liouville sectors $\{X_t\}_{t\in[0,1]}$ induces a natural quasi-equivalence $\W(X_0)=\W(X_1)$.
\end{lemma}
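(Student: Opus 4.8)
The plan is to split the statement into two inputs: (A) an isotopy of inclusions of Liouville sectors $\{i_t:X\hookrightarrow X'\}_{t\in[0,1]}$ induces quasi-isomorphic pushforward functors $(i_0)_\ast,(i_1)_\ast\colon\W(X)\to\W(X')$; and (B) an isomorphism of Liouville sectors (respecting the structure at infinity, in particular a symplectomorphism) induces a quasi-equivalence on wrapped Fukaya categories. Granting these, if $X\hookrightarrow X'$ is trivial we pick an isotopy of inclusions from it to a symplectomorphism $j$; the pushforward along $X\hookrightarrow X'$ is then quasi-isomorphic to $j_\ast$ by (A), which is a quasi-equivalence by (B). The second sentence follows formally: a deformation $\{X_t\}$ gives, for nearby parameters (after a small Liouville rescaling if necessary), trivial inclusions between the sectors, so covering $[0,1]$ and composing reduces it to the first sentence and (B); alternatively a Moser-type argument in the cylindrical-at-infinity setting produces an isomorphism $X_0\xrightarrow{\sim}X_1$ directly.

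Claim (B) is the easy input. An isomorphism of Liouville sectors transports all the auxiliary data entering the construction of $\W$ in \S\ref{wrappedlocalizationconstruction} — the countable collection $I$ of exact cylindrical Lagrangians, the cofinal wrappings \eqref{cofinalwrapping}, the compatible universal strip-like coordinates \eqref{coordsI}--\eqref{coordsII}, and the families of $\pi_X$-compatible almost complex structures \eqref{Jfamilyforainfty} — to a valid choice of such data on the target. This yields a strict isomorphism between the auxiliary categories $\OO$, and hence, after localizing at the class $C$ of continuation morphisms, an isomorphism of $\ainf$-categories between $\W(X)$ computed with the original data and $\W(X')$ computed with the transported data. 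Since $\W$ is independent, up to zig-zag of quasi-equivalence, of the choices in its construction (\cite[Proposition 3.39]{gpssectorsoc}), both are models for the respective wrapped categories, and the pushforward is identified with this isomorphism composed with the comparison quasi-equivalences.

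Claim (A) is where the real work lies, and is essentially the content of \cite[Lemma 3.41]{gpssectorsoc}. The pushforward along $i_t$ is built from a poset $\OO_X$ of Lagrangians in $X$, its image $i_t(\OO_X)$ inside $X'$, and an extension of the latter to a poset computing $\W(X')$, the functor recording the inclusion of Floer data. By compactness of $[0,1]$ one subdivides into subintervals on which $i_s$ and $i_t$ are $C^\infty$-close (at finite distance and near the cylindrical ends), so that $i_s(\OO_X)$ and $i_t(\OO_X)$ differ by a small Lagrangian isotopy inside $X'$ supported away from $\partial X'$ and compatible with the wrapping categories; continuation maps then furnish a homotopy-commuting triangle relating $(i_s)_\ast$ and $(i_t)_\ast$, and composing these along the subdivision gives $(i_0)_\ast\simeq(i_1)_\ast$. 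The delicate points — the ones I expect to occupy most of the argument — are (i) arranging that $\{i_t\}$ act cylindrically at infinity, so that it carries wrapping categories to wrapping categories and preserves the class $C$ of continuation morphisms being inverted, and (ii) choosing the perturbation and almost complex structure data to vary continuously in $t$ while remaining $\pi_{X'}$-holomorphic near $\partial X'$, so that no holomorphic curve escapes toward the boundary during the homotopy. With these in hand, the triangle-of-functors argument is routine, and we refer to \cite[Lemma 3.41]{gpssectorsoc} for the execution.
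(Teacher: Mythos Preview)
There is a genuine gap: your reference to \cite[Lemma 3.41]{gpssectorsoc} for the execution of claim (A) is circular. Lemma 3.41 \emph{is} the present statement (that trivial inclusions induce quasi-equivalences), and its proof does not go through your (A). Instead, the paper reduces by a sandwiching argument to the specific inclusion $X_{-a}:=e^{-aX_I}(X)\hookrightarrow X$, for which essential surjectivity is trivial (the flow of $X_I$ pushes Lagrangians from $X$ into $X_{-a}$) and full faithfulness is shown by comparing the cutoff Reeb vector fields on $\partial(\partial_\infty X_{-a})$ and $\partial(\partial_\infty X)$ to verify that no new Reeb chords are created when enlarging --- this is \cite[Lemma 3.33]{gpssectorsoc}, not 3.41. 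The key input is a direct geometric estimate on Reeb dynamics, not isotopy-invariance of pushforward functors.

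Your claim (A) is plausible and your sketch toward it is reasonable in outline, but you have not actually proven it: the delicate points you yourself flag (cylindricality of the isotopy at infinity, continuous choice of $\pi_{X'}$-compatible Floer data, building the homotopy of functors) are precisely the work that remains, and none of it is in the reference you cite. The paper's route sidesteps all of this bookkeeping by trading the homotopy-of-functors argument for a single comparison of wrapping dynamics near the boundary.
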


\begin{proof}
We claim that $X_{-a}\to X$ induces a quasi-equivalence on wrapped Fukaya categories for $X_{-a}:=e^{-aX_I}(X)$ where $I$ is a fixed defining function for $X$.
This implies that $\W(X_a)\to\W(X_b)$ is a quasi-equivalence for all $a<b$.
Given a small perturbation $X'$ of $X$ sitting inside $X_1$, we can also flow $X'$ in and out using the flow of $I$, to obtain $X'_a$ for $a\in\RR$, so we obtain quasi-equivalences $\W(X_a)\to\W(X_b)$ as well.
Now we can sandwich together
\begin{equation}
\W(X'_{-3})\to\W(X_{-2})\to\W(X'_{-1})\to\W(X)
\end{equation}
to see that $\W(X'_{-3})\to\W(X'_{-1})$ and $\W(X_{-2})\to\W(X)$ being quasi-equivalences implies all functors above are quasi-equivalences (the `two-out-of-six property').

It thus suffices to prove the claim that $\W(X_{-a})\to\W(X)$ is a quasi-equivalence.
This is shown in \cite[Lemma 3.33]{gpssectorsoc} using a careful analysis of the Reeb dynamics.
Here is a more formal argument.
Consider the map $X\to X_{-a}$ defined by flowing along $X_I$ for time $-a$ (extend $I$ to a globally defined linear Hamiltonian on $X$ by cutting off and extending by zero).
We argue that the induced map $\W(X)\to\W(X_{-a})$ is inverse to the pushforward map $\W(X_{-a})\to\W(X)$ (say, for simplicity, at the level of cohomology categories).
Restricting to Lagrangians which are disjoint from a fixed cylindrical neighborhood of $\partial X$ containing the support of $I$, and to elements of $HW^\bullet$ in the image of $HF^\bullet$ of such Lagrangians, both functors are the `identity', hence are certainly inverses to each other.
It thus suffices to show that such Lagrangians and such morphisms cover all of $\W(X)$ and $\W(X_{-a})$.
In other words, we want to know that for any Liouville sector $X$, we can realize any morphism in $\W(X)$ inside $HF^\bullet(L,K)$ for $L,K$ disjoint from a fixed small cylindrical neighborhood of $\partial X$.
To do this, we simply isotope any pair $L,K$ using a defining function (different from the above!) and appeal to isotopy invariance from Lemma-Definition \ref{isotopyinvariancedef} and unitality in Lemma-Definition \ref{continuationdef} (compare Remark \ref{isotopyisomorphism}).
\end{proof}

\begin{lemma}\label{decreasingintersectionfaithful}
If $\f_0\supseteq\f_1\supseteq\cdots$ is a decreasing family of closed subsets of $(\partial_\infty X)^\circ$, then the natural map
\begin{equation} \label{eq: limitstop} 
\varinjlim_iHW^\bullet(L,K)_{X,\f_i}\to HW^\bullet(L,K)_{X,\bigcap_{i=0}^\infty\f_i}
\end{equation}
is an isomorphism.
\end{lemma}

\begin{proof}
The left hand side is computed by the wrapping category $\bigcup_{i=1}^\infty(L\to-)_{\f_i}^+$ since direct limits commute with direct limits, and this is the same as the wrapping category $(L\to-)_{\bigcap_{i=0}^\infty\f_i}^+$.
\end{proof}

\begin{lemma}\label{winvariancemarked}
Let $X$ be a Liouville sector.
A deformation of codimension zero submanifolds-with-corners $\f_t\subseteq(\partial_\infty X)^\circ$ with every $\partial\f_t$ convex
(meaning there is a contact vector field $V_t$ defined in a neighborhood of $\partial\f_t$ which at every point of $\partial\f_t$ is strictly outward pointing with respect to every face)
induces a natural quasi-equivalence $\W(X,\f_0)=\W(X,\f_1)$.
\end{lemma}

\begin{proof}
By the same sandwiching argument, it is enough to show that the map $\W(X,\f)\to\W(X,e^V\f)$ is a quasi-equivalence, where $V$ is any contact vector field inward pointing along $\partial\f$.
When $\partial\f$ is smooth (i.e.\ no corners), this follows from the analysis of the cutoff Reeb vector field from \cite[Lemma 3.33]{gpssectorsoc} cited in the proof of Lemma \ref{winvariancesector}, or from the alternative formal argument in that proof.
For general $\f$ with corners, we may write $\f$ as a decreasing intersection of smoothings $\f^1\supseteq\f^2\supseteq\cdots$ each with convex boundary and appeal to the smooth boundary case and Lemma \ref{decreasingintersectionfaithful}.
\end{proof}

\begin{lemma}\label{faithfulstopping}
Let $X\hookrightarrow X'$ be an inclusion of Liouville sectors and let $\f\subseteq(\partial_\infty X)^\circ$ and $\f'\subseteq(\partial_\infty X')^\circ$ be closed subsets with $\f'\cap\partial_\infty X=\f\cup\partial(\partial_\infty X)$.
Then $\W(X,\f)\hookrightarrow\W(X',\f')$ is fully faithful.
\end{lemma}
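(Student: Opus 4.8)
The plan is to realize the pushforward functor $\W(X,\f)\to\W(X',\f')$ through compatibly chosen posets of Lagrangians $\OO\subseteq\OO'$ defining $\W(X,\f)=\OO[C^{-1}]$ and $\W(X',\f')=\OO'[C'^{-1}]$ as in \S\ref{wrappedlocalizationconstruction}, and to show that, for cylindrical Lagrangians $L,K\subseteq X$ (disjoint from $\f$ at infinity), the morphism complexes computed in the two localizations literally agree. Two ingredients are needed: (i) a cofinal wrapping of $L$ inside $(X,\f)$ is also a cofinal wrapping of $L$ inside $(X',\f')$; and (ii) holomorphic polygons in $X'$ with boundary on Lagrangians contained in $X$ are themselves contained in $X$.

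The contact-topological heart of (i) is a clopen observation. The hypothesis $\f'\cap\partial_\infty X=\f\cup\partial(\partial_\infty X)$ forces $\partial(\partial_\infty X)\subseteq\f'$, so the subset $(\partial_\infty X)^\circ\setminus\f=(\partial_\infty X)^\circ\setminus\f'$ is both closed and open in $(\partial_\infty X')^\circ\setminus\f'$: it is the intersection of the closed set $\partial_\infty X$ with the open set $(\partial_\infty X')^\circ\setminus\f'$, and it is open because a point of $\partial_\infty X$ lying outside $\f'$ lies outside $\partial(\partial_\infty X)$, hence in the codimension-zero open subset $(\partial_\infty X)^\circ\subseteq\partial_\infty X'$. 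Therefore any Legendrian isotopy inside $(\partial_\infty X')^\circ\setminus\f'$ whose initial Legendrian lies in $(\partial_\infty X)^\circ\setminus\f$ stays there for all time, as does any homotopy of such isotopies; in other words, for $L\subseteq X$ the Legendrian wrapping categories $(\partial_\infty L\leadsto-)^+_{(\partial_\infty X)^\circ\setminus\f}$ and $(\partial_\infty L\leadsto-)^+_{(\partial_\infty X')^\circ\setminus\f'}$ coincide. Choosing a contact form $\alpha'$ on $(\partial_\infty X')^\circ\setminus\f'$ with complete Reeb flow and setting $\Lambda_t:=e^{t\R_{\alpha'}}(\partial_\infty L)$, the orbit $\Lambda_t$ lies in $(\partial_\infty X)^\circ\setminus\f$ for all $t$ and satisfies $\alpha'(\partial_t\Lambda_t)\equiv 1$, so by the cofinality criterion (Lemma \ref{cofinalitycriterion}), applied with $\alpha'$ and with its restriction to $(\partial_\infty X)^\circ\setminus\f$, it is a cofinal wrapping in both Legendrian wrapping categories.

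Next I would lift this to Lagrangians. Applying the ``drag at $\infty$'' functor of \eqref{laglegcofinal} inside $X$ produces a wrapping $L=L^{(0)}\leadsto L^{(1)}\leadsto\cdots$ of $L$ by Lagrangians contained in $X$; because the ``drag at $\infty$'' functor for $(X',\f')$ agrees with that for $(X,\f)$ followed by the inclusion of Lagrangian wrapping categories, and because the ``drag at $\infty$'' functors for both $(X,\f)$ and $(X',\f')$ are cofinal \eqref{laglegcofinal}, the single sequence $\{L^{(i)}\}$ is cofinal in $(L\leadsto-)^+_{X,\f}$ \emph{and} in $(L\leadsto-)^+_{X',\f'}$. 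I would use such sequences (chosen generically, for every Lagrangian of $X$, so that tuples are mutually transverse) to build $\OO\subseteq\OO'$, arranging the Floer data for $\OO'$ to restrict to that for $\OO$ on tuples of Lagrangians contained in $X$, and taking the almost complex structures to be compatible with a projection $\pi_X$ near the hypersurface $\partial X\subseteq X'$ as well as with $\pi_{X'}$ near $\partial X'$.

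Ingredient (ii) is then the standard no-escape property for such Floer data: a holomorphic polygon in $X'$ bounding a tuple of cylindrical Lagrangians contained in $X$ is contained in $X$ (project by $\pi_X$ and apply the maximum principle as in Remark \ref{sectorbdrycoords}; being trapped in $X$ it cannot reach $\partial X'$). Hence the full $\ainf$-subcategory of $\OO'$ spanned by the objects coming from $X$ is exactly $\OO$, and it carries the continuation class $C$ into $C'$. Since for $L,K$ Lagrangians of $X$ the morphism complex in either localization is computed (as in \S\ref{wrappedlocalizationconstruction}) by the telescope of the complexes $CF^\bullet(L^{(i)},K)$ along the common cofinal wrapping $L^{(0)}\leadsto L^{(1)}\leadsto\cdots$, and these complexes coincide whether formed in $X$ or in $X'$, the functor $\W(X,\f)\to\W(X',\f')$ is a quasi-isomorphism on all morphism complexes, i.e.\ fully faithful. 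The one genuinely delicate point is the cofinality assertion (i): it is exactly the clopen observation above, together with cofinality of the ``drag at $\infty$'' functors, that upgrades a wrapping cofinal inside the smaller $(X,\f)$ to one cofinal inside the larger $(X',\f')$; by comparison, the no-escape argument and the bookkeeping of compatible Floer data are routine.
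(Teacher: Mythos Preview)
Your proposal is correct and follows essentially the same approach as the paper's proof, which consists of the same two observations: the hypothesis forces wrapping in $(\partial_\infty X)^\circ\setminus\f$ to coincide with wrapping in $(\partial_\infty X')^\circ\setminus\f'$, and holomorphic disks in $X'$ with boundary in $X$ are confined to $X$ by the maximum principle (the paper cites \cite[Lemma 3.20]{gpssectorsoc}). You have simply unpacked these two sentences in full detail---making the clopen argument explicit, invoking the cofinality of the ``drag at $\infty$'' functors from \eqref{laglegcofinal}, and spelling out the compatible choices of posets and Floer data---whereas the paper leaves all of this implicit.
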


\begin{proof}
The hypothesis on $\f$ and $\f'$ implies that wrapping inside $\partial_\infty X\setminus\f$ is the same as wrapping inside $\partial_\infty X'\setminus \f'$.  Holomorphic disks in $X'$ are forced to lie in $X$ by the usual maximum principle argument \cite[Lemma 3.20]{gpssectorsoc}.
\end{proof}

\begin{lemma}\label{horizequivalence}
Let $X\hookrightarrow\bar X$ be the inclusion of a Liouville sector $X$ into its convex completion $\bar X$.
The map $\W(X)\xrightarrow\sim\W(\bar X,\partial_\infty\bar X\setminus(\partial_\infty X)^\circ)$ is a quasi-equivalence.
\end{lemma}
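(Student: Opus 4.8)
The plan is to exhibit the map as a pushforward functor, derive full faithfulness from Lemma~\ref{faithfulstopping}, and then prove essential surjectivity by an explicit isotopy; only the last step requires genuine geometric input.

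Write $\f':=\partial_\infty\bar X\setminus(\partial_\infty X)^\circ$. First, $X\hookrightarrow\bar X$ is an inclusion of Liouville sectors (recall $X=\bar X\setminus\Nbd F_0$ carries the restricted Liouville form), and $X\cap\partial\bar X=\varnothing$ since $\partial\bar X=\varnothing$; moreover $\f'\cap(\partial_\infty X)^\circ=\varnothing$, so covariant functoriality provides the pushforward $\W(X)=\W(X,\varnothing)\to\W(\bar X,\f')$ --- this is the natural map of the statement. For full faithfulness, note that $\partial_\infty X$ sits inside $\partial_\infty\bar X$ as a closed subset with interior $(\partial_\infty X)^\circ$ and topological boundary $\partial(\partial_\infty X)$, whence $\f'\cap\partial_\infty X=\partial_\infty X\setminus(\partial_\infty X)^\circ=\partial(\partial_\infty X)$. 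This is precisely the hypothesis of Lemma~\ref{faithfulstopping} with $X'=\bar X$ and $\f=\varnothing$, so the pushforward is fully faithful. (The geometry behind this: wrapping in $\W(X)$ and in $\W(\bar X,\f')$ both take place in the contact manifold $(\partial_\infty X)^\circ$, while holomorphic disks with boundary on Lagrangians lying in $X$ are confined to $X$ by the maximum principle associated to the projection $\pi_X$ of Remark~\ref{sectorbdrycoords}.)

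It remains to prove essential surjectivity, i.e.\ that every exact cylindrical Lagrangian $L\subseteq\bar X$ with $\partial_\infty L$ disjoint from $\f'$ is isotopic, through exact cylindrical Lagrangians with boundary at infinity in $(\partial_\infty X)^\circ$, to one contained in $X$; such an isotopy induces an isomorphism in $\W(\bar X,\f')$, and so this identifies every object with one in the image. Since $\partial_\infty L\subseteq(\partial_\infty X)^\circ$, the cylindrical end of $L$ already lies over $(\partial_\infty X)^\circ$ and hence inside $X$, and only a compact part of $L$ can meet the glued-on region $\bar X\setminus X$, which by construction is a standard neighborhood of $F_0$, symplectomorphic to a piece of $F\times\CC$. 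Using the product description of this region near infinity from \cite[\S2.6]{gpssectorsoc} and the fact that $\partial_\infty L$ stays bounded away from $\f'$, one pushes $L$ across this region into $X$ by an exact Lagrangian isotopy --- generated near the region by a Hamiltonian which in the $\CC$-coordinate translates towards $\{\Re\ge 0\}$ --- arranged so that $\partial_\infty L$ remains inside $(\partial_\infty X)^\circ$ at all times.

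I expect this push-off isotopy to be the main obstacle: one must organize it to proceed through exact \emph{cylindrical} Lagrangians and to avoid $\f'$ at infinity throughout, which amounts to a careful reading of the geometry of the convex completion near the Liouville hypersurface $F_0$. Granting this, the pushforward is both fully faithful and essentially surjective, hence a quasi-equivalence. (If one prefers not to invoke Lemma~\ref{faithfulstopping}, full faithfulness can instead be proved directly by setting up the maximum principle above and comparing the two wrapping categories via Lemma~\ref{cofinalitycriterion}, but the route through Lemma~\ref{faithfulstopping} is shorter.)
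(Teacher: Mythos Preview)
Your approach is correct and matches the paper's: full faithfulness via Lemma~\ref{faithfulstopping}, then essential surjectivity by isotoping any $L$ with $\partial_\infty L\subseteq(\partial_\infty X)^\circ$ into $X$. The only incomplete step is the one you flag yourself, namely the construction of the push-off isotopy; here is how the paper resolves it.

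Rather than attempting to ``translate in the $\CC$-coordinate'' directly (which is awkward to make cylindrical at infinity), the paper first observes that the essential surjectivity statement is invariant under deformation of the pair $(X,\bar X)$, and then invokes \cite[Lemma~2.32]{gpssectorsoc} to deform to the explicit model
\[
X=\bar X\setminus(\RR_{s\geq 0}\times\RR_{|t|\leq 1}\times F_0)^\circ,
\]
where the Liouville form in local coordinates is $e^s(dt-\lambda)$. The point of this model is that the vector field
\[
V=-\tfrac{\partial}{\partial s}+t\tfrac{\partial}{\partial t}+Z_\lambda
\]
\emph{preserves the Liouville form} (a one-line check), so any extension of $V$ to a Hamiltonian vector field on $\bar X$ linear at infinity automatically carries cylindrical Lagrangians to cylindrical Lagrangians. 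Since the compact set $L\cap(\bar X\setminus X)$ is pushed into $X$ under the flow of $V$ (the $s$-coordinate decreases), the time-$T$ flow for $T\gg 0$ lands $L$ inside $X$. At infinity, the induced contact vector field $t\partial_t+Z_\lambda$ is outward-pointing along the boundary of the removed region $\{|t|\leq 1\}\times F_0$, so $\partial_\infty L$ remains disjoint from $\f'$ throughout the isotopy. This furnishes exactly the ``careful reading of the geometry'' you anticipated needing.
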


\begin{proof}
The map is fully faithful by Lemma \ref{faithfulstopping}.
To show essential surjectivity, we just need to show that every Lagrangian in $\bar X$ with boundary contained in $(\partial_\infty X)^\circ$ is isotopic to a Lagrangian in $X$.
This statement is invariant under deformation of the pair $(X,\bar X)$, as can be seen from the ``pushing by $X_I$'' argument from the proof of Lemma \ref{winvariancesector}.

By \cite[Lemma 2.32]{gpssectorsoc}, we may deform to the situation to
$X=\bar X\setminus(\RR_{s\geq 0}\times\RR_{\left|t\right|\leq 1}\times F_0)^\circ \subseteq \bar X$.
In local coordinates $\RR_{s\geq 0}\times\RR_{\left|t\right|\leq 1}\times F_0$, the Liouville form is given by $e^s(dt-\lambda)$, which is preserved by the vector field $-\frac\partial{\partial s}+t\frac\partial{\partial t}+Z_\lambda$.
Extending this vector field arbitrarily to a Hamiltonian vector field linear at infinity on $\bar X$, we see that it pushes any Lagrangian in $\bar X$ with boundary disjoint from $\RR_{\left|t\right|\leq 1}\times F_0$ into $X$.
\end{proof}

\begin{corollary}\label{horizsmallstop}
Let $X\hookrightarrow\bar X\hookleftarrow F\times\CC_{\Re\geq 0}$ be the inclusion of a Liouville sector into its convex completion and the complement.
The natural functors
\begin{equation}
\W(X)\xrightarrow\sim\W(\bar X,\partial_\infty(F\times\CC_{\Re\geq 0}))\xrightarrow\sim\W(\bar X,F_0)\xrightarrow\sim\W(\bar X,\cc_F)
\end{equation}
are quasi-equivalences, where $\cc_F\subseteq F=F\times\{\infty\}\subseteq F\times\partial_\infty\CC_{\Re\geq 0}\subseteq\partial_\infty(F\times\CC_{\Re\geq 0})\subseteq\partial_\infty\bar X$.
\end{corollary}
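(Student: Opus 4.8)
\emph{Proof plan.} I would realize the chain as the quasi-equivalence of Lemma~\ref{horizequivalence} followed by two acceleration functors between nested stops, and check each arrow in turn. Set $N:=\partial_\infty(F\times\CC_{\Re\geq 0})\subseteq\partial_\infty\bar X$. By the construction of the convex completion --- $X=\bar X\setminus(\RR_{s\geq 0}\times\RR_{|t|\leq 1}\times F_0)^\circ$, as recalled in the proof of Lemma~\ref{horizequivalence} --- one has $N=\partial_\infty\bar X\setminus(\partial_\infty X)^\circ$, and $N$ is identified with a two-sided contact collar $[-1,1]\times F_0$ of the Liouville hypersurface $F_0=\{0\}\times F_0$, whose core is $\f=\cc_F=\{0\}\times\cc_F$. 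Inside this collar the relevant stops are the nested closed subsets $\f\subseteq F_0\subseteq N$, together with, for $\eta\in(0,1]$ and $T\geq 0$, the codimension-zero sub-collars $[-\eta,\eta]\times e^{-T}F_0$. The first arrow $\W(X)\xrightarrow\sim\W(\bar X,N)$ is exactly Lemma~\ref{horizequivalence}, so it remains to show the acceleration functors $\W(\bar X,N)\to\W(\bar X,F_0)\to\W(\bar X,\f)$ are quasi-equivalences.

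The point is that the intermediate codimension-zero stops are related by deformations with convex boundary: for fixed $\eta,T$, the family $[-c_t,c_t]\times e^{-tT}F_0$ (with $c_0=1$, $c_1=\eta$, all $c_t\geq\eta$) is a deformation of $N$ through codimension-zero submanifolds-with-corners of $\partial_\infty\bar X$ whose boundary is convex (the $z$-faces have the Reeb field transverse, the $\partial F_0$-faces are of contact type, the corners meeting convexly). Hence Lemma~\ref{winvariancemarked} gives $\W(\bar X,[-\eta,\eta]\times e^{-T}F_0)=\W(\bar X,N)$ for all such $\eta,T$, the equivalences being acceleration functors. Since $F_0=\bigcap_{\eta>0}([-\eta,\eta]\times F_0)$ and $\cc_F=\bigcap_{T}(\{0\}\times e^{-T}F_0)=\bigcap_{\eta,T}([-\eta,\eta]\times e^{-T}F_0)$, Lemma~\ref{decreasingintersectionfaithful} yields, for all $L,K$,
\[
HW^\bullet(L,K)_{\bar X,F_0}=\varinjlim_{\eta}HW^\bullet(L,K)_{\bar X,[-\eta,\eta]\times F_0}=HW^\bullet(L,K)_{\bar X,N},
\]
and similarly $HW^\bullet(L,K)_{\bar X,\f}=HW^\bullet(L,K)_{\bar X,N}$, the colimits being constant by the previous sentence. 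Thus $\W(\bar X,N)\to\W(\bar X,F_0)$ and the composite $\W(\bar X,N)\to\W(\bar X,\f)$ are \emph{fully faithful}.

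For essential surjectivity it suffices, by two-out-of-three applied to $\W(\bar X,N)\to\W(\bar X,F_0)\to\W(\bar X,\f)$, to treat the composite $\W(\bar X,N)\to\W(\bar X,\f)$. Given an object $L$ of $\W(\bar X,\f)$, the set $\partial_\infty L$ (compact) is disjoint from $\cc_F=\bigcap_{\eta,T}([-\eta,\eta]\times e^{-T}F_0)$, hence disjoint from $[-\eta,\eta]\times e^{-T}F_0$ for some $\eta,T$; so $L$ is an object of $\W(\bar X,[-\eta,\eta]\times e^{-T}F_0)$, which is quasi-equivalent to $\W(\bar X,N)$ via the acceleration functor, putting $L$ in the essential image of $\W(\bar X,N)\to\W(\bar X,\f)$ (the accelerations composing correctly because $\cc_F\subseteq[-\eta,\eta]\times e^{-T}F_0\subseteq N$). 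Composing with Lemma~\ref{horizequivalence} gives the corollary. The step I expect to need the most care is the geometric input at the outset --- identifying $\partial_\infty(F\times\CC_{\Re\geq 0})$ with the contact collar $[-1,1]\times F_0$ of the Liouville hypersurface $F_0$, locating $\f$, $F_0$ and the sub-collars inside it, and verifying convexity (including at corners) of the deformations above --- after which the argument is formal manipulation of Lemmas~\ref{winvariancemarked} and~\ref{decreasingintersectionfaithful}.
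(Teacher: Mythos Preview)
Your proposal is correct and follows essentially the same route as the paper's proof: Lemma~\ref{horizequivalence} for the first arrow, then Lemmas~\ref{winvariancemarked} and~\ref{decreasingintersectionfaithful} for the remaining two. The paper outsources the geometric input you flag as delicate---that $\partial_\infty(F\times\CC_{\Re\geq 0})$ deforms down to $F_0$ and to $\f$ through codimension-zero submanifolds with convex boundary---to \cite[Lemma~2.18]{gpssectorsoc}, whereas you sketch it by hand via the sub-collars $[-\eta,\eta]\times e^{-T}F_0$; your explicit essential surjectivity argument is what the paper leaves implicit in its appeal to those two lemmas.
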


\begin{proof}
The first equivalence is just Lemma \ref{horizequivalence}.
The second two equivalences follow from Lemmas \ref{decreasingintersectionfaithful} and \ref{winvariancemarked} since $\partial_\infty(F\times\CC_{\Re\geq 0})$ can be deformed down to either $F_0$ or $\f$ through codimension zero submanifolds with convex boundary by \cite[Lemma 2.18]{gpssectorsoc}.
\end{proof}

\begin{remark}
Theorem \ref{winvariancestrong} also asserts equivalences of Fukaya categories under deformation of stops.
In contrast to the above, it concerns a situation where there is no obvious functor between the categories in question.
The functor ends up being a certain pushforward, but by a non-cylindrical symplectomorphism.
This requires the use of dissipative Floer data from \S\ref{noncyl}, from which Theorem \ref{winvariancestrong} is in fact an easy corollary.
\end{remark}

\subsection{Invariance from constancy of contact complement} \label{sec: ccc}

Although it is not obvious from its statement, the proof of Theorem \ref{winvariancestrong} will involve the consideration of non-cylindrical Floer data.
Indeed, the equivalence it asserts will simply be the pushforward under a particular non-cylindrical symplectomorphism.

\begin{proof}[Proof of Theorem \ref{winvariancestrong}]
By passing to the convex completion and appealing to Lemma \ref{horizequivalence}, it suffices to treat the case when $X$ is a Liouville manifold.

The hypothesis on $\{\f_t\}_{t\in[0,1]}$ is equivalent to the existence of contact vector fields $V_t$ on $\partial_\infty X\setminus\f_t$, varying smoothly in $t$, such that the flow of $V_t$ defines a contactomorphism $\partial_\infty X\setminus\f_0\to\partial_\infty X\setminus\f_1$.
Using this contact isotopy at infinity to define isotopies of exact cylindrical Lagrangians, we obtain at least an identification of the objects of $\W(X,\f_0)$ with the objects of $\W(X,\f_1)$.

We lift this contact isotopy at infinity to a Hamiltonian symplectomorphism $\Phi:X\to X$ as follows.
Fix a sequence of smooth contact vector fields $V^1_t,V^2_t,\ldots$ on $\partial_\infty X$ which agree with $V_t$ away from smaller and smaller neighborhoods of $\bigcup_{t\in[0,1]}\{t\}\times\f_t$.
Let $\Phi^1_t:X\to X$ $(t\in[0,1]$) be any Hamiltonian isotopy which agrees with $V^1_t$ at infinity.
Iteratively define $\Phi^i_t:X\to X$ by modifying $\Phi^{i-1}_t$ to agree with $V_t^i$ near infinity.
By performing these successive modifications in smaller and smaller neighborhoods of $\bigcup_{t\in[0,1]}\{t\}\times\f_t$ (further and further out at infinity), we ensure that $\Phi_t^i$ converges to a Hamiltonian isotopy $\Phi_t:X\to X$ as $i\to\infty$, in the sense that $\Phi_t^i$ eventually agrees with $\Phi_t$ over the complement of any fixed neighborhood of $\bigcup_{t\in[0,1]}\{t\}\times\f_t$.
We define $\Phi:=\Phi_1:X\to X$ to be the resulting time one flow map.

Now consider defining the wrapped Fukaya categories $\W(X,\f_i)$ using not the abstract wrapped Floer setups $\SSS(X,\f_i)=\SSS^\cyl(X,\f_i)$ from \S\ref{wrapdefsec} based on cylindrical Lagrangians and cylindrical almost complex structures, but rather the larger abstract wrapped Floer setups $\SSS^{\cyl,\diss}(X,\f_i)$ from \S\ref{noncyl} consisting of cylindrical Lagrangians and dissipative almost complex structures.
The resulting wrapped Fukaya categories are the same (Lemma \ref{usedissipativeJ}).

We now claim that $\Phi$ induces an isomorphism of abstract wrapped Floer setups $\SSS^{\cyl,\diss}(X,\f_0)=\SSS^{\cyl,\diss}(X,\f_1)$.
Because of the asymptotic cylindricity property of $\Phi$, it follows that if $L$ is cylindrical and disjoint from $\f_0$ at infinity, then $\Phi(L)$ is cylindrical and disjoint from $\f_1$ at infinity, and conversely.
Certainly $\Phi$ preserves mutual transversality.
Since we use dissipative almost complex structures, it follows that $\Phi$ preserves valid Floer data as well, since the notion of dissipativity depends only on the symplectic structure.
The asymptotic cylindricity property of $\Phi$ implies that it preserves the isotopy invariance isomorphisms of Floer cohomology, hence preserves units and thus continuation maps as well.
We have thus shown that $\Phi$ defines an isomorphism $\SSS^{\cyl,\diss}(X,\f_0)=\SSS^{\cyl,\diss}(X,\f_1)$.

Since $\Phi$ induces an isomorphism of abstract wrapped Floer setups $\SSS^{\cyl,\diss}(X,\f_0)=\SSS^{\cyl,\diss}(X,\f_1)$, it therefore induces an isomorphism of their associated wrapped Fukaya categories $\W(X,\f_0)=\W(X,\f_1)$.
\end{proof}

\begin{example}\label{corefamilycomplementconstant}
Let us show that if $F_t\subseteq(\partial_\infty X)^\circ$ is a deformation of Liouville hypersurfaces then the family of their cores $\f_t$ satisfies the hypothesis of Theorem \ref{winvariancestrong} (though note that in this case the conclusion of Theorem \ref{winvariancestrong} follows much more easily from Corollary \ref{horizsmallstop} and Lemma \ref{winvariancesector}; the whole point of Theorem \ref{winvariancestrong} is that verifying its hypothesis for a given family of stops $\f_t$ may be easier than producing a corresponding family of ribbons $F_t$).
To see that the complement is a locally trivial bundle of contact manifolds, write a neighborhood of $F_t$ in local coordinates $(\RR_z\times F_t,dz+\lambda_t)$, and observe that there is a contact vector field $W_t:=-z\partial_z-Z_{\lambda_t}$ which is complete in the positive direction.
There thus exists a smooth trivialization of the complement of $\bigcup_{t\in[0,1]}\{t\}\times\f_t\subseteq[0,1]\times(\partial_\infty X)^\circ$ which is standard near $\partial(\partial_\infty X)$ and which preserves $W_t$ near $\f_t$.
With respect to this trivialization, we thus have a family of contact structures $\xi_t$ on a manifold-with-boundary $M$, such that $\xi_t$ is constant near $\partial M$, and $\xi_t$ is $W$-invariant near infinity for a fixed vector field $W$ giving $M$ a complete cylindrical structure near infinity.
Recall that for any deformation of contact structures $\xi_t$ on a manifold $M$, there exists a \emph{unique} family of vector fields $V_t$ satisfying $\sL_{V_t}\xi_t=\partial_t\xi_t$ and $V_t\in\xi_t$.
If the flow of $V_t$ is complete, then this provides an isotopy $\Phi_t:M\to M$ with $\Phi_t^\ast\xi_t=\xi_0$.
In our particular example, $V_t$ vanishes near $\partial M$ since $\xi_t$ is constant there, and $V_t$ is $W$-invariant near infinity since $\xi_t$ is $W$-invariant there.
These two conditions clearly imply the flow of $V_t$ is complete, thus concluding the proof.
\end{example}

\subsection{Stop removal is localization at continuation maps}

A fundamental property of partially wrapped Fukaya categories---following directly from their definition---is their localization behavior under removing any portion of a stop.
The general version of this property is as follows:

\begin{lemma} \label{lem: general stop removal}
Let $X$ be a Liouville manifold or sector, with two stops $\f\subseteq\g\subseteq(\partial_\infty X)^\circ$.
Suppose that for every $L\subseteq X$ disjoint from $\g$, there exists a sequence of wrappings $L=L^{(0)}\leadsto L^{(1)}\leadsto\cdots$ in the complement of $\f$, with each $L^{(i)}$ disjoint from $\g$, which is cofinal in $(X,\f)$ (e.g.\ this holds by general position if $\g\setminus\f$ is mostly Legendrian).
Then the natural map $ \W(X, \g) \to \W(X, \f)$ factors through a fully faithful morphism from the quotient: 
\begin{equation} \label{cquotientinproof}
 \W(X, \g) \to \W(X, \g) /  \C_{\f, \g} \hookrightarrow \W(X, \f)
\end{equation}
where $\C_{\f, \g} \subseteq \W(X, \g)$ is the collection of cones 
of continuation maps $L^w\to L$ for all positive wrappings $L\leadsto L^w$ disjoint from $\f$, with $L$ and $L^w$ disjoint from $\g$.
\end{lemma}

\begin{proof}
For a positive isotopy $L\leadsto L^w$ disjoint from $\f$, with $L$ and $L^w$ disjoint from $\g$, the associated continuation map $L^w\to L$ is an isomorphism in $\W(X,\f)$.
Thus the functor $\W(X,\g)\to\W(X,\f)$ sends $\C_{\f,\g}$ to zero objects.
The quotient functor $\W(X,\f)\to\W(X,\f)/\C_{\f,\g}$ is thus a quasi-equivalence \cite[Lemma 3.13]{gpssectorsoc}.
The functor $\W(X,\g)\to\W(X,\f)$ localizes to a functor
\begin{equation}
\W(X,\g)/\C_{\f,\g}\to\W(X,\f)/\C_{\f,\g}\xleftarrow\sim\W(X,\f)
\end{equation}
which (up to formally inverting quasi-equivalences) defines the desired functor \eqref{cquotientinproof}.
Our task is to show this functor (which exists for arbitrary $\f\subseteq\g\subseteq(\partial_\infty X)^\circ$) is fully faithful under the stated hypotheses.

The proof is essentially same as the proof of Lemma \ref{pwrappingworks} that the localization construction yields in \S\ref{abstractfloersec} wrapped Floer cohomology.
Given any $L$ disjoint from $\g$, choose a sequence of wrappings $L=L^{(0)}\leadsto L^{(1)}\leadsto\cdots$ in the complement of $\f$ which are cofinal in $(X,\f)$, such that each $L^{(i)}$ is disjoint from $\g$ (this exists by hypothesis).
We claim that the natural map
\begin{equation}\label{hwtohw}
\varinjlim_iHW^\bullet(L^{(i)},K)_{(X,\g)}\xrightarrow\sim HW^\bullet(L,K)_{(X,\f)}
\end{equation}
is an isomorphism.
Indeed, it is surjective since $\varinjlim_iHF^\bullet(L^{(i)},K)\xrightarrow\sim HW^\bullet(L,K)_{(X,\f)}$ is an isomorphism.
The map $\varinjlim_iHF^\bullet(L^{(i)},K)\to\varinjlim_iHW^\bullet(L^{(i)},K)_{(X,\g)}$ is surjective by cofinality of the sequence $L^{(i)}$: any class in $HW^\bullet(L^{(i)},K)_{(X,\g)}$ is in the image of $HF^\bullet(L^{(i)w},K)$ for some positive wrapping $L^{(i)}\leadsto L^{(i)w}$ in the complement of $\g$, which can be composed with a wrapping $L^{(i)w}\leadsto L^{(j)}$ by cofinality, thus ensuring that this class is realized in $HF^\bullet(L^{(j)},K)$.
Thus \eqref{hwtohw} is an isomorphism.

Since multiplication by a continuation map induces an isomorphism on wrapped Floer cohomology, 
we conclude from \eqref{hwtohw} being an isomorphism that the pro-object $\cdots\to L^{(1)}\to L^{(0)}$ in $\W(X,\g)$ is left $\C$-local in the sense of \cite[Lemma 3.16]{gpssectorsoc} (let $\C=\C_{\f,\g}$).
We now consider
\begin{equation}\label{prolocalsequence}
H^\bullet(\W(X,\g)/\C)(L,K)\xrightarrow\sim\varinjlim_iH^\bullet(\W(X,\g)/\C)(L^{(i)},K)\xleftarrow\sim\varinjlim_iH^\bullet\W(X,\g)(L^{(i)},K).
\end{equation}
The first map is an isomorphism since each cone $L^{(i+1)}\to L^{(i)}$ is in $\C$ \cite[Lemma 3.12]{gpssectorsoc}, and the second map is an isomorphism since the pro-object $\cdots\to L^{(1)}\to L^{(0)}$ is left $\C$-local \cite[Lemma 3.16]{gpssectorsoc}.
Now the sequence \eqref{prolocalsequence} maps to the corresponding sequence with $\W(X,\f)$.
In this latter sequence the maps are also isomorphims, and the rightmost vertical map is obviously an isomorphism.
We conclude that $\W(X, \g) / \C \hookrightarrow \W(X, \f)$ is fully faithful.
\end{proof}

Since the cones of two maps $\alpha$ and $\beta$ together generate the cone of their composition $\alpha\beta$, it is equivalent in Lemma \ref{lem: general stop removal} to quotient by cones of positive isotopies which `generate the monoid of positive wrappings'.
In particular: 

\begin{lemma} \label{lem: transverse crossings generate} 
In the situation of Lemma \ref{lem: general stop removal}, if $\g \setminus \f$ is mostly Legendrian, we may replace $\C_{\f, \g}$ with the cones on
just the continuation maps corresponding to positive wrappings $L\leadsto L^w$ which meet $\g \setminus \f$ exactly once, at a smooth Legendrian point, and transversely.
\end{lemma}

\begin{proof}
Follows from Lemma \ref{genericpositiveisotopy}.
\end{proof}

Theorem \ref{wrapcone}, whose proof occupies essentially all of Sections \ref{lagrdisksection} and \ref{wet}, 
identifies geometric representatives for the cones of interest in the situation of Lemma \ref{lem: transverse crossings generate}.  
Proposition \ref{removecontact} is an instance where there are \emph{no} cones needed:

\begin{proof}[Proof of Proposition \ref{removecontact}]
    It suffices to show by Lemma \ref{lem: general stop removal} that cofinal wrappings in the complement of $\g$ are in fact cofinal in the complement of $\f$ as well.
To show this, simply choose a Reeb vector field which is tangent to $\g\setminus\f$ (which exists since $\g\setminus\f$ is a contact submanifold) and appeal to the cofinality criterion Lemma \ref{cofinalitycriterion}.
\end{proof}

It is not generally true that the map $\W(X, \g) /  \C_{\f, \g} \hookrightarrow \W(X, \f)$ is essentially surjective; e.g. 
if $\g = \partial_\infty X$ and $\f = \emptyset$, this will typically be false.  However: 

\begin{lemma} \label{legendrian stop removal surjective}
If $\g \setminus \f$ is mostly Legendrian, then the map $\W(X, \g) /  \C_{\f, \g} \hookrightarrow \W(X, \f)$ is essentially surjective.
\end{lemma}

\begin{proof}
It is enough to prove that every Legendrian inside $\partial_\infty X$ disjoint from $\f$ can be isotoped disjointly from $\f$ to be moreover disjoint from $\g$.
This is a special case of the first part of Lemma \ref{lem:pushoffcore} (note the differing notation).
\end{proof}

\section{Cobordism attachment and twisted complexes}\label{caatc}

The proof of Proposition \ref{cobordismtwisted} consists of two steps.
First, we show in Lemma \ref{cobordismtwistedyoneda} the desired quasi-isomorphism at the level of Yoneda modules over any finite poset of Lagrangians disjoint at infinity from the cobordism $C$.
Then, we show, using the thinness hypothesis on $C$ and a direct limit argument, that testing against such finite posets is enough to ensure quasi-isomorphism in the wrapped Fukaya category.

\begin{lemma}\label{cobordismtwistedyoneda}
Let $X$ be a Liouville sector, and let $L_1,\ldots,L_n\subseteq X$ be disjoint exact Lagrangians whose primitives vanish at infinity.
Let $C\subseteq S\partial_\infty X$ be an exact Lagrangian cobordism with negative end $\partial_\infty L_1\sqcup\cdots\sqcup\partial_\infty L_n$, such that the primitive $f_C:C\to\RR$ of $\lambda|_C$ satisfies
\begin{equation}
f_C|_{\partial_\infty L_1}<\cdots<f_C|_{\partial_\infty L_n},
\end{equation}
regarding $\partial_\infty L_i$ as the negative ends of $C$.
We denote by $L^t=\#_i^{C_t}L_i$ the result of taking $\coprod_{i=1}^nL_i$ and gluing on the $t$-translate $C_t$ of $C$ at infinity ($L^t$ is defined for sufficiently large $t\in\RR$).

Let $\A$ be a finite decorated poset (of Lagrangians and Floer data on $X$ as in \S\S\ref{abstractfloersec}--\ref{wrapdefsec}) such that the boundary at infinity of each $L\in\A$ is disjoint from the image of $C$ under the projection $S\partial_\infty X\to\partial_\infty X$; let $\A$ also denote the resulting $\ainf$-category.
Moreover, fix Floer data for the poset $\{\A>\ast\}$ (i.e.\ the poset $\A$ union one additional object $\ast$ smaller than everything in $\A$), so that assigning to $*$ one of the Lagrangians $L_1,\ldots,L_n,L^t$ defines left $\A$-modules $CF^\bullet(-,L_1),\ldots,CF^\bullet(-,L_n),CF^\bullet(-,L^t)$.
For sufficiently large $t<\infty$, there is an isomorphism of $\A$-modules
\begin{equation}\label{twistedisomorphismyoneda}
CF^\bullet(-,L^t)=[CF^\bullet(-,L_1)\to\cdots\to CF^\bullet(-,L_n)],
\end{equation}
where the right hand side denotes a twisted complex $(\bigoplus_{i=1}^nCF^\bullet(-,L_i),\sum_{i<j}D_{ij}^{C,t})$.
Moreover, using the same Floer data to define a map $CF^\bullet(-,L^t)\to CF^\bullet(-,L^{t+\varepsilon})$ by counting strips $\RR\times[0,1]$ with moving Lagrangian boundary conditions given by the obvious isotopy $L^t\leadsto L^{t+\varepsilon}$ translating $C$ near infinity, the resulting map respects the filtration induced by \eqref{twistedisomorphismyoneda} and acts as the identity on the associated graded, provided $\varepsilon>0$ is sufficiently small in terms of $t$.
\end{lemma}

Before beginning the proof of Lemma \ref{cobordismtwistedyoneda}, we clarify the meaning of $\#_i^{C_t}L_i$.
Identify a neighborhood of infinity (i.e.\ the positive end) in the symplectization $S\partial_\infty X$ with its image inside $X$ under the canonical embedding.
We translate the cobordism $C$ upwards towards infinity via the Liouville flow.
For sufficiently large $t \in \RR$, the locus where the translated-by-$t$ cobordism $C_t$ is cylindrical on its negative end will overlap with the region where the Lagrangians $L_i$ are cylindrical on their positive ends, and we may glue them using this common locus since by assumption $\partial_{-\infty}C=\partial_\infty L_1\sqcup\cdots\sqcup\partial_\infty L_n$.
We denote the result of this gluing by $\#_i^{C_t}L_i$, which is an exact Lagrangian.
Note that we could just as well attach $C$ at infinity to a single Lagrangian $L\subseteq X$ with $\partial_\infty X=\partial_{-\infty}C$, but in this case the result $\#^CL$ may not be exact.

\begin{proof}
As $t\to\infty$, the Lagrangian $L^t=\#_i^{C_t}L_i$ coincides with $\coprod_{i=1}^nL_i$ on larger and larger compact subsets of $X$.
To compare primitives, fix primitives $f_i:L_i\to\RR$ which vanish at infinity, and fix a primitive $f_C:C\to\RR$.
Let $c_i:=f_C|_{\partial_\infty L_i}\in\RR$, so we have $c_1<\cdots<c_n$.
As we translate $C$ by the Liouville flow, these constants $c_i$ scale exponentially, that is $f_{C_t}|_{\partial_\infty L_i}=e^t\cdot c_i$.
Hence we may choose primitives $f_{L^t}:L^t\to\RR$ such that
\begin{equation}\label{primitivecomparison}
f_{L^t}|_{L_i}=f_i+e^t\cdot c_i,
\end{equation}
where by assumption $c_1<\cdots<c_n$.

We now consider the Floer theory of $L_1$, \ldots, $L_n$, and $L^t$ with our fixed collection of Lagrangians $\A$.
Because all $A\in\A$ are disjoint at infinity from $C$, we have (for sufficiently large $t<\infty$) natural identifications
\begin{equation}\label{intersectionswithsurgery}
A\cap L^t = \coprod_{i=1}^n A \cap L_i
\end{equation}
and thus isomorphisms of abelian groups
\begin{equation}
CF^\bullet(A,L^t)=CF^\bullet(A,L_1)\oplus\cdots\oplus CF^\bullet(A,L_n).
\end{equation}
We now study how the $\ainf$ operations interact with this direct sum decomposition.
Consider a Fukaya $\ainf$ disk giving the $\A$-module structure of $CF^\bullet(-,L^t)$.
Such a disk has one boundary component mapping to $L^t$ and the remaining boundary components mapping to Lagrangians from our fixed collection $\A$.
In view of \eqref{intersectionswithsurgery}, the endpoints of the segment labelled with $L^t$ are mapped to intersections with some $L_i$ for $i=1,\ldots,n$.
Let us label the possible cases $(i,j)$ for $1\leq i,j\leq n$ as on the left side of Figure \ref{figurecobordismdisk}.

\begin{figure}[ht]
\centering
\includegraphics[max width=.95\textwidth]{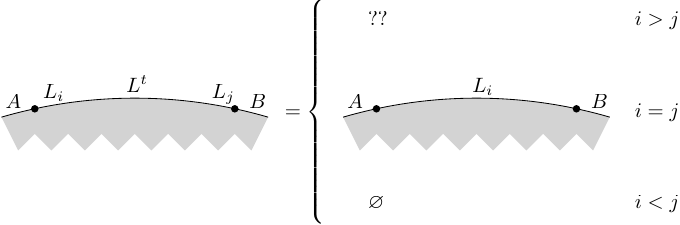}
\caption{Possibilities for a Fukaya $\ainf$ disk with boundary on $L^t$.}\label{figurecobordismdisk}
\end{figure}

The energy of such disks can be calculated as a function of $t$ using \eqref{primitivecomparison}.
In the case $i<j$, we conclude that such disks cannot exist for $t$ sufficiently large, since their energy would be negative.
In the case $i=j$, we conclude that the boundary component labelled with $L^t$ must be mapped entirely to $L_i=L_j$ for $t$ sufficiently large.
Indeed, the energy of such disks is independent of $t$, and the proof of compactness of the moduli spaces of such disks \cite[Proposition 3.19]{gpssectorsoc} based on monotonicity produces an \emph{a priori} $C^0$-estimate depending only on the energy (and on the geometry of the almost complex structures).
Note that how large $t$ must be for this argument to work depends in particular on the actions of the intersections of $L_i$ with the Lagrangians in $\A$ and between the Lagrangians in $\A$ in each other, and hence we are using finiteness of $\A$ in a crucial way here.
We draw no conclusions about the case $i>j$ (there can be many such disks).
We summarize this discussion in Figure \ref{figurecobordismdisk}.
This identification of disks immediately gives the isomorphism of $\A$-modules \eqref{twistedisomorphismyoneda}.

The same argument applies to justify the statement about maps $CF^\bullet(-,L^t)\to CF^\bullet(-,L^{t+\varepsilon})$.
The key assertion about the energy of disks follows in this case from the energy identity for moving Lagrangian boundary conditions \cite[(3.42)]{gpssectorsoc}.
The point is that the contribution of the moving Lagrangian boundary conditions can be made arbitrarily small by taking $\varepsilon>0$ sufficiently small in terms of $t$.
\end{proof}

\begin{proof}[Proof of Proposition \ref{cobordismtwisted}]
Let $\OO$ be a decorated poset of Lagrangians as in \S\S\ref{abstractfloersec}--\ref{wrapdefsec} for defining the partially wrapped Fukaya category, and fix Floer data for defining the $\OO$-modules of $L^t$ and $L_i$ as in the statement of Lemma \ref{cobordismtwistedyoneda}.
Since $C$ is thin, we may assume that all Lagrangians in $\OO$ are disjoint from $C$ at infinity.

Fix a sequence $t_1<t_2<\cdots\to\infty$ and isotopies $L^{t_r}\leadsto L^{t_{r+1}}$ (translation of $C$ as in Lemma \ref{cobordismtwistedyoneda}), thus defining a diagram of $\OO$-modules
\begin{equation}\label{Ltsequence}
CF^\bullet(-,L^{t_1})\xrightarrow{\psi_1}CF^\bullet(-,L^{t_2})\xrightarrow{\psi_2}\cdots.
\end{equation}
Let $\OO_1\subseteq\OO_2\subseteq\cdots\subseteq\OO$ be finite downward closed subposets with $\bigcup_{r=1}^\infty\OO_r=\OO$ such that there are isomorphisms of $\OO_r$-modules
\begin{equation}
CF^\bullet(-,L^{t_r})=[CF^\bullet(-,L_1)\to\cdots\to CF^\bullet(-,L_n)]
\end{equation}
and moreover that the maps $\psi_r$ restricted to $\OO_r$ respect the induced filtrations and act as the identity on the associated graded.
The existence of such a sequence $t_r$ and subposets $\OO_r$ follows from Lemma \ref{cobordismtwistedyoneda}.

The mapping telescope
\begin{equation}
\Biggl[\bigoplus_{r=1}^\infty CF^\bullet(-,L^{t_r})\xrightarrow{\bigoplus(\id_r-\psi_r)}\bigoplus_{r=1}^\infty CF^\bullet(-,L^{t_r})\Biggr]
\end{equation}
models the homotopy colimit of the sequence of $\OO$-modules \eqref{Ltsequence}.
Since the maps in \eqref{Ltsequence} are all quasi-isomorphisms, the inclusion of the first term $CF^\bullet(-,L^{t_1})$ into the mapping telescope is a quasi-isomorphism.

We modify the mapping telescope as follows.
Write $j_r:\OO_r\hookrightarrow\OO$, and note that since $\OO_r\subseteq\OO$ is downward closed, the restriction functor $j_r^\ast$ on modules has a left adjoint $(j_r)_!$ namely ``extension by zero''.
We consider now another mapping telescope
\begin{equation}\label{restrictedtelescope}
\Biggl[\bigoplus_{r=1}^\infty(j_r)_!(j_r)^\ast CF^\bullet(-,L^{t_r})\xrightarrow{\bigoplus(\id_r-\psi_r)}\bigoplus_{r=1}^\infty(j_r)_!(j_r)^\ast CF^\bullet(-,L^{t_r})\Biggr],
\end{equation}
which includes tautologically into the original mapping telescope.
Moreover, this inclusion is a quasi-isomorphism.
Indeed, this can be checked for each object of $\OO$ individually by Lemma \ref{moduleqiso}.
Each such object is in $\OO_r$ for all sufficiently large $r$, which is enough.

The above discussion thus implies that the $\OO$-module $CF^\bullet(-,L^{t_1})$ is quasi-isomorphic to the second mapping telescope \eqref{restrictedtelescope} above.
Now this second mapping telescope has, by construction, a filtration whose subquotients are
\begin{equation}
\Biggl[\bigoplus_{r=1}^\infty(j_r)_!(j_r)^\ast CF^\bullet(-,L_i)\xrightarrow{\bigoplus(\id_r-\id_{r,r+1})}\bigoplus_{r=1}^\infty(j_r)_!(j_r)^\ast CF^\bullet(-,L_i)\Biggr]
\end{equation}
for $i=1,\ldots,n$.
These subquotients are, by the same argument as above, quasi-isomorphic to the $\OO$-modules $CF^\bullet(-,L_i)$.

We have thus produced a (zig-zag of) quasi-isomorphism(s) (note Lemma \ref{moduleqiso}) of $\OO$-modules
\begin{equation}\label{extensioninO}
CF^\bullet(-,L^{t_0})=[CF^\bullet(-,L_1)\to\cdots\to CF^\bullet(-,L_n)].
\end{equation}
To conclude, we just need to localize at $I$.
Localizing on the left by produces a quasi-isomorphism of $\W$-modules.
The inclusion of $\W$ (localization of $\OO$) into the localization of $\{\OO>*\}$ (with $*$ assigned to any of the Lagrangians $L^{t_0},L_1,\ldots,L_n$) is a quasi-equivalence by Lemma \ref{suffwrapequiv}, so the localized modules are thus the Yoneda modules of $L^{t_0},L_1,\ldots,L_n$ (if this argument seems too much like a trick, we invite the reader to review Lemma \ref{representcohomology} and Proposition \ref{kunnethrepresentable}, which give a much `softer' argument for representability which is equally applicable here).
Thus the localization of \eqref{extensioninO} expresses the Yoneda module of $L^{t_0}$ as twisted complex of those of $L_1,\ldots,L_n$.
In view of the Yoneda Lemma \ref{yoneda} and the proof of Lemma \ref{generationqiso}, this produces the desired quasi-isomorphism \eqref{twistedisomorphism}.
\end{proof}

\section{Lagrangian linking disks and surgery at infinity}\label{lagrdisksection}

The purpose of this section is to introduce and perform some basic manipulations with the Lagrangian cobordisms we are interested in, namely the Lagrangian linking disks and the relatively non-exact embedded Lagrangian $1$-handle.
Our definitions of and reasoning about exact Lagrangian submanifolds of symplectizations will be given primarily in terms of the fronts of their Legendrian lifts, however we will also give descriptions in terms of Weinstein handles in some cases.

In particular, we give a careful account of the higher-dimensional version of the series of pictures in Figure \ref{figureonehandlecancelTAKETWO}.

\begin{figure}[ht]
\centering
\includegraphics[max width=.95\textwidth]{onehandlecancel.pdf}
\caption{This picture is only two-dimensional.
Left: the Lagrangian disk $D$ (an arc) linking a Legendrian submanifold $\Lambda$ (a point) at infinity.
Middle: the result $L\#_\gamma D$ of attaching a `relatively non-exact Lagrangian $1$-handle' with center the indicated Reeb chord $\gamma$ from $L$ to $D$.
Right: the result $L^w$ of positively isotoping $L$ through $\Lambda$, which is evidently isotopic to $L\#_\gamma D$ (in the present two-dimensional picture).}\label{figureonehandlecancelTAKETWO}
\end{figure}

\subsection{Front projections}\label{secfrontprojections}

\begin{figure}[ht]
\centering
\includegraphics[max width=.95\textwidth]{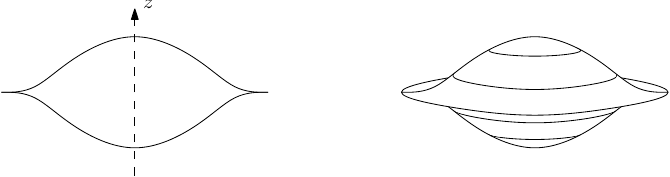}
\caption{The front of the standard Legendrian unknot in contact $\RR^3=\RR_z\times T^\ast\RR$ is illustrated on the left.  The front of the standard Legendrian unknot in contact $\RR^{2n+1}=\RR_z\times T^\ast\RR^n$ is obtained by spinning the picture on the left about the dotted vertical $z$-axis (see the `front spinning' operation in \cite[4.4]{ekholmetnyresullivan}\cite[Example 5.3]{ekholmetnyresabloff}\cite{golovkospinning}); for example, the case of $\RR^5$ is illustrated on the right.}\label{figurelegendrianunknot}
\end{figure}

A function $f:M\to\RR$ gives rise to its graph $\Gamma_f$ inside $\RR_z\times M$ as well as to the graph $\Gamma_{(f,df)}$ of $(f,df)$ inside $(\RR_z\times T^\ast M,dz-\lambda_{T^\ast M})$, which is Legendrian.
The projection $\RR_z\times T^\ast M\to\RR_z\times M$ is called the \emph{front projection}, and the image of a Legendrian is termed its \emph{front}.
For example, $\Gamma_f$ is the front of the Legendrian $\Gamma_{(f,df)}$.

Unlike the case of the graph as just mentioned, usually this front has singularities, as in Figure \ref{figurelegendrianunknot}.
The part of the front which is locally the graph of a smooth function lifts uniquely to a smooth Legendrian.  
The fronts we draw in this paper all have the property that taking the closure of the lift of the smooth locally graphical part 
recovers the original Legendrian (in fact, the only singularities which appear are cusps).

An exact Lagrangian submanifold $L\subseteq(T^\ast M,\lambda_{T^\ast M})$ lifts to a Legendrian submanifold of $(\RR_z\times T^\ast M,dz-\lambda_{T^\ast M})$, and a choice of lift is equivalent (via taking the $z$-coordinate) to a choice of function $g:L\to\RR$ satisfying $dg=\lambda_{T^\ast M}|_L$.
We may thus represent a pair $(L,g)\subseteq(T^\ast M,\lambda_{T^\ast M})$ by drawing its front in $\RR_z\times M$.

To represent exact Lagrangian submanifolds of symplectizations via a front projection, we consider the following setup.
We consider the cotangent bundle $T^\ast(\RR_s\times M)$ equipped with the Liouville vector field $Z_\cyl$ defined by adding to $Z_{T^\ast(\RR_s\times M)}$ the canonical Hamiltonian lift of the vector field $\partial_s$ on the base $\RR_s\times M$ (denote by $\lambda_\cyl$ the associated Liouville form).
In coordinates $T^\ast(\RR_s\times M)=\RR_t\times\RR_s\times T^\ast M$, we have
\begin{align}
\lambda_{T^\ast(\RR_s\times M)}&=\lambda_{T^\ast M}+t\,ds,&Z_{T^\ast(\RR_s\times M)}&=Z_{T^*M}+t\partial_t,\\
\lambda_\cyl&=\lambda_{T^\ast M}+t\,ds-dt,&Z_\cyl&=Z_{T^*M}+t\partial_t+\partial_s.
\end{align}
The Liouville flow identifies $(T^\ast(\RR_s\times M),Z_\cyl)$ with the symplectization of the contact type hypersurface $\{s=0\}=\RR_t\times T^\ast M$, over which the Liouville form $\lambda_\cyl$ restricts to the contact form
\begin{equation}\label{cylissymp}
-dt+\lambda_{T^\ast M}
\end{equation}
(which is the standard contact form on $J^1(M)=\RR_t\times T^\ast M$).

We represent exact Lagrangians $L\subseteq T^\ast(\RR_s\times M)$ via their front inside $\RR_z\times\RR_s\times M$ (note that a Lagrangian submanifold $L\subseteq T^\ast(\RR_s\times M)$ is exact with respect to $\lambda_\cyl$ iff it is exact with respect to $\lambda_{T^\ast(\RR_s\times M)}$).
If such a front is given locally by the graph of a function $g:\RR_s\times M\to\RR_z$, then $g$ is the primitive of $\lambda_{T^\ast(\RR_s\times M)}$, and $f=g-\partial_sg$ is the primitive of $\lambda_\cyl$.
In particular, the Lagrangian $L\subseteq T^\ast(\RR_s\times M)$ is cylindrical precisely where $g=e^sh+\const$ for some function $h:M\to\RR$.
A Legendrian submanifold $\Lambda\subseteq(\RR_t\times T^\ast M,-dt+\lambda_{T^\ast M})$ gives rise to an everywhere cylindrical Lagrangian in its symplectization $(T^\ast(\RR_s\times M),Z_\cyl)$.
In terms of front projections, this corresponds to taking a front inside $\RR_t\times M$ and setting $z=e^st+\const$ to obtain a front inside $\RR_z\times\RR_s\times M$ (representing the cylinder over the original Legendrian equipped with the primitive $f\equiv\const$), see Figure \ref{figurecylinderunknot} (for ease of illustration, we will use the coordinate $r=e^s$ in place of $s$ in figures).

\begin{warning}\label{bigwarning}
This method of representing exact Lagrangian cobordisms in symplectizations by drawing the front of their Legendrian lifts is not standard, and so care is needed in order to interpret the figures correctly.
The key thing to remember is that vertical translation in the $z$-coordinate \emph{does not change} the Lagrangian cobordism being represented, rather it changes the \emph{primitive} of $\lambda_\cyl$ assigned to it.
This is particularly important to remember when looking at cross-sections $s=\const$.
In particular, the cross-sections for $s$ approaching $\pm\infty$ \emph{are not} the fronts of the positive/negative Legendrian ends.
Rather, they are the result of scaling these fronts by $e^s$ and then translating them vertically by the (\emph{locally} constant) value of the primitive of $\lambda_\cyl$ on those ends.
To obtain the front of the positive/negative Legendrian ends, one instead should replace the vertical coordinate $z$, say the graph of a function $g$, with the graph of $\partial_sg$.
Of course, this derivative $\partial_sg$ cannot be seen from a cross-section $r=e^s=\const$ alone.
In particular, we emphasize that the Lagrangian cobordism represented by a front in $\RR_z\times\RR_{r>0}\times M$ generally speaking has \emph{nothing} to do with the Legendrian isotopy obtained by regarding the cross-sections $r=e^s=\const$ as a `movie' of Legendrian fronts.
\end{warning}

\begin{figure}[ht]
\centering
\includegraphics[width=.95\textwidth]{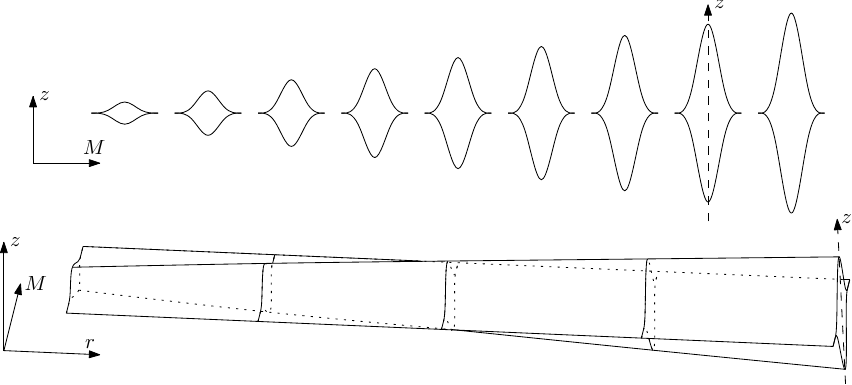}
\caption{Below: the front inside $\RR_z\times\RR_{r>0}\times M$ of the cylinder over the standard Legendrian unknot.  Above: cross-sections $r=e^s=\const$ of the same.  The picure shows $\dim M=1$; in higher dimensions we spin $M$ about the dotted vertical $z$-axis.}\label{figurecylinderunknot}
\end{figure}

It is important to know when a front  in $\RR_z\times\RR_s\times M$ corresponds to an \emph{embedded} Lagrangian (rather than just an immersed Lagrangian).
This holds whenever the Legendrian lift has no Reeb chords, or in other words whenever 
the front has no common tangencies with any of its vertical translates (other than itself).
This is always the case for a front in $\RR_z\times\RR_s\times M$ obtained as $e^s$ times a front in $\RR_t\times M$ representing an embedded Legendrian.

\subsection{Weinstein handles}\label{weinsteinhandles}

We recall the basics of Weinstein handles.
Weinstein handles were introduced by Weinstein \cite{weinstein}, motivated by the construction of Stein structures due to Eliashberg \cite{eliashbergstein} and work of Gromov--Eliashberg \cite{eliashberggromovconvexsymplectic}.
The connection between Weinstein and Stein structures was developed fully by Cieliebak--Eliashberg \cite{cieliebakeliashberg}.
Our present perspective, however, is purely symplectic.

A \emph{Weinstein $k$-handle} of dimension $2n$ ($0\leq k\leq n$) is a germ of a Liouville cobordism near a point $p$ at which $Z$ vanishes and in a neighborhood of which $Z$ is gradient-like for a function $\phi$ having a Morse critical point of index $k$ at $p$.
The \emph{standard Weinstein $k$-handle} is (the germ near zero of)
\begin{multline}
(X_k,\omega_k,Z_k):=\\
\left(\RR^{2k}\times\RR^{2(n-k)},\omega_\std,\sum_{i=1}^k\frac 12(-x_i\partial_{x_i}+3y_i\partial_{y_i})+\sum_{j=1}^{n-k}\frac 12(x_j'\partial_{x_j'}+y_j'\partial_{y_j'})\right).
\end{multline}
A general Weinstein $k$-handle need not be locally modelled on this standard Weinstein $k$-handle, however it can always be canonically deformed to it (we will not use this result logically, and we do not know a reference except in the case $k=n$ \cite[Lemma 6.6]{girouxpardon}).
The fundamentals of Morse theory (reordering of critical values, handle cancellation, handle slides, etc.) for Weinstein handles were developed by Cieliebak--Eliashberg \cite{cieliebakeliashberg}.

Coupled Weinstein handles were introduced more recently by Eliashberg--Ganatra--Lazarev \cite{eliashbergganatralazarev}.
A \emph{coupled Weinstein $(k,\ell)$-handle} ($0\leq\ell\leq k\leq n$) is a Weinstein $k$-handle together with a (germ of) Lagrangian submanifold $L$ passing through $p$ such that $Z$ is tangent to $L$ and the restriction of $\phi$ to $L$ has a Morse critical point of index $\ell$.
The \emph{standard coupled Weinstein $(k,\ell)$-handle} is given by
\begin{equation}
(X_k,\omega_k,Z_k,L_{k,\ell}):=\{y_1=\cdots=y_\ell=x_{\ell+1}=\cdots=x_k=0=y_1'=\cdots=y_{n-k}'\}).
\end{equation}
For example, $L_{n,n}$ is the core of the underlying Weinstein $n$-handle and $L_{n,0}$ is the cocore.
Again, a general coupled Weinstein handle need not be locally modelled on the standard coupled handle, but it can be canonically deformed to the standard one.
Of course, there is no reason to stop with just one Lagrangian---we could just as well consider (as we will later) multiple $Z$-invariant Lagrangians passing through $p$ on which the restriction of $\phi$ is Morse.

An \emph{exact embedded Lagrangian $k$-handle} for $0\leq k<n$ is a coupled Weinstein $(k,k)$-handle followed by a Weinstein $(k+1)$-handle, such that the pair cancel (as Weinstein handles), so the result is an exact Lagrangian cobordism inside a trivial Liouville cobordism (i.e.\ a symplectization); see Dimitroglou Rizell \cite[\S 4]{rizellambientsurgery} or Bourgeois--Sabloff--Traynor \cite{bourgeoissablofftraynor}.
We will not appeal to this notion logically, but it provides relevant context for the constructions of this section.

\subsection{Legendrian linking spheres and Lagrangian linking disks}\label{linkingdisksection}

\begin{figure}[ht]
\centering
\includegraphics[max width=.95\textwidth]{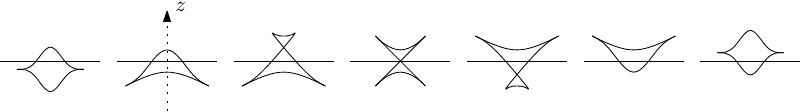}
\caption{The Legendrian linking sphere around a given point of a Legendrian submanifold (seven views, related by Legendrian isotopy).  This is the picture inside contact $3$-space, and the general case is obtained by spinning about the dotted vertical $z$-axis.}\label{figurelinkingsphere}
\end{figure}

\begin{figure}[ht]
\centering
\includegraphics[max width=.95\textwidth]{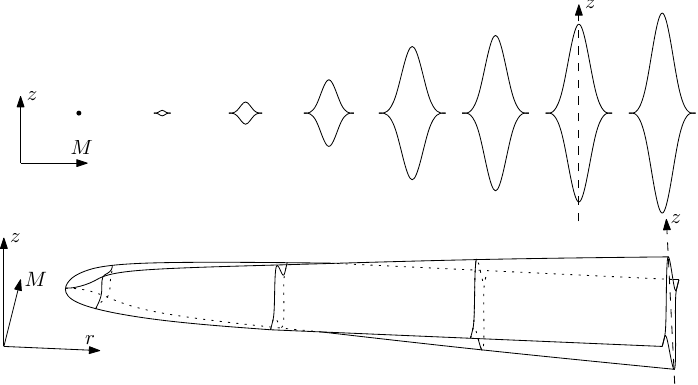}
\caption{Below: the front inside $\RR_z\times\RR_{r>0}\times M$ of the Lagrangian linking disk.  Above: cross-sections $r=\const$ of the same.  The picure shows $\dim M=1$; in higher dimensions we spin $M$ about the dotted vertical $z$-axis.  (Compare Figure \ref{figurecylinderunknot}.)}\label{figurelinkingdisk}
\end{figure}

At any point $p$ of a Legendrian submanifold $\Lambda$ there is a \emph{Legendrian linking sphere} $S_p$ (aka meridian) defined by the picture in Figure \ref{figurelinkingsphere}.
The picture takes place in a contact Darboux chart around $p$ in which $\Lambda$ is the horizontal line.
The Legendrian linking spheres $S_p$ bound \emph{Lagrangian linking disks} $D_p$ in the symplectization obtained by modifying the cylinder $\RR\times S_p$ as in Figure \ref{figurelinkingdisk}.
Note that there are two possible interpretations of Figure \ref{figurelinkingdisk}, namely either as a filling of the leftmost picture or the rightmost picture of Figure \ref{figurelinkingsphere}.
The disks defined by these two interpretations are canonically Lagrangian isotopic (this can be seen by extending the Legendrian isotopy in Figure \ref{figurelinkingsphere} in the natural way, or by comparing with the description in terms of Weinstein handles given below).

The Lagrangian linking disk $D_p$ can be alternatively described as follows in terms of coupled Weinstein handles in the sense of \S\ref{weinsteinhandles}.
Let $p\in\Lambda\subseteq Y$ be a (germ near the point $p$ of a) Legendrian submanifold of a contact manifold $Y^{2n-1}$.
Let $0\leq\ell<n$, and attach to the pair $(Y,\Lambda)$ a coupled $(\ell,\ell)$-handle and a coupled $(\ell+1,\ell+1)$-handle which cancel (as coupled handles) near $p$.
At either such handle, we can consider another Lagrangian passing through the critical point which is transverse to $\Lambda$ and on which the restriction of the Morse function has a critical point of index zero.
This gives a properly embedded Lagrangian $\RR^n$ inside the symplectization of $Y$, and this Lagrangian is our alternative definition of $D_p$.

To see that $D_p$ does not depend on $\ell$ or on whether we take it over the $(\ell,\ell)$-handle or over the $(\ell+1,\ell+1)$-handle, argue as follows.
We consider the everywhere cylindrical Lagrangian $L=S\Lambda\subseteq SY=X$.
We introduce a cancelling pair of coupled Weinstein handles of indices $(\ell,\ell)$ and $(\ell+1,\ell+1)$ on $L$.
Locally, the model for this is as follows.
The pair $(X,L)$ is locally modelled on
\begin{equation}
(T^\ast(\RR_s\times\Lambda),\RR_s\times\Lambda,Z_\cyl=Z_{T^\ast(\RR_s\times\Lambda)}+\pi^\ast\partial_s)
\end{equation}
as described in \S\ref{secfrontprojections}, where we now use $\pi^\ast$ to denote the canonical lift of vector fields on $\RR_s\times\Lambda$ to Hamiltonian vector fields on $T^\ast(\RR_s\times\Lambda)$.
If we deform $\partial_s$ by introducing a pair of cancelling Morse handles of indices $\ell$ and $\ell+1$, the resulting deformation of Hamiltonian lifts realizes the operation of introducing a cancelling pair of coupled Weinstein handles of indices $(\ell,\ell)$ and $(\ell+1,\ell+1)$.
(More precisely, for this to work we may need to work with $\varepsilon>0$ times this deformed vector field to ensure that $Z_\cyl$ is indeed a Weinstein structure.)
In this description, the Lagrangian linking disk $D_p$ is simply the cotangent fiber over one of the zeroes of the deformed vector field on $\RR_s\times\Lambda$.
When the critical points come together and cancel, these two fibers also come together, thus showing that we can define $D_p$ as the fiber over either of them.
To see that the disk $D_p$ is independent of $\ell$, consider adding cancelling Morse handles of indices $\ell,\ell+1,\ell+1,\ell+2$ to $L$ (for example, using the vector field $(x^2+t_1)\partial_t+(y^2+t_2)\partial_y$ with $t_1,t_2<0$).
These handles can be cancelled in two ways, depending on which of the $(\ell+1)$-handles cancels with the $\ell$-handle and which with the $(\ell+2)$-handle (in the example, this corresponds to raising $t_1$ to be positive or raising $t_2$ to be positive).
It follows that the cotangent fibers over each of these handles are all isotopic.
This shows that $D_p$ is independent of $\ell$.

To see that the description of the Lagrangian linking disk in terms of Weinstein handles is indeed equivalent to the picture in Figure \ref{figurelinkingdisk}, argue as follows.
We consider the local model given above with $\ell=0$.
Namely, we consider the Liouville vector field $Z_V$ on $T^\ast(\RR_s\times M)$ given by $Z_{T^\ast(\RR_s\times\Lambda)}$ plus the Hamiltonian lift of a vector field $V$ on $\RR_s\times M$ which is obtained from $\partial_s$ by a compactly supported perturbation (within the class of gradient-like vector fields) which introduces a pair of cancelling zeroes of indices $0$ and $1$, as illustrated in Figure \ref{figurebasecancel}.
We suppose in addition that near the zero of index zero, $V$ is locally smoothly conjugate to $\frac 23\sum_ix_i\partial_{x_i}$, and hence $Z_V$ is given locally by $\frac 23\sum_ix_i\partial_{x_i}+\frac 13\sum_iy_i\partial_{y_i}$.
(This local requirement on $V$ is not compatible with the need, mentioned earlier, to replace $V$ with $\varepsilon\cdot V$ for small $\varepsilon>0$ to ensure that $Z_V$ is Weinstein.  So, to be precise, we must first choose $V$, then scale it down to $\varepsilon\cdot V$, and finally perform a local modification near the index zero critical point to ensure the correct local form.)

\begin{figure}[ht]
\centering
\includegraphics[max width=.95\textwidth]{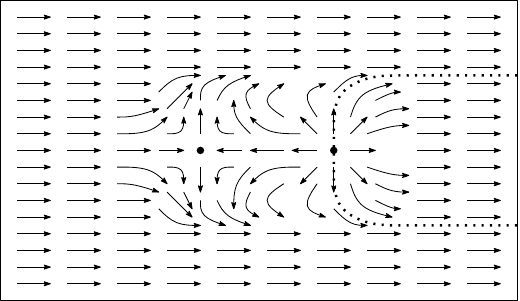}
\caption{The vector field $V$ on $\RR_s\times M$ with cancelling critical points of indices $0$ and $1$, and the hypersurface $H\subseteq\RR_s\times M$ (dotted).}\label{figurebasecancel}
\end{figure}

Consider a hypersurface $H\subseteq\RR_s\times M$ passing through the index $0$ zero of $V$, with $V$ tangent to $H$, and $H$ non-compact only in the $s=+\infty$ direction (see Figure \ref{figurebasecancel}).
The conormal $N^\ast H$ is a $Z_V$-invariant Lagrangian submanifold which intersects $L=\RR\times\Lambda$ cleanly along $H$.
The Lagrangian linking disk as described via Weinstein handles is precisely a small pushoff of $N^*H$, moving in the positive direction at infinity, so as to intersect $\RR\times\Lambda$ exactly once (namely at the zero of $V$ of index $0$).
To draw this pushoff, we first deform $N^\ast H$ to $\widetilde{N^\ast H}$ (which still intersects $L$ cleanly along $H$) as follows.
The conormal $N^\ast H$ is given in local coordinates by $\{x_1=0=y_2=\cdots=y_n\}$, and we define $\widetilde{N^\ast H}$ to be given in local coordinates by $\{x_1-y_1^2=0=y_2=\cdots=y_n\}$ extended globally by $Z_V$-invariance (note that this locus is indeed locally invariant under $Z_V$).
Now the front of $\widetilde{N^\ast H}$ is precisely the picture in Figure \ref{figurelinkingdisk}.
Now the Lagrangian linking disk as described by Weinstein handles (which intersects $L$ once transversally) is a small perturbation of $\widetilde{N^\ast H}$ (which intersects $L$ cleanly along $H$).
This perturbation may be described as simply a small positive or negative pushoff via the Reeb vector field on the local contact sphere near the zero of $Z_V$.
At infinity, the effect on $\widetilde{N^\ast H}$ is the same, simply a small positive or negative Reeb pushoff, which yields the same picture from Figure \ref{figurelinkingdisk} except now perturbed either up or down to coincide at infinity with either the far left or far right picture from Figure \ref{figurelinkingsphere}, respectively.

\subsection{Relatively non-exact embedded Lagrangian \texorpdfstring{$1$}{1}-handle}\label{onehandlesection}

\begin{figure}[ht]
\centering
\includegraphics[max width=.95\textwidth]{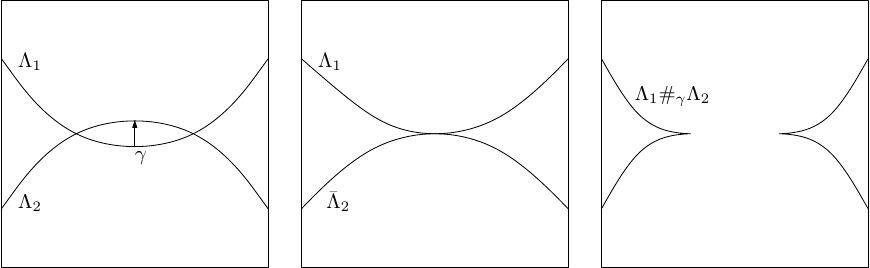}
\caption{Middle: Two Legendrians $\bar\Lambda_1$ and $\bar\Lambda_2$ with a single clean intersection.  Left: Two Legendrians $\Lambda_1=(\bar\Lambda_1)^-$ and $\Lambda_2=(\bar\Lambda_2)^+$, and the obvious short Reeb chord $\gamma$ from $\Lambda_1$ to $\Lambda_2$.  Right: The new Legendrian $\Lambda_1\#_\gamma\Lambda_2$ (topologically the connect sum of $\Lambda_1$ and $\Lambda_2$).  The relatively non-exact embedded Lagrangian $1$-handle has negative end $\Lambda_1\sqcup\Lambda_2$ and positive end $\Lambda_1\#_\gamma\Lambda_2$.  (In higher dimensions, the picture is obtained by spinning about the central vertical $z$-axis.)}\label{figurehandlebeforeafter}
\end{figure}

Fix two Legendrians $\bar\Lambda_1$ and $\bar\Lambda_2$ inside a contact manifold $Y$ and a contact Darboux chart for $Y$ with front projection as in the middle of Figure \ref{figurehandlebeforeafter}.
This chart is the standard neighborhood of a single clean intersection of $\bar\Lambda_1$ and $\bar\Lambda_2$, and we assume $\bar\Lambda_1$ and $\bar\Lambda_2$ are otherwise disjoint (an isolated intersection point $p\in\bar\Lambda_1\cap\bar\Lambda_2$ is `clean' when $T_p\bar\Lambda_1\cap T_p\bar\Lambda_2=0$).
Legendrians $\Lambda_1$ and $\Lambda_2$ are defined as small negative/positive pushoffs of $\bar\Lambda_1$ and $\bar\Lambda_2$, and are illustrated on the left of Figure \ref{figurehandlebeforeafter}.
There is an obvious short Reeb chord $\gamma$ from $\Lambda_1$ to $\Lambda_2$, which we will call the `center'.
Given such data, we define an exact Lagrangian cobordism $L\subseteq SY$ asymptotic at $s=-\infty$ to $\Lambda_1\sqcup\Lambda_2$ and asymptotic at $s=+\infty$ to the Legendrian denoted $\Lambda_1\#_\gamma\Lambda_2$ appearing on the right of Figure \ref{figurehandlebeforeafter}.
We call this $L$ a \emph{relatively non-exact embedded Lagrangian $1$-handle}; topologically, it is simply a $1$-handle.

\begin{remark}
It is tempting to begin the discussion above instead with $\Lambda_1$, $\Lambda_2$, and $\gamma$ and produce from this data the rest of Figure \ref{figurehandlebeforeafter} (as we mistakenly did in an initial version of this text), but let us explain why this is difficult.
First, we should say precisely what sort of input data $(\Lambda_1,\Lambda_2,\gamma)$ one might want to consider.
Certainly $\Lambda_1$ and $\Lambda_2$ should be disjoint Legendrian submanifolds.
The curve $\gamma$ produced above is a positive arc $\gamma:[0,1]\to Y$ from $\gamma(0)\in\Lambda_1$ to $\gamma(1)\in\Lambda_2$ (otherwise disjoint from $\Lambda_1\sqcup\Lambda_2$), and it is moreover equiped with a Lagrangian subbundle $F\subseteq\gamma^\ast\xi$ (`framing') which coincides with $T\Lambda_1$ at $0$ and is transverse to $T\Lambda_2$ at $1$ (namely, $F$ simply `is' $T\Lambda_1$, which makes sense since $\gamma$ is an arbitrarily small perturbation of an isolated clean intersection).
We are thus led to a precise mathematical question: given a pair of disjoint Legendrian submanifolds $\Lambda_1$ and $\Lambda_2$ along with a framed positive arc $\gamma$ from one to the other (in the above sense), does the ensemble $(\Lambda_1,\Lambda_2,\gamma,F)$ come, uniquely up to isotopy, from a pair of cleanly intersecting Legendrians $\bar\Lambda_1$ and $\bar\Lambda_2$ via the construction above?
The answer is surely negative.
One can indeed perform an isotopy supported in a neighborhood of $\gamma$ which brings $\Lambda_1$ forward to intersect $\Lambda_2$ cleanly, however such an isotopy will twist a lot, producing framings which are very `large' (the set of framings up to isotopy is a $\ZZ$-torsor, and large means towards positive infinity, for a certain convention on sign).
In general, if a given framing is achievable (by any isotopy), then so is any larger framing (one can always add positive twists).
There is, however, no reason to expect existence or uniqueness for arbitrary framings.
As pointed out by one of the referees, this discussion is closely related to the notion of a `contractible' Reeb chord from Ekholm--Honda--K\'{a}lm\'{a}n \cite[Definition 6.13]{ekholmhondakalman}.
\end{remark}

\begin{remark}
It is important to point out that the relatively non-exact embedded Lagrangian 1-handle $L$ is not ``local'' to the clean intersection point of $\bar\Lambda_1$ and $\bar\Lambda_2$.
That is, $L$ \emph{does not} coincide with the cylinder over $\Lambda_1\sqcup\Lambda_2$ away from the Darboux chart in Figure \ref{figurehandlebeforeafter}.
Rather, away from this chart, the cobordism $L$ is an arbitrarily small, but necessarily nontrivial, perturbation of the cylinder over $\Lambda_1\sqcup\Lambda_2$, supported away from both positive and negative infinity.
This is forced by reasons of action: the restriction of the primitive of $\lambda|_L$ to the negative end $\partial_{-\infty}L=\Lambda_1\sqcup\Lambda_2$ necessarily takes different constant values on $\Lambda_1$ and $\Lambda_2$ (see Remark \ref{nonexactforced}), and if $L$ were cylindrical outside the region illustrated in Figure \ref{figurehandlebeforeafter}, these two constants would both coincide with the constant value of the primitive on the connected positive end $\partial_\infty L=\Lambda_1\#_\gamma\Lambda_2$.
\end{remark}

\begin{figure}[ht]
\centering
\includegraphics[width=.95\textwidth]{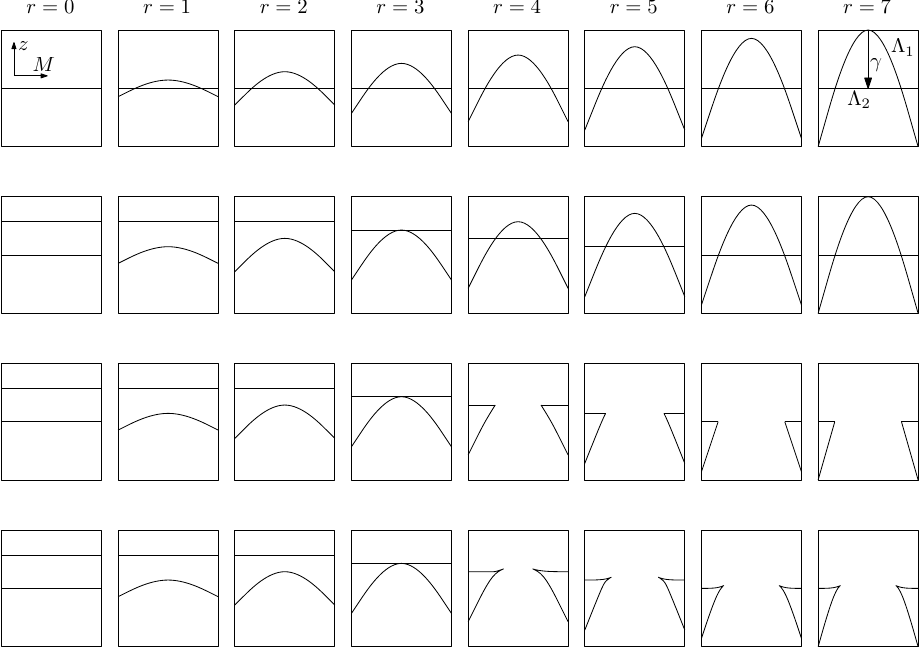}
\caption{Each row is the cross-sections $\{r=\const\}$ of a front inside $\RR_z\times\RR_{r>0}\times M$ (the first column $\{r=0\}$ illustrates the $r\to 0^+$ limit of the front).  In higher dimensions, we spin each cross section about the central vertical $z$-axis.  The top row represents the front of the cylinder over $\Lambda_1\sqcup\Lambda_2$, while the bottom row represents the front of the relatively non-exact embedded Lagrangian $1$-handle $L$.}\label{figureonehandle}
\end{figure}

\begin{figure}[ht]
\centering
\includegraphics[width=.95\textwidth]{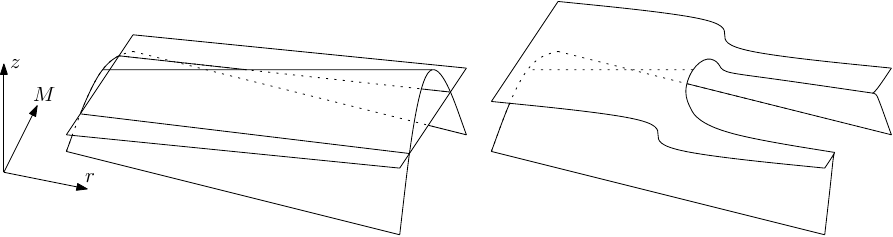}
\caption{Left: Front of the cylinder over $\Lambda_1\sqcup\Lambda_2$.  Right: Front of the relatively non-exact embedded Lagrangian $1$-handle $L$.  We emphasize that the negative Legendrian ends of the Lagrangian cobordisms represented by these two fronts are the \emph{same} (compare Warning \ref{bigwarning}); both are the Legendrian $\Lambda_1\sqcup\Lambda_2$ whose front appears on the left of Figure \ref{figurehandlebeforeafter}.}\label{figureonehandlesecond}
\end{figure}

Let us now define the relatively non-exact embedded Lagrangian $1$-handle $L$ associated to $\bar\Lambda_1,\bar\Lambda_2\subseteq Y$ as above.
We define $L$ by the illustrations in Figures \ref{figureonehandle} and \ref{figureonehandlesecond}, which we now explain in detail (the apparent asymmetry between $\Lambda_1$ and $\Lambda_2$ is only for ease of illustration).
As in \S\ref{secfrontprojections}, we are considering fronts inside $\RR_z\times\RR_{r>0}\times M$ to describe exact Lagrangians inside $T^\ast(\RR_{r>0}\times M)$ with a choice of primitive.
Recall (from the discussion in \S\ref{secfrontprojections}) that the front inside $\RR_z\times\RR_{r>0}\times M$ of an exact cylindrical Lagrangian inside $T^\ast(\RR_{r>0}\times M)$ is given by $z=e^st+\const$ (equivalently, $rt+\const$) in terms of the front inside $\RR_t\times M$ of the corresponding Legendrian inside $\RR_t\times T^*M$.

The construction of $L$ begins with the cylinder over $\Lambda_1\sqcup\Lambda_2$, illustrated in the first row of Figure \ref{figureonehandle} and on the left side of Figure \ref{figureonehandlesecond}.
The vertical coordinate is reversed in comparison with Figure \ref{figurehandlebeforeafter} due to \eqref{cylissymp} being negative of the standard contact form on $\RR_t\times T^\ast M$.
Recall that the primitive $f$ is given by $g-\partial_sg$ where $g$ is the $z$-coordinate, so the primitive vanishes on both components.

We now perturb the cylinder over $\Lambda_1\sqcup\Lambda_2$ to obtain the exact Lagrangian inside $T^\ast(\RR_{r>0}\times M)$ whose front inside $\RR_z\times\RR_{r>0}\times M$ is illustrated in the second row of Figure \ref{figureonehandle}.
Specifically, we lift (increase the vertical $z$-coordinate) of the component of the front corresponding to $\Lambda_2$ near $s=-\infty$, while keeping it fixed near $s=+\infty$.
Explicitly, we add $\varepsilon\cdot\varphi(s)$ to the $z$-coordinate, for some smooth function $\varphi:\RR\to\RR$ with $\varphi(s)=1$ near $s=-\infty$ and $\varphi(s)=0$ near $s=+\infty$, for some choice of sufficiently small $\varepsilon>0$.
Some remarks are in order about this perturbation.
First of all, while the perturbation of the \emph{front} is nontrivial near $s=-\infty$, the induced perturbation of \emph{exact Lagrangians} is trivial near $s=-\infty$ (vertical translation of the front corresponds to adding a constant to the primitive assigned to the exact Lagrangian; compare Warning \ref{bigwarning}).
Thus we have defined a perturbation of the cylinder over $\Lambda_1\sqcup\Lambda_2$ which is fixed near $s=\pm\infty$.
The second remark is that this perturbation is \emph{not} supported inside (the cone over) the local Darboux chart appearing in the illustrations (indeed, the function $\varphi$ depends only on $s$, not where we are in $M$); this is not a problem since we may draw the same picture in $\RR_z\times\RR_{r>0}\times\Lambda_2$, which describes the cone over a Weinstein neighborhood of $\Lambda_2$.
The remaining steps in the construction of $L$ will take place entirely within the illustrated local chart.

We now remove part of the exact Lagrangian whose front appears in the second row of Figure \ref{figureonehandle} to obtain the exact Lagrangian whose front appears in the third row.
As illustrated, we remove the part of the front where the component coming from $\Lambda_1$ lies `above' (i.e.\ larger $z$-coordinate) the component coming from $\Lambda_2$.
The result is an exact Lagrangian with boundary (the boundary corresponds to the `corner' of the front).
Finally, we replace the `corner' of the front with cusps to obtain the front appearing in the fourth row of Figure \ref{figureonehandle} and on the right side of Figure \ref{figureonehandlesecond}.
In terms of exact Lagrangians, the effect is to glue in a band connecting the two boundary components.
This final row defines the relatively non-exact embedded Lagrangian 1-handle $L$.
It is straightforward to check that $L$ is indeed embedded (i.e.\ that its front is nowhere tangent to any of its nontrivial vertical translations).

\begin{remark}\label{nonexactforced}
Note that the primitive $f:L\to\RR$ of $\lambda|_L$ satisfies
\begin{equation}
f|_{\{-\infty\}\times\Lambda_2}>f|_{\{-\infty\}\times\Lambda_1}
\end{equation}
(this is the origin of the term ``relatively non-exact'' used to describe $L$).
The pair $(L,f)$ cannot be deformed to satisfy $f|_{\{-\infty\}\times\Lambda_2}=f|_{\{-\infty\}\times\Lambda_1}$.
Indeed, if such a deformation were to exist, then Proposition \ref{cobordismtwisted} would yield a direct sum decomposition $L\#_\gamma K=L\oplus K$ in the wrapped Fukaya category, and there are simple counterexamples to this statement.
Note that a similar phenomenon was observed by Chantraine \cite{chantrainerelnonexact} using a construction of Lagrangian cobordisms to which ours is surely related (and Remark \ref{onehandleandpolterovich} possibly goes towards this).
\end{remark}

\begin{remark}
The relatively non-exact embedded Lagrangian $1$-handle should be contrasted with the exact embedded Lagrangian $k$-handles recalled in \S\ref{weinsteinhandles}.
The latter are exact, local, and their attachment does not alter the corresponding object of the wrapped Fukaya category by Proposition \ref{cobordismtwisted}.
\end{remark}

\begin{remark}\label{onehandleandpolterovich}
Using the pictures above, one can check (though we will not use this fact) that attaching a relatively non-exact embedded Lagrangian $1$-handle followed by attaching an exact embedded Lagrangian $(n-1)$-handle can be alternatively described as wrapping $\Lambda_1$ through $\Lambda_2$ to create a single transverse double point and then resolving that double point via Polterovich surgery.
\end{remark}

\begin{remark}\label{onehandlesmall}
The relatively non-exact embedded Lagrangian $1$-handle is `thin' in the sense of Proposition \ref{cobordismtwisted} by Lemma \ref{lem:pushoffcore} since it is contained the cone over a small neighborhood of $\bar\Lambda_1\cup\bar\Lambda_2$ which has a ribbon given by the plumbing of their cotangent bundles.
\end{remark}

\subsection{Wrapping through a Legendrian}\label{wrapishandlesection}

We now prove Proposition \ref{wrapishandle}, whose setup we briefly recall.
We consider an exact cylindrical Lagrangian $L\subseteq X$ and a Legendrian $\Lambda\subseteq\partial_\infty X$.
We consider an exact cylindrical Lagrangian isotopy $L\leadsto L^w$ which is positive at infinity (a `wrapping') and which passes through $\Lambda$ exactly once, transversally, at a point $p\in\Lambda$.
We also consider the exact cylindrical Lagrangian $L\#_\gamma D$ obtained by attaching a relatively non-exact embedded Lagrangian $1$-handle to $L\sqcup D$, where $D\subseteq X$ denotes the linking disk of $\Lambda$ at $p$ (the reader may refer to the discussion immediately following the statement of Lemma \ref{cobordismtwistedyoneda} for a precise explanation of the `attach at infinity' operation).

\begin{figure}[ht]
\centering
\includegraphics[max width=.95\textwidth]{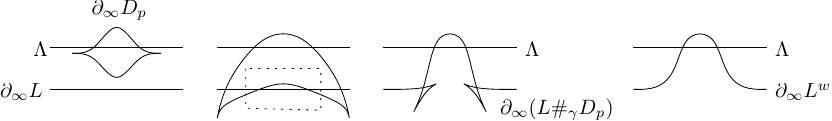}
\caption{The attaching locus of the relatively non-exact embedded Lagrangian $1$-handle (dotted box), and the proof that $\partial_\infty(L\#_\gamma D)=\partial_\infty L^w$.}\label{figureconnectsumwithdisk}
\end{figure}

\begin{figure}[ht]
\centering
\includegraphics[max width=.95\textwidth]{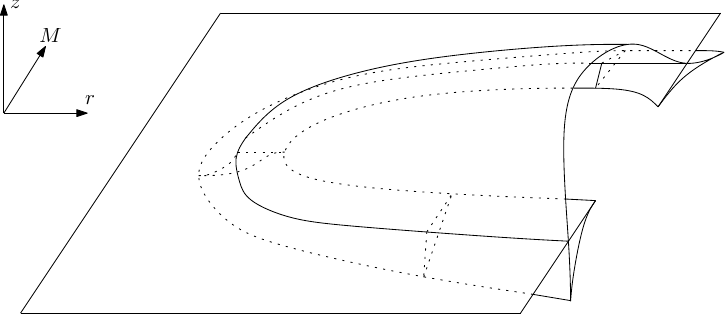}
\caption{The result of attaching a relatively non-exact Lagrangian $1$-handle with a Lagrangian linking disk (with inverted vertical coordinate; compare with the sign of \eqref{cylissymp}).}\label{figurezeroonehandles}
\end{figure}

\begin{proof}[Proof of Proposition \ref{wrapishandle}]
We are to show that $L^w$ and $L\#_\gamma D$ are isotopic as exact cylindrical Lagrangians.
It is enough to consider a single local model.

Figure \ref{figureconnectsumwithdisk} illustrates where the relatively non-exact embedded Lagrangian $1$-handle is attached, and proves the desired statement at the level of the contact boundary, namely $\partial_\infty L^w=\partial_\infty(L\#_\gamma D_p)$.
The leftmost diagram in Figure \ref{figureconnectsumwithdisk} shows the stop $\Lambda$, the boundary $\partial_\infty L$ of our Lagrangian, and the boundary $\partial_\infty D_p$ of the linking disk.
The next diagram in Figure \ref{figureconnectsumwithdisk} (related by Legendrian isotopy to the one on its left) shows the contact Darboux chart along which we attach a relatively non-exact Lagrangian $1$-handle (compare with the left of Figure \ref{figurehandlebeforeafter}).
The right two diagrams in Figure \ref{figureconnectsumwithdisk} (related by Legendrian isotopy) show the result after attaching the $1$-handle, which is evidently the same as is obtained by simply passing $\partial_\infty L$ through $\Lambda$ in the positive direction to obtain $\partial_\infty L^w$.

To show the full statement $L^w=L\#_\gamma D_p$, we also argue via picture.
Combining $D_p$ as illustrated on the bottom of Figure \ref{figurelinkingdisk} with the $1$-handle as illustrated on the right of Figure \ref{figureonehandlesecond}, we obtain Figure \ref{figurezeroonehandles} as an illustration of $L\#_\gamma D_p$.
To explain further: the very ``first'' (minimal $r$) cusp point in Figure \ref{figurezeroonehandles} is the beginning of the linking disk, and slicing at a slightly larger value of $r$ produces the second front in Figure \ref{figureconnectsumwithdisk}; the part above this slice (larger $r$) is the $1$-handle.
Now we apply a parameterized Legendrian Reidemeister I move to Figure \ref{figurezeroonehandles} to obtain $L^w$.
\end{proof}

\section{Wrapping exact triangle and stop removal} \label{wet}

We now unfold the cohomological consequences of the geometry of linking disks.

\begin{proof}[Proof of Proposition \ref{handlecone}]
We consider applying Proposition \ref{cobordismtwisted} to the relatively non-exact embedded Lagrangian $1$-handle attached at infinity to $L\sqcup K$ to form $L\#_\gamma K$.
The relatively non-exact embedded Lagrangian $1$-handle is thin in the required sense by Remark \ref{onehandlesmall}.
Proposition \ref{cobordismtwisted} also requires that the primitives of $\lambda$ restricted to $L$ and $K$ be constant at infinity, which we can achieve by performing a small perturbation by Lemma \ref{cylvanishinf}.
Proposition \ref{cobordismtwisted} now produces an exact triangle
\begin{equation}\label{trianglewithoutgamma}
L\to K\to L\#_\gamma K\to,
\end{equation}
and our task is to show that the map $L\to K$ can be taken to be (the morphism in the wrapped Fukaya category corresponding to) the short Reeb chord $\gamma$.

Morally speaking, the reason one expects this to be true is that in the limit as the $1$-handle is pushed to infinity, the analysis of disks in Figure \ref{figurecobordismdisk} should admit a strengthening illustrated in Figure \ref{figuresurgerydisk} in that the count of disks in the remaining case $(K,L)$ should coincide with the count of disks with the segment labelled with $L\#_\gamma K$ replaced by two segments labelled $K$ and $L$ separated by a puncture asymptotic at infinity to $\gamma$.
This is, however, purely motivation.

\begin{figure}[ht]
\centering
\includegraphics[max width=.95\textwidth]{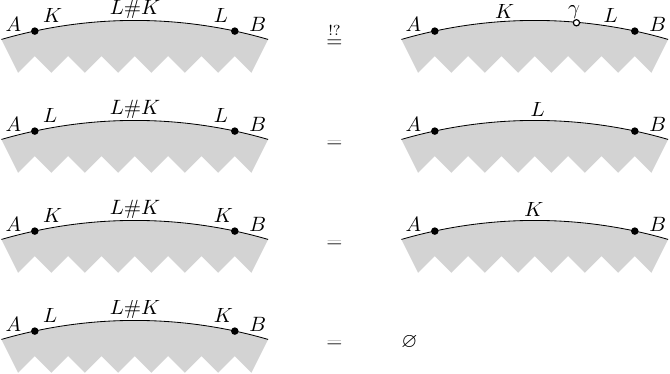}
\caption{Possibilities for a Fukaya $\ainf$ disk with boundary on $L\#_\gamma K$.}\label{figuresurgerydisk}
\end{figure}

Introduce a stop at a small negative pushoff of the plumbing of $\partial_\infty L$ and $\partial_\infty K$ (compare Remark \ref{onehandlesmall}).
Now consider testing the exact triangle \eqref{trianglewithoutgamma} against $L^w$, a small positive pushoff of $L$ intersecting $K$ exactly once, corresponding to $\gamma$.
We obtain a long exact sequence
\begin{equation}
HF^\bullet(L^w,L)\to HF^\bullet(L^w,K)\to HF^\bullet(L^w,L\#_\gamma K)\to
\end{equation}
(note that when any of $L$, $K$, or $L\#_\gamma K$ is wrapped backwards, it immediately falls into the new stop, and hence $HW^\bullet$ is $HF^\bullet$).
The connecting homomorphism $L\to K$ we are looking for is thus simply the image in $HF^\bullet(L^w,K)$ of the continuation map in $HF^\bullet(L^w,L)$.
Now $HF^\bullet(L^w,K)$ is freely generated by a single intersection point corresponding to $\gamma$.
This proves the desired result up to an unknown integer multiple.

\begin{figure}[ht]
\centering
\includegraphics[max width=.95\textwidth]{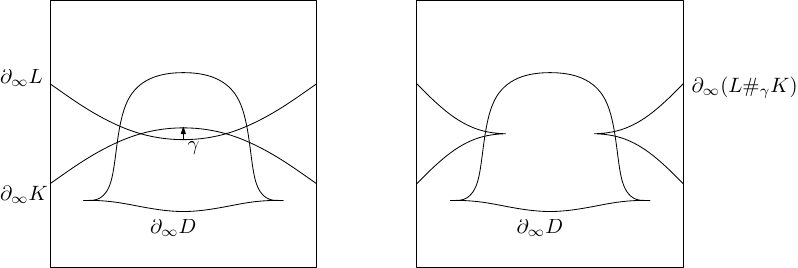}
\caption{Left: The contact boundary of $L$ and $K$, together with the Lagrangian disk $D$ linking both of them.  Right: The contact boundary of $L\#_\gamma K$, which is evidently unlinked from $D$.  (In higher dimensions, the picture is obtained by spinning about the central vertical $z$-axis.)}\label{figuretestwithsurgery}
\end{figure}

To fix the unknown integer multiple, we test $L$, $K$, and $L\#_\gamma K$ against a Lagrangian disk $D$ linking both $L$ and $K$ near $\gamma$ (see Figure \ref{figuretestwithsurgery}).
Clearly $HF^\bullet(D,L)=HF^\bullet(D,K)=\ZZ$ as both Floer complexes are generated by a single intersection point.
On the other hand, $HF^\bullet(D,L\#_\gamma K)=0$ since $\partial_\infty D$ is unlinked with $\partial_\infty(L\#_\gamma K)$ and hence $D$ can be disjoined from $L\#_\gamma K$ (keeping boundaries at infinity disjoint).
By introducing an auxiliary stop at a small positive Reeb pushoff of $\partial_\infty D$, these three $HF^\bullet$ groups are in fact $HW^\bullet$ (because $\partial_\infty D$ falls immediately into the stop, and this wrapping is cofinal by Lemma \ref{cofinalitycriterion}).
It follows that the connecting homomorphism in $HW^\bullet(L,K)$ indeed equals $\pm\gamma$ in the category with the auxiliary stop, and this implies the same in the category without the auxiliary stop simply by pushing forward.
\end{proof}

\begin{proof}[Proof of Theorem \ref{wrapcone}]
Propositions \ref{handlecone} and \ref{wrapishandle} combine to show that $D$ is quasi-isomorphic to the cone on \emph{some} morphism $a:L^w\to L$, and our goal is to show that this morphism can be taken to be the continuation map $L^w\to L$.
Introduce a stop at a small positive pushoff of $\partial_\infty L^w$, and consider testing the exact triangle (in the wrapped Fukaya category with this additional stop) against a smaller positive pushoff $L^{ww}$ of $L^w$.
Since wrapping $L^{ww}$ directly into the stop is cofinal by Lemma \ref{cofinalitycriterion}, this yields an exact triangle
\begin{equation}
HF^\bullet(L^{ww},L^w)\xrightarrow{\cdot a}HF^\bullet(L^{ww},L)\to HF^\bullet(L^{ww},D_p)\to.
\end{equation}
Since $HF^\bullet(L^{ww},D_p)=0$ (they are disjoint), we conclude that multiplication by $a$ is an isomorphism.
Now the groups $HF^\bullet(L^{ww},L^w)$ and $HF^\bullet(L^{ww},L)$ are canonically isomorphic, and we may simply write them as $HF^\bullet(L^+,L)$, the Floer cohomology of $L$ with an unspecified small positive pushoff $L^+$ thereof (recall Lemma-Definition \ref{isotopyinvariancedef} and the surrounding discussion).
This group $HF^\bullet(L^+,L)$ is an algebra with respect to Floer composition.
Since multiplication by $a\in HF^\bullet(L^+,L)$ is an isomorphism on $HF^\bullet(L^+,L)$, we conclude that $a\in HF^\bullet(L^+,L)$ is a unit.
Now the quasi-isomorphism type of the cone $[L^w\xrightarrow aL]$ is unchanged by multiplying $a$ by a unit in either $HW^\bullet(L^w,L^w)$ or $HW^\bullet(L,L)$.
Thus if $a\in HF^\bullet(L^+,L)$ is a unit, we may replace $a$ with the identity $\1_L\in HF^\bullet(L^+,L)$ which is by definition the continuation map in $HF^\bullet(L^w,L)$.
\end{proof}

\begin{proof}[Proof of Theorem \ref{stopremoval}]
Essential surjectivity is Lemma \ref{legendrian stop removal surjective}.
According to Lemma \ref{lem: transverse crossings generate}, we therefore have an equivalence 
$\W(X,\g)/ \C_{\f, \g} \to \W(X, \f)$, where  $\C_{\f, \g}$ is the collection of 
cones on continuation maps on wrappings which meet $\g \setminus \f$ exactly once,
at a smooth Legendrian point, and transversely.  Theorem \ref{wrapcone} (which we have just proven) 
shows that these cones are isomorphic to linking disks.
\end{proof}

\begin{remark}
It follows from Theorem \ref{wrapcone} that linking disks to $\g \setminus \f$ vanish in $\W(X, \f)$.
If one follows through the proof, the argument for this boils down to saying that they are cones on continuation maps $L^w\to L$ for isotopies $L\leadsto L^w$ passing once through $\g\setminus\f$, and these continuation maps are, essentially by definition, isomorphisms in $\W(X,\f)$.
One can also argue more directly:
each such disk $D$ is contained in a small neighborhood of a point of $\partial_\infty X\setminus\f$, 
and hence every Lagrangian $L$ has a cofinal sequence of wrappings which are disjoint from $D$.
Alternatively, one could argue that $D$ has a cofinal sequence of wrappings disjoint from any fixed $L$, 
or one could argue that $D$ is contained in a halfspace $\CC_{\Re\geq 0}\times\CC^{n-1}$ and appeal to Corollary \ref{zeroobject}.
\end{remark}

\section{Products and cylindrization} \label{sec: products}

The theme of this section is that products of objects which are cylindrical at infinity need not be cylindrical at infinity, yet can often be made to be so by suitable small perturbations.
We show how to do this perturbation in cases of interest to us.

\subsection{Products of Liouville sectors and stopped Liouville manifolds}\label{productofwithstops}

We begin with a general discussion of products of Liouville sectors and products of stopped Liouville manifolds.
We then compare these two product operations, arguing that the product of Liouville sectors is a special case of the product of stopped Liouville manifolds.
Note that we do \emph{not} discuss products of stopped Liouville sectors.

Given two stopped Liouville manifolds $(X,\f)$ and $(Y,\g)$, their product is defined to be
\begin{equation}\label{productofpairs}
(X,\f)\times(Y,\g):=(X\times Y,(\f\times\cc_Y)\cup(\f\times\g\times\RR)\cup(\cc_X\times\g)).
\end{equation}
To interpret the stop on $X\times Y$, recall that $\partial_\infty(X\times Y)$ is covered by $\partial_\infty X\times Y$ and $X\times\partial_\infty Y$, which overlap over $\partial_\infty X\times\partial_\infty Y\times\RR$.
Note that the product stop can change drastically as the Liouville forms on $X$ and $Y$ are deformed or as the stops $\f$ and $\g$ undergo ambient contact isotopy (or the sort of deformation appearing in Theorem \ref{winvariancestrong} or Lemmas \ref{decreasingintersectionfaithful} and \ref{winvariancemarked}).

Given two Liouville sectors $X$ and $Y$, we can consider their product $X\times Y$.
This product can fail to be a Liouville sector for two reasons.
First, it will have corners unless one or both of $X$ and $Y$ is a Liouville manifold.
Second, and rather more seriously, the product may fail to be cylindrical at infinity.
It is cylindrical at infinity provided the factors $X$ and $Y$ satisfy a technical condition: their Liouville vector fields must be everywhere tangent to their boundary.
In this case, any cylindrical smoothing of the corners of $X\times Y$ yields a Liouville sector by \cite[Remark 2.12]{gpssectorsoc}, which we denote by $(X\times Y)^\sm$.
The technical condition of the Liouville vector field being everywhere tangent to the boundary is harmless: every Liouville sector can be deformed to satisfy this condition, and in fact this deformation is unique up to contractible choice \cite[Lemma 2.11 and Proposition 2.28]{gpssectorsoc}.
Thus, up to canonical deformation, the product of any pair of Liouville sectors is a Liouville sector.

We now compare the product of Liouville sectors with the product of their (stopped) convex completions.
Recall that any Liouville sector $X$ gives rise, up to contractible choice, to an inclusion $X\hookrightarrow(\bar X,\f)$ where $\bar X$ is a Liouville manifold (termed the convex completion of $X$) and $\f\subseteq\partial_\infty X$ is a stop (this is the setup of Corollary \ref{horizsmallstop}).

\begin{proposition}  \label{prop: product sectors versus stopped}
Let $X$ and $Y$ be Liouville sectors.
The canonical inclusions $X\hookrightarrow(\bar X,\f)$ and $Y\hookrightarrow(\bar Y,\g)$ into stopped convex completions can be chosen so that their product
\begin{equation}
X\times Y\hookrightarrow(\bar X,\f)\times(\bar Y,\g)
\end{equation}
is, after smoothing the corners of the domain, also the canonical inclusion of a Liouville sector into its stopped convex completion.
\end{proposition} 

\begin{proof} 
We may choose coordinates of the form:
\begin{align}
F\times(T^\ast\RR_{\geq 0},Z_{T^\ast\RR_{\geq 0}}+\pi^\ast\varphi(s_1)\partial_{s_1})&\to\bar X\phantom{\bar Y}\qquad\phantom{Y}X=\bar X\setminus(F\times T^\ast\RR_{>1}),\\
G\times(T^\ast\RR_{\geq 0},Z_{T^\ast\RR_{\geq 0}}+\pi^\ast\varphi(s_2)\partial_{s_2})&\to\bar Y\phantom{\bar X}\qquad\phantom{X}Y=\bar Y\setminus(G\times T^\ast\RR_{>1}),
\end{align}
where $F$ and $G$ are Liouville manifolds, $\pi^\ast$ denotes the lift from vector fields on $\RR$ to Hamiltonian vector fields on $T^\ast\RR$, and $\varphi:\RR\to[0,1]$ is smooth and satisfies $\varphi(s)=0$ for $s\leq 2$ and $\varphi(s)=1$ for $s\geq 3$.
The product $\bar X\times\bar Y$ is thus equipped with a chart of the form
\begin{equation}
F\times G\times(T^\ast\RR^2_{\geq 0},Z_{T^\ast\RR^2_{\geq 0}}+\pi^\ast[\varphi(s_1)\partial_{s_1}+\varphi(s_2)\partial_{s_2}])\to\bar X\times\bar Y.
\end{equation}
Now as illustrated in Figure \ref{figuretwovectorfields}, the vector field $\varphi(s_1)\partial_{s_1}+\varphi(s_2)\partial_{s_2}$ may be deformed over a compact subset of the interior of $\RR^2_{\geq 0}$ to a vector field of the form $\varphi(s)\partial_s$ for some coordinates $(s,\theta)$ on $\RR^2_{\geq 0}\setminus\{(0,0)\}$.
The locus $s\leq 1$ in this deformation is thus a smoothing $(X\times Y)^\sm$ of the corners of $X\times Y$, and its complement can be described as
\begin{equation}\label{productfiber}
\Bigl[F\times Y\underset{F\times G\times T^\ast[0,1]}\cup X\times G\Bigr]\times(T^\ast\RR_{\geq 0},Z_{T^\ast\RR_{\geq 0}}+\pi^\ast\varphi(s)\partial_s).
\end{equation}
We will denote this deformation of $\bar X\times\bar Y$ by $\overline{(X\times Y)^\sm}$, since the above discussion shows it is the convex completion of $(X\times Y)^\sm$ (we have thus shown that convex completion commutes with product of Liouville sectors, up to canonical deformation).  

\begin{figure}[ht]
\centering
\includegraphics[max width=.95\textwidth]{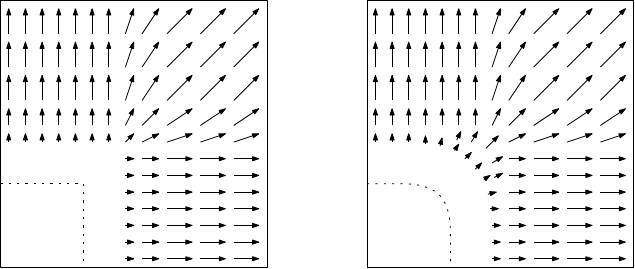}
\caption{Left: The vector field $\varphi(s_1)\partial_{s_1}+\varphi(s_2)\partial_{s_2}$ on $\RR^2$ defining the Liouville structure on $T^\ast\RR^2_{\geq 0}\times F\times G$ inside the product $\bar X\times\bar Y$ (the dotted line indicates the boundary of $X\times Y$).  Right: The deformed vector field $\varphi(s)\partial_s$ defining the Liouville structure on $T^\ast\RR^2_{\geq 0}\times F\times G$ which defines what we call $\overline{(X\times Y)^\sm}$ (the dotted line indicates a smoothing $(X\times Y)^\sm$ of $X\times Y$).  Note that the deformation is supported in a compact subset of $\RR^2_{\geq 0}$, disjoint from the boundary.}\label{figuretwovectorfields}
\end{figure}

We write $H$ for the first factor in \eqref{productfiber}; it is a Liouville manifold. 
The deformation $\bar X\times\bar Y$ to $\overline{(X\times Y)^\sm}$ is supported away from $H$, so we may
regard $H$ as living in $\partial_\infty (\bar X\times\bar Y)$ as well.  
It is evident from the construction that $\cc_H$ is the product stop.
\end{proof}

\subsection{Products of Lagrangians}\label{productcylindrical}

If $L\subseteq X$ and $K\subseteq Y$ are cylindrical,
the product Lagrangian 
$L\times K\subseteq X\times Y$ inside the product Liouville manifold $(X\times
Y,\lambda_X+\lambda_Y)$ need not be, and typically is not.

The goal of this subsection is to describe a deformation of the product $L\tildetimes K\subseteq X\times Y$ (called the \emph{cylindrization} of $L\times K$) which is cylindrical, for any pair $L$ and $K$ satisfying the (mild) assumption that both primitives $f_L$ and $f_K$ (of $\lambda_X|_L$ and $\lambda_Y|_K$, respectively) are compactly supported.
That this assumption does not result in any loss of generality is guaranteed by:

\begin{lemma}\label{cylvanishinf}
For any exact cylindrical Lagrangian $L\subseteq X$, there exists a compactly supported exact Lagrangian isotopy $L\leadsto L'$ such that $\lambda_X|_{L'}$ has a compactly supported primitive $f_{L'}$.
\end{lemma}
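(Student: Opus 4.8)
The plan is to first identify what prevents $\lambda_X|_L$ from having a compactly supported primitive, and then remove that obstruction by an explicit exact (but not Hamiltonian) Lagrangian isotopy carried out inside a Weinstein tubular neighborhood of $L$. Since $L$ is cylindrical at infinity and $\partial_\infty L$ is Legendrian, $\lambda_X|_L$ vanishes identically near infinity: in cylindrical coordinates where $\lambda_X=e^s\alpha$, the end $\RR_{\geq R}\times\partial_\infty L$ of $L$ has tangent spaces spanned by $\partial_s$ (on which the pulled-back form $\alpha$ vanishes) and by $T\partial_\infty L\subseteq\ker\alpha$. Hence any primitive $f_L$ of $\lambda_X|_L$ is locally constant near infinity, with some value $c_i\in\RR$ on the $i$-th cylindrical end of $L$ (the ends being indexed by the components of $\partial_\infty L$), and $\lambda_X|_L$ has a compactly supported primitive exactly when all $c_i=0$. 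So it suffices to produce a compactly supported exact Lagrangian isotopy $L\leadsto L'$ — along which $L'$ stays equal to $L$ near infinity, hence remains exact and cylindrical — driving all end-values of the primitive to $0$.

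For the mechanism, fix a Weinstein tubular neighborhood $\Phi\colon\Nbd_X(L)\xrightarrow{\ \sim\ }\Nbd_{T^*L}(0_L)$ with $\Phi|_L=\id$ and write $\Phi_*\lambda_X=\lambda_{T^*L}+d\psi$; restricting to $0_L$ and using exactness of $L$ (and shrinking to a disk bundle, so a closed $1$-form is determined by its restriction to the zero section) one may take $\psi|_{0_L}=f_L$. Given a function $\rho\colon L\to\RR$ with $d\rho$ compactly supported and $C^1$-small enough that the graphs $\Gamma_{td\rho}\subseteq T^*L$ lie in $\Nbd_{T^*L}(0_L)$ for $t\in[0,1]$, the family $L_t:=\Phi^{-1}(\Gamma_{td\rho})$ is a compactly supported exact Lagrangian isotopy (the graph isotopy $\{\Gamma_{td\rho}\}$ is exact in $T^*L$ with generating function $\rho$, $\Phi$ is a symplectomorphism, and $L_t=L$ off a compact set). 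Pulling $\lambda_X|_{L_1}$ back to $L$ along $q\mapsto\Phi^{-1}(q,d\rho_q)$ gives $d\rho+d\bigl(q\mapsto\psi(q,d\rho_q)\bigr)$, so $L':=L_1$ carries the primitive $q\mapsto\rho(q)+\psi(q,d\rho_q)$, whose value on the far part of the $i$-th end (where $d\rho=0$, so $\psi(q,0)=f_L(q)=c_i$) is $\rho|_{\text{end }i}+c_i$. In other words, such an isotopy shifts the end-values by the prescribed constants $\rho|_{\text{end }i}$ while changing nothing at infinity.

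To finish, iterate: fix a compact $K_0\subseteq L$ meeting every end, over which $\Nbd_{T^*L}(0_L)$ contains a disk bundle of radius $\delta_0>0$, so that any $\rho$ with $\supp d\rho\subseteq K_0$ and end-values of size at most some small $\varepsilon=\varepsilon(K_0,\delta_0)$ is admissible; choosing at each stage the shift to decrease every $|c_i|$ by $\min(\varepsilon,|c_i|)$, after finitely many stages all end-values vanish, and concatenating the resulting compactly supported exact isotopies (one fixed Weinstein neighborhood works throughout, since all isotopies are supported near $K_0$) yields $L\leadsto L'$. Disconnected or compact components require no separate argument — compact components already have compactly supported primitives. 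The conceptual point worth emphasizing is that a compactly supported exact Lagrangian isotopy \emph{can} alter the end-values of the primitive, unlike one generated by a compactly supported Hamiltonian, because the isotopy's generating function need only be locally constant, not compactly supported, near infinity; the rest — the $C^1$-smallness bookkeeping keeping the graphs in the tubular neighborhood, and checking the graph isotopy is exact with this generating function — is routine, and this bookkeeping is the only real technical chore.
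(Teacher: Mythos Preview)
Your argument is correct and takes a different route from the paper's. The paper flows $L$ by the Hamiltonian vector field of a compactly supported $H\equiv 1$ on a large compact set, asserts this shifts the primitive by the flow-time $a$ near infinity, conjugates by the Liouville flow to allow arbitrary $a$, and finally ``performs such an isotopy separately in each of the non-compact ends of $L$''. You instead work intrinsically in a Weinstein tubular neighborhood of $L$ and compute the primitive along the graph isotopy $\Gamma_{td\rho}$ explicitly. Your closing observation is on point: when $H$ is \emph{literally} compactly supported, the function $G_a$ determined by $(\varphi_H^a)^*\lambda=\lambda+dG_a$ vanishes identically near infinity, so such a flow preserves the \emph{differences} among end-values on any connected component of $L$ and cannot by itself accomplish the per-end adjustment; the paper's final step must therefore be read with $dH_i$ (rather than $H_i$ itself) compactly supported, which is exactly your generating-function mechanism recast extrinsically. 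Your presentation makes this distinction transparent and avoids the Liouville-conjugation step. One small simplification: the iteration is unnecessary, and the parenthetical ``one fixed Weinstein neighborhood works throughout'' would otherwise require justification (the cumulative graph $\Gamma_{d\sigma^{(n)}}$ with $\sigma^{(n)}=\sum_{k\le n}\rho^{(k)}$ could leave the $\delta_0$-disk bundle). It is cleaner to take $K_0$ to contain long enough segments of each cylindrical end that a single $\rho$ with $\rho|_{\text{end }i}=-c_i$ already satisfies $\lVert d\rho\rVert_{C^0}<\delta_0$; in the cylindrical region the Weinstein neighborhood may be chosen $Z$-invariant, so $\delta_0$ does not deteriorate as $K_0$ grows.
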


\begin{proof}
Fix a primitive $f_L$ of $\lambda_X|_L$.
Let $H:X\to\RR$ be a compactly supported Hamiltonian which equals $1$ over a large 
compact subset of $X$.
For any $a\in\RR$ with $\left|a\right|$ sufficiently small, applying the (forwards or backwards) Hamiltonian flow of $H$ to $L$ for small time defines a compactly supported isotopy $L\leadsto L'$ and a primitive $f_{L'}$ satisfying $f_{L'}=f_L+a$ near infinity.
By conjugating such an isotopy by the Liouville flow (pushing it towards infinity), we may in fact achieve $f_{L'}=f_L+a$ for arbitrary $a\in\RR$.
Finally, perform such an isotopy separately in each of the non-compact ends of $L$.
\end{proof}

Let $L\subseteq X$ and $K\subseteq Y$ be two exact cylindrical Lagrangians inside Liouville sectors $(X,\lambda_X)$ and $(Y,\lambda_Y)$ (whose Liouville vector fields are tangent to their respective boundaries).
Since $L$ and $K$ are exact, there exist primitives $f_L:L\to\RR$ and $f_K:K\to\RR$ satisfying $df_L=\lambda_X|_L$ and $df_K=\lambda_Y|_K$.
We say $L$ (resp.\ $K$) is \emph{strongly exact} iff $\lambda_X|_L\equiv 0$ (resp.\ $\lambda_Y|_K\equiv 0$), equivalently, iff $Z_X$ (resp.\ $Z_Y$) is everywhere tangent to $L$ (resp.\ $K$).
If both $L$ and $K$ are strongly exact, then the product Lagrangian $L\times K\subseteq X\times Y$ inside the product Liouville manifold $(X\times Y,\lambda_X+\lambda_Y)$ is also strongly exact, hence, in particular, cylindrical.

We now define the cylindrization $L\tildetimes K\subseteq X\times Y$, assuming only that $f_L$ and $f_K$ are both compactly supported.
Fix extensions $f_L:X\to\RR$ and $f_K:Y\to\RR$ which vanish near the boundary and are supported inside subdomains $X_0\subseteq X$ and $Y_0\subseteq Y$ (respectively) whose completions are $X$ and $Y$.
We now consider the Liouville form
\begin{equation}\label{deformedliouville}
\lambda_X+\lambda_Y-d(f_L\phi_K)-d(\phi_Lf_K),
\end{equation}
where $\phi_L:X\to[0,1]$ and $\phi_K:Y\to[0,1]$ are smooth functions which vanish over $X_0$ and $Y_0$ (and near $\partial X$ and $\partial Y$), are $Z$-invariant near infinity, and equal $1$ over a neighborhood of $\partial_\infty L$ and $\partial_\infty K$, respectively.
Note that the restriction of \eqref{deformedliouville} to $L\times K$ is compactly supported, and hence the associated Liouville vector field is tangent to $L\times K$ outside a compact set.

We now claim that for suitable choices of $\phi_L$ and $\phi_K$, this deformed Liouville form \eqref{deformedliouville} remains convex outside $X_0\times Y_0$.
More precisely, the associated Liouville vector field is outward pointing along $\partial(X_0\times Y_0)$ and exhibits its exterior as the positive half of a symplectization.
To see this, we calculate the Liouville vector field corresponding to \eqref{deformedliouville} to be
\begin{equation}\label{deformedliouvillevectorfield}
[Z_X+\phi_KX_{f_L}+X_{\phi_L}f_K]+[Z_Y+\phi_LX_{f_K}+X_{\phi_K}f_L].
\end{equation}
Consider now the positive flow of this vector field starting at a point of $(X\setminus X_0)\times Y$, so the $X$-component of the vector field is given by $Z_X+X_{\phi_L}f_K$ (note that $\phi_KX_{f_L}$ vanishes over this locus since $f_L$ is supported inside $X_0$).
Now we note that $X_{(e^{NZ_X})_\ast\phi_L}=e^{-N}(e^{NZ_X})_\ast X_{\phi_L}$, so replacing $\phi_L$ with $(e^{NZ_X})_\ast\phi_L$ and taking $N\to\infty$, the term $X_{\phi_L}f_K$ becomes negligible.
Thus for $N<\infty$ sufficiently large, starting at any point outside $X_0\times Y$, this flow is complete and escapes to infinity.
A symmetric argument implies applies to the case where one starts at a point outside $X \times Y_0$ (after replacing $\phi_K$ with $(e^{NZ_Y})_\ast\phi_K$ for $N$ sufficiently large).
Hence, implicitly fixing such a sufficiently large $N$ and replacing $\phi_L$ and $\phi_K$ as above, we see that outside $X_0 \times Y_0$ the flow is complete and escapes to infinity.
Moreover, the same holds at every point of the linear interpolation between the product Liouville vector field $Z_X+Z_Y$ and \eqref{deformedliouvillevectorfield}.

Since the linear deformation of Liouville forms between $\lambda_X+\lambda_Y$
and \eqref{deformedliouville} (where we have replaced $\phi_L$ and $\phi_K$ with their pushforwards under $e^{NZ_X}$ and $e^{NZ_Y}$ for a fixed large $N$) maintains convexity outside
$X_0\times Y_0\subseteq X\times Y$, it is necessarily induced by a Hamiltonian symplectomorphism
$\Phi:X\times Y\to X\times Y$, fixed over $X_0\times Y_0$, so that
$\Phi^\ast(\lambda_X+\lambda_Y)=\eqref{deformedliouville}$.
Concretely, $\Phi$ is characterized uniquely by the properties that $\Phi=\id$ over $X_0\times Y_0$ and $\Phi_\ast\eqref{deformedliouvillevectorfield}=Z_X+Z_Y$.
Since $L\times K$ is cylindrical with respect to \eqref{deformedliouville}, it follows that its image
\begin{equation}
L\tildetimes K:=\Phi(L\times K)\subseteq(X\times Y,\lambda_X+\lambda_Y)
\end{equation}
is cylindrical.
Note that by taking $\phi_L$ and $\phi_K$ to be supported in small neighborhoods of $\partial_\infty L$ and $\partial_\infty K$ (in particular, vanishing over large compact subsets of $X$ and $Y$), we can ensure that $\Phi$ is supported in an arbitrarily small neighborhood of $((L\cap X_0)\times\partial_\infty K)\cup(\partial_\infty L\times(K\cap Y_0))$ (in particular, $\Phi=\id$ and $L\tildetimes K=L\times K$ over arbitrarily large compact subsets of $X\times Y$).
Note also that the choices going into the definition of the cylindrization
$L\tildetimes K$ form a contractible space. Although we will suppress the
dependence of $L\tildetimes K$ on all of these choices, it will be important
to study the dependence of this construction on $N$ (which measures the
proximity to infinity of the cylindrization) in order to control
holomorphic disks and positivity of wrappings.

\subsection{Wrapping product Lagrangians}\label{wrappingproduct}

We will need to know that formation of the cylindrized product $L\tildetimes K$ respects positivity of isotopies near infinity.
More precisely, given two Lagrangian isotopies $\{L_t\}_{t\in[0,1]}$ and $\{K_t\}_{t\in[0,1]}$ which are positive near infinity (meaning $\partial_t\partial_\infty L_t$ and $\partial_t\partial_\infty K_t$ are positively transverse to the respective contact distributions), we would like to know that the isotopy $\{L_t\tildetimes K_t\}_{t\in[0,1]}$ is positive near infinity (at least for controlled choice of cylindrization $L_t\tildetimes K_t$ of the product $L_t\times K_t$).

Fix functions $f_L^t:X\to\RR$ and $f_K^t:Y\to\RR$ satisfying $df_L^t|_{L_t}=\lambda_X|_{L_t}$ and $df_K^t|_{K_t}=\lambda_Y|_{K_t}$, supported inside $X_0$ and $Y_0$ (and away from $\partial X$ and $\partial Y$), respectively.
Also fix $\phi_L^t:X\to\RR$ and $\phi_K^t:Y\to\RR$ as before.
We define the cylindrized isotopy $L_t\tildetimes K_t$ as before, using the Liouville form \eqref{deformedliouville} with $\phi_L$ and $\phi_K$ replaced with their pushforwards under $e^{NZ_X}$ and $e^{NZ_Y}$; we emphasize that this cylindrized isotopy depends on a choice of sufficiently large $N$.

\begin{lemma}\label{productpositive}
If $\{L_t\}_{t\in[0,1]}$ and $\{K_t\}_{t\in[0,1]}$ are each positive isotopies, then for $N<\infty$ sufficiently large (depending on the entire isotopy $\{L_t \times K_t\}_{t \in [0,1]}$), the cylindrized isotopy $L_t\tildetimes K_t:=\Phi_t(L_t\times K_t)$ is positive at infinity.
\end{lemma}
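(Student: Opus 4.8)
The plan is to check positivity directly in terms of the velocity vector field of the isotopy $\{L_t\tildetimes K_t\}$ near the ideal contact boundary $\partial_\infty(X\times Y)$: by the definition recalled in \S\ref{wrappingcatsec}, the isotopy is positive at infinity precisely when this velocity field is positively transverse to the contact distribution along $\partial_\infty(L_t\tildetimes K_t)$. Write $M_t:=L_t\times K_t$ and let $v_t$ be the velocity field of $\{M_t\}$, whose restriction to $M_t$ is the direct sum of the velocity fields of $\{L_t\}$ and $\{K_t\}$ (both positive at infinity by hypothesis). Recall from \S\ref{productcylindrical} that $L_t\tildetimes K_t=\Phi_t(M_t)$ for a Hamiltonian symplectomorphism $\Phi_t$ with $\Phi_t=\id$ over $X_0\times Y_0$ and $\Phi_t^\ast(\lambda_X+\lambda_Y)=\eqref{deformedliouville}$ (formed from $f_L^t,f_K^t$ and the pushed-out cutoffs $\tilde\phi_L^t=(e^{NZ_X})_\ast\phi_L^t$, $\tilde\phi_K^t=(e^{NZ_Y})_\ast\phi_K^t$). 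Hence the velocity field $W_t$ of $\{L_t\tildetimes K_t\}$ satisfies $W_t\circ\Phi_t=(\Phi_t)_\ast v_t+\dot\Phi_t$, where $\dot\Phi_t:=(\partial_t\Phi_t)\circ\Phi_t^{-1}$; the first term carries the genuine positivity, the second is a correction to be controlled.

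First I would fix $N$. Using the scaling identity $X_{(e^{NZ})_\ast\phi}=e^{-N}(e^{NZ})_\ast X_\phi$ from \S\ref{productcylindrical}, the part of the deformed Liouville vector field \eqref{deformedliouvillevectorfield} built from $X_{\tilde\phi_L^t}$ and $X_{\tilde\phi_K^t}$ is $O(e^{-N})$ relative to $Z_X+Z_Y$ outside a fixed compact set, uniformly in $t\in[0,1]$ by continuity of the data in $t$ and compactness of $[0,1]$. So for $N$ large (depending on the whole family) the cylindrization construction of \S\ref{productcylindrical} is valid for every $t$ at once: each $\lambda^t:=\eqref{deformedliouville}$ is convex outside $X_0\times Y_0$, $\Phi_t$ exists, and $L_t\tildetimes K_t$ is genuinely cylindrical, so $\partial_\infty(L_t\tildetimes K_t)$ is well-defined and the transition regions of the cutoffs (now located at radius $\sim e^N$) are pushed off the ideal boundary.

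The ideal boundary of $M_t$ is covered by three pieces: a neighborhood of the corner $\partial_\infty L_t\times\partial_\infty K_t$ and two side pieces, near $\partial_\infty L_t\times(K_t\cap Y_0)$ and near $(L_t\cap X_0)\times\partial_\infty K_t$, and I would analyze $W_t$ on each. On the corner piece, $\Phi_t=\id$ (its support, as arranged in \S\ref{productcylindrical}, is disjoint from it), so $W_t=v_t$; since the velocities of $\{L_t\}$ and $\{K_t\}$ are positively transverse to the contact distributions of $\partial_\infty X$ and $\partial_\infty Y$ respectively, and a contact form for $\partial_\infty(X\times Y)$ restricted there is (up to a positive factor) a convex combination of the pullbacks of $\alpha_X$ and $\alpha_Y$, the sum $v_t$ is again positively transverse. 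On the side piece near $\partial_\infty L_t\times(K_t\cap Y_0)$, at $r_X\to\infty$ one has $f_L^t\equiv 0$, $\tilde\phi_L^t\equiv 1$ and $\tilde\phi_K^t\equiv 0$, so by \eqref{deformedliouvillevectorfield} the deformed Liouville vector field there equals $Z_X+Z_Y+X^Y_{f_K^t}$; consequently $\Phi_t$, at this part of infinity, is the identity in the $X$-factor (composed with a symplectomorphism of the $Y$-factor), and the same holds for $\dot\Phi_t$. Therefore the $\partial_\infty X$-component of $W_t$ equals that of $v_t$ — namely the positive-at-infinity velocity of $\{L_t\}$ — while the contact form on this part of $\partial_\infty(X\times Y)$ is a positive multiple of the pullback of $\alpha_X$; hence $\alpha(W_t)>0$ there. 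The other side piece is symmetric, and combining the three pieces gives positivity of $\{L_t\tildetimes K_t\}$ at infinity.

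The hard part, I expect, will be the bookkeeping in the last step: pinning down a contact form for $\partial_\infty(X\times Y)$ on the overlaps of the three pieces, verifying that $\Phi_t$ really does split as claimed at each part of the ideal boundary (which uses that $f_L^t,f_K^t$ are compactly supported — Lemma \ref{cylvanishinf} — and that the cutoffs are $Z$-invariant and locally constant near infinity), and confirming that the cylindrization genuinely renders $M_t$ cone-like past the $\sim e^N$ transition so that no extra piece of the ideal boundary is created there. This last point is where the $O(e^{-N})$ estimate from \S\ref{productcylindrical} is doing the essential work, and it is also the reason $N$ must be taken large uniformly over $t\in[0,1]$.
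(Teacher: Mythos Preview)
Your outline is in the right spirit, and the corner piece is fine (indeed $\Phi_t=\id$ there, as arranged in \S\ref{productcylindrical}).  The gap is in the side-piece argument.  You assert that because the deformed Liouville vector field equals $Z_X+Z_Y+X^Y_{f_K^t}$ \emph{at} the ideal boundary near $\partial_\infty L_t\times(K_t\cap Y_0)$, ``consequently $\Phi_t$, at this part of infinity, is the identity in the $X$-factor.''  This does not follow: $\Phi_t$ is characterized by $(\Phi_t)_*\eqref{deformedliouvillevectorfield}=Z_X+Z_Y$ together with $\Phi_t=\id$ on $X_0\times Y_0$, so $\Phi_t(p)$ at an ideal point $p$ is determined by integrating along the entire backward trajectory from $p$ down to $X_0\times Y_0$.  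That trajectory crosses the transition shell of $\tilde\phi_L^t$ (at $X$-radius $\sim e^N$), where the $X$-component of \eqref{deformedliouvillevectorfield} picks up the term $f_K^t\,X_{\tilde\phi_L^t}$, so one cannot conclude identity-in-$X$ from the endpoint value of the vector field alone.  Relatedly, the assertion that the contact form on this side piece is ``a positive multiple of the pullback of $\alpha_X$'' is only true asymptotically (the form is $\alpha_X+\lambda_Y$ on a fixed hypersurface); you need a scaling argument to see that the $\lambda_Y$ contribution is dominated.

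The paper handles both issues at once, and more economically, by working in just the two charts $X\times(Y\setminus Y_0)$ and $(X\setminus X_0)\times Y$ (appealing to symmetry) and \emph{pulling back by $e^{NZ_Y}$} (resp.\ $e^{NZ_X}$).  After this rescaling, the transition shell sits at a fixed location, the deformed vector field becomes $Z_X+Z_Y+X_{f_L^t}\phi_K^t+e^{-N}f_L^t X_{\phi_K^t}$, and the pulled-back Liouville form is $\lambda_X+e^N\lambda_Y$.  Taking $N\to\infty$, the $e^{-N}$ term vanishes (so the limiting $\Phi_t$ is the identity on the $Y$-coordinate), and after normalizing by $e^{-N}$ the Liouville form converges to $\lambda_Y$; evaluating this on the velocity of the isotopy then only sees $\partial_\infty K_t$, which moves positively by hypothesis.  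This rescaling device is exactly what your $O(e^{-N})$ estimate was gesturing at, but it makes the dependence of $\Phi_t$ on the whole trajectory (not just the endpoint) transparent and simultaneously disposes of the contact-form issue.
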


\begin{proof}
By symmetry, it suffices to make a calculation in the chart $X\times(Y\setminus Y_0)$, in which the product Liouville form is given by $\lambda_X+\lambda_Y$ and the deformed Liouville form is given by $\lambda_X+\lambda_Y-d(f_L^t(e^{NZ_Y})_\ast\phi_K^t)$.
The corresponding Liouville vector fields are given, respectively, by
\begin{align}
&Z_X+Z_Y,\\
&Z_X+Z_Y+X_{f_L^t}(e^{NZ_Y})_\ast\phi_K^t+e^{-N}f_L^t(e^{NZ_Y})_\ast X_{\phi_K^t}.
\end{align}
To understand what happens in the limit $N\to\infty$, we pull back under $e^{NZ_Y}$, to obtain, respectively,
\begin{align}
\label{productlimit}&Z_X+Z_Y,\\
\label{deformedlimit}&Z_X+Z_Y+X_{f_L^t}\phi_K^t+e^{-N}f_L^tX_{\phi_K^t}.
\end{align}
In these coordinates, the term with the factor $e^{-N}$ evidently becomes negligible as $N\to\infty$, and hence the defining relations $\Phi_t=\id$ over a large compact subset and $(\Phi_t)_\ast\eqref{deformedlimit}=\eqref{productlimit}$ show that as $N\to\infty$, the limit of $\Phi_t$ exists (converging smoothly) and is the identity on the $Y$-coordinate.
The limit of $(e^{-NZ_Y})_\ast(L_t\tildetimes K_t)$ therefore also exists.
The pulled back Liouville form $\lambda_X+e^{N}\lambda_Y$ converges (after rescaling by $e^{-N}$) in the limit to $\lambda_Y$.
We thus conclude that evaluating this limiting form on the limiting deformed isotopy $(e^{-NZ_Y})_\ast(L_t\tildetimes K_t)$ only sees the $\partial_\infty K_t$ factor (since the limiting $\Phi_t$ is the identity on the $Y$-coordinate), which by assumption moves positively.
We conclude that for sufficiently large $N<\infty$, the isotopy $L_t\tildetimes K_t$ is positive at infinity.
\end{proof}

Note that the size of $N<\infty$ needed to ensure positivity of the cylindrized isotopy $L_t\tildetimes K_t$ depends on the entire isotopies $\{L_t\}_{t\in[0,1]}$ and $\{K_t\}_{t\in[0,1]}$, and in particular cannot be made to coincide with a given previously chosen $N$ for $L_0\tildetimes K_0$ or $L_1\tildetimes K_1$.
This means that arguments involving cylindrized product isotopies require some care (though no serious issues will arise).
It also means that the above is not sufficient for ensuring positivity of cylindrized product isotopies over noncompact parameter spaces (as are needed, for example, to describe cofinal wrappings); this will be resolved in \S\ref{productcofinal} by taking $N$ to be a function of $t$ and refining the above analysis.

\subsection{Cofinality of product wrappings}\label{productcofinal}

Here we show that product wrappings are cofinal amongst all wrappings.  The cofinality criterion of Lemma \ref{cofinalitycriterion} will be essential.

\begin{proposition} \label{product wrappings cofinal} 
Let $X$ and $Y$ be Liouville manifolds and $L^0\subseteq X$ and $K^0\subseteq Y$ cylindrical Lagrangians.
There exist cofinal wrapping isotopies $\{L^t\}_{t\geq 0}$ and $\{K^t\}_{t\geq 0}$ whose cylindrized product isotopy $\{L^t\tildetimes K^t\}_{t\geq 0}$ (for certain appropriate cylindrization data) is a cofinal wrapping of $L^0\tildetimes K^0$.
\end{proposition}

\begin{proof}
Fix contact forms $\beta_X$ and $\beta_Y$ on $\partial_\infty X$ and $\partial_\infty Y$, respectively, and choose wrappings $\{L^t\}_{t\geq 0}$ and $\{K^t\}_{t\geq 0}$ which simply follow the Reeb vector fields $\R_{\beta_X}$ and $\R_{\beta_Y}$ at infinity.
We will show that the product wrapping $\{L^t\tildetimes K^t\}_{t\geq 0}$ satisfies the cofinality criterion (Lemma \ref{cofinalitycriterion}) with respect to the ``product contact form''
\begin{equation}\label{productcontactform}
\beta_{X\times Y}:=\min(\beta_X+\lambda_Y,\lambda_X+\beta_Y).
\end{equation}
(Recall that $\partial_\infty(X\times Y)$ is covered by two charts $\partial_\infty X\times Y$ and $X\times\partial_\infty Y$, and interpret $\beta_X+\lambda_Y$ and $\lambda_X+\beta_Y$ as contact forms on each of these charts, respectively; concretely $\beta_{X\times Y}$ is the contact form on $\partial_\infty(X\times Y)$ which corresponds to the Liouville subdomain $X_0\times Y_0\subseteq X\times Y$, where $X_0\subseteq X$ and $Y_0\subseteq Y$ are the Liouville subdomains corresponding to $\beta_X$ and $\beta_Y$, respectively.)

We saw in \S\ref{wrappingproduct} that the wrapping $\{L^t\tildetimes K^t\}_{t\geq 0}$ is at least positive, or rather that for any $T$ that there exists cylindrization data making $\{L^t\tildetimes K^t\}_{t \in [0, T]}$ positive.
Here we show, by quantitatively refining the same argument, that one can find cylindrization data making the entire isotopy $\{L^t\tildetimes K^t\}_{t \geq 0 }$ positive with moreoever a positive lower bound on $\beta_{X\times Y}(\partial_t\partial_\infty(L^t\tildetimes K^t))$ (which immediately implies the cofinality criterion). 
In the notation of the previous subsection, we specify the perturbation $L^t\tildetimes K^t$ by fixing $f_L^t$, $f_K^t$ and $\phi_L^t$, $\phi_K^t$ as before and by specifying a function $N(t)$ in place of the constant $N$ used previously.
Over the unperturbed locus $\partial_\infty L_t\times\partial_\infty K_t\times\RR\subseteq\partial_\infty X\times\partial_\infty Y\times\RR$ (rather, the unperturbed locus is an arbitrarily large compact subset of this), the product contact form $\beta_{X\times Y}$ is given by $e^{\min(0,-s)}\beta_X+e^{\min(0,s)}\beta_Y$, and its evaluation on $\partial_t(\partial_\infty L_t\times\partial_\infty K_t\times\RR)$ is thus given by
\begin{equation}
e^{\min(0,-s)}\beta_X(\partial_t\partial_\infty L_t)+e^{\min(0,s)}\beta_Y(\partial_t\partial_\infty K_t)\geq\begin{cases}\beta_X(\partial_t\partial_\infty L_t)&s\leq 0,\\\beta_Y(\partial_t\partial_\infty K_t)&s\geq 0.\end{cases}
\end{equation}
In fact, both terms on the right are simply $1$, since by definition $\partial_\infty L_t$ and $\partial_\infty K_t$ follow the Reeb flows of $\beta_X$ and $\beta_Y$, respectively.

We now consider the perturbed locus where $L^t\tildetimes K^t$ differs from $L^t\times K^t$.
Following the reasoning from \S\ref{wrappingproduct} surrounding \eqref{productlimit}--\eqref{deformedlimit}, we may observe that
\begin{equation}
\beta_{X\times Y}(\partial_t\partial_\infty(L_t\tildetimes K_t))=\beta_Y(\partial_t\partial_\infty K_t)+O(e^{-N(t)}(1+\left|N'(t)\right|)),
\end{equation}
where the constant in the $O(\cdot)$ depends on $t$.
The first term again equals $1$, and a function $N:\RR_{\geq 0}\to\RR$ makes the second term negligible iff it solves the differential inequality
\begin{equation}\label{diffineq}
\left|N'(t)\right|+1\leq\varepsilon(t)e^{N(t)},
\end{equation}
where $\varepsilon:\RR_{\geq 0}\to\RR_{>0}$ is a continuous function depending on the isotopies $L^t$, $K^t$, the primitives $f_L^t$, $f_K^t$, and the functions $\phi_L^t$, $\phi_K^t$.

To solve the differential inequality \eqref{diffineq}, first note that it makes sense for Lipschitz functions $N$, and within this class of functions, if $N_1$ and $N_2$ are solutions then so is $\min(N_1,N_2)$.
Therefore it suffices to find, for arbitrarily large $T<\infty$, solutions $N:[0,T)\to\RR_{>0}$ which go to infinity at $T$ (then just take the minimum of all such).
Such solutions may be constructed by taking $N\equiv a$ over $[0,t_0]$ and then continuing for $t>t_0$ by taking $N'(t)$ as large as possible subject to \eqref{diffineq} (for sufficiently large $a<\infty$, this solves \eqref{diffineq} and diverges to infinity in finite time).
This produces the desired cylindrized isotopy $\{L_1^t\tildetimes K_1^t\}_{t\geq 0}$ for which $\beta_{X\times Y}(\partial_t\partial_\infty(L_t\tildetimes K_t))$ is bounded below, and hence by Lemma \ref{cofinalitycriterion} is cofinal.
\end{proof}

We now extend this reasoning to the stopped case:

\begin{proposition} \label{product wrappings cofinal stopped}
Consider stopped Liouville manifolds $(X,\f)$ and $(Y,\g)$ and cylindrical Lagrangians $L^0 \subseteq X \setminus \f$ and $K^0 \subseteq Y \setminus \g$.
There there exist cofinal wrapping isotopies $\{L^t\}_{t\geq 0} \subseteq X \setminus \f$ and $\{K^t\}_{t\geq 0} \subseteq Y \setminus \g$ whose cylindrized product isotopy $\{L^t\tildetimes K^t\}_{t\geq 0}$ (for certain appropriate cylindrization data) is a cofinal wrapping of $L^0\tildetimes K^0$ in
$(X, \f) \times (Y, \g)$.
\end{proposition}

\begin{proof}
Choose smooth contact Hamiltonians on $\partial_\infty X$ and $\partial_\infty Y$ vanishing precisely along $\f$ and $\g$, and let $\beta_X$ and $\beta_Y$ be the corresponding contact forms on $\partial_\infty X\setminus\f$ and $\partial_\infty Y\setminus\g$, respectively (so the Reeb vector fields of $\beta_X$ and $\beta_Y$ are complete).
Let $\{L_1^t\}_{t\geq 0}$ and $\{K_1^t\}_{t\geq 0}$ be positive isotopies which, outside fixed compact subsets of $X$ and $Y$, coincide with the flow by (the linear Hamiltonian corresponding to) the chosen contact vector fields on $\partial_\infty X$ and $\partial_\infty Y$.
We now consider again the product contact form \eqref{productcontactform}.
As before, this product contact form $\beta_{X\times Y}$ corresponds to the contact type hypersurface $\partial(X_0\times Y_0)$, where $X_0\subseteq X$ and $Y_0\subseteq Y$ are the subsets corresponding to $\beta_X$ and $\beta_Y$, respectively.
Note that now $X_0$ and $Y_0$ are not compact, rather they approach $\partial_\infty X$ and $\partial_\infty Y$ along $\f$ and $\g$, respectively.
Their product $X_0\times Y_0$ approaches $\partial_\infty(X\times Y)$ along the product stop 
$\h := (\cc_X\times\g)\cup(\f\times\g\times\RR)\cup(\f\times\cc_Y) \subset \partial_\infty(X\times Y)$, 
so the product contact form $\beta_{X\times Y}$ from \eqref{productcontactform} is a contact form on $\partial_\infty(X\times Y)\setminus\h$.
Now the same reasoning as before shows that there is a lower bound on the evaluation of $\beta_{X\times Y}$ on the $t$-derivative of the product isotopy $\partial_t\partial_\infty(L_1^t\tildetimes K_1^t)$, and hence the product isotopy $\{L_1^t\tildetimes K_1^t\}_{t\geq 0}$ is cofinal.
\end{proof}

\section{K\"unneth} \label{sec: kunneth}

In this section we prove Theorem \ref{kunneth}.
That is, we construct a fully faithful K\"unneth functor for stopped Liouville manifolds:
\begin{equation} \label{eq: stopped kunneth}
\W(X,\f)\otimes\W(Y,\g)\to\W((X,\f)\times(Y,\g))
\end{equation}
which on objects takes $L \otimes K \mapsto L \tilde \times K$, and we formally deduce from this a similar functor for Liouville sectors.
For notational convenience, we will abbreviate $\W_X=\W(X,\f)$, $\W_Y=\W(Y,\g)$, and $\W_{X\times Y}=\W((X,\f)\times(Y,\g))$ for most of this section.

Let us outline the basic ideas and difficulties which go into the construction of the K\"unneth functor \eqref{eq: stopped kunneth}.
At the level of Lagrangian Floer cohomology, for choices of complex structure $J_X, J_Y$, 
one has an obvious identification at the chain level
\begin{equation}\label{kunnethcomplexproductisointro}
CF^\bullet(L_1,L_2;J_X)\otimes CF^\bullet(K_1,K_2;J_Y)=CF^\bullet(L_1\times K_1,L_2\times K_2;J_X\times J_Y).
\end{equation}
There are two primary difficulties in upgrading this tautological identification to an $\ainf$-bilinear-functor $\W_X\otimes\W_Y\to\W_{X\times Y}$.
First, and most obviously, the right hand side is \emph{not} a morphism complex in $\W_{X\times Y}$, since neither the product Lagrangians $L_i\times K_i$ nor the product almost complex structure $J_X\times J_Y$ is cylindrical on $X\times Y$ (and wrapping $L_1,K_1$ in $X,Y$, respectively, is not exactly the same as wrapping in $X\times Y$).
Second, and less obviously, while the above tautological isomorphism is compatible with $\mu^1$ and $\mu^2$, it does not play well with $\mu^k$ for $k\geq 3$, due to the fact that the moduli spaces of disks $\Rbar_{k,1}$ are then positive dimensional and so the moduli spaces of disks in $X\times Y$ are the \emph{fiber} product over $\Rbar_{k,1}$ of those for $X$ and $Y$.
This is an incarnation of the closely related fact that there is no canonical tensor product of $\ainf$-categories $\W_X\otimes\W_Y$, while there is for dg-categories.
We separate these two difficulties by constructing the K\"unneth functor \eqref{eq: stopped kunneth} as a composition
\begin{equation}
\W_X\otimes\W_Y\to\W_{X\times Y}^\pr\to\W_{X\times Y}
\end{equation}
where $\W_{X\times Y}^\pr$ is like $\W_{X\times Y}$ but built using product Lagrangians, product Floer data, and product wrappings.
The first difficulty is involved in constructing the second functor, and the second difficulty in the first functor.

To construct the first functor $\W_X\otimes\W_Y\to\W_{X\times Y}^\pr$, we use Yoneda.
That is, we define a functor $\T:\W_X\otimes\W_Y\to\Mod\W_{X\times Y}^\pr$ and prove that it lands inside representable modules.
This $\T$ is, in other words, a $(\W_{X\times Y}^\pr,\W_X\otimes\W_Y)$-trimodule, and the basic claim is that $\T(-,L,K)\in\Mod\W(X\times Y)^\pr$ is representable.
This assertion is more or less tautological: it is represented by $L\times K$.
Note that representability is a separate assertion for each individual pair $(L,K)$, and this is why proving it is easier than constructing directly an $\ainf$-bilinear-functor $\W_X\otimes\W_Y\to\W_{X\times Y}^\pr$, which would involve operations for many pairs $(L_i,K_i)$ defined via more complicated moduli spaces.
The induced functor $\W_X\otimes\W_Y\to\W_{X\times Y}^\pr$ is thus somewhat inexplicit, so one must check that it agrees on cohomology with the tautological isomorphisms \eqref{kunnethcomplexproductisointro}.

The second functor $\W_{X\times Y}^\pr\to\W_{X\times Y}$ is cylindrization of Lagrangians as in \S\ref{sec: products}.
The key is to choose a countable set of Lagrangians defining $\W_{X\times Y}^\pr$ for which there exists a symplectomorphism $\Phi$ which simultaneously cylindrizes all of them.
The desired functor is then simply pushforward under $\Phi$.
Of course, $\Phi$ will not send product of cylindrical Floer data on $X\times Y$ to cylindrical Floer data on $X\times Y$.
This becomes irrelevant by using dissipative Floer data as in \S\ref{noncyl}.
Full faithfulness of this functor follows from cofinality of product wrappings, proved in \S\ref{productcofinal}.

To relate the negative pushoff of the diagonal $\Delta^-\in\W_{X^-\times X}$ and the diagonal $(\W_X,\W_X)$-bimodule, the main point is that pulling back wrapped Floer cochains with $\Delta$ is the same as plugging in $\Delta$ to the trimodule $\T$, which is tautologically the diagonal bimodule by `seam erasing'.

\subsection{Product Floer data}\label{Wprod}

In this subsection, we define $\W_{X\times Y}^\pr$, the `split' wrapped Fukaya category of $(X,\f)\times(Y,\g)$, for stopped Liouville manifolds $(X,\f)$ and $(Y,\g)$.
The adjective `split' and the superscript `$\pr$' indicate that it is defined using Lagrangians, Floer data, and wrappings which are products of cylindrical (rather than cylindrical on the product, which would give $\W_{X\times Y}$, the usual wrapped Fukaya category of $(X,\f)\times(Y,\g)$).
The definition of $\W_{X\times Y}^\pr$ follows \S\ref{wrapdefsec} closely.

\begin{definition}
We define an abstract Floer setup $\SSS_{X\times Y}^\pr$ as in Definition \ref{geomtoabstract}, with the following differences.
An element of the set $\sL$ of Lagrangians is a pair of cylindrical Lagrangians $L\subseteq X$ and $K\subseteq Y$ (often written simply as $L\times K\subseteq X\times Y$).
A tuple $(L_0\times K_0,\ldots,L_k\times K_k)$ will be called composable iff $(L_0,\ldots,L_k)$ is mutually transverse and so is $(K_0,\ldots,K_k)$.
We use product almost complex structures, i.e.\ we choose families of cylindrical almost complex structures on $X$ and on $Y$, and we consider disks holomorphic with respect to their product.
The moduli spaces of holomorphic disks in $X\times Y$ with boundary on $L_0\times K_0,\ldots,L_r\times K_r$ are compact by the usual monotonicity and uniformly bounded geometry arguments (uniformly bounded geometry is preserved under taking products, and the distance between $L\times K$ and $L'\times K'$ near infinity is bounded below since the same holds for the pairs $(L,L')$ and $(K,K')$).
\end{definition}

\begin{lemma}
Generic product almost complex structures achieve transversality.
\end{lemma}

\begin{proof}
(This is a known statement for which we do not know a reference.)
To prove generic transversality for a given class of pseudo-holomorphic maps $u:C\to W$ with respect to domain-dependent almost complex structures, the key (compare \cite[(9k)]{seidelbook}) is to show that for every such pseudo-holomorphic map $u:C\to W$ with respect to an almost complex structure $J$, the map
\begin{equation}\label{gensurj}
\End^{0,1}_J(T_{u(p)}W)\xrightarrow{\circ(du)_p}\Hom^{0,1}_J(T_pC,T_{u(p)}W)
\end{equation}
is surjective for an open subset of $p\in C$ meeting every component of $C$.
It is well known and evident that this surjectivity holds whenever $(du)_p\ne 0$ (and thus generic transversality holds except possibly for curves with constant components, which need to be analyzed separately).

In the present setting of $W=X\times Y$, generic transversality for \emph{product} almost complex structures amounts to surjectivity of \eqref{gensurj} after restricting the domain to
\begin{equation}
\End^{0,1}_J(T_{u(p)}X)\oplus\End^{0,1}_J(T_{u(p)}Y)\subseteq\End^{0,1}_J(T_{u(p)}W).
\end{equation}
The result is simply the direct sum of the maps \eqref{gensurj} for $\pi_X\circ u$ and $\pi_Y\circ u$ separately.
It is thus surjective whenever $d\pi_X\circ(du)_p$ and $d\pi_Y\circ(du)_p$ are \emph{both} nonzero.
In the contrary case where one vanishes over some non-empty open set, unique continuation forces $\pi_X\circ u$ or $\pi_Y\circ u$ to be constant on an entire connected component of $C$.
When $k\geq 2$ (i.e.\ three boundary Lagrangians), this is disallowed by our assumption that the Lagrangians be mutually transverse (hence have no triple intersections).
In the remaining case $k=1$, we have constant strips sent to transverse intersections of Lagrangians, and these are cut out transversally by calculation.
\end{proof}

\begin{lemma}\label{kunnethproducthomologytrivial}
There is a canonical isomorphism $hF^\bullet(L\times K,L'\times K')=hF^\bullet(L,L')\otimes hF^\bullet(K,K')$.
It is compatible with product, in the sense that it identifies the composition map for $(L_1\times K_1,L_2\times K_2,L_3\times K_3)$ with the tensor product of the composition maps for $(L_1,L_2,L_3)$ and $(K_1,K_2,K_3)$.
\end{lemma}

\begin{proof}
Fix almost complex structures $J_X:[0,1]=\Sbar_{1,1}\to\J(X)$ and $J_Y:[0,1]=\Sbar_{1,1}\to\J(Y)$ used to define $CF^\bullet(L,L')$ and $CF^\bullet(K,K')$.
Their product $J_X\times J_Y$ is valid Floer data for $(L\times K,L'\times K')$.
As is well understood, there is a tautological isomorphism of complexes
\begin{equation}\label{kunnethcomplexproductiso}
CF^\bullet(L_1,L_2;J_X)\otimes CF^\bullet(K_1,K_2;J_Y)=CF^\bullet(L_1\times K_1,L_2\times K_2;J_X\times J_Y).
\end{equation}
Namely, the intersection points generating both sides are in obvious bijection with each other.
To compare the differentials, note that a holomorphic map into the product is simply a pair of holomorphic maps into both factors.
Transversality on each of the factors (including for the constant strips at intersection points $L_1\cap L_2$ and $K_1\cap K_2$) implies transversality in the product.

Similar arguments apply to the continuation maps relating different choices of Floer data for $(L,L')$ and $(K,K')$, which imply that \eqref{kunnethcomplexproductiso} induces a well defined isomorphism $hF^\bullet(L\times K,L'\times K')=hF^\bullet(L,L')\otimes hF^\bullet(K,K')$ in the homotopy category.
Compatibility with Floer product is similar.
\end{proof}

\begin{remark}\label{donottensorhomology}
Lemma \ref{kunnethproducthomologytrivial} provides a homomorphism $HF^\bullet(L,L')\otimes HF^\bullet(K,K')\to HF^\bullet(L\times K,L'\times K')$ which is \emph{not} in general an isomorphism due to torsion issues.
\end{remark}

As in Lemma-Definition \ref{isotopyinvariancedef}, $hF^\bullet(L\times K,L'\times K')$ is invariant under isotopies of $L,K,L',K'$ for which the pairs $(L,L')$ and $(K,K')$ stay disjoint at infinity.
This allows us to define $hF^\bullet(L\times K,L'\times K')$ for \emph{all} $L,K,L',K'$ by positively perturbing $L$ and $K$.
Moreover, these isotopy invariance isomorphisms are compatible with the isomorphisms from Lemma \ref{kunnethproducthomologytrivial} by the same argument, and hence Lemma \ref{kunnethproducthomologytrivial} applies to all $L,K,L',K'$.
That is, $h\F_{X\times Y}^\pr=h\F_X\otimes h\F_Y$.
In particular, the algebra object
\begin{equation}
hF^\bullet(L\times K,L\times K):=hF^\bullet(L^+\times K^+,L\times K)
\end{equation}
is well defined and coincides under Lemma \ref{kunnethproducthomologytrivial} with $hF^\bullet(L,L)\otimes hF^\bullet(K,K)$.
Unitality of $HF^\bullet(L,L)$ and $HF^\bullet(K,K)$ does not trivially imply unitality of $HF^\bullet(L\times K,L\times K)$ due to Remark \ref{donottensorhomology}, but does so by the following argument.

\begin{lemma}\label{continuationhomotopyunit}
If $HF^\bullet(L,L)$ is unital, then multiplication by $\1_L\in HF^0(L,L)$ on $hF^\bullet(L,L)$ is the identity.
\end{lemma}

\begin{proof}
Multiplication by $\1_L$ on $hF^\bullet(L,L)$ is a quasi-isomorphism, hence is an isomorphism by cofibrancy \cite[Lemma 3.6]{gpssectorsoc}.
Since $\1_L\circ\1_L=\1_L$, multiplication by $\1_L$ squares to itself.
Any automorphism which squares to itself is the identity.
\end{proof}

\begin{lemma}\label{productunits}
The algebra $HF^\bullet(L\times K,L\times K)$ is unital with unit $\1_{L\otimes K}$ the image of $\1_L\otimes\1_K$, and the modules $HF^\bullet(L\times K,L'\times K')$ and $HF^\bullet(L'\times K',L\times K)$ over it are unital.
\end{lemma}

\begin{proof}
Multiplication by $\1_L\otimes\1_K$ acts as the identity on $HF^\bullet(L,L')\otimes HF^\bullet(K,K')$, but note that Lemma \ref{kunnethproducthomologytrivial} \emph{does not} equate this with $HF^\bullet(L\times K,L'\times K')$ (see Remark \ref{donottensorhomology}).
Instead, we note that by Lemma \ref{continuationhomotopyunit}, multiplication by $\1_L$ acts as the identity on $hF^\bullet(L,L')$ (and analogously for $\1_K$), and hence $\1_L\otimes\1_K$ acts as the identity on $hF^\bullet(L,L')\otimes hF^\bullet(K,K')=hF^\bullet(L\times K,L'\times K')$, hence on its cohomology as well.
Taking $L=L'$ and $K=K'$ gives unitality of the algebra $HF^\bullet(L\times K,L\times K)$, and the general case is module unitality.
\end{proof}

Given the unitality from Lemma \ref{productunits}, continuation maps for $\SSS_{X\times Y}^\pr$ are defined as in Lemma-Definition \ref{continuationdef}.
We consider the wrapping categories $R_{L\times K}=R_L\times R_K$, which satisfy the factorization property by general position.
We denote the resulting wrapped Floer cohomology groups by
\begin{equation}\label{wprodmorcohomologydirlim}
HW^\bullet(L\times K,L'\times K')^\pr=\varinjlim_{\begin{smallmatrix}L^w\in R_L\\K^w\in R_L\end{smallmatrix}}HF^\bullet(L^w\times K^w,L'\times K').
\end{equation}
The argument from Lemma \ref{wraprightlocal} shows the right locality property, so we have defined an abstract wrapped Floer setup $\SSS_{X\times Y}^\pr$.
We denote its wrapped Fukaya category by $\W_{X\times Y}^\pr$.

Having defined $\W_{X\times Y}^\pr$, let us make a first step towards comparing it with $\W_X\otimes\W_Y$.
Although the symbol $\W_X\otimes\W_Y$ has no meaning for us on its own (other than being simply a pair of $\ainf$-categories), we can make sense out of its cohomology category: an object of $H^\bullet(\W_X\otimes\W_Y)$ is a pair $L\in\W_X$ and $K\in\W_Y$, and the group of morphisms $(L,K)\to(L',K')$ is the cohomology of the tensor product $\W_X(L,L')\otimes\W_Y(K,K')$, with composition as expected.
We can calculate this cohomology as follows.

\begin{lemma}\label{tensorofqisolemma}
The natural map
\begin{equation}\label{tensorofqiso}
\varinjlim_{\begin{smallmatrix}L^w\in R_L\\K^w\in R_K\end{smallmatrix}}H^\bullet(hF^\bullet(L^w,L')\otimes hF^\bullet(K^w,K'))\to\varinjlim_{\begin{smallmatrix}L^w\in R_L\\K^w\in R_K\end{smallmatrix}}H^\bullet(\W_X(L^w,L')\otimes\W_Y(K^w,K'))
\end{equation}
is an isomorphism.
\end{lemma}

\begin{proof}
Choose cofinal sequences $L=L_0\leadsto L_1\leadsto\cdots$ and $K=K_0\leadsto K_1\leadsto\cdots$ in $R_L$ and $R_K$, respectively, so by cofinality it suffices to consider the map
\begin{equation}\label{tensorofqisoseq}
\varinjlim_{i,j}H^\bullet(hF^\bullet(L_i,L')\otimes hF^\bullet(K_j,K'))\to\varinjlim_{i,j}H^\bullet(\W_X(L_i,L')\otimes\W_Y(K_j,K'))
\end{equation}
The map $\varinjlim_iHF^\bullet(L_i,L')\to\varinjlim_iH^\bullet\W_X(L_i,L')$ is an isomorphism by Lemma \ref{pwrappingworks}.
Realize this map on the chain level using mapping telescopes and choices of cycles lifting the continuation maps $L_{i+1}\to L_i$.
The result is a quasi-isomorphism of cofibrant complexes, hence a homotopy equivalence \cite[Lemma 3.6]{gpssectorsoc}.
The tensor product of this homotopy equivalence with the corresponding one for $(K_j,K')$ is thus a homotopy equivalence, hence in particular a quasi-isomorphism.
This tensor product of mapping telescopes calculates the domain of \eqref{tensorofqiso}.
\end{proof}

The domain of \eqref{tensorofqiso} is, in view of \eqref{wprodmorcohomologydirlim} and Lemma \ref{kunnethproducthomologytrivial}, the morphism group in $H^\bullet\W_{X\times Y}^\pr$.
The codomain of \eqref{tensorofqiso} is of course just $H^\bullet(\W_X(L,L')\otimes\W_Y(K,K'))$.
Thus Lemma \ref{tensorofqisolemma} defines an equivalence of categories
\begin{equation}\label{kunnethproducthomologycategories}
H^\bullet(\W_X\otimes\W_Y)=H^\bullet\W_{X\times Y}^\pr
\end{equation}
(compatibility with composition is straightforward).
In the next subsection, we will lift this equivalence to an $\ainf$-bilinear-functor $\W_X\otimes\W_Y\to\W_{X\times Y}^\pr$.

\subsection{K\"unneth trimodule} \label{kunnethtrimodule}

Here we construct the functor $\W_X\otimes\W_Y\to\W_{X\times Y}^\pr$.
We do so by constructing a $(\W_{X\times Y}^\pr,\W_X\otimes\W_Y)$-trimodule $\T$, that is an $\ainf$-bilinear-functor
\begin{equation}
\T:\W_X\otimes\W_Y\to\Mod\W_{X\times Y}^\pr,
\end{equation}
and then showing that it lands inside representable functors, i.e.\ that $\T(-,L,K)$ is represented by $L\times K$.
We also show that this functor induces the equivalence of cohomology categories \eqref{kunnethproducthomologycategories}.
The construction of $\T$ will use pseudo-holomorphic quilted strips following Ma'u \cite{mau} and Ganatra \cite{ganatrawrapcy}.

\begin{figure}[ht]
\centering
\includegraphics[max width=.95\textwidth]{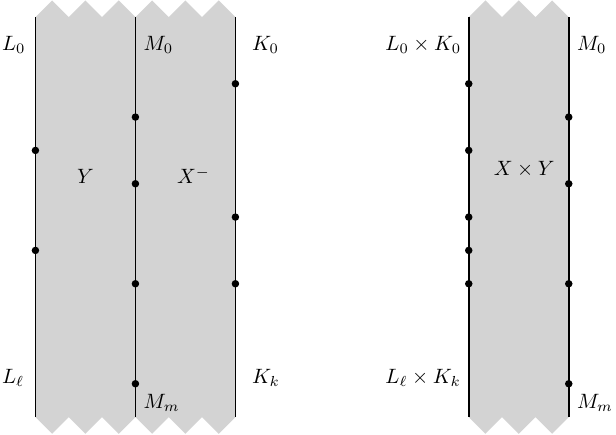}
\caption{Holomorphic maps used to define the $(\W_{X\times Y}^\pr,\W_X\otimes\W_Y)$-trimodule $\T$.}\label{figuretrimodule}
\end{figure}

To define the trimodule $\T$, we extend the abstract wrapped Floer setups underlying $\W_X$, $\W_Y$, and $\W_{X\times Y}^\pr$ as follows.
We declare a tuple $(M_m,\ldots,M_0,L_0,\ldots,L_\ell,K_0,\ldots,K_k)$ ($M_i\in\sL_{X\times Y}^\pr$, $L_i\in\sL_X$, $K_i\in\sL_Y$) to be composable iff when we write $M_i=L_i'\times K_i'$, the tuples $(L_m',\ldots,L_0',L_0,\ldots,L_k)$ and $(K_m',\ldots,K_0',K_0,\ldots,K_k)$ are both mutually transverse.
We then consider Floer data for strips as in Figure \ref{figuretrimodule} (see \cite[\S 5.3]{gpssectorsoc} for more details on the relevant moduli spaces of domains).
Such a strip can be thought of either as a pair of maps to $X^-$ and $Y$ as on the left of Figure \ref{figuretrimodule} or as a single map to $X\times Y$ as on the right (via folding the strip).
Floer data consists of compatible families of cylindrical almost complex structures on $X$ and $Y$ over each half of the strip, respectively ($s$-invariant in the thin parts of the strip, with respect to fixed universal strip-like coordinates).
We restrict the abstract Floer setups $\SSS_X$, $\SSS_Y$, and $\SSS_{X\times Y}^\pr$ to use negative strip-like coordinates \eqref{coordsII} which extend to biholomorphisms as in Remark \ref{striplikebottombiholospecial}, and we require that on the trimodule domain strips, we use the tautological strip-like coordinates at $s=\pm\infty$ (these assumptions ensure that gluing via the strip-like coordinates defines collars for the relevant moduli space of domains).
As a map to $X\times Y$, the folded strip should be holomorphic with respect to the product of these almost complex structures.
As a pair of maps to $X^-$ and $Y$, the left half (mapping to $Y$) should be holomorphic, and the right half (mapping to $X^-$) should be antiholomorphic.
The usual monotonicity and uniformly bounded geometry arguments from \cite[Proposition 3.19]{gpssectorsoc} apply to show that such (possibly broken) holomorphic strips map to a fixed compact subset of the target, and hence the (Gromov--Floer compactified) moduli spaces of such strips are indeed compact.
Choosing Floer data generically we obtain transversality, and counting the
zero-dimensional moduli spaces defines moduli counts
\begin{multline}
CF^\bullet(M_m,M_{m-1})\otimes\cdots\otimes CF^\bullet(M_1,M_0)\otimes CF^\bullet(M_0,L_0\times K_0)\\
\otimes CF^\bullet(L_0,L_1)\otimes\cdots\otimes CF^\bullet(L_{\ell-1},L_\ell)\otimes CF^\bullet(K_0,K_1)\otimes\cdots\otimes CF^\bullet(K_{k-1},K_k)\\
\to CF^\bullet(M_m,L_\ell\times K_k)[1-m-\ell-k]
\end{multline}
satisfying the identities to define an $\ainf$-trimodule.

Now suppose $P_X$, $P_Y$, and $P_{X\times Y}^\pr$ are decorated posets as in \S\ref{abstractfloersec} for the abstract wrapped Floer setups underlying $\W_X$, $\W_Y$, and $\W_{X\times Y}^\pr$.
Suppose further that all tuples $(M_m,\allowbreak\ldots,\allowbreak M_0,\allowbreak L_0,\allowbreak\ldots,\allowbreak L_\ell,\allowbreak K_0,\allowbreak\ldots,\allowbreak K_k)$ with $K_0>\cdots>K_k\in P_X$, $L_0>\cdots>L_\ell\in P_Y$, and $M_m>\cdots>M_0\in P_{X\times Y}^\pr$ are composable in the sense of the previous paragraph.
In this case, Floer data as above can be chosen for all such tuples by induction, and we thus obtain an $(\OO_{X\times Y}^\pr,\OO_X\otimes\OO_Y)$-trimodule $\Q$.
It is straightforward to choose $P_X$, $P_Y$, and $P_{X\times Y}^\pr$ to be countable, cofinite, and sufficiently wrapped, such that moreover the above transversality condition is satisfied (first choose $P_X$ and $P_Y$, and then incorporate the transversality condition into the inductive construction of the countable cofinite sufficiently wrapped $P_{X\times Y}^\pr$ from Lemma \ref{pcanwrap}).
Now localizing $\Q$ defines a $(\W_{X\times Y}^\pr,\W_X\otimes\W_Y)$-trimodule $\T={}_{(I_{X\times Y}^\pr)^{-1}}\Q_{I_X^{-1},I_Y^{-1}}$.

Let us calculate the cohomology of $\T$.
The cohomology of $\Q$ is evident: $H^\bullet\Q(M,L,K)=HF^\bullet(M,L\times K)$.
The cohomology of its localization $_{(I_{X\times Y}^\pr)^{-1}}\Q$ is calculated by the direct limit over $P_{X\times Y}^\pr$-wrapping sequences by Lemma \ref{pwrappingworks}.
Thus $H^\bullet{}_{(I_{X\times Y}^\pr)^{-1}}\Q(M,L,K)=HW^\bullet(M,L\times K)^\pr$ \eqref{wprodmorcohomologydirlim}.
By the equivalence \eqref{kunnethproducthomologycategories}, it follows that ${}_{(I_{X\times Y}^\pr)^{-1}}\Q$ is $I_X$- and $I_Y$-local on the right, so the localization map ${}_{(I_{X\times Y}^\pr)^{-1}}\Q\to{}_{(I_{X\times Y}^\pr)^{-1}}\Q_{I_X^{-1},I_Y^{-1}}$ is a quasi-isomorphism by \cite[Lemma 3.13]{gpssectorsoc}, and thus $H^\bullet\T(M,L,K)=HW^\bullet(M,L\times K)^\pr$ as well.

The trimodule $\T$ may be viewed as a functor
\begin{equation}\label{trimoduleasfunctor} \T: \W_X\otimes\W_Y\to\Mod\W_{X\times Y}^\pr, \end{equation}
and our next task is to show that it lands inside the representable modules $\W_{X\times Y}^\pr\subseteq\Mod\W_{X\times Y}^\pr$.

\begin{proposition}\label{kunnethrepresentable}
The functor $\T$ \eqref{trimoduleasfunctor} lands in representable modules, and the resulting functor (up to inverting quasi-equivalences)
\begin{equation} \label{eq: naive kunneth}
\W_X\otimes\W_Y\to\W_{X\times Y}^\pr
\end{equation}
agrees on $H^\bullet$ categories with the equivalence \eqref{kunnethproducthomologycategories}.
\end{proposition}

\begin{proof}
We first argue that the $\W_{X\times Y}^\pr$-module $\T(-,L,K)$ is represented by $L\times K\in\W_{X\times Y}^\pr$ (or, more precisely, an object isomorphic to it!).
Choose $L'\times K'\in\W_{X\times Y}^\pr$ (i.e.\ an element of our chosen $P_{X\times Y}^\pr$) transverse to $L\times K$, and choose an element of
\begin{equation}
H^0\T(L'\times K',L,K)=HW^0(L'\times K',L\times K)^\pr
\end{equation}
which is an isomorphism $L'\times K'\xrightarrow\sim L\times K$ in $H^\bullet\W_{X\times Y}^\pr$.
Multiplication with this class in $H^0\T(L'\times K',L,K)$ defines (up to homotopy) a map of $\W_{X\times Y}^\pr$-modules
\begin{equation}\label{representqiso}
\W_{X\times Y}^\pr(-,L'\times K')\to\T(-,L,K)
\end{equation}
which on cohomology is the isomorphism $HW^\bullet(-,L'\times K')^\pr=HW^\bullet(-,L\times K)^\pr$ given by composing with $L'\times K'\xrightarrow\sim L\times K$.
Thus $\T(-,L,K)$ is representable (note that the argument above depended only on the fact that the $H^\bullet\W_{X\times Y}^\pr$-module $H^\bullet\T(-,L,K)$ is representable; this argument applies to any $\ainf$-module, see Lemma \ref{representcohomology}).

To check the action of the resulting functor \eqref{eq: naive kunneth} on a given class in $HF^\bullet(L_1\times K_1,L_2\times K_2)$, note that such a class determines a map $\T(-,L_1,K_1)\to\T(-,L_2,K_2)$ (up to homotopy), and we just need to figure out where this class is sent under the representing quasi-isomorphisms \eqref{representqiso}.
We should thus consider the diagram
\begin{equation}
\begin{tikzcd}
\W_{X\times Y}^\pr(-,L_1'\times K_1')\ar[r]&\T(-,L_1\times K_1)\ar[d]\\
\W_{X\times Y}^\pr(-,L_2'\times K_2')\ar[r]&\T(-,L_2\times K_2)
\end{tikzcd}
\end{equation}
where the horizontal maps are multiplication by isomorphisms $L_1'\times K_1'\xrightarrow\sim L_1\times K_1$ and $L_2'\times K_2'\xrightarrow\sim L_2\times K_2$ in $H^\bullet\W_{X\times Y}^\pr$.
On cohomology, this diagram consists of the induced isomorphisms $HW^\bullet(-,L_1'\times K_1')^\pr=HW^\bullet(-,L_1\times K_1)^\pr$ and $HW^\bullet(-,L_2'\times K_2')^\pr=HW^\bullet(-,L_2\times K_2)^\pr$ on the top and bottom rows, respectively, and the map $HW^\bullet(-,L_1\times K_1)^\pr\to HW^\bullet(-,L_2\times K_2)^\pr$ given by multiplication by our chosen class in $HF^\bullet(L_1\times K_1,L_2\times K_2)$.
Thus the induced map $HW^\bullet(-,L_1'\times K_1')^\pr\to HW^\bullet(-,L_2'\times K_2')^\pr$ is also multiplication by this same class, which is enough (for example, by looking at where it sends the identity map of $L_1'\times K_1'$).
\end{proof}

\begin{lemma}\label{trimodulewelldefined}
The functor \eqref{eq: naive kunneth} is well defined up to quasi-isomorphism.
\end{lemma}

\begin{proof}
We are to show that the quasi-isomorphism class of the $(\W_{X\times Y}^\pr,\W_X\otimes\W_Y)$-trimodule $\T$ is independent of the choice of $P_X$, $P_Y$, $P_{X\times Y}^\pr$, and trimodule Floer data.

Let $(P_X,P_Y,P_{X\times Y}^\pr)$ and $(P_X',P_Y',P_{X\times Y}^{\pr\prime})$ be two triples equipped with trimodule Floer data.
Apply Lemma \ref{pcanwrap} to $P_X\sqcup P_X'$ and $P_Y\sqcup P_Y'$ to produce sufficiently wrapped countable cofinite decorated posets $P_X''$ and $P_Y''$, respectively.
Also construct a decorated poset $P_{X\times Y}^{\pr\prime\prime}$ by applying the construction of Lemma \ref{pcanwrap} to $P_{X\times Y}^\pr\sqcup P_{X\times Y}^{\pr\prime}$ as follows.
We add $\ZZ_{\geq 0}$ (ordered appropriately), we color it with elements of $P_{X\times Y}^\pr\sqcup P_{X\times Y}^{\pr\prime}$, and we label it accordingly with Lagrangians constructed out of cofinal sequences in wrapping categories.
The factorization property Definition \ref{abstractwrappedfloersetupdef}\ref{perturbation} (compare Remark \ref{pcanwrapcountable}) allows us to ensure that these new Lagrangians labelling $\ZZ_{\geq 0}$ satisfy the transversality conditions necessary to ensure that $(P_X,P_Y,P_{X\times Y}^\pr\sqcup\ZZ_{\geq 0})$, $(P_X',P_Y',P_{X\times Y}^{\pr\prime}\sqcup\ZZ_{\geq 0})$, and $(P_X'',P_Y'',\ZZ_{\geq 0})$ are all suitable for defining the trimodule.

Now choose trimodule Floer data for the triple $(P_X'',P_Y'',\ZZ_{\geq 0})$, and consider its restriction to $(P_X,P_Y,\ZZ_{\geq 0})$.
Then choose trimodule Floer data on $(P_X,P_Y,P_{X\times Y}^\pr\sqcup\ZZ_{\geq 0})$ restricting to the already fixed trimodule Floer data on $(P_X,P_Y,P_{X\times Y}^\pr)$ and on $(P_X,P_Y,\ZZ_{\geq 0})$.
With such trimodule Floer data, we have
\begin{equation}
(P_X,P_Y,P_{X\times Y}^\pr)\sim(P_X,P_Y,P_{X\times Y}^\pr\sqcup\ZZ_{\geq 0})\sim(P_X,P_Y,\ZZ_{\geq 0})\sim(P_X'',P_Y'',\ZZ_{\geq 0})
\end{equation}
where $\sim$ means that the resulting functors \eqref{eq: naive kunneth} are quasi-isomorphic.
By symmetry, we have $(P_X',P_Y',P_{X\times Y}^{\pr\prime})\sim(P_X'',P_Y'',\ZZ_{\geq 0})$ as well, so $(P_X,P_Y,P_{X\times Y}^\pr)\sim(P_X',P_Y',P_{X\times Y}^{\pr\prime})$ as desired.
\end{proof}

\subsection{Cylindrization} \label{kunnethcylindrization}

We will now construct a fully faithful functor
\begin{equation}
\W_{X\times Y}^\pr\to\W_{X\times Y}
\end{equation}
sending $L\times K$ to its cylindrization $L\tildetimes K$.
This functor is defined by pushing forward under an exact symplectomorphism which simultaneously cylindrizes a certain carefully chosen set of product Lagrangians used to define $\W_{X\times Y}^\pr$ (there is, of course, no symplectomorphism which simultaneously cylindrizes \emph{all} product Lagrangians).
So that Floer data can be pushed forward under arbitrary exact symplectomorphisms, we assume throughout this subsection the use of dissipative Floer data in the sense of \S\ref{noncyl} in the abstract wrapped Floer setups underlying $\W_{X\times Y}$ and $\W_{X\times Y}^\pr$.
We also tacitly assume all Lagrangians are equipped with a \emph{compactly supported} primitive of the restriction of the Liouville form, as is necessary to define cylindrizations of product Lagrangians as in \S\ref{productcylindrical} (products of such Lagrangians span all isomorphism classes in $\W_{X\times Y}^\pr$ by Lemma \ref{cylvanishinf}).

We begin by considering the effect of cylindrization of product Lagrangians on Floer cohomology.
Recall from \S\ref{productcylindrical} that the cylindrization $L\tildetimes K$ of a product Lagrangian $L\times K$ is its image under a certain exact symplectic isotopy (well defined up to contractible choice) supported in an arbitrarily small neighborhood of $(\partial_\infty L\times\cc_Y)\sqcup(\cc_X\times\partial_\infty K)$.
The cylindrization $L\tildetimes K$ is, in particular, well defined as an object of $\W_{X\times Y}$.

When $(L,L')$ and $(K,K')$ are disjoint at infinity and the isotopies cylindrizing $L\times K$ and $L'\times K'$ are taken to have sufficiently small support (hence, in particular, to have disjoint support), the cylindrizations $L\tildetimes K$ and $L'\tildetimes K'$ are disjoint at infinity.
Thus when $(L,L')$ and $(K,K')$ are transverse, their cylindrizations $L\tildetimes K$ and $L'\tildetimes K'$ are as well, and hence $HF^\bullet(L\tildetimes K,L'\tildetimes K')$ is well defined, due to the isotopy invariance isomorphisms on $HF^\bullet$ from Lemma-Definition \ref{isotopyinvariancedef}.
Moreover, since cylindrization is itself an exact Lagrangian isotopy, isotopy invariance in the dissipative context (Lemma-Definition \ref{isotopyinvariancedefdissipative}) implies that:

\begin{lemma}\label{cylindrizefloerequivalencehtpy}
For $(L,L')$ and $(K,K')$ transverse, there is a canonical equivalence $HF^\bullet(L\times K,L'\times K')=HF^\bullet(L\tildetimes K,L'\tildetimes K')$.
These equivalences are compatible with the isotopy invariance isomorphisms on both sides, and with Floer multiplication $\mu^2$.
\qed
\end{lemma}

We now compare continuation maps.
Lemma \ref{productpositive} guarantees that for appropriate choice of cylindrization data, the cylindrized product of small positive pushoffs of $L$ and $K$ is a small positive pushoff of the cylindrized product of $L$ and $K$.
It also follows that the resulting equivalence from Lemma \ref{cylindrizefloerequivalencehtpy}
\begin{equation}\label{endokunneth}
HF^\bullet(L^+\times K^+,L\times K)=HF^\bullet((L\tildetimes K)^+,L\tildetimes K)
\end{equation}
is an equivalence of algebras.
In particular, this identification respects units: $\1_{L\times K}=\1_{L\tildetimes K}$.

\begin{corollary}\label{cylindrizecontinuation}
For any pair of positive isotopies $L\leadsto L^w$ and $K\leadsto K^w$ the identification
\begin{equation}
HF^\bullet(L^w\times K^w,L\times K)=HF^\bullet(L^w\tildetimes K^w,L\tildetimes K)
\end{equation}
identifies the continuation map of $(L\leadsto L^w)\times(K\leadsto K^w)$ with the continuation map of $L\tildetimes K\leadsto L^w\tildetimes K^w$ (which is, up to contractible choice of cylindrization data, a positive isotopy by Lemma \ref{productpositive}).
\end{corollary}

\begin{proof}
This follows immediately from the definition of continuation maps in terms of units, the fact that the identifications in Lemma \ref{cylindrizefloerequivalencehtpy} respect multiplication, and the resulting fact that \eqref{endokunneth} respects units.
\end{proof}

Consider the functor $H^\bullet\F_{X\times Y}^\pr\to H^\bullet\W_{X\times Y}$ sending $L\times K$ to $L\tildetimes K$ and acting on morphisms for pairs $(L,L')$ and $(K,K')$ (not necessarily transverse) by sending $HF^\bullet(L\times K,L'\times K'):=HF^\bullet(L^+\times K^+,L'\times K')$ to $HF^\bullet(L^+\tildetimes K^+,L'\tildetimes K')$ using Lemma \ref{cylindrizefloerequivalencehtpy}.
This functor sends continuation maps to isomorphisms by Corollary \ref{cylindrizecontinuation}.
Thus by the universal property of localization Lemma \ref{HWlocalizationproperty}, it determines a unique (up to unique isomorphism) functor
\begin{equation}\label{kcylindrizecohomology}
H^\bullet\W_{X\times Y}^\pr\to H^\bullet\W_{X\times Y}.
\end{equation}

\begin{corollary}\label{cylff}
The functor \eqref{kcylindrizecohomology} is fully faithful.
\end{corollary}

\begin{proof}
We wish to show to be isomorphisms the maps
\begin{equation}\label{cylindrizewrapcohomology}
HW^\bullet(L_1\times K_1,L_2\times K_2)^\pr\to HW^\bullet(L_1\tildetimes K_1,L_2\tildetimes K_2),
\end{equation}
comprising the functor \eqref{kcylindrizecohomology}.
Choose cofinal wrappings $L_1^t$ and $K_1^t$, and use Propositions \ref{product wrappings cofinal}--\ref{product wrappings cofinal stopped} to choose cylindrization data defining $L_1^t\tildetimes K_1^t$ so that it is also a cofinal positive isotopy.
Choosing these generically allows enough transversality to define an isomorphism
\begin{equation}\label{cylindrizedirlimwrap}
\varinjlim_tHF^\bullet(L_1^t\times K_1^t,L_2\times K_2)\to\varinjlim_tHF^\bullet(L_1^t\tildetimes K_1^t,L_2\tildetimes K_2)
\end{equation}
using the isomorphisms from Lemma \ref{cylindrizefloerequivalencehtpy} and the agreement of continuation maps from Corollary \ref{cylindrizecontinuation}.
Now there is a natural map from \eqref{cylindrizedirlimwrap} to \eqref{cylindrizewrapcohomology}, which is an isomorphism by cofinality of $L_1^t$, $K_1^t$, and $L_1^t\tildetimes K_1^t$.
\end{proof}

Our final task is to lift \eqref{kcylindrizecohomology} to an $\ainf$-functor.

Consider a decorated poset $P$ for the abstract wrapped Floer setup underlying $\W_{X\times Y}^\pr$, together with a choice of \emph{cylindrization data} for every $L\times K\in P$ in the following sense.
Cylindrization data for $L\times K$ consists functions $(f_L,\phi_L,f_K,\phi_K)$ as in \eqref{deformedliouville} subject to the following support requirements.
The functions $\phi_L$ and $\phi_K$ must be supported inside \emph{specified} \emph{standard} neighborhoods $N_{\partial_\infty L}\subseteq X$ of $\partial_\infty L$ and $N_{\partial_\infty K}\subseteq Y$ of $\partial_\infty K$.
Moreover, as $L\times K$ ranges over all of $P$, the standard neighborhoods $N_{\partial_\infty L}\subseteq X$ are required to be disjoint and locally finite (and the same on $Y$).
Finally, the support of the products $f_L\phi_K$ and $\phi_Lf_K$ must be contained in specified disjoint neighborhoods of $\cc_X\times\partial_\infty Y$ and $\partial_\infty X\times\cc_Y$, respectively.
These conditions together guarantee that the deformation terms to the Liouville form \eqref{deformedliouville} have disjoint support and hence can be added simultaneously.
We thus obtain a single exact symplectic isotopy $\Phi$ which simultaneously cylindrizes all Lagrangians in $P$.
The pushforward $\Phi(P)$ is thus a decorated poset for the abstract wrapped Floer setup underlying $\W_{X\times Y}$ (note that Floer data remains dissipative under pushforward since dissipativity is symplectomorphism invariant).

Unfortunately, a given decorated poset $P$ for the abstract wrapped Floer setup underlying $\W_{X\times Y}^\pr$ need not admit cylindrization data in the above sense.
Indeed, the existence of cylindrization data implies, for instance, that for every $L\times K$ appearing in $P$, there exists a neighborhood of $\partial_\infty L$ which is disjoint from $\partial_\infty L'$ for every other $L'\times K'$ in $P$, a rather strong condition.
So, we need an argument to produce sufficiently wrapped decorated posets which admit cylindrization data.

\begin{lemma}\label{cancylindrizeposet}
There exists a sufficiently wrapped countable cofinite decorated poset $P$ together with cylindrization data as above, such that $P$ contains all isotopy classes of pairs $(L,K)$.
\end{lemma}

\begin{proof}
We appeal to the construction in Lemma \ref{pcanwrap}.
That is, we set $P=\ZZ_{\geq 0}$, which we color by isotopy classes of pairs $(L,K)$ so that each color appears infinitely often.
For each such color, we choose a cofinal sequence in the wrapping category $R_{L\times K}:=R_L\times R_K$, call it $L_0\times K_0\leadsto L_1\times K_1\leadsto\cdots$.
If $i_0^{L\times K}<i_1^{L\times K}<\cdots\in\ZZ_{\geq 0}$ denotes the sequence colored by $(L,K)$, we label $i_r^{L\times K}$ with $A_r$ for some choice of factorizations $L_{n_r}\times K_{n_r}\to A_r\to L_{n_{r+1}}\times K_{n_{r+1}}$ in $R_{L\times K}$ and indices $n_0<n_1<\cdots$, chosen by the following inductive procedure.
The factorization property allows us to choose $A_r$ so that it is transverse to all Lagrangians assigned previously to indices $<i_r^{L\times K}$, but it does not address the key condition that it be disjoint from the previously fixed standard neighborhoods of these Lagrangians.
To achieve this disjointness property, we use Lemma \ref{lem:pushoffcore}, which says that every Lagrangian admits cofinal wrappings disjoint at infinity from any fixed finite number of standard neighborhoods of Legendrians.
This means that for every index $n_r<\infty$, there exists a larger $n_{r+1}<\infty$ and a factorization $L_{n_r}\times K_{n_r}\to B_r\to L_{n_{r+1}}\times K_{n_{r+1}}$ in $R_{L\times K}$ in which $B_r$ satisfies the required disjointness property.
We now let $A_r$ be a perturbation of $B_r$.
\end{proof}

We now show how to obtain the desired $\ainf$-functor $\W_{X\times Y}^\pr\to\W_{X\times Y}$ lifting \eqref{kcylindrizecohomology} from any sufficiently wrapped countable decorated poset which admits cylindrization data.
Pushing forward $P$ under the symplectomorphism determined by the cylindrization data gives a decorated poset $\tilde P$ for $\W_{X\times Y}$.

There is hence a tautological isomorphism $\OO_P=\OO_{\tilde P}$.
The diagram
\begin{equation}
\begin{tikzcd}
H^\bullet\OO_P\ar[r,equal]\ar[d]&H^\bullet\OO_{\tilde P}\ar[d]\\
H^\bullet\F_{X\times Y}^\pr\ar[r]&H^\bullet\W_{X\times Y}
\end{tikzcd}
\end{equation}
commutes by definition of the bottom arrow.
As noted earlier, the bottom arrow factors uniquely through $H^\bullet\W_{X\times Y}^\pr$, so we can put this category in the lower left corner as well.
Thus morphisms in $H^0\OO_P$ which are sent to isomorphisms in $H^\bullet\W_{X\times Y}^\pr$ are sent to morphisms in $H^0\OO_{\tilde P}$ which are sent to isomorphisms in $H^\bullet\W_{X\times Y}$.
There is thus an induced map of localizations $\W_P\to W_{\tilde P}$.
Since $P$ is sufficiently wrapped and contains all isotopy classes of pairs of Lagrangians, we have $\W_P=\W_{X\times Y}^\pr$.
By including $\tilde P$ into a sufficiently wrapped countable cofinite decorated poset, we obtain a functor $\W_{\tilde P}\to\W_{X\times Y}$.
Combining these, we obtain a functor
\begin{equation}\label{kcylindrize}
\W_{X\times Y}^\pr\to\W_{X\times Y}.
\end{equation}
On cohomology categories, it obviously coincides with \eqref{kcylindrizecohomology} (factor any morphism in $H^\bullet\W_{X\times Y}^\pr=H^\bullet\W_P$ into a composition of morphisms in $\OO_P$ and their inverses using Lemma \ref{pwrappingworks}).
In particular, it is fully faithful by Corollary \ref{cylff}.

\begin{remark}
Note that we have \emph{not} shown the $\ainf$-functor \eqref{kcylindrize} to be independent of the choice of $P$ and cylindrization data (although we certainly expect it to be).
What makes this independence difficult is that the cylindrization functor requires choosing a single isotopy $\Phi$ for all Lagrangians in $P$.
Independence would presumably be easy given an adaptation of the above argument in which we can choose different isotopies for each of the Lagrangians in $P$.
The most straightforward way of doing this leads to considering the pseudo-holomorphic curve equation with Hamiltonian term, to which the dissipative setup of \S\ref{noncyl} does not immediately apply (but one could certainly try to generalize it).
\end{remark}

\begin{proof}[Proof of Theorem \ref{kunneth}]
The K\"unneth functor for stopped Liouville manifolds is the composition of the equivalence $\W_X\otimes\W_Y\xrightarrow\sim\W_{X\times Y}^\pr$ from Proposition \ref{kunnethrepresentable} and the functor $\W_{X\times Y}^\pr\hookrightarrow\W_{X\times Y}$ \eqref{kcylindrize}, which is fully faithful by Corollary \ref{cylff}.
This induces a K\"unneth functor for Liouville sectors using Proposition \ref{prop: product sectors versus stopped} and Corollary \ref{horizsmallstop}.
\end{proof}

\subsection{Stabilization by \texorpdfstring{$T^\ast[0,1]$}{T*[0,1]} or \texorpdfstring{$(\CC,\{\pm\infty\})$}{(C,\{+-infty\})}}\label{stabilizationsubsection}

The K\"unneth functor in the form of Theorem \ref{kunneth} proved in the previous subsections gives rise, as a special case, to functors
\begin{align}
\W(X)&\hookrightarrow\W(X\times T^\ast[0,1]),\\
\W(X,\f)&\hookrightarrow\W((X,\f)\times(\CC,\{\pm\infty\})),
\end{align}
(the former for Liouville sectors, the latter for stopped Liouville manifolds).
To define these functors from the K\"unneth bilinear functors
\begin{align}
\W(X)\otimes\W(T^\ast[0,1])&\hookrightarrow\W(X\times T^\ast[0,1]),\\
\W(X,\f)\otimes\W(\CC,\{\pm\infty\})&\hookrightarrow\W((X,\f)\times(\CC,\{\pm\infty\})),
\end{align}
it suffices to fix functors
\begin{align}
\label{ZtoTstar}\ZZ&\to\W(T^\ast[0,1]),\\
\label{ZtoCstop}\ZZ&\to\W(\CC,\{\pm\infty\}),
\end{align}
where $\ZZ$ denotes the $\ainf$-category with a single object $\ast$ with endomorphism algebra $\ZZ$, sending this object to $\fiber\in\W(T^\ast[0,1])$ and $i\RR\in\W(\CC,\{\pm\infty\})$, respectively.
The construction of such functors \eqref{ZtoTstar}--\eqref{ZtoCstop} can be
obtained by sending $\ast$ to a fixed representative $K$ of $\fiber \in
\W(T^\ast[0,1])$ (respectively of $i \RR \in\W(\CC,\{\pm\infty\})$), and the
generator $1 \in \ZZ$ to the strict unit in $\Hom(K,K)$ (note that our construction
of partially wrapped Fukaya categories produces categories which are strictly unital).

\subsection{Pulling back the diagonal}\label{diagonalsection}

In this section we prove Proposition \ref{kunnethdiagonal}, which identifies the pullback of (an appropriate perturbation of) the diagonal under the K\"unneth functor with the diagonal bimodule.

Up until now, it has sufficed in the foundations of wrapped Fukaya categories \S\S\ref{abstractfloersec}--\ref{wrapdefsec} to work with sufficiently wrapped decorated posets.
By contrast, our arguments here will require decorated posets which are sufficiently bi-wrapped in the sense of Remark \ref{opposites} (and thus we will be using the correspondingly the bi-wrapped variant of Lemma \ref{pcanwrap} throughout this subsection).
Informally speaking, that means that, rather than just `wrapping the first factor forwards', we will also need to `wrap the second factor backwards'.
This is not so surprising given that to even state Proposition \ref{kunnethdiagonal} requires knowing that $\W(X^-,\f)=\W(X,\f)^\op$ (Lemma \ref{oppositesw}), an assertion which manifestly requires a comparison between wrapping the first and second factors.

Given a stopped Liouville manifold $(X,\f)$, we let $\Delta=\Delta_X\subseteq X^-\times X$ denote the geometric diagonal; it is Lagrangian.
Grading/orientation data for the diagonal is defined following \cite[Remark 3.0.7]{wehrheimwoodwardquilted}.
When $\f\ne\varnothing$, the geometric diagonal $\Delta$ runs into the product stop at infinity, hence is not an object of $\W_{X^-\times X}$ (recall our running abbreviation throughout this section $\W_{X^-\times X}=\W((X^-,\f)\times(X,\f))$).
To be more precise, the boundary at infinity of $\Delta$ is given by
\begin{equation}
\partial_\infty\Delta_X=0\times\Delta_{\partial_\infty X}\subseteq\RR\times\partial_\infty X^-\times\partial_\infty X\subseteq\partial_\infty(X^-\times X).
\end{equation}
Unless $\f=\varnothing$, this intersects the product stop, which inside $\RR\times\partial_\infty X^-\times\partial_\infty X$ is given by $\RR\times\f\times\f$.

A \emph{positive/negative pushoff} of $\Delta$ is a positive/negative cylindrical isotopy $\Delta_r$ starting at $\Delta_0=\Delta$ and defined for small $r\geq 0$, such that $\Delta_r$ is disjoint from the product stop for all $r>0$.
We will also abuse terminology and refer to the single Lagrangian $\Delta^\pm=\Delta_r$ for any $r>0$ (rather than the entire isotopy) as a positive/negative pushoff of $\Delta$.

Any exact cylindrical symplectomorphism $\Phi:X\to X$ has a graph $\Gamma_\Phi=\{(p,\Phi(p))\in X^-\times X\}\subseteq X^-\times X$ which is an exact cylindrical Lagrangian.
The graph of the identity is the diagonal $\Delta$, and every exact cylindrical perturbation of $\Delta$ is the graph of an exact cylindrical perturbation of the identity.
The graph $\Gamma_\Phi\subseteq X^-\times X$ is disjoint from the product stop precisely when $\f\cap\Phi(\f)=\varnothing$.
Thus a positive/negative pushoff of $\Delta$ is the same (up to contractible choice) as a positive/negative contact isotopy $\Psi_r$ of $\partial_\infty X$ with $\f\cap\Psi_r(\f)=\varnothing$ for $r>0$.
There may be many inequivalent ways (or no way) of positively/negatively displacing an arbitrary closed set $\f$, however we do have the following:

\begin{lemma}\label{diagonalpushoff}
A choice of ribbon for $\f$ determines, canonically up to contractible choice, positive/negative pushoffs $\Delta^\pm\subseteq X^-\times X$ disjoint from the product stop at infinity.
\end{lemma}

\begin{proof}
As discussed above, it is equivalent to show that there is a canonical up to contractible choice way of positively/negatively displacing the stop $\f\subseteq\partial_\infty X$.
Let $F_0\subseteq\partial_\infty X$ be a ribbon for $\f$ along with a contact form $\alpha$ defined in its neighborhood for which $(F_0,\alpha|_{F_0})$ is a Liouville domain.
The Reeb flow of $\alpha$ is transverse to $F_0$ (this is equivalent to non-degeneracy of $d\alpha|_{F_0}$), and thus provides the desired positive/negative displacement of $\f$.
Given $F_0$, the space of allowable $\alpha$ is convex, hence contractible.
\end{proof}

Henceforth the notation $\Delta^\pm$ shall refer to any fixed choice of positive/negative pushoff of $\Delta$.
Since they are disjoint from the product stop, they are well defined objects $\Delta^\pm\in\W_{X^-\times X}$.
When $\f=\varnothing$, of course $\Delta^+=\Delta^-=\Delta$ in $\W_{X^-\times X}$.

We now define modules $CW^\bullet(\Delta,-)$ and $CW^\bullet(\Delta^\pm,-)$ on the category $\W_{X^-\times X}$ (the latter two will be the Yoneda modules of $\Delta^\pm\in\W_{X^-\times X}$).
For any decorated poset $P$ for the abstract wrapped Floer setup underlying $\W_{X^-\times X}$, we can add an object $*$ greater than everything to obtain a poset $\{*>P\}$.
We label $*$ with $\Delta$ or $\Delta^\pm$, and we extend Floer data.
Everything is cylindrical, so the abstract wrapped Floer setup from \S\ref{wrapdefsec} suffices.
We do need to add the restriction that $(\Delta,L_0,\ldots,L_k)$ be mutually transverse for totally ordered subsets of $\{*>P\}$ starting at $*$.
Choosing $P$ which is sufficiently bi-wrapped (using the bi-wrapped variant of Lemma \ref{pcanwrap} from Remark \ref{opposites}) and localizing yields a category with semi-orthogonal decomposition $\langle\ZZ,\W_{X^-\times X}\rangle$ encoding the relevant right $\W_{X^-\times X}$-module.
Given any two such $P$ and $P'$, we can include $P\sqcup P'$ into a sufficiently bi-wrapped $P''$, which shows that the resulting modules are well-defined up to quasi-isomorphism (compare Lemma \ref{trimodulewelldefined}).

The negative isotopy $\Delta\leadsto\Delta^-$ has an associated continuation map in $HF^0(\Delta,\Delta^-)$ from Lemma-Definition \ref{continuationdef}.
This continuation map induces a map of $\W_{X^-\times X}$-modules
\begin{equation}
CW^\bullet(\Delta^-,-)\to CW^\bullet(\Delta,-)
\end{equation}
(define both modules simultaneously using Floer data for the poset $\{\Delta>\Delta^->P\}$).

\begin{lemma}\label{diagonalperturbationworks}
The continuation map $CW^\bullet(\Delta^-,-)\to CW^\bullet(\Delta,-)$ is a quasi-isomorphism of $\W_{X^-\times X}$-modules.
\end{lemma}

\begin{proof}
We just need to check that
\begin{equation}
HW^\bullet(\Delta^-,L)\to HW^\bullet(\Delta,L)
\end{equation}
is an isomorphism for cylindrical $L\subseteq X^-\times X$, where both sides are defined by negatively wrapping the second factor.
Let $\Delta_r$ be a negative isotopy beginning with $\Delta_0=\Delta$, so $\Delta_r$ is a negative pushoff $\Delta^-$ for every $r>0$.
Multiplication by the continuation map in $HF^0(\Delta_s,\Delta_r)$ defines an isomorphism $HW^\bullet(\Delta_r,L)\to HW^\bullet(\Delta_s,L)$ for $0<s<r$.
It thus suffices to show that
\begin{equation}
\varinjlim_{r\downarrow 0}HW^\bullet(\Delta_r,L)\to HW^\bullet(\Delta,L)
\end{equation}
is an isomorphism.
Crucially, direct limits commute with direct limits, so this map is the direct limit over negative wrappings $L^{-w}$ of $L$ of
\begin{equation}
\varinjlim_{r\downarrow 0}HF^\bullet(\Delta_r,L^{-w})\to HF^\bullet(\Delta,L^{-w}).
\end{equation}
For fixed $L^{-w}$ disjoint from $\partial_\infty\Delta$ at infinity, the maps $HF^\bullet(\Delta_r,L^{-w})\to HF^\bullet(\Delta,L^{-w})$ are isomorphisms for sufficiently small $r>0$, so we are done (as such $L^{-w}$ are certainly cofinal in the negative wrapping category of $L$).
\end{proof}

We can also define a module $CW^\bullet(\Delta,-)$ over the category $\W_{X^-\times X}^\pr$ by the same method as above, this time using product of cylindrical Floer data as used to define $\W_{X^-\times X}^\pr$.
To show that such Floer data provides compactness, we observe that product of cylindrical almost complex structures are cylindrical near $\partial_\infty\Delta$, hence are dissipative for $\Delta$ (see \S\ref{noncyl}).
Dissipativity of a pair $(\Delta,L\times K)$ with respect to product of cylindrical almost complex structures follows from a lower bound on the distance between $L$ and $K$ near infinity, which for $L$ and $K$ cylindrical is equivalence to compactness of $L\cap K$.
This is a generic condition, hence poses no issue.

\begin{lemma}\label{diagonalcylin}
For suitable choices of Floer data, the cylindrization functor $\W_{X^-\times X}^\pr\to\W_{X^-\times X}$ from \S\ref{kunnethcylindrization} pulls back the $\W_{X^-\times X}$-module $CW^\bullet(\Delta,-)$ to the $\W_{X^-\times X}^\pr$-module of the same name.
\end{lemma}

\begin{proof}
Observe that we could define these modules more generally using dissipative Floer data.
Using the inductive procedure in Remark \ref{opposites} and Lemma \ref{cancylindrizeposet}, we can build a sufficiently bi-wrapped decorated poset $P$ for $\W_{X^-\times X}^\pr$ which admits cylindrization data.
Further, take $P$ transverse to $\Delta$ so that we can build the $\W_{X^-\times X}^\pr$-module $CW^\bullet(\Delta,-)$ using Floer data for $\{\Delta>P\}$.
Now push forward the Floer data under the cylindrizing isotopy which defines the cylindrization functor, and include the resulting pushed forward poset into a sufficiently wrapped poset for $\W_{X^-\times X}$, and extend the pushed forward Floer data to define the $\W_{X^-\times X}$-module $CW^\bullet(\Delta,-)$.
Since the cylindrizing isotopy is supported in a neighborhood of $(\cc_{X^-}\times\partial_\infty X)\sqcup(\partial_\infty X^-\times\cc_X)$, it is, in particular, supported away from $\partial_\infty\Delta$.
\end{proof}

\begin{proposition}\label{diagonaltrimodule}
The pullback of the $\W_{X^-\times X}^\pr$-module $CW^\bullet(\Delta,-)$ under the functor $\W_{X^-}\otimes\W_X\to\W_{X^-\times X}^\pr$ from \S\ref{kunnethtrimodule} is the diagonal bimodule of $\W_X$.
\end{proposition}

\begin{proof}
Let us denote the functor from \S\ref{kunnethtrimodule} by $k:\W_{X^-}\otimes\W_X\to\W_{X^-\times X}^\pr$.
It is defined by the diagram
\begin{equation}
\begin{tikzcd}
\W_{X^-}\otimes\W_X\ar[r,"k"]\ar[rd,"\T"]&\W_{X^-\times X}^\pr\ar[d,"{Z\mapsto\Hom(-,Z)}"]\\
{}&\Mod\W_{X^-\times X}^\pr
\end{tikzcd}
\end{equation}
or, symbolically, $\T(-,L,K)=\W_{X^-\times X}^\pr(-,k(L,K))$.
In other words, the pullback under $k$ of any Yoneda module $\Hom(M,-)$ over $\W_{X^-\times X}^\pr$ is $\T(M,-,-)$.

We would like to show that the pullback under $k$ of $CW^\bullet(\Delta,-)$ is $\T(\Delta,-,-)$.
Note that this does \emph{not} follow from taking $M=\Delta$ above, since $\Delta$ is not an object of $\W_{X^-\times X}^\pr$ (which means moreover that $\T(\Delta,-,-)$ does not even have a definition \emph{a priori}).

To resolve this difficulty, we consider the category ${}^\Delta\W_{X^-\times X}^\pr$ with semi-orthogonal decomposition $\langle\ZZ,\W_{X^-\times X}^\pr\rangle$ encoding the $\W_{X^-\times X}^\pr$-module $CW^\bullet(\Delta,-)$ (recall that defining such a category is how this module was defined).
We extend the trimodule $\T$ to a $({}^\Delta\W_{X^-\times X}^\pr,\W_{X^-}\otimes\W_X)$-trimodule ${}^\Delta\T$ as follows.
We define ${}^\Delta\Q$ by counting the same pseudo-holomorphic curves as before, using product of cylindrical Floer data (note that we have already seen above that pairs $(\Delta,L\times K)$ are dissipative in this context provided $L\cap K$ is compact).
We now fix sufficiently bi-wrapped posets $P_{X^-}$, $P_X$, and $P_{X^-\times X}^\pr$, and we define ${}^\Delta\T$ to be the localization ${}_{(I_{X^-\times X}^\pr)^{-1}}({}^\Delta\Q)_{I_{X^-}^{-1},I_X^{-1}}$.
The cohomology of ${}^\Delta\Q$ is simply Floer cohomology.
Localizing ${}^\Delta\Q$ on the right by $I_{X^-}$ and $I_X$ is calculated by negatively wrapping the latter two arguments of ${}^\Delta\Q$ by Lemma \ref{pwrappingworks} since $P_{X^-}$ and $P_X$ are sufficiently op-wrapped.
The result is wrapped Floer cohomology $HW^\bullet(M,L\times K)^\pr$ defined by negatively wrapping $L$ and $K$.
Further localizing on the left by $C_{X^-\times X}^\pr$ does nothing to the cohomology since $HW^\bullet(-,L\times K)^\pr$ is left $I_{X^-\times X}^\pr$-local.
Now ${}^\Delta\T(-,L,K)$ is represented by $L\times K$ by the argument from Proposition \ref{kunnethrepresentable} (recall that this argument involved only cohomology).
We thus have a diagram
\begin{equation}
\begin{tikzcd}
\W_{X^-}\otimes\W_X\ar[r,"{}^\Delta k"]\ar[rd,"{}^\Delta\T"]&{}^\Delta\W_{X^-\times X}^\pr\ar[d,"{Z\mapsto\Hom(-,Z)}"]\\
{}&\Mod{}^\Delta\W_{X^-\times X}^\pr
\end{tikzcd}
\end{equation}
for some functor ${}^\Delta k$.
By definition, we have $({}^\Delta k)^*CW^\bullet(\Delta,-)={}^\Delta\T(\Delta,-,-)$.
We are interested instead in $k^*CW^\bullet(\Delta,-)$.

To relate $k^*CW^\bullet(\Delta,-)$ with ${}^\Delta\T(\Delta,-,-)$, we consider the maps
\begin{equation}
\begin{tikzcd}[row sep=small]
CW^\bullet(\Delta,-)\otimes_{\W_{X^-\times X}^\pr}CW^\bullet(-,k(-,-))\ar[d,equal]\ar[r,"\sim"]&CW^\bullet(\Delta,k(-,-))\\
CW^\bullet(\Delta,-)\otimes_{\W_{X^-\times X}^\pr}\T(-,-,-)\ar[d]\\
CW^\bullet(\Delta,-)\otimes_{{}^\Delta\W_{X^-\times X}^\pr}{}^\Delta\T(-,-,-)\ar[r,"\sim"]&{}^\Delta\T(\Delta,-,-)
\end{tikzcd}
\end{equation}
where the horizontal maps are quasi-isomorphisms by \cite[Lemma 3.7]{gpssectorsoc}.
Since $\T$ is the restriction of ${}^\Delta\T$, which is represented by ${}^\Delta k$, which lands in the full subcategory $\W_{X^-\times X}^\pr\subseteq{}^\Delta\W_{X^-\times X}^\pr$, the vertical map above has the form
\begin{equation}
\C(X,-)\otimes_\A\C(-,K)\to\C(X,-)\otimes_\C\C(-,K)
\end{equation}
for $\A\subseteq\C$ a full subcategory containing $K$, and such a map is a quasi-isomorphism since both sides map quasi-isomorphically to $\C(X,K)$ by \cite[Lemma 3.7]{gpssectorsoc}.

It remains to show that ${}^\Delta\T(\Delta,-,-)$ is the diagonal bimodule of $\W_X$, which is a variant of the standard fact that one can erase diagonal seam conditions when counting holomorphic quilts.
As shown above, the localization map ${}^\Delta\Q(\Delta,-,-)_{I_{X^-}^{-1},I_X^{-1}}\to{}^\Delta\T(\Delta,-,-)$ is a quasi-isomorphism, so it suffices to work with the former.
Now the decorated posets $P_{X^-}$ and $P_X$ together with the bimodule ${}^\Delta\Q(\Delta,-,-)$ define an $\ainf$-category $\langle\OO_{P_{X^-}}^\op,\OO_{P_X}\rangle_{{}^\Delta\Q(\Delta,-,-)}$ (semi-orthogonal gluing along the bimodule).
The crucial observation is that
\begin{equation}
\langle\OO_{P_{X^-}}^\op,\OO_{P_X}\rangle_{{}^\Delta\Q(\Delta,-,-)}=\OO_{\{P_{X^-}^\op>P_X\}}
\end{equation}
for certain Floer data decorating $\{P_{X^-}^\op>P_X\}$ extending the decorations on $P_{X^-}$ and $P_X$ (namely by taking the Floer data used to define ${}^\Delta\Q(\Delta,-,-)$, observing that the middle seam on the left of Figure \ref{figuretrimodule} can simply be eliminated since it is labelled with the diagonal, being careful of course to use Floer data for ${}^\Delta\Q(\Delta,-,-)$ so that the resulting Floer data on the `unseamed' strip is smooth!); this is where the choice of grading/orientation data for $\Delta$ is used, compare \cite[Lemma 3.0.12]{wehrheimwoodwardquilted}.
This identification is compatible with localization by $I_{X^-}$ and $I_X$, that is we have
\begin{equation}
\langle\W_{P_{X^-}}^\op,\W_{P_X}\rangle_{{}^\Delta\Q(\Delta,-,-)_{I_{X^-}^{-1},I_X^{-1}}}=\OO_{\{P_{X^-}^\op>P_X\}}[I_{X^-}^{-1},I_X^{-1}].
\end{equation}
By including $\{P_{X^-}^\op>P_X\}$ into something sufficiently wrapped, we obtain a map from the left hand side to $\W_X$.
The restriction of this map to $\W_{P_X}$ is the `identity', as is its restriction to $\W_{P_{X^-}}^\op$.
Moreover, its restriction to the bimodule of morphisms from the latter to the former is a quasi-isomorphism since both domain and target are wrapped Floer cohomology.
This identifies ${}^\Delta\Q(\Delta,-,-)_{I_{X^-}^{-1},I_X^{-1}}$ with the diagonal bimodule of $\W_X$, so we are done.
\end{proof}

\begin{proof}[Proof of Proposition \ref{kunnethdiagonal}]
Combine Lemmas \ref{diagonalperturbationworks} and \ref{diagonalcylin} with Proposition \ref{diagonaltrimodule}.
\end{proof}

\subsection{Fukaya--Seidel categories}\label{fscomparison}

In this subsection, we explain how the methods of \S\ref{kunnethcylindrization} can be applied without much modification to compare models of the so-called \emph{Fukaya--Seidel category}.

Throughout this paper, we use the term `Fukaya--Seidel category' interchangeably with `partially wrapped Fukaya category' for stopped Liouville manifolds $(E,\f)$ in which the stop, at least morally, takes the form $\f=\{-\infty\}\times\cc_F$ for some chart $\CC_{\leq 0}\times F\subseteq E$ which extends to an exact symplectic fibration $\pi:E\to\CC$.
This `cylindrical' model from \S\ref{wrapdefsec} (so-called since it uses cylindrical Lagrangians) is, however, superficially rather different from the various constructions termed the `Fukaya--Seidel category' in the literature \cite[(15a)]{seidelbook}\cite[\S 2]{maydanskiyseidel}\cite{girouxpardon}\cite[(5b)]{seidellefschetzvi} which rely instead on `thimble-like' Lagrangians.
We will not attempt to summarize these definitions, nor to formulate a general framework encompassing all of them.
Rather, we will consider just a certain very simple setup, leaving it to the reader to make the necessary adjustments (not necessarily all that trivial) in their geometric setup of interest.

We fix a Liouville manifold $E$ and a proper codimenion zero Liouville embedding $\sigma:F\times\CC_{\Re\geq 0}\hookrightarrow E$.
We will consider the stop $\f=\cc_F\times\{i\infty\}\subseteq\partial_\infty(F\times\CC_{\Re\geq 0})\subseteq\partial_\infty E$.

\begin{definition}
A Lagrangian $L\subseteq E$ is called \emph{thimble-like} (with respect to $\sigma$) when outside of a compact subset of $E$ it coincides with a disjoint union $\bigsqcup_{\theta\in\Theta}L_\theta\times[e^{i\theta}\RR_{\geq 0}]$ for some finite subset $\Theta=\Theta_L\subseteq(-\pi,\pi)=(\partial_\infty\CC_{\Re\geq 0})^\circ$ and non-empty Lagrangians $L_\theta\subseteq F$.
\end{definition}

A thimble-like Lagrangian $L$ whose fibers $L_\theta$ are exact has a cylindrization $\tilde L$, since the definition of cylindrization in \S\ref{productcylindrical} is a local operation which we can just do in the product chart $F\times\CC_{\Re\geq 0}$.

We now form an abstract wrapped Floer setup using thimble-like Lagrangians (similar to that in \S\ref{Wprod}) and dissipative almost complex structures.
The exact symplectic manifold $E$ is dissipative since it is Liouville.
Exact thimble-like Lagrangians are dissipative since they are isotopic to cylindrical Lagrangians (namely their cylindrizations), which are dissipative.
Pairs of thimble-like Lagrangians with disjoint sets of asymptotic angles are dissipative since they can be simultaneously cylindrized.
(One is of course free to use dissipative almost complex structures which in addition satisfy other nice properties, such as a maximum principle for pseudo-holomorphic curves, provided one proves they exist.)
This gives an abstract Floer setup following Definition \ref{geomtoabstract}, in which a tuple $(L_0,\ldots,L_k)$ is composable iff it is mutually transverse and the sets of asymptotic angles $\Theta_{L_0},\ldots,\Theta_{L_n}$ are disjoint.

Isotopy invariance Lemma-Definition \ref{isotopyinvariancedefdissipative} allows us to define $hF^\bullet(L,K)$ for arbitrary (not necessarily transverse) pairs of thimble-like Lagrangians as in \S\ref{wrapdefsec}, where $L^+$ is defined by positively perturbing $\Theta_L$ (e.g.\ using a rotational Hamiltonian on $\CC$, restricted to $\CC_{\Re\geq 0}$, and extended to the rest of $E$ arbitrarily).
The algebras $HF^\bullet(L,L)$ are identified with $HF^\bullet(\tilde L,\tilde L)$ by isotopy invariance, and hence they are unital (note that unitality is a \emph{property}, and one can almost certainly prove it without appealing to cylindrization).
Now Lemma-Definition \ref{continuationdef} defines continuation maps for positive isotopies of thimble-like Lagrangians (an isotopy of thimble-like Lagrangians is deemed positive when the set of asymptotic angles $\Theta$ moves in the positive direction inside $\partial_\infty\CC_{\Re\geq 0}$).

We claim that the result is an abstract wrapped Floer setup.
We define the wrapping category of a thimble-like Lagrangian as in \S\ref{wrappingcatsec}, namely as a slice category in the category whose objects are thimble-like Lagrangians and whose morphisms are positive isotopies modulo deformation rel endpoints.
Wrapped Floer cohomology (by definition the direct limit over wrapping) $HW^\bullet(L,K)$ is given by $HF^\bullet(L^w,K)$ for any positive isotopy $L\leadsto L^w$ with the property that all elements of $\Theta_{L^w}$ lie above $\Theta_K$ (further positive isotopy of $L^w$ is then necessarily disjoint from $K$ at infinity, hence leaves $HF^\bullet(L^w,K)$ invariant by isotopy invariance).
Right locality now follows as in Lemma \ref{wraprightlocal}.
We have thus defined an abstract wrapped Floer setup, and we denote its wrapped Fukaya category by $\FS(E,\sigma)$.

\begin{proposition}
There is a canonical fully faithful embedding $\FS(E,\sigma)\hookrightarrow\W(E,\f)$.
\end{proposition}

\begin{proof}
Apply the argument of the previous subsection \S\ref{kunnethcylindrization} in the product chart $F\times\CC_{\Re\geq 0}\subseteq E$.
\end{proof}

It often happens that $\W(E,\f)$ is generated by cylindrizations of thimble-like Lagrangians, making the embedding $\FS(E,\sigma)\hookrightarrow\W(E,\f)$ a pre-triangulated equivalence.
For example, this is the case when $(E,\f)$ is a Lefschetz fibration by Corollaries \ref{lefschetzgeneration} and \ref{lefschetzgenerationII} (modulo a comparison, similar to \cite[\S 6.2]{girouxpardon}, between the cylindrical cocores which we have been calling `Lefschetz thimbles' and the thimble-like `true' Lefschetz thimbles).

\section{Generation by cocores and linking disks}\label{generationsection}

\subsection{Products of cocores and linking disks}\label{cocorediskkunneth}

For the purpose of proving Theorem \ref{generation}, we require an understanding of how cocores and linking disks are transformed under taking products.

Obviously, for Weinstein manifolds $X$ and $Y$, the product $X\times Y$ is again Weinstein, and the product of a cocore in $X$ and a cocore in $Y$ is a cocore in $X\times Y$.
Moreover, every cocore in $X\times Y$ is of this form.
Note that cocores are strongly exact (the Liouville form vanishes identically on them) and hence cylindrization need not be discussed.
We now wish to generalize this discussion to stopped Weinstein manifolds.

Let $(X,\f)$ and $(Y,\g)$ be stopped Weinstein manifolds satisfying the hypothesis of Theorem \ref{generation}, fixing decompositions $\f=\f^\subcrit\cup\f^\crit$ and $\g=\g^\subcrit\cup\g^\crit$.
The product $X\times Y$ is a Weinstein manifold, and the product stop
\begin{equation}
\h:=(\cc_X\times\g)\cup(\f\times\g\times\RR)\cup(\f\times\cc_Y)
\end{equation}
is mostly Legendrian, admitting a natural decomposition $\h=\h^\subcrit\cup\h^\crit$.
To see this, first write $\cc_X=\cc_X^\subcrit\cup\cc_X^\crit$, where $\cc_X^\subcrit$ (resp.\ $\cc_X^\crit$) is the union of the cores of the subcritical (resp.\ critical) handles (and similarly for $\cc_Y$).
We may thus define
\begin{equation}
\h^\crit:=(\cc_X^\crit\times\g^\crit)\cup(\f^\crit\times\g^\crit\times\RR)\cup(\f^\crit\times\cc_Y^\crit),
\end{equation}
which is an open subset of $\h$.
Clearly $\h^\crit$ is a (locally closed) Legendrian submanifold (using the fact that the cocores of the critical handles of $X$ and $Y$ are disjoint at infinity from $\f$ and $\g$, respectively), and one may also check that $\h^\subcrit:=\h\setminus\h^\crit$ is closed and is a countable union of locally closed isotropic submanifolds.
Note also that cocores of $X\times Y$, being cocores of $X$ times cocores of $Y$, are disjoint from the product stop $\h$.

Let us now compare products of cocores/linking disks of $(X,\f)$ and $(Y,\g)$ with the cocores/linking disks of $(X\times Y,\h)$.
Obviously, products of cocores are cocores, and this accounts for all cocores in the product, just as in the case without stops.

Let us now consider products of cocores and linking disks.
Consider the linking disk inside $X$ at a given point of $\f^\crit$.
Recall from \S\ref{linkingdisksection} that this linking disk admits the following description in terms of Weinstein handles.
On the Lagrangian cylinder $\RR\times\f^\crit\subseteq X$, we introduce a pair of cancelling Weinstein handles of indices $0$ and $1$ (more precisely, coupled $(0,0)$- and $(1,1)$-handles), and the linking disk is a Lagrangian plane invariant under the Liouville flow intersecting $\RR\times\f^\crit$ precisely at the critical point of index zero.
The introduction of these handles increases the core $\cc_X$, however note that the new points of $\cc_X$ are entirely contained in $\RR\times\f$, so in particular the product stop $\h$ remains the same.
We now take this deformation of Liouville forms on $X$ and multiply with $Y$, noting that by the previous sentence, the product stop $\h$ remains the same.
The result is a deformation of the Liouville form on $X\times Y$ which introduces a pair of cancelling critical points of index $k$ and $k+1$ for every Weinstein $k$-handle of $Y$.
When $k$ is the critical index for $Y$, these critical points lie on $\RR\times\f^\crit\times\cc_Y^\crit$.
It follows that the product of a linking disk of $\f^\crit$ and a cocore of $Y$ is the linking disk of the corresponding component of $\f^\crit\times\cc_Y^\crit$.

Finally, let us consider products of linking disks and linking disks.
We consider the same description of the linking disks as before, namely in terms of a deformation of the Liouville forms on $X$ and $Y$ to introduce cancelling Weinstein handles of indices $0$ and $1$ on $\RR\times\f^\crit$ and $\RR\times\g^\crit$.
Now the result on the product is that the Liouville form is deformed near $\RR\times\f^\crit\times\g^\crit\times\RR$ to introduce four cancelling Weinstein handles of indices $0$, $1$, $1$, and $2$.
The product of the linking disks is now a Lagrangian plane invariant under the Liouville flow intersecting $\RR\times\f^\crit\times\g^\crit\times\RR$ precisely at the critical point of index $0$, and this is exactly the linking disk at the corresponding point of $\f^\crit\times\g^\crit\times\RR$.

\subsection{Proof of generation}

\begin{proof}[Proof of Theorem \ref{generation}]
Fix a stopped Weinstein manifold $(X,\f)$, and consider the K\"unneth embedding
\begin{align}
\W(X,\f)&\hookrightarrow\W(X\times\CC,\h)\\
L&\mapsto L\tildetimes i\RR
\end{align}
where
\begin{equation}
\h=(\cc_X\times\{\pm\infty\})\cup(\f\times\RR)\subseteq\partial_\infty(X\times\CC)=X\times\partial_\infty\CC\underset{\partial_\infty X\times\partial_\infty\CC\times\RR}\cup\partial_\infty X\times\CC.
\end{equation}
It is enough to show that the image under K\"unneth of any $L\in\W(X,\f)$ is generated by the images of the linking disks and cocores (see Lemma \ref{generationqiso}).
By the discussion in \S\ref{cocorediskkunneth}, these images are precisely the linking disks of the product stop $\h$.
Hence by the wrapping exact triangle Theorem \ref{wrapcone} and general position (Lemma \ref{genericpositiveisotopy}), it suffices to show that $L\times i\RR$ can be isotoped through $\h$ to a zero object.
For such an isotopy, we can simply take (the cylindrization of) $L$ times an isotopy of $i\RR$ through the stop at $+\infty\in\partial_\infty\CC$ to an arc contained in the lower half-plane $\CC_{\Im\leq 0}$, which is a zero object (and hence whose cylindrized product with $L$ is also a zero object by virtue of the K\"unneth functor).
\end{proof}

\begin{lemma}\label{halfplanevanish}
For any Liouville sector $X$, we have $\W(X\times\CC_{\Re\geq 0})=0$.
\end{lemma}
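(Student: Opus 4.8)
The goal is to show $\W(X\times\CC_{\Re\geq 0}) = 0$, i.e.\ that every object is a zero object. The natural strategy is to use the K\"unneth stabilization functor together with a direct geometric argument that cotangent fibers (or the analogous generating objects) become trivial after crossing the puncture. First I would recall that $X \times \CC_{\Re\geq 0} = X \times T^\ast\RR_{\geq 0}$ as Liouville sectors, so this is a special case of stabilization. By the K\"unneth embedding \eqref{kunnethstabsector}, we have a fully faithful functor $\W(X) \hookrightarrow \W(X \times T^\ast[0,1])$; but here the relevant factor is $\CC_{\Re\geq 0}$, which is a Liouville sector whose convexification is $\CC$ and which should be handled directly. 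Actually the cleanest approach: it suffices to show that \emph{every} exact cylindrical Lagrangian $L \subseteq X \times \CC_{\Re\geq 0}$ is a zero object in the wrapped category, and for this it is enough to show that $L$ admits a cofinal wrapping which is eventually disjoint from itself (so that $HW^\bullet(L,L) = 0$, forcing $L \cong 0$), and similarly $HW^\bullet(L,K)=0$ for all $K$.

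The key geometric input is that $\partial_\infty(X \times \CC_{\Re\geq 0})^\circ$, the interior of the boundary at infinity, admits a global Reeb-type flow (coming from wrapping in the $\CC_{\Re\geq 0}$-direction, i.e.\ rotating in $\CC$) that pushes any compact Legendrian off to infinity — concretely, wrapping any Lagrangian by following the Liouville flow combined with the rotation vector field on the $\CC$-factor moves $\partial_\infty L$ entirely out toward the ``ends'' $\pm i\infty$ of $\partial_\infty\CC_{\Re\geq 0}$. More precisely, I would argue that for any cylindrical Lagrangian $L$, one can find a positive wrapping $L \leadsto L^w$ with $L^w$ disjoint from any fixed compact set, hence (by the cofinality criterion Lemma \ref{cofinalitycriterion}) a cofinal wrapping; and moreover $L^w$ can be arranged disjoint from $L$ itself and from any other fixed $K$. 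This uses that in the $\CC_{\Re\geq 0}$ direction there is ``room to escape'': unlike $T^\ast S^1$ there is no monodromy obstruction, and unlike a general Liouville manifold the boundary face of the sector lets Lagrangians slide off to infinity. One technical point is to produce such wrappings \emph{compatibly with the sectorial structure}, i.e.\ respecting the boundary $\partial(X \times \CC_{\Re\geq 0})$; this is where the description $\CC_{\Re\geq 0} = T^\ast\RR_{\geq 0}$ and the defining function $I$ come in, and one must check the wrapping keeps Lagrangians cylindrical and disjoint at infinity from $\partial X \times \CC_{\Re\geq 0}$.

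Concretely I would proceed as follows. (1) Fix coordinates $\CC_{\Re\geq 0} = T^\ast\RR_{\geq 0}$, so $X \times \CC_{\Re\geq 0}$ has symplectic boundary containing a $T^\ast\RR_{\geq 0}$-factor; recall from the introduction (and the construction of stabilization) that $i\RR \in \W(\CC,\{\pm\infty\})$ — the analogue of the cotangent fiber — can be isotoped through $+\infty$ to become a zero object, as used in the sketch of the proof of Theorem \ref{generation}. (2) Show that every object of $\W(X \times \CC_{\Re\geq 0})$ is a zero object by exhibiting, for each cylindrical $L$ and each test object $K$, a cofinal wrapping of $L$ disjoint at infinity from $K$: this follows because one may first wrap so that $\partial_\infty L$ lies in the region where the projection to $\partial_\infty\CC_{\Re\geq 0}$ is near one end, then slide it off that end, using Lemma \ref{cofinalitycriterion} to see cofinality since the wrapped Legendrian escapes every compact set. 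Hence $HW^\bullet(L,K) = \varinjlim HF^\bullet(L^w,K) = 0$. (3) Taking $K = L$ gives $HW^\bullet(L,L) = 0$, so the identity of $L$ is zero, so $L \cong 0$ in $\W(X \times \CC_{\Re\geq 0})$; therefore the category is the zero category.

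\textbf{Main obstacle.} The delicate point is step (2): constructing the escaping wrapping \emph{within the class of cylindrical Lagrangians in a Liouville sector}, i.e.\ keeping the isotopy positive at infinity and disjoint from the sectorial boundary $\partial(X \times \CC_{\Re\geq 0})$ throughout, while genuinely pushing $\partial_\infty L$ off to infinity in the $\CC_{\Re\geq 0}$-direction. One must be careful that $\partial_\infty(X \times \CC_{\Re\geq 0})^\circ$ really does admit a complete positive contact vector field whose flow eventually leaves every compact set — the ``slot'' $\CC_{\Re\geq 0}$ provides this, analogous to how $i\RR$ escapes through $+\infty$ in $(\CC,\{\pm\infty\})$, but verifying it respects the product/sectorial structure and applies to arbitrary (not just product) Lagrangians in $X \times \CC_{\Re\geq 0}$ is the part requiring genuine care. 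The rest is a formal consequence of the cofinality criterion and the definition of the wrapped category as a localization.
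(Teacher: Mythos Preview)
Your overall strategy matches the paper's: show $HW^\bullet(L,K)=0$ for all pairs $L,K$ by exhibiting a positive wrapping of $L$ eventually disjoint from $K$. But the paper's proof is a single sentence, and the ``main obstacle'' you flag dissolves once you write down the right Hamiltonian. The paper simply uses $H=(\Re)^2$ pulled back from the $\CC$-factor: for the radial Liouville structure on $\CC$ this is positive on the interior and linear at infinity, so its flow is a positive isotopy of cylindrical Lagrangians; and since $X_H$ acts only on the $\CC$-factor, as the shear $(x,y)\mapsto(x,y+2Nx)$, for large $N$ one has $e^{NX_H}L\cap K=\varnothing$ for any fixed $K$.

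Your concerns about compatibility with the sectorial structure and about keeping the isotopy cylindrical evaporate with this choice: $H$ depends only on the $\CC$-coordinate, vanishes on the boundary face $\{\Re=0\}$, and is already linear at infinity, so its flow automatically preserves the sector and cylindricality. There is no need for K\"unneth, for an escape-to-infinity argument via Lemma~\ref{cofinalitycriterion}, or for a two-step procedure (``first wrap near one end, then slide off''); the detours through $T^\ast\RR_{\geq 0}$, stabilization, and $i\RR\in\W(\CC,\{\pm\infty\})$ are unnecessary here. Your plan would presumably work, but it replaces a one-line computation with a construction you yourself identify as requiring genuine care.
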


\begin{proof}
The Hamiltonian function $(\Re)^2$ is positive and linear (for the radial Liouville structure on $\CC$), generating the family of `shear' symplectomorphisms (in fact, Liouville isomorphisms) of $\CC$ given by $(x+iy)\mapsto(x+i(y+tx))$.
For any two cylindrical Lagrangians $L,K\subseteq X\times\CC_{\Re\geq 0}$, we have $e^{NX_{(\Re)^2}}L\cap K=\varnothing$ for sufficiently large $N<\infty$.
Indeed, the function $\frac yx$ is Liouville invariant (hence is bounded on any cylindrical Lagrangian) and is acted on by translation by the Hamiltonian vector field $X_{(\Re)^2}=x\partial_y$.
\end{proof}

\begin{corollary}\label{zeroobject}
If $L\subseteq X$ is in the image of any inclusion $Y\times\CC_{\Re\geq 0}\hookrightarrow X$, then $L$ is a zero object in $\W(X)$ (and in fact in $\W(X,\f)$ provided $\f\cap\partial_\infty(Y\times\CC_{\Re\geq 0})^\circ=\varnothing$).
\end{corollary}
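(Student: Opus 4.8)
The statement is essentially immediate from Lemma \ref{halfplanevanish} together with the functoriality of partially wrapped Fukaya categories under inclusions of Liouville sectors. First I would observe that the inclusion $Y\times\CC_{\Re\geq 0}\hookrightarrow X$ is an inclusion of Liouville sectors (in the generalized sense allowing corners, smoothed as usual), so it induces a pushforward functor $\W(Y\times\CC_{\Re\geq 0})\to\W(X)$ as recalled in \S\ref{partiallywrappedsection}. Since $L$ lies in the image of this inclusion, it is (isotopic to) the image of a Lagrangian in $Y\times\CC_{\Re\geq 0}$ under this pushforward; call that Lagrangian $L_0$. By Lemma \ref{halfplanevanish}, $\W(Y\times\CC_{\Re\geq 0})=0$, so $L_0$ is a zero object there. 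A pushforward $\ainf$-functor sends zero objects to zero objects (the zero object is characterized by $\1=0$ in its endomorphism algebra, and a functor preserves identity elements and the zero morphism, hence preserves this condition — or, more simply, a functor sends an object with $\Hom(L_0,L_0)$ acyclic with vanishing unit to one with the same property). Therefore $L=F(L_0)$ is a zero object in $\W(X)$.

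For the parenthetical refinement to $\W(X,\f)$ under the hypothesis $\f\cap\partial_\infty(Y\times\CC_{\Re\geq 0})^\circ=\varnothing$: this hypothesis is exactly what is needed so that $(Y\times\CC_{\Re\geq 0},\varnothing)\hookrightarrow(X,\f)$ is an inclusion of stopped Liouville sectors (it requires $\f\cap(\partial_\infty(Y\times\CC_{\Re\geq 0}))^\circ\subseteq\varnothing$, which is the stated condition), and hence there is a pushforward $\W(Y\times\CC_{\Re\geq 0})=\W(Y\times\CC_{\Re\geq 0},\varnothing)\to\W(X,\f)$. Running the identical argument — $L$ is the image of a zero object under this functor — shows $L$ is a zero object in $\W(X,\f)$ as well.

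The only point requiring a word of care is the assertion that a pushforward functor between partially wrapped Fukaya categories preserves zero objects; but since all our $\ainf$-functors are (cohomologically) unital, any functor carries an object $L_0$ with $H^\bullet\Hom(L_0,L_0)=0$ to an object with acyclic endomorphisms, because unitality forces $H^0\Hom(F(L_0),F(L_0))$ to receive the (necessarily zero) unit, so $\1_{F(L_0)}=0$ and $F(L_0)$ is a zero object. I do not anticipate any genuine obstacle here; the content of the corollary is entirely in Lemma \ref{halfplanevanish}, and this is simply the observation that ``vanishing of the Fukaya category of a half-space propagates along inclusions.''
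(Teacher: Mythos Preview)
Your proof is correct and follows essentially the same approach as the paper: invoke Lemma \ref{halfplanevanish} to see $\W(Y\times\CC_{\Re\geq 0})=0$, then push forward. The paper is slightly more explicit about one technical point you gloss over with ``(isotopic to)'': it notes that $L$ may not lie in the \emph{interior} of the image of $Y\times\CC_{\Re\geq 0}$ (which is needed for the pushforward functor to apply), and remedies this by first isotoping $L$ into the interior via the Hamiltonian flow of a defining function $I$ for $Y\times\CC_{\Re\geq 0}$, using that isotopies induce quasi-isomorphisms in the wrapped Fukaya category.
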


\begin{proof}
If $L$ is contained in the \emph{interior} of the image of $Y\times\CC_{\Re\geq 0}$, then the conclusion holds since $\W(Y\times\CC_{\Re\geq 0})=0$ by Lemma \ref{halfplanevanish} and any $\ainf$-functor (in particular $\W(Y\times\CC_{\Re\geq 0})\to\W(X)$) sends zero objects to zero objects.
In general, simply isotope $L$ into the interior of $Y\times\CC_{\Re\geq 0}$ (e.g.\ using the Hamiltonian flow of a defining function $I$ for $Y\times\CC_{\Re\geq 0}$) and recall that isotopies of exact Lagrangians induce quasi-isomorphisms of objects in the wrapped Fukaya category.
\end{proof}

\begin{proof}[Proof of Theorem \ref{halfplanegeneration}]
Given any $L\subseteq F\times\CC_{\Re\geq 0}$ missing $\Lambda$ at infinity, the isotopy from the proof of Lemma \ref{halfplanevanish} eventually pushes $L$ to a zero object in $\W(F\times\CC_{\Re\geq 0},\Lambda)$.
Perturbing this isotopy to intersect $\Lambda$ only by passing through $\Lambda^\crit$ transversally via Lemma \ref{genericpositiveisotopy} and appealing to the wrapping exact triangle Theorem \ref{wrapcone} proves the claim.
\end{proof}

\begin{proof}[Proof of Corollary \ref{lefschetzgeneration}]
The Fukaya--Seidel category of our Lefschetz fibration $\pi:\bar X\to\CC$ is the wrapped Fukaya category of the Liouville sector obtained from $F\times\CC_{\Re\geq 0}$ by attaching critical Weinstein handles along the vanishing cycles inside (or, rather, their lifts to) $F\times\partial_\infty\CC_{\Re\geq 0}$.
Denoting by $\Lambda$ the union of these attaching loci, there is a functor
\begin{equation}
\W(F\times\CC_{\Re\geq 0},\Lambda)\to\W((F\times\CC_{\Re\geq 0})\cup_\Lambda(\text{handles})).
\end{equation}
The image of this functor generates the target by Theorem \ref{generation}, since the linking disks to $\Lambda$ are sent to the cocores of the added handles.
On the other hand, the domain category is generated by these linking disks by Theorem \ref{halfplanegeneration}, and the cocores of the added handles are precisely the Lefschetz thimbles.
\end{proof}

\begin{proof}[Proof of Theorem \ref{generationpractice}]
We follow the proof of Theorem \ref{generation}.
To see that the product stop $\h=(\cc_{X,\f}\times\{\pm\infty\})\cup(\f\times\RR)$ is mostly Legendrian, note that the Liouville vector field of $X$ is necessarily tangent to the smooth Lagrangian locus $\cc_{X,\f}^\crit\subseteq\cc_{X,\f}$, and hence $\cc_{X,\f}^\crit\times\{\pm\infty\}\subseteq X\times\partial_\infty\CC$ is Legendrian.
If $\f^\crit\subseteq\f$ is the smooth Legendrian locus, then $\f^\crit\times\RR\subseteq\partial_\infty X\times\CC$ is Legendrian.
The remainder of the product stop is $\h^\subcrit=(\cc_{X,\f}^\subcrit\times\{\pm\infty\})\cup(\f^\subcrit\times\RR)$, which is certainly covered by the smooth image of a second countable manifold of dimension less than Legendrian.
It thus suffices to show that for each generalized cocore $L\subseteq X$ intersecting $\cc_{X,\f}^\crit$ transversally at $p$, the cylindrized product $L\tildetimes i\RR$ is isomorphic in $\W(X\times\CC,\h)$ to the linking disk at $p\times\infty$.

We discuss first the case that the generalized cocore in question $L$ is invariant under the Liouville flow.
Consider isotoping the linking disk $i\RR$ of $(\CC,\{\pm\infty\})$ to make it (transversally) pass through $\infty$ (and thus become a zero object).
The (cylindrized) product of this isotopy with $L$ passes through the product stop $\h$ once, transversally, at $p\times\infty$ (note that this intersection occurs far away from where the product Lagrangian needs to be cylindrized, since $L$ is Liouville invariant).
In view of the wrapping exact triangle and the fact (which follows from K\"unneth) that the product of anything with a zero object is again a zero object, we conclude that $L\tildetimes i\RR$ is isomorphic in $\W(X\times\CC,\h)$ to the linking disk of the product stop at $p\times\infty$, as desired.

For case of general (i.e.\ not necessarily globally Liouville invariant) $L$, fix a primitive $f:L\to\RR$ of $\lambda_X|_L$.
Using the argument from the proof of Lemma \ref{cylvanishinf}, we may assume that $f$ is compactly supported and that $f$ vanishes at the transverse intersection point $L\cap\cc_{X,\f}=\{p\}$.
We may thus extend $f$ to all of $X$ so as to be compactly supported and to vanish on a neighborhood of $\cc_{X,\f}$, except possibly in a small neighborhood of $p$, where we may only demand that it vanish on $\cc_{X,\f}$ itself.
We now consider the family of stopped Liouville manifolds
\begin{equation}
(X\times\CC,\lambda_X-t\,df+\lambda_\CC,\partial_\infty(\cc_{X,\f}\times\RR))
\end{equation}
for $t\in[0,1]$.
Note that, by definition, $X_f$ vanishes over $\cc_{X,\f}$ except in a neighborhood of $p$ where it can be nonzero but tangent to $\cc_{X,\f}$; hence $(\cc_{X,\f}\times\RR)$ is indeed cylindrical, so the definition of the stop above makes sense (note that we make no claims about the core of $(X,\lambda_X-df)$).
Moreover, this family of stopped Liouville manifolds is nice at infinity: the only changes at infinity occurs where the stop is just a smooth Legendrian, so the picture at infinity simply undergoes a global contact isotopy.
In particular, the wrapped Fukaya categories are canonically identified, so to show agreement of $L\tildetimes i\RR$ with the linking disk in the category at $t=0$, it suffices to show agreement in the category at $t=1$.
Now $L$ is invariant under the Liouville flow of the Liouville form $\lambda_X-df$, and hence the argument from the previous paragraph applies.
\end{proof}

\begin{proof}[Proof of Corollary \ref{weinsteinkunneth}]
(Cylindrized) products of generalized cocores are generalized cocores, so $\W((X,\f)\times(Y,\g))$ is generated by product Lagrangians by Theorem \ref{generationpractice}.
\end{proof}

\begin{proof}[Proof of Corollary \ref{homologicallysmooth}]
By Corollary \ref{weinsteinkunneth}, the perturbed diagonal $\Delta^-\subseteq(X,\f)\times(X^-,\f)$ is generated by (cylindrizations of) product Lagrangians.
We now pull back under the K\"unneth embedding of $(X,\f)\times(X^-,\f)$.
The perturbed diagonal $\Delta^-$ pulls back to the diagonal bimodule by Proposition \ref{kunnethdiagonal}, and a cylindrized product pulls back to a tensor product of Yoneda modules by full faithfulness of K\"unneth.
\end{proof}

Recall that we call a Liouville manifold $X$ inessential when (after possibly deforming the Liouville form) there exists a closed contact submanifold of the contactization $X\times\RR$ which contains $\cc_X\times 0$.

\begin{lemma}\label{stabinessential}
$W\times\CC$ is inessential for any Liouville manifold $W$.
\end{lemma}

\begin{proof}
We are to show that $\cc_W\times 0\times 0$ inside $(W\times\CC\times\RR,\lambda_W+\lambda_\CC+dt)$ is contained in a closed contact submanifold.
Let $W_0\subseteq W$ be a Liouville domain completing to $W$, and consider the boundary $\partial(W_0\times D^2)$ (or rather its boundary after smoothing corners).
This times $0\in\RR$ is a closed contact submanifold of $W\times\CC\times\RR$, and so it suffices to describe an ambient contact isotopy which sends $\cc_W\times 0\times 0$ into it.

For such an isotopy, take the identity on $W$ times a contact isotopy of $\CC\times\RR$ sending $0\times 0$ to $1\times 0$ (either one should consider such an isotopy which, near the point $0\times 0$ as it moves, preserves the contact \emph{form}, or one should make the general observation that a contact isotopy of $(Y,\xi)$ is the same thing as a Liouville isotopy of its symplectization, which we can multiply by $W$ to obtain the symplectization of the contact manifold $W\times Y$ for any Liouville manifold $W$, so contact isotopies of $Y$ induce contact isotopies of $W\times Y$).
\end{proof}

\begin{proof}[Proof of Proposition \ref{removeinessential}]
For simplicity of notation, let us ignore $\f$.
Instead of $H_0$, we can use its contactization $H_0\times[0,1]$ as the stop.
We then have functors
\begin{equation}
\W(X,H_0\times[0,1])\to\W(X,Q)\to\W(X,\cc_H\times\{{\textstyle\frac 12}\})\to\W(X)
\end{equation}
where $Q\subseteq H_0\times[0,1]$ is a closed contact submanifold containing $\cc_X\times\{\frac 12\}$.
The composition of the first two of these functors is an equivalence by shrinking the stop, and the composition of the last two is fully faithful by Proposition \ref{removecontact}.
It follows (the `two-out-of-six property') that all functors above are fully faithful.
\end{proof}

\begin{proof}[Proof of Corollary \ref{inessentialvanish}]
The K\"unneth embedding $\W(X)\to\W(X\times(\CC,\{\pm\infty\}))$ is fully faithful, and stop removal $\W(X\times(\CC,\{\pm\infty\}))\to\W(X\times\CC)$ is fully faithful by Proposition \ref{removeinessential}.
The composition $\W(X)\to\W(X\times\CC)$ is thus fully faithful, yet it sends $L$ to $L\tildetimes[i\RR]$ which is a zero object of the target by K\"unneth since $[i\RR]\in\W(\CC)$ is a zero object.
\end{proof}

\begin{proof}[Proof of Corollary \ref{lefschetzgenerationII}]
We are to show that the wrapped Fukaya category of
\begin{equation}
(X,\cc_F)=(F\times(\CC,\{-\infty\}))\cup_\Lambda(\text{handles})
\end{equation}
is generated by the cocores of the handles.
The core of $X$ is $\RR_{\leq 0}\times\cc_F$ union the cores of the handles.
Multiply $X$ by $(\CC,\{\pm\infty\})$, and consider the product stop.
Removing the cores of the handles times $\{\pm\infty\}$ from this product stop quotients by the stabilizations of the cocores by stop removal Theorem \ref{stopremoval}.
What remains of the product stop is $\cc_F$ times an interval, which is a core of $F\times\CC$.
Since $F\times\CC$ is inessential by Lemma \ref{stabinessential}, removing it is fully faithful by Proposition \ref{removeinessential}.
We have thus removed the entire stop, leaving us with $\W(X\times\CC)$, which vanishes by Corollary \ref{inessentialvanish} since $X\times\CC$ is inessential by Lemma \ref{stabinessential}.
We conclude that the quotient of $\W(X)$ by the cocores of the handles is zero, so $\W(X)$ is split-generated by the cocores.
On the other hand, the cocores form an exceptional collection (the boundary at infinity of $X$ is $F_0\times[0,1]$, so Reeb dynamics are trivial), and it is well known that everything split-generated by an exceptional collection is in fact generated by it (for a proof, see \cite[Lemma A.10]{gpswrappedconstructible}).
\end{proof}

\section{Stopped inclusions}\label{stoppedinclusionssection}

We now explore geometric conditions under which the pushforward functor on wrapped Fukaya categories $\W(X)\to\W(X')$ induced by an inclusion of Liouville sectors $X\hookrightarrow X'$ is fully faithful.
A simple example of such a situation, namely when the wrapping in $X'$ was literally the same as the wrapping in $X$, was explored in Lemma \ref{faithfulstopping}.
More generally, since wrapped Floer cohomology can be computed by wrapping \emph{only one factor}, it is enough to assume that when Lagrangians inside $X$ are wrapped inside $X'$, they never re-enter $X$ after leaving.
We introduce notions of \emph{forward stopped} and \emph{tautologically forward stopped} (and dually backward stopped) inclusions of Liouville sectors which are geometric conditions sufficient to ensure this, and hence to ensure full faithfulness of the pushforward functor on wrapped Fukaya categories.
This notion depends only on the contact geometry of the boundary at infinity.

\subsection{Review of convex hypersurfaces in contact manifolds}\label{convexreview}

Recall that a compact cooriented hypersurface $H$ inside a cooriented contact manifold $Y$ is called \emph{convex} iff there exists a contact vector field defined near $H$ which is positively transverse to $H$.
Such a contact vector field determines a partition
\begin{equation}\label{Vpartition}
H=H_+\cup_\Gamma H_-
\end{equation}
where $\Gamma\subseteq H$ (called the \emph{dividing set}) is a hypersurface transverse to the characteristic foliation (in particular, avoiding its singularities) and all of the positive (resp.\ negative) tangencies of $\xi$ to $H$ are contained in $H_+$ (resp.\ $H_-$).
Namely, $H_\pm\subseteq H$ are the loci where the contact vector field $V$ is positively/negatively transverse to $\xi$, and they meet along the locus $\Gamma\subseteq H$ where $V\in\xi$.
The dividing set $\Gamma$ 
is a transversely cut out hypersurface inside $H$ for \emph{every} transverse contact vector field $V$, and is
a contact submanifold of $Y$.
The unique contact forms $\lambda_\pm$ on $Y$ defined near $H_\pm$ by $\lambda_\pm(V)=\pm 1$ restrict to Liouville forms on $H_\pm$ whose Liouville vector fields are tangent to the characteristic foliation.
With these Liouville forms, $H_\pm$ are Liouville manifolds, whose contact boundaries are naturally identified with $\Gamma$.

The partition \eqref{Vpartition} depends on a choice of transverse contact vector field $V$.  For a more invariant notion, we may pass to the cores
$\cc_{H_\pm}$  of $H_\pm$, which do not depend on the choice of $V$.  The complement $H\setminus(\cc_{H_+}\cup\cc_{H_-})$ equipped with its characteristic foliation is (non-canonically) diffeomorphic to $\RR\times\Gamma$ equipped with the foliation by $\RR\times\{p\}$.
We thus obtain the following canonical intrinsically defined structure: 
\begin{equation}\label{betterpartition}
\cc_{H_+},\cc_{H_-}\subseteq H\qquad H\setminus(\cc_{H_+}\cup\cc_{H_-})\to\Gamma,
\end{equation}
where the second map is projection along the characteristic foliation (i.e.\ the ``contact reduction'' of the `even contact manifold' $H\setminus(\cc_{H_+}\cup\cc_{H_-})$).
In particular, the contact manifold $\Gamma$ is defined intrinsically in terms of $H$.
The image of any section of the projection $H\setminus(\cc_{H_+}\cup\cc_{H_-})\to\Gamma$ is the dividing set for some transverse contact vector field $V$.
The characteristic foliation of $H$ is cooriented (by the coorientations on $H$ and $\xi$), and let us fix conventions so that this coorientation is outward pointing along $\partial H_+$.
In particular, we have the following useful result:

\begin{lemma}\label{domainsinsideconvex}
A subdomain $A\subseteq H$ is the ``positive part'' of some partition \eqref{Vpartition} associated to a transverse contact vector field iff $\cc_{H_+}\subseteq A\subseteq H\setminus\cc_{H_-}$ and the characteristic foliation is outwardly transverse to $\partial A$.
More generally, a subdomain $A\subseteq H$ is a Liouville subdomain of the positive part of some partition \eqref{Vpartition} iff $A\subseteq H\setminus\cc_{H_-}$ and the characteristic foliation is outwardly transverse to $\partial A$.\qed
\end{lemma}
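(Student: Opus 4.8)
The plan is to reduce everything to the canonical structure \eqref{betterpartition}. Fix a diffeomorphism $H\setminus(\cc_{H_+}\cup\cc_{H_-})\cong\RR\times\Gamma$ carrying the characteristic foliation to the foliation by the lines $\ell_p:=\RR\times\{p\}$, arranged so that (with the stated orientation conventions) the positive $\RR$-direction is the one along which the characteristic foliation is outwardly transverse to $\partial H_+$; thus $\cc_{H_+}$ sits at the $-\infty$ end and $\cc_{H_-}$ at the $+\infty$ end of each leaf.

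For the two ``only if'' implications: if $A=H_+$ for a transverse contact vector field $V$, then $\cc_{H_+}\subseteq H_+=A$; since $H_+\cap H_-=\Gamma$ while $\cc_{H_-}$ lies in the interior of the Liouville manifold $H_-$, we get $A\subseteq H\setminus\cc_{H_-}$; and $\partial A=\partial H_+$ is transverse to the characteristic foliation with the foliation outward along it, by the convention recorded before the Lemma. If instead $A$ is a Liouville subdomain of $H_+$, then $A\subseteq H_+\subseteq H\setminus\cc_{H_-}$; shrinking $A$ by the backward Liouville flow produces a Liouville domain contained in $\mathring A$ with completion $H_+$, so $\cc_{H_+}\subseteq\mathring A$ and hence $\partial A\subseteq H_+\setminus\cc_{H_+}$, where the Liouville vector field $Z_{H_+}$ is nonvanishing, tangent to the characteristic foliation, and points in its positive direction; since $Z_{H_+}$ is outward-transverse to $\partial A$ by definition of a Liouville subdomain, so is the characteristic foliation.

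For the ``if'' implications: assume first $\cc_{H_+}\subseteq A\subseteq H\setminus\cc_{H_-}$ with the characteristic foliation outward-transverse to $\partial A$, so in particular $\partial A$ lies in the ``even-contact'' region $\RR\times\Gamma$ and $\cc_{H_+}\subseteq\mathring A$. On each leaf $\ell_p$, the set $A\cap\ell_p$ is a closed $1$-manifold-with-boundary all of whose boundary points are exit points for the positively oriented leaf; a point at which the leaf entered $A$ in the positive direction would contradict outward-transversality, so $A\cap\ell_p=(-\infty,b(p)]$ for a single finite number $b(p)$ (finite because $A$ is compact and misses the $+\infty$ end, and reaching $-\infty$ because $A$ contains a neighborhood of $\cc_{H_+}$). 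Hence $\partial A$ is the graph of a smooth function $b\colon\Gamma\to\RR$, i.e.\ the image of a section of $H\setminus(\cc_{H_+}\cup\cc_{H_-})\to\Gamma$; by the remark following \eqref{betterpartition} this section is the dividing set of some transverse contact vector field $V$, and since $H_+^V$ and $A$ share the boundary $\partial A$ and lie on the same side of it (the $\cc_{H_+}$-side, equivalently the side out of which the characteristic foliation points), we get $H_+^V=A$. For the general ``if'' implication, assume only $A\subseteq H\setminus\cc_{H_-}$ with the characteristic foliation outward-transverse to $\partial A$; the plan is to enlarge $A$ to $\tilde A:=A\cup N$, where $N\supseteq\cc_{H_+}$ is a small neighborhood whose boundary is also exited by the characteristic foliation (such $N$ exists because the characteristic foliation emanates from $\cc_{H_+}$), chosen so that $\tilde A$ still avoids $\cc_{H_-}$ and its boundary is still transverse to and exited by the characteristic foliation. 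The already-proven first ``if'' implication then gives $\tilde A=H_+^V$ for some transverse $V$, and since $\partial A\subseteq\tilde A\setminus\cc_{H_+}$, where $Z_{H_+^V}$ positively spans the characteristic foliation and is therefore outward-transverse to $\partial A$, the domain $A$ is a Liouville subdomain of $H_+^V$.

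The step I expect to be the main obstacle is the enlargement in the last paragraph: one must check that $N$ can be chosen so that $A\cup N$ has smooth boundary still transverse to the characteristic foliation — an interpolation/smoothing argument along the leaves near $\cc_{H_+}$ — and more generally one must be careful about the point-set behaviour of $A$ and of the characteristic foliation near the (possibly singular, possibly disconnected) set $\cc_{H_+}$.
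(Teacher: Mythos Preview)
The paper omits the proof entirely (note the \qed\ at the end of the statement), treating the lemma as an immediate consequence of the structure \eqref{betterpartition} and the remark that every section of $H\setminus(\cc_{H_+}\cup\cc_{H_-})\to\Gamma$ is the dividing set of some transverse contact vector field. Your proof is correct and spells out exactly this intended argument.

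Your worry about the enlargement step is unnecessary, however, because that step can be bypassed. Since $A$ is compact and contained in $H\setminus\cc_{H_-}$, in the coordinates $\RR\times\Gamma$ the set $A\setminus\cc_{H_+}$ lies in $(-\infty,M]\times\Gamma$ for some $M$. Take the constant section at height $M+1$; by the remark following \eqref{betterpartition} this is the dividing set for some transverse $V$, and $A\subseteq H_+^V$. Now $\partial A$ avoids the zeros of the characteristic foliation (transversality would fail there), and on the rest of $H_+^V$ the Liouville vector field $Z_{H_+^V}$ positively spans the characteristic foliation, so it is outward along $\partial A$. Hence $A$ is a Liouville subdomain of $H_+^V$, with no smoothing required.
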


\begin{example}\label{convexexample}
Consider a Liouville pair $(\bar X,F)$.
Choose a contact form near $F_0$ whose restriction to $F_0$ makes it a Liouville domain; this determines coordinates $\RR_t\times F_0\hookrightarrow\partial_\infty\bar X$ near $t=0$ in which the contact form is given by $dt+\lambda_{F_0}$.
As in \cite[Definition 2.14]{gpssectorsoc}, removing $(\RR_{s>0}\times\RR_{\left|t\right|<\varepsilon}\times F_0^\circ,e^s(dt+\lambda_{F_0}))$ from $\bar X$ yields a Liouville sector $X$.
The boundary of $\partial_\infty X$ is thus $\partial(\RR_{\left|t\right|\leq\varepsilon}\times F_0)$, which admits the transverse contact vector field $t\frac\partial{\partial t}+Z_{F_0}$.
The dividing set is thus $\{t=0\}$, and the positive/negative cores are $\{t=\pm\varepsilon\}\times\cc_{F_0}$.
Any Liouville subdomain of $F_0$, embedded inside either side $\{t=\pm\varepsilon\}\times F_0$, is a Liouville subdomain of the positive/negative parts of $\partial(\RR_{\left|t\right|\leq\varepsilon}\times F_0)$.
\end{example}

\subsection{Forward/backward stopped cobordisms and inclusions} \label{forwardstoppedsec} 

We now define `forward stopped' and `tautologically forward stopped' inclusions of (stopped) Liouville sectors.  
Being (tautologically) forward stopped depends only on the contact boundary, and so we first define the corresponding notions for contact cobordisms.
The dual notions of being (tautologically) backward stopped simply mean (tautologically) forward stopped after negating the Liouville form / the coorientation of the contact structure.

A \emph{contact cobordism} shall mean a contact manifold-with-boundary $M$ whose boundary $\partial M$ is compact, convex, and has a specified decomposition into open pieces $\partial M=\partial_\inn M\sqcup\partial_\out M$, with $\partial_\out M$ cooriented in the outward direction and $\partial_\inn M$ in the inward direction.
Note that $\partial M$ is required to be compact, but $M$ itself is not.
Both $\partial_\inn M$ and $\partial_\out M$ have positive and negative cores $\cc_{(\partial_\inn M)_\pm}$ and $\cc_{(\partial_\out M)_\pm}$, respectively.
Wrapping in $M$ will tend to push towards $\cc_{(\partial_\out M)_+}\sqcup\cc_{(\partial_\inn M)_-}$ and away from $\cc_{(\partial_\inn M)_+}\sqcup\cc_{(\partial_\out M)_-}$ (a convenient class of positive contact vector fields which does precisely this is constructed in \cite[\S 2.9]{gpssectorsoc}).

\begin{figure}[ht]
\centering
\includegraphics[max width=.95\textwidth]{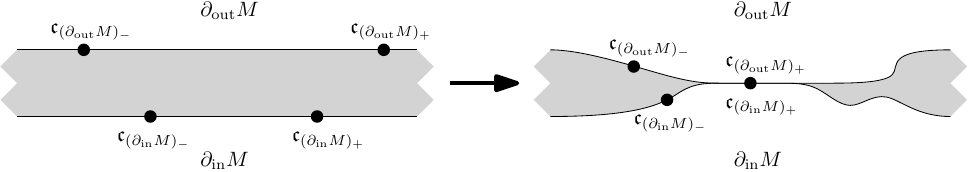}
\caption{Deformation of $M$ so that $\partial_\inn M$ and $\partial_\out M$ touch over $\cc_{(\partial_\inn M)_+}$.}\label{figuretautstop}
\end{figure}

\begin{definition}\label{tautforwardstoppeddef}
A contact cobordism $M$ is called \emph{tautologically forward stopped} iff it admits a compactly supported deformation (through contact cobordisms) which makes $\partial_\inn M$ touch $\partial_\out M$ over a neighborhood of the incoming positive core $\cc_{(\partial_\inn M)_+}$ so that it is disjoint from the outgoing negative core $\cc_{(\partial_\out M)_-}$ (see Figure \ref{figuretautstop}).
(This final ``pinched'' object is not strictly speaking a manifold-with-boundary, so to make this discussion precise, one may work with contact cobordisms equipped with a germ of codimension zero embedding into a contact manifold.)
\end{definition}

The contact cobordisms which we are interested in arise from inclusions of (possibly stopped) Liouville sectors.
Recall that the contact boundary $\partial_\infty X$ of a Liouville sector $X$ is a compact cooriented contact manifold with convex boundary $\partial\partial_\infty X$.
We coorient this boundary in the outward direction, so positive wrapping inside $X$ will tend to push Lagrangians towards the positive core $\cc_{(\partial\partial_\infty X)_+}$.
In the presence of a stop $\f\subseteq(\partial_\infty X)^\circ$ which does not approach the boundary $\partial\partial_\infty X$, the relevant cooriented contact manifold with convex boundary is $\partial_\infty X\setminus\f$.
For an inclusion of Liouville sectors $X\hookrightarrow X'$, the difference of their boundaries $M=\partial_\infty(X'\setminus X^\circ)$ is a contact cobordism, with $\partial_\inn M=\partial\partial_\infty X$ and $\partial_\out M=\partial\partial_\infty X'$.
More generally, for an inclusion of stopped Liouville sectors $(X,\f)\hookrightarrow(X',\f')$ satisfying the (rather strong) condition that $\f=\f'\cap(\partial_\infty X)^\circ$ and $\f'$ does not approach $\partial_\infty X$ or $\partial_\infty X'$, we may consider the contact cobordism $M=\partial_\infty(X'\setminus X^\circ)\setminus\f'$.

\begin{definition}\label{fsforsectors}
An inclusion of stopped Liouville sectors $(X,\f)\hookrightarrow(X',\f')$ (satisfying $\f=\f'\cap(\partial_\infty X)^\circ$ and that $\f'$ does not approach $\partial_\infty X$ or $\partial_\infty X'$) is called tautologically forward stopped iff the associated contact cobordism is.
\end{definition}

\begin{example}
Consider a Liouville sector $X$ including into a slight enlargement $X^+$ of itself, defined by flowing out by the Hamiltonian flow of a defining function for unit time.
Then $M=\partial_\infty(X^+\setminus X^\circ)$ is a contact cylinder, and shrinking $X^+$ back down to $X$ defines a pinching of $M$ which shows that $X\hookrightarrow X^+$ is tautologically forward stopped.
Hence every trivial inclusion of Liouville sectors $X\hookrightarrow X'$ is tautologically forward stopped (a trivial inclusion may, by definition, be deformed to $X\hookrightarrow X^+$).
\end{example}

\begin{proposition} \label{forwardexample}
Any inclusion of Liouville sectors $X\hookrightarrow X'$ may be factored into the composition of a 
forward stopped inclusion $X\hookrightarrow\check X'$ and a stop removal  $\check X'\hookrightarrow X'$.
\end{proposition}
\begin{proof}
We consider the Liouville hypersurface in $\partial_\infty X'$ defined as a small outward pushoff of some choice of positive part $(\partial\partial_\infty X)_+\subseteq\partial\partial_\infty X$ (in the sense of \eqref{Vpartition}).
Removing from $X'$ a neighborhood of this Liouville hypersurface as in Example \ref{convexexample} defines a Liouville sector $\check X'$.
Translating this excised neighborhood of the Liouville hypersurface back towards $\partial\partial_\infty X$ shows that the inclusion $X\hookrightarrow\check X'$ is tautologically forward stopped.
\end{proof}

\begin{example}\label{boundaryfiberinclusionstopping}
Consider a Liouville sector $X$ with $F$ (a component of) its symplectic boundary.
There is an inclusion of Liouville sectors $F \times T^*[0,1] \to X$, giving coordinates on a neighborhood of (a component of) the boundary of $X$.
This inclusion is not usually tautologically forward stopped: $\partial(F \times T^*[0,1]) = F \times (T^*_0 [0,1]  \cup T^*_1 [0,1])$, and only one of these components is tautologically forward stopped using $\partial X$.
We can stop the other component by adding a stop to $X$ as in Proposition \ref{forwardexample}.
Equivalently, we could apply Proposition \ref{forwardexample} directly to the inclusion $F \times T^*[0,1] \to \bar X$ where $\bar X$ is the convex completion of $X$ along the boundary component $F$.
The result is a tautologically forward stopped inclusion from $F\times T^\ast[0,1]$ into $\bar X$ stopped at $F_0$ and a small positive pushoff $F_0^+$ thereof.
Combining this with the K\"unneth stabilization functor, we obtain (using Corollary \ref{stoppedff} below) a fully faithful embedding $\W(F)\hookrightarrow\W(\bar X,F\cup F^+)$.
\end{example}

The properties of tautologically forward stopped inclusions which are relevant to controlling wrapping are in fact shared
by a more general class we term simply forward stopped inclusions.  In addition to being more general, the notion of forward stopped inclusions
will be more immediately connected to the geometry of wrapping.  However, in applications, we have found that the 
forward stopped inclusions of interest are in fact tautologically forward stopped.

\begin{figure}[ht]
\centering
\includegraphics[max width=.95\textwidth]{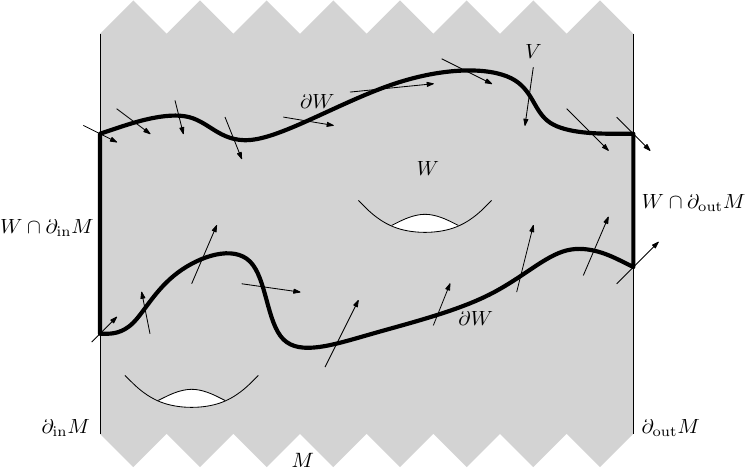}
\caption{Schematic diagram of the codimension zero submanifold-with-boundary $W$ and the contact vector field $V$ certifying that a contact cobordism $M$ is forward stopped.}\label{figureforwardstopped}
\end{figure}

\begin{definition}\label{forwardstoppeddef}
A contact cobordism $M$ is called \emph{forward stopped} iff there exists a closed subset $W\subseteq M$ and a positive contact vector field $V$ defined over $\Nbd\partial W$ (where $\partial W\subseteq M$ denotes the boundary in the point set topological sense) satisfying the following conditions (in which case we say $M$ is \emph{forward stopped by} $W$ or $(W,V)$ or that $W$ or $(W,V)$ is a \emph{forward stopping witness for $M$}):
\begin{enumerate}
\item\label{bwcpt}$\partial W$ is compact.
\item$W$ is sent into itself by the forward flow of $V$, namely for every trajectory $\gamma:[0,\varepsilon)\to M$ of $V$ with $\gamma(0)\in W$, we have $\gamma(t)\in W$ for all $t\in[0,\varepsilon)$.
\item\label{Vinward}$V$ is inward pointing along $\partial_\inn M$.
\item\label{Voutward}$V$ is outward pointing along $\partial_\out M$.
\item\label{bminusgood}$W\cap\partial_\inn M\subseteq\partial_\inn M$ is the ``positive piece'' $(\partial_\inn M)_+$ in some partition \eqref{Vpartition} of $\partial_\inn M$.
\item\label{bplusgood}$W\cap\partial_\out M\subseteq\partial_\out M$ is a Liouville subdomain of the ``positive piece'' $(\partial_\out M)_+$ in some partition \eqref{Vpartition} of $\partial_\out M$.
\end{enumerate}
(Recall Lemma \ref{domainsinsideconvex} for equivalent formulations of \ref{bminusgood}--\ref{bplusgood}.)
The adjective `forward stopped' may also be applied to inclusions of stopped Liouville sectors as in Definition \ref{fsforsectors}.
\end{definition}

The point of Definition \ref{forwardstoppeddef} is that nothing escapes from $W$ under the flow of suitably chosen positive contact vector fields (a precise such statement is given in Proposition \ref{forwardcofinal} and its proof).

\begin{lemma}\label{extendV}
Given a contact cobordism $M$ and a forward stopping witness $(W,V)$, there exist contact vector fields $T_\inn$ and $T_\out$ inwardly/outwardly transverse to $\partial_\inn M$ and $\partial_\out M$ which coincide with $V$ over $\Nbd(\partial_\inn M\cap\partial W)$ and $\Nbd(\partial_\out M\cap\partial W)$, respectively.
\end{lemma}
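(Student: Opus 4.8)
The statement is local near $\partial_\inn M$ and near $\partial_\out M$ separately, and the construction of $T_\out$ is entirely analogous to (indeed slightly easier than) that of $T_\inn$: by hypothesis~\ref{bplusgood} and Lemma~\ref{domainsinsideconvex}, $\partial W\cap\partial_\out M$ lies in the interior of $(\partial_\out M)_+$, away from any dividing set, so no dividing-set bookkeeping is needed, and reversing the coorientations throughout converts the $\out$ case into the $\inn$ case. So I would focus on producing $T_\inn$. Write $\Gamma:=\partial_\inn M\cap\partial W$; by hypothesis~\ref{bwcpt} this is compact, and by hypothesis~\ref{Vinward} the contact vector field $V$ is defined on a neighborhood of $\Gamma$ in $M$ and is inward-transverse to $\partial_\inn M$ there. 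The task is to extend the germ of $V$ along $\Gamma$ to a contact vector field defined near all of $\partial_\inn M$ and inward-transverse to $\partial_\inn M$.

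The first move is to pass to contact Hamiltonians: fix a contact form $\alpha$ near $\partial_\inn M$ and write $V=X_h$ near $\Gamma$. The key elementary observation is that for any contact Hamiltonian $g$ and any $p\in\partial_\inn M$, the value $X_g(p)$ — in particular its component transverse to $\partial_\inn M$ — is a fixed \emph{linear} function of the $1$-jet $j^1_pg$ (concretely $X_g(p)=g(p)R_\alpha(p)+Y_p$, with $Y_p\in\xi_p$ the $d\alpha|_{\xi_p}$-dual of $-dg|_{\xi_p}$). Hence ``$X_g$ is inward-transverse to $\partial_\inn M$ at $p$'' is a strict linear inequality on $j^1_pg$, so the set of Hamiltonians $g$ near $\partial_\inn M$ for which $X_g$ is inward-transverse along all of $\partial_\inn M$ is open and convex. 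Using convexity of $\partial_\inn M$ and the inward coorientation convention, fix one such reference Hamiltonian $h_0$, i.e.\ an inward-transverse contact vector field $V_0=X_{h_0}$ defined near all of $\partial_\inn M$. It then remains to interpolate between $V$ (near $\Gamma$) and $V_0$ through inward-transverse contact vector fields so that the result equals $V$ on a neighborhood of $\Gamma$ in $M$. The natural attempt is: first replace $h_0$ by a Hamiltonian $h_0'$ whose $1$-jet along $\partial_\inn M$ agrees with that of $h$ on a neighborhood of $\Gamma$ in $\partial_\inn M$ (interpolating the free jet data — boundary value and normal derivative — via a cutoff $\chi_1\colon\partial_\inn M\to[0,1]$ supported near $\Gamma$), and then cut off transversally, setting $\tilde h:=\chi h+(1-\chi)h_0'$ for a bump function $\chi$ on $M$ equal to $1$ near $\Gamma$, supported in the domain of $V$, with trace on $\partial_\inn M$ inside the region where $h$ and $h_0'$ share their $1$-jet along $\partial_\inn M$; then $T_\inn:=X_{\tilde h}$. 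On $\{\chi=1\}$ this equals $V$, on $\{\chi=0\}$ it equals $X_{h_0'}$, and along $\partial_\inn M$ near $\Gamma$ the cutoff term is inert (since $h-h_0'$ and its differential vanish there), so the transverse component of $X_{\tilde h}$ is, everywhere on $\partial_\inn M$, a convex combination of those of $V$ and $V_0$.

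The one genuinely delicate point — and what I expect to be the main obstacle — is that linear interpolation of the contact Hamiltonians (or of their $1$-jets along $\partial_\inn M$) is \emph{not} jet-linear: the product rule produces an extra term proportional to $(h-h_0)\,d\chi_1$ whose transverse component is not controlled a priori, so ``inward-transverse'' is not obviously preserved through the interpolation region where $\chi_1$ varies. I would resolve this in one of two standard ways. Either (i) work in a Giroux normal form for a neighborhood of $\partial_\inn M$ adapted to $V_0$, in which inward-transverse contact vector fields near $\partial_\inn M$ admit a transparent parametrization and the germ of $V$ can be extended directly without a crude cutoff; or (ii) exploit compactness of $\Gamma$: since the ``good'' convex combination keeps the transverse component bounded strictly away from zero on a fixed compact neighborhood of $\Gamma$, and since the domains of $V$ and of $V_0$ both contain a fixed-width neighborhood of $\Gamma$, one has room to spread the interpolation over a region of definite width and to arrange first-order vanishing of $h-h_0'$ along $\partial_\inn M$ there, which together make the error term subdominant. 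Either route is a routine, if technical, maneuver of convex-surface theory, and it is the only substantive content of the lemma.
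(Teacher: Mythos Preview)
Your construction is correct, and the cutoff issue you flag (the $d\chi\cdot(h-h_0)$ term spoiling naive linear interpolation of contact Hamiltonians) is real; both of your proposed fixes---matching $1$-jets along $\partial_\inn M$ before cutting off, or passing to a $\partial_t$-invariant Giroux model---work and are indeed routine convex-surface maneuvers.

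That said, the paper's own proof is a single sentence: it simply cites conditions \ref{Vinward}--\ref{bplusgood} together with the convex-hypersurface discussion of \S\ref{convexreview}. So what you have written is not so much a different approach as a careful unpacking of what the paper leaves implicit. The paper is content to observe that (a) condition \ref{bminusgood} (via Lemma \ref{domainsinsideconvex}) guarantees \emph{some} globally defined inward-transverse contact vector field on $\partial_\inn M$ whose dividing set is $\partial W\cap\partial_\inn M$, (b) condition \ref{Vinward} says $V$ is already such a vector field on its domain, and (c) the contractibility/convexity of the space of transverse contact vector fields to a fixed convex hypersurface (the content of \S\ref{convexreview}) lets one patch them. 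Your write-up makes the patching explicit.

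One small correction: your claim that the $\partial_\out M$ case is strictly easier because $\partial W\cap\partial_\out M$ ``lies in the interior of $(\partial_\out M)_+$, away from any dividing set'' is not quite right as stated---condition \ref{bplusgood} allows $W\cap\partial_\out M$ to be all of $(\partial_\out M)_+$, in which case $\partial W\cap\partial_\out M$ \emph{is} a dividing set. The two cases are really symmetric (reverse coorientations, as you note), not asymmetric.
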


\begin{proof}
This follows from conditions \ref{Vinward}--\ref{bplusgood} and the discussion in \S\ref{convexreview}.
\end{proof}

\begin{proposition}\label{stoppeddeformation}
The property of being forward stopped is invariant under compactly supported deformation of contact cobordisms.
\end{proposition}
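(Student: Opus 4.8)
The plan is to realize the compactly supported deformation by a compactly supported contactomorphism that preserves all the boundary structure of a contact cobordism, and then transport a forward stopping witness along it.

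\textbf{Step 1 (the main step): produce the contactomorphism.} Let $\{M_t\}_{t\in[0,1]}$ be a compactly supported deformation of contact cobordisms; present it as a fixed manifold-with-boundary $M$ carrying a smooth family of contact structures $\xi_t$ which is independent of $t$ outside a fixed compact set, with $\partial M$ compact and convex for every $\xi_t$ and with $t$-independent decomposition $\partial M=\partial_\inn M\sqcup\partial_\out M$ and coorientations. I claim there is a contactomorphism $\Theta\colon M_0\to M_1$ equal to $\id$ outside a compact set and carrying $\partial_\inn M_0,\partial_\out M_0$ to $\partial_\inn M_1,\partial_\out M_1$. This is a Gray--Moser argument. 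Since the deformation is constant near the non-compact end, the Moser vector field of the family $\xi_t$ vanishes there. Near the compact convex hypersurface $\partial M$ one uses the convex neighbourhood normal form---the germ of $\xi_t$ near $\partial M$ is determined, up to contactomorphism fixing $\partial M$, by $\xi_t|_{\partial M}$ together with the induced Liouville structures on the positive and negative parts---together with ordinary Gray stability on the compact hypersurface $\partial M$, to arrange the Moser vector field to be tangent to $\partial M$. The resulting Moser vector field then has compact support, hence integrates to a compactly supported isotopy $\Psi_t$ of $M$ with $\Psi_t^\ast\xi_t=\xi_0$ and $\Psi_t(\partial M)=\partial M$; since $\Psi_t$ is a path of diffeomorphisms starting at $\id$ and $\partial_\inn M,\partial_\out M$ are unions of connected components of $\partial M$, each is preserved, and $\Psi_t^\ast\xi_t=\xi_0$ forces the coorientations to agree. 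Set $\Theta:=\Psi_1$.

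\textbf{Step 2 (transport the witness).} Given $\Theta$, the rest is formal. Let $(W_0,V_0)$ be a forward stopping witness for $M_0$ as in Definition \ref{forwardstoppeddef}, and set $W_1:=\Theta(W_0)$ and $V_1:=\Theta_\ast V_0$, a positive contact vector field for $\xi_1$ defined on $\Theta(\Nbd\,\partial W_0)=\Nbd\,\partial W_1$. Conditions \ref{bwcpt}--\ref{bplusgood} transfer directly: $\partial W_1=\Theta(\partial W_0)$ is compact; the forward flow of $V_1$ is the $\Theta$-conjugate of that of $V_0$, so trajectories of $V_1$ starting in $W_1$ remain in $W_1$; $V_1$ is inward (resp.\ outward) along $\partial_\inn M$ (resp.\ $\partial_\out M$) since $\Theta$ is a boundary-preserving diffeomorphism; and $W_1\cap\partial_\inn M=\Theta\bigl(W_0\cap\partial_\inn M_0\bigr)=\Theta\bigl((\partial_\inn M_0)_+\bigr)$ is a positive piece of some convex partition, and $W_1\cap\partial_\out M$ a Liouville subdomain of one, because $\Theta$ restricts near $\partial M$ to a contactomorphism and hence carries a transverse contact vector field together with its partition \eqref{Vpartition} to another such (cf.\ Lemma \ref{domainsinsideconvex}). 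Thus $M_1$ is forward stopped by $(W_1,V_1)$. Running the deformation in reverse gives the converse implication, so forward-stoppedness is invariant under compactly supported deformation; the corresponding statement for inclusions of stopped Liouville sectors (Definition \ref{fsforsectors}) follows since the associated contact cobordism then varies by a compactly supported deformation, and the dual statements for backward stopped cobordisms follow upon negating the contact form.

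\textbf{Expected main obstacle.} The only genuinely technical point is the boundary-relative Gray stability in Step 1: away from $\partial M$ (and near infinity) this is the textbook Moser argument, but ensuring $\Theta$ can be chosen to preserve $\partial M$ together with its convex partition into positive and negative cores requires the convex neighbourhood normal form and some bookkeeping with the dividing sets of the intermediate cobordisms $M_t$. Step 2 is routine.
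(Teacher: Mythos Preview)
Your proposal and the paper's proof both reduce to Gray stability, but they handle the boundary $\partial M$ quite differently. You aim to produce a compactly supported contactomorphism $\Theta:M_0\to M_1$ preserving $\partial M$, and your Step~2 (transporting the witness along $\Theta$) is indeed routine once you have $\Theta$. The paper instead \emph{completes} $M$: after modifying the smooth trivialization so that a fixed $\frac\partial{\partial r}$ is transversely contact for every $\xi^t$ near $\partial M$, it glues on cylindrical ends $\RR_{\leq 0}\times\partial_\inn M$ and $\RR_{\geq 0}\times\partial_\out M$ to form $\hat M$, applies ordinary Gray on $\hat M$ (where the Gray field $X_t$ automatically commutes with $\frac\partial{\partial r}$ in the ends by uniqueness), and then observes that the ``thickened'' cobordism $M_R:=M\cup([-N,0]\times\partial_\inn M)\cup([0,N]\times\partial_\out M)$ is contactomorphic to $(M,\xi^t)$ for every $R$. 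Flowing the cylindrically extended witness $\hat W^0$ under $X_t$ gives a witness for $(M_R,\xi^1)$ once $R$ is large enough to absorb the displacement.

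Your Step~1 is where the argument is underspecified. The Gray vector field $X_t\in\xi_t$ satisfying $\sL_{X_t}\xi_t=\partial_t\xi_t$ is \emph{unique}, so there is nothing to ``arrange'': it is tangent to $\partial M$ only if $\partial_t\xi_t$ already vanishes suitably there. To make your approach go through, you would first need to normalize $\xi_t$ to be $t$-independent in a collar of $\partial M$---a parametric uniqueness statement for contact germs near a family of convex hypersurfaces---which can be carried out via Giroux's theory but which you have only gestured at. The paper's completion trick bypasses this entirely: there is no boundary in $\hat M$, and the only residual structure to preserve (commutation with $\frac\partial{\partial r}$) follows immediately from uniqueness of the Gray field. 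The price is that the flowed witness may spill out of $M$ into the ends, but enlarging to $M_R$ absorbs this.
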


Despite its aesthetic appeal, from a logical standpoint this result is unnecessary in view of \cite[Lemmas 2.9 and 3.41]{gpssectorsoc}.

\begin{proof}
A deformation of $M$ is smoothly trivial, so we may regard $M$ as fixed and only the contact structure $\xi^t$ as varying.
Fix a forward stopping witness $(W^0,V^0)$ for $(M,\xi^0)$.
Let $T_\inn^t$ and $T_\out^t$ be a family of inward/outward pointing contact (with respect to $\xi^t$) vector fields defined over $\Nbd\partial_\inn M$ and $\Nbd\partial_\out M$, respectively, so that at time $t=0$ they agree with $V^0$ as in Lemma \ref{extendV}.
By modifying our smooth trivialization, we may assume that $T_\inn^t$ and $T_\out^t$ are independent of $t$, so let us just denote them by $\frac\partial{\partial r}$.

Let $\hat M$ denote the completion of $M$ obtained by gluing on cylindrical ends $\RR_{\leq 0}\times\partial_\inn M$ and $\RR_{\geq 0}\times\partial_\out M$ via $\frac\partial{\partial r}$; we may extend $\xi_t$ to be $\frac\partial{\partial r}$-invariant in these ends.
We may also extend $W^0\subseteq M$ to $\hat W^0\subseteq\hat M$ by gluing on $\RR_{\leq 0}\times(W\cap\partial_\inn M)$ and $\RR_{\geq 0}\times(W\cap\partial_\out M)$.
Note that the family $(M,\xi^t)$ is contactomorphic to $M_R:=M\cup([-N,0]\times\partial_\inn M)\cup([0,N]\times\partial_\out M)$ equipped with $\xi^t$ (to see this, it is enough to cut off the contact vector field $\frac\partial{\partial r}$ to make it supported in a small neighborhood of $\partial M$ and then use its flow).
Hence it is enough to show that $(M_R,\xi^1)$ is forward stopped.
Note that $(M_R,\xi^0)$ is forward stopped, as witnessed by $\hat W^0$ and the splicing of $V^0$ with $\frac\partial{\partial r}$.

By Gray's theorem, there is a unique family of vector fields $X_t\in\xi_t$ satisfying $\sL_{X_t}\xi_t=\frac\partial{\partial t}\xi_t$; uniqueness shows that $X_t$ commutes with $\frac\partial{\partial r}$ in the cylindrical ends.
If $M$ is non-compact, then note that $X_t$ are supported away from this non-compactness, since by assumption our given deformation is compactly supported.
We may now simply flow the stopping witness for $(M_R,\xi^0)$ under $X_t$ to obtain one for $(M_R,\xi^1)$ (we should take $R$ large enough so that the flowed witness remains cylindrical outside $M_R$).
\end{proof}

\begin{proposition}\label{tautisstopped}
A tautologically forward stopped contact cobordism may be deformed (away from infinity) to be forward stopped.
\end{proposition}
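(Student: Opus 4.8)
The plan is to use Proposition~\ref{stoppeddeformation} to reduce to exhibiting a forward stopping witness for a contact cobordism obtained from $M$ by a compactly supported deformation which is as close as we wish to the ``pinched'' object of Definition~\ref{tautforwardstoppeddef}. Concretely, first I would carry out (most of) the pinching deformation, so that over a neighborhood $U$ of the incoming positive core $\cc_{(\partial_\inn M)_+}$ in $\partial_\inn M$ the cobordism $M$ is identified with a thin collar $[0,h]\times U$ (with $\{0\}\times U\subseteq\partial_\inn M$ and $\{h\}\times U\subseteq\partial_\out M$), where $h>0$ is as small as desired and the exit region $U':=\{h\}\times U\subseteq\partial_\out M$ is disjoint from $\cc_{(\partial_\out M)_-}$ (cf.\ Figure~\ref{figuretautstop}). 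By shrinking the pinching region I may further arrange that $\partial U$ is transverse to the characteristic foliation of $\partial_\inn M$, so that $U$ is the positive piece of some partition of $\partial_\inn M$ by Lemma~\ref{domainsinsideconvex}; since the characteristic foliations of the nearly parallel hypersurfaces $\partial_\inn M$ and $\partial_\out M$ coincide in the limit $h\to 0$, for $h$ small the boundary $\partial U'$ is likewise transverse to the characteristic foliation of $\partial_\out M$, so that $U'$ is a Liouville subdomain of the positive piece of some partition of $\partial_\out M$.

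Next I would construct the positive contact vector field $V$. The key point is that over $(\partial_\inn M)_+$ — hence over $U$ once $U$ is small — the inward transverse contact vector field $T$ realizing this partition satisfies $\lambda_+(T)=+1>0$ in the notation of \S\ref{convexreview}, i.e.\ $T$ is already a \emph{positive} contact vector field there. Extending the contact Hamiltonian of $T$ from a neighborhood of $U\subseteq\partial_\inn M$ to a positive function on a neighborhood of the thin collar (for instance one essentially independent of the collar coordinate) yields a positive contact vector field $V$ which agrees with $T$ near $U$, hence is inward transverse to $\partial_\inn M$ there; and for $h$ small $V$ remains transverse to the slices $\{r=\const\}$ throughout $[0,h]\times U$, pointing in the $+r$ direction, so in particular $V$ is outward transverse to $\partial_\out M$ along $U'$. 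I then set
\[
W:=\bigl\{\Phi^V_t(x)\ :\ x\in\overline U,\ 0\le t\le\tau(x)\bigr\},
\]
the forward flowout of $\overline U$ under $V$ up to the first hitting time $\tau(x)$ of $\partial_\out M$, which is continuous and close to $h$. Then $W$ is compact; $W\cap\partial_\inn M=U$ and $W\cap\partial_\out M=\{\Phi^V_{\tau(x)}(x):x\in\overline U\}$ is (for $h$ small, after perturbing $U$ if needed so that this image has boundary transverse to the characteristic foliation) a Liouville subdomain of $(\partial_\out M)_+$ disjoint from $\cc_{(\partial_\out M)_-}$; and the side walls of $W$ are by construction flowlines of $V$, so $V$ is tangent to them. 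Consequently the forward flow of $V$ sends $W$ into itself (flowing into $W$ along $U$, staying in $W$ along the side walls, and exiting $M$ along $U'$). This verifies all the conditions of Definition~\ref{forwardstoppeddef}, so the deformed $M$ is forward stopped, as claimed.

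I expect the main obstacle to be precisely the compatibility in the second paragraph: $V$ must simultaneously be a positive contact vector field, be inward transverse to $\partial_\inn M$ over $U$, and be outward transverse to $\partial_\out M$ over $U'$, and a priori these are competing demands. The resolution is the observation that over $(\partial_\inn M)_+$ the inward transverse contact vector field is \emph{automatically} positive, so one may take $V=T$ near $U$ and simply propagate it across the thin collar; the remaining work is the routine (but slightly fiddly) verification that the pinching deformation can be chosen so that $U$ and $U'$ have boundaries transverse to the relevant characteristic foliations, which is handled by Lemma~\ref{domainsinsideconvex} together with the fact that $\partial_\inn M$ and $\partial_\out M$ become $C^\infty$-close over $U$ as $h\to 0$.
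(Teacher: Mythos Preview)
Your approach is correct and rests on the same key observation as the paper's proof: the transverse contact vector field for $\partial_\inn M$ is already a \emph{positive} contact vector field over $(\partial_\inn M)_+$, so it can serve as the $V$ in Definition~\ref{forwardstoppeddef}, and the sweepout of a small Liouville neighborhood of $\cc_{(\partial_\inn M)_+}$ under this $V$ is the desired $W$.

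The execution differs in one useful way. You stop the pinching at small thickness $h>0$, then extend $V$ across the collar and verify the conditions on $W\cap\partial_\out M$ by a $C^\infty$-closeness argument as $h\to 0$ (the ``fiddly'' part you anticipate). The paper instead passes all the way to the pinched limit and then \emph{un-pinches by translating $\partial_\inn M$ outward along $V$ itself}. In the resulting cobordism $M'$, the region near $\cc_{(\partial_\inn M)_+}$ is by construction exactly a $V$-cylinder over a small Liouville domain $A\ni\cc_{(\partial_\inn M)_+}$, with one end on $\partial_\inn M'$ and the other on $\partial_\out M'=\partial_\out M$ (which in the pinched region literally coincides with the old $\partial_\inn M$, with the same coorientation). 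Hence $V$ is tautologically tangent to the side walls of $W$, and both boundary conditions follow directly from Lemma~\ref{domainsinsideconvex} with no perturbation or limiting argument needed. This sidesteps exactly the compatibility issues you flag. (Incidentally, you do not really need Proposition~\ref{stoppeddeformation}: the statement already allows you to deform $M$ before exhibiting the witness.)
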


\begin{proof}
We will show that (a small deformation of) the limiting pinched cobordism $M$ from Definition \ref{tautforwardstoppeddef} is forward stopped.
By translating $\partial_\inn M$ outward by its transverse contact vector field $V$, we obtain a genuine contact cobordism $M'$.
Let $W\subseteq M'$ be the cylinder over (i.e.\ the sweepout under $V$ of) a small Liouville domain neighborhood of $\cc_{(\partial_\inn M)_+}\subseteq\partial_\inn M$.
Using Lemma \ref{domainsinsideconvex}, we see that $W\cap\partial_\inn M'$ is indeed the positive piece $(\partial_\inn M')_+$ in some partition \eqref{Vpartition} and that $W\cap\partial_\out M'$ is a Liouville subdomain of $(\partial_\out M')_+$ in some partition.
The vector field $V$ is tangent to $\partial W$ and thus $W$ is a stopping witness for $M$.
\end{proof}

We now argue that being forward stopped tells us something about wrapping, namely, that if $Y\hookrightarrow Y'$ is forward stopped, then if we wrap a given Legendrian submanifold of $Y$ inside $Y'$, once it exits $Y$ it stays within $W$ and never returns to $Y$.
More precisely, such wrappings (those which stay inside $W$ and never return to $Y$) are cofinal in all wrappings (wrappings which exit $W$ can of course exist; the claim is simply that they can be ignored for the purposes of calculating wrapped Floer cohomology).

\begin{proposition}\label{forwardcofinal}
Suppose $Y\hookrightarrow Y'$ is forward stopped by $W$, and let $\Lambda\subseteq Y^\circ$ be a compact Legendrian submanifold.
Consider wrappings $\Lambda\leadsto\Lambda^w$ inside $Y^\circ$ followed by wrappings $\Lambda^w\leadsto\Lambda^{ww}$ inside $(Y')^\circ$ supported inside (i.e.\ fixed outside) $W$ union any fixed small neighborhood of $W\cap\partial Y$ inside $Y$ and of $W\cap\partial Y'$.
The collection of all such compositions $\Lambda\leadsto\Lambda^{ww}$ is cofinal in the wrapping category of $\Lambda$ inside $Y'$.
\end{proposition}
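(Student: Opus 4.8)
The plan is to use the cofinality criterion of Lemma \ref{cofinalitycriterion}. Since wrapping happens entirely at contact infinity, we work with the boundary at infinity $(\partial_\infty Y)^\circ \hookrightarrow (\partial_\infty Y')^\circ$ — but actually the statement is already phrased for contact manifolds $Y \hookrightarrow Y'$ with convex boundary, so I will work directly there. The point is to construct a single positive contact vector field $V$ on $Y'$ whose forward flow, applied to $\Lambda$, (a) is cofinal in the wrapping category of $\Lambda$ inside $Y'$, and (b) after finite time carries $\Lambda$ into $W$ and thereafter keeps it inside $W$, with all motion supported in $W$ union a small neighborhood of $W \cap \partial Y$ and $W \cap \partial Y'$.

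First I would set up the vector field. Using the forward stopping witness $(W,V_0)$ and Lemma \ref{extendV}, extend $V_0$ near $\partial W \cap \partial_\inn M$ and $\partial W \cap \partial_\out M$ to contact vector fields $T_\inn, T_\out$ that are inward/outward transverse to $\partial_\inn M = \partial Y$ and $\partial_\out M = \partial Y'$. The idea is then to build a positive contact vector field $V$ on $Y'$ which: is inward-pointing along $\partial Y'$ (so flow stays in $Y'$), agrees with $V_0$ on a neighborhood of $\partial W$ inside $M = Y' \setminus Y^\circ$ so that $W$ is forward-invariant under its flow (condition (ii) of Definition \ref{forwardstoppeddef}), and on $Y^\circ$ is an arbitrary positive contact vector field that pushes Legendrians across $Y$ towards $\partial Y$ and into the region where it matches $V_0$. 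Concretely I would take $V$ on $Y$ to be a large positive multiple of a contact vector field that is cofinal for wrapping inside $Y$ and transverse to $\partial Y$ in the inward (into $M$) direction, splice it with $V_0$ near $\partial Y \cap W$ using a partition of unity in the contact-Hamiltonian picture, extend $V_0$ into the interior of $W$ as any positive contact field keeping $W$ forward-invariant, and near $\partial Y'$ cut things off so $V$ is inward-pointing there (using that $V_0$ is outward-pointing along $\partial_\out M$, so $-$(cutoff of it) or a modification is inward along $\partial Y'$ — care is needed here).

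Next I would verify cofinality. Since $\Lambda$ starts in $Y^\circ$ and $V|_Y$ is chosen to be a sufficiently rapid positive field, after finite time the flowed Legendrian $\Lambda^t$ enters the collar of $\partial Y$ where $V = V_0$, hence crosses into $W \cap M$; by forward-invariance of $W$ it stays in $W$ thereafter, and by condition (v)–(vi) and the construction of positive contact vector fields in \cite[\S 2.9]{gpssectorsoc} the flow inside $W$ can be arranged to escape to infinity within $W$ (pushing towards $\cc_{(\partial_\out M)_+}$ and away from the negative cores). To invoke Lemma \ref{cofinalitycriterion} I need a contact form $\alpha$ on $(Y')^\circ$ with $\int_0^\infty \min_{\Lambda^t} \alpha(\partial_t \Lambda^t)\,dt = \infty$; since $V$ is a fixed positive contact vector field, $\alpha(V) > 0$ everywhere, and along the compact trajectory-swept region $\bigcup_t \Lambda^t$ — which, being eventually in $W$ escaping to infinity, is either bounded below by a positive constant or one reparametrizes as in the proof of Lemma \ref{cofinalitycriterion} — the integral diverges. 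I would also note the flow is eventually disjoint from any compact set (escaping to infinity inside $W$), so the "in particular" clause of Lemma \ref{cofinalitycriterion} applies directly once we know it escapes. Finally, any wrapping of $\Lambda$ inside $Y^\circ$ followed by one inside $(Y')^\circ$ of the stated restricted form maps to this cofinal family after perturbation, since by filtration of the wrapping categories (\cite[Lemma 3.27]{gpssectorsoc}) and the cofinal functor \eqref{laglegcofinal} it suffices to produce one cofinal object of the required shape, which our $V$-flow does.

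The main obstacle I expect is the careful construction of the global positive contact vector field $V$ on $(Y')^\circ$: it must simultaneously be inward-pointing along $\partial Y'$ (to keep the isotopy inside $Y'$), restrict near $\partial W$ to something making $W$ forward-invariant so that once $\Lambda$ enters $W$ it never returns to $Y$, be supported (as an isotopy of $\Lambda$) only in $W$ and small collars of $W \cap \partial Y$ and $W \cap \partial Y'$ once $\Lambda$ has left $Y$, and still be a \emph{positive} contact vector field everywhere. The cleanest route is to work entirely with positive contact Hamiltonians $h : Y' \to \RR_{>0}$ (in a fixed contact form), where "positive" becomes "$h > 0$", "inward along $\partial Y'$" and "$W$ forward-invariant" become open/closed sign conditions on $h$ and its derivatives near the relevant hypersurfaces, and convex-combining such Hamiltonians preserves positivity — reducing the construction to patching finitely many local models via a partition of unity, exactly in the spirit of \cite[\S 2.9]{gpssectorsoc} and the proof of Lemma \ref{cofinalitycriterion}. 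Once $V$ is in hand, the cofinality verification is essentially a repetition of the argument in Lemma \ref{cofinalitycriterion} together with the escape-to-infinity-inside-$W$ observation.
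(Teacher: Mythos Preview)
Your overall strategy—construct a positive contact vector field and invoke Lemma~\ref{cofinalitycriterion}—is sound, but there is a genuine gap in obtaining the \emph{factored} structure demanded by the statement. You must exhibit cofinal wrappings that decompose as a positive isotopy entirely inside $Y^\circ$ followed by a positive isotopy supported in $W$ (union the specified small collars). A single vector field $V$ on $Y'$ does not give this: different pieces of $\Lambda$ will cross $\partial Y$ into $W$ at different times, so there is no moment $T_0$ at which the isotopy on $[0,T_0]$ lies in $Y^\circ$ while the isotopy on $[T_0,\infty)$ is supported in $W$. Your final paragraph asserts that the $V$-flow is ``of the required shape,'' but this is precisely what fails.

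The paper resolves this with \emph{two} vector fields. First, a positive contact vector field $V_1$ on $Y$ whose contact Hamiltonian $M(t)$ in collar coordinates near $\partial Y$ satisfies $M(0)=M'(0)=0$, so the $V_1$-flow is complete on $Y^\circ$ and never exits $Y$. Second, a modification $V_2$ on $Y'$ agreeing with $V_1$ except in a tiny neighborhood of $\partial Y$, where it equals $\varepsilon$ times the stopping witness, and decaying similarly along $\partial Y'$. The cofinal family is $\Lambda\leadsto e^{NV_2}\Lambda$, factored as
\[
\Lambda\;\xrightarrow{\text{flow of }V_1}\;e^{NV_1}\Lambda\;\xrightarrow{a:\,0\to 1}\;e^{NV_2}\Lambda,
\]
where the second arrow is the isotopy $a\mapsto e^{N(aV_2+(1-a)V_1)}\Lambda$. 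The first arrow stays in $Y^\circ$ by construction of $V_1$. The second arrow is an interpolation in $a$ (not a flow in time), and moves only those points whose trajectory has reached the locus $\{V_1\ne V_2\}$; since that locus is a small neighborhood of $\partial Y$—chosen \emph{after} fixing $\Lambda$, so that the $V_1$-flow of $\Lambda$ approaches $\partial Y$ only near $\cc_{(\partial Y)_+}\subseteq W\cap\partial Y$—and since $aV_2+(1-a)V_1$ is weakly inward along $\partial W$, such trajectories end in $W$ union the small collars. This interpolation trick is the missing idea.

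A secondary issue: you invoke ``escaping to infinity inside $W$'' for cofinality, but $Y'$ and $W$ may well be compact. The correct appeal to Lemma~\ref{cofinalitycriterion} is simply that $V_2$ is a complete positive contact vector field on $(Y')^\circ$.
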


\begin{proof}
Fix a positive contact vector field $V$ defined over $\Nbd\partial W$ satisfying the conditions of Definition \ref{forwardstoppeddef}.
Fix contact vector fields $T$ and $T'$ outwardly transverse to $\partial Y$ and $\partial Y'$ as in Lemma \ref{extendV}.
We use these transverse contact vector fields to fix coordinates near $\partial Y$ and $\partial Y'$ as in \cite[\S 2.9]{gpssectorsoc}.
We also fix contact forms near $\partial Y$ and $\partial Y'$ as in \cite[\S 2.9 Equations (2.22) and (2.26)]{gpssectorsoc} which evaluate to $1$ on $T$ and $T'$ over $\Nbd(W\cap\partial Y)$ and $\Nbd(W\cap\partial Y')$.
In particular, over $\Nbd(W\cap\partial Y)$ and $\Nbd(W\cap\partial Y')$, we have coordinates $\RR_{t\geq 0}\times(W\cap\partial Y)$ and $\RR_{t\geq 0}\times(W\cap\partial Y')$ (respectively) in which $T$ and $T'$ equal $-\frac\partial{\partial t}$ and the contact form equals $\lambda-dt$ and $\lambda'-dt$ for Liouville forms $\lambda$ and $\lambda'$ on $W\cap\partial Y$ and $W\cap\partial Y'$.

\begin{figure}[ht]
\centering
\includegraphics[max width=.95\textwidth]{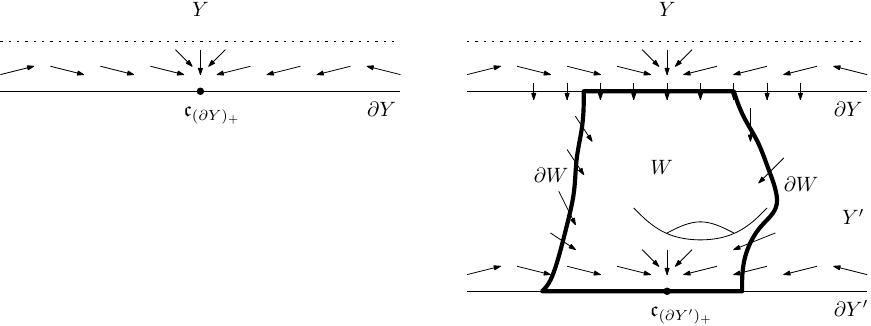}
\caption{The positive contact vector fields $V_1$ (left) and $V_2$ (right).}\label{figureforwardvectorfields}
\end{figure}

We define a positive contact vector field $V_1$ on $Y$ as follows.
Near $\partial Y$, we define $V_1$ by a contact Hamiltonian of the form $M(t)$ satisfying $M(0)=M'(0)=0$ and $M(t),M'(t)>0$ for $t>0$, thus of the form \cite[\S 2.9 Equations (2.23) and (2.29)]{gpssectorsoc} with the resulting excellent dynamics near $\partial Y$.
In particular, in the coordinates $\RR_{t\geq 0}\times(W\cap\partial Y)$ over $\Nbd(W\cap\partial Y)$, we have
\begin{equation}
V_1=-M'(t)Z_\lambda-M(t)\frac\partial{\partial t}.
\end{equation}
We extend $V_1$ to all of $Y$ to be complete (this is possible since $\partial W$ is compact: use the argument in the proof of \cite[Lemma 3.29]{gpssectorsoc}).
The vector field $V_1$ is illustrated on the left of Figure \ref{figureforwardvectorfields}.

We now define a positive contact vector field $V_2$ on $Y'$.
The contact vector field $V_2$ is obtained from $V_1$ by modifying $M(t)$ in a very small neighborhood of $\partial Y$ to not decay to zero but rather to a very small constant $\varepsilon>0$.
Thus $V_2$ equals $-\varepsilon\frac\partial{\partial t}=\varepsilon T=\varepsilon V$ over $\Nbd(\partial Y\cap\partial W)$, and we extend $V_2$ to all of $\Nbd\partial W$ as $\varepsilon\cdot V$.
Near $\partial Y'$ we declare $V_2$ to again be given by a contact Hamiltonian of the form $N(t)$ satisfying $N(0)=N'(0)=0$ and $N(t),N'(t)>0$ for $t>0$ and equalling $\varepsilon$ outside a very small neighborhood of $t=0$.
Examining the resulting form
\begin{equation}
V_2=-N'(t)Z_{\lambda'}-N(t)\frac\partial{\partial t}
\end{equation}
over $\Nbd(W\cap\partial Y')$, we see that the flow of $V_2$ attracts towards $W\cap\partial Y'$ in a neighborhood.
We extend $V_2$ to the rest of $Y'$ to be complete.
The vector field $V_2$ is illustrated on the right of Figure \ref{figureforwardvectorfields}.

We now analyze the effect of flowing a given compact Legendrian $\Lambda\subseteq Y^\circ$ under $V_1$ and/or $V_2$.
The image of $\Lambda$ under the forward flow of $V_1$ is bounded away from $\partial Y$ except for a neighborhood of $\cc_{(\partial Y)_+}\subseteq(W\cap\partial Y)$ (this follows from the explicit form of $V_1$ near $\partial Y$, see \cite[\S 2.9 Equations (2.23) and (2.29) and Proposition 2.35]{gpssectorsoc}).
Hence, fixing such a $\Lambda$, we may choose the modification $V_2$ of $V_1$ to take place in a sufficiently small neighborhood of $\partial Y$ so as to ensure that the forward flow of $\Lambda$ under $V_2$ exits $Y$ only along (the interior of) $W\cap\partial Y$.
Now $V_2$ is weakly inward pointing along $\partial W$, so the forward flow of $\Lambda$ under $V_2$ either stays entirely within $Y$ or exits $Y$ along $W\cap\partial Y$ and then stays entirely within $W$ (except over a small neighborhood of $W\cap\partial Y'$, where it is not necessarily weakly inward pointing along $\partial W$, but nevertheless attracts towards $W\cap\partial Y'$).
Furthermore, the same holds for the flow of $\Lambda$ under $aV_2+(1-a)V_1$ for any $a\in[0,1]$.

With our vector fields $V_1$ and $V_2$ defined and their dynamics understood, we may now conclude the proof.
The family of wrappings $\Lambda\leadsto e^{NV_2}\Lambda$ is cofinal, since $V_2$ is a complete positive contact vector field on $(Y')^\circ$ (use Lemma \ref{cofinalitycriterion}).
These wrappings may be factorized as $\Lambda\leadsto e^{NV_1}\Lambda\leadsto e^{NV_2}\Lambda$, where the second positive isotopy is $e^{N(aV_2+(1-a)V_1)}\Lambda$ for $a\in[0,1]$.
The first isotopy $\Lambda\leadsto e^{NV_1}\Lambda$ clearly takes place entirely inside $Y$.
The second isotopy $e^{N(aV_2+(1-a)V_1)}\Lambda$ is supported inside $W$ union a small neighborhood of $W\cap\partial Y$ inside $Y$ and of $W\cap\partial Y'$.
Indeed, the only points which move during this isotopy are those corresponding to trajectories which have reached a place where $V_1$ and $V_2$ differ, and such trajectories can only end inside $W$ union a small neighborhood of $W\cap\partial Y$ inside $Y$ and of $W\cap\partial Y'$.
\end{proof}

\subsection{Forward stopped implies fully faithful} 

\begin{corollary}\label{stoppedff}
Let $(X,\f)\hookrightarrow(X',\f')$ be an inclusion of stopped Liouville sectors which is forward stopped by $W$.
The pushforward functor $\W(X,\f)\hookrightarrow\W(X',\f')$ is fully faithful,
and its image is left-orthogonal to every $K\subseteq X'$ (disjoint from $\f'$ at infinity) which is disjoint from $X$ and disjoint at infinity from $W$.
\end{corollary}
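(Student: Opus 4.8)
The plan is to derive both statements from the cofinality result Proposition \ref{forwardcofinal}. Since the pushforward functor $\W(X,\f)\to\W(X',\f')$ is already available (covariant functoriality), to see it is fully faithful it is enough to check that the induced maps $HW^\bullet(L,K)_{X,\f}\to HW^\bullet(L,K)_{X',\f'}$ are isomorphisms for all objects $L,K$; this map is, up to inverting quasi-equivalences, the natural map between the two direct limits of Floer cohomology over wrapping categories (it is compatible with continuation maps by construction of the pushforward functor). Likewise, the left-orthogonality assertion amounts to $HW^\bullet(L,K)_{X',\f'}=0$ when $L$ lies in the image of $\W(X,\f)$ and $K\subseteq X'$ is disjoint from $X$ and disjoint at infinity from $W$. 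In both cases we apply Proposition \ref{forwardcofinal} to the forward stopped inclusion of contact manifolds $\partial_\infty X\setminus\f\hookrightarrow\partial_\infty X'\setminus\f'$ with witness $W\subseteq M=\partial_\infty(X'\setminus X^\circ)\setminus\f'$.

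\emph{Full faithfulness.} Fix objects $L,K$ of $\W(X,\f)$ and a cofinal sequence of wrappings $L=L^{(0)}\leadsto L^{(1)}\leadsto\cdots$ inside $(\partial_\infty X)^\circ\setminus\f$, as used to compute $HW^\bullet(L,K)_{X,\f}=\varinjlim_i HF^\bullet(L^{(i)},K)_X$. By Proposition \ref{forwardcofinal}, following each $L^{(i)}$ by further wrappings $L^{(i)}\leadsto L^{(i,j)}$ supported (at infinity) inside $W$ together with fixed small neighborhoods of $W\cap\partial\partial_\infty X$ inside $\partial_\infty X$ and of $W\cap\partial\partial_\infty X'$ produces a family cofinal in the wrapping category of $L$ inside $(\partial_\infty X')^\circ\setminus\f'$; since the latter is filtered, $HW^\bullet(L,K)_{X',\f'}=\varinjlim_{i,j}HF^\bullet(L^{(i,j)},K)_{X'}$. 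It now suffices to show that for each $i,j$ the natural (continuation) map $CF^\bullet(L^{(i,j)},K)_{X'}\to CF^\bullet(L^{(i)},K)_X$ is an isomorphism of complexes. The isotopy $L^{(i)}\leadsto L^{(i,j)}$ is supported near infinity in the cone over $W$ and the two collars, and this support is disjoint from $K$ (indeed $K\subseteq X$ is disjoint from $\partial X$, while $\partial_\infty K\subseteq(\partial_\infty X)^\circ\setminus\f$ is disjoint from $W$ and, for the collars taken small enough, from the collars); hence $L^{(i,j)}$ coincides with $L^{(i)}$ near $K$, so the two complexes have the same generators and the continuation map may be taken to be the identity on them. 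For the differentials, observe that $L^{(i,j)}$ leaves $X$ only in a neighborhood of infinity, whereas a holomorphic strip with both asymptotics in the fixed bounded region cannot enter that neighborhood, by the monotonicity and bounded-geometry estimates of \cite[Proposition 3.19]{gpssectorsoc}; thus every such strip has boundary on $L^{(i)}$ and $K$, both lying in $X$, and the maximum principle for the projection $\pi_X$ near $\partial X$ (Remark \ref{sectorbdrycoords}) confines it to $X$. Passing to the direct limit gives $HW^\bullet(L,K)_{X',\f'}=\varinjlim_i HF^\bullet(L^{(i)},K)_X=HW^\bullet(L,K)_{X,\f}$, and by construction this is the map induced by the pushforward functor.

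\emph{Left-orthogonality.} Let $L$ be an object of $\W(X,\f)$, regarded in $\W(X',\f')$, and let $K\subseteq X'$ be disjoint from $X$ and disjoint at infinity from $W$. As above, $HW^\bullet(L,K)_{X',\f'}=\varinjlim_{i,j}HF^\bullet(L^{(i,j)},K)_{X'}$, where each $L^{(i,j)}$ is contained in $X$ together with a neighborhood of infinity in the cone over $W$ and small collars of $W\cap\partial\partial_\infty X$ and $W\cap\partial\partial_\infty X'$. Since $K$ is disjoint from $X$, is cylindrical near infinity with $\partial_\infty K$ disjoint from $W$, and the collar near $\partial\partial_\infty X'$ can be taken within an arbitrarily small neighborhood of infinity, we conclude that $L^{(i,j)}$ is disjoint from $K$ for all $i,j$. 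Hence $HF^\bullet(L^{(i,j)},K)=0$ for all $i,j$, so $HW^\bullet(L,K)_{X',\f'}=0$.

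The main obstacle is the confinement of holomorphic strips in the full faithfulness step. In contrast with Lemma \ref{faithfulstopping}, the wrapped representative $L^{(i,j)}$ genuinely exits $X$, so the maximum principle for $\partial X$ cannot be invoked directly; the point is that it exits $X$ only near infinity, and a strip whose asymptotics lie in a fixed bounded set has too little energy to reach that neighborhood (monotonicity), after which the usual maximum principle applies inside $X$. A secondary point, routine but worth checking, is that the identification of direct limits produced above really is the map induced by the pushforward functor, which follows from naturality of continuation maps and the localization construction of \S\ref{wrappedlocalizationconstruction}.
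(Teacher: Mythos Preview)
Your overall strategy matches the paper's: invoke Proposition~\ref{forwardcofinal} to obtain cofinal two-stage wrappings $L\leadsto L^w\leadsto L^{ww}$, then argue that the second stage does not change Floer cohomology with $K$. The difference is in how you justify that last step. You set up a chain-level identification and then run a disk-confinement argument (monotonicity to keep strips away from the modification region, followed by the maximum principle near $\partial X$); you yourself flag this as ``the main obstacle.'' The paper bypasses this entirely: since the isotopy $L^w\leadsto L^{ww}$ keeps $\partial_\infty L$ disjoint from $\partial_\infty K$, the continuation map $HF^\bullet(L^w,K)\to HF^\bullet(L^{ww},K)$ is an isomorphism by the last property of \cite[Lemma~3.26]{gpssectorsoc}, with no curve analysis required. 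Your argument can be made to work (you are free to choose, for each fixed $K$, a cofinal family in which the second-stage dragging starts arbitrarily far out, so the energy/monotonicity bound applies), but it is more delicate than necessary and you should also check that your tautological chain-level identification really coincides with the continuation map appearing in the direct system---this is again exactly what \cite[Lemma~3.26]{gpssectorsoc} packages. For the left-orthogonality statement your argument is essentially identical to the paper's.
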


\begin{proof}
For full faithfulness, we just need to argue that $HW^\bullet(L,K)_{X,\f}\to HW^\bullet(L,K)_{X',\f'}$ is a quasi-isomorphism.
According to Proposition \ref{forwardcofinal}, wrapping $L$ inside $X'$ away from $\f'$ may be described by first wrapping $L\leadsto L^w$ inside $X$ away from $\f$, and then wrapping $L^w\leadsto L^{ww}$ inside $W$ (union a small neighborhood of $\partial_-W$).
The second wrapping is disjoint from $\partial_\infty K\subseteq\partial_\infty X$, and hence the map $HF^\bullet(L^w,K)\xrightarrow\sim HF^\bullet(L^{ww},K)$ is an isomorphism by Lemma-Definition \ref{continuationdef}.
Taking the direct limit over pairs of wrappings $L\leadsto L^w\leadsto L^{ww}$ yields the map $HW^\bullet(L,K)_{X,\f}\xrightarrow\sim HW^\bullet(L,K)_{X',\f'}$ which is thus also an isomorphism.

The argument for orthogonality is the same: if $K\cap X = \varnothing$ and $\partial_\infty K \cap W = \varnothing$,
then every $L\subseteq X$ has cofinal wrappings away from $K$, and hence the direct limit computing $HW^\bullet(L, K)$ vanishes.
\end{proof}

\begin{example}
Given an inclusion of Liouville sectors $X \hookrightarrow X'$, consider the factoring constructed in Proposition \ref{forwardexample} 
into a forward stopped inclusion $X\hookrightarrow\check X'$ and a stop removal $\check X'\hookrightarrow X'$.  
Correspondingly the functor $\W(X) \to \W(X')$ factors into a fully faithful morphism
$\W(X) \hookrightarrow \W(\check X')$, followed by the map 
$\W(\check X') \to \W(X')$, which is a localization if $(\partial\partial_\infty X)_+$ is Weinstein by Theorem \ref{stopremoval}).
\end{example} 

\section{Pushout formulae} \label{pushoutsec}

In this section we derive various pushout formulae for partially wrapped Fukaya categories from Corollary \ref{stoppedff} (full faithfulness and semi-orthogonality for forward stopped inclusions).

\subsection{Viterbo restriction}\label{viterborestrictionsection}

We implement a proposal of Sylvan \cite{sylvantalk} for defining a restriction functor (conjecturally agreeing with Abouzaid--Seidel's Viterbo restriction functor \cite{abouzaidseidel} on the domain of the latter).

Let $X^\inn_0\subseteq X_0$ be an inclusion of Liouville domains, with completions $X^\inn$ and $X$.
Given any Liouville manifold $W$ and an embedding of $X_0$ as a Liouville hypersurface inside $\partial_\infty W$, we may consider the diagram
\begin{equation}\label{viterbogeneraldiagramfirst}
\begin{tikzcd}
\W(X)\ar{r}&\W(W,\cc_X)\ar{d}\\
\W(X^\inn)\ar{r}&\W(W,\cc_{X^\inn}),
\end{tikzcd}
\end{equation}
where the horizontal arrows are given by the composition of the K\"unneth stabilization functor $\W(X)\hookrightarrow\W(X\times T^\ast[0,1])$ with pushing forward under the tautological inclusion $X\times T^\ast[0,1]\hookrightarrow(W,\cc_X)$ near the stop $\cc_X$.
Given this diagram, it is natural to ask whether there is a natural functor $\W(X)\to\W(X^\inn)$ making the diagram commute.
Sylvan conjectured that Abouzaid--Seidel's (partially defined) functor $\W(X)\to\W(X^\inn)$ is such a functor.
Sylvan also observed that, in the opposite direction, one may, under certain assumptions, use the diagram \eqref{viterbogeneraldiagramfirst} in the special (in fact, universal) case $W=X\times\CC_{\Re\geq 0}$ to define \emph{a priori} a restriction functor $\Tw\W(X)\to\Tw\W(X^\inn)$, and conjectured that this recovers Abouzaid--Seidel's restriction functor on the domain of the latter.

To spell this out, observe that the inclusions of Liouville sectors
\begin{align}
\label{ffviterboI}X\times T^\ast[0,1]&\hookrightarrow(X\times\CC_{\Re\geq 0})\setminus\Nbd(X_0\times\{\infty\}),\\
\label{ffviterboII}X^\inn\times T^\ast[0,1]&\hookrightarrow(X\times\CC_{\Re\geq 0})\setminus\Nbd(X_0^\inn\times\{\infty\}),
\end{align}
are tautologically forward stopped (by following the trivial open book for the contact boundary of $X\times\CC$), and hence induce fully faithful pushforward functors by Corollary \ref{stoppedff}.
K\"unneth stabilization is always fully faithful, so we have a diagram whose horizontal arrows are fully faithful embeddings:
\begin{equation}\label{viterbospecialdiagram}
\begin{tikzcd}
\W(X)\ar[hook]{r}&\W(X\times\CC_{\Re\geq 0},\cc_X)\ar{d}\\
\W(X^\inn)\ar[hook]{r}&\W(X\times\CC_{\Re\geq 0},\cc_{X^\inn})
\end{tikzcd}
\end{equation}
(where we have used Corollary \ref{horizsmallstop} to relate the categories on the right with those on the right of \eqref{ffviterboI}--\eqref{ffviterboII}).
If $X^\inn$ is Weinstein, then Theorem \ref{halfplanegeneration} implies that the bottom horizontal arrow is essentially surjective on twisted complexes, and is thus a pre-triangulated equivalence.
Hence inverting it defines a restriction functor
\begin{equation}\label{restrictionfunctor}
\Tw\W(X)\to\Tw\W(X^\inn)
\end{equation}
making $\Tw\eqref{viterbospecialdiagram}$ commute.
In fact, by pushing forward under the canonical embedding $X\times\CC_{\Re\geq 0}\to W$ near a Liouville hypersurface $X_0\hookrightarrow\partial_\infty W$, we see that this restriction functor \eqref{restrictionfunctor} makes $\Tw\eqref{viterbogeneraldiagramfirst}$ commute as well.

\begin{remark}
The assumption that $X^\inn$ is Weinstein is used only to ensure essential surjectivity of $\W(X^\inn)\to\W(X\times\CC_{\Re\geq 0},\cc_{X^\inn})$; the restriction functor \eqref{restrictionfunctor} is defined whenever this holds.
More generally, one may define a ``Viterbo $(\W(X),\W(X^\inn))$-bimodule'' from \eqref{viterbospecialdiagram} by pulling back the diagonal bimodule from $\W(X\times\CC_{\Re\geq 0},\cc_{X^\inn})$ and ask whether it is representable (possibly by a twisted complex) and thus defines a functor \eqref{restrictionfunctor}.
It seems plausible that one could use geometric arguments to show this representability for any inclusion of Liouville domains $X^\inn_0\subseteq X_0$, possibly leading to a favorable comparison with the construction of Abouzaid--Seidel \cite{abouzaidseidel}.
\end{remark}

If we assume that both $X^\inn_0$ and the cobordism $X_0\setminus X^\inn_0$ are Weinstein (up to deformation), then we can say more about the restriction functor \eqref{restrictionfunctor} and the commutative diagrams \eqref{viterbogeneraldiagramfirst}:

\begin{proposition}
Suppose both $X^\inn_0$ and $X_0\setminus X^\inn_0$ are Weinstein.
The restriction functor \eqref{restrictionfunctor} is the quotient by the cocores of $X$ not in $X^\inn$, and the diagram
\begin{equation}\label{viterbogeneraldiagram}
\begin{tikzcd}
\Tw \W(X)\ar{r}\ar{d}[swap]{\eqref{restrictionfunctor}}&\Tw \W(W,\f\cup\cc_X)\ar{d}\\
\Tw \W(X^\inn)\ar{r}&\Tw \W(W,\f\cup\cc_{X^\inn})
\end{tikzcd}
\end{equation}
is a homotopy pushout for any stopped Liouville sector $(W,\f)$ with an embedding $X_0\hookrightarrow(\partial_\infty W)^\circ\setminus\f$ as a Liouville hypersurface.
\end{proposition}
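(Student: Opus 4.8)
The plan is to prove the two assertions in turn, the first (a geometric/categorical identification) being the real content and the homotopy pushout being then formal.

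First I would fix the Weinstein geometry. Deform the Liouville structure on $X$ so that $X^\inn_0\subseteq X_0$ becomes a Weinstein pair: the handle decomposition of $X_0$ is that of $X^\inn_0$ together with the handles of the (Weinstein) cobordism $X_0\setminus X^\inn_0$, and, after a further generic perturbation making the critical cocores properly embedded and disjoint at infinity from $\f$, we have $\cc_{X^\inn}\subseteq\cc_X$ with $\cc_X\setminus\cc_{X^\inn}$ mostly Legendrian inside $(\partial_\infty W)^\circ\setminus(\f\cup\cc_{X^\inn})$, its critical locus being the critical cores of the cobordism handles. The cocores of $X$ split accordingly into the cocores of $X^\inn$ and the cocores ``not in $X^\inn$'' (the cocores of the critical cobordism handles), and by the analysis of \S\ref{cocorediskkunneth} the top horizontal functor $\W(X)\hookrightarrow\W(X\times T^\ast[0,1])\to\W(W,\f\cup\cc_X)$ carries each cocore $c$ of $X$ to the small Lagrangian disk linking $\cc_X^\crit$ at the point determined by $c$, a point which lies in $(\cc_X\setminus\cc_{X^\inn})^\crit$ exactly when $c$ is not in $X^\inn$. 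Consequently Theorem \ref{stopremoval} identifies the right vertical functor $\W(W,\f\cup\cc_X)\to\W(W,\f\cup\cc_{X^\inn})$ with the quotient by the collection $\D$ of linking disks to $(\cc_X\setminus\cc_{X^\inn})^\crit$, equivalently by the images under the top horizontal of the cocores of $X$ not in $X^\inn$. (The top horizontal is fully faithful, by K\"unneth fully-faithfulness and Corollary \ref{stoppedff} applied to the relevant forward stopped inclusion with the extra stop $\f$ present but harmless.)

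Next I would prove that \eqref{restrictionfunctor} is the quotient of $\Tw\W(X)$ by the cocores not in $X^\inn$, working in the universal instance $W=X\times\CC_{\Re\geq 0}$, $\f=\varnothing$ of the above --- this is the diagram \eqref{viterbospecialdiagram} that defines \eqref{restrictionfunctor}. By the previous paragraph the composite of the top and right arrows of \eqref{viterbospecialdiagram} factors through the quotient $q\colon\Tw\W(X)\to\Tw(\W(X)/\D_X)$, where $\D_X$ is the set of cocores not in $X^\inn$; the induced functor $\Tw(\W(X)/\D_X)\to\Tw\W(X\times\CC_{\Re\geq 0},\cc_{X^\inn})$ is fully faithful by Lemma \ref{quotientfullfaithful}. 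Its essential image contains every linking disk to $\cc_{X^\inn}^\crit$ --- these are the images of the cocores of $X^\inn$, which survive the stop removal --- and by Theorem \ref{halfplanegeneration} those disks generate $\W(X\times\CC_{\Re\geq 0},\cc_{X^\inn})$, so the functor is essentially surjective on twisted complexes, hence a pre-triangulated equivalence. Since Theorem \ref{halfplanegeneration} also makes the bottom horizontal of \eqref{viterbospecialdiagram} a pre-triangulated equivalence $\Tw\W(X^\inn)\xrightarrow{\sim}\Tw\W(X\times\CC_{\Re\geq 0},\cc_{X^\inn})$, we conclude that \eqref{restrictionfunctor} agrees, up to this equivalence, with $q$; this is the first assertion.

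Finally, for the homotopy pushout statement, observe that the square \eqref{viterbogeneraldiagram} now has: top arrow fully faithful; left arrow the quotient $\Tw\W(X)\to\Tw(\W(X)/\D_X)\simeq\Tw\W(X^\inn)$; right arrow the quotient of $\Tw\W(W,\f\cup\cc_X)$ by the image of $\D_X$ under the top arrow, with target $\simeq\Tw\W(W,\f\cup\cc_{X^\inn})$ via Theorem \ref{stopremoval}; and bottom arrow the induced functor between these quotients. Because a quotient by a set of objects is a homotopy colimit and homotopy colimits commute (cf.\ the proof of Theorem \ref{mvthm} and the discussion in \S\ref{gluingintrosec}), pushing the localization $\Tw\W(X)\to\Tw(\W(X)/\D_X)$ out along the fully faithful top arrow yields precisely the localization of $\Tw\W(W,\f\cup\cc_X)$ at the image of $\D_X$; that is, $\hocolim\bigl(\Tw\W(X^\inn)\leftarrow\Tw\W(X)\to\Tw\W(W,\f\cup\cc_X)\bigr)\xrightarrow{\sim}\Tw\W(W,\f\cup\cc_{X^\inn})$, as desired. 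The main obstacle is the first paragraph: arranging the Weinstein normalization so that $X^\inn_0\subseteq X_0$ really is a Weinstein pair with the clean relation $\cc_{X^\inn}\subseteq\cc_X$ and mostly Legendrian complement, and matching the critical cobordism cores' linking disks with the stabilized cobordism cocores; everything after that is a formal consequence of stop removal, Lemma \ref{quotientfullfaithful}, Theorem \ref{halfplanegeneration}, and the calculus of homotopy colimits.
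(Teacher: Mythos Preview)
Your proposal is correct and follows essentially the same route as the paper: identify the right vertical of \eqref{viterbospecialdiagram} as stop removal (hence a quotient by the linking disks of $\cc_X\setminus\cc_{X^\inn}$, which are exactly the images of the cocores of $X$ not in $X^\inn$), deduce that \eqref{restrictionfunctor} is the quotient by those cocores, and then conclude the pushout via Lemma \ref{hocolimlocalcommute}. The paper's write-up is slightly more streamlined in one place: since $X$ (not just $X^\inn$) is Weinstein, Theorem \ref{halfplanegeneration} makes the \emph{top} horizontal of \eqref{viterbospecialdiagram} a pre-triangulated equivalence as well, so ``restriction $=$ quotient'' follows immediately from the commuting square without the detour through Lemma \ref{quotientfullfaithful}. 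Also, for the general-$W$ pushout you do not actually need full faithfulness of the top arrow (which is not asserted in the paper for arbitrary $W$); Lemma \ref{hocolimlocalcommute} alone suffices once you know both verticals are quotients by the same set and its image.
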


\begin{proof}
Since both $X$ and $X^\inn$ are Weinstein, both horizontal functors in \eqref{viterbospecialdiagram} are essentially surjective on twisted complexes by Theorem \ref{halfplanegeneration} and are thus pre-triangulated equivalences.
The right vertical functor is the quotient by the linking disks of $\cc_X\setminus\cc_{X^\inn}$ by Theorem \ref{stopremoval}.
Since these are precisely the images of the cocores of $X$ not in $X^\inn$, it follows by definition that \eqref{restrictionfunctor} is the quotient by the same.

Now going in the reverse direction, the vertical maps in \eqref{viterbogeneraldiagram} are, respectively, quotienting by the cocores of $X$ not in $X^\inn$ and quotienting by their images the linking disks to $\cc_X\setminus\cc_{X^\inn}$.
We thus conclude that \eqref{viterbogeneraldiagram} is a homotopy pushout by Lemma \ref{hocolimlocalcommute}.
\end{proof}

\subsection{Sector gluing}\label{mvthmproofsec}

\begin{proof}[Proof of Theorem \ref{mvthm}]
Our first task is simply to setup nice coordinates near the splitting hypersurface $X_1\cap X_2$.
To construct these coordinates, we will deform the Liouville form on $X$ near the splitting hypersurface; this is permitted since such a deformation does not affect the validity of the statement we are trying to prove.
The discussion surrounding \cite[Proposition 2.25]{gpssectorsoc} shows that there exist coordinates near $X_1\cap X_2$ of the form $(F\times T^\ast[0,1],\lambda_F+\lambda_{T^\ast[0,1]}+df)$ where $f:F\times T^\ast[0,1]\to\RR$ has proper support over $T^\ast[0,1]$ and $f=f_\pm:F\to\RR$ away from a compact subset of $T^\ast[0,1]$ and $X_1\cap X_2=F\times N^\ast\{\frac 12\}$.
By assumption, the Liouville manifold $(F,\lambda_F)$ is Weinstein up to deformation, namely there exists a compactly supported function $g:F\to\RR$ such that $(F,\lambda_F+dg)$ is Weinstein.
We may, of course, absorb $g$ into $f$ so that $(F,\lambda_F)$ is itself Weinstein.
We now consider the obvious linear deformation of Liouville forms between $\lambda_F+\lambda_{T^\ast[0,1]}+df$ and $\lambda_F+\lambda_{T^\ast[0,1]}+d(\varphi(t)f)$ where $\varphi:[0,1]\to[0,1]$ is smooth and equals $1$ near the boundary and equals $0$ near $t=\frac 12$.
A calculation as in \cite[Proposition 2.28]{gpssectorsoc} shows that convexity at infinity is preserved under this deformation.
Since $\varphi$ vanishes in a neighborhood of $t=\frac 12$, in this deformed Liouville manifold there now exist coordinates
\begin{equation}\label{splittingcoords}
(F\times T^\ast[0,1],\lambda_F+\lambda_{T^\ast[0,1]})\to(X,\lambda_X)
\end{equation}
strictly respecting the Liouville form, where $(F,\lambda_F)$ is Weinstein, and locally $X_1=\{t\leq\frac 12\}$ and $X_2=\{t\geq\frac 12\}$.

Let $X_i^+$ denote a small enlargement of $X_i^+$, namely locally $X_1^+=\{t\leq\frac 47\}$ and $X_2^+=\{t\geq\frac 37\}$, so $X_1^+\cap X_2^+=F\times T^\ast[\frac 37,\frac 47]$ (see Figure \ref{figuregluingdiagram}).
There is thus a diagram of $\ainf$-categories, well defined up to quasi-equivalence,
\begin{equation}\label{secondpushout}
\begin{tikzcd}
\W(F\times T^\ast[\frac 37,\frac 47])\ar{r}\ar{d}&\W(X_1^+,\rr_1)\ar{d}\\
\W(X_2^+,\rr_2)\ar{r}&\W(X,\rr_1\cup\rr_2)
\end{tikzcd}
\end{equation}
(note that the stop $\rr=\rr_1\cup\rr_2$ is disjoint from the chart \eqref{splittingcoords} near $X_1\cap X_2$ constructed above).
It is this diagram of $\ainf$-categories which we would like to show is an almost homotopy pushout.

\begin{figure}[ht]
\centering
\includegraphics[max width=.95\textwidth]{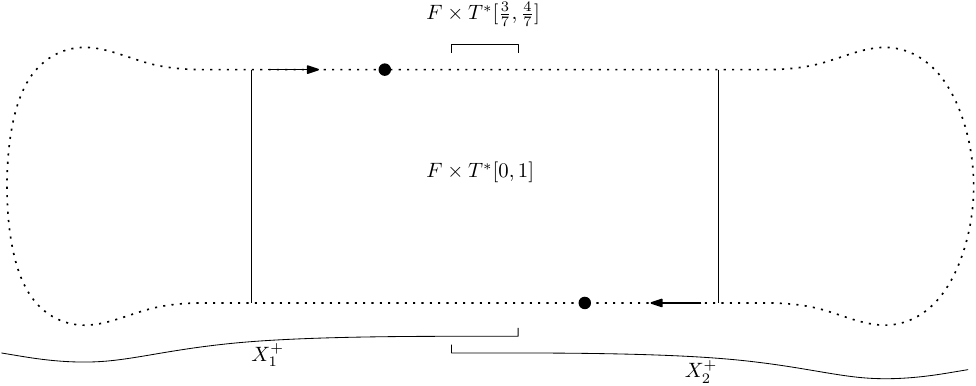}
\caption{Liouville sectors $X_i^+$ near the chart \eqref{splittingcoords}, with two dots indicating the location of the stops imposed to obtain $\check X_i^+$.
The arrows indicate the direction of the Reeb flow.}\label{figuregluingdiagram}
\end{figure}

We now introduce more stops into the picture, namely at $F_0\times\{\infty\cdot dt\}\times\{t=\frac 27\}$ and $F_0\times\{-\infty\cdot dt\}\times\{t=\frac 57\}$, where $F_0\subseteq F$ denotes any Liouville domain whose completion is $F$.
We denote the resulting Liouville sectors by $\check X$ and $\check X_i^+$; note that we still have $\check X_1^+\cap\check X_2^+=X_1^+\cap X_2^+=F\times T^\ast[\frac 37,\frac 47]$ since the added stops lie outside the region $[\frac 37,\frac 47]$.
There is thus a corresponding diagram of $\ainf$-categories
\begin{equation}\label{firstpushout}
\begin{tikzcd}
\W(F\times T^\ast[\frac 37,\frac 47])\ar{r}\ar{d}&\W(\check X_1^+,\rr_1)\ar{d}\\
\W(\check X_2^+,\rr_2)\ar{r}&\W(\check X,\rr_1\cup\rr_2).
\end{tikzcd}
\end{equation}
There is a map of diagrams $\eqref{firstpushout}\to\eqref{secondpushout}$.
By stop removal Theorem \ref{stopremoval} (after shrinking the stops modelled on $F_0$ down to their cores), each of the four maps comprising this map of diagrams is a quotient by the linking disks of the stops.
Thus by Lemma \ref{hocolimlocalcommute}, to show that \eqref{secondpushout} is an almost homotopy pushout, it suffices to show that \eqref{firstpushout} is an almost homotopy pushout.

We now argue that \eqref{firstpushout} is an almost homotopy pushout by appealing to Proposition \ref{semiorthhocolim} (in other words, we wish to show that \eqref{firstpushout} has, up to pre-triangulated equivalence, the same shape as in Example \ref{semiorthsquare}).
Let us first argue that the functors comprising \eqref{firstpushout} are all fully faithful.
By Corollary \ref{stoppedff}, it is enough to check that each of the corresponding inclusions of Liouville sectors is tautologically backward stopped.
We consider the inclusion $(\check X_1^+,\rr_1)\hookrightarrow(\check X,\rr_1\cup\rr_2)$.
The incoming core of the boundary of $\partial_\infty\check X_1^+$ is given by $\cc_F\times\{-\infty\cdot dt\}\times\{t=\frac 47\}$, and dragging the stop $F_0\times\{-\infty\cdot dt\}\times\{t=\frac 57\}$ from $t=\frac 57$ to $t=\frac 47$ shows that this inclusion is tautologically backward stopped (compare Proposition \ref{forwardexample} and Example \ref{boundaryfiberinclusionstopping}).
Identical considerations show that all the inclusions of Liouville sectors forming the diagram \eqref{firstpushout} are tautologically backward stopped, and so the functors are fully faithful.

Let us now ask for the left-orthogonal complements of $\W(F\times T^\ast[\frac 37,\frac 47])$ inside $\W(\check X_i^+,\rr_i)$ for $i=1,2$.
The Lagrangians inside $\{t\leq\frac 17\}$ (for $i=1$) or $\{t\geq\frac 67\}$ (for $i=2$) are left-orthogonal to $\W(F\times T^\ast[\frac 37,\frac 47])$ by Corollary \ref{stoppedff}.
Together with $\W(F\times T^\ast[\frac 37,\frac 47])$, these Lagrangians generate $\W(\check X_i^+,\rr_i)$ for $i=1,2$ by the wrapping exact triangle (push any Lagrangian until it lies in $\{t\leq\frac 17\}$ or $\{t\geq\frac 67\}$; this costs some number of linking disks, but these are generated by the subcategories in question by applying K\"unneth to $T^*[0,1]$ with a stop times $F$).
The Lagrangians inside $\{t\leq\frac 17\}$ and $\{t\geq\frac 67\}$ thus split-generate the ``right-new'' objects.
Their images in $\W(\check X,\rr_1\cup\rr_2)$ are mutually orthogonal by Corollary \ref{stoppedff}, which finishes off the verification of the hypotheses of Proposition \ref{semiorthhocolim}.
We have thus shown that \eqref{firstpushout} (and hence \eqref{secondpushout}) is an almost homotopy pushout, as desired.
\end{proof}

\begin{remark}
The wrapping analysis in the proof above in fact shows that the full subcategory of $\W(\check X,\rr_1\cup\rr_2)$
spanned by the images of $\W(X,\rr_1), \W(X,\rr_2), \W(F)$ is the \emph{Grothendieck construction} of the diagram
\begin{equation}\label{htpycolim}
\W(X_1,\rr_1)\leftarrow\W(F)\to\W(X_2,\rr_2),
\end{equation}
i.e. the semi-orthogonal gluing of $\W(F)$ with $\W(X_1,\rr_1)\sqcup\W(X_2,\rr_2)$ along (the disjoint union of) the pullback of the diagonal bimodules.  
That is, the space of morphisms from an object of $\W(F)$ to an object of $\W(X_i,\rr_i)$ is given by the morphism space in 
$\W(X_i,\rr_i)$ from the image of the first object to the second.

The homotopy colimit of a diagram such as \eqref{htpycolim} can be described concretely as the localization of the Grothendieck construction at the morphisms 
$L\to Q_i(L)$ for $L\in\W(F)$ corresponding 
to the identity map of $Q_i(L)$, where $Q_i$ ($i=1,2$) denotes the two functors in \eqref{htpycolim} (see Definition \ref{hocolimdef}).
It is equivalent to consider a generating set of $L\in\W(F)$, e.g.\ cocores, in which case one can check (e.g.\ using K\"unneth) that these cones are precisely the linking disks to the stop removed by the functor $\W(\check X,\rr_1\cup\rr_2)\to\W(X,\rr_1\cup\rr_2)$.
This provides an alternative proof of Theorem \ref{mvthm}, though it is not materially different (compare the discussion in Example \ref{formalreverse}) and requires a bit more work to justify completely.
\end{remark}

\begin{remark}
The pushout formula of Theorem \ref{mvthm} is a special case of the descent formula of Theorem \ref{weinsteindescent}.  One
reason we have proven Theorem \ref{mvthm} separately is that it does not depend on the new geometric notions
introduced in \S\ref{admsubsec}.
\end{remark}

\subsection{Weinstein handle attachment} \label{handles}

\begin{proof}[Proof of Corollary \ref{handleeffect}]
Fix a stopped Liouville sector 
$(X^\inn,\f^\inn)$ of dimension $2n$, and let $\Lambda^{k-1}\subseteq(\partial_\infty
X^\inn)^\circ\setminus\f^\inn$ be a parameterized isotropic sphere with a
trivialization of its symplectic normal bundle (i.e.\ the data for attaching a
Weinstein $k$-handle).  Let $X$ be the result of attaching a Weinstein
$k$-handle to $X^\inn$ along $\Lambda$.  There is an open embedding
$\partial_\infty X^\inn\setminus\Lambda\hookrightarrow\partial_\infty X$, and
we let $\f$ denote the image of $\f^\inn$ under this embedding.  We may view
$X$ as a gluing or union in the sense of Theorem \ref{mvthm} (see also 
\cite[\S 3.1]{eliashbergweinsteinrevisited}) of:
\begin{enumerate}
\item$X_1:=X^\inn_\Lambda$, defined
as $X^\inn$ with an added stop along $\Lambda$ (or rather the corresponding
Liouville sector $X^\inn \setminus \Nbd \Lambda$; note that $\Lambda$ comes with a canonical Weinstein ribbon $T^\ast\Lambda\times\CC^{n-k}$ since its symplectic normal bundle is trivialized as part of the handle attaching data), and
\item$X_2:=T^\ast B^k\times\CC^{n-k}$, along
\item$T^\ast (S^{k-1} \times (-\varepsilon, \varepsilon)) \times \CC^{n-k}$.
\end{enumerate}
It follows that there is an almost homotopy pushout
\begin{equation}\label{handlepushout}
\begin{tikzcd}
\W(T^\ast(S^{k-1}\times[0,1])\times\CC^{n-k})\ar{r}\ar{d}&\W(X_\Lambda^\inn,\f^\inn)\ar{d}\\
\W(T^\ast B^k\times\CC^{n-k})\ar{r}&\W(X,\f).
\end{tikzcd}
\end{equation}
We now analyze the critical and subcritical cases separately.

If $k<n$ (i.e.\ $\Lambda$ is subcritical), then
$\W(T^\ast(S^{k-1}\times[0,1])\times\CC^{n-k})=0=\W(T^\ast B^k\times\CC^{n-k})$ since $n-k>0$
(one argument goes via Theorem \ref{generation}, which shows they are generated
by the empty set, compare Example \ref{cotangentgeneration}), so \eqref{handlepushout} reduces to a fully faithful embedding $\W(X_\Lambda^\inn,\f^\inn)\hookrightarrow\W(X,\f)$.
On the other hand, stop removal Theorem \ref{stopremoval} in the subcritical case implies that $\W(X_\Lambda^\inn, \f^\inn)\xrightarrow\sim\W(X^\inn, \f^\inn)$ is a quasi-equivalence, and hence, by inverting this latter quasi-equivalence, we obtain a fully faithful embedding
\begin{equation}\label{subcriticalembedding}
\W(X^\inn, \f^\inn) \hookrightarrow \W(X, \f).
\end{equation}
Geometrically, this functor first perturbs a Lagrangian in $X^\inn$ to avoid $\Lambda$ and then completes it inside $X$.
If $X^\inn$ is a Weinstein manifold and $\f^\inn$ is mostly Legendrian, then using Theorem \ref{generation} we see that \eqref{subcriticalembedding} is in fact a quasi-equivalence.

Let us now consider the case $k=n$ (i.e.\ $\Lambda$ is Legendrian), so we may write \eqref{handlepushout} as
\begin{equation}\label{handlepushoutLegendrian}
\begin{tikzcd}
\W(T^\ast(S^{k-1}\times[0,1]))\ar{r}\ar{d}&\W(X_\Lambda^\inn,\f^\inn)\ar{d}\\
\W(T^\ast B^k)\ar{r}&\W(X,\f).
\end{tikzcd}
\end{equation}
For $k=n>1$, the categories $\W(T^\ast B^k)$ and $\W(T^\ast(S^{k-1}\times[0,1]))$ are both generated by a cotangent fiber (see Example \ref{cotangentgeneration}), and the endomorphism algebras of these cotangent fibers are $\ZZ$ and $C_{-\bullet}(\Omega S^{n-1})$, respectively, by Abbondandolo--Schwarz \cite{abbondandoloschwarz} and Abouzaid \cite{abouzaidtwisted} (along with Theorem \ref{kunneth}).
We thus obtain the desired pushout \eqref{handlepushoutstatement}.
The images of the cotangent fibers inside $\W(X_\Lambda^\inn,\f^\inn)$ and $\W(X,\f)$ are the linking disk $D\subseteq X_\Lambda^\inn$ of $\Lambda$ and the $\cocore\subseteq X$, respectively, giving \eqref{handlepushoutalgebras}.
When $k=n=1$, the only difference is that instead $\W(T^\ast(S^{k-1}\times[0,1]))=\ZZ\sqcup\ZZ$.
\end{proof}

\section{Sectorial hypersurfaces, coverings, and corners}\label{admsubsec}

We introduce and study sectorial hypersurfaces, sectorial coverings, and Liouville sectors with corners, establishing some basic properties.
This section may be regarded as a continuation of \cite[\S 2]{gpssectorsoc}.

\subsection{Sectorial hypersurfaces} \label{sectorialhypersurfaces}

\begin{lemma}\label{coisotropic}
Let $H_1,\ldots,H_n\subseteq X$ be a collection of cleanly intersecting hypersurfaces in a symplectic manifold.
The following are equivalent:
\begin{enumerate}
\item All multiple intersections $H_{i_0}\cap\cdots\cap H_{i_k}$ are coisotropic.
\item All pairwise intersections $H_i\cap H_j$ are coisotropic.
\item We have $C_i\subseteq TH_j$ over $H_i\cap H_j$, where $C_i$ denotes the characteristic foliation of $H_i$.
\end{enumerate}
\end{lemma}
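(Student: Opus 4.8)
The plan is to prove the three statements equivalent in the cyclic pattern $(1)\Rightarrow(2)\Rightarrow(3)\Rightarrow(1)$, working throughout pointwise in the linear-algebra setting of the tangent space $V=T_pX$ at a point $p$ lying in the relevant intersection. Here a hypersurface $H_i$ contributes a hyperplane $W_i=T_pH_i\subseteq V$, the characteristic foliation $C_i$ at $p$ is the line $W_i^\perp$ (the symplectic orthogonal, which lies inside $W_i$ since $W_i$ is coisotropic as a hyperplane), and clean intersection means that the various intersections of the $W_i$ have the expected relation to the $\bigcap W_i$ and that $(\bigcap_{i\in I}H_i)$ has tangent space $\bigcap_{i\in I}W_i$. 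The key identity to keep in mind is $\bigl(\bigcap_{i\in I}W_i\bigr)^\perp=\sum_{i\in I}W_i^\perp=\sum_{i\in I}C_i$.

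The implication $(1)\Rightarrow(2)$ is immediate since pairwise intersections are a special case of multiple intersections. For $(2)\Rightarrow(3)$: fix $i\ne j$ and a point of $H_i\cap H_j$, so $W_i\cap W_j$ is coisotropic, i.e. $(W_i\cap W_j)^\perp\subseteq W_i\cap W_j$. But $(W_i\cap W_j)^\perp=W_i^\perp+W_j^\perp=C_i+C_j$, so in particular $C_i\subseteq W_i\cap W_j\subseteq W_j$, which is exactly the assertion that $C_i\subseteq TH_j$ over $H_i\cap H_j$. For $(3)\Rightarrow(1)$: assume $C_i\subseteq W_j$ over every pairwise intersection, in particular over every multiple intersection $\bigcap_{i\in I}H_i$ (which is contained in each pairwise intersection $H_i\cap H_j$ for $i,j\in I$). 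Then for $I$ and any $i\in I$ we have $C_i=W_i^\perp\subseteq W_j$ for all $j\in I$, hence $C_i\subseteq\bigcap_{j\in I}W_j$; summing over $i\in I$ gives $\sum_{i\in I}C_i\subseteq\bigcap_{j\in I}W_j$. Since $\sum_{i\in I}C_i=\bigl(\bigcap_{j\in I}W_j\bigr)^\perp$, this says $\bigl(\bigcap_{j\in I}W_j\bigr)^\perp\subseteq\bigcap_{j\in I}W_j$, i.e. $\bigcap_{i\in I}H_i$ is coisotropic, using cleanness to identify its tangent space with $\bigcap_{i\in I}W_i$.

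The one genuine subtlety — and the step I expect to be the main obstacle — is making sure the clean-intersection hypothesis is used correctly so that the tangent space of a multiple intersection really is the intersection of tangent spaces (so that the formula $\bigl(\bigcap W_i\bigr)^\perp=\sum W_i^\perp$ applies to the actual submanifold), and that in $(3)\Rightarrow(1)$ we are entitled to use the pairwise containment $C_i\subseteq TH_j$ at points of a higher multiplicity intersection rather than only at generic points of $H_i\cap H_j$; this is fine because $H_{i_0}\cap\cdots\cap H_{i_k}\subseteq H_i\cap H_j$ as sets for any $i,j$ among the indices. I would spell out the elementary symplectic-linear-algebra fact $(A\cap B)^\perp=A^\perp+B^\perp$ for subspaces $A,B$ (valid in any symplectic vector space, with no nondegeneracy caveats since $\omega$ is nondegenerate on $V$), iterate it to get $\bigl(\bigcap_{i\in I}W_i\bigr)^\perp=\sum_{i\in I}W_i^\perp$, and then the three implications are each a two-line computation. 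Everything is local and pointwise, so no global or analytic input is needed beyond the standing clean-intersection assumption.
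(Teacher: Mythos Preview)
Your proof is correct and follows essentially the same approach as the paper: both reduce to linear algebra at a point and pass to the symplectic orthogonal complements $C_i=W_i^\perp$, using $(\bigcap W_i)^\perp=\sum C_i$ so that ``$\bigcap W_i$ coisotropic'' becomes ``$\sum C_i$ isotropic'' and condition (3) becomes ``$C_i$ and $C_j$ are $\omega$-orthogonal''. The paper simply lists these dualized conditions and declares their equivalence obvious, whereas you write out the cyclic chain of implications explicitly; the content is the same.
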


\begin{proof}
It is equivalent to prove the corresponding statement for tuples of codimension one subspaces $V_1,\ldots,V_n$ inside a symplectic vector space $W$.
Pass to the $\omega$-orthogonal complements $C_1,\ldots,C_n\subseteq W$, which are lines inside $W$.
The three conditions in question can now be stated as follows:
\begin{enumerate}
\item All sums $C_{i_0}+\cdots+C_{i_k}\subseteq W$ are isotropic.
\item All sums $C_i+C_j$ are isotropic.
\item All pairs $C_i$ and $C_j$ are $\omega$-orthogonal.
\end{enumerate}
The equivalence of these is now obvious.
\end{proof}

Recall that when working in a Liouville manifold-with-boundary with Liouville vector field $Z$, we write $\Nbd^Z$ to indicate a $Z$-invariant neighborhood. 

\begin{definition}\label{sectorial}
Let $X$ be a Liouville manifold-with-boundary.
A collection of cylindrical hypersurfaces $H_1,\ldots,H_n\subseteq X$ will be called \emph{sectorial} iff their characteristic foliations are $\omega$-orthogonal (over their intersections) and there exist functions $I_i:\Nbd^ZH_i\to\RR$ (linear near infinity) satisfying:
\begin{equation}\label{admidentities}
dI_i|_{C_i}\ne 0,\qquad dI_i|_{C_j}=0\text{ for }i\ne j,\qquad\{I_i,I_j\}=0.
\end{equation}
We also allow immersed cylindrical hypersurfaces $H\to X$, with $I$ now defined on $\Nbd^ZH$ regarded as an immersed codimension zero submanifold of $X$, and the subscripts $i$ in the identities \eqref{admidentities} now indexing the ``local branches'' of $H$.
\end{definition}

\begin{remark}
A word of caution is in order: we do \emph{not} show that the space of tuples $\{I_i:\Nbd^ZH_i\to\RR\}_i$ for a given sectorial collection $H_1,\ldots,H_n$ is contractible (or even connected).
Some care is thus warranted regarding whether certain constructions/results depend on a choice of $\{I_i:\Nbd^ZH_i\to\RR\}_i$.
\end{remark}

\begin{lemma}
Sectorial hypersurfaces $H_1,\ldots,H_n\subseteq X$ are mutually transverse, that is $\codim(TH_{i_1}\cap\cdots\cap TH_{i_k})=k$ over $H_{i_1}\cap\cdots\cap H_{i_k}$, and these multiple intersections are coisotropic.
\end{lemma}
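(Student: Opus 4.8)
The plan is to reduce everything to linear algebra in a single tangent space, exactly as in the proof of Lemma \ref{coisotropic} which we have just established. Fix a point $p\in H_{i_1}\cap\cdots\cap H_{i_k}$; the statement is local, so we work in $T_pX$ with the symplectic form $\omega_p$. Let $V_{i_1},\ldots,V_{i_k}\subseteq T_pX$ be the hyperplanes $T_pH_{i_j}$, with $\omega$-orthogonal lines (characteristic directions) $C_{i_1},\ldots,C_{i_k}$. By hypothesis the characteristic foliations are mutually $\omega$-orthogonal over intersections, so $\omega_p(C_{i_a},C_{i_b})=0$ for all $a,b$; equivalently (as in Lemma \ref{coisotropic}) $C_{i_a}\subseteq V_{i_b}$ for all $a\ne b$. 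This already gives coisotropy of the intersection: $(V_{i_1}\cap\cdots\cap V_{i_k})^{\perp_\omega}=C_{i_1}+\cdots+C_{i_k}$, and each $C_{i_a}$ lies in every $V_{i_b}$, hence in the intersection, so the intersection contains its own symplectic orthogonal.

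For transversality, the point is that the lines $C_{i_1},\ldots,C_{i_k}$ are linearly independent, which then forces $\codim(V_{i_1}\cap\cdots\cap V_{i_k})=\dim(C_{i_1}+\cdots+C_{i_k})=k$ (using that $(\sum_a C_{i_a})^{\perp_\omega}=\bigcap_a V_{i_a}$). To see the independence of the $C_{i_a}$, I would use the defining functions: the $I_i$ of Definition \ref{sectorial} satisfy $dI_{i_a}|_{C_{i_a}}\ne 0$ while $dI_{i_a}|_{C_{i_b}}=0$ for $b\ne a$. Thus the $k\times k$ matrix whose $(a,b)$ entry is $dI_{i_a}(v_b)$ (for $v_b$ a nonzero vector spanning $C_{i_b}$) is diagonal with nonzero diagonal entries, hence invertible; this is incompatible with any nontrivial linear relation among the $v_b$. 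Therefore $C_{i_1},\ldots,C_{i_k}$ are independent, and mutual transversality of the $H_{i_j}$ follows.

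I do not expect a serious obstacle here — the argument is a direct bookkeeping exercise combining Lemma \ref{coisotropic} (or rather its proof) with the existence of the functions $I_i$. The only point requiring a little care is making sure the $I_i$ are available near $p$: by definition each $I_i$ is defined on a $Z$-invariant neighborhood $\Nbd^Z H_i$ of $H_i$, so on a neighborhood of the intersection point $p$ all of $I_{i_1},\ldots,I_{i_k}$ are simultaneously defined, and the pointwise identities \eqref{admidentities} hold at $p$. (The Poisson-commutativity $\{I_i,I_j\}=0$ is not needed for this particular lemma — it will matter later for the symplectic reduction statements — so I would simply not invoke it.) One should also note that the immersed-hypersurface case is handled identically after passing to local branches, as already built into Definition \ref{sectorial}.
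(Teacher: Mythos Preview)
Your proof is correct and follows essentially the same approach as the paper: use the conditions $dI_i|_{C_i}\ne 0$ and $dI_i|_{C_j}=0$ for $i\ne j$ to deduce linear independence of the characteristic lines (hence transversality of their symplectic orthogonals $TH_i$), and invoke Lemma \ref{coisotropic} for coisotropy. The paper's proof is simply a two-sentence compression of what you wrote out in detail; your additional remarks (that $\{I_i,I_j\}=0$ is unused here, and that the $I_i$ are simultaneously defined near any intersection point) are accurate but not needed.
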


\begin{proof}
It follows from the conditions $dI_i|_{C_i}\ne 0$ and $dI_i|_{C_j}=0$ for $i\ne j$ that the characteristic lines $C_i$ are linearly independent, which implies mutual transversality of their symplectic orthogonal complements $TH_i$.
That the intersections are coisotropic follows from Lemma \ref{coisotropic}.
\end{proof}

\begin{example}
A Liouville manifold-with-boundary $X$ is a Liouville sector iff $\partial X$ is sectorial.
\end{example}

\begin{example}\label{cotangentsectorialII}
Let $Q$ be a manifold, and let $G_1,\ldots,G_n\subseteq Q$ be a collection of mutually transverse hypersurfaces.
Their inverse images inside $T^\ast Q$ form a sectorial collection of hypersurfaces.
Namely, we may take $I_i$ to be the Hamiltonian lifts of vector fields $V_i$ on $Q$ where $V_i$ is transverse to $G_i$ and tangent to $G_j$ for $j\ne i$.
More generally, the same holds for the inverse image in $T^\ast Q$ of any self-transverse immersed hypersurface $G\looparrowright Q$.
\end{example}

\begin{lemma}\label{conjugate}
Let $H_1,\ldots,H_n\subseteq X$ be a sectorial collection of hypersurfaces, with a choice of $I_i:\Nbd^ZH_i\to\RR$.
There exist unique functions $t_i:\Nbd^ZH_i\to\RR$ satisfying $Zt_i\equiv 0$ near infinity, $H_i=\{t_i=0\}$, $\{I_i,t_j\}=\delta_{ij}$, and $\{t_i,t_j\}=0$.
\end{lemma}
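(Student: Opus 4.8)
The plan is to build each $t_i$ as a ``time'' coordinate for the Hamiltonian flow of $I_i$, transverse to $H_i$, and then to deduce all of the Poisson relations from two facts: the flows of the $X_{I_i}$ mutually commute (since $\{I_i,I_j\}=0$), and the characteristic foliations $C_i$ are pairwise $\omega$-orthogonal. First I would note that $dI_i|_{C_i}\neq 0$ together with $TH_i=C_i^{\perp_\omega}$ forces $X_{I_i}\notin TH_i$, so $X_{I_i}$ is everywhere transverse to $H_i$. Hence there is a unique smooth function $t_i$ on a neighborhood of $H_i$ with $t_i|_{H_i}\equiv 0$ and $X_{I_i}t_i\equiv 1$, given concretely by $t_i\bigl(\phi^\tau_{X_{I_i}}(q)\bigr):=\tau$ for $q\in H_i$ ($\phi^\tau$ denoting time-$\tau$ flow); by construction $H_i=\{t_i=0\}$ locally and $\{I_i,t_i\}=1$. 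Uniqueness in the lemma is then immediate: if $t_i,t_i'$ both satisfy the stated properties, then $t_i-t_i'$ vanishes on $H_i=\{t_i=0\}=\{t_i'=0\}$ and is annihilated by $X_{I_i}$, hence vanishes near $H_i$ by transversality. For the behavior at infinity, since $I_i$ is linear near infinity we have $[Z,X_{I_i}]=0$ there, and since $H_i$ is cylindrical at infinity the flow-box construction above is $Z$-equivariant near infinity; this gives $Zt_i\equiv 0$ near infinity and lets me take the domain of $t_i$ to be a $Z$-invariant neighborhood $\Nbd^Z H_i$, shrinking freely as all assertions are germ-level along the $H_i$.

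The cross-relations $\{I_i,t_j\}=\delta_{ij}$ and $\{t_i,t_j\}=0$ for $i\neq j$ are to be verified on the overlaps $\Nbd^Z H_i\cap\Nbd^Z H_j$; since the $H_i$ are closed and properly embedded, after shrinking I may assume each connected component of such an overlap meets $H_i\cap H_j$ (and if $H_i\cap H_j=\varnothing$, that the overlap is empty, in which case there is nothing to prove). For $\{I_i,t_j\}=0$: from $dI_i|_{C_j}=0$ one gets $X_{I_i}\in TH_j$ over a neighborhood in $H_j$ of $H_i\cap H_j$, so the flow $\phi^s_{X_{I_i}}$ preserves $H_j$ there; as $\{I_i,I_j\}=0$ it commutes with $\phi^c_{X_{I_j}}$, hence carries the level set $\{t_j=c\}=\phi^c_{X_{I_j}}(H_j)$ to itself near $H_i\cap H_j$, forcing $X_{I_i}t_j=0$, i.e.\ $\{I_i,t_j\}=0$ on $\Nbd^Z(H_i\cap H_j)$ and so on all of $\Nbd^Z H_i\cap\Nbd^Z H_j$.

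For $\{t_i,t_j\}=0$: set $f:=\{t_i,t_j\}$. The Jacobi identity together with $X_{I_i}t_i=1$, $X_{I_i}t_j=0$ (and symmetrically for $j$) gives $X_{I_i}f=\{X_{I_i}t_i,t_j\}+\{t_i,X_{I_i}t_j\}=0$ and likewise $X_{I_j}f=0$, so $f$ is invariant under both flows $\phi_{X_{I_i}}$ and $\phi_{X_{I_j}}$. These vector fields are linearly independent (as $C_i,C_j$ are, by the conditions on the $dI_\bullet|_{C_\bullet}$), with $X_{I_i}$ transverse to $H_i$ and tangent to $H_j$ and $X_{I_j}$ transverse to $H_j$ and tangent to $H_i$; hence $(s,s',q)\mapsto\phi^s_{X_{I_i}}\phi^{s'}_{X_{I_j}}(q)$ for $q\in H_i\cap H_j$ is a local diffeomorphism onto a neighborhood of $H_i\cap H_j$, so $f$ is determined by $f|_{H_i\cap H_j}$. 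Finally, along $H_i$ the relation $t_i\equiv 0$ gives $dt_i|_{TH_i}=0$, so $X_{t_i}$ lies in and (being nonzero there, since $X_{I_i}t_i=1$) spans the characteristic line $C_i$; likewise $X_{t_j}$ spans $C_j$ along $H_j$. Since $C_i$ and $C_j$ are $\omega$-orthogonal, $f=\omega(X_{t_i},X_{t_j})=0$ along $H_i\cap H_j$, hence $f\equiv 0$ on $\Nbd^Z H_i\cap\Nbd^Z H_j$. The same argument applies verbatim to the local branches in the immersed case. I expect the only genuine fussiness to be the neighborhood bookkeeping just described; the substantive content — transporting the bracket relations along commuting Hamiltonian flows, and using $\omega$-orthogonality of the characteristic foliations to kill $\{t_i,t_j\}$ along $H_i\cap H_j$ — is short.
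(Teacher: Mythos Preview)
Your proof is correct and follows essentially the same approach as the paper's: define $t_i$ by the flow of $X_{I_i}$ transverse to $H_i$, use $[Z,X_{I_i}]=0$ near infinity to get $Zt_i\equiv 0$ there, and then propagate the Poisson relations off $H_i\cap H_j$ using the commuting flows of $X_{I_i}$ and $X_{I_j}$ together with $\omega$-orthogonality of $C_i$ and $C_j$ to check $\{t_i,t_j\}=0$ along $H_i\cap H_j$. The only cosmetic difference is that for $\{I_i,t_j\}=0$ the paper invokes the Jacobi identity directly to show $X_{I_j}\{t_j,I_i\}=0$ and then checks vanishing along $H_j$, whereas you phrase the same step geometrically (the flow of $X_{I_i}$ preserves the $t_j$-level sets); these are equivalent.
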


\begin{proof}
The properties $H_i=\{t_i=0\}$ and $X_{I_i}t_i\equiv 1$ define $t_i:\Nbd^ZH_i\to\RR$ uniquely.
Differentiating $X_{I_i}t_i\equiv 1$ by $Z$ yields $X_{I_i}(Zt_i)\equiv 0$, which together with $Zt_i=0$ over $H_i$ near infinity implies that $Zt_i=0$ near infinity.
The remaining properties follow from a standard calculation as we now recall.
The Jacobi identity $\{I_i,\{I_j,t_j\}\}+\{I_j,\{t_j,I_i\}\}+\{t_j,\{I_i,I_j\}\}=0$ implies $\{I_j,\{t_j,I_i\}\}=0$.
Now $X_{t_j}$ spans the characteristic foliation of $H_j$, so since $dI_i|_{C_j}=0$ for $i\ne j$, we have $\{t_j,I_i\}=0$ over $H_j$, so $\{I_j,\{t_j,I_i\}\}=0$ implies that in fact $\{t_j,I_i\}=0$ everywhere for $i\ne j$.
Jacobi again $\{t_i,\{t_j,I_j\}\}+\{t_j,\{I_j,t_i\}\}+\{I_j,\{t_i,t_j\}\}=0$ now yields $X_{I_j}\{t_i,t_j\}=0$, and symmetrically we have $X_{I_i}\{t_i,t_j\}=0$.
Thus it is enough to show that $\{t_i,t_j\}=0$ over $H_i\cap H_j$, and here it follows since $X_{t_i}\in C_i$ which is tangent to $H_j$.
\end{proof}

\begin{lemma}\label{rksplittinglemma}
The map
\begin{equation}\label{rkfibration}
(I_{i_1},t_{i_1},\ldots,I_{i_k},t_{i_k}):\Nbd^Z(H_{i_1}\cap\cdots\cap H_{i_k})\to T^*\RR^k
\end{equation}
is a symplectic fibration whose symplectic connection is flat, thus giving a symplectic product decomposition
\begin{equation}\label{rksplitting}
X=F\times T^*\RR^k
\end{equation}
identifying $Z$-invariant neighborhoods of $H_{i_1}\cap\cdots\cap H_{i_k}$ and $F\times T^*_0\RR^k$.
\end{lemma}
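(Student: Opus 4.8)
The plan is to read the product decomposition straight off the Poisson-bracket relations, using the functions $t_{i_j}$ produced by Lemma \ref{conjugate}. After relabelling we may assume $\{i_1,\dots,i_k\}=\{1,\dots,k\}$; set $U:=\Nbd^Z(H_1\cap\cdots\cap H_k)$, taken small enough that $I_1,t_1,\dots,I_k,t_k$ are all defined on it, and write $\Phi=(I_1,t_1,\dots,I_k,t_k)\colon U\to T^\ast\RR^k$. By Definition \ref{sectorial} and Lemma \ref{conjugate} these $2k$ functions have the canonical Darboux brackets $\{I_i,I_j\}=\{t_i,t_j\}=0$, $\{I_i,t_j\}=\delta_{ij}$. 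First I would record the two elementary consequences: the Hamiltonian vector fields $X_{I_1},X_{t_1},\dots,X_{I_k},X_{t_k}$ are pointwise linearly independent (pair them against one another via $\omega$; their Gram matrix is the standard nondegenerate symplectic one), so $\Phi$ is a submersion; and they commute pairwise, since $[X_f,X_g]=\pm X_{\{f,g\}}$ and each such bracket is here a constant.

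Next I would identify the fibers and the connection. At every $x\in U$ the span $V_x=\langle X_{I_1},X_{t_1},\dots,X_{I_k},X_{t_k}\rangle$ is a symplectic subspace (same Gram-matrix computation), and $T_x\Phi^{-1}(\Phi(x))=\bigcap_j(\ker dI_j\cap\ker dt_j)=V_x^{\perp_\omega}$; hence the fibers of $\Phi$ are symplectic submanifolds and $V$ is a symplectic (Ehresmann) connection for $\Phi$. The $X_{I_i},X_{t_i}$ are $\Phi$-related (up to sign and reordering) to the coordinate translation vector fields on $T^\ast\RR^k$, so they furnish the horizontal lifts, and flatness of the connection is exactly the commutation relations above. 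Since each $X_{I_i},X_{t_i}$ preserves all of $I_j,t_j$ — their Lie derivatives are $d$ of constants — their flows preserve both $V$ and $V^\perp$. Letting $F$ be the fiber of $\Phi$ over a point of $T^\ast_0\RR^k$ in its image (we may take this point to be the origin, adjusting coordinates harmlessly if necessary), I would define $\Psi\colon F\times T^\ast\RR^k\to U$ by flowing $F$ out along the commuting horizontal vector fields. Commutativity makes $\Psi$ well defined; Hamiltonianness (hence symplecticity) of the flows together with their preservation of the splitting $V\oplus V^\perp$ gives $\Psi^\ast\omega=\omega_F\oplus\omega_{\std}$ by inspection ($\omega$ restricted to the $F$-directions pulls back to $\omega|_F$; restricted to the $T^\ast\RR^k$-directions it is the standard form by the Gram-matrix computation; the cross terms vanish because $V\perp_\omega V^\perp$). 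Finally $\Psi^{-1}(H_1\cap\cdots\cap H_k)=\Psi^{-1}(\{t_1=\cdots=t_k=0\})=F\times T^\ast_0\RR^k$, which is the asserted identification.

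The only genuinely technical point is that $\Psi$ a priori trivializes only over the image of $\Phi$ and only as far as the horizontal vector fields flow, so one must check it covers a full $Z$-invariant neighborhood of $H_1\cap\cdots\cap H_k$. Here I would use that the $t_i$ are bounded near $H_i$ and $Z$-invariant near infinity, so that $\mathcal L_Z\omega=\omega$ and $Zt_i\equiv 0$ near infinity force $[Z,X_{t_i}]=-X_{t_i}$ there; thus $X_{t_i}$ decays exponentially towards infinity and is complete on a $Z$-invariant neighborhood, exactly as in the completeness argument of \cite[Lemma 3.29]{gpssectorsoc}, while the $X_{I_i}$-directions only need be flowed for bounded time since $t_i$ ranges over a small neighborhood of $0$. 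This bookkeeping is where the cylindrical-at-infinity hypotheses enter, and it is really the $k\geq 2$ continuation of the $k=1$ local normal form already established in \cite[\S 2]{gpssectorsoc}; I do not expect any serious obstacle beyond it.
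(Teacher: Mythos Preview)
Your proposal is correct and follows essentially the same approach as the paper: use the Poisson bracket relations to see that the Hamiltonian vector fields $X_{I_j},X_{t_j}$ are linearly independent, span a symplectic subspace complementary to the (hence symplectic) fiber, and commute, so the induced symplectic connection is flat. You supply more detail than the paper does---in particular the explicit trivialization $\Psi$ via flows and the completeness discussion near infinity---whereas the paper's proof stops at ``the connection is flat'' and leaves the passage to a product decomposition implicit.
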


\begin{proof}
The Hamiltonian vector fields $X_{I_{i_1}},\ldots,X_{I_{i_k}},X_{t_{i_1}},\ldots,X_{t_{i_k}}$ are linearly independent and span a symplectic subspace of $TX$, in view of their Poisson brackets.
This subspace is symplectically orthogonal to the kernel of the differential of the map $(I_{i_1},t_{i_1},\ldots,I_{i_k},t_{i_k})$, so we conclude that this map is a submersion with symplectic fibers, and that these vector fields span the horizontal distribution of the induced symplectic connection.
Moreover, these vector fields are the horizontal lifts of the corresponding vector fields on the target, so since they commute (by their Poisson brackets), we conclude that the connection is flat.
\end{proof} 

\begin{remark}\label{newfromoldsectorial}
In the coordinates \eqref{rksplitting}, the sectorial hypersurfaces are simply the inverse images of the coordinate hyperplanes in $\RR^k$.
This gives a particularly simple way of ``smoothing corners'' or doing other local modifications to a sectorial collection of hypersurfaces near a given stratum $H_{i_1}\cap\cdots\cap H_{i_k}$: choose some other collection of mutually transverse hypersurfaces in $\RR^k$ and pull back.
The new collection stays sectorial: we may take the new $X_{I_i}$ to be the Hamiltonian lifts of vector fields on $\RR^k$ which are transverse to our new set of hypersurfaces (compare Example \ref{cotangentsectorialII}).
\end{remark}

\subsection{Straightening the Liouville form} \label{subsec:sectorstraightening} 

We now study the interaction of the splitting \eqref{rksplitting} with Liouville forms.

\begin{lemma}\label{rkliouville}
With respect to the coordinates \eqref{rksplitting}, we have
\begin{equation}\label{liouvillesplitting}
\lambda_X=\lambda_F+\lambda_{T^*\RR^k}+df
\end{equation}
for a Liouville form $\lambda_F$ on $F$ and a function $f:F\times T^*\RR^k\to\RR$ supported in $F_0\times T^*\RR^k$ for some compact $F_0\subseteq F$.
This function $f$ locally factors through $F$ wherever $ZI_i=I_i$ and $Zt_i=0$ (and hence $f$ may be taken to be zero iff $ZI_i=I_i$ and $Zt_i=0$ everywhere).
\end{lemma}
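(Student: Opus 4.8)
The plan is to work on the $Z$-invariant neighborhood of $H_{i_1}\cap\cdots\cap H_{i_k}$ carrying the coordinates \eqref{rksplitting}, and to compare the given Liouville form $\lambda_X$ with the obvious ``split'' form $\lambda_F+\lambda_{T^*\RR^k}$. First I would fix notation: write $(q,p)=(q_1,\ldots,q_k,p_1,\ldots,p_k)$ for the $T^*\RR^k$ coordinates, so that $q_i=I_{i}$ and $p_i=t_{i}$ (up to relabelling the index set by the chosen tuple), $\lambda_{T^*\RR^k}=\sum_i p_i\,dq_i$, and recall from Lemma \ref{rksplittinglemma} that $X_{I_i}=\partial_{q_i}$ and $X_{t_i}=-\partial_{p_i}$ in these coordinates, while the fiber directions $F$ are the joint kernel of $d I_{i_1},dt_{i_1},\ldots,dI_{i_k},dt_{i_k}$. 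The one-form $\eta:=\lambda_X-\lambda_{T^*\RR^k}$ then satisfies $d\eta=\omega_X-\omega_{T^*\RR^k}$, which annihilates all of the $\partial_{q_i},\partial_{p_i}$; in other words $\iota_{X_{I_i}}d\eta=\iota_{X_{t_i}}d\eta=0$ for all $i$. Contracting $d\eta$ with $X_{I_i}$ gives $d(\eta(X_{I_i}))=-\iota_{X_{I_i}}d\eta=0$, so $\eta(X_{I_i})$ is a locally constant function, and similarly for $\eta(X_{t_i})$; after subtracting an exact form $d(\text{affine in }q,p)$ we may arrange $\eta(X_{I_i})=\eta(X_{t_i})=0$, i.e.\ $\eta$ annihilates the $T^*\RR^k$ directions. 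Such an $\eta$ with $d\eta$ also annihilating those directions is then pulled back from $F$, i.e.\ $\eta=\lambda_F$ for a closed-up-to-the-fiber one-form; checking that $\lambda_F$ is genuinely a Liouville form on $F$ amounts to noting $d\lambda_F = \omega_X|_F=\omega_F$ is symplectic, which holds because the splitting \eqref{rksplitting} is symplectic. This yields \eqref{liouvillesplitting} with $f$ the affine-in-$(q,p)$ correction just introduced — but that is not yet compactly supported in the base-independent sense claimed, so the bulk of the work is the refinement below.

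The real content is arranging $f$ to be supported in $F_0\times T^*\RR^k$ for compact $F_0\subseteq F$, and to vanish precisely when $ZI_i=I_i$ and $Zt_i=0$ everywhere. For this I would use the Liouville vector fields. We have two Liouville forms on the neighborhood, $\lambda_X$ and $\lambda':=\lambda_F+\lambda_{T^*\RR^k}$, agreeing up to $df$; their Liouville vector fields $Z=Z_X$ and $Z'=Z_F+Z_{T^*\RR^k}$ differ by the Hamiltonian vector field of $f$, i.e.\ $Z-Z'=X_f$ (with respect to $\omega_X$). Both $\lambda_X$ and $\lambda'$ are linear at infinity — $\lambda_X$ by hypothesis (the $H_i$ are cylindrical at infinity), and $\lambda'$ tautologically — and the identities $\{I_i,I_j\}=\{I_i,t_j\}=\delta_{ij}$, $\{t_i,t_j\}=0$ plus $Z t_i=0$ near infinity (from Lemma \ref{conjugate}) pin down the behavior of $Z$ near infinity on the $T^*\RR^k$ factor: near infinity $Z$ and $Z'$ both project to $Z_{T^*\RR^k}$ on the base, so $X_f$ is tangent to the fibers near infinity and projects to $Z$-invariant data, forcing $f$ to be (fiberwise) constant near infinity along each ray of $Z$; subtracting that constant (which we may absorb, since only $df$ matters) we get $f$ compactly supported over $F$. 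Concretely: $\mathcal L_Z f = Z(f)$; using $df=\lambda_X-\lambda'$ and $\lambda_X,\lambda'$ both $Z$- and $Z'$-linear near infinity, one computes $Z(f)=\lambda_X(Z)-\lambda'(Z)$ which near infinity equals $\lambda_X(Z)-\lambda'(Z)$, and $\lambda_X(Z)=0$ is false in general, so instead one should track $\mathcal L_Z(df) = d(\mathcal L_Z f)$; I would use that $\lambda_X$ is linear at infinity, meaning $\mathcal L_Z\lambda_X=\lambda_X$ outside a compact set, and likewise $\mathcal L_{Z'}\lambda'=\lambda'$, to deduce that $df = \lambda_X-\lambda'$ decays/is controlled, hence $f$ is (up to an additive constant absorbed harmlessly) supported over a compact $F_0$. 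The clean way to organize all this is to run the argument on the level of the difference of radial vector fields and invoke the standard fact that two linear-at-infinity primitives of the same symplectic form differing by $df$ with $f$ vanishing at infinity force $f$ to have the stated support.

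For the last sentence — $f$ locally factors through $F$ wherever $ZI_i=I_i$, $Zt_i=0$, and may be taken $\equiv 0$ iff this holds everywhere — I would argue as follows. Where $ZI_i=I_i$ and $Zt_i=0$, the Liouville vector field $Z$ is tangent to the splitting and acts as $Z_{T^*\RR^k}$ on the $T^*\RR^k$ factor (since $Z I_i=I_i$, $Zt_i=0$ say exactly that $Z$ has $T^*\RR^k$-component equal to $\sum_i(I_i\partial_{I_i})=Z_{T^*\RR^k}$); hence $\mathcal L_Z(\lambda_X-\lambda_{T^*\RR^k}-\lambda_F)=0$ forces $df$ to be $Z$-invariant and pulled back from $F$, so $f$ depends only on $F$-coordinates there. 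Conversely if $ZI_i=I_i$ and $Zt_i=0$ hold on all of the neighborhood, then $Z$ preserves the splitting globally, $Z-Z'$ is tangent to the $F$ fibers and its $\omega_X$-dual $df$ is then a fiberwise one-form; but $f$ already has compact support over $F$ by the previous step and, being fiberwise exact with $df$ pulled back from $F$ one checks $df$ is globally exact on $F$ with a compactly supported primitive, which we then subtract — equivalently we redefine the $F$-coordinates' primitive $\lambda_F$ to absorb it — to get $f\equiv0$. The main obstacle I anticipate is the bookkeeping in the ``compact support over $F$'' step: making precise the interplay between ``$\lambda_X$ linear at infinity'' (which is a statement about the $Z_X$-action, and $Z_X$ is \emph{not} $Z_F+Z_{T^*\RR^k}$) and the desired conclusion about $f$. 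I would handle this by the \eqref{rksplitting}-compatible rescaling: flow by $e^{sZ_X}$, use that on the cylindrical end $\lambda_X$ scales while $q_i=I_i$, $p_i=t_i$ scale in a prescribed way ($t_i$ being $Z$-invariant near infinity and $I_i$ being linear), deduce the asymptotic form of $\eta=\lambda_X-\lambda_{T^*\RR^k}$, and read off that its "extra" part is pulled back from $F$ outside a compact set — giving the compactly-supported $f$. The other steps (the cohomological/Poisson-bracket computations and the recognition that a one-form annihilating and whose differential annihilates the $T^*\RR^k$ directions descends to $F$) are routine.
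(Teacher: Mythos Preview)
Your first step contains a genuine error. You write ``Contracting $d\eta$ with $X_{I_i}$ gives $d(\eta(X_{I_i}))=-\iota_{X_{I_i}}d\eta=0$,'' but Cartan's formula reads $\mathcal L_{X_{I_i}}\eta = d(\iota_{X_{I_i}}\eta) + \iota_{X_{I_i}}d\eta$, so what you actually get from $\iota_{X_{I_i}}d\eta=0$ is $d(\eta(X_{I_i}))=\mathcal L_{X_{I_i}}\eta$, and there is no reason for this Lie derivative to vanish. A concrete counterexample: take $k=1$, $F$ a point, and $\lambda_X = t\,dI + d(I^2 t)$ on $T^*\RR$; then $\eta = d(I^2 t)$ and $\eta(X_{I}) = \eta(\partial_t) = I^2$, which is not locally constant. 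So the ``affine correction'' step collapses, and your derivation of \eqref{liouvillesplitting} does not go through.

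The paper's argument is both simpler and avoids this difficulty entirely: define $\lambda_F$ as the restriction of $\lambda_X$ to the slice $F\times\{0\}$; then $\lambda_X-\lambda_F-\lambda_{T^*\RR^k}$ is closed (same exterior derivative $\omega_X=\omega_F+\omega_{T^*\RR^k}$) and vanishes on $F\times\{0\}$, which is a deformation retract of $F\times T^*\RR^k$, hence it is exact. Normalize $f$ to vanish on $F\times\{0\}$. This replaces your entire first paragraph with a three-line argument.

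The remaining two assertions are then handled in the opposite logical order from what you attempt. Writing $Z_X=Z_F+Z_{T^*\RR^k}-X_f$, the conditions $ZI_i=I_i$ and $Zt_i=0$ say precisely that the $T^*\RR^k$-component of $Z_X$ is $Z_{T^*\RR^k}$, i.e.\ that $X_f$ is tangent to the $F$-fibers, i.e.\ that $f$ locally factors through $F$. This is the content of the last sentence. Now the compact support claim follows immediately: since $I_i$ is linear at infinity and $Zt_i=0$ near infinity (from Lemma~\ref{conjugate}), $f$ factors through $F$ outside some $F_0\times T^*\RR^k$; combined with the normalization $f|_{F\times\{0\}}=0$, this forces $f\equiv 0$ there. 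Your attempt to derive compact support first, via asymptotic scaling of $\lambda_X$ under $e^{sZ_X}$, is working much harder than necessary---and as you yourself note, the bookkeeping there is delicate because $Z_X\ne Z_F+Z_{T^*\RR^k}$.
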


\begin{proof}
Let $\lambda_F$ be the Liouville form on $F$ obtained as the restriction of $\lambda_X$ to $F\times\{0\}=I_{i_1}^{-1}(0)\cap t_{i_1}^{-1}(0)\cap\cdots\cap I_{i_k}^{-1}(0)\cap t_{i_k}^{-1}(0)$.
Since $\omega_X=\omega_F+\omega_{T^*\RR^k}$ from Lemma \ref{rksplittinglemma}, the difference between $\lambda_X$ and $\lambda_F+\lambda_{T^*\RR^k}$ is closed.
To check that this difference is exact, it is enough to check it on $F\times\{0\}$ (where it \emph{a fortiori} vanishes) since the inclusion of $F\times\{0\}$ into $F\times T^*\RR^k$ is a homotopy equivalence.
We fix a choice of $f$ by requiring that $f$ vanish on $F\times\{0\}$.

The identities $ZI_i=I_i$ and $Zt_i=0$ are equivalent to the assertion that the projection of $Z_X=Z_F+Z_{T^*\RR^k}-X_f$ to $T^*\RR^k$ equals $Z_{T^*\RR^k}$, which is in turn equivalent to $f$ locally factoring through the projection to $F$.
Since $ZI_i=I_i$ and $Zt_i=0$ near infinity, we conclude that $f$ is supported inside $F_0\times T^*\RR^k$ for some compact $F_0\subseteq F$.
\end{proof}

Note that the case $k=1$ (studied in \cite[\S 2]{gpssectorsoc}) and the case $k\geq 2$ of Lemma \ref{rkliouville} have slightly differing behavior.
When $k\geq 2$, the cotangent bundle $T^*(-\varepsilon,\varepsilon)^k$ is connected at infinity, which means that we have a well-defined compactly supported function $f_\infty:F\to\RR$, so we may redefine $\lambda_F$ as $\lambda_F+df_\infty$, so that now in \eqref{liouvillesplitting} we have $f$ having compact support.
In contrast, when $k=1$, there are two compactly supported functions $f_{\pm\infty}:F\to\RR$.

We now show how to deform the Liouville form so that it is split $\lambda_X=\lambda_F+\lambda_{T^*\RR^k}$ with respect to the coordinates \eqref{rksplitting} (equivalently, so that $ZI_i=I_i$ and $Zt_i=0$ everywhere).
We will do this in two separate steps, corresponding to the two cases $k=1$ and $k\geq 2$ with differing behavior above.

\begin{lemma}\label{goodcoordsnearintersections}
For any sectorial collection of hypersurfaces $H_1,\ldots,H_n\subseteq X$ and choice of functions $I_i:\Nbd^ZH_i\to\RR$, there exists a compactly supported $f:X\to\RR$ such that the deformed Liouville vector field $Z=Z_{\lambda+df}$ satisfies $ZI_i=I_i$ and $Zt_i=0$ in a neighborhood of $H_i\cap H_j$ for every $i\ne j$.
\end{lemma}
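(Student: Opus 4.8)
The plan is to deform $\lambda$ by a compactly supported exact form $df$ so that the deformed Liouville vector field becomes \emph{split} near every nonempty multiple intersection $H_I:=\bigcap_{i\in I}H_i$ with $|I|\geq 2$, where I call the form split near $H_I$ if $ZI_i=I_i$ and $Zt_i=0$ hold on a neighborhood of $H_I$ for every $i\in I$; by the last sentence of Lemma \ref{rkliouville} this is exactly the assertion that the function appearing in the local normal form \eqref{liouvillesplitting} adapted to $H_I$ factors through the $F$-factor near $H_I$. Since each $H_i\cap H_j$ is an $H_I$ with $|I|=2$, this proves the lemma. First I would record that adding a compactly supported exact form to $\lambda$ changes neither $\omega$ nor the hypersurfaces $H_i$ nor the functions $I_i$, and that the companion functions $t_i$ of Lemma \ref{conjugate}, being determined by $\omega$, $I_i$ and $H_i=\{t_i=0\}$ alone, are also unchanged; hence the collection remains sectorial and Lemmas \ref{rksplittinglemma} and \ref{rkliouville} continue to supply, for the modified form, the local product coordinates $\Nbd^Z H_I\cong F_I\times T^*\RR^{|I|}$.

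The construction is a finite downward induction on $|I|$. Put $N:=\max\{|I|:H_I\neq\varnothing\}$ (if no pairwise intersection is nonempty, $f=0$ works). For $m=N,N-1,\dots,2$, assuming $\lambda$ has already been arranged to be split near every $H_I$ with $|I|\geq m+1$ --- say on an open set $U$ containing $\bigcup_{|I|\geq m+1}H_I$ --- I would, for each $I$ with $|I|=m$ and $H_I\neq\varnothing$, invoke Lemma \ref{rkliouville} (case $k\geq 2$ of the discussion following it, absorbing $f_\infty$ into $\lambda_{F_I}$) to get coordinates with $\lambda=\lambda_{F_I}+\lambda_{T^*\RR^m}+df_I$ and $f_I$ compactly supported, then pick a cutoff $\rho_I:X\to[0,1]$ supported in $\Nbd^Z H_I$, identically $0$ on a neighborhood of $\bigcup_{J\supsetneq I}H_J$ lying in $U$, and identically $1$ near $H_I$ minus a slightly smaller neighborhood of $\bigcup_{J\supsetneq I}H_J$, and replace $\lambda$ by $\lambda-d(\rho_I f_I)$. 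Carrying this out over the finitely many such $I$ changes $\lambda$ by $dg_m$ with $g_m$ compactly supported; since distinct $H_I$, $H_{I'}$ of size $m$ meet only along strata $H_{I\cup I'}$ with $|I\cup I'|>m$, after shrinking the $\Nbd^Z H_I$ these modifications have disjoint supports and do not interfere.

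Finally I would check that after the level-$m$ step $\lambda$ is split near every $H_I$ with $|I|=m$ and that the splitting already present near deeper strata is preserved. The second point is immediate, as $\rho_I$ and $d\rho_I$ vanish near $\bigcup_{|J|>m}H_J$, so $\lambda$ is untouched there. For the first, a neighborhood of $H_I$ is covered by: the locus $\{\rho_I=1\}$, where $\lambda=\lambda_{F_I}+\lambda_{T^*\RR^m}$ by construction; the inductively-given neighborhoods of the deeper strata $H_J$ ($J\supsetneq I$), where $\lambda$ is split in the $H_J$-adapted coordinates and therefore satisfies $ZI_i=I_i$, $Zt_i=0$ for all $i\in J\supseteq I$; and the transition region of $\rho_I$, which can be arranged to sit inside those deeper-stratum neighborhoods (using that $U$ is open), so the conditions for $I$ hold there too. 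Setting $f:=\sum_{m=2}^N g_m$ then finishes the proof. I expect the only real difficulty to be this bookkeeping --- choosing the cutoffs and neighborhoods so that treating one stratum neither undoes the work done near deeper strata nor collides with the other strata of the same dimension --- which is made possible precisely by the nested compatibility of the product decompositions in Lemma \ref{rksplittinglemma}; everything else is the local normal form \eqref{liouvillesplitting}.
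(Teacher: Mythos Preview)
Your approach is the same as the paper's: downward induction on $|I|$, use the local normal form of Lemma~\ref{rkliouville} near each $H_I$, and subtract a cut-off version of $df_I$. The one real gap is in your treatment of the ``transition region'' of $\rho_I$ near the deeper strata. You write that since this region can be arranged to lie in $U$, the conditions $ZI_i=I_i$, $Zt_i=0$ for $i\in I$ hold there. But that only justifies the conditions for the \emph{unmodified} $\lambda$; you have just replaced $\lambda$ by $\lambda-d(\rho_I f_I)$, and in the transition region $d(\rho_I f_I)=\rho_I\,df_I+f_I\,d\rho_I$ need not vanish. Computing, the new Liouville vector field satisfies $ZI_i=I_i$ there iff $\{\rho_I f_I,I_i\}=f_I\{\rho_I,I_i\}=0$ (using that $f_I$ factors through $F_I$ on $U$), and you have said nothing to force either factor to vanish.

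The fix---which is exactly the paper's key observation---is that $f_I$ in fact vanishes identically on $U\cap\Nbd^Z H_I$. Indeed, the inductively produced split region near $H_J$ ($J\supsetneq I$) is of the form $\{|t_{j'}|<\delta:j'\in J\}$, which in the $F_I\times T^*\RR^I$ chart is $W\times\{|t_i|<\delta:i\in I\}$ with the cotangent coordinates $I_i$ unconstrained. On this set $f_I$ factors through $F_I$ (since the split conditions for $I$ hold), yet $f_I$ is compactly supported; the only way both can happen along fibers extending to infinity in the $I_i$-direction is $f_I\equiv 0$ there. Once you know this, your transition-region worry evaporates (the modification is identically zero on $U$), and you see that the type-B feature of $\rho_I$ was never needed: the simple cutoff $\prod_{i\in I}\varphi(t_i)$ already does the job, which is precisely what the paper uses. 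Incidentally, this observation is also what actually makes your claim about ``disjoint supports'' for distinct $I,I'$ with $|I|=|I'|=m$ work: the neighborhoods $\Nbd^Z H_I$ and $\Nbd^Z H_{I'}$ cannot be made disjoint by shrinking (they always meet near $H_{I\cup I'}$), but the functions $\rho_I f_I$ vanish there because $f_I$ does.
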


\begin{proof}
Work by induction on strata $H_I:=\bigcap_{i\in I}H_I$, ordered by reverse inclusion of subsets $I\subseteq\{1,\ldots,n\}$ of cardinality $\geq 2$.
Thus we assume the conclusion hold over a neighborhood of $H_J$ for all $J\supsetneqq I$, and we try to achieve the desired conclusion over a neighborhood of $H_I$.
In a neighborhood of $H_I$, we have coordinates $(X,\lambda_X)=(F\times T^*\RR^I,\lambda_F+\lambda_{T^*\RR^I}+df)$ where $f$ is properly supported over $T^*\RR^I$ and independent of the $T^*\RR^I$ coordinate near infinity (in the cotangent directions).
Since $\left|I\right|\geq 2$ so $T^*\RR^I$ is connected at infinity, by modifying $\lambda_F$ we may assume that $f$ has compact support.
Now since the desired conclusion holds in a neighborhood of $H_J$ for $J\supsetneqq I$, we conclude that $f$ vanishes indentically in a neighborhood of $H_I\cap H_j$ for all $j\notin I$.
We may thus simply add $f\cdot\prod_{i\in I}\varphi(t_i)$ to the Liouville form of $X$ (for a suitable cutoff function $\varphi:\RR\to\RR_{\geq 0}$ equalling $1$ near zero and of small support) to achieve the desired conclusion over a neighborhood of $H_I$.
\end{proof}

\begin{lemma}\label{makelinear}
For any sectorial collection of hypersurfaces $H_1,\ldots,H_n\subseteq X$ and choice of functions $I_i:\Nbd^ZH_i\to\RR$, there exists $f:X\to\RR$ such that the deformed Liouville vector field $Z=Z_{\lambda+df}$ satisfies $ZI_i=I_i$ and $Zt_i=0$ (equivalently, the coordinates \eqref{rkfibration}--\eqref{rksplitting} give a splitting of Liouville forms $\lambda_X=\lambda_F+\lambda_{T^*\RR^k}$).

The deformation $Z_{\lambda+r\,df}$, while not necessarily compactly supported, is cylindrical at infinity in the sense that there is a diffeomorphism $X=Y\times\RR_{\geq 0}$ near infinity such that $Z_{\lambda+r\,df}$ is outward pointing along $Y\times\{s\}$ for all $r\in[0,1]$.
\end{lemma}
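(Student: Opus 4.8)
The plan is to bootstrap from Lemma~\ref{goodcoordsnearintersections}, which already handles the interaction of the hypersurfaces near their pairwise (hence all multiple) intersections by a \emph{compactly supported} deformation.  So I would first apply that lemma, reducing to the case where $ZI_i=I_i$ and $Zt_i=0$ already hold in a neighborhood of $H_i\cap H_j$ for all $i\ne j$.  A compactly supported deformation alters neither the $I_i$ nor the $t_i$ (these are characterized by $\omega_X$, which is unchanged, together with conditions at infinity, where $Z$ is unchanged), leaves $Z$ unchanged at infinity, and does not affect the cylindricity-at-infinity clause; so it costs nothing.  After this reduction the problem becomes local hypersurface by hypersurface: near each $H_i$ I want to absorb the discrepancy between $\lambda_X$ and the split form while cutting off in the $t_i$-direction so as not to disturb the (already split) picture near the other $H_j$.

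Concretely, for each $i$, Lemma~\ref{rkliouville} with $k=1$ gives coordinates $\Nbd^ZH_i\cong F_i\times T^\ast\RR$ with $\lambda_X=\lambda_{F_i}+\lambda_{T^\ast\RR}+df_i$, where $f_i$ is supported in $(F_i)_0\times T^\ast\RR$ for compact $(F_i)_0\subseteq F_i$, vanishes on $F_i\times\{0\}$, and locally factors through $F_i$ wherever the split conditions hold --- in particular near infinity of $X$ (since $I_i$ is linear and $t_i$ is $Z$-invariant at infinity) and, by the previous step, near $H_i\cap H_j$, where it therefore vanishes identically by its normalization.  I would then fix a single small $\varepsilon>0$ with $\{|t_i|<2\varepsilon\}\subseteq\Nbd^ZH_i$ for every $i$, and such that $|t_i|<2\varepsilon$ and $|t_j|<2\varepsilon$ with $i\ne j$ forces the point into the region where Lemma~\ref{goodcoordsnearintersections} applies (possible by mutual transversality of the $H_i$, which is uniform near infinity by cylindricity), pick a cutoff $\chi\colon\RR\to[0,1]$ equal to $1$ near $0$ and supported in $(-\varepsilon,\varepsilon)$, and set $\tilde f_i:=\chi(t_i)f_i$, extended by zero; this is globally smooth since it is supported in $\{|t_i|\le\varepsilon\}$, a closed subset of the interior of $\Nbd^ZH_i$.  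Taking $f$ to be the function from Lemma~\ref{goodcoordsnearintersections} plus $-\sum_i\tilde f_i$ then works: on a suitable invariant neighborhood of $H_i$ we have $\chi(t_i)\equiv1$ and $\tilde f_j\equiv0$ for $j\ne i$ (a point there with $\tilde f_j\ne0$ would have $|t_i|,|t_j|<2\varepsilon$, hence lie where $f_j$ vanishes), so $\lambda_X+df=\lambda_{F_i}+\lambda_{T^\ast\RR}$ and thus $Z_{\lambda_X+df}=Z_{F_i}+Z_{T^\ast\RR}$, giving $Z_{\lambda_X+df}I_i=I_i$ and $Z_{\lambda_X+df}t_i=0$.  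The splittings near the higher intersections $H_{i_1}\cap\cdots\cap H_{i_k}$ are then immediate from these identities together with Lemma~\ref{rkliouville}.

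The one point needing genuine care --- and the reason for the lemma's second sentence --- is that $f$ is truly non-compactly supported: near infinity of $X$, in the $I_i$-directions, $f_i$ does not vanish but equals data pulled back from $F_i$, which cannot be removed by a compactly supported deformation (this is visible already for $T^\ast(-\varepsilon,\varepsilon)$, with different asymptotic functions on its two ends).  To verify the cylindricity clause I would write $Z_{\lambda_X+r\,df}=Z_{\lambda_X}+rW$ for the fixed vector field $W$ with $\iota_W\omega_X=df$, fix a radial function $\rho$ near infinity with $Z_{\lambda_X}\rho$ growing without bound, and observe that near infinity $W$ is built from the \emph{compactly supported} data $\chi(t_i)\,(f_i^\infty\circ\pi_{F_i})$ occurring there, with $f_i^\infty\colon F_i\to\RR$ compactly supported; hence $W\rho$ stays bounded near infinity while $Z_{\lambda_X}\rho\to\infty$, so $Z_{\lambda_X+r\,df}\rho>0$ far enough out, uniformly in $r\in[0,1]$, while in the $F_i$-factor $W$ is a compactly supported perturbation so convexity at infinity of $F_i$ is preserved; the same radial coordinate therefore witnesses cylindricity for the whole path.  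I expect the main obstacle to be bookkeeping rather than substance: choosing the neighborhoods and cutoffs uniformly enough (including near infinity) that the per-hypersurface straightenings genuinely do not interfere, and phrasing the ``$W$ is subleading relative to the Liouville scaling'' estimate cleanly; everything else is a routine consequence of Lemmas~\ref{goodcoordsnearintersections}, \ref{rkliouville}, and \ref{conjugate}.
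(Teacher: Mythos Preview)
Your proposal is correct and follows essentially the same strategy as the paper: first invoke Lemma~\ref{goodcoordsnearintersections}, then straighten near each $H_i$ separately using the $k=1$ case of Lemma~\ref{rkliouville} with a cutoff in $t_i$, noting that the per-hypersurface modifications have disjoint supports because $f_i$ vanishes near $H_i\cap H_j$. The only difference is in the cylindricity verification: the paper computes the deformed Liouville vector field explicitly as $Z_F+Z_{T^\ast\RR}-(1-r\varphi(t_i))X_f+rf\varphi'(t_i)X_{t_i}$ and checks directly that it is outward pointing along the concrete hypersurface $\partial(F_0\times\{|I|\le N\})$ for large $F_0,N$ (analyzing each term separately on the two boundary pieces), whereas you argue more abstractly via a radial function $\rho$ and a boundedness estimate on $W\rho$---the same content, but your version would benefit from being made as explicit as the paper's to avoid any ambiguity about what ``$W\rho$ bounded'' means near the different ends.
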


\begin{proof}
Appealing first to Lemma \ref{goodcoordsnearintersections}, we reduce to the case that the desired conclusion already holds in a neighborhood of every pairwise intersection $H_i\cap H_j$ for $i\ne j$.
In a neighborhood of $H_i$, we have coordinates $(X,\lambda_X)=(F\times T^*\RR,\lambda_F+\lambda_{T^*\RR}+df)$ where $f$ is properly supported over $T^*\RR$ and independent of the $T^*\RR$ coordinate near infinity (in the cotangent directions).
In a neighborhood of $H_i\cap H_j$ for any $j\ne i$, we have that $f$ vanishes identically.
We now simply consider the deformation $\lambda_F+\lambda_{T^*\RR}+d((1-r\varphi(t_i))f)$ for $r\in[0,1]$ (where $\varphi:\RR\to\RR_{\geq 0}$ equals $1$ near zero and has small support).
These deformations associated to the various $H_i$ have disjoint support, so we may perform them simultaneously.
The associated family of Liouville vector fields is given by
\begin{equation}
Z_F+Z_{T^*\RR}-(1-r\varphi(t_i))X_f+rf\varphi'(t_i)X_{t_i}.
\end{equation}
Let us show that this vector field is outward pointing along $\partial(F_0\times\{\left|I\right|\leq N\})$ for $F_0\subseteq F$ a large Liouville domain and sufficiently large $N<\infty$.
The vector field $Z_F+Z_{T^*\RR}$ is certainly outward pointing along $\partial(F_0\times\{\left|I\right|\leq N\})$.
The vector field $X_f$ is tangent to $F$ factor, and vanishes outside a sufficiently large compact subset of $F$, so it is tangent to $\partial(F_0\times\{\left|I\right|\leq N\})$.
The vector field $X_{t_i}$ equals $-\frac\partial{\partial I}$ in the $T^*\RR$ factor, however the factors $rf\varphi'(t_i)$ are bounded, so this term is negligible compared to $Z_{T^*\RR}=I\frac\partial{\partial I}$.
We conclude that during our prescribed deformation, the Liouville vector field remains outward pointing along $\partial(F_0\times\{\left|I\right|\leq N\})$.
Finally, we should show how to patch together these contact type hypersurfaces (defined over each $\Nbd H_i$) globally.
This is straightforward since $Z$ is not changing near the pairwise intersections $H_i\cap H_j$, so near such and higher intersections, we may take any contact hypersurfaces we like.
\end{proof}

\begin{remark}[Sectorial hypersurfaces and products]\label{productofwithcorners}
Given sectorial hypersurfaces $H_1,\ldots,H_n\subseteq X$ and $H_1',\ldots,H_{n'}'\subseteq X'$, we would like to know that their product
\begin{equation}\label{sectorialproduct}
H_1\times X',\ldots,H_n\times X',X\times H_1',\ldots,X\times H_{n'}'\subseteq X\times X'
\end{equation}
is again sectorial.
So that these product hypersurfaces are cylindrical, we must, at a minimum, assume that the Liouville vector fields on $X$ and $X'$ are everywhere tangent to the $H_i$ and $H_i'$.
If, in addition, the functions $I_i$ and $I_i'$ satisfy $ZI_i=I_i$ and $Z'I_i'=I_i'$ everywhere, then their pullbacks to $X\times X'$ show that the collection of hypersurfaces \eqref{sectorialproduct} is indeed sectorial.
These hypotheses on the input hypersurfaces are precisely the properties ensured by applying Lemma \ref{makelinear}.
Thus whenever we want to consider products of sectorial hypersurfaces, we will first apply Lemma \ref{makelinear} to each factor.
\end{remark}

\subsection{Liouville sectors with corners}\label{pairsectorgluing}

\begin{definition}\label{admcorners}
A \emph{Liouville sector with (sectorial) corners} is a Liouville manifold-with-corners whose boundary, viewed as an immersed hypersurface, is sectorial.
\end{definition}

We will usually drop the qualifier `sectorial', except in the following remark.

\begin{remark}\label{smoothadmcorners}
A Liouville sector with sectorial corners is, in particular, a \emph{Liouville sector with (naive) corners}, namely a Liouville manifold-with-corners for which there exists an outward pointing Hamiltonian vector field defined near the boundary which is linear at infinity (indeed, a Hamiltonian giving such a vector field on a Liouville sector with sectorial corners is given by $\sum_i\varphi(t_i)I_i$ for a cutoff function $\varphi$ supported near zero).
Smoothing the corners of a Liouville sector with naive corners $X$ yields a Liouville sector $X^\sm$ \cite[Remark 2.12]{gpssectorsoc}.
A Liouville sector with sectorial corners is a much stronger notion than a Liouville sector with naive corners (e.g.\ the corner strata of the latter need not even be coisotropic).
Since we will not need the notion of a Liouville sector with naive corners in this paper, we shorten `Liouville sector with sectorial corners' to `Liouville sector-with-corners'.
\end{remark}

The wrapped Fukaya category of a Liouville sector-with-corners $X$ may be defined either as that of its smoothing: $\W(X):=\W(X^\sm)$ (which is well-defined in view of Lemma \ref{winvariancesector}), or, equivalently, and somewhat more intrinsically, as the wrapped Fukaya category of its interior (which is an \emph{open Liouville sector}, see Remark \ref{openliouvillesectors}).

\begin{example}
A product of Liouville sectors, each of which has been straightened as in Remark \ref{productofwithcorners}, is a Liouville sector-with-corners.
More generally, the same holds for products of Liouville sectors-with-corners.
\end{example}

Recall that a Liouville sector $X$ has $\partial X = F \times \RR$ whose symplectic reduction $F$, termed the symplectic boundary of $X$, is a Liouville manifold.
We extend this terminology to sectors-with-corners: each corner stratum of $X$ is coisotropic, and its symplectic reduction (namely the $F$ factor appearing in \eqref{rksplitting}) is called a \emph{symplectic boundary stratum} of $X$.
These symplectic boundary strata are themselves Liouville sectors-with-corners (the $I_i$ and $t_i$ descend in view of their Poisson brackets).
We will refer to the maximal proper symplectic boundary strata (i.e.\ those of real codimension two) as \emph{symplectic (boundary) faces}.

We now consider various gluing operations for Liouville sectors-with-corners.
Let us first recall the situation for Liouville sectors.
Given Liouville sectors $X$ and $Y$ with common symplectic boundary $F$, we may glue them to obtain a Liouville manifold $X\#_FY:=X \cup_{F \times \RR} Y$.
The common boundary, now in the interior, is a sectorial hypersurface.  
The same operation may also be described in terms of the corresponding Liouville pairs $(\bar X,F)$ and $(\bar G,F)$, the result being denoted by $(\bar X,F)\#_F(\bar G,F)$.
A direct construction of $\#$ in the language of Liouville pairs may be found in \cite[\S 3.1]{eliashbergweinsteinrevisited}, and reasoning as in \cite[\S 2]{gpssectorsoc} shows it is equivalent to the connect sum of Liouville sectors.

We now extend this picture to Liouville sectors-with-corners in various ways.
We limit our discussion to the case that the boundary consists of exactly two faces meeting precisely along the corner locus.

\begin{construction}\label{cornerdescription}
Items of the following two kinds can each be used to produce one of the other:
\begin{itemize}
\item A Liouville sector-with-corners $X$, with symplectic faces $F, G$, which in turn have common symplectic boundary $P$.
\item A Liouville sector $X^\sm$ with symplectic boundary expressed as $F \#_P G$.
\end{itemize}
\end{construction}

\begin{proof}[Construction]
We write $\partial X = H_1 \cup H_2$ where $H_1 = F \times \RR$, $H_2 = G \times \RR$, and $H_1 \cap H_2 = P \times \RR^2$. 
We apply Lemma \ref{makelinear} so that the coordinates \eqref{rksplitting} strictly respect Liouville forms; explicitly, these are a compatible collection of coordinates
\begin{align}
\label{cornernbhdfirst}F\times T^*\RR_{\leq 0}&\hookrightarrow X,\\
G\times T^*\RR_{\leq 0}&\hookrightarrow X,\\
\label{cornernbhdP}P\times T^*\RR_{\leq 0}^2&\hookrightarrow X,\\
\label{cornernbhdF}P\times T^*\RR_{\leq 0}&\hookrightarrow F,\\
\label{cornernbhdlast}P\times T^*\RR_{\leq 0}&\hookrightarrow G,
\end{align}
where $F$ and $G$ are Liouville sectors and $P$ is a Liouville manifold (though we write $T^*\RR_{\leq 0}$, we really mean $T^* (-\varepsilon,0]$ for small $\varepsilon>0$).
The first three of these are coordinates near $H_1$, $H_2$, and $H_1\cap H_2$, and the last two are coordinates near $\partial F$ and $\partial G$, respectively.
The coordinates $P\times T^*\RR_{\leq 0}^2\hookrightarrow X$ account for the entire corner locus of $X$, so we may describe a smoothing of this corner simply by smoothing the corner of $\RR_{\leq 0}^2$.

The smoothing of this corner may be described as follows (a very similar discussion appeared earlier in \S\ref{productofwithstops}, in particular around Figure \ref{figuretwovectorfields}).
Let $\bar X$ denote the result of gluing onto $X$ (via \eqref{cornernbhdfirst}--\eqref{cornernbhdlast}) copies of
\begin{align}
&F\times(T^\ast\RR_{\geq 0},Z_{T^\ast\RR_{\geq 0}}+\pi^\ast\varphi(s)\partial_s),\\
&G\times(T^\ast\RR_{\geq 0},Z_{T^\ast\RR_{\geq 0}}+\pi^\ast\varphi(s)\partial_s),\\
&P\times(T^\ast\RR^2_{\geq 0},Z_{T^\ast\RR^2_{\geq 0}}+\pi^\ast\varphi(s_1)\partial_{s_1}+\pi^\ast\varphi(s_2)\partial_{s_2}),
\end{align}
where $\pi^\ast$ denotes the lift from vector fields on a manifold to Hamiltonian vector fields on its cotangent bundle, and $\varphi:\RR\to[0,1]$ is smooth and satisfies $\varphi(s)=0$ for $s\leq 2$ and $\varphi(s)=1$ for $s\geq 3$.
Now as illustrated in Figure \ref{figuretwovectorfieldsreprise}, the vector field $\varphi(s_1)\partial_{s_1}+\varphi(s_2)\partial_{s_2}$ may be deformed over a compact subset of the interior of $\RR^2_{\geq 0}$ to a vector field of the form $\varphi(s)\partial_s$ for some coordinates $(s,\theta)$ on $\RR^2_{\geq 0}\setminus\{(0,0)\}$.
The locus $s\leq 1$ in this deformation is thus a smoothing $X^\sm$ of the corners of $X$, and its complement can be described as
\begin{equation}\label{productfiberreprise}
\Bigl[F\underset{P\times T^\ast[0,1]}\cup G\Bigr]\times(T^\ast\RR_{\geq 0},Z_{T^\ast\RR_{\geq 0}}+\pi^\ast\varphi(s)\partial_s).
\end{equation}
We will denote this deformation of $\bar X$ by $\overline{X^\sm}$, since the above discussion shows it is the convex completion of $X^\sm$.
Now the smoothed Liouville sector $X^\sm$ has boundary neighborhood coordinates
\begin{equation}
(F\#_PG)\times T^*\RR_{\leq 0}\hookrightarrow X^\sm
\end{equation}
where $F\#_PG$ denotes the Liouville manifold from \eqref{productfiberreprise}, namely obtained by gluing $F$ and $G$ together along $P\times T^*(-\varepsilon,\varepsilon)$, either side of which is embedded into $F$ and $G$ via \eqref{cornernbhdF}--\eqref{cornernbhdlast}.

\begin{figure}[ht]
\centering
\includegraphics[max width=.95\textwidth]{twovectorfields.pdf}
\caption{Left: The vector field $\varphi(s_1)\partial_{s_1}+\varphi(s_2)\partial_{s_2}$ on $\RR^2$ defining the Liouville structure on $P\times T^\ast\RR^2_{\geq 0}$ (the dotted line indicates the boundary of $X$).  Right: The deformed vector field $\varphi(s)\partial_s$ defining the Liouville structure on $P\times T^\ast\RR^2_{\geq 0}$ which defines what we call $\overline{X^\sm}$ (the dotted line indicates a smoothing $X^\sm$ of $X$).  Note that the deformation is supported in a compact subset of $\RR^2_{\geq 0}$, disjoint from the boundary.}\label{figuretwovectorfieldsreprise}
\end{figure}

\begin{figure}[ht]
\centering
\includegraphics[max width=.95\textwidth]{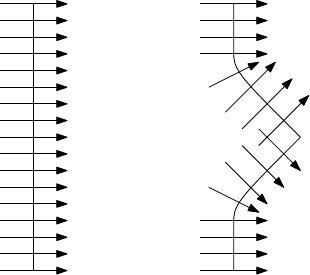}
\caption{Introducing a corner near the origin to turn $\RR\times\RR_{\leq 0}$ (left) into $A$ (right).}\label{figureintroducecorner}
\end{figure}

This operation works in reverse as well: given a Liouville sector $Y$ with boundary neighborhood coordinates $Q\times T^*\RR_{\leq 0}\hookrightarrow Y$ and a splitting of $Q$ as $F\#_PG$, we may introduce a corner into the boundary as follows to producing a Liouville sector-with-corners $X$ with $X^\sm=Y$.
Concretely, near $P$, there is a neighborhood in $Q$ of the form $P \times T^* \RR$ (rather $P \times T^*(-\epsilon, \epsilon)$), and hence a neighborhood in $X$ of the form $P \times T^*(\RR \times \RR_{\leq 0})$
(rather $P \times T^*((-\epsilon, \epsilon) \times (-\epsilon, 0])$).
We now modify $\RR\times\RR_{\geq 0}$ near the origin so as to introduce a corner (see Figure \ref{figureintroducecorner}), and we replace our local chart $P\times T^*(\RR\times\RR_{\leq 0})$ with $P\times T^*A$ to define our desired $X$.
To see that $X$ has sectorial corners, simply note that the Hamiltonian vector field on $Y$ transverse to its boundary is the Hamiltonian lift of $\frac\partial{\partial t}$, so the desired vector fields on $X$ can also be defined on $A$ so that they agree with $\frac\partial{\partial t}$ away from a neighborhood of the origin (see Figure \ref{figureintroducecorner}) and then lifted.
\end{proof}

We now discuss a version of the boundary connect sum $\#$ construction for gluing along a shared subsector of the boundary, rather than the whole boundary. 

\begin{construction} \label{glueandsplit}
Items of the following three kinds can each be used to produce one of the other:
\begin{itemize}
\item A Liouville sector $X$ with a hypersurface $H\subseteq X$, meeting the boundary transversally, with $\{H,\partial X\}$ sectorial, such that $H$ separates $X$ into two pieces $X_1$ and $X_2$ meeting precisely along $H$.
\item A pair of Liouville sectors-with-corners $X_1$ and $X_2$ with two boundary faces each $\partial^j X_i$, $j = 1,2$, and an identification of the symplectic reductions of $\partial^1 X_1$ and $\partial^1 X_2$.
\item A pair of Liouville pairs $(Y_1,Q_1)$ and $(Y_2,Q_2)$ with a common Liouville subsector $Q_1\hookleftarrow P\hookrightarrow Q_2$.
\end{itemize}
In this correspondence, $(Y_i,Q_i)$ is the Liouville pair corresponding to the Liouville sector $(X_i)^{\sm}$ (the rounding of the sector-with-corners $X_i$).
\end{construction}

\begin{proof}[Construction]
The passage between the first two inputs in either direction is evident (splitting along $H$ and gluing $\partial^1 X_1$ to $\partial^1 X_2$).
Beginning with the second type of input, $X_1$ and $X_2$ with $H = \partial^1 X_1 = \partial^1 X_2$, 
we apply Lemma \ref{makelinear} so as to obtain boundary neighborhood coordinates \eqref{cornernbhdfirst}--\eqref{cornernbhdlast} on $X_1$ and $X_2$, say with $F=F_1=F_2$ corresponding to $H$ and 
with $G_i$ the remaining the two pieces (i.e.\ in the context of the first type of input, the symplectic reductions of the two pieces into which $H$ splits $\partial X$).
Passing to the equivalent descriptions of $X_i$ in terms of Liouville pairs given by Construction \ref{cornerdescription}, we obtain $(Y_i,Q_i)$ together with a common Liouville subsector $Q_1\hookleftarrow P\hookrightarrow Q_2$.

Now suppose we are given the third type of input, Liouville pairs $(Y_i,Q_i)$ with a common Liouville subsector $Q_1\hookleftarrow P\hookrightarrow Q_2$.
The boundary $\partial P = R \times \RR$ separates $Q_i$ into $P\#_R(Q_i\setminus P^\circ)$.
We may thus apply Construction \ref{cornerdescription} to $(Y_i,P\#_R(Q_i\setminus P^\circ))$ to obtain Liouville sectors-with-corners, each with a boundary face with neighborhood $P\times T^*\RR_{\geq 0}$.
This yields the second type of input, and gluing along these common faces yields the first.
\end{proof}

Going forward, given a pair of Liouville pairs $(Y_1, Q_1)$ and $(Y_2, Q_2)$ along with a common subsector $Q_1\hookleftarrow P\hookrightarrow Q_2$, we shall call the result of passing to the first item in Construction \ref{glueandsplit} the \emph{gluing of $(Y_1, Q_1)$ and $(Y_2, Q_2)$ along $P$}, denoted
\begin{equation}
    (Y_1, Q_1) \#_P (Y_2, Q_2).
\end{equation}
We will use the same notation and terminology, the gluing of $X_1$ and $X_2$ along $P$
\begin{equation}
    X= X_1 \#_{P} X_2 := X_1 \cup_{P \times \RR} X_2
\end{equation}
for the passage from the second type of input of Construction \ref{glueandsplit} to the first, where $\partial^1 X_1 = \partial^1 X_2 = P \times \RR$.

\subsection{Boundary cores of sectors with corners} \label{sec:cornerboundarycores}

To understand when an inclusion of a Liouville sector $X$ into a larger sector is forward stopped, we use the positive/negative cores $\cc_{(\partial\partial_\infty X)_\pm}$, which were defined in \S\ref{convexreview}.
In order to have a similar understanding of the case when $X$ is a sector-with-corners, we will round corners in a particular way 
and calculate the positive/negative cores of the chosen rounding, as subsets of the original cornered $\partial_\infty X$.
We content ourselves with the case when $X$ has exactly two faces as in Construction \ref{cornerdescription}.

To begin the discussion, let us recall from \S\ref{convexreview} the situation for Liouville sectors (without corners).
So, suppose $X$ is a Liouville sector, with coordinates $F\times T^*\RR_{\leq 0}$ given near its boundary, where $F$ is a Liouville manifold.
Then we have
\begin{equation}
\partial\partial_\infty X=F\bigcup_{\RR\times\partial_\infty F} F,
\end{equation}
and $\cc_{(\partial\partial_\infty X)_\pm}$ are, respectively, the two copies of $\cc_F$ sitting inside the two copies of $F$ inside $\partial\partial_\infty X$.

Now consider the case that $X$ is a Liouville sector-with-corners, with exactly two faces.  Using Lemma \ref{makelinear} if necessary, we fix coordinates near its 
boundary given by $F\times T^*\RR_{\leq 0}$ and $G\times T^*\RR_{\leq 0}$ overlapping over $P\times T^*\RR_{\leq 0}^2$, where 
$F$ and $G$ are Liouville sectors both of which have coordinates $P\times T^*\RR_{\leq 0}$ near their boundary as in \eqref{cornernbhdfirst}--\eqref{cornernbhdlast}.
The locus $\partial\partial_\infty X$ is the union of $F\bigcup_{\RR\times\partial_\infty F}F$ and $G\bigcup_{\RR\times\partial_\infty G}G$.

As explained in Construction \ref{cornerdescription}, 
choosing a rounding of the corner of $\RR_{\leq 0}^2$ determines a corresponding rounding $X^\sm$, a neighborhood of whose boundary is given by $(F \#_P G)\times T^* \RR_{\leq 0}$.  
The cores $\cc_{(\partial\partial_\infty X^\sm)_\pm}$ are evidently given abstractly by $\cc_F \cup_{\cc_P} \cc_G$, i.e.\ the relative cores of $F$ and $G$ attached along their common boundary, $\cc_P$.
Let us discuss how they are embedded.

Away from the corner locus, the positive/negative cores $\cc_{(\partial\partial_\infty X^\sm)_\pm}$ are naturally identified, as in \S\ref{convexreview}, with two copies of $\cc_F$ and $\cc_G$.

It remains to understand
what happens in a neighborhood of the corner locus, where we may work in the local model $P\times T^*\RR_{\leq 0}^2$.
Before smoothing, the contribution of this local model to $\partial\partial_\infty X$ is given by
\begin{equation}\label{localcornerbdrymodel}
    \partial\RR_{\leq 0}^2\times\Biggl[\partial_\infty P\times\RR^2\bigcup_{(\RR\times\partial_\infty P)\times S^1}P\times S^1\Biggr].
\end{equation}
After rounding the corner of $\partial\RR_{\leq 0}^2$, 
the positive/negative cores 
lie in the second term $P\times S^1$ and are given by $\cc_P$ times the codirections in $S^1$ which are outward/inward conormal to (the rounded) $\partial\RR_{\leq 0}^2$. 
(Compare the discussion in Construction \ref{cornerdescription} for another look at this geometry.)
This completes the description of the $\cc_{(\partial\partial_\infty X^\sm)_\pm}$.

It is natural to ask whether we can describe the limiting behavior of 
these as we undo the rounding of the corner.  It is clear from the above description that this limit has the same description as in the rounded case, 
so long as by ``outward/inward conormal'' to a corner, we understand the appropriate quarter cocircle over the corner of $\RR_{\leq 0}^2$.
We henceforth \emph{define} the positive/negative cores of $\partial\partial_\infty X$ by this limit.

\subsection{Sectorial coverings} \label{sectorialcoverings}

\begin{definition}[Sectorial covering]\label{sectorialcoveringdef}
Let $(X, \partial X)$ be a Liouville manifold-with-boundary.
Suppose $X = X_1 \cup \cdots \cup X_n$, where each $X_i$ is a manifold-with-corners with precisely two faces $\partial^1X_i:=X_i\cap\partial X$ and the point set topological boundary $\partial^2X_i$ of $X_i\subseteq X$, meeting along the corner locus $\partial X\cap\partial^2 X_i=\partial^1X_i\cap\partial^2X_i$.
Such a covering $X=X_1\cup\cdots\cup X_n$ is called \emph{sectorial} iff the collection of hypersurfaces $\partial X,\partial^2X_1,\ldots,\partial^2X_n$ is sectorial.
(Note that this means, in particular, that $X$ and $X_1,\ldots,X_n$ are Liouville sectors.)
\end{definition}

\begin{remark}\label{sectorialcoveringotherdefs}
There are many possible variations on the above definition which also deserve the name `sectorial covering'---the key point is just that the collection of all the boundaries $\partial X,\partial X_1,\ldots,\partial X_n$ should be sectorial.
For example, we could allow $X$ and the $X_i$ to have more corners.
We could also insist on no corners: require $X_i\subseteq X$ to be disjoint from $\partial X$, require $\partial X,\partial X_1,\ldots,\partial X_n$ to be sectorial, and require $X_1\cup\cdots\cup X_n\hookrightarrow X$ to be a trivial inclusion (this comes at the cost of $X_1,\ldots,X_n$ not literally covering $X$).
For the purposes of this paper, we work with Definition \ref{sectorialcoveringdef} as stated above.
The various possible alternative definitions are all related by smoothing of corners, and hence our main results will continue to apply in these more general settings.
\end{remark}

\begin{example}\label{twoballscover}
Consider two balls $B_1,B_2\subseteq\RR^n$ whose boundaries are transverse.
The cover $T^*(B_1\cup B_2)=T^*B_1\cup T^*B_2$ does \emph{not} satisfy Definition \ref{sectorialcoveringdef}, since the boundary of $T^*(B_1\cup B_2)$ is not smooth.
To make this example conform to Definition \ref{sectorialcoveringdef}, we should take $X$ to be the cotangent bundle of a smoothing of $B_1\cup B_2$, and we should take $X_1$ and $X_2$ to be bounded by \emph{disjoint} slight perturbations (in $X$) of $(\partial B_1)\cap B_2$ and $B_1\cap(\partial B_2)$, respectively.
In this way, $X=X_1\cup X_2$ coincides with $T^*(B_1\cup B_2)=T^*B_1\cup T^*B_2$ except in (the inverse image of) a small neighborhood of $\partial B_1\cap\partial B_2$.
Similar constructions are often necessary when working with sectorial coverings in the sense of Definition \ref{sectorialcoveringdef}.
(In the `no corner' definition from Remark \ref{sectorialcoveringotherdefs}, we would just say that $T^*B_1\cup T^*B_2$ is a sectorial covering of $X$.)
\end{example}

\begin{example}\label{glueandsplitcovering}
The setup of Construction \ref{glueandsplit} gives a sectorial covering $X=X_1^+\cup X_2^+$, where $X_i^+$ is a slight enlargment of $X_i$, so that $X_1^+\cap X_2^+=H\times(-\varepsilon,\varepsilon)$.
\end{example}

\begin{example}
Let $X$ be a Liouville sector, and let $\partial X,H_1,\ldots,X_m$ be sectorial.
These divide $X$ into some number of connected components.
Suppose that the closure of each such component is embedded, i.e.\ consists of at most one orthant in every choice of local coordinates \eqref{rksplitting}.
Defining $X_1,\ldots,X_n$ to be appropriate slight enlargements of smoothings of corners of these closures, we see that $X_1,\ldots,X_n$ is a sectorial cover of $X$.
\end{example}

\begin{example}
Let $X$ be a Liouville sector-with-corners, and smooth its boundary to obtain $X^\sm$.
The symplectic boundary of $X^\sm$ may be described by the natural generalization of Construction \ref{cornerdescription}.
This symplectic boundary has a sectorial covering by $F_i$, each of which is (a slight enlargement of) a symplectic boundary face of $X$ (with smoothed corners).
\end{example}

Recall that given sectorial hypersurfaces $H_1,\ldots,H_n\subseteq X$, we get Liouville manifolds $F_{i_1,\ldots,i_k}$ from Lemma \ref{rksplittinglemma} and \ref{rkliouville}.
In fact, we have the following finer structure (which, for convenience, we describe only in the case of sectorial coverings).
For any sectorial covering $X=X_1\cup\cdots\cup X_n$, stratify $X$ by strata
\begin{equation}\label{coveringstrata}
X_{I, J, K} = \bigcap_{i\in I}X_i\cap\bigcap_{j\in J}\partial X_j\setminus\bigcup_{k\in K}X_k
\end{equation}
ranging over all decompositions $I \sqcup J \sqcup K = \{1, \ldots, n\}$.
The closure of each $X_{I, J, K}$ is a submanifold-with-corners, whose symplectic reduction is a Liouville sector-with-corners.
Indeed, this follows since the relevant $I_i$ descend to the symplectic reduction in view of their Poisson brackets.

\begin{definition}
A sectorial covering will be called \emph{Weinstein} when the convex completions of all of the symplectic reductions of strata \eqref{coveringstrata} are (up to deformation) Weinstein.
\end{definition}

\begin{lemma} \label{lem:morecovers} 
Given a sectorial covering $X=X_1\cup\cdots\cup X_n$, the following are also sectorial covers:
\begin{align}
\label{auxcoverI}X&=(X_1\cup X_2)\cup X_3\cup\cdots\cup X_n\\
\label{auxcoverII}X_1&=(X_1\cap X_2)\cup\cdots\cup(X_1\cap X_n)
\end{align}
(after smoothing the appropriate corners).
Moreover, if the original cover was Weinstein, so are those described above.
\end{lemma}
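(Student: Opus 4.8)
The plan is to verify directly that the candidate collections of hypersurfaces appearing in each new cover are sectorial, by exhibiting the required defining functions $I_i$, and then to check that the Weinstein property is inherited by the strata of the new covers because those strata (and their symplectic reductions) coincide with strata of the original cover.

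First I would treat \eqref{auxcoverI}. Here the collection of boundaries to be checked is $\partial X,\partial^2(X_1\cup X_2),\partial^2 X_3,\ldots,\partial^2 X_n$ (after smoothing the corner where $\partial^2 X_1$ meets $\partial^2 X_2$). Away from a neighborhood of $\partial^2 X_1\cap\partial^2 X_2$ the hypersurface $\partial^2(X_1\cup X_2)$ agrees with $\partial^2 X_1\cup\partial^2 X_2$, so the only work is local near that stratum. By Lemma \ref{makelinear} applied to the original sectorial collection, we may choose coordinates as in \eqref{rksplitting}: in a $Z$-invariant neighborhood of $\partial^2 X_1\cap\partial^2 X_2$ we have $X=F\times T^\ast\RR^2$ with $\partial^2 X_1=\{t_1=0\}$ and $\partial^2 X_2=\{t_2=0\}$, the Liouville form split, and $I_1,I_2$ the pullbacks of the coordinate functions on $T^\ast\RR^2$. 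Smoothing the corner of $X_1\cup X_2$ amounts, as in Remark \ref{newfromoldsectorial}, to replacing the two coordinate hyperplanes in $\RR^2$ near the origin by a single smooth hypersurface transverse to the characteristic directions; taking the new $X_I$ near this stratum to be the Hamiltonian lift of a vector field on $\RR^2$ transverse to the new hypersurface and tangent to the others (and equal to the old $X_{I_i}$ away from the modification) produces the required defining function, and the Poisson-bracket identities \eqref{admidentities} follow from those of the model. The functions $I_3,\ldots,I_n,I_{\partial X}$ are unchanged, and compatibility of their brackets with the new $I$ holds because the modification is supported away from $\partial X$ and $\partial^2 X_k$ for $k\geq 3$ (these are disjoint from $\partial^2 X_1\cap\partial^2 X_2$ up to shrinking, since the $X_i$ with $i\geq 3$ need not meet this corner; if some do, the modification can be localized away from them as in the proof of Lemma \ref{makelinear}). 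That the new cover is a sectorial covering in the sense of Definition \ref{sectorialcoveringdef} is then immediate. For the Weinstein claim: every stratum \eqref{coveringstrata} of the new cover is, away from the modification locus, a union of strata of the old cover, and near the modification locus its symplectic reduction is identified (via the local splitting $F\times T^\ast\RR^2$) with the symplectic reduction of the corresponding old stratum times a smoothing of an orthant in $T^\ast\RR^2$, which does not affect the symplectic boundary stratum $F$; hence Weinstein-ness of the convex completions is inherited.

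Next, \eqref{auxcoverII}: here the ambient manifold-with-boundary is $X_1$, whose boundary $\partial X_1$ is itself two faces $\partial^1 X_1=X_1\cap\partial X$ and $\partial^2 X_1$. The pieces $X_1\cap X_j$ for $j=2,\ldots,n$ cover $X_1$, and their point-set boundaries inside $X_1$ are (the parts inside $X_1$ of) $\partial^2 X_j$. So the relevant collection is $\partial^1 X_1,\partial^2 X_1,\{\partial^2 X_j\cap X_1\}_{j\geq 2}$, which is just the restriction to $\Nbd^Z X_1$ of a subcollection of the original sectorial collection $\partial X,\partial^2 X_1,\ldots,\partial^2 X_n$. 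The defining functions $I_{\partial X},I_{\partial^2 X_1},I_{\partial^2 X_j}$ restrict to a $Z$-invariant neighborhood in $X_1$ and still satisfy \eqref{admidentities} there, so this collection is sectorial; after smoothing the corners where the $\partial^2 X_j$ meet $\partial^2 X_1$ (again via Remark \ref{newfromoldsectorial}) one obtains a sectorial covering of $X_1$ in the required normal form. The strata \eqref{coveringstrata} of this cover of $X_1$ are exactly the strata $X_{I,J,K}$ of the original cover with $1\in I$ (together with their corner-smoothings), so their symplectic reductions are among those of the original cover and hence have Weinstein convex completions when the original cover does.

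The main obstacle I anticipate is purely bookkeeping: making the corner-smoothing in \eqref{auxcoverI} (and the interaction of the new hypersurface $\partial^2(X_1\cup X_2)$ with all the other $\partial^2 X_k$ simultaneously) genuinely local, so that the new defining function $I$ for $\partial^2(X_1\cup X_2)$ can be chosen to satisfy $\{I,I_k\}=0$ and $\{I,I_{\partial X}\}=0$ with all remaining functions and to satisfy $dI|_{C_k}=0$, $dI|_{C_{\text{new}}}\neq 0$. This is exactly the type of local-in-$\RR^k$ construction sanctioned by Remark \ref{newfromoldsectorial} and the proof of Lemma \ref{makelinear}; once one grants oneself the normal-form coordinates from Lemma \ref{makelinear}, the verification is routine, and I would phrase the argument by reducing to that normal form and then citing Remark \ref{newfromoldsectorial}. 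The inheritance of the Weinstein condition then requires only the observation that smoothing a corner of $X_1\cup X_2$ changes a stratum's closure by a local modification in the $T^\ast\RR^k$ factor of \eqref{rksplitting}, leaving the symplectic-reduction factor $F$ (and hence its convex completion) unchanged up to deformation.
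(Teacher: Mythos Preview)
Your sectoriality argument is essentially the paper's: both reduce to Remark \ref{newfromoldsectorial} (pull back new hypersurfaces from $\RR^k$ in the local product coordinates and lift vector fields to get the new $I$'s). That part is fine.

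The gap is in your Weinstein argument for \eqref{auxcoverI}. You correctly observe that a stratum of the new cover is, away from the smoothing locus, a \emph{union} of old strata, but you then write that near the modification its symplectic reduction ``does not affect the symplectic boundary stratum $F$; hence Weinstein-ness of the convex completions is inherited.'' This ``hence'' hides the real step. The symplectic reduction of the smoothed hypersurface $\partial^2(X_1\cup X_2)$ is not an old reduction left unchanged: it is the \emph{gluing} $A\#_F B$ of the reductions $A$ of (the relevant part of) $\partial^2 X_1$ and $B$ of (the relevant part of) $\partial^2 X_2$ along their common boundary face $F$ (the reduction of the corner stratum $\partial^2 X_1\cap\partial^2 X_2$). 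This is precisely the connect-sum operation of Constructions \ref{cornerdescription}--\ref{glueandsplit}. So to conclude, you must invoke the nontrivial fact that $\#$ along a boundary face preserves the Weinstein property (as in \cite[\S 3.1]{eliashbergweinsteinrevisited}); the paper's proof makes exactly this point. Your local statement that the reduction is ``$F$'' near the modification is true but only tells you what the gluing region is, not that the glued result is Weinstein.

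A similar issue affects your argument for \eqref{auxcoverII} once you smooth: the claim that the new strata ``are exactly the strata $X_{I,J,K}$ of the original cover with $1\in I$'' is correct only before smoothing. After smoothing the corners where $\partial^2 X_j$ meets $\partial X_1$, some strata of the new cover are again connect sums of old strata along boundary faces, and the same appeal to preservation of Weinstein under $\#$ is needed.
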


\begin{proof}
The new coverings are sectorial by Remark \ref{newfromoldsectorial}.
To see that the Weinstein property is preserved, we should note that, following the geometry of Constructions \ref{cornerdescription}--\ref{glueandsplit}, effect on the strata is to perform connect sum along boundary faces, which preserves being Weinstein \cite[\S 3.1]{eliashbergweinsteinrevisited}.
\end{proof}

\section{Stopping and the \texorpdfstring{$A_2$}{A\_2} sector} \label{subsec:stopa2}

We write $A_2$ for the Liouville sector associated to the stopped Liouville manifold $(\CC,\{e^{2\pi ij/3}\cdot\infty\}_{j=0,1,2})$.

\begin{remark}
The notation $A_2$ is due to the fact $\W(A_2)$  is equivalent to the category of perfect modules over the $A_2$ quiver $\bullet \to \bullet$. 
This category  has a semi-orthogonal decomposition
into two copies of the category of perfect modules over $\bullet$.
Geometrically, this semi-orthogonal decomposition can be seen by contemplating the forward stopped-ness and stopping witnesses for the inclusions of $T^* [0,1]$ around any two of the stops.
\end{remark}

Here, we will be interested in products $A_2 \times Q$, for a Liouville sector $Q$ with symplectic boundary $P$.  
By applying Lemma \ref{makelinear} to $A_2$ and $Q$, we may ensure that this product $Q\times A_2$ is a Liouville sector-with-corners (see Remark \ref{productofwithcorners}).
In fact for $A_2$, rather than abstractly applying Lemma \ref{makelinear}, we will simply use the Liouville vector field illustrated on the left of Figure \ref{figurea2deformation}.
The symplectic faces of this product are $Q\sqcup Q\sqcup Q$ (with a natural cyclic order) and $A_2\times P$.

Given a Liouville sector $X$ with symplectic boundary $F$, along with a sector embedding $Q\hookrightarrow F$, we may use 
Construction \ref{glueandsplit} to form $X \#_Q (Q \times A_2)$ (implicitly we are passing 
$Q \times A_2$ or $(X,F)$ through the first or second bullet point of Construction  \ref{cornerdescription} respectively in order for the result can be fed into the second respectively third input of Construction \ref{glueandsplit}).  As there are three $Q$ faces of $Q \times A_2$, we may glue up to three 
such (sectors associated to the) pairs $(X,F)$ into $Q \times A_2$ in this way.

\begin{figure}[ht]
\centering
\includegraphics[max width=.95\textwidth]{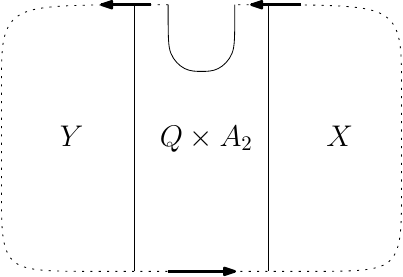}
\caption{Gluing Liouville sectors with $Q\times A_2$ in between.  The arrows indicate the direction of the Reeb flow.}\label{figurea2middle}
\end{figure}

We will be particularly interested in the gluing of two such pairs illustrated in Figure \ref{figurea2middle}, namely $X\#_Q(Q\times A_2)\#_QY$ (with the cyclic ordering illustrated), for Liouville sectors $X$ and $Y$ together with embeddings of $Q$ into their symplectic boundaries.
Of particular importance is the resulting pair of inclusions
\begin{equation}\label{a2gluing}
X\hookrightarrow X\#_Q(Q\times A_2)\#_QY\hookleftarrow Y.
\end{equation}

\begin{proposition}\label{a2stops}
The inclusion $X \hookrightarrow X\#_Q(Q\times A_2)\#_Q Y$ is tautologically forward stopped, and the 
stopping witness produced by Proposition \ref{tautisstopped} is disjoint from $Y$.
\end{proposition}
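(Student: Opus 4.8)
The plan is to verify the two geometric conditions in Definition \ref{tautforwardstoppeddef} directly from the explicit local models, working in the coordinates provided by Construction \ref{cornerdescription}. First I would set up coordinates: near the face of $\partial_\infty X$ along which the gluing takes place, we have the chart $F\times T^*\RR_{\leq 0}$, and since we are gluing along $Q\hookrightarrow F$, the relevant corner chart is $Q\times T^*\RR_{\leq 0}^2$ inside $Q\times A_2$. In these coordinates, one factor of $T^*\RR_{\leq 0}$ (or rather its rounding) comes from the $A_2$ direction and one from the $X$ direction. The key observation is that the $A_2$ sector, equipped with the explicit Liouville vector field of Figure \ref{figurea2deformation}, has its three stops arranged so that the Reeb flow near the $Q$-face along which $X$ is glued points \emph{into} $X$ (this is the content of Figure \ref{figurea2middle}); concretely, the incoming positive core $\cc_{(\partial_\inn M)_+}$ of the cobordism $M=\partial_\infty((X\#_Q(Q\times A_2)\#_Q Y)\setminus X^\circ)\setminus\f$ is, using the description of boundary cores of sectors-with-corners in \S\ref{sec:cornerboundarycores}, contained in $\cc_Q$ times an outward quarter-cocircle direction, and this sits in the $A_2\times P$ face on the side where the $A_2$ geometry lets it be pushed back towards $\partial_\infty X$.

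Next I would carry out the pinching: following Definition \ref{tautforwardstoppeddef}, I need a compactly supported deformation of $M$ (through contact cobordisms) making $\partial_\inn M$ touch $\partial_\out M$ over a neighborhood of $\cc_{(\partial_\inn M)_+}$ while staying disjoint from $\cc_{(\partial_\out M)_-}$. The $A_2$ sector provides exactly the room for this: there is a contact vector field on the $A_2$ factor (visible in Figure \ref{figurea2deformation}, flowing away from the stop adjacent to $X$ towards the other two stops) whose flow pushes the whole of $\partial_\infty X$ across the $Q\times A_2$ region without encountering $\partial_\infty Y$ or the negative core on the outgoing side. Taking the product of this $A_2$-contact-isotopy with the identity on $Q$ and splicing it with the identity away from the gluing region gives the required deformation; the pinched locus is $\cc_Q$ times the pinch point of the $A_2$-picture. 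This is where Figure \ref{figurea2middle} does the real work, so the step amounts to carefully checking that the product contact vector field is genuinely positive and that the supports can be arranged compactly — both straightforward given that $A_2$ is two-dimensional and the pictures are explicit.

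For the second assertion — that the stopping witness produced by Proposition \ref{tautisstopped} is disjoint from $Y$ — I would trace through the construction in the proof of that proposition. There, one translates $\partial_\inn M$ outward by its transverse contact vector field to un-pinch slightly, obtaining a genuine cobordism $M'$, and then takes $W$ to be the sweepout under that vector field of a small Liouville-domain neighborhood of $\cc_{(\partial_\inn M')_+}$. Since $\cc_{(\partial_\inn M')_+}$ lives in the $A_2\times P$ face on the side adjacent to $X$ (it is essentially $\cc_Q$ times an outward corner direction), a sufficiently small such neighborhood, and hence its sweepout $W$, stays within the $Q\times A_2$ collar and never reaches the $Q$-face bounding $Y$. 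Concretely one checks that the $A_2$-coordinate of every point of $W$ stays on the $X$-side of the middle of the $A_2$ picture, so $W\cap Y=\varnothing$.

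The main obstacle I anticipate is bookkeeping rather than conceptual: correctly identifying, using \S\ref{sec:cornerboundarycores}, \emph{which} quarter-cocircle directions over the corner of $\RR_{\leq 0}^2$ constitute the positive versus negative incoming and outgoing cores, and verifying that the cyclic ordering fixed in Figure \ref{figurea2middle} is the one for which the $A_2$-side push is available on the correct side. Getting the orientation conventions (coorientations of $\partial_\inn$ versus $\partial_\out$, the sign conventions making positive wrapping push towards $\cc_{(\partial_\out)_+}\sqcup\cc_{(\partial_\inn)_-}$) consistent between the $A_2$ picture and the abstract Definition \ref{forwardstoppeddef} is the delicate part; once that is pinned down, both the pinching deformation and the disjointness of $W$ from $Y$ follow by inspection of the explicit two-dimensional $A_2$ model multiplied by the identity on $Q$ (and on $P$ in the relevant faces).
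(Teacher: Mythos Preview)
Your overall strategy matches the paper's: use the pinching of the $A_2$ factor (Figure \ref{figurea2deformation}) to realize the deformation required by Definition \ref{tautforwardstoppeddef}, and observe that this pinching takes place entirely on the $X$-side of the $A_2$ region, hence away from $Y$.

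However, you underestimate one technical point that the paper spends most of its proof on. The phrase ``taking the product of this $A_2$-contact-isotopy with the identity on $Q$'' does not directly give a deformation of the contact cobordism, because $\partial_\infty(Q\times A_2)$ is not a product: it is the union $(\partial_\infty Q\times A_2)\cup_{\partial_\infty Q\times\partial_\infty A_2\times\RR}(Q\times\partial_\infty A_2)$, and during the pinching the Liouville vector field on $Q\times A_2^r$ is no longer tangent to $\partial A_2^r$, so $Q\times A_2^r$ is not cylindrical at infinity and $\partial_\infty(Q\times A_2^r)$ is not \emph{a priori} defined. The paper resolves this by fixing a contact form $\alpha_Q$ on $\partial_\infty Q$, working directly with $\alpha_Q+\lambda_{A_2^r}$ on the chart $\partial_\infty Q\times A_2^r$, and then extending the deformation by hand over the other chart $Q\times\partial_\infty A_2$. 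A separate paragraph is then devoted to checking that the deformed boundaries $\partial\partial_\infty(X\#_Q(Q\times A_2^r)\#_QY)$ remain convex (with corners) throughout the deformation, using that the transverse contact vector fields can be chosen to strictly preserve the Liouville forms. Your assertion that ``the product contact vector field is genuinely positive and the supports can be arranged compactly --- both straightforward'' is precisely where these two issues live, so this is the step where your sketch needs to be filled in rather than dismissed.
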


\begin{proof}
To begin let us fix precisely the geometric objects under consideration.
We consider $X$ and $Y$ to be their cornered versions from the middle bullet of Construction \ref{glueandsplit}, i.e.\ they each have a boundary face with neighborhood $Q\times T^*\RR_{\leq 0}$ where they are glued to $Q\times A_2$ to form $X\#_Q(Q\times A_2)\#_QY$ (so $\#_Q$ is $\cup_{Q\times\RR}$).
The main part of our argument proceeds by deforming $\partial_{\infty}(Q \times A_2)$ to make $\partial \partial_{\infty} X$ and $\partial\partial_{\infty}(X \#_Q (Q \times A_2) \#_Q Y)$ touch (in a way avoiding $\partial \partial_\infty Y$).
To conclude, we (simultaneously) smooth corners and note that the (smoothed) deformation exhibits the desired tautological forward stopping property (this smoothing step is necessary since the formulation of Definition \ref{tautforwardstoppeddef} does not allow corners).

We begin with the Liouville vector field on $A_2$ illustrated on the left of Figure \ref{figurea2deformation}.
This model of $A_2$ has boundary neighborhood coordinates $T^*\RR_{\leq 0}$ (times three), and thus we may consider its product with $Q$ (also assumed to have coordinates $T^*\RR_{\leq 0}\times P$ near its boundary, for $P$ a Liouville manifold) which is a Liouville sector-with-corners $Q\times A_2$, suitable for gluing to $X$ and $Y$ at the locations indicated on the (right and bottom of the) left half of Figure \ref{figurea2deformation}.

\begin{figure}[ht]
\centering
\includegraphics[max width=.95\textwidth]{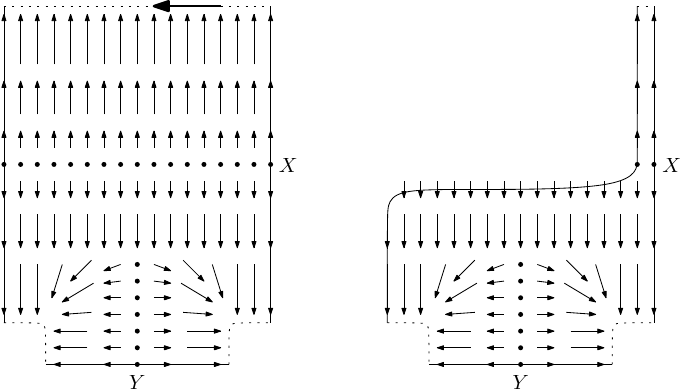}
\caption{A deformation of the $A_2$ Liouville sector, starting from the left at $r=0$, going to the right at $r=1-\varepsilon$, and limiting at $r=1$ to having the upper end pinched.  The large arrow indicates the direction of the Reeb flow.}\label{figurea2deformation}
\end{figure}

Consider now the contact boundary
\begin{equation}
\partial_\infty(Q\times A_2)=(\partial_\infty Q\times A_2)\bigcup_{\partial_\infty Q\times\partial_\infty A_2\times\RR}(Q\times\partial_\infty A_2).
\end{equation}
This is a contact manifold with convex corners in the sense that its boundary has two faces, each face has a transverse contact vector field which is tangent to the other face, and these vector fields commute (compare Definition \ref{admcorners}; the formula from Remark \ref{smoothadmcorners} implies that the boundary is convex, after smoothing corners).

We would now like to deform $\partial_\infty(Q\times A_2)$ by executing the deformation $\{A_2^r\}_{r\in[0,1]}$ illustrated in Figure \ref{figurea2deformation}.
Although $Q\times A_2^r$ is not cylindrical at infinity since the Liouville vector field is not everywhere tangent to $\partial A_2^r$, we will nevertheless make sense of $\partial_\infty(Q\times A_2^r)$ as a deformation at infinity provided we fix a choice of contact form $\alpha_Q$ on $\partial_\infty Q$.
Recall that the open inclusion $\partial_\infty Q\times A_2\hookrightarrow\partial_\infty(Q\times A_2)$ depends on a choice of contact form on $\partial_\infty Q$.
Fix any such contact form $\alpha_Q$, so $\alpha_Q+\lambda_{A_2}$ is a contact form on $\partial_\infty Q\times A_2$.
Now for $A_2^r$, 
consider the deformation of $\partial_\infty Q \times A_2$ (inside $\partial_\infty (Q \times A_2)$) given by $(\partial_\infty Q\times A_2^r,\alpha_Q+\lambda_{A_2^r})$.
At infinity this deformation simply shrinks one of the ends of $A_2$, and hence this deformation can be extended to all of $\partial_\infty (Q \times A_2)$ by, in the $Q\times  \partial_\infty A_2$ portion of $\partial_\infty (Q \times A_2)$, shrinking the end of $A_2$ by $Q \times \partial_\infty A_2^r$.
This gives the desired definition of $\partial_\infty(Q\times A_2^r)$, which we can further glue to obtain a deformation $\partial_\infty(X\#_Q(Q\times A_2^r)\#_QY)$.

Let us argue that $\partial\partial_\infty(Q\times A_2^r)$ and $\partial\partial_\infty(X\#_Q(Q\times A_2^r)\#_QY)$ remain convex during this deformation.
These boundaries have corners, so let us instead prove the stronger result that they have convex corners (which implies convexity by the construction of Remark \ref{smoothadmcorners}).
Recall that the vector fields demonstrating that $Q\times A_2$ has sectorial corners are simply those lifted from $Q$ and $A_2$, which strictly preserve the Liouville forms.
We may assume that our chosen contact form $\alpha_Q$ on $\partial_\infty Q$ is also preserved near the boundary.
Now to extend these vector fields to the deformations with parameter $r$, simply observe that over the deformed boundary component of $A_2^r$, the Liouville form is still locally $-s\,dt$ (i.e.\ the Liouville form on $T^*\RR$) and we may take our vector field to be $-\frac\partial{\partial t}$, where $t$ is the horizontal coordinate and $s$ is the vertical coordinate in Figure \ref{figurea2deformation}.
As this vector field strictly preserves the Liouville form, it defines a contact vector field on $(\partial_\infty Q\times A_2^r,\alpha_Q+\lambda_{A_2^r})$, thus also on $\partial\partial_\infty(Q\times A_2^r)$ and on $\partial\partial_\infty(X\#_Q(Q\times A_2^r)\#_QY)$, commuting with the transverse vector fields for the other boundary faces.

Finally, let us argue that this deformation of the inclusion $\partial_\infty X\hookrightarrow\partial_\infty(X\#_Q(Q\times A_2)\#_QY)$ into $\partial_\infty X\hookrightarrow\partial_\infty(X\#_Q(Q\times A_2^1)\#_QY)$ fulfills Definition \ref{tautforwardstoppeddef} (after smoothing), thus verifying that this inclusion is tautologically forward stopped.
First, note that this deformation causes both boundary faces of $\partial_\infty X$ to touch the boundary of $\partial_\infty(X \#_Q (Q \times A_2^1) \#_Q Y)$.
Smoothing corners, the discussion surrounding \eqref{localcornerbdrymodel} implies that the outgoing core of $\partial\partial_\infty X$ meets $\partial_\infty(\partial T^*\RR_{\leq 0}\times F)$ only along the positive conormal at $0\in\RR_{\leq 0}$, which implies it is contained in the deformed boundary $\partial\partial_\infty(X\#_Q(Q\times A_2^1)\#_QY)$.
It remains to show that it is disjoint from the incoming core of $\partial\partial_\infty(X\#_Q(Q\times A_2^1)\#_QY)$.
In fact, the outgoing core of $\partial\partial_\infty X$ is a subset of the outgoing core of the deformed boundary $\partial\partial_\infty(X\#_Q(Q\times A_2^1)\#_QY)$, so is in particular disjoint from its incoming core.

Finally, note that throughout this deformation (and its smoothing), $\partial_\infty X$ remains separated from $\partial_\infty Y$, hence the stopping witness produced by Proposition \ref{tautisstopped} is disjoint from $\partial_\infty Y$ as well.
\end{proof}

We have the following categorical consequences.  

\begin{corollary}\label{a2semiorthogonal}
In the setting of \eqref{a2gluing}, the functors
\begin{equation}
\W(X)\to \W( X\#_Q(Q\times A_2)\#_Q Y)\leftarrow\W(Y)
\end{equation}
are fully faithful, and $\W(X)$ is left-orthogonal to $\W(Y)$ inside $\W(X\#_Q(Q\times A_2)\#_QY)$. 
More generally, the same holds if we add stops inside $(\partial_\infty X)^\circ$ and $(\partial_\infty Y)^\circ$ not approaching the boundary.
\end{corollary}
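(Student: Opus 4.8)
The plan is to derive the Corollary formally from Proposition \ref{a2stops} and the general properties of forward stopped inclusions established in \S\ref{stoppedinclusionssection}. By Proposition \ref{a2stops}, the inclusion $X\hookrightarrow X\#_Q(Q\times A_2)\#_QY$ is tautologically forward stopped, and the stopping witness $W$ produced by Proposition \ref{tautisstopped} is disjoint from $Y$. By Proposition \ref{tautisstopped} this inclusion may be deformed, away from infinity, to be forward stopped by $W$; such a deformation changes neither the wrapped Fukaya categories nor the pushforward functor (Lemma \ref{winvariancesector}). Hence Corollary \ref{stoppedff} applies: the pushforward $\W(X)\hookrightarrow\W(X\#_Q(Q\times A_2)\#_QY)$ is fully faithful, and its image is left-orthogonal to every object representable by a Lagrangian disjoint from $X$ and disjoint at infinity from $W$.

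Full faithfulness of $\W(Y)\to\W(X\#_Q(Q\times A_2)\#_QY)$ I would obtain by symmetry. The stopped Liouville manifold $(\CC,\{e^{2\pi ij/3}\cdot\infty\}_{j=0,1,2})$ admits a reflection exchanging any two of its three stops and fixing the third; taking its product with the identity on $Q$ gives an automorphism of the Liouville sector-with-corners $Q\times A_2$ exchanging the two $Q$-faces along which $X$ and $Y$ are glued, whence $X\#_Q(Q\times A_2)\#_QY\cong Y\#_Q(Q\times A_2)\#_QX$. Applying Proposition \ref{a2stops} to this second presentation shows $Y\hookrightarrow X\#_Q(Q\times A_2)\#_QY$ is tautologically forward stopped (with witness disjoint from $X$), and Corollary \ref{stoppedff} again yields full faithfulness. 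Alternatively one re-runs the local, contact-geometric deformation of Proposition \ref{a2stops} with the two relevant $Q$-ends of $A_2$ interchanged, avoiding any appeal to symmetry.

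For the left-orthogonality of $\W(X)$ and $\W(Y)$ I would note that every object of $\W(Y)$ has a representative contained in the interior of $Y\subseteq X\#_Q(Q\times A_2)\#_QY$, hence disjoint from $X$; and since the witness $W$ for the inclusion of $X$ is disjoint from $Y$ by Proposition \ref{a2stops}, such a representative is disjoint at infinity from $W$ as well. The orthogonality clause of Corollary \ref{stoppedff} then gives $HW^\bullet(L,K)=0$ whenever $L$ lies in the image of $\W(X)$ and $K$ in the image of $\W(Y)$, i.e.\ $\W(X)$ is left-orthogonal to $\W(Y)$. The statement with stops $\f_X\subseteq(\partial_\infty X)^\circ$ and $\f_Y\subseteq(\partial_\infty Y)^\circ$ not approaching the boundary is identical: these assemble into a stop $\f_X\cup\f_Y$ on $X\#_Q(Q\times A_2)\#_QY$ which again avoids all the relevant boundaries, and removing it from the associated contact cobordism changes nothing near that cobordism's boundary, so the (tautological) forward stoppedness of Proposition \ref{a2stops} persists and Corollary \ref{stoppedff}, stated directly for inclusions of stopped Liouville sectors, applies verbatim.

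The only step I expect to require genuine care is the identification $X\#_Q(Q\times A_2)\#_QY\cong Y\#_Q(Q\times A_2)\#_QX$, namely verifying that the reflection symmetry of $A_2$ is compatible with the gluing data $Q\hookrightarrow F$ and $Q\hookrightarrow G$ so that Proposition \ref{a2stops} can be quoted verbatim for the $Y$-inclusion. Should pinning down this compatibility prove awkward, re-deriving the forward stoppedness of the $Y$-inclusion directly, following the proof of Proposition \ref{a2stops}, is entirely routine.
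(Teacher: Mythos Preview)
Your overall strategy matches the paper's: deduce everything from Proposition \ref{a2stops} together with Corollary \ref{stoppedff}. The treatment of full faithfulness for $\W(X)$, the left-orthogonality $\W(X)\perp\W(Y)$, and the extension to stops are all correct and essentially identical to the paper's argument.

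There is, however, a genuine gap in your argument for full faithfulness of $\W(Y)$. The reflection of $(\CC,\{e^{2\pi ij/3}\cdot\infty\}_j)$ exchanging two stops and fixing the third is an \emph{anti}-symplectomorphism (it reverses $dx\wedge dy$, equivalently the coorientation of the contact structure and hence the Reeb flow direction at infinity). Consequently it does not give an automorphism of the Liouville sector $Q\times A_2$, and there is in general no symplectomorphism $X\#_Q(Q\times A_2)\#_QY\cong Y\#_Q(Q\times A_2)\#_QX$: the cyclic order of the three $Q$-faces is fixed by the Reeb flow and cannot be reversed symplectically. For the same reason your fallback of ``re-running the proof of Proposition \ref{a2stops} with the ends interchanged'' will not yield forward stoppedness for $Y$; the pinching argument there uses the Reeb direction in an essential way.

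What the anti-symplectic reflection \emph{does} give (and what the paper invokes as ``by symmetry'') is that the inclusion of $Y$ is tautologically \emph{backward} stopped, with witness disjoint from $X$. This is exactly as good for your purposes: the proof of Corollary \ref{stoppedff} computes $HW^\bullet(L,K)$ by positively wrapping $L$, but one may equally compute it by negatively wrapping $K$, and backward stoppedness of $Y$ controls the latter. So replace ``forward stopped'' by ``backward stopped'' for $Y$ and invoke the evident dual of Corollary \ref{stoppedff}; the rest of your proof goes through unchanged.
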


\begin{proof}
Combine Proposition \ref{a2stops} and Corollary \ref{stoppedff}.
\end{proof}

\begin{example}\label{stopdoubling}
Corollary \ref{a2semiorthogonal} allows us to generalize Example \ref{boundaryfiberinclusionstopping} from Liouville manifolds to Liouville sectors.
Let $F$ be a Liouville sector with symplectic boundary $Q$.
For an embedding $\bar F_0\hookrightarrow\partial_\infty X$ as a Liouville hypersurface, we consider the gluing $(X,\bar F)\#_F(F\times A_2)$, which can be equivalently described as $(X,F\#_QF)$, for an embedding $(F\#_QF)_0\hookrightarrow\partial_\infty X$ as a Liouville hypersurface (the ``doubling'' of $\bar F_0\hookrightarrow\partial_\infty X$ along $F\subseteq\bar F$).
Corollary \ref{a2semiorthogonal} shows that the natural embeddings
\begin{equation}
F\times T^*[0,1]\hookrightarrow(X,\bar F)\#_F(F\times A_2)
\end{equation}
coming from the unglued faces of $F\times A_2$ both induce fully faithful functors on wrapped Fukaya categories.
\end{example}

\section{Sectorial descent}\label{descent-sec}

We begin by discussing how stop removal (Theorem \ref{stopremoval}) and generation (Theorem \ref{generation}) apply in the case of Liouville sectors-with-corners:

\begin{corollary}\label{cornergenerationstopremoval}
Let $X$ be a Liouville sector-with-corners, whose boundary is the union of two faces meeting along the corner locus, fixing notation as in Construction \ref{cornerdescription}, so there is a commutative diagram
\begin{equation}
\begin{tikzcd}
\W(P)\ar{r}\ar{d}&\W(F)\ar{d}\\
\W(G)\ar{r}&\W(X)
\end{tikzcd}
\end{equation}
If (the convex completions of) $X$, $F$, $G$, and $P$ are all Weinstein up to deformation, then we have:
\begin{itemize}
\item$\W(X)$ is generated by the cocores of $X$ and the images of the cocores of $F$, $G$, and $P$.
\item For $\bar X$ the convex completion of $X$, the functor $\W(X)\to\W(\bar X)$ is the localization at the union of the images of $\W(F)$, $\W(G)$, and $\W(P)$.
\item For $\bar X^F$ the convex completion of $X$ along only the face corresponding to $F$, 
the functor $\W(X)\to\W(\bar X^F)$ is the localization at the image of $\W(F)$.
\end{itemize}
\end{corollary}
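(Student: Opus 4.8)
\textbf{Proof proposal for the third bullet of Corollary \ref{cornergenerationstopremoval}.}

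The plan is to realize $\bar X^F$ as the result of removing a Weinstein stop from $X$, and then invoke Theorem \ref{stopremoval} together with the identification of the relevant linking disks as the images of the cocores of $F$. First I would recall, via Construction \ref{cornerdescription}, the boundary-neighborhood coordinates \eqref{cornernbhdfirst}--\eqref{cornernbhdlast} for $X$, so that a neighborhood of the face corresponding to $F$ is $F\times T^*\RR_{\leq 0}$. Convex-completing $X$ along this face (and only this face) means gluing on $F\times\CC_{\Re\leq\varepsilon}$ along these coordinates; by the correspondence between Liouville sectors and Liouville pairs (see \cite[Lemma 2.32]{gpssectorsoc} as recalled in the introduction), the resulting space $\bar X^F$ carries a Liouville hypersurface $F_0\subseteq(\partial_\infty\bar X^F)^\circ$ with completion $F$, and $X=\bar X^F\setminus\Nbd F_0$. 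Equivalently, by Corollary \ref{horizsmallstop}, the inclusion $X\hookrightarrow\bar X^F$ is identified (up to quasi-equivalence of the relevant wrapped categories) with the pushforward $\W(\bar X^F,F_0\sqcup(\text{other stops}))\to\W(\bar X^F,\text{other stops})$; but since we are only completing along the single face $F$, there is no ``other stop,'' and the map in question is precisely $\W(\bar X^F,F_0)\to\W(\bar X^F)$, i.e.\ $\W(X)\to\W(\bar X^F)$.

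Next I would apply Theorem \ref{stopremoval} with $\g=\cc_{F_0}=\cc_F$ and $\f=\varnothing$. Since $F$ is Weinstein up to deformation, its core is mostly Legendrian (after the small perturbation making the critical cocores properly embedded, as discussed beneath Definition \ref{mostlyLegdef}), so the hypothesis of Theorem \ref{stopremoval} is met: $\g\setminus\f=\cc_F$ is mostly Legendrian inside $(\partial_\infty\bar X^F)^\circ$. Theorem \ref{stopremoval} then gives that $\W(\bar X^F,\cc_F)/\D\xrightarrow{\sim}\W(\bar X^F)$, where $\D$ is the collection of small Lagrangian disks linking $\cc_F^\crit$. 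It remains to identify $\D$ with the image of $\W(F)$ inside $\W(X)=\W(\bar X^F,\cc_F)$. For this I would invoke the geometric discussion in \S\ref{linkingdisksection} (and \S\ref{cocorediskkunneth}): under the canonical embedding $F\times T^*[0,1]\hookrightarrow(\bar X^F,\cc_F)$ near the stop, the K\"unneth stabilization functor $\W(F)\hookrightarrow\W(F\times T^*[0,1])\to\W(\bar X^F,\cc_F)=\W(X)$ sends a cocore of $F$ (times a fiber of $T^*[0,1]$) to the linking disk at the corresponding smooth Legendrian point of $\cc_F^\crit$; since cocores generate $\W(F)$ by Theorem \ref{generation} (using that $F$ is Weinstein), the image of $\W(F)$ is the full subcategory split-generated by these linking disks, which is exactly the subcategory by which we quotient in Theorem \ref{stopremoval} (quotients depend only on the split-generated subcategory, by \cite[Corollary 3.14]{gpssectorsoc}). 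Putting these together, $\W(X)\to\W(\bar X^F)$ is the localization at the image of $\W(F)$, as claimed.

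The main obstacle I anticipate is the bookkeeping identifying the ``convex completion along one face'' with the stop-removal setup: one must check that completing $X$ along the $F$-face — as opposed to the full-boundary completion $\bar X$ — really does produce a Liouville pair whose hypersurface core is $\cc_F$ and whose complementary stopped category is $\W(\bar X^F)$ with no residual stop along the $G$-direction. This is where the coordinates of Construction \ref{cornerdescription} and the invariance results (Lemmas \ref{winvariancesector}, \ref{winvariancemarked}, Corollary \ref{horizsmallstop}) do the work, and one should be careful that the face corresponding to $G$ becomes genuine (cylindrical, unstopped) boundary of $\bar X^F$ after the completion, which follows from the product structure $F\times T^*\RR_{\leq 0}$ being disjoint (near infinity) from the $G$-face except over the corner locus $P\times T^*\RR_{\leq 0}^2$, where the completion is supported away from the other stop. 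Once this identification is in place, the Weinstein hypotheses on $F$ (for Theorem \ref{generation}) and on $\cc_F$ being mostly Legendrian (for Theorem \ref{stopremoval}) are exactly what is assumed, and the rest is formal.
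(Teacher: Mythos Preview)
Your approach is correct in spirit and uses the same core ingredients (stop removal plus the identification of linking disks with stabilized cocores of $F$), but the paper takes a slightly different technical route that sidesteps exactly the obstacle you flag. Rather than working directly in the partially completed sector $\bar X^F$ and invoking a ``one-face'' analogue of Corollary~\ref{horizsmallstop}, the paper passes all the way to the full convex completion $\bar X$ (a Liouville \emph{manifold}) and sets up the commutative square
\[
\begin{tikzcd}
\W(X)\ar{d}\ar{r}{\sim}&\W(\bar X,\cc_F\cup(\RR\times\cc_P)\cup\cc_G)\ar{d}\\
\W(\bar X^F)\ar{r}{\sim}&\W(\bar X,\cc_G).
\end{tikzcd}
\]
The top horizontal arrow is exactly \eqref{cornerstostops}, already established earlier in the same proof; the bottom is Corollary~\ref{horizsmallstop} applied to the honest Liouville sector $\bar X^F$ (whose convex completion is $\bar X$). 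Stop removal is then applied to the right vertical arrow inside the Liouville manifold $\bar X$, where all stops sit cleanly in the interior of $\partial_\infty\bar X$ and there is no boundary to worry about. Your route would instead require checking that the relative core of the \emph{sector} $F$---which carries a tail $\cc_P\times\RR$ running toward the shared corner with $G$---can be treated as a stop in $(\partial_\infty\bar X^F)^\circ$ for the purposes of Theorem~\ref{stopremoval}; the paper's detour through $\bar X$ makes that bookkeeping unnecessary.
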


\begin{proof}
Given coordinates \eqref{cornernbhdfirst}--\eqref{cornernbhdlast}, we may perform the convex completion operation of gluing on $T^*\RR_{\geq 0}\times F$, $T^*\RR_{\geq 0}\times G$, and $T^*\RR_{\geq 0}^2\times P$ where $T^*\RR_{\geq 0}$ is given the Liouville vector field $Z_{T^*\RR_{\geq 0}}+\pi^*(\varphi(s)\partial_s)$ as in Construction \ref{cornerdescription}.
As explained below \eqref{productfiberreprise}, this operation coincides (up to deformation) with the convex completion of the smoothing of $X$.

The Liouville manifold $\bar X$ is equipped with a natural stop given by $\cc_F\cup(\RR\times\cc_P)\cup\cc_G$ (two copies of the relative cores of $F$ and $G$, glued along their common copy of $\RR\times\cc_P$), such that the pushforward
\begin{equation}\label{cornerstostops}
\W(X)\xrightarrow\sim\W(\bar X,\cc_F\cup(\RR\times\cc_P)\cup\cc_G)
\end{equation}
is a quasi-equivalence by Corollary \ref{horizsmallstop}.

Let us now assume $\bar X$, $\bar F$, $\bar G$, and $P$ are Weinstein up to deformation, and let us choose nice Liouville forms as follows.
By assumption, there exists $f$ such that $\lambda_P+df$ is Weinstein, so by adding $d(\varphi(t_1)\varphi(t_2)f)$ to $\lambda_X$ in coordinates \eqref{cornernbhdP}, we may assume that $\lambda_P$ is itself Weinstein.
Now $F$ has boundary neighborhood coordinates \eqref{cornernbhdF}, and we modify the Liouville form in these coordinates from $\lambda_P+\lambda_{T^*\RR_{\leq 0}}$ by adding $\pi^*(1-\varphi(t))\frac\partial{\partial t}$ where $\varphi:\RR_{\leq 0}\to\RR_{\geq 0}$ equals $1$ near zero and is supported near zero.
Now the locus $\{t\leq -1\}$ has convex boundary, and the completion along this boundary is $\bar F$.
There is thus a compactly supported $f$ (supported away from the boundary neighborhood coordinates) such that $\lambda_F+df$ is Weinstein.
We may add this to the Liouville form on $X$ (and similarly for $G$).
We may now apply Theorem \ref{generation} to $\W(\bar X,\cc_F\cup(\RR\times\cc_P)\cup\cc_G)$ and conclude that $\W(X)$ is generated by cocores of $X$ and the stabilizations of the cocores of $F$, $G$, and $P$ (which are the linking disks to the stop).

In view of the equivalence \eqref{cornerstostops}, stop removal implies that $\W(X)\to\W(\bar X)$ is the localization at the images of $\W(F)$, $\W(G)$, and $\W(P)$, as these precisely account for all the linking disks to the stop appearing in \eqref{cornerstostops}.
We may also convex complete along only one face, namely we may glue on $T^*\RR_{\geq 0}\times F$ (only) to $X$ to obtain what we might write as $\bar X^F$.
To apply stop removal to the functor $\W(X)\to\W(\bar X^F)$, we complete (fully) and add stops.
Namely, we consider the diagram
\begin{equation}
\begin{tikzcd}
\W(X)\ar{d}\ar{r}{\sim}&\W(\bar X,\cc_F\cup(\RR\times\cc_P)\cup\cc_G)\ar{d}\\
\W(\bar X^F)\ar{r}{\sim}&\W(\bar X,\cc_G).
\end{tikzcd}
\end{equation}
The horizontal arrows are quasi-equivalences, and stop removal shows that the right vertical arrow is localization at the image of $\W(F)$, and hence the same applies to $\W(X)\to\W(\bar X^F)$.
\end{proof}

\begin{proposition}\label{reducetotwo}
The case $n=2$ of Theorem \ref{weinsteindescent} implies the general case.
\end{proposition}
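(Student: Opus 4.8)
The plan is to induct on the number $n$ of pieces in the covering, taking $n\le 2$ as the base case: for $n=1$ the assertion is vacuous and $n=2$ is the hypothesis. So suppose $n\ge 3$ and that Theorem \ref{weinsteindescent} holds for every Weinstein sectorial covering with fewer than $n$ pieces. The first step is to merge the first two pieces, setting $Y:=X_1\cup X_2$ and smoothing the resulting corner as in Lemma \ref{lem:morecovers}, so that $Y$ is a Liouville sector with a well-defined wrapped Fukaya category (using \S\ref{admsubsec} and the deformation invariance of Lemma \ref{winvariancesector}). By Lemma \ref{lem:morecovers}\eqref{auxcoverI}, the collection $Y,X_3,\dots,X_n$ is again a Weinstein sectorial covering of $X$, now with $n-1$ pieces. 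Writing $\mathcal Q$ for the poset of nonempty subsets of $\{0,3,\dots,n\}$ (ordered by reverse inclusion, with $0$ labelling $Y$) and $Y_J$ for the corresponding multiple intersection, the inductive hypothesis supplies a pre-triangulated equivalence
\[
\hocolim_{J\in\mathcal Q}\W(Y_J)\xrightarrow{\ \sim\ }\W(X).
\]

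Next I would analyze the intersections involving $Y$. For $J=\{0\}\cup J'$ with $J'\subseteq\{3,\dots,n\}$, setting $X_{J'}:=\bigcap_{j\in J'}X_j$ we have $Y_J=\bigl(X_1\cap X_{J'}\bigr)\cup\bigl(X_2\cap X_{J'}\bigr)$, which by (iterated application of) Lemma \ref{lem:morecovers}\eqref{auxcoverII} is a two-piece Weinstein sectorial covering of $Y_J$. Applying the $n=2$ case of Theorem \ref{weinsteindescent} to each such $Y_J$ identifies $\W(Y_J)$ with the homotopy pushout of
\[
\W(X_1\cap X_{J'})\leftarrow\W(X_1\cap X_2\cap X_{J'})\to\W(X_2\cap X_{J'}),
\]
and this identification is natural in $J$ because all the functors in sight are pushforwards along inclusions of Liouville sectors.

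I would then reassemble. Consider the order-preserving map $q\colon\mathcal P_n\to\mathcal Q$, from the poset $\mathcal P_n$ of nonempty subsets of $\{1,\dots,n\}$, sending $I$ to $I$ if $I\subseteq\{3,\dots,n\}$ and to $\{0\}\cup(I\cap\{3,\dots,n\})$ otherwise. Its strict fibers are singletons over $J\not\ni 0$ and, over $J=\{0\}\cup J'$, the cospan on $J'\cup\{1\},\,J'\cup\{1,2\},\,J'\cup\{2\}$; in each case the fiber is homotopy-final in the corresponding slice $q/J$ (a one-point check, or the contractibility of a span). Hence the homotopy left Kan extension of the diagram $I\mapsto\W(X_I)$ along $q$, evaluated at $J$, is the homotopy colimit over the fiber, which by the previous step is $\W(Y_J)$, naturally in $J$. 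Since a homotopy colimit may be computed by first left Kan extending and then taking the homotopy colimit over the target poset, we get
\[
\hocolim_{I\in\mathcal P_n}\W(X_I)\ \simeq\ \hocolim_{J\in\mathcal Q}\W(Y_J)\ \simeq\ \W(X),
\]
the desired descent equivalence for the $n$-piece covering. A mostly Legendrian stop $\rr$ disjoint from every $\partial X_i$ and not approaching $\partial X$ restricts compatibly to all the pieces and intersections above, and the same induction runs verbatim, the $n=2$ input and Lemma \ref{lem:morecovers} being available in the stopped setting.

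The part I expect to be the main obstacle is the bookkeeping in the reassembly step: setting up the homotopy left Kan extension (equivalently, the relevant Grothendieck construction) precisely, and invoking a sufficiently clean form of ``homotopy colimits commute with homotopy colimits'' valid for $\ainf$-categories up to pre-triangulated equivalence after passing to $\Tw$ — this is where one must verify both the naturality in $J$ of the fiberwise $n=2$ equivalences and the homotopy-finality of the fibers that licenses computing the intermediate colimits $\W(Y_J)$ before assembling. By contrast, the geometric content — that the merged and restricted coverings remain Weinstein sectorial, and that every sector-with-corners occurring has a well-defined wrapped Fukaya category — is furnished directly by Lemma \ref{lem:morecovers} and the constructions of \S\ref{admsubsec}.
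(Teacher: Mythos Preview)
Your strategy is correct in outline and close in spirit to the paper's, but the organization differs in one key respect that makes the paper's argument cleaner. You merge $X_1$ and $X_2$ and then push the diagram forward along $q\colon\Sigma_n\to\mathcal Q\cong\Sigma_{n-1}$, computing the left Kan extension fiberwise (each fiber being a span, not a cospan---a small slip) and invoking the $n=2$ case there, before applying the inductive hypothesis over $\mathcal Q$. As you rightly flag, this requires a general ``homotopy colimits commute with homotopy colimits / left Kan extension'' statement for $\ainf$-categories, together with a careful naturality-in-$J$ argument for the fiberwise equivalences; neither piece is supplied by the paper's appendix, which only proves the special decomposition Lemma~\ref{hocolimdecomposition} (for maps to $\Sigma_2$), the cofinality Lemma~\ref{cofinalityspecialcase}, and Corollary~\ref{colimitmaximalelement}.

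The paper instead splits off a \emph{single} piece $X_n$, which places the argument entirely within those three lemmas. Partitioning $\{1,\dots,n\}$ into $\{1,\dots,n-1\}$ and $\{n\}$ gives a map $\Sigma_n\to\Sigma_2$, and Lemma~\ref{hocolimdecomposition} immediately turns $\hocolim_{\Sigma_n}\W(X_I)$ into a homotopy pushout of $\hocolim_P$, $\hocolim_Q$, $\hocolim_R$ where $P=\{I:n\in I\}$, $Q=\{I:I\ne\{n\}\}$, $R=P\cap Q$. The point of isolating one element is that $P$ has the maximal element $\{n\}$, so $\hocolim_P\simeq\W(X_n)$ by Corollary~\ref{colimitmaximalelement}; $R$ is canonically $\Sigma_{n-1}$ applied to the cover $\{X_i\cap X_n\}_{i<n}$ of $(X_1\cup\cdots\cup X_{n-1})\cap X_n$, handled by induction; and $\Sigma_{n-1}\hookrightarrow Q$ is cofinal in the sense of Lemma~\ref{cofinalityspecialcase}, again reducing to induction. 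The resulting pushout square then maps to the geometric $n=2$ square for $(X_1\cup\cdots\cup X_{n-1})\cup X_n$, and the $n=2$ hypothesis finishes the argument. Your approach would work once the Kan extension machinery is in place, but the paper's choice avoids developing it.
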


\begin{proof}
We argue by induction for $n\geq 3$.

For $I\subseteq\{1,\ldots,n\}$, we use the shorthand $X_I = \bigcap_{i \in I} X_i$ (so $X_\varnothing=X$).
There is a diagram of Liouville sectors $\{X_I\}_I$ over the poset $2^{\{1,\ldots,n\}}$ of subsets of $\{1,\ldots,n\}$ (ordered by reverse inclusion, so $\varnothing$ is maximal).
Let $\Sigma_n = \{ I \subseteq \{1, \ldots, n\} \,|\, I \neq \varnothing\}$ be the poset of non-empty subsets of $\{1, \ldots, n\}$, so $2^{\{1,\ldots,n\}}=\Sigma_n^\triangleright$ and hence we get (see \eqref{univhocolimfunctor}) a canonical map
\begin{equation}\label{reductiontarget}
\hocolim_{I\in\Sigma_n}\W(X_I)\to\W(X).
\end{equation}
Our aim is to show that this map is a pre-triangulated equivalence.

We decompose the poset
    \begin{equation}
        \Sigma_n =  P  \cup Q
    \end{equation}
    into $P := \{ I \in \Sigma_n \,|\, n \in I\}$ (those subsets which
    contain $n$) and $Q := \{ I \in \Sigma_n \,|\, I \neq \{n\}\}$ (those
    subsets which are not equal to the singleton $\{n\}$), which intersect in
    $R := P \cap Q = \{ J \cup \{n\} \,|\, J \in \Sigma_{n-1}\}$ (those
    subsets containing $n$ and at least one other element).
In other words, this is the decomposition of $\Sigma_n$ associated to the map $\Sigma_n\to\Sigma_2=(\bullet\leftarrow\bullet\to\bullet)$ defined by partitioning $\{1,\ldots,n\}$ into $\{1,\ldots,n-1\}$ and $\{n\}$.
The hypotheses of Lemma \ref{hocolimdecomposition} apply to this decomposition, showing that the natural square
\begin{equation}\label{hocolimdecompositionfukaya}
\begin{tikzcd}
\displaystyle\hocolim_{I \in R} \W(X_I)\ar{r}\ar{d}&\displaystyle\hocolim_{I \in P} \W(X_I)\ar{d}\\
\displaystyle\hocolim_{I \in Q} \W(X_I)\ar{r}&\displaystyle\hocolim_{I \in \Sigma_n} \W(X_I)
\end{tikzcd}
\end{equation}
is a homotopy pushout.

We now relate this homotopy pushout square \eqref{hocolimdecompositionfukaya} to the square associated to the decomposition
\begin{equation}
X = (X_1 \cup \cdots \cup X_{n-1})\cup X_n.
\end{equation}
Namely, there is a natural (strictly commuting!) map from \eqref{hocolimdecompositionfukaya} to
\begin{equation}\label{coarseningdecompositionfukaya}
\begin{tikzcd}
\W((X_1\cup\cdots\cup X_{n-1})\cap X_n)\ar{r}\ar{d}&\W(X_n)\ar{d}\\
\W(X_1\cup\cdots\cup X_{n-1})\ar{r}&\W(X)
\end{tikzcd}
\end{equation}
given on each corner of the square by the map \eqref{univhocolimfunctor} (more precisely, \eqref{hocolimdecompositionfukaya} maps to the localization of \eqref{coarseningdecompositionfukaya} at the identity morphisms).

Now \eqref{coarseningdecompositionfukaya} is a homotopy pushout by the case $n=2$ of Theorem \ref{weinsteindescent} (note that the cover $X = (X_1 \cup \cdots \cup X_{n-1})\cup X_n$ is Weinstein since the cover $X=X_1\cup\cdots\cup X_n$ is).
Hence to show that \eqref{reductiontarget} (which is the lower right component of the map $\eqref{hocolimdecompositionfukaya}\to\eqref{coarseningdecompositionfukaya}$) is a pre-triangulated equivalence, it is enough to show that the maps in each of the other three corners of $\eqref{hocolimdecompositionfukaya}\to\eqref{coarseningdecompositionfukaya}$ are pre-triangulated equivalences.
The map
\begin{equation}
\hocolim_{I \in P} \W(X_I)\to\W(X_n)
\end{equation}
is a quasi-equivalence since $\{n\}\in P$ is a maximal element.
The map
\begin{equation}
\hocolim_{I \in R} \W(X_I)\to\W((X_1\cup\cdots\cup X_{n-1})\cap X_n)
\end{equation}
is the descent map (note that $R=\Sigma_{n-1}$) associated to the cover of $(X_1\cup\cdots\cup X_{n-1})\cap X_n$ by $X_1\cap X_n,\ldots,X_{n-1}\cap X_n$ (which is Weinstein since $X=X_1\cup\cdots\cup X_n$ is), and hence is a pre-triangulated equivalence by the induction hypothesis (Theorem \ref{weinsteindescent} in the case $n-1$).
Finally, to analyze
\begin{equation}
\hocolim_{I \in Q} \W(X_I)\to\W(X_1\cup\cdots\cup X_{n-1}),
\end{equation}
note that $\Sigma_{n-1}\subseteq Q$ is cofinal in the sense of satisfying the hypotheses of Lemma \ref{cofinalityspecialcase}, and hence this is the same as 
\begin{equation}
\hocolim_{I \in \Sigma_{n-1}} \W(X_I)\to\W(X_1\cup\cdots\cup X_{n-1}),
\end{equation}
which is again a pre-triangulated equivalence by the induction hypothesis.
\end{proof}

\begin{proposition}\label{reducetotwowiththinintersection}
The case $n=2$ of Theorem \ref{weinsteindescent} is implied by the special case of two-element covers obtained by splitting along a sectorial hypersurface (Example \ref{glueandsplitcovering}).
\end{proposition}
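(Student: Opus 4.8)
The plan is to reduce an arbitrary two-element Weinstein sectorial covering $X=X_1\cup X_2$ to one arising from splitting along a sectorial hypersurface, so that the hypothesis (the case treated via Example \ref{glueandsplitcovering}) applies directly. The point is purely geometric: after passing to a sectorial covering that differs from the given one only near the triple-intersection-type locus $\partial^2X_1\cap\partial^2X_2$ of the boundaries, we can arrange that $X_1\cap X_2$ is (a neighborhood of) a genuine separating hypersurface in $X$ of the form $H\times T^\ast(-\varepsilon,\varepsilon)$, which is exactly the setup of Constructions \ref{glueandsplit} and Example \ref{glueandsplitcovering}. Since all categories and functors in sight are invariant under smoothing of corners and under trivial inclusions / deformations of Liouville sectors (Lemmas \ref{winvariancesector} and \ref{winvariancemarked}), and since the maps in the homotopy pushout square \eqref{coarseningdecompositionfukaya} are natural with respect to such modifications, the descent statement for the modified cover is equivalent to the descent statement for the original one.

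Here is how I would carry this out. First I would use Definition \ref{sectorialcoveringdef}: the boundary hypersurfaces $\partial X$, $\partial^2X_1$, $\partial^2X_2$ form a sectorial collection, so near any stratum of their intersection we have the product coordinates \eqref{rksplitting}. In particular, near the corner stratum $\partial^2X_1\cap\partial^2X_2\cap\partial X$ we may, after applying Lemma \ref{makelinear} to straighten the Liouville form, write $X$ locally as $P\times T^\ast\RR_{\leq 0}^3$ (or the appropriate orthant), with $\partial^2X_1$, $\partial^2X_2$, $\partial X$ the three coordinate hyperplanes. Next, invoking Remark \ref{newfromoldsectorial}, I would replace the three hypersurfaces in $T^\ast\RR_{\leq 0}^3$ near the origin by a new mutually transverse collection — precisely, push $\partial^2X_1$ and $\partial^2X_2$ apart so that $X_1$ and $X_2$ now overlap along a neighborhood of a single hypersurface $H=\{$one coordinate $=0\}$ rather than meeting exactly along a corner. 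This keeps the collection sectorial and changes $X$, $X_1$, $X_2$ only inside (the inverse image of) a small neighborhood of the corner stratum; the resulting covering $X = X_1'\cup X_2'$ now has $X_1'\cap X_2' = H\times T^\ast(-\varepsilon,\varepsilon)$ with $H$ separating $X$, which is exactly the situation of Example \ref{glueandsplitcovering}. (Compare Example \ref{twoballscover}, which is precisely this kind of modification for cotangent bundles.) I would then note that the modification preserves the Weinstein property of the covering: by Lemma \ref{lem:morecovers} (and the remark there that connect-sum-along-faces of Weinstein sectors is Weinstein, via \cite[\S 3.1]{eliashbergweinsteinrevisited}), the symplectic reductions of all strata of the new cover remain Weinstein up to deformation.

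Finally I would check that the descent square for the original cover agrees (up to pre-triangulated equivalence) with that for the modified cover. Since the two covers coincide away from a small neighborhood $N$ of the corner stratum, and inside $N$ we have merely performed a compactly-supported modification of a sectorial collection in the coordinates \eqref{rksplitting}, the corresponding Liouville sectors $X_i$, $X_i'$, $X_1\cap X_2$, $X_1'\cap X_2'$, and $X$ itself are related by trivial inclusions / smoothings of corners, hence induce quasi-equivalences on wrapped Fukaya categories (Lemmas \ref{winvariancesector}, \ref{winvariancemarked}, \ref{horizequivalence}) compatibly with the pushforward functors in \eqref{coarseningdecompositionfukaya}. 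Therefore the natural map $\hocolim(\W(X_1\cap X_2)\rightrightarrows \W(X_1)\oplus\W(X_2))\to\W(X)$ is a pre-triangulated equivalence for the original cover if and only if it is for the modified one, and the latter is the hypothesis. The main obstacle I anticipate is purely bookkeeping: making precise that the local replacement in $T^\ast\RR_{\leq 0}^3$ can be done so that (a) the resulting sectors literally fit the shape required by Construction \ref{glueandsplit}/Example \ref{glueandsplitcovering} after smoothing, and (b) the smoothing-of-corners and enlargement choices are compatible across all four corners of the descent square simultaneously — exactly the kind of care flagged in Remark \ref{sectorialcoveringotherdefs}, where it is noted that the various definitions of sectorial covering are interrelated by corner smoothing and the main results continue to hold.
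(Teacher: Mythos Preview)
Your approach rests on a geometric misconception. In any two-element sectorial cover $X=X_1\cup X_2$ in the sense of Definition \ref{sectorialcoveringdef}, the interior boundaries $\partial^2X_1$ and $\partial^2X_2$ are necessarily \emph{disjoint}: were they to meet at a point, then in the local coordinates \eqref{rksplitting} the sets $X_1$ and $X_2$ would each be a half-space determined by one of two transverse hyperplanes, and their union would miss an entire quadrant, contradicting $X_1\cup X_2=X$. (The paper's own proof notes this explicitly; Example \ref{twoballscover} is precisely about passing from a would-be cover with intersecting boundaries to a genuine sectorial cover with disjoint ones, not the reverse.) So the corner stratum $\partial^2X_1\cap\partial^2X_2\cap\partial X$ you propose to modify is empty, and your local model $P\times T^\ast\RR_{\leq 0}^3$ never occurs.

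The actual difference between a general two-element cover and the splitting case of Example \ref{glueandsplitcovering} is not a corner but the \emph{thickness} of the overlap: writing $X=P\cup Q\cup R$ with $X_1=A=P\cup Q$, $X_2=B=Q\cup R$ as in Figure \ref{figuretwocoverreduction}, the intersection $A\cap B=Q$ is an arbitrary Liouville sector, not in general a thin collar $H\times T^\ast(-\varepsilon,\varepsilon)$. One cannot shrink $Q$ to such a collar by trivial inclusions without replacing one of $A$, $B$ by a strictly smaller sector (e.g.\ replacing $A$ by $P$), which genuinely changes its wrapped Fukaya category; hence your claim that the two descent squares are related by quasi-equivalences via Lemma \ref{winvariancesector} fails.

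The paper's argument is instead categorical and exploits both of the splitting hypersurfaces already present. One forms the diagram
\begin{equation*}
\begin{tikzcd}
\W(F)\ar{r}\ar{d}&\W(Q)\ar{r}\ar{d}&\W(B)\ar{d}\\
\W(P)\ar{r}&\W(A)\ar{r}&\W(X)
\end{tikzcd}
\end{equation*}
where $F$ is the symplectic reduction of $\partial^2B$. The left square is the descent square for the splitting $A=P\cup Q$ along $\partial^2B$, and the outer composite square is that for the splitting $X=P\cup B$ along the same hypersurface; both are of the Example \ref{glueandsplitcovering} type and hence homotopy pushouts by hypothesis. Proposition \ref{compositepushout} then forces the right square --- which is exactly the descent square for the original cover $X=A\cup B$ --- to be a homotopy pushout as well.
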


\begin{figure}[ht]
\centering
\includegraphics[max width=.95\textwidth]{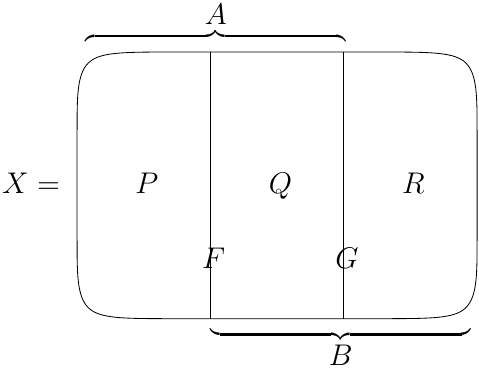}
\caption{A general two-element sectorial cover of $X=P\cup Q\cup R$ by $A=P\cup Q$ and $B=Q\cup R$.}\label{figuretwocoverreduction}
\end{figure}

\begin{proof}
Consider the situation in Figure \ref{figuretwocoverreduction}, namely $X=P\cup Q\cup R$ is covered by $A=P\cup Q$ and $B=Q\cup R$, which is the structure of a general two-element cover.
Note that in a general two-element sectorial cover $X=A\cup B$, the boundaries $\partial A$ and $\partial B$ cannot intersect, since if they did then $A$ and $B$ would not cover $X$ (compare Example \ref{twoballscover}).
Let a neighborhood of $P\cap Q$ be $F\times T^*[0,1]$ and a neighborhood of $Q\cap R$ be $G\times T^*[0,1]$.

Now we have the following diagram of $\ainf$-categories:
\begin{equation}
\begin{tikzcd}
\W(F)\ar{r}\ar{d}&\W(Q)\ar{r}\ar{d}&\W(B)\ar{d}\\
\W(P)\ar{r}&\W(A)\ar{r}&\W(X).
\end{tikzcd}
\end{equation}
The left square and the composite square are both associated to splitting along a sectorial hypersurface, and the right square is the one associated to our original arbitrary covering $X=A\cup B$.
We are thus done by Proposition \ref{compositepushout}.
\end{proof}

\begin{proof}[Proof of Theorem \ref{weinsteindescent}]
By Propositions \ref{reducetotwo} and \ref{reducetotwowiththinintersection}, it is enough to consider two-element covers obtained by splitting along a sectorial hypersurface (i.e.\ those from Example \ref{glueandsplitcovering}).
In other words, we are in the geometric setup of Construction \ref{glueandsplit}, namely we have Liouville pairs $(X,F)$ and $(Y,G)$ with a common Liouville subsector $F\supseteq Q\subseteq G$ (where $Q$ has boundary neighborhood coordinates $P\times T^*\RR_{\leq 0}$), where (the convexifications of) all of $X$, $Y$, $Q$, $F\setminus Q$, $G\setminus Q$, and $P$ are Weinstein (up to deformation), and we must show that the diagram
\begin{equation}\label{postlocalpush}
\begin{tikzcd}
\W(Q)\ar{r}\ar{d}&\W(X,F)\ar{d}\\
\W(Y,G)\ar{r}&\W((X,F)\#_Q(Y,G))
\end{tikzcd}
\end{equation}
is a homotopy pushout.

\begin{figure}[ht]
\centering
\includegraphics[max width=.95\textwidth]{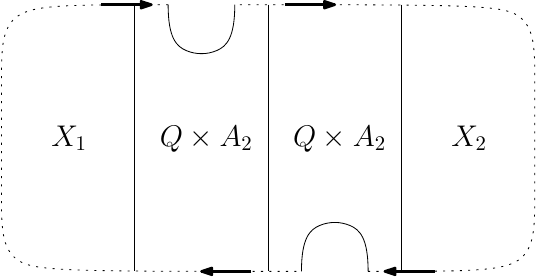}
\caption{Auxiliary geometric setup to prove homotopy pushout property.  The arrows indicate the direction of the Reeb flow.}\label{figurestoppedpushout}
\end{figure}

To show that \eqref{postlocalpush} is a homotopy pushout, we consider a modified geometric setup where $(X,F)$ is replaced with $(X,F)\#_Q(Q\times A_2)$, and similarly for $(Y,G)$.
In other words, we consider the square
\begin{equation}\label{prelocalpush}
\begin{tikzcd}
\W(Q)\ar{r}\ar{d}&\W((X,F)\#_Q(Q\times A_2))\ar{d}\\
\W((Q\times A_2)\#_Q(Y,G))\ar{r}&\W((X,F)\#_Q(Q\times A_2)\#_Q(Q\times A_2)\#_Q(Y,G))
\end{tikzcd}
\end{equation}
associated to the geometry illustrated in Figure \ref{figurestoppedpushout}.
Now Corollary \ref{a2semiorthogonal} provides a semi-orthogonal decomposition of $\W((X,F)\#_Q(Q\times A_2))$ into $\W(X,F)$ and $\W(Q)$ (that these generate follows from Theorem \ref{generation} and Corollary \ref{cornergenerationstopremoval}), and similarly for $\W((Q\times A_2)\#_Q(Y,G))$ (into $\W(Y,G)$ and $\W(Q)$) and $\W((X,F)\#_Q(Q\times A_2)\#_Q(Q\times A_2)\#_Q(Y,G))$ (into $\W(X,F)$, $\W(Y,G)$, and $\W(Q)$).
The square \eqref{prelocalpush} is thus of the shape considered in Example \ref{semiorthsquare}, and thus Proposition \ref{semiorthhocolim} applies to show that \eqref{prelocalpush} is a homotopy pushout.

We now aim to deduce that \eqref{postlocalpush} is a homotopy pushout by constructing a map $\eqref{prelocalpush}\to\eqref{postlocalpush}$ which is a localization and appealing to Lemma \ref{hocolimlocalcommute}.
The inclusion $A_2\hookrightarrow T^*[0,1]$ given by completing/convexifying/capping off the free boundary component of each $A_2$ defines the desired map $\eqref{prelocalpush}\to\eqref{postlocalpush}$.
In this inclusion $A_2\hookrightarrow T^*[0,1]$, there is one linking disk (arc) $\gamma$ in $A_2$ which is sent to a zero object, namely a parellel copy of the boundary component which gets capped off.
Now in the product $Q\times A_2$, we may consider Lagrangians $L\times\gamma$, where $L\subseteq Q$.
We claim that $\eqref{prelocalpush}\to\eqref{postlocalpush}$ quotients at precisely these objects $L\times\gamma$ by stop removal Theorem \ref{stopremoval} (note that this is a separate statement for each corner of the square).
This was shown in Corollary \ref{cornergenerationstopremoval}, so we are done.
\end{proof}

\appendix

\section{\texorpdfstring{$\ainf$}{A-infty}-categorical results}\label{ainftyappendix}

In this appendix, we record proofs of some foundational results about $\ainf$-categories.
The first few of these are well-known over a field; we give proofs in the case of coefficients in a general commutative ring (with cofibrancy assumptions as in \cite[\S 3.1]{gpssectorsoc}). In contrast with the body of this paper, our convention here in the appendix is that all morphisms considered between $\ainf$ categories are genuine $\ainf$ functors (rather than formal compositions of genuine functors and formal inverses of quasi-equivalences).
\subsection{Quasi-isomorphisms of \texorpdfstring{$\ainf$}{A-infty}-modules}

There are \emph{a priori} two different notions of a map $f:\M\to\N$ of $\C$-modules being a quasi-isomorphism.
On the one hand, one could work internally to the dg-category $\Mod\C$ and declare $f$ to be a quasi-isomorphism iff it induces an isomorphism in the cohomology category $H^0(\Mod\C)$ (equivalently, it is has a inverse up to homotopy).
On the other hand, one could work ``pointwise on $\C$'' and declare $f$ to be a quasi-isomorphism iff $f:\M(X)\to\N(X)$ is a quasi-isomorphism for every $X\in\C$.
Fortunately, these two notions turn out to be equivalent:

\begin{lemma}\label{moduleqiso}
A map $f:\M\to\N$ of $\C$-modules has a homotopy inverse iff it is a quasi-isomorphism pointwise on $\C$.
\end{lemma}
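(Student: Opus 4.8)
The plan is to prove the two implications separately, with the reverse direction (pointwise quasi-isomorphism implies homotopy invertible) being the substantive one. The forward direction is formal: if $f$ has a homotopy inverse $g$ in $\Mod_\C$, then $fg$ and $gf$ are chain-homotopic to the respective identities via homotopies which are themselves module maps, and evaluating everything at a fixed object $X\in\C$ gives a chain homotopy of complexes exhibiting $f\colon\M(X)\to\N(X)$ as a quasi-isomorphism. So I would dispatch that in a sentence.

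For the reverse direction, the standard approach is to pass to the mapping cone. Form the $\C$-module $\cone(f)$; pointwise it is $\cone(f(X)\colon\M(X)\to\N(X))$, which is acyclic for every $X\in\C$ by the long exact sequence and the hypothesis. So it suffices to prove: \emph{a $\C$-module $\P$ which is pointwise acyclic is contractible, i.e.\ $\id_\P\simeq 0$ in $\Mod_\C$.} Given this, applying it to $\P=\cone(f)$ shows $\cone(f)$ is contractible, hence (by the usual triangulated-category argument in $H^0\Mod_\C$, or by directly extracting the splitting homotopy) $f$ is an isomorphism in $H^0\Mod_\C$, which is exactly having a homotopy inverse. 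The key step is thus the contractibility of pointwise-acyclic modules, and I would prove it by the same filtration/iteration argument used in Lemma~\ref{yoneda} (and dual to \cite[Lemma 3.7]{gpssectorsoc}): an endomorphism $h$ of $\P$ (in particular a candidate nullhomotopy of $\id_\P$) lives in a product $\prod_p$ indexed by length-$p$ chains of objects, and one builds the contracting homotopy term by term, at each stage using that $\P(X)$ is acyclic (to solve $d\xi=\eta$ for the ``leading'' component, which is a genuine cocycle in $\P(X)$) together with the cofibrancy assumptions on the morphism complexes of $\C$ (\cite[\S 3.1]{gpssectorsoc}) to lift these choices compatibly; one shows the leading order at which $\id_\P-dh$ is nonzero can be pushed to $\infty$, and the product topology makes the limit converge. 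This is where the general-commutative-ring subtlety enters, since over a field one can split more freely; cofibrancy is precisely what lets the argument go through.

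The main obstacle I anticipate is the bookkeeping in this iterated lifting: one must be careful that the leading component of $\id_\P-dh$ at each stage really is a \emph{cocycle} in $\P(X)$ for a single object $X$ (so that acyclicity of $\P(X)$ applies), that the correction added does not disturb the lower-order components already arranged to vanish, and that cofibrancy is invoked correctly to choose the higher-arity components of the homotopy. I would model the estimates and the inductive step closely on the proof of Lemma~\ref{yoneda}, essentially transposing it from the ``bar complex computing $\Hom_\C(\C(-,X),\M)$'' setting to the ``$\Hom_\C(\P,\P)$ and $\Hom_\C(\P,0)$'' setting, and citing \cite[Lemmas 3.6, 3.8]{gpssectorsoc} for the cofibrancy and unit-homotopy inputs. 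Once contractibility of pointwise-acyclic modules is in hand, assembling the statement for general $f$ via the cone is routine.
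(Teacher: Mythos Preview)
Your approach is correct but takes a genuinely different route from the paper's. The paper does \emph{not} pass to the cone; instead it directly constructs a one-sided inverse $g:\N\to\M$ together with a homotopy $w$ witnessing $gf\simeq\id_\M$, by a coupled induction on the bar filtration: at each level $k$ one first shows the ``obstruction class'' $[(dg)_k]$ vanishes (using the relation $[(dg)_k]f_0 + g_0[(df)_k] - [(d\id_\M)_k]=0$ forced by the lower-level equations, plus $f_0$ being a homotopy equivalence), then chooses $g_k$, then further adjusts $g_k$ and picks $w_k$ to kill $(gf-dw)_k$. Having a left inverse for $f$, the paper then runs the classical ``a left inverse which itself has a left inverse is a two-sided inverse'' argument. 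Your route factors through the cleaner intermediate statement ``pointwise acyclic $\Rightarrow$ contractible'' and then invokes the triangulated structure of $H^0\Mod_\C$; the paper's route is more hands-on but avoids appealing to pretriangulatedness of $\Mod_\C$. Both rely on the same cofibrancy input (\cite[Lemma 3.6]{gpssectorsoc}) at the base of the induction.

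One small imprecision in your sketch: the leading obstruction at level $k\geq 1$ is not literally a cocycle \emph{in} $\P(X)$, but a $D$-cocycle in $\prod_{X_0,\ldots,X_k}\Hom(\C(X_k,X_{k-1})[1]\otimes\cdots\otimes\P(X_0),\P(X_k))$ under the internal (Koszul) differential. You kill it not by acyclicity of $\P(X_k)$ alone but by its \emph{contractibility} (acyclic plus cofibrant, via \cite[Lemma 3.6]{gpssectorsoc}), which makes each $\Hom(-,\P(X_k))$ contractible by post-composition with the contracting homotopy. With that adjustment your inductive step goes through as written.
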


\begin{proof}
Obviously if $f$ has a homotopy inverse then it is a quasi-isomorphism pointwise.
The point is to prove the reverse direction, namely that if $f$ is a quasi-isomorphism pointwise, then it has an inverse up to homotopy.

Let $f:\M\to\N$ be given, and suppose $f:\M(X)\to\N(X)$ is a quasi-isomorphism for every $X\in\C$.
We construct $g:\N\to\M$ such that $gf\simeq\id_\M$.
This is enough to prove the desired result, since applying the same assertion to $g$, we find $h:\M\to\N$ with $hg\simeq\id_\N$, so $f=\id_\N f\simeq hgf\simeq h\id_\M=h$, and hence $f\simeq h$ and $g$ are inverses up to homotopy.

We are looking for
\begin{align}
g=\prod_{k\geq 0}g_k&\in\prod_{\begin{smallmatrix}k\geq 0\\X_0,\ldots,X_k\in\C\end{smallmatrix}}\Hom(\C(X_k,X_{k-1})[1]\otimes\cdots\otimes\C(X_1,X_0)[1]\otimes\N(X_0),\M(X_k))\\
w=\prod_{k\geq 0}w_k&\in\prod_{\begin{smallmatrix}k\geq 0\\X_0,\ldots,X_k\in\C\end{smallmatrix}}\Hom(\C(X_k,X_{k-1})[1]\otimes\cdots\otimes\C(X_1,X_0)[1]\otimes\M(X_0),\M(X_k))
\end{align}
such that $g$ is a cocycle of degree zero and $w$ is of degree $-1$ and satisfies $dw=gf-\id_\M$.
We construct $g$ and $w$ by induction on $k$.
For the $k=0$ step, we let $g_0$ be a homotopy inverse to $f_0$ (under our cofibrancy assumptions, a quasi-isomorphism is a homotopy equivalence \cite[Lemma 3.6]{gpssectorsoc}), and we let $w_0$ be any homotopy between $g_0f_0$ and the identity.

For the $k\geq 1$ inductive step, we are seeking to find $g_k$ and $w_k$ solving $(dg)_k=0$ and $(gf-dw)_k=0$.
First observe that (under the induction hypothesis)
\begin{equation}
(dg)_k\in\prod_{X_0,\ldots,X_k\in\C}\Hom(\C(X_k,X_{k-1})[1]\otimes\cdots\otimes\C(X_1,X_0)[1]\otimes\N(X_0),\M(X_k))
\end{equation}
is always a cocycle (for any choice of $g_k$), and moreover its class in cohomology is independent of $g_k$.
To find a $g_k$ for which $(dg)_k=0$, it is thus necessary and sufficient to show that this ``obstruction class'' $[(dg)_k]$ vanishes.
The identity $dw=gf-\id_\M$ in degrees $<k$ gives, upon applying $d$, the relation among ``obstruction classes'' $[(dg)_k]f_0+g_0[(df)_k]-[(d\id_\M)_k]=0$.
The obstruction classes of $\id_\M$ and $f$ both vanish since \emph{a fortiori} they are both cycles.
Since $f_0$ is a homotopy equivalence, we conclude that the obstruction class of $g$ vanishes, as desired.
This shows that we can choose $g_k$ to satisfy $(dg)_k=0$.

We now turn to the second equation $(gf-dw)_k=0$.
We have chosen $g$ to satisfy $(dg)_k=0$, so $gf-dw$ is a cocycle (up to degrees $\leq k$) in the morphism complex (for any choice of $w_k$).
Since $gf-dw=\id_\M$ in degrees $<k$, we conclude that $(gf-dw)_k$ is a cocycle (for any $w_k$).
Since $f_0$ is a homotopy equivalence, we may modify the cohomology class of this cocycle arbitrarily by adding to $g_k$ an appropriate cocycle (note that this operation preserves $(dg)_k=0$).
Performing such a modification of $g_k$ so that $(gf-dw)_k$ becomes null-homologous, we may now choose $w_k$ to ensure that $(gf-dw)_k=0$ as desired.
\end{proof}

\subsection{Yoneda lemma}

Recall that for left $\C$-modules $\M$ and $\N$, the mapping complex from $\M$ to $\N$ in the dg-category $\Mod\C$ is defined as
\begin{equation}\label{hombarcomplex}
\Hom_\C(\M,\N):=\prod_{\begin{smallmatrix}p\geq 0\\X_0,\ldots,X_p\in\C\end{smallmatrix}}\Hom(\C(X_p,X_{p-1})[1]\otimes\cdots\otimes\C(X_1,X_0)[1]\otimes\M(X_0),\N(X_p)).
\end{equation}
Recall also that there is a functor
\begin{align}
\C&\to\Mod\C,\\
X&\mapsto\C(-,X),
\end{align}
known as the Yoneda functor.

\begin{lemma}\label{yoneda}
For any left $\C$-module $\M$, the natural map $\M(X)\to\Hom_\C(\C(-,X),\M(-))$ is a quasi-isomorphism.
In particular (taking $\M=\C(-,Y)$), the Yoneda functor is fully faithful.
\end{lemma}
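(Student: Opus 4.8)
The plan is to construct the standard homotopy-theoretic apparatus: an evaluation map and an explicit chain homotopy witnessing that it is a quasi-isomorphism, being careful that nothing in the argument secretly uses field coefficients. First I would write down the natural map $\iota_X:\M(X)\to\Hom_\C(\C(-,X),\M(-))$. Given $m\in\M(X)$ one defines $\iota_X(m)$ to be the element of the product \eqref{hombarcomplex} whose component indexed by $(X_0,\ldots,X_p)$ is the multilinear map
\begin{equation}
\C(X_p,X_{p-1})[1]\otimes\cdots\otimes\C(X_1,X_0)[1]\otimes\C(X_0,X)\to\M(X_p)
\end{equation}
given by the module structure map $\mu^{p+1}_\M$ applied with last input $m$ (with appropriate signs). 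A direct computation using the $\ainf$-module equations for $\M$ shows $\iota_X$ is a chain map, and one checks it is compatible with the module structure maps, so that it is a morphism of $\C$-modules in the variable $X$; this is the routine part and I would not belabor it.

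The heart of the matter is to show $\iota_X$ is a quasi-isomorphism. Here I would produce an explicit one-sided inverse and homotopy. The evaluation map $\mathrm{ev}_X:\Hom_\C(\C(-,X),\M(-))\to\M(X)$ sends a pre-morphism $t$ to $t^{0|1}(\1_X)$, i.e.\ evaluates the length-zero component on the strict unit $\1_X\in\C(X,X)$ (recall our categories are \emph{strictly} unital, which is what makes this clean over a general ring). One checks $\mathrm{ev}_X\circ\iota_X=\id_{\M(X)}$ on the nose. For the other composite, one constructs a chain homotopy $h$ on $\Hom_\C(\C(-,X),\M(-))$ with $dh+hd=\id-\iota_X\circ\mathrm{ev}_X$; the formula for $h$ is the usual ``insert $\1_X$ at the end and shift'' operator, whose defining identity is verified by expanding $d$ using the bar differential on \eqref{hombarcomplex} and repeatedly invoking strict unitality ($\mu^2(-,\1_X)=\id$, $\mu^2(\1_X,-)=\pm\id$, and vanishing of $\mu^{\geq 3}$ with a strict unit inserted). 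Since $\iota_X$ has a left inverse and the complementary projector is nullhomotopic, $\iota_X$ is a quasi-isomorphism (indeed a homotopy equivalence). No spectral-sequence or filtration argument is needed, so coefficients play no role beyond the cofibrancy hypotheses guaranteeing that $\Hom$-complexes behave well.

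I expect the main obstacle to be purely bookkeeping: getting the signs in the definitions of $\iota_X$ and $h$ consistent with whatever Koszul conventions are in force for the shifted bar complex \eqref{hombarcomplex}, and confirming that the homotopy identity $dh+hd=\id-\iota_X\mathrm{ev}_X$ holds term-by-term after the telescoping cancellations. There is no conceptual difficulty—every potentially troublesome term involves a $\mu^k$ with $\1_X$ inserted and is killed or collapsed by strict unitality—but the verification is the kind of computation one must do carefully once. Finally, the ``in particular'' clause is immediate: taking $\M=\C(-,Y)$, the map $\C(X,Y)=\M(X)\to\Hom_\C(\C(-,X),\C(-,Y))$ is by definition the action of the Yoneda functor on morphism complexes, so full faithfulness is exactly the quasi-isomorphism just established.
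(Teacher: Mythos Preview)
Your argument is correct in the strictly unital case, and indeed the paper dispatches that case in a single sentence with exactly your homotopy $f\mapsto f(-\otimes\1_X)$. The gap is your premise: the paper's standing conventions (stated at the start of the $\ainf$ section) are that categories and modules are only \emph{cohomologically} unital, not strictly unital. Your evaluation map $\mathrm{ev}_X(t)=t^{0|1}(\1_X)$ and homotopy $h$ both rely on a strict unit satisfying $\mu^2(-,\1_X)=\id$ exactly and $\mu^{k\geq 3}$ vanishing on any input containing $\1_X$; with only a cohomological unit, none of the telescoping cancellations you invoke go through on the nose, and the identity $dh+hd=\id-\iota_X\mathrm{ev}_X$ simply fails.

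The paper's actual proof handles the cohomologically unital case by an inductive length-filtration argument on the mapping cone. Given a cocycle $f=\prod_pf_p$ in the cone, one lets $m$ be the least index with $f_m\ne 0$, chooses a cocycle $\1_X$ representing the cohomological unit, and sets $g:=f(-\otimes\1_X)$. A direct computation shows $(dg)_{m-1}=0$ and $(dg)_m=f_m(-\otimes\mu^2(-,\1_X))$, which is only \emph{chain homotopic} to $f_m$ (via the homotopy between $\mu^2(-,\1_X)$ and the identity). Subtracting $dg$ and a further coboundary thus pushes the lowest nonvanishing index of $f$ up by one; iterating takes $m\to\infty$ and proves acyclicity. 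The key difference from your approach is that no global contracting homotopy exists---one must instead peel off the filtration one layer at a time, absorbing the error terms coming from $\mu^2(-,\1_X)\ne\id$ into the induction.
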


\begin{proof}
This argument is formally dual to \cite[Lemma 3.7]{gpssectorsoc}.
We consider the mapping cone
\begin{equation}
\prod_{\begin{smallmatrix}p\geq 0\\X=X_0,\ldots,X_p\in\C\end{smallmatrix}}\Hom(\C(X_p,X_{p-1})[1]\otimes\cdots\otimes\C(X_1,X_0)[1],\M(X_p))
\end{equation}
(note that the index $p$ above is shifted by one compared to \eqref{hombarcomplex}).
If $\C$ and $\M$ are strictly unital, then $f\mapsto f(-\otimes\1_X)$ is a contracting homotopy of this complex.
In the general cohomologically unital case, the argument is as follows.

Let $f=\prod_{p\geq 0}f_p$ be a cocycle in the complex above.
Let $m\geq 0$ be the smallest index such that $f_m\ne 0$.
Note that this implies that
\begin{equation}
f_m\in\prod_{X=X_0,\ldots,X_m\in\C}\Hom(\C(X_m,X_{m-1})\otimes\cdots\otimes\C(X_1,X_0),\M(X_m))
\end{equation}
is a cocycle.
Fix a cocycle $\1_X\in\C(X,X)$ representing the cohomological unit, and consider $g:=f(-\otimes\1_X)$.
Using the fact that $f$ is a cocycle, we may calculate that $(dg)_{m-1}=0$ and
\begin{equation}
(dg)_m=f_m(-\otimes\mu^2(-,\1_X)).
\end{equation}
Since $\mu^2(-,\1_X)$ is homotopic to the identity map \cite[Lemma 3.8]{gpssectorsoc}, the right hand side is chain homotopic to $f_m$.
We thus conclude that $(f-dg)_i=0$ for $i<m$ and $(f-dg)_m$ represents zero in cohomology.
Thus by further adding to $f-dg$ a coboundary, we can find a cocycle $f^+$ cohomologous to $f$ satisfying $(f^+)_i=0$ for $i\leq m$.
Iterating this procedure to take $m$ to infinity, we conclude that $f$ is cohomologous to zero.
\end{proof}

A $\C$-module is called \emph{representable} iff it lies in the essential image of the Yoneda embedding $\C\hookrightarrow\Mod\C$.
We record here the simple fact that representability is a cohomological question.

\begin{lemma}\label{representcohomology}
A $\C$-module $\M$ is representable iff the $H^\bullet\C$-module $H^\bullet\M$ is representable.
\end{lemma}

\begin{proof}
The nontrivial direction is to show that if $H^\bullet\M$ is representable, then so is $\M$.
Fix an isomorphism $H^\bullet\M(-)=H^\bullet\C(-,X)$ as $H^\bullet\C$-modules.
We consider the class $[e]\in H^0\M(X)$ corresponding to the identity $\1_X\in H^0\C(X,X)$.
Multiplication by $[e]$ defines a map of $H^\bullet\C$-modules
\begin{equation}\label{repcohiso}
H^\bullet\C(-,X)\to H^\bullet\M(-).
\end{equation}
This map is an isomorphism since under the identification of $H^\bullet\M(-)$ with $H^\bullet\C(-,X)$, it is simply the endomorphism of $H^\bullet\C(-,X)$ given by multiplication by $\1_X$.
Lift $[e]$ to a cycle $e\in\M(X)$ of degree zero, so now multiplication by $e$ defines a map of $\C$-modules
\begin{equation}
\C(-,X)\to\M(-)
\end{equation}
which reduces to \eqref{repcohiso} on cohomology, hence is a quasi-isomorphism `objectwise' on $\C$, hence a quasi-isomorphism in $\Mod\C$ by Lemma \ref{moduleqiso}.
\end{proof}

\subsection{Generation and quasi-equivalences}

For an $\ainf$-category $\C$, denote by $\left|\C\right|$ the set of quasi-isomorphism classes of objects (i.e.\ isomorphism classes of objects in the cohomology category $H^0\C$).
An $\ainf$-functor $F:\C\to\D$ induces a map $\left|\C\right|\to\left|\D\right|$, which is injective if $F$ is fully faithful and is surjective if $F$ is essentially surjective.
In particular, the tautological inclusion $\C\hookrightarrow\Tw\C$ induces an inclusion $\left|\C\right|\subseteq\left|\Tw\C\right|$.

\begin{lemma}\label{generationqiso}
Given a collection of objects $\A\subseteq\C$, the subset $\left|\Tw\A\right|\subseteq\left|\Tw\C\right|$ consisting of objects which are quasi-isomorphic to twisted complexes of objects in $\A$ depends only on $\left|\A\right|\subseteq\left|\C\right|$.
For a quasi-equivalence $F:\C\to\D$, we have $F(\left|\Tw\A\right|)=\left|\Tw F(\A)\right|$.
\end{lemma}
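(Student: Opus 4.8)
The plan is to give a single structural characterization of $|\Tw\A|$ from which both assertions are immediate. Namely, I would show that $|\Tw\A|$ is the \emph{smallest} subset $S\subseteq|\Tw\C|$ which contains $|\A|$ and is closed under shift and under mapping cones (if $[X],[Y]\in S$ and $f$ is a degree-zero cocycle in $\Tw\C(X,Y)$, then $[\cone f]\in S$). That $|\Tw\A|$ has these closure properties is essentially formal: the shift of a twisted complex of objects of $\A$ is again one, and the standard mapping cone $\cone(f)=(V[1]\oplus W,\,(\begin{smallmatrix}\delta_V[1]&0\\ f&\delta_W\end{smallmatrix}))$ of a cocycle between two twisted complexes $V,W$ of objects of $\A$ is, on the nose, a twisted complex of objects of $\A$ (so one never has to pass to $\Tw\Tw$); and transporting along quasi-isomorphisms shows closure under cones of arbitrary morphisms in $H^0\Tw\C$. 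Conversely, any twisted complex $(\bigoplus_{i=1}^k X_i,\delta)$ with $X_i\in\A$ is obtained from $X_1,\dots,X_k$ by $k-1$ iterated mapping cones, by filtering via the sub-twisted-complexes $\bigoplus_{i\le j}X_i$ (the component of $\delta$ landing in the already-built piece is a cocycle precisely because $\delta$ solves the Maurer--Cartan equation); hence any shift/cone-closed $S$ containing $|\A|$ contains $|\Tw\A|$. This establishes the characterization, and the first assertion follows at once: the right-hand side of the characterization refers only to $|\A|\subseteq|\C|\subseteq|\Tw\C|$ and the triangulated structure of $H^0\Tw\C$, so it is unchanged if $\A$ is replaced by any $\A'\subseteq\C$ with $|\A'|=|\A|$.

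For the second assertion, I would use that a fully faithful $\ainf$-functor $F\colon\C\to\D$ (in particular a quasi-equivalence) induces a fully faithful functor $\Tw F\colon\Tw\C\to\Tw\D$, which moreover carries shifts to shifts and mapping cones to mapping cones since it is an $\ainf$-functor. First, $\Tw F$ sends a twisted complex of objects of $\A$ to one of objects of $F(\A)$, and preserves quasi-isomorphism, so $F(|\Tw\A|)\subseteq|\Tw F(\A)|$. For the reverse inclusion, note that $F(|\Tw\A|)=[\Tw F](|\Tw\A|)$ is again closed under shift and cone in $|\Tw\D|$: given $[X],[Y]$ in this image, realized (up to quasi-isomorphism) as $\Tw F(X_0),\Tw F(Y_0)$ with $[X_0],[Y_0]\in|\Tw\A|$, fullness of $\Tw F$ lets us write any morphism $f\colon X\to Y$ as $\Tw F(f_0)$ up to homotopy, whence $\cone(f)\simeq\Tw F(\cone(f_0))\in F(|\Tw\A|)$ because $|\Tw\A|$ is cone-closed. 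Since $F(|\Tw\A|)$ is shift/cone-closed in $|\Tw\D|$ and contains $[\Tw F](|\A|)=|F(\A)|$, the minimality characterization applied in $\D$ gives $|\Tw F(\A)|\subseteq F(|\Tw\A|)$, and together with the first inclusion we conclude $F(|\Tw\A|)=|\Tw F(\A)|$. (Essential surjectivity of $F$ is not actually needed here, only full faithfulness, but the statement as given for a quasi-equivalence is of course a special case.)

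The one point requiring genuine care is the converse half of the minimality characterization in the first paragraph — the assertion that a twisted complex decomposes as an iterated cone of its constituent objects \emph{inside $\Tw\A$}, rather than only inside $\Tw\Tw\A$. This is the standard fact that the constituent objects of a twisted complex generate it, and the only thing to verify is that at each stage of the filtration the connecting morphism is a bona fide cocycle with respect to the differential built from the lower-triangular part of $\delta$; this follows by reading off the appropriate graded piece of the Maurer--Cartan equation $\mu^1\delta+\sum_{k\ge 2}\mu^k(\delta,\dots,\delta)=0$. Everything else in the argument is formal manipulation with the triangulated structure on $H^0\Tw\C$ and $H^0\Tw\D$ and the elementary behaviour of $\ainf$-functors under the $\Tw$ construction, which are available without further hypotheses on the coefficient ring (the cofibrancy conventions of \S\ref{partiallywrappedsection} suffice, exactly as in Lemma~\ref{quotientfullfaithful}).
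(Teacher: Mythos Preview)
Your proof is correct and takes essentially the same approach as the paper: both rest on the facts that any twisted complex is an iterated cone of its constituent objects and that the quasi-isomorphism class of a cone depends only on the quasi-isomorphism classes of its inputs and the cohomology class of the connecting morphism. Your packaging via the minimality characterization of $|\Tw\A|$ is a bit more explicit than the paper's terse ``the result now follows by induction,'' and your observation that only full faithfulness of $F$ (not essential surjectivity) is needed for the second assertion is a nice bonus.
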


\begin{proof}
Given two objects $X,Y\in\C$, we may form $\cone(f):=[X[1]\xrightarrow fY]\in\Tw\C$ for any degree zero cycle $f\in\C(X,Y)$.
If $f,f'\in\C(X,Y)$ are cohomologous, then $\cone(f)$ and $\cone(f')$ are quasi-isomorphic objects of $\Tw\C$.
More generally, if $X\simeq X'$ and $Y\simeq Y'$ are quasi-isomorphic and $f\in\C(X,Y)$ and $f'\in\C(X',Y')$ are degree zero cycles whose classes coincide under the identification $H^0(\C(X,Y))=H^0(\C(X',Y'))$, then $\cone(f)$ and $\cone(f')$ are quasi-isomorphic objects of $\Tw\C$.
The result now follows by induction.
\end{proof}

\subsection{Homotopy colimits of \texorpdfstring{$\ainf$}{A-infty}-categories}\label{ainfhocolimconstruction}

We provide here an explicit construction of homotopy colimits of $\ainf$-categories (adapting \cite[\S A.3.5]{luriehttpub} to the $\ainf$ setting), and we derive some of their basic abstract properties.
For a discussion of how the notion we define here compares with other notions of homotopy colimits, see Remark \ref{inftycathocolim}.
As a reminder, all of the $\ainf$-categories in this paper are small (i.e.\ they have a \emph{set} of objects), and by a `commuting diagram of $\ainf$-categories' we always mean a \emph{strictly commuting diagram} (rather than only commuting up to specified natural quasi-isomorphism of functors).

We begin by introducing the \emph{Grothendieck construction} (or `lax homotopy colimit') of a diagram of $\ainf$-categories $\{\C_\sigma\}_{\sigma\in\Sigma}$ indexed by a poset $\Sigma$ (compare Thomason \cite{thomason}, Lurie \cite[Definition A.3.5.11]{luriehttpub}).
To set the stage, recall that the \emph{semi-orthogonal gluing} $\langle\C,\D\rangle_\B$ of $\ainf$-categories $\C$ and $\D$ along the $(\C,\D)$-bimodule $\B$ is, by definition, the $\ainf$-category whose objects are $\Ob\C\sqcup\Ob\D$, with $\C$ and $\D$ full subcategories, with $\B$ as the bimodule of morphisms from objects of $\C$ to objects of $\D$, and zero morphism spaces from $\D$ to $\C$.
The Grothendieck construction of a diagram of $\ainf$-categories of the form $\C_0\to\cdots\to\C_p$ is defined inductively by
\begin{align}
\Groth(\C)&:=\C,\\
\Groth(\C_0\xrightarrow{F_1}\cdots\xrightarrow{F_p}\C_p)&:=\Groth(\C_0\xrightarrow{F_1}\cdots\xrightarrow{F_{p-2}}\C_{p-2}\xrightarrow{F_{p-1}}\langle\C_{p-1},\C_p\rangle_{\C_p(F_p(-),-)}).
\end{align}
Let us now consider the functoriality of the Grothendieck construction.
Fix a weakly order preserving map of totally ordered sets $i:\{0<\cdots<q\}\to\{0<\cdots<p\}$ covered by a map of diagrams $(\C_0\to\cdots\to\C_q)\to i^*(\D_0\to\cdots\to\D_p)$, and let us define a functor $\Groth(\C_0\to\cdots\to\C_q)\to\Groth(\D_0\to\cdots\to\D_p)$.
We do so by induction on $q$.
In the case $q=0$, the desired map is evident, so suppose $q>0$ and let us reduce to the case $q-1$.
It suffices to define a map of diagrams from $(\C_0\to\cdots\to\C_{q-2}\to\langle\C_{q-1},\C_q\rangle_{\C_q(F_q(-),-)})$ to the result of applying the inductive definition of $\Groth$ to $\D_0\to\cdots\to\D_p$ to reduce $p$ to $i(q-1)$.
In other words, given a diagram
\begin{equation}
\begin{tikzcd}[column sep=large]
\C_{q-1}\ar[d]\ar[rr,"F_q"]&&\C_q\ar[d]\\
\D_{i(q-1)}\ar[r,"G_{i(q-1)+1}"]&\cdots\ar[r,"G_{i(q)}"]&\D_{i(q)}
\end{tikzcd}
\end{equation}
we should produce a functor $\langle\C_{q-1},\C_q\rangle_{\C_q(F_q(-),-)}\to\Groth(\D_{i(q-1)}\to\cdots\to\D_{i(q)})$.
There is a tautological such functor since pullback of bimodules is associative: $\B(F(G(-)),-)=\B((F\circ G)(-),-)$, so there is a full subcategory
\begin{equation}
\langle\D_{i(q-1)},\D_{i(q)}\rangle_{\D_{i(q)}((G_{i(q)}\circ\cdots\circ G_{i(q-1)+1})(-),-)}\subseteq\Groth(\D_{i(q-1)}\to\cdots\to\D_{i(q)}).
\end{equation}
This fixes the functoriality of the Grothendieck construction of diagrams of $\ainf$-categories indexed by finite totally ordered sets.

\begin{example}
Consider the Grothendieck construction of a diagram of identity functors $\Groth(\C\to\C\to\C\to\C)$.
It can be described alternatively as follows.
Consider the $\ainf$-category $4\C$ whose set of objects is $\{0,1,2,3\}\times\Ob\C$ and in which the morphisms and $\ainf$ operations are simply those from $\C$ (ignoring the $\{0,1,2,3\}$ factor).
The Grothendieck construction above is the subcategory of $4\C$ in which we allow nonzero morphisms from $(i,X)$ to $(j,Y)$ only when $i\leq j$.
Notice in particular that the $\mu^3$ operation for a chain of objects $((0,X_0),(1,X_1),(2,X_2),(3,X_3))$ is just the $\mu^3$ operation for $(X_0,X_1,X_2,X_3)$ in $\C$; in particular, it can be nontrivial (despite the fact that we began with a strict diagram of $\ainf$-categories and $\ainf$-functors).
\end{example}

Now for a diagram of $\ainf$-categories $\{\C_\sigma\}_{\sigma\in\Sigma}$ indexed by any poset $\Sigma$, we define $\Groth_{\sigma\in\Sigma}\C_\sigma$ to have objects $\bigsqcup_{\sigma\in\Sigma}\C_\sigma$, with morphism spaces from $\C_\sigma$ to $\C_{\sigma'}$ vanishing unless $\sigma\leq\sigma'$, and so that the full subcategory spanned by $\Ob\C_{\sigma_0}\sqcup\cdots\sqcup\Ob\C_{\sigma_p}$ for any $\sigma_0<\cdots<\sigma_p$ is given by $\Groth(\C_{\sigma_0}\to\cdots\to\C_{\sigma_p})$.
The Grothendieck construction is functorial under maps of posets: a weakly order preserving map $\Sigma\to T$ covered by a map of diagrams $\{\C_\sigma\}_{\sigma\in\Sigma}\to\{\D_\tau\}_{\tau\in T}$ induces a functor $\Groth_{\sigma\in\Sigma}\C_\sigma\to\Groth_{\tau\in T}\D_\tau$.

\begin{lemma}\label{grothff}
If each map $\C_\sigma\to\D_\sigma$ is fully faithful (resp.\ essentially surjective, generating, split-generating), then so is $\Groth_{\sigma\in\Sigma}\C_\sigma\to\Groth_{\sigma\in\Sigma}\D_\sigma$.\qed
\end{lemma}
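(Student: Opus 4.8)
\textbf{Proof proposal for Lemma \ref{grothff}.}

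The plan is to reduce the statement to the two-object case, namely to semi-orthogonal gluings, and then to argue directly there. First I would observe that all four properties in question---fully faithful, essentially surjective, generating (image generates $\Tw$ of the target), and split-generating---are detected on morphism complexes and on objects in a way that is compatible with the filtration of $\Groth_{\sigma\in\Sigma}\C_\sigma$ by the poset $\Sigma$. Concretely: a morphism complex in $\Groth_{\sigma\in\Sigma}\C_\sigma$ from an object of $\C_\sigma$ to an object of $\C_{\sigma'}$ is, by construction, zero unless $\sigma\le\sigma'$, equal to $\C_\sigma(X,Y)$ when $\sigma=\sigma'$, and equal to $\C_{\sigma'}(F_{\sigma\sigma'}(X),Y)$ when $\sigma<\sigma'$ (where $F_{\sigma\sigma'}\colon\C_\sigma\to\C_{\sigma'}$ is the transition functor); here I am using that the full subcategory on $\Ob\C_{\sigma_0}\sqcup\cdots\sqcup\Ob\C_{\sigma_p}$ is $\Groth$ of the chain, and that in a semi-orthogonal gluing $\langle\C,\D\rangle_\B$ the morphism complex from an object of $\C$ to an object of $\D$ is literally $\B$. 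The map to the target is, on each of these morphism complexes, either the identity (the zero case), the map induced by $\C_\sigma\to\D_\sigma$ (the $\sigma=\sigma'$ case), or the map induced by $\C_{\sigma'}\to\D_{\sigma'}$ together with functoriality of the transition functors (the $\sigma<\sigma'$ case). In every case it is a quasi-isomorphism under the fully-faithful hypothesis, which gives full faithfulness of $\Groth_{\sigma\in\Sigma}\C_\sigma\to\Groth_{\sigma\in\Sigma}\D_\sigma$ immediately.

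For essential surjectivity, every object of $\Groth_{\sigma\in\Sigma}\D_\sigma$ lies in some $\D_\sigma$, and essential surjectivity of $\C_\sigma\to\D_\sigma$ (together with the fact that the full subcategory of $\Groth_{\sigma\in\Sigma}\D_\sigma$ on $\Ob\D_\sigma$ is exactly $\D_\sigma$, with the analogous statement for $\Groth_{\sigma\in\Sigma}\C_\sigma$) produces a quasi-isomorphic object in the image; an isomorphism in $H^0\D_\sigma$ is still an isomorphism in $H^0\Groth_{\sigma\in\Sigma}\D_\sigma$ since $\D_\sigma$ is a full subcategory. For the generating and split-generating statements, I would argue as follows: the objects of $\D_\sigma$ split-generate (resp.\ generate) $\Tw\D_\sigma$ over the image of $\C_\sigma$ by hypothesis, hence over the image of $\Groth_{\sigma\in\Sigma}\C_\sigma$; and the objects $\bigsqcup_\sigma\Ob\D_\sigma$ tautologically generate $\Tw\Groth_{\sigma\in\Sigma}\D_\sigma$ since every object of $\Groth_{\sigma\in\Sigma}\D_\sigma$ is one of them. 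Composing these two observations (using that ``generated by a collection which is generated by'' is transitive, which is Lemma \ref{generationqiso} together with the standard closure properties of $\Tw$ recalled in \S\ref{partiallywrappedsection}), every object of $\Tw\Groth_{\sigma\in\Sigma}\D_\sigma$ is (split-)generated by the image of $\Groth_{\sigma\in\Sigma}\C_\sigma$.

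The main obstacle, and the step I would be most careful about, is bookkeeping in the $\sigma<\sigma'$ case of the morphism-complex computation: one must check that the identification of the relevant morphism complex with $\C_{\sigma'}(F_{\sigma\sigma'}(X),Y)$ is compatible, as a bimodule, with the $\ainf$-structure maps coming from the inductive definition of $\Groth$ on a chain $\C_{\sigma_0}\to\cdots\to\C_{\sigma_p}$, so that the comparison map to the target really is a chain map and really is the expected quasi-isomorphism. This is routine but requires unwinding the inductive definition of $\Groth(\C_0\to\cdots\to\C_p)$ one semi-orthogonal gluing at a time and tracking where the pulled-back diagonal bimodules $\C_{j}(F_j(-),-)$ go; the key point is that an injection $\{0<\cdots<q\}\hookrightarrow\{0<\cdots<p\}$ induces a \emph{full and faithful} inclusion of Grothendieck constructions, so nothing is lost or distorted in passing between the chain-level statement and the poset-level statement. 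Once that compatibility is in hand, the four assertions follow formally as sketched above.
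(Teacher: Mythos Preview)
Your proof is correct and is essentially the argument the paper has in mind: the paper simply marks this lemma with \qed, leaving the verification (morphism complexes are computed termwise in the semi-orthogonal gluing, objects sit in individual $\D_\sigma$'s, etc.) as immediate from the definitions. You have spelled out exactly those details; there is no alternative route to compare.
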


\begin{definition}\label{hocolimdef}
The \emph{homotopy colimit} of a diagram of $\ainf$-categories $\{\C_\sigma\}_{\sigma\in\Sigma}$ is defined as the localization of the Grothendieck construction
\begin{equation}
\hocolim_{\sigma\in\Sigma}\C_\sigma:=\Bigl(\Groth_{\sigma\in\Sigma}\C_\sigma\Bigr)[A_\Sigma^{-1}]
\end{equation}
at the collection $A_\Sigma$ of `adjacent' morphisms $X_\sigma\to F_{\sigma'\sigma}X_\sigma$ corresponding to the identity map in $H^0\C_{\sigma'}(F_{\sigma'\sigma}X_\sigma,F_{\sigma'\sigma}X_\sigma)=H^0(\Groth_{\sigma\in\Sigma}\C_\sigma)(X_\sigma,F_{\sigma'\sigma}X_\sigma)$ for $\sigma\leq\sigma'$ and $X_\sigma\in\C_\sigma$ (compare \cite[Lemma A.3.5.13]{luriehttpub}).
\end{definition}

Homotopy colimits enjoy the same functoriality as the Grothendieck construction: a map $\Sigma\to T$ covered by a map of diagrams $\{\C_\sigma\}_{\sigma\in\Sigma}\to\{\D_\tau\}_{\tau\in T}$ induces a functor
\begin{equation}\label{hocolimfunctorial}
\hocolim_{\sigma\in\Sigma}\C_\sigma\to\hocolim_{\tau\in T}\D_\tau
\end{equation}
since $A_\Sigma$ is sent into $A_T$.
(Recall the precise definition \cite[Definition 3.17]{gpssectorsoc} of the localization $\C[W^{-1}]$ as the quotient of $\C$ by the set of all cones $[X\xrightarrow aY]$ where $X,Y\in\C$ and $a\in\C(X,Y)$ is a cycle representing an element of $W$; this definition is strictly functorial in the sense that for $F:\C\to\D$ with $F(W)\subseteq Z$, there is a canonically induced functor $\C[W^{-1}]\to\D[Z^{-1}]$.)

Recall that an $\ainf$-functor is called a quasi-equivalence (resp.\ pre-triangulated equivalence, Morita equivalence) iff it is fully faithful and essentially surjective (resp.\ generating, split-generating).

\begin{lemma}\label{hocolimofequivalences}
If each map $\C_\sigma\to\D_\sigma$ is a quasi-equivalence (resp.\ pre-triangulated equivalence, Morita equivalence), then so is $\hocolim_{\sigma\in\Sigma}\C_\sigma\to\hocolim_{\sigma\in\Sigma}\D_\sigma$.
\end{lemma}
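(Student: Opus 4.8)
The statement is a purely formal consequence of two facts: (1) the localization construction $\C \mapsto \C[W^{-1}]$ is strictly functorial and sends fully faithful, essentially surjective, generating, and split-generating functors to functors with the same property (provided $W$ maps into the target's distinguished class); and (2) the Grothendieck construction has the corresponding property, which is exactly Lemma \ref{grothff}. So the plan is to combine these. First I would record the diagram
\begin{equation}
\begin{tikzcd}
\Groth_{\sigma\in\Sigma}\C_\sigma \ar{r}\ar{d} & \Groth_{\sigma\in\Sigma}\D_\sigma\ar{d}\\
\hocolim_{\sigma\in\Sigma}\C_\sigma \ar{r} & \hocolim_{\sigma\in\Sigma}\D_\sigma
\end{tikzcd}
\end{equation}
where the vertical arrows are the localization functors at $A_\Sigma$ (resp.\ the image of $A_\Sigma$ in $\Groth_{\sigma\in\Sigma}\D_\sigma$, which is exactly $A_\Sigma$ for the $\D$-diagram since the relevant identity morphisms are sent to identity morphisms), and the square commutes strictly by the functoriality of localization recalled just before the lemma.

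The key observation is that the functor $\Groth_{\sigma\in\Sigma}\C_\sigma \to \Groth_{\sigma\in\Sigma}\D_\sigma$ sends the set $A_\Sigma$ \emph{onto} $A_\Sigma$ (identified up to relabeling): an adjacent morphism $X_\sigma \to F_{\sigma'\sigma}X_\sigma$ is sent to the adjacent morphism $G(X_\sigma) \to F'_{\sigma'\sigma}G(X_\sigma)$ corresponding to the identity, and since each $\C_\sigma \to \D_\sigma$ is essentially surjective (in the pre-triangulated or Morita case, essentially surjective after passing to $\Tw$ or idempotent completion), every object of $\D_\sigma$ is quasi-isomorphic to one in the image. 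Hence the induced functor between localizations is, in each of the three cases, a localization-preserving map. Now I would invoke the quotient/localization facts from \cite[\S 3.1.3]{gpssectorsoc}: full faithfulness is preserved by Lemma \ref{quotientfullfaithful} (since a localization is a quotient by a set of cones, and $\Groth_{\sigma}\C_\sigma \to \Groth_\sigma\D_\sigma$ is fully faithful by Lemma \ref{grothff}, and the cones in the $\C$-side are sent to the cones on the $\D$-side); essential surjectivity (resp.\ generation, split-generation) of the bottom arrow follows because the top arrow has that property (Lemma \ref{grothff} again) and the vertical localization functors are themselves essentially surjective (a localization functor hits every object), so the composite $\Groth_\sigma\C_\sigma \to \hocolim_\sigma\D_\sigma$ is essentially surjective (resp.\ generating, split-generating), which forces the same for $\hocolim_\sigma\C_\sigma \to \hocolim_\sigma\D_\sigma$.

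The only genuine subtlety — and the step I expect to be the main obstacle — is verifying that full faithfulness survives the localization, since localization does not preserve full faithfulness in general. The point is that here the functor between Grothendieck constructions is not an arbitrary fully faithful functor but one that identifies the distinguished set of morphisms $A_\Sigma$ on the source with (a cofinal-up-to-isomorphism subset of) $A_\Sigma$ on the target: concretely, $G(A_\Sigma)$ and $A_\Sigma$ split-generate the same set of cones in $\Tw(\Groth_\sigma\D_\sigma)$ because every object of $\D_\sigma$ is isomorphic to one in the image of $\C_\sigma$ and adjacent morphisms vary quasi-isomorphically with their endpoints (Lemma \ref{generationqiso}). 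Combined with Lemma \ref{quotientfullfaithful} (full faithfulness is preserved by passing to quotients by sets of objects that match up under a fully faithful functor), and the fact that quotients depend only on the split-generated set of isomorphism classes \cite[Corollary 3.14]{gpssectorsoc}, this gives that $\hocolim_\sigma\C_\sigma \to \hocolim_\sigma\D_\sigma$ is fully faithful. Assembling these observations yields the three cases simultaneously.
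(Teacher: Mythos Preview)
Your approach is correct and matches the paper's one-line proof (``Combine Lemma \ref{grothff} with \cite[Corollary 3.14]{gpssectorsoc}''); you have simply unpacked what that combination means, correctly identifying Lemma \ref{quotientfullfaithful} as the mechanism by which full faithfulness survives the quotient. One small imprecision worth flagging: in the pre-triangulated and Morita cases it is \emph{not} true that every object of $\D_\sigma$ is quasi-isomorphic to one in the image of $\C_\sigma$ (only that it is generated, resp.\ split-generated, by such), so to conclude that $\cones(A_\Sigma^\D)$ is split-generated by the image of $\cones(A_\Sigma^\C)$ you need the additional observation that the assignment $Y_\sigma \mapsto \cone(Y_\sigma \to F'_{\sigma'\sigma}Y_\sigma)$ extends to an exact functor $\Tw\D_\sigma \to \Tw\Groth_{\sigma}\D_\sigma$---after which \cite[Corollary 3.14]{gpssectorsoc} applies exactly as you say.
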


\begin{proof}
Combine Lemma \ref{grothff} with \cite[Corollary 3.14]{gpssectorsoc}.
\end{proof}

As a special case of the functoriality of homotopy colimits under maps of diagrams, note that for any diagram $\{\C_{\sigma}\}_{\sigma \in \Sigma^\triangleright}$ (where $\Sigma^\triangleright = \Sigma \cup \{*\}$ denotes the poset formed from $\Sigma$ by adding a maximal element), there is an induced functor
\begin{equation}\label{univhocolimfunctor}
\hocolim_{\sigma\in\Sigma}\C_\sigma\to\C_*[\1^{-1}]\xleftarrow\sim\C_*
\end{equation}
where $\1$ denotes the class of identity morphisms (the homotopy colimit of the diagram $\C_*$ over the one-element poset $\{*\}$ is given by this somewhat silly localization $\C_*[\1^{-1}]$).
This map is functorial, in the sense that a map of posets $\Sigma\to T$ covered by a map of diagrams $\{\C_{\sigma}\}_{\sigma \in \Sigma^\triangleright}\to\{\D_\tau\}_{\tau \in T^\triangleright}$ induces a diagram
\begin{equation}
\begin{tikzcd}
\hocolim\limits_{\sigma\in\Sigma}\C_\sigma\ar[r]\ar[d]&\C_*[\1^{-1}]\ar[d]&\ar[l,swap,"\sim"]\C_*\ar[d]\\
\hocolim\limits_{\tau\in T}\D_\tau\ar[r]&\D_*[\1^{-1}]&\ar[l,swap,"\sim"]\D_*
\end{tikzcd}
\end{equation}

\begin{definition}
A diagram $\{\C_\sigma\}_{\sigma\in\Sigma^\triangleright}$ is called a \emph{homotopy colimit diagram} iff \eqref{univhocolimfunctor} is a pre-triangulated equivalence.
It is called an \emph{almost homotopy colimit diagram} iff \eqref{univhocolimfunctor} is fully faithful.
In the case of a square diagram (namely $\Sigma=(\bullet\leftarrow\bullet\to\bullet)$), we may write \emph{homotopy pushout (square)} (respectively \emph{almost homotopy pushout (square)}) in place of \emph{homotopy colimit (diagram)} (respectively \emph{almost homotopy colimit (diagram)}).
\end{definition}

\begin{remark}
Requiring \eqref{univhocolimfunctor} to be a quasi-equivalence, a pre-triangulated equivalence, or a Morita equivalence are all reasonable definitions of `homotopy colimit diagram'.
They do not differ much: they all entail full faithfulness of \eqref{univhocolimfunctor} and some sort of `full image' statement (essentially surjective, generating, or split-generating, respectively), of which full faithfulness is the most interesting and nontrivial condition.
They simply correspond to working in an `$\infty$-category of $\ainf$-categories' (compare Remark \ref{inftycathocolim} below) or its full subcategory of pre-triangulated or split-closed pre-triangulated $\ainf$-categories.
We consider pre-triangulated equivalence because it is the notion which appears in our main descent results Theorems \ref{mvthm} and \ref{weinsteindescent}.
Quasi-equivalence is obviously too strong a requirement for these results: a Lagrangian in $X_1\cup X_2$ has no reason to be isomorphic in the Fukaya category to one contained entirely in $X_1$ or $X_2$, and indeed in even the simplest of examples this is false.
Morita equivalence is unnecessarily weak.
\end{remark}

It is easy enough to check (using Lemma \ref{hocolimofequivalences}) that for a map of diagrams $\{\C_\sigma\}_{\sigma\in\Sigma^\triangleright}\to\{\D_\sigma\}_{\sigma\in\Sigma^\triangleright}$ where each $\C_\sigma\to\D_\sigma$ is a pre-triangulated equivalence, each diagram is a(n almost) homotopy colimit diagram iff the other is.

A functor $F:\C\to\D$ is said to quotient by a set of isomorphism classes $Q\subseteq\left|\Tw\C\right|$ iff $F(Q)$ are zero objects and the induced functor $\C/Q\to\D/F(Q)$ is a quasi-equivalence (this condition is independent of the choice of set of objects representing the isomorphism classes in $Q$ used to define $\C/Q\to\D/F(Q)$).

\begin{lemma}[Localization and homotopy colimits commute]\label{hocolimlocalcommute}
If each map $\C_\sigma\to\D_\sigma$ quotients by some set of isomorphism classes $Q_\sigma\subseteq\left|\Tw\C_\sigma\right|$, then $\hocolim_{\sigma\in\Sigma}\C_\sigma\to\hocolim_{\sigma\in\Sigma}\D_\sigma$ quotients by the union $\bigcup_\sigma Q_\sigma$.\qed
\end{lemma}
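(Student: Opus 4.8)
The plan is to reduce the statement to the corresponding compatibility between localization and the Grothendieck construction, for which the essential input is already contained in the excerpt (Lemma \ref{quotientfullfaithful} and \cite[Corollary 3.14]{gpssectorsoc}). First I would recall that, by definition, $\hocolim_{\sigma\in\Sigma}\C_\sigma=(\Groth_{\sigma\in\Sigma}\C_\sigma)[A_\Sigma^{-1}]$, and that localization is a special case of quotient: $\C[W^{-1}]=\C/\{\cone(a):a\in W\}$. So all three categories in play (the two Grothendieck constructions and their homotopy colimits) are quotients of $\Groth_{\sigma\in\Sigma}\C_\sigma$ by explicit sets of twisted complexes, and the claim is a statement about composing quotient functors. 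The key point is that quotienting is insensitive to the order in which one kills objects, in the precise sense of \cite[Corollary 3.14]{gpssectorsoc}: a quotient depends only on the set of isomorphism classes split-generated by the objects being killed.

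The main steps, in order, would be: (1) Observe that $\Groth_{\sigma\in\Sigma}\C_\sigma\to\Groth_{\sigma\in\Sigma}\D_\sigma$ quotients by $\bigcup_\sigma Q_\sigma$ (viewed inside $\left|\Tw\Groth_{\sigma\in\Sigma}\C_\sigma\right|$ via the full and faithful inclusions $\Tw\C_\sigma\hookrightarrow\Tw\Groth_{\sigma\in\Sigma}\C_\sigma$ coming from Lemma \ref{grothff}). This is because the Grothendieck construction is built from semi-orthogonal gluings, and killing objects in a full subcategory of a semi-orthogonal gluing commutes with the gluing — concretely, $\langle\C,\D\rangle_\B/Q_\C=\langle\C/Q_\C,\D\rangle_{\B'}$ and similarly for $Q_\D$, up to quasi-equivalence. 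Iterating over the poset and using that $\C_\sigma\to\D_\sigma$ is by hypothesis the quotient $\C_\sigma/Q_\sigma\to\D_\sigma/F_\sigma(Q_\sigma)$ (a quasi-equivalence), one gets that $\Groth(\D)=\Groth(\C)/\bigcup_\sigma Q_\sigma$, with the induced functor on quotients being a quasi-equivalence by Lemma \ref{grothff} and \cite[Corollary 3.14]{gpssectorsoc}. (2) Note that the map $\Groth(\C)\to\Groth(\D)$ sends the set $A_\Sigma$ of adjacent morphisms of $\Groth(\C)$ to (morphisms cohomologous in the target to) the set $A_\Sigma$ of adjacent morphisms of $\Groth(\D)$; hence the cones on $A_\Sigma$ in $\Groth(\C)$ map to cones on $A_\Sigma$ in $\Groth(\D)$. (3) Combine: $\hocolim_\sigma\D_\sigma=\Groth(\D)[A_\Sigma^{-1}]=\bigl(\Groth(\C)/\textstyle\bigcup_\sigma Q_\sigma\bigr)[A_\Sigma^{-1}]=\Groth(\C)\big/\bigl(\{\cone(a):a\in A_\Sigma\}\cup\bigcup_\sigma Q_\sigma\bigr)$, where the last equality uses that a two-step quotient is the quotient by the union (again \cite[Corollary 3.14]{gpssectorsoc}, since split-generation by a union is generated by the pieces). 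The same computation rearranged gives $\hocolim_\sigma\D_\sigma=\bigl(\Groth(\C)[A_\Sigma^{-1}]\bigr)\big/\overline{\bigcup_\sigma Q_\sigma}=\bigl(\hocolim_\sigma\C_\sigma\bigr)/\overline{\bigcup_\sigma Q_\sigma}$, where $\overline{\bigcup_\sigma Q_\sigma}$ denotes the images of $\bigcup_\sigma Q_\sigma$ in $\Tw\hocolim_\sigma\C_\sigma$; full faithfulness of the comparison functor on quotients is exactly Lemma \ref{quotientfullfaithful} applied to $\hocolim_\sigma\C_\sigma\to(\hocolim_\sigma\C_\sigma)/\overline{\bigcup_\sigma Q_\sigma}$ together with the identification of the latter with $\hocolim_\sigma\D_\sigma$. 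This is precisely the assertion that $\hocolim_\sigma\C_\sigma\to\hocolim_\sigma\D_\sigma$ quotients by $\bigcup_\sigma Q_\sigma$.

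I expect the main obstacle to be Step (1): carefully checking that killing a set of twisted complexes supported in one of the full subcategories $\C_\sigma$ of the Grothendieck construction commutes (up to quasi-equivalence) with the semi-orthogonal gluings that build $\Groth_{\sigma\in\Sigma}\C_\sigma$. One has to verify that the bimodule of morphisms out of $\C_\sigma$ behaves correctly under passing to $\C_\sigma/Q_\sigma$ — i.e.\ that $\langle\C/Q,\D\rangle$ with the induced bimodule really is $\langle\C,\D\rangle/Q$, where on the right $Q$ is regarded as a set of objects of $\Tw\langle\C,\D\rangle$. This is a bookkeeping fact about the bar-type formula for morphism spaces in a quotient, but it requires a little care because the objects of $Q$ are twisted complexes rather than honest objects, and because the gluing is iterated over a poset rather than a single step. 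Everything else — the handling of $A_\Sigma$, and the final reshuffling of two-step quotients into one — is a formal consequence of \cite[Corollary 3.14]{gpssectorsoc} and Lemma \ref{quotientfullfaithful}, which are stated in the excerpt.
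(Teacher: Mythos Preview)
The paper gives no proof of this lemma: it is marked with a \qed\ immediately after the statement, indicating the authors regard it as immediate from the definitions together with \cite[Corollary 3.14]{gpssectorsoc} and Lemma~\ref{quotientfullfaithful}. Your proposal is therefore considerably more detailed than anything in the paper, and the outline is correct: reduce to showing that $\Groth_\sigma\C_\sigma\to\Groth_\sigma\D_\sigma$ quotients by $\bigcup_\sigma Q_\sigma$, then reshuffle the two-step quotient $(\Groth\C/\bigcup Q_\sigma)/\cones(A_\Sigma)$ into $((\Groth\C)/\cones(A_\Sigma))/\bigcup Q_\sigma$ using \cite[Corollary 3.14]{gpssectorsoc}. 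The only substantive content, as you rightly flag, is Step~(1), and your plan for it --- writing the Grothendieck construction as an iterated semi-orthogonal gluing and checking that $\langle\C,\D\rangle_\B/Q\simeq\langle\C/Q,\D\rangle_{\B'}$ via the bar formula for morphisms in a quotient --- is a valid way to proceed. One small point worth making explicit in that step: the image of $Q_\sigma$ under $F_{\sigma'\sigma}$ lies in the split-closure of $Q_{\sigma'}$ (since its further image in $\D_{\sigma'}$ is zero and $\C_{\sigma'}/Q_{\sigma'}\xrightarrow\sim\D_{\sigma'}$), which is what makes the diagram $\{\C_\sigma/Q_\sigma\}_{\sigma\in\Sigma}$ well-defined and allows the inductive comparison with $\{\D_\sigma\}_{\sigma\in\Sigma}$ via Lemma~\ref{grothff} to go through.
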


A basic property of colimits is that \emph{cofinal} diagrams of categories have the same colimit (see, e.g., \cite[\S 4.1.1]{luriehttpub} for rather general homotopy-invariant notions of cofinality and this property).
For our purposes, we need only the following very special case of this property:

\begin{lemma}\label{cofinalityspecialcase}
Suppose that an inclusion of posets $P \hookrightarrow \Sigma$ satisfies the following properties:  
\begin{enumerate}
\item \label{cofinal:minimal} For every $\sigma \in \Sigma$, the subset of elements of $P$ greater than or equal to $\sigma$ has a unique minimal element $p_{\sigma}$.
\item \label{cofinal:upwardsclosed} For all $p\in P$ and $p\leq q$, we have $q\in P$.
\end{enumerate}
Then, the canonical map $\hocolim_{\sigma \in P} \C_{\sigma} \to \hocolim_{\sigma \in \Sigma} \C_{\sigma}$ is a quasi-equivalence.
\end{lemma}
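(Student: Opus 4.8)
\textbf{Proof plan for Lemma \ref{cofinalityspecialcase}.}

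The strategy is to reduce the assertion to the basic ``adjoining a maximal element'' fact already in place: if a poset $T$ has a maximal element $\ast$, then $\hocolim_{\sigma\in T}\C_\sigma\to\C_\ast$ is a quasi-equivalence (this is the content of \eqref{univhocolimfunctor} together with the observation that the localization $\C_\ast[\1^{-1}]$ is a quasi-equivalence). The map we want to analyze, $\hocolim_{\sigma\in P}\C_\sigma\to\hocolim_{\sigma\in\Sigma}\C_\sigma$, is the one induced by the inclusion of posets $P\hookrightarrow\Sigma$ via \eqref{hocolimfunctorial}. Since all functors $\C_\sigma\to\C_\sigma$ are the identity, by the remarks preceding Lemma \ref{hocolimlocalcommute} we are free to replace the diagram by any pre-triangulated-equivalent one; but in fact we will work directly with the given diagram and with the explicit Grothendieck construction from \S\ref{ainfhocolimconstruction}.

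First I would set up the retraction. Define $\rho:\Sigma\to P$ by $\rho(\sigma)=p_\sigma$, the unique minimal element of $\{p\in P: p\geq\sigma\}$ furnished by hypothesis \ref{cofinal:minimal}. Hypothesis \ref{cofinal:upwardsclosed} guarantees this set is nonempty and upward closed in $P$, so the minimal element is well-defined; and for $p\in P$ one has $\rho(p)=p$, so $\rho$ is a retraction of the inclusion $\iota:P\hookrightarrow\Sigma$. The key point is that $\rho$ is \emph{order-preserving}: if $\sigma\leq\sigma'$ then $\{p\in P:p\geq\sigma'\}\subseteq\{p\in P:p\geq\sigma\}$, and since $p_{\sigma'}$ lies in the smaller set it lies in the larger one, whence $p_\sigma\leq p_{\sigma'}$ by minimality of $p_\sigma$. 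Moreover $\rho(\sigma)=p_\sigma\geq\sigma$ for all $\sigma$, so there is a natural transformation $\id_\Sigma\Rightarrow\iota\circ\rho$ of poset maps (i.e.\ $\sigma\leq\rho(\sigma)$ pointwise). Because the diagram $\{\C_\sigma\}$ is pulled back from $\Sigma$, both $\iota$ and $\rho$ are covered by maps of diagrams (using the structure functors $\C_\sigma\to\C_{p_\sigma}$), so they induce functors on homotopy colimits by \eqref{hocolimfunctorial}.

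Next I would show that the two induced maps $\hocolim_P\C\to\hocolim_\Sigma\C$ and $\hocolim_\Sigma\C\to\hocolim_P\C$ are mutually inverse quasi-equivalences. One composite is the identity on the nose, since $\rho\circ\iota=\id_P$ and the covering maps of diagrams compose to identities. For the other composite $\iota\circ\rho:\Sigma\to\Sigma$, the natural transformation $\sigma\leq\rho(\sigma)$ together with the compatible structure maps produces, for each object $X_\sigma$ of $\C_\sigma$ sitting inside $\Groth_{\sigma\in\Sigma}\C_\sigma$, an adjacent morphism $X_\sigma\to F_{\rho(\sigma)\,\sigma}X_\sigma$ lying in the class $A_\Sigma$ along which we localize; these morphisms assemble into a natural transformation between $\id$ and the endofunctor of $\Groth_{\sigma\in\Sigma}\C_\sigma$ induced by $\iota\circ\rho$, and after localizing at $A_\Sigma$ this natural transformation becomes a natural quasi-isomorphism. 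Hence $(\iota\circ\rho)_\ast$ is canonically quasi-isomorphic to the identity on $\hocolim_\Sigma\C$. Combining, $\iota_\ast$ and $\rho_\ast$ are inverse quasi-equivalences, which is the claim.

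The only real subtlety — the step I expect to be the main obstacle — is the verification that the natural transformation $\id_\Sigma\Rightarrow\iota\circ\rho$ genuinely induces, after localization, a natural quasi-isomorphism of functors between the two maps $\Groth_{\sigma\in\Sigma}\C_\sigma[A_\Sigma^{-1}]\rightrightarrows\Groth_{\sigma\in\Sigma}\C_\sigma[A_\Sigma^{-1}]$. Concretely one must check that the components $X_\sigma\to F_{\rho(\sigma)\sigma}X_\sigma$ are compatible with morphisms in $\Groth_{\sigma\in\Sigma}\C_\sigma$ up to coherent homotopy (so that they define an $\ainf$-natural transformation), and that each such component becomes invertible in $H^0$ after inverting $A_\Sigma$. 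The first is a naturality computation internal to the explicit Grothendieck construction of \S\ref{ainfhocolimconstruction}, using that the bimodules of morphisms from $\C_\sigma$ to $\C_{\sigma'}$ are exactly the pullbacks $\C_{\sigma'}(F_{\sigma'\sigma}(-),-)$, so composition with an adjacent morphism is strictly unital on the nose in one variable; the second is immediate from the definition of $A_\Sigma$ and localization. I would phrase this last part exactly as in the proof of the companion statement \cite[Lemma A.3.5.13]{luriehttpub}, transported to the $\ainf$ setting, and invoke \cite[Corollary 3.14]{gpssectorsoc} where needed to pass between quotients and localizations.
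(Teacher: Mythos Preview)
Your retraction argument is a genuinely different route from the paper's. The paper never constructs $\rho$ or any natural transformation; instead it works entirely inside the Grothendieck construction and its localizations. Using hypothesis \ref{cofinal:upwardsclosed} it observes the semi-orthogonal decomposition $\G_\Sigma=\langle\G_{\Sigma\setminus P},\G_P\rangle$, splits $A_\Sigma=A_P\sqcup A_{\Sigma\setminus P}\sqcup A_{\Sigma\setminus P\to P}$, and checks by a short direct computation (using the minimality in \ref{cofinal:minimal}) that the cones on the ``minimal'' crossing morphisms $A^{\min}_{\Sigma\setminus P\to P}$ are left-orthogonal to $\G_P$. Full faithfulness of $\G_P[A_P^{-1}]\hookrightarrow\G_\Sigma[A_\Sigma^{-1}]$ then falls out of \cite[Lemmas 3.13 and 3.16]{gpssectorsoc} and Lemma \ref{quotientfullfaithful}, after noting that the remaining morphisms $A_{\Sigma\setminus P}$ become isomorphisms once $A_P$ and $A^{\min}_{\Sigma\setminus P\to P}$ are inverted. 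Essential surjectivity is immediate. The whole argument stays within the orthogonality/localization toolkit already assembled in \cite{gpssectorsoc}.

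The step you flag as the main obstacle is a real gap, and your proposed justification does not close it. You need the adjacent morphisms $\eta^0_{X_\sigma}=\1_{F_{p_\sigma\sigma}X_\sigma}$ to extend to an $\ainf$-natural transformation $\id\Rightarrow(\iota\rho)_*$ on $\G_\Sigma$, but the $\ainf$-naturality relations for $k\geq 2$ involve higher $\mu^k$ in $\G_\Sigma$ with one input a unit, and these do not vanish unless the $\C_\sigma$ are strictly unital and the structure functors $F_{\sigma'\sigma}$ are strict --- neither of which is assumed. Your remark that ``composition with an adjacent morphism is strictly unital on the nose in one variable'' is exactly what would need strict units to hold. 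The appeal to \cite[Lemma A.3.5.13]{luriehttpub} does not help directly: that argument lives in the quasicategorical world where natural transformations are maps out of $\Delta^1\times(-)$, and transporting it to the explicit $\ainf$ model is essentially the work you are trying to skip. Note also that the shortcut of arguing ``$\rho_*\iota_*=\id$ and $(\iota\rho)_*$ restricts to the identity on $\G_P$'' does not suffice either: morphism spaces in $\hocolim_\Sigma$ between objects of $\G_P$ involve bar words through cones supported on $\G_{\Sigma\setminus P}$, where $(\iota\rho)_*$ is not the identity. Your strategy can be made to work (e.g.\ by first replacing everything with strictly unital models, or by encoding the natural transformation via a diagram over $\Sigma\times[1]$), but at that point the paper's orthogonality computation is both shorter and entirely self-contained within the framework already in hand.
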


\begin{proof}
Let us denote by $\G_{\Sigma} := \Groth_{\sigma \in \Sigma} \C_{\sigma}$ the Grothendieck construction of the diagram indexed by $\Sigma$, and similarly $\G_{P}$ and $\G_{\Sigma - P}$.
We note first that condition \ref{cofinal:upwardsclosed} implies that there is a semi-orthogonal decomposition $\G_{\Sigma} = \langle \G_{\Sigma - P}, \G_P \rangle$.
In the formation of the homotopy colimit over $\Sigma$, the class of morphisms $A_{\Sigma}$ we need to invert can be divided into three classes $A_\Sigma=A_P\sqcup A_{\Sigma-P}\sqcup A_{\Sigma-P\to P}$, where $A_P$ and $A_{\Sigma-P}$ are the morphisms between objects of $\G_P$ and $\G_{\Sigma-P}$, respectively, and $A_{\Sigma-P\to P}$ are the morphisms from $\G_{\Sigma - P}$ to $\G_P$.

Let $A_{\Sigma - P \to P}^{\min} \subseteq A_{\Sigma - P \to P}$ be the subset of morphisms from an object of $\C_{\sigma}$ to an object of $\C_{p_{\sigma}}$, where $p_{\sigma}$ denotes the minimal $p \in P$ greater than $\sigma$ guaranteed by \ref{cofinal:minimal}. 
Observe that any morphism in $A_{\Sigma - P \to P}$ can be factored as a morphism in $A_{\Sigma - P \to P}^{\min}$ followed by a morphism in $A_P$;
it follows that the localization of $\G_{\Sigma}$ by $A_{\Sigma}$ coincides with the localization by $A_{\Sigma - P} \sqcup A_{\Sigma - P \to P}^{\min} \sqcup A_P$.
(Note that we may always localize in steps: for classes of morphisms $W,Z\subseteq H^0\C$, the natural functors $\C[W^{-1}][Z^{-1}]\to\C[(W\cup Z)^{-1}][Z^{-1}]\leftarrow\C[(W\cup Z)^{-1}]$ are both quasi-equivalences.)

Now observe that $\cones(A^{\min}_{\Sigma-P\to P})$ is left-orthogonal to $\G_P$.
To see this, consider any morphism $z \in A^{\min}_{\Sigma - P \to P}$, concretely $z: c_{\sigma} \to f_{p_{\sigma},\sigma} c_{\sigma}$ for some $\sigma\in\Sigma-P$, and note that for any object $d\in \C_p \subseteq \G_P$ for $p \in P$, the minimality condition \ref{cofinal:minimal} on $p_{\sigma}$ implies that $\sigma\leq p$ iff $p_\sigma\leq p$.
If $\sigma\leq p$ and $p_\sigma\leq p$ are both false, then there is nothing to show; if they both hold, then the map
\begin{equation}
\C_p(f_{p,\sigma}c_\sigma,d)=\G_\Sigma(f_{p_\sigma,\sigma}c_\sigma,d)\xrightarrow{\circ z}\G_\Sigma(c_\sigma,d)=\C_p(f_{p,\sigma}c_\sigma,d)
\end{equation}
is a quasi-isomorphism as desired.

Since $\cones(A_{\Sigma - P \to P}^{\min})$ is left-orthogonal to $\G_P$, it follows that $\G_P \hookrightarrow \G_\Sigma [(A_{\Sigma - P \to P}^{\min})^{-1}]$ is fully faithful \cite[Lemma 3.13]{gpssectorsoc}.
We may now further localize both sides by $A_P$ (Lemma \ref{quotientfullfaithful}) to obtain a fully faithful embedding
\begin{equation}
\G_P[A_P^{-1}]\hookrightarrow\G_\Sigma [(A_P\sqcup A_{\Sigma - P \to P}^{\min})^{-1}].
\end{equation}
Now, we claim that \ref{cofinal:minimal} implies that the morphisms in $A_{\Sigma - P}$ are already isomorphisms in this target $\G_\Sigma [(A_P\sqcup A_{\Sigma - P \to P}^{\min})^{-1}]$.
Indeed, since $\sigma \leq \sigma'$ implies $p_{\sigma} \leq p_{\sigma'}$, any adjacent morphism $c \to f_{\sigma', \sigma} c$ in $A_{\Sigma - P}$ differs from the corresponding adjacent morphism  $f_{p_{\sigma},\sigma} c \to f_{p_{\sigma'}, p_{\sigma}} f_{p_\sigma,\sigma} c$ (which is in $A_P$) by morphisms in $A_{\Sigma - P \to P}^{\min}$, and hence we conclude by \cite[Lemma 3.13]{gpssectorsoc} that
\begin{equation}
\G_P[A_P^{-1}]\hookrightarrow\G_\Sigma [(A_P\sqcup A_{\Sigma - P \to P}^{\min}\sqcup A_{\Sigma-P})^{-1}]
\end{equation}
is fully faithful (and we have already argued above that this functor is the desired map on homotopy colimits $\hocolim_{\sigma \in P} \C_{\sigma} \to \hocolim_{\sigma \in \Sigma} \C_{\sigma}$).
Finally, note that this functor is essentially surjective since every element of $\Sigma$ is $\leq$ some element of $P$ by \ref{cofinal:minimal}.
\end{proof}

\begin{corollary}\label{colimitmaximalelement}
If $\Sigma$ has a maximal element $\tau \in \Sigma$, then the natural functor $\C_\tau\xrightarrow\sim\hocolim_{\sigma\in\Sigma}\C_{\sigma}$ is a quasi-equivalence.
\qed
\end{corollary}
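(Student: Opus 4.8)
The plan is to deduce this immediately from the cofinality statement Lemma \ref{cofinalityspecialcase}, applied to the one-element subposet $P = \{\tau\} \subseteq \Sigma$. First I would verify the two hypotheses of that lemma for this choice of $P$. For hypothesis \ref{cofinal:minimal}: since $\tau$ is maximal in $\Sigma$, every $\sigma \in \Sigma$ satisfies $\sigma \leq \tau$, so the set of elements of $P$ that are $\geq \sigma$ is all of $P = \{\tau\}$, which has unique minimal element $p_\sigma = \tau$. For hypothesis \ref{cofinal:upwardsclosed}: if $\tau \leq q$ then $q = \tau$ by maximality of $\tau$, hence $q \in P$. Therefore Lemma \ref{cofinalityspecialcase} applies and shows that the canonical functor $\hocolim_{\sigma \in \{\tau\}} \C_\sigma \to \hocolim_{\sigma \in \Sigma} \C_\sigma$ is a quasi-equivalence.

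Next I would identify the source of this functor. By the construction in \S\ref{ainfhocolimconstruction}, the Grothendieck construction of the one-object diagram $\{\C_\tau\}$ over the singleton poset $\{\tau\}$ is $\C_\tau$ itself, so $\hocolim_{\sigma \in \{\tau\}} \C_\sigma = \C_\tau[\1^{-1}]$ is the localization of $\C_\tau$ at the class $\1$ of identity morphisms. The natural functor $\C_\tau \to \C_\tau[\1^{-1}]$ is a quasi-equivalence: the cones of identity morphisms are zero objects, so this localization is a quotient by a set of zero objects, which is a quasi-equivalence by \cite[Corollary 3.14]{gpssectorsoc} (this is exactly the remark made below \eqref{univhocolimfunctor}).

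Composing these two quasi-equivalences gives the asserted natural functor $\C_\tau \xrightarrow{\sim} \hocolim_{\sigma \in \Sigma} \C_\sigma$; concretely this composite is the map induced on homotopy colimits by the inclusion of posets $\{\tau\} \hookrightarrow \Sigma$ (covered by the identity diagram map) via the functoriality \eqref{hocolimfunctorial}, precomposed with $\C_\tau \to \C_\tau[\1^{-1}]$, and it agrees with the tautological functor $\C_\tau \hookrightarrow \Groth_{\sigma \in \Sigma} \C_\sigma \to \hocolim_{\sigma \in \Sigma} \C_\sigma$. There is no substantive obstacle here; the only point requiring a moment's attention is to confirm that the ``natural functor'' named in the statement coincides with this composite, which is immediate from the definitions.
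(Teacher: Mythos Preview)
Your proof is correct and is exactly the argument the paper intends: the corollary is stated immediately after Lemma \ref{cofinalityspecialcase} with a bare \qed, and you have spelled out precisely this deduction (taking $P=\{\tau\}$, verifying the two hypotheses, and identifying the one-point homotopy colimit with $\C_\tau$ via $\C_\tau\to\C_\tau[\1^{-1}]$ as noted below \eqref{univhocolimfunctor}).
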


The quasi-equivalence in Corollary \ref{colimitmaximalelement} above is moreover functorial in $\Sigma$.
This is easier to see for its inverse, which may be realized by beginning with the natural map $\Groth_{\sigma\in\Sigma}\C_\sigma\to\C_\tau$ and localizing so that the domain becomes $\hocolim_{\sigma\in\Sigma}\C_\sigma$.
Now for any map $\Sigma\to\Sigma'$ of posets with maximal elements $\tau$ and $\tau'$ (where $\tau$ is not necessarily sent to $\tau'$), covered by a map between diagrams of $\ainf$-categories $\{\C_\sigma\}_{\sigma\in\Sigma}\to\{\C_{\sigma'}\}_{\sigma'\in\Sigma'}$, there is a tautologically commutative diagram
\begin{equation}
\begin{tikzcd}
\displaystyle\Groth_{\sigma\in\Sigma}\C_\sigma\ar{r}\ar{d}&\C_\tau\ar{d}\\
\displaystyle\Groth_{\sigma'\in\Sigma'}\C_{\sigma'}\ar{r}&\C_{\tau'}
\end{tikzcd}
\end{equation}
and localizing this diagram to obtain $\hocolim_{\sigma\in\Sigma}\C_\sigma\to\hocolim_{\sigma'\in\Sigma'}\C_{\sigma'}$ on the left gives the desired result.

\begin{proposition}\label{semiorthhocolim}
Suppose $\{\C_\sigma\}_{\sigma\in\Sigma^\triangleright}$ satisfies the following properties:
\begin{enumerate}
\item\label{hypff}All functors $\C_\sigma\hookrightarrow\C_{\sigma'}$ for $\sigma\leq\sigma'$ are fully faithful.
\item Every $\Tw^\pi\C_\sigma$ is generated by the images of $\C_{\sigma'}$ for $\sigma'<\sigma$ and objects which are left-orthogonal to all these images (call such $X\in\Tw^\pi\C_\sigma$ \emph{right-new}).
\item\label{neworth}If $X\in\C_\sigma$ and $Y\in\C_{\sigma'}$ are right-new and $\sigma$ and $\sigma'$ are incomparable, then $\Hom_{\sigma''}(X,Y)$ is acyclic for every $\sigma''\geq\sigma,\sigma'$.
\end{enumerate}
Then $\{\C_\sigma\}_{\sigma\in\Sigma^\triangleright}$ is an almost homotopy colimit diagram.
\end{proposition}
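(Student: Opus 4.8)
## Proof plan for Proposition \ref{semiorthhocolim}

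\textbf{Overview.} The plan is to show that $\hocolim_{\sigma\in\Sigma}\C_\sigma\to\C_*$ (the map \eqref{univhocolimfunctor}) is fully faithful, proceeding by induction on the cardinality of $\Sigma$. The base case $\Sigma=\varnothing$ is trivial, since then $\hocolim$ is the empty category. The inductive step will be carried out by splitting off a \emph{minimal} element $\sigma_0\in\Sigma$ and comparing to the homotopy colimit over $\Sigma\setminus\{\sigma_0\}$. The key structural input is that the semi-orthogonality hypotheses \ref{hypff}--\ref{neworth} let us understand the Grothendieck construction $\Groth_{\sigma\in\Sigma^\triangleright}\C_\sigma$ very explicitly in terms of the right-new subcategories, so that the localization at the adjacent morphisms $A_\Sigma$ can be analyzed piece by piece.

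\textbf{Step 1: Reduce to understanding the right-new generators.} For each $\sigma$, fix (via hypothesis (ii)) a full subcategory $\N_\sigma\subseteq\Tw^\pi\C_\sigma$ of right-new objects such that $\Tw^\pi\C_\sigma$ is generated by $\N_\sigma$ together with the images of the $\C_{\sigma'}$, $\sigma'<\sigma$. By hypothesis \ref{hypff} and an induction on $\Sigma$ (using that a left-orthogonal object stays left-orthogonal under the fully faithful functors), one gets that $\Tw^\pi\C_*$ is generated by the images of all the $\N_\sigma$, $\sigma\in\Sigma$. Moreover hypothesis \ref{neworth} says that for incomparable $\sigma,\sigma'$ the images of $\N_\sigma$ and $\N_{\sigma'}$ are mutually (left-)orthogonal inside $\C_*$ — here one uses that $\Hom_{\sigma''}$ being acyclic for one $\sigma''\geq\sigma,\sigma'$ propagates to $\C_*$ by full faithfulness of the chain of functors $\C_{\sigma''}\hookrightarrow\C_*$. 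The upshot is a semi-orthogonal-type decomposition of $\Tw^\pi\C_*$ organized by the poset $\Sigma$, with graded pieces the images of the $\N_\sigma$.

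\textbf{Step 2: Identify the same structure on the homotopy colimit.} Build the analogous picture on $\G:=\Groth_{\sigma\in\Sigma}\C_\sigma$. The full subcategory of $\G$ spanned by $\bigsqcup_\sigma\N_\sigma$ has, by construction of the Grothendieck construction, morphism spaces from $\N_\sigma$ to $\N_{\sigma'}$ equal to the pulled-back bimodule $\C_{\sigma'}(F_{\sigma'\sigma}(-),-)$ when $\sigma\le\sigma'$ and zero otherwise. When we localize $\G$ at $A_\Sigma$ to form $\hocolim_{\sigma\in\Sigma}\C_\sigma$, I would argue — exactly as in the proof of Lemma \ref{cofinalityspecialcase}, using \cite[Lemmas 3.12, 3.13, 3.16]{gpssectorsoc} — that each $\C_\sigma$ becomes, inside the localization, generated by the images of $\C_{\sigma'}$ ($\sigma'<\sigma$) and its own right-new part, and that the adjacent morphisms realize in the localization precisely the comparison of $\Hom$ along the poset. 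Concretely: the functor $\hocolim_{\sigma\in\Sigma}\C_\sigma\to\C_*$ sends the right-new generators of each $\C_\sigma$ isomorphically onto the corresponding generators of $\C_*$, and preserves the two orthogonality relations just described. A fully faithful functor which (a) hits a generating set on both sides identifying $\Hom$-complexes between generators and (b) respects the filtration/orthogonality is then a pre-triangulated equivalence; more carefully, one checks full faithfulness on generators (which reduces, via the semi-orthogonal structure, to full faithfulness of the individual $\C_\sigma\hookrightarrow\C_*$, i.e. hypothesis \ref{hypff}, together with acyclicity of cross-terms, i.e. hypothesis \ref{neworth}) and then extends to all twisted complexes by the usual five-lemma argument on cones.

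\textbf{Step 3: Assemble.} Combining Steps 1 and 2: the functor \eqref{univhocolimfunctor} is fully faithful on the generating subcategory $\bigsqcup_\sigma\N_\sigma$ of both sides (this is exactly where all three hypotheses get used — \ref{hypff} for the diagonal $\Hom$'s, \ref{neworth} for incomparable cross-terms, and \ref{hypff} again in the form of the adjacent morphisms giving isomorphisms for comparable $\sigma<\sigma'$), and full faithfulness then propagates to $\Tw$ since both source and target are generated by these objects and full faithfulness is stable under cones \cite[Lemma 3.12]{gpssectorsoc}. Hence \eqref{univhocolimfunctor} is fully faithful, i.e.\ $\{\C_\sigma\}_{\sigma\in\Sigma^\triangleright}$ is an almost homotopy colimit diagram. (We do \emph{not} claim essential surjectivity onto $\C_*$, only onto the subcategory it generates, consistent with the ``almost'' in the statement and the footnote following the definition of homotopy colimit diagram.)

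\textbf{Main obstacle.} The technical heart is Step 2: showing that after localizing $\Groth_{\sigma\in\Sigma}\C_\sigma$ at $A_\Sigma$ the morphism complexes between right-new objects are computed correctly — i.e.\ that the adjacent morphisms do not ``collapse'' too much or too little. This is the $\ainf$-categorical bookkeeping, and the right way to handle it cleanly is probably to do the induction on $|\Sigma|$ by peeling off a minimal element, writing $\Groth_{\Sigma}=\langle\C_{\sigma_0},\Groth_{\Sigma\setminus\{\sigma_0\}}'\rangle$ for an appropriate bimodule, and invoking compatibility of localization with semi-orthogonal gluing (Lemma \ref{quotientfullfaithful} and \cite[Lemmas 3.12--3.16]{gpssectorsoc}) to reduce to the inductive hypothesis — rather than trying to manipulate the full localization at once. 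The orthogonality hypothesis \ref{neworth} is exactly what guarantees the gluing bimodule between incomparable right-new pieces is acyclic, so that the induction closes up.
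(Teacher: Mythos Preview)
Your outline is broadly correct in that both you and the paper reduce to checking full faithfulness on the right-new generators, but the paper's argument is more direct and avoids the induction on $|\Sigma|$ you propose. The paper's key move is to show \emph{directly} that every right-new object $X_\sigma\in\Tw^\pi\C_\sigma$, viewed in $\G=\Groth_{\sigma\in\Sigma}\C_\sigma$, is left-orthogonal to the cones on \emph{all} adjacent morphisms $Y_\tau\to F_{\tau'\tau}Y_\tau$ (for $Y_\tau$ right-new). This is a short three-case check on the relation between $\sigma$ and $\tau$ (incomparable uses \ref{neworth}; $\sigma\le\tau$ uses full faithfulness of $\C_\tau\hookrightarrow\C_{\tau'}$; $\sigma>\tau$ uses that $X_\sigma$ is right-new plus full faithfulness). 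By \cite[Lemma 3.13]{gpssectorsoc} this single orthogonality check gives $\G(X_\sigma,-)\xrightarrow\sim\H(X_\sigma,-)$ for all right-new $X_\sigma$ at once. Then full faithfulness of $\H\to\C_*$ on right-new pairs reduces to comparing $\G(X_\sigma,Y_{\sigma'})$ with $\C_*(F_{*\sigma}X_\sigma,F_{*\sigma'}Y_{\sigma'})$, which is another three-case check.

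Your inductive scheme (peeling off a minimal $\sigma_0$ and writing $\Groth_\Sigma$ as a semi-orthogonal gluing) would also work, and your Step 2 correctly identifies the relevant lemmas from \cite{gpssectorsoc}, but it is more elaborate than necessary: you would need to track how the right-new decompositions interact with the gluing bimodule at each stage, whereas the paper's left-orthogonality argument handles all $\sigma$ simultaneously in one pass. What your approach would buy is perhaps a clearer conceptual picture of the homotopy colimit as an iterated semi-orthogonal gluing (closer in spirit to Example \ref{semiorthsquare}), at the cost of more bookkeeping. The ``main obstacle'' you flag in Step 2 is exactly what the paper's orthogonality-to-cones argument dissolves.
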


\begin{proof}
We consider $\G:=\Groth_{\sigma\in\Sigma}\C_\sigma$ and $\H:=\hocolim_{\sigma\in\Sigma}\C_\sigma$.
Since $\Tw^\pi\C_\Sigma$ is generated by right-new objects, it is equivalent to localize $\G$ at the adjacent morphisms $Y_\tau\to F_{\tau'\tau}Y_\tau$ for right-new objects $Y_\tau\in\Tw^\pi\C_\tau$.
We now claim that every right-new object $X_\sigma\in\Tw^\pi\C_\sigma$ is, when regarded as an object of $\Tw^\pi\G$, left-orthogonal to the cones on these adjacent morphisms.
Indeed, to verify that $\G(X_\sigma,\cone(Y_\tau\to F_{\tau'\tau}Y_\tau))$ is acyclic, there are a few cases to consider: when $\sigma$ and $\tau$ are incomparable, this follows from \ref{neworth}, when $\sigma\leq\tau$, this follows from full faithfulness of $\C_\tau\to\C_{\tau'}$, and when $\sigma>\tau$, the only possibility for a nonzero morphism is if $\tau'\geq\sigma$, and then it remains acyclic since $X_\sigma$ is right-new and $\C_\sigma\to\C_{\tau'}$ is fully faithful.
Thus the map $\G(X_\sigma,-)\xrightarrow\sim\H(X_\sigma,-)$ is a quasi-isomorphism for any right-new object $X_\sigma\in\Tw^\pi\C_\sigma$ \cite[Lemma 3.13]{gpssectorsoc}.

Now $\Tw^\pi\H$ is generated by images of right-new objects, so to check that $\H\to\C_*$ is a quasi-equivalence, it is enough to check that that $\G\to\C_*$ induces a quasi-isomorphism on morphisms $X_\sigma\to Y_{\sigma'}$ for right-new objects $X_\sigma\in\Tw^\pi\C_\sigma$ and $Y_{\sigma'}\in\Tw^\pi\C_{\sigma'}$.
If $\sigma$ and $\sigma'$ are incomparable, then $\G(X_\sigma,Y_{\sigma'})$ is acyclic by inspection, and $\C_*(F_{*\sigma}X_\sigma,F_{*\sigma'}Y_{\sigma'})$ is acyclic by \ref{neworth}.
If $\sigma'<\sigma$, then $\G(X_\sigma,Y_{\sigma'})$ is acyclic by inspection, and $\C_*(F_{*\sigma}X_\sigma,F_{*\sigma'}Y_{\sigma'})$ is acyclic since $X_\sigma$ is right-new and $\C_\sigma\to\C_*$ is fully faithful.
If $\sigma\leq\sigma'$, then $\G(X_\sigma,Y_{\sigma'})$ and $\C_*(F_{*\sigma}X_\sigma,F_{*\sigma'}Y_{\sigma'})$ are both quasi-isomorphic to $\C_{\sigma'}(F_{\sigma'\sigma}X_\sigma,X_{\sigma'})$ since $\C_{\sigma'}\to\C_*$ is fully faithful.
Thus $\{\C_\sigma\}_{\sigma\in\Sigma^\triangleright}$ is an almost homotopy colimit diagram.
\end{proof}

\begin{example}\label{semiorthsquare}
Let $\C$, $\D_1$, $\D_2$ be $\ainf$-categories, and let $\B_i$ be $(\C,\D_i)$-bimodules.
The square of $\ainf$-categories
\begin{equation}
\begin{tikzcd}
\C\ar{r}\ar{d}&\langle\C,\D_1\rangle_{\B_1}\ar{d}\\
\langle\C,\D_2\rangle_{\B_2}\ar{r}&\langle\C,\D_1\sqcup\D_2\rangle_{\B_1\sqcup\B_2}
\end{tikzcd}
\end{equation}
satisfies the hypotheses of Proposition \ref{semiorthhocolim} and is thus a homotopy pushout square.
\end{example}

\begin{example}\label{formalreverse}
Let $\{\C_\sigma\}_{\sigma\in\Sigma}$ be any diagram.
The diagram $\{\Groth_{\tau\leq\sigma}\C_\tau\}_{\sigma\in\Sigma^\triangleright}$ is a homotopy colimit diagram by Proposition \ref{semiorthhocolim}.
Localizing at $A_{\Sigma^{\leq\sigma}}$ and appealing to Lemma \ref{hocolimlocalcommute} and Corollary \ref{colimitmaximalelement}, we conclude that the diagram $\sigma\mapsto\C_\sigma$ and $\ast\mapsto\bigl(\Groth_{\sigma\in\Sigma}\C_\sigma\bigr)[A_\Sigma^{-1}]$ is a homotopy colimit diagram.
We have thus derived what was our original definition of homotopy colimits as a consequence of the formal properties Lemma \ref{hocolimlocalcommute} and Proposition \ref{semiorthhocolim} they satisfy (note that in this argument, Corollary \ref{colimitmaximalelement} was \emph{not} used as a formal property of homotopy colimits, rather it was applied as a statement about localizations of Grothendieck constructions).
In other words, given a functor which associates an $\ainf$-category to any diagram of $\ainf$-categories indexed by a finite poset, that functor is equivalent to the functor $\{\C_\sigma\}_{\sigma\in\Sigma}\mapsto\hocolim_{\sigma\in\Sigma}\C_\sigma$ defined above iff it commutes with localization (as in Lemma \ref{hocolimlocalcommute}) and satisfies Proposition \ref{semiorthhocolim}.
\end{example}

\begin{remark}
The hypotheses of Proposition \ref{semiorthhocolim} are not (obviously) preserved under passing to opposite $\ainf$-categories.
This asymmetry can be traced back to the asymmetry in our definition of homotopy colimits, which is also not obviously preserved under passing to opposites.
The converse is true as well: by Example \ref{formalreverse}, verifying the `opposite' of Proposition \ref{semiorthhocolim} for our definition of homotopy colimits would imply they are compatible with passing to opposites.
\end{remark}

\begin{remark}\label{inftycathocolim}
A folklore result (for example, there is the unpublished work of Cohn \cite{cohn}) states that pre-triangulated $\ainf$-categories over a field (or perhaps more generally a commutative ring, with cofibrancy assumptions as in \cite[\S 3.1]{gpssectorsoc}) $k$ are the same as $k$-linear stable $\infty$-categories.
In the latter $\infty$-category (of $k$-linear stable $\infty$-categories), we may consider (homotopy) colimits in the $\infty$-categorical sense (i.e.\ satisfying the relevant universal property).
We could also consider an appropriately defined $\infty$-category of $\ainf$-categories.
We expect (and closely related results appear in the literature \cite[Lemma A.3.5.13]{luriehttpub}) that these notions of homotopy colimit agree with what we have defined here, namely that a diagram of pre-triangulated $\ainf$-categories over $\Sigma^\triangleright$ is a homotopy colimit diagram in the sense we define here iff it is a colimit diagram in the $\infty$-categorical sense.
To prove this, it would suffice, by Example \ref{formalreverse}, to show that the $\infty$-categorical colimits commute with localization in the sense of Lemma \ref{hocolimlocalcommute} and satisfy Proposition \ref{semiorthhocolim} (even just in the special case of $\{\Groth_{\tau\leq\sigma}\C_\tau\}_{\sigma\in\Sigma^\triangleright}$).
\end{remark}

It is a well-known fact that, given a ``nice decomposition'' of a diagram $\Sigma$, the (homotopy) colimit can be itself decomposed into (a colimit of) smaller colimits (see e.g., \cite[\S 4.2.3]{luriehttpub} which establishes a general framework for such decomposition formulae).
For our purposes, the following special case of this property (discussed as motivation in \cite[beginning of \S 4.2]{luriehttpub}) will be sufficient:

\begin{lemma}\label{hocolimdecomposition}
Consider a poset $\Sigma$ with a map to the poset $\bullet\leftarrow\bullet\to\bullet$, and denote by $P$ and $Q$ the inverse images of $\bullet\leftarrow\bullet$ and $\bullet\to\bullet$, respectively, intersecting along $R:=P\cap Q$.
Equivalently, $P$ and $Q$ are sub-posets of $\Sigma$ with $P\cup Q=\Sigma$, such that no element of $P \backslash R$ is comparable to any element of $Q \backslash R$, and no element of $\Sigma\setminus R$ is less than any element of $R$.
Then the square
\begin{equation}\label{hocolimsquare}
\begin{tikzcd}
\displaystyle\hocolim_{\sigma \in R} \C_{\sigma} \ar{r}\ar{d}& \displaystyle\hocolim_{\sigma \in Q} \C_{\sigma}\ar{d}\\
\displaystyle\hocolim_{\sigma \in P} \C_{\sigma}  \ar{r}& \displaystyle\hocolim_{\sigma \in \Sigma} \C_{\sigma}
\end{tikzcd}
\end{equation}
is a homotopy pushout.
\end{lemma}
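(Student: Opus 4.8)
The strategy is to reduce \eqref{hocolimsquare} to a repeated application of the semiorthogonal-gluing homotopy pushout of Example \ref{semiorthsquare} via the explicit construction of homotopy colimits in \S\ref{ainfhocolimconstruction}, together with the commutation of localization with homotopy colimits (Lemma \ref{hocolimlocalcommute}) and the cofinality statement (Lemma \ref{cofinalityspecialcase}). The key observation is that the combinatorial hypotheses on $P$, $Q$, $R$ — namely $P\cup Q=\Sigma$, no element of $P\setminus R$ comparable to any element of $Q\setminus R$, and no element of $\Sigma\setminus R$ below any element of $R$ — precisely say that the Grothendieck construction $\G_\Sigma:=\Groth_{\sigma\in\Sigma}\C_\sigma$ admits a semiorthogonal decomposition $\G_\Sigma=\langle\G_{P\setminus R}\sqcup\G_{Q\setminus R},\G_R\rangle$ in which there are no morphisms between $\G_{P\setminus R}$ and $\G_{Q\setminus R}$ in either direction (by incomparability) and no morphisms from $\G_{\Sigma\setminus R}$ into $\G_R$ (by the second condition), so $\G_R$ sits as a "final" full subcategory receiving morphisms from everything.

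First I would set up notation: write $\G_\Sigma$, $\G_P$, $\G_Q$, $\G_R$, $\G_{P\setminus R}$, $\G_{Q\setminus R}$ for the relevant Grothendieck constructions, and recall that $\G_P$, $\G_Q$, $\G_R$ are tautologically full subcategories of $\G_\Sigma$, and that $\G_R=\G_P\cap\G_Q$ on objects. The first step is to identify the structure of $\G_\Sigma$: using the hypotheses I would check that $\G_\Sigma$ is the semiorthogonal gluing $\langle\C,\D_1\sqcup\D_2\rangle_{\B_1\sqcup\B_2}$ in the sense of Example \ref{semiorthsquare}, where $\C=\G_R$, $\D_1=\G_{P\setminus R}$, $\D_2=\G_{Q\setminus R}$, and $\B_i$ are the appropriate diagonal-type bimodules recording morphisms from $\G_{P\setminus R}$ (resp.\ $\G_{Q\setminus R}$) to $\G_R$ inside $\G_\Sigma$. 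Wait — I need to be careful about the direction of the arrows. In $\Groth$, morphisms go from $\C_\sigma$ to $\C_{\sigma'}$ when $\sigma\le\sigma'$; the condition "no element of $\Sigma\setminus R$ is less than any element of $R$" means morphisms can only go \emph{from} $\G_{\Sigma\setminus R}$ \emph{into} $\G_R$, so $\G_R$ plays the role of the glued-on piece $\D$ (the target of the bimodule) and $\G_{P\setminus R}\sqcup\G_{Q\setminus R}$ plays the role of $\C$ (the source). So more precisely $\G_\Sigma=\langle\G_{P\setminus R}\sqcup\G_{Q\setminus R},\ \G_R\rangle_{\B}$ where $\B$ is the bimodule of $\G_\Sigma$-morphisms from the former to the latter, which decomposes as $\B_1\sqcup\B_2$ along the two incomparable pieces. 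Similarly $\G_P=\langle\G_{P\setminus R},\G_R\rangle_{\B_1}$ and $\G_Q=\langle\G_{Q\setminus R},\G_R\rangle_{\B_2}$ (using incomparability again to see there are no extra morphisms), and $\G_R$ itself is the "empty glue" case. Thus the square
\begin{equation*}
\begin{tikzcd}
\G_R\ar{r}\ar{d}&\G_P\ar{d}\\
\G_Q\ar{r}&\G_\Sigma
\end{tikzcd}
\end{equation*}
is (isomorphic to) the square of Example \ref{semiorthsquare} with $\C=\G_R$, $\D_1=\G_{P\setminus R}$, $\D_2=\G_{Q\setminus R}$, $\B_i$ as above, hence is a homotopy pushout by Proposition \ref{semiorthhocolim}.

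The second step is to pass from the Grothendieck constructions to the homotopy colimits by inverting the adjacent morphisms. For each subposet $T\in\{P,Q,R,\Sigma\}$, the homotopy colimit $\hocolim_{\sigma\in T}\C_\sigma$ is by definition $\G_T[A_T^{-1}]$. The map-of-diagrams functoriality gives a map from the square of Grothendieck constructions to the square \eqref{hocolimsquare}, and on each corner this map is a localization at the class $A_T$ of adjacent morphisms; moreover $A_R=A_P\cap A_Q$ (as morphism classes inside $\G_\Sigma$) and $A_\Sigma=A_P\cup A_Q$ by construction, because an adjacent morphism $X_\sigma\to F_{\sigma'\sigma}X_\sigma$ for $\sigma\le\sigma'$ lies entirely in $P$ (resp.\ $Q$) iff both $\sigma,\sigma'\in P$ (resp.\ $Q$), and every comparable pair in $\Sigma$ lies in $P$ or in $Q$ by the incomparability hypothesis. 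Since the square of Grothendieck constructions is a homotopy pushout, and homotopy pushouts are preserved under passing to localizations by compatible classes of morphisms on each corner — this is exactly Lemma \ref{hocolimlocalcommute} (localization = quotient by cones of the relevant morphisms, and these cone-sets on the four corners are $A_R$, $A_P$, $A_Q$, $A_\Sigma$, with the union of the first three being the last) — it follows that \eqref{hocolimsquare} is a homotopy pushout.

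\textbf{Main obstacle.} The genuinely delicate point is the bookkeeping in Step 1: verifying that under the stated hypotheses the Grothendieck construction $\G_\Sigma$ really is a semiorthogonal gluing of the shape in Example \ref{semiorthsquare}, with the correct identification of the three "new" pieces and the vanishing of cross-morphisms, and that $\G_P$, $\G_Q$ restrict to the gluings $\langle\G_{P\setminus R},\G_R\rangle_{\B_1}$, $\langle\G_{Q\setminus R},\G_R\rangle_{\B_2}$ compatibly — i.e.\ that the restriction of the bimodule $\B_1\sqcup\B_2$ on $\G_\Sigma$ to $\G_P$ is exactly $\B_1$, which uses that no morphism in $\G_\Sigma$ connects $\G_{P\setminus R}$ to $\G_{Q\setminus R}$ (incomparability) so the two bimodule summands do not interact. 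Equivalently, one must double-check the direction conventions so that $\G_R$ is the \emph{co}slice piece (the target of the gluing bimodule) rather than the source; this is where the hypothesis "no element of $\Sigma\setminus R$ is less than any element of $R$" is used, and getting it backwards would make Proposition \ref{semiorthhocolim} inapplicable. A secondary, more routine point is confirming the morphism-class identities $A_\Sigma=A_P\cup A_Q$ and that localizing the homotopy-pushout square of Grothendieck constructions corner-by-corner yields \eqref{hocolimsquare}; this follows formally from Lemma \ref{hocolimlocalcommute} once the cone-sets are correctly matched, but the matching of $A_R$ with $A_P\cap A_Q$ and of the cones being quotiented at each corner should be stated carefully.
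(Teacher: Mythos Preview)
Your strategy is exactly the paper's: show the square of Grothendieck constructions $\G_R,\G_P,\G_Q,\G_\Sigma$ is the homotopy pushout of Example \ref{semiorthsquare}, then localize cornerwise at the adjacent morphisms using $A_\Sigma=A_P\cup A_Q$ and Lemma \ref{hocolimlocalcommute}. (Lemma \ref{cofinalityspecialcase}, which you list among your tools, is not needed.)

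However, you reversed the direction of the semiorthogonality, and this is precisely the point you flagged as the main obstacle. The condition ``no element of $\Sigma\setminus R$ is less than any element of $R$'' says that there are \emph{no} morphisms from $\G_{\Sigma\setminus R}$ into $\G_R$ in the Grothendieck construction (since morphisms go from $\C_\sigma$ to $\C_{\sigma'}$ only for $\sigma\le\sigma'$); morphisms can go only \emph{from} $\G_R$ \emph{into} $\G_{\Sigma\setminus R}$. Thus $\G_P=\langle\G_R,\G_{P\setminus R}\rangle$, $\G_Q=\langle\G_R,\G_{Q\setminus R}\rangle$, and $\G_\Sigma=\langle\G_R,\G_{P\setminus R}\sqcup\G_{Q\setminus R}\rangle$, with $\G_R$ in the first (source) slot --- the opposite of what you wrote. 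Your final identification $\C=\G_R$, $\D_i=\G_{(P\text{ or }Q)\setminus R}$ is actually correct, but it is inconsistent with your bracket expressions; with your reversed brackets the square would \emph{not} match Example \ref{semiorthsquare} (the objects of $\G_{P\setminus R}$ would fail to be left-orthogonal to $\G_R$, so Proposition \ref{semiorthhocolim} would not apply). Once you fix this sign, the argument goes through verbatim and coincides with the paper's proof.
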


\begin{proof}
    Let $\G_\Sigma:=\Groth_{\sigma\in\Sigma}\C_\sigma$ and similarly denote by
    $\G_{P}$, $\G_Q$, and $\G_R$ the Grothendieck constructions of the
    restricted diagrams over $P$, $Q$, and $R$. By definition, there is a commutative square of fully faithful embeddings
    \begin{equation}\label{grothendieckdecomposition}
    \begin{tikzcd}
    \G_R \ar{r}\ar{d}& \G_Q\ar{d}\\
    \G_P  \ar{r}& \G_\Sigma.
    \end{tikzcd}
    \end{equation}
    In fact, we claim that \eqref{grothendieckdecomposition} is a homotopy
    pushout. To see this, note that the hypotheses on $P$, $Q$, and $R$ imply
    that $\G_P = \langle \G_R, \G_{P \backslash R} \rangle$, $\G_Q = \langle
    \G_R, \G_{Q \backslash R} \rangle$, and in $\G_{\Sigma}$ there are no
    morphisms between $\G_{P \backslash R}$ and $\G_{Q \backslash R}$ (in either direction); now
    apply Example \ref{semiorthsquare}.

To obtain \eqref{hocolimsquare} from \eqref{grothendieckdecomposition}, we localize at morphisms $A_P$, $A_Q$, $A_R$, and $A_\Sigma$.
Since $A_\Sigma=A_P\cup A_Q$, we may apply Lemma \ref{hocolimlocalcommute} to the natural map from \eqref{grothendieckdecomposition} to \eqref{hocolimsquare} to conclude that \eqref{hocolimsquare} is a homotopy pushout.
\end{proof}

\begin{lemma}\label{compositepushout}
For any diagram
\begin{equation}\label{compositediagram}
\begin{tikzcd}
\A\ar{r}\ar{d}&\B\ar{d}\ar{r}&\C\ar{d}\\
\D\ar{r}&\E\ar{r}&\F
\end{tikzcd}
\end{equation}
in which the leftmost square a homotopy pushout, the composite square is a homotopy pushout iff the rightmost square is.
\end{lemma}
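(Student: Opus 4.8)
The statement is the standard ``pasting law'' for homotopy pushouts, and the plan is to deduce it from the formal properties of homotopy colimits already established, namely Lemma \ref{hocolimdecomposition} (the decomposition lemma) together with Lemma \ref{hocolimofequivalences} and the observation (made just before Proposition \ref{semiorthhocolim}) that being an almost homotopy colimit diagram is invariant under replacing a diagram by a levelwise pre-triangulated equivalent one. The key point is to exhibit both the ``composite square is a homotopy pushout'' statement and the ``rightmost square is a homotopy pushout'' statement as instances of a single homotopy colimit decomposition, so that one follows from the other by two-out-of-three.

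\textbf{Main steps.} First I would set up the index poset. Consider the poset $\Sigma$ consisting of the six objects appearing in \eqref{compositediagram}, with the evident order relations (so $\A$ is the unique maximal element if we use the convention that arrows point toward smaller elements — or, to match the orientation used in the paper for homotopy \emph{colimit} diagrams, reverse all arrows so that $\F$ is maximal and the diagram $\sigma\mapsto$ (the category at $\sigma$) is covariant; I will use this latter convention throughout). The hypothesis that the leftmost square is a homotopy pushout means precisely that the sub-poset spanned by $\{\A,\B,\D,\E\}$, with $\E$ maximal, is a homotopy colimit diagram, i.e.\ the canonical map $\hocolim(\B\leftarrow\A\to\D)\to\E$ is a pre-triangulated equivalence. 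Next, I would apply Lemma \ref{hocolimdecomposition} to the poset $\Sigma$ decomposed along the map to $\bullet\leftarrow\bullet\to\bullet$ sending the three objects in the bottom row to the outer vertices appropriately: concretely, take $P$ to be the sub-poset $\{\A,\B,\C,\D,\E,\F\}$ — wait, more carefully: take $R=\{\B,\E\}$, take $P$ to be the left part $\{\A,\B,\D,\E\}$ (the leftmost square) and $Q$ to be the right part $\{\B,\C,\E,\F\}$ (the rightmost square), so that $P\cup Q=\Sigma$ and $R=P\cap Q=\{\B,\E\}$; one checks the comparability hypotheses of Lemma \ref{hocolimdecomposition} hold because there are no order relations between $\{\A,\D\}$ and $\{\C,\F\}$. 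Lemma \ref{hocolimdecomposition} then says the square
\begin{equation}
\begin{tikzcd}
\hocolim_{\sigma\in R}\C_\sigma\ar{r}\ar{d}&\hocolim_{\sigma\in Q}\C_\sigma\ar{d}\\
\hocolim_{\sigma\in P}\C_\sigma\ar{r}&\hocolim_{\sigma\in\Sigma}\C_\sigma
\end{tikzcd}
\end{equation}
is a homotopy pushout. Now $\hocolim_{\sigma\in R}\C_\sigma$ is $\hocolim(\B\to\E)$, which by Corollary \ref{colimitmaximalelement} (after localizing at identity morphisms, or directly) is just $\E$; similarly $\hocolim_{\sigma\in P}\C_\sigma$ receives a map to $\E$ which is a pre-triangulated equivalence \emph{by the hypothesis} that the leftmost square is a homotopy pushout. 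So, using Lemma \ref{hocolimofequivalences} and the equivalence-invariance of homotopy pushout squares, the left vertical map in the displayed square may be replaced by the identity $\E\to\E$, and a square with one side an identity (equivalently, a pushout square $\E\leftarrow\E\to\hocolim_Q$) is a homotopy pushout iff the opposite pair of vertices map by a pre-triangulated equivalence, i.e.\ iff $\hocolim_{\sigma\in Q}\C_\sigma\to\hocolim_{\sigma\in\Sigma}\C_\sigma$ is a pre-triangulated equivalence. Finally I would identify $\hocolim_{\sigma\in Q}\C_\sigma=\hocolim(\C\leftarrow\B\to\F)$ — no, $Q=\{\B,\C,\E,\F\}$ is itself a square-shaped poset with maximal element $\F$, so by Example \ref{formalreverse} / Corollary \ref{colimitmaximalelement} applied appropriately, $\hocolim_{\sigma\in Q}\C_\sigma$ being pre-triangulated equivalent to $\F$ (via the canonical map) is exactly the assertion that the rightmost square of \eqref{compositediagram} is a homotopy pushout, while $\hocolim_{\sigma\in\Sigma}\C_\sigma\to\F$ being a pre-triangulated equivalence is exactly the assertion that the composite square is a homotopy pushout. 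Since we have shown these two canonical maps differ by the pre-triangulated equivalence of the displayed square, one is an equivalence iff the other is, which is the claim.

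\textbf{Expected main obstacle.} The genuine mathematical content is already packaged in Lemma \ref{hocolimdecomposition}, so no hard analysis or $\ainf$-algebra remains; the delicate part is purely bookkeeping — getting the variance conventions consistent (the paper orients homotopy colimit diagrams with a \emph{maximal} cone point, so \eqref{compositediagram} as literally drawn must be reindexed), checking that the poset $Q$ really has the right shape so that ``$\hocolim_Q\to\F$ is a pre-triangulated equivalence'' unwinds to ``the rightmost square is a homotopy pushout'' rather than some off-by-one variant, and verifying the comparability hypotheses of Lemma \ref{hocolimdecomposition} for the chosen $P$, $Q$, $R$. I would also need to be slightly careful that ``homotopy pushout'' here allows the cone point to only receive the canonical map up to pre-triangulated equivalence (not quasi-equivalence), which is the convention fixed in the excerpt, so all the ``iff'' statements are genuinely symmetric. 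Once these conventions are pinned down, the proof is a three-line diagram chase invoking Lemma \ref{hocolimdecomposition}, Lemma \ref{hocolimofequivalences}, and Corollary \ref{colimitmaximalelement}.
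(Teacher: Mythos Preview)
Your proposal has a genuine gap: the decomposition you feed into Lemma \ref{hocolimdecomposition} does not satisfy its hypotheses, and in fact collapses to a triviality.

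Concretely, you take $\Sigma=\{\A,\B,\C,\D,\E,\F\}$, $P=\{\A,\B,\D,\E\}$, $Q=\{\B,\C,\E,\F\}$, $R=\{\B,\E\}$. But $\A\in P\setminus R$ and $\C\in Q\setminus R$ are comparable (since $\A\le\B\le\C$), and $\A\in\Sigma\setminus R$ is \emph{less than} $\B\in R$; so both conditions of Lemma \ref{hocolimdecomposition} fail. More damagingly, each of your posets $R$, $P$, $Q$, $\Sigma$ already contains its own maximal element ($\E$, $\E$, $\F$, $\F$), so by Corollary \ref{colimitmaximalelement} the four corners of your square are tautologically $\E$, $\E$, $\F$, $\F$ regardless of any hypothesis. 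In particular your claim that ``$\hocolim_P\to\E$ is an equivalence \emph{by the hypothesis} that the left square is a pushout'' is vacuous (it is always an equivalence), and your identification of ``$\hocolim_Q\to\F$ is an equivalence'' with ``the rightmost square is a pushout'' is simply false: the former is automatic, the latter is the assertion that $\hocolim(\C\leftarrow\B\to\E)\to\F$ is an equivalence, which is a colimit over the three-object poset $\{\B,\C,\E\}$ \emph{without} $\F$.

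The paper avoids this by excluding the final cone point $\F$ from the start: it applies Lemma \ref{hocolimdecomposition} to the five-object poset $\{\A,\B,\C,\D,\E\}$ with $R=\{\A,\B,\D\}$, $P=\{\A,\B,\D,\E\}$, $Q=\{\A,\B,\C,\D\}$. Now $P\setminus R=\{\E\}$ and $Q\setminus R=\{\C\}$ are genuinely incomparable, and $R$ sits at the bottom, so the hypotheses hold. The hypothesis on the left square shows the left vertical map of the resulting pushout square is an equivalence, hence so is the right vertical map. A second diagram chase (using Lemma \ref{cofinalityspecialcase} to pass from the five-object and four-object colimits to the three-object colimits defining the composite and rightmost squares) then finishes. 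The moral is that to extract information you must keep the cone points \emph{outside} the posets over which you take $\hocolim$.
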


\newlength\downarrowwidth
\settowidth\downarrowwidth{$\downarrow$}
\newlength\Acalwidth
\settowidth\Acalwidth{$\A$}
\newlength\Bcalwidth
\settowidth\Bcalwidth{$\B$}
\newlength\widthone
\setlength\widthone{0.5\Acalwidth}
\addtolength\widthone{-0.5\downarrowwidth}
\newlength\allwidth
\settowidth\allwidth{$\A\to\B$}
\newlength\widthtwo
\setlength\widthtwo{1\allwidth}
\addtolength\widthtwo{-0.5\Bcalwidth}
\addtolength\widthtwo{-0.5\downarrowwidth}
\addtolength\widthtwo{-\widthone}
\addtolength\widthtwo{-\downarrowwidth}

\begin{proof}
By Lemma \ref{hocolimdecomposition}, the following square is a homotopy pushout
\begin{equation}
\begin{tikzcd}
\hocolim\left(\begin{matrix}\A\to\B\hfill\\\hspace{\widthone}\downarrow\hfill\\\D\hfill\end{matrix}\right)\ar{r}\ar{d}&\hocolim\left(\begin{matrix}\A\to\B\to\C\hfill\\\hspace{\widthone}\downarrow\hfill\\\D\hfill\end{matrix}\right)\ar{d}\\
\hocolim\left(\begin{matrix}\A\to\B\hfill\\\hspace{\widthone}\downarrow\hspace{\widthtwo}\downarrow\hfill\\\D\to\E\hfill\end{matrix}\right)\ar{r}&\hocolim\left(\begin{matrix}\A\to\B\to\C\hfill\\\hspace{\widthone}\downarrow\hspace{\widthtwo}\downarrow\hfill\\\D\to\E\hfill\end{matrix}\right)
\end{tikzcd}
\end{equation}
Since the left square in \eqref{compositediagram} is a homotopy pushout, the left vertical arrow above is a pre-triangulated equivalence.
Thus the right vertical arrow is also a pre-triangulated equivalence.
Now consider the following diagram (induced by the natural maps of diagrams):
\begin{equation}
\begin{tikzcd}
\hocolim\left(\begin{matrix}\A\to\B\to\C\hfill\\\hspace{\widthone}\downarrow\hfill\\\D\hfill\end{matrix}\right)\ar{d}&\ar{l}\hocolim\left(\begin{matrix}\A\to\C\hfill\\\hspace{\widthone}\downarrow\hfill\\\D\hfill\end{matrix}\right)\ar{d}\\
\hocolim\left(\begin{matrix}\A\to\B\to\C\hfill\\\hspace{\widthone}\downarrow\hspace{\widthtwo}\downarrow\hfill\\\D\to\E\hfill\end{matrix}\right)\ar{r}&\hocolim\left(\begin{matrix}\B\to\C\hfill\\\hspace{\widthone}\downarrow\hfill\\\E\hfill\end{matrix}\right)
\end{tikzcd}
\end{equation}
We just saw above that the left vertical arrow is a pre-triangulated equivalence.
The bottom horizontal arrow is a pre-triangulated equivalence by applying Lemma \ref{cofinalityspecialcase} to the natural map in the opposite direction (which is a section), and the top horizontal arrow is a quasi-equivalence by inspection.
It follows that the right vertical arrow is a pre-triangulated equivalence, which gives the desired result.
\end{proof}

\bibliographystyle{amsplain}
\bibliography{descent}
\addcontentsline{toc}{section}{References}

\end{document}